
\documentclass[10pt,twoside, openright]{report} 

\bibliographystyle{abbrv}
\usepackage{color}
\definecolor{myurlcolor}{rgb}{0,0,0.5}
\usepackage{amsmath}
\usepackage{amsfonts}
\usepackage{amssymb}
\usepackage{amsthm}
\usepackage[all,2cell]{xy}
\UseAllTwocells
\usepackage{stmaryrd}
\usepackage{bbm}
\usepackage{enumerate}
\usepackage{accsupp}
\usepackage[title]{appendix}
\usepackage{comment}
\usepackage{mathabx}
\usepackage{booktabs}
\renewcommand{\bibname}{References}

\def\slashedrightarrow{\relbar\joinrel\mapstochar\joinrel\rightarrow}
\newcommand{\proto}{\slashedrightarrow}

\newcommand{\from}{\colon} 
\newcommand{\cat}[1]{\mathcal{#1}} 
\newcommand{\scat}[1]{\mathbbm{#1}} 
\newcommand{\fcat}[1]{\mathbf{#1}} 
\newcommand{\demph}[1]{\textbf{#1}} 
\renewcommand{\emptyset}{\varnothing} 
\renewcommand{\epsilon}{\varepsilon} 
\newcommand{\such}{\mathrel{|}} 
\newcommand{\hyph}{\text{-}} 
\newcommand{\iso}{\cong} 
\newcommand{\eqv}{\simeq} 
\newcommand{\toby}[1]{\stackrel{#1}{\longrightarrow}} 
\newcommand{\incl}{\hookrightarrow} 
\newcommand{\bref}[1]{(\ref{#1})} 
\newcommand{\of}{\mathbin{\circ}} 
\DeclareMathOperator{\ev}{ev} 

\newcommand{\id}{\mathrm{id}}
\newcommand{\op}{\mathrm{op}}

\newcommand{\mnd}[1]{\mathbb{#1}}
\newcommand{\lpair}{\langle}
\newcommand{\rpair}{\rangle}

\newcommand{\ra}{\mathrm{r.a.}}

\newcommand{\cm}{\mathrm{c.m.}}
\newcommand{\str}{\operatorname{Str}}
\newcommand{\sem}{\operatorname{Sem}}

\newcommand{\thr}{\operatorname{Th}}

\renewcommand{\mod}{\operatorname{Mod}}
\newcommand{\kl}[1]{\operatorname{Kl}(#1)}
\newcommand{\kle}{\operatorname{Kl}}
\newcommand{\monad}{\fcat{Mnd}}
\newcommand{\alg}{\operatorname{Alg}}
\newcommand{\tensor}{\otimes}
\newcommand{\tagqed}{\tag*{\scshape \qed}} 

\newcommand{\Set}{\fcat{Set}}
\newcommand{\SET}{\fcat{SET}}
\newcommand{\Top}{\fcat{Top}}
\newcommand{\TOP}{\fcat{TOP}}
\newcommand{\TOPCAT}{\fcat{TOP}\hyph\fcat{CAT}}
\newcommand{\Vect}{\fcat{Vect}}
\newcommand{\finvect}{\fcat{FinVect}}
\newcommand{\finset}{\fcat{FinSet}}
\newcommand{\fin}{\mathbb{F}}
\newcommand{\finbij}{\mathbb{B}}
\newcommand{\Cat}{\fcat{Cat}}
\newcommand{\CAT}{\fcat{CAT}}
\newcommand{\ACAT}{\fcat{CATwA}}
\newcommand{\Amonad}{\fcat{MndwA}}
\newcommand{\finprod}{\fcat{FinProdCAT}}
\newcommand{\symmon}{\fcat{SymMonCAT}}
\newcommand{\monCAT}{\fcat{MonCAT}}
\newcommand{\multi}{\fcat{MultiCAT}}

\newcommand{\law}{\operatorname{Law}}
\newcommand{\Law}{\fcat{Law}}
\newcommand{\LAW}{\fcat{LAW}}
\newcommand{\PROP}{\fcat{PROP}}
\newcommand{\PRO}{\fcat{PRO}}

\newcommand{\PROF}{\fcat{PROF}}

\newcommand{\multicat}{\fcat{MultiCAT}}
\newcommand{\operad}{\fcat{OPD}}
\newcommand{\tract}{\mathrm{tract}}
\newcommand{\convt}{\fcat{Convt}}

\newcommand{\vect}{\mathbf}
\newcommand{\Gp}{\fcat{Gp}}
\newcommand{\FinGp}{\fcat{FinGp}}
\newcommand{\TopGp}{\fcat{TopGp}}
\newcommand{\profGp}{\fcat{ProfGp}}

\newcommand{\monoid}{\fcat{Mon}}

\newcommand{\TopMonoid}{\fcat{TopMon}}
\newcommand{\MONOID}{\fcat{MON}}

\newcommand{\obj}{\hyph\fcat{obj}}

\newcommand{\catover}[1]{\CAT/#1}

\newcommand{\radjover}[1]{(\CAT/#1)_{\ra}}
\newcommand{\cmover}[1]{(\CAT/#1)_{\cm}}
\newcommand{\currylo}{H_{\bullet}}
\newcommand{\curryhi}{H^{\bullet}}

\newcommand{\const}[1]{\widebar{#1}}

\newcommand{\nat}{\mathbb{N}}

\newcommand{\Gfinset}{G\text{-}\fcat{FinSet}}
\newcommand{\pc}{\widehat}
\newcommand{\sym}{\operatorname{Sym}}
\newcommand{\ob}{\operatorname{ob}}
\newcommand{\mor}{\operatorname{mor}}

\newcommand{\pow}{\mathcal{P}}
\newcommand{\Union}{\bigcup}
\newcommand{\ladj}{\dashv}
\newcommand{\radj}{\vdash}
\newcommand{\ofsfont}{\mathcal}
\newcommand{\procomp}[1]{\widehat{#1}}

\newcommand{\und}{\operatorname{Und}}

\newcommand{\talg}{\mnd{T}\hyph \alg}

\newcommand{\cplt}{\operatorname{Cplt}}
\newcommand{\inc}{\operatorname{Inc}}
\newcommand{\disc}{\operatorname{Disc}}
\newcommand{\oper}{\mathcal{O}}
\newcommand{\proth}{\fcat{PTh}}

\newcommand{\chu}{\operatorname{Chu}}

\newcommand{\pullbackcorner}[1][dr]{\save*!/#1-1.2pc/#1:(-1,1)@^{|-}\restore}

\newcommand{\setting}{setting}
\newcommand{\Setting}{Setting}

\newtheorem{lem}[subsection]{Lemma}
\newtheorem{prop}[subsection]{Proposition}
\newtheorem{thm}[subsection]{Theorem}
\newtheorem{cor}[subsection]{Corollary}
\theoremstyle{definition}
\newtheorem{defn}[subsection]{Definition}
\newtheorem{ex}[subsection]{Example}

\newtheorem{remark}[subsection]{Remark}
\newtheorem{question}[subsection]{Question}
\newtheorem*{defn*}{Definition}

\title{Structure and Semantics}
\author{Tom Avery}
\date{2017}

\usepackage[phd,onehalf,hyperref,
colour
]{edmaths}

\begin{document}
\flushbottom
\pagenumbering{roman}
\sloppy

\maketitle

\declaration

\cleardoublepage
\phantomsection
\addcontentsline{toc}{chapter}{Lay summary}
\chapter*{Lay summary}
It is common in mathematics to consider structures that consist of a collection of elements that can be combined in various ways, which we call operations, such that certain equations always hold. For example, two integers can be combined by adding them together, and the order in which they are added does not change the result. Thus, the collection of all integers has addition as an operation, and it satisfies the equation $x + y = y + x$, which says that the order in which we add two numbers does not matter.

Often we do not consider such structures in isolation, but consider an entire class of structures with similar operations and equations; this allows us to prove things about many structures simultaneously. To continue our example from above, the collection of rational numbers (i.e. fractions) also has an addition operation that satisfies the equation $x + y = y + x$. Thus if we prove some result that only depends on the fact that we can combine two elements and the order in which they are combined does not matter, that result will apply equally to both integers and to rational numbers.

An algebraic theory is a way of describing such a class of structures by specifying some operations abstractly, and some equations that they should satisfy. A model of an algebraic theory consists of a collection of elements that can be combined in the ways prescribed by the operations of the theory such that the equations hold. Thus there is an algebraic theory with ``$+$'' as an operation and the equation $x + y = y + x$, and both the collection of all integers and the collection of all rational numbers are models of this theory.

Algebraic theories are very useful, but they have limitations. Sometimes we want to define a class of structures using operations and equations, but where the object underlying the structure is not just a collection of elements, but something more complicated. In some contexts we may want to place restrictions on the kinds of operations and equations allowed, or allow more general types of operation. Over the years, many different variants of the notion of an algebraic theory have been developed to cope with all these different situations.

The goal of this thesis is to develop a general notion of algebraic theory that unifies many of these variants. We then use this general notion to give an extension of one of the variants in particular, called monads, adding certain desirable properties that monads themselves lack. We do this using ideas from topology, the branch of mathematics that studies spaces with a notion of continuity. Although algebraic theories are used to \emph{describe} mathematical structures, they are also structures in their own right. We draw an analogy between our general notion of algebraic theories as structures, and another kind of structure called a group.

\cleardoublepage
\phantomsection
\begin{abstract}
Algebraic theories describe mathematical structures that are defined in terms of operations and equations, and are extremely important throughout mathematics. Many generalisations of the classical notion of an algebraic theory have sprung up for use in different mathematical contexts; some examples include Lawvere theories, monads, PROPs and operads. The first central notion of this thesis is a common generalisation of these, which we call a \emph{proto-theory}.

The purpose of an algebraic theory is to describe its models, which are structures in which each of the abstract operations of the theory is given a concrete interpretation such that the equations of the theory hold. The process of going from a theory to its models is called semantics, and is encapsulated in a \emph{semantics functor}. In order to define a model of a theory in a given category, it is necessary to have some structure that relates the arities of the operations in the theory with the objects of the category. This leads to the second central notion of this thesis, that of an interpretation of arities, or \emph{aritation} for short. We show that any aritation gives rise to a semantics functor from the appropriate category of proto-theories, and that this functor has a left adjoint called the \emph{structure functor}, giving rise to a \emph{structure--semantics adjunction}. Furthermore, we show that the usual semantics for many existing notions of algebraic theory arises in this way by choosing an appropriate aritation.

Another aim of this thesis is to find a \emph{convenient category of monads} in the following sense. Every right adjoint into a category gives rise to a monad on that category, and in fact some functors that are not right adjoints do too, namely their codensity monads. This is the structure part of the structure--semantics adjunction for monads. However, the fact that not every functor has a codensity monad means that the structure functor is not defined on the category of all functors into the base category, but only on a full subcategory of it.

This deficiency is solved when passing to general proto-theories with a canonical choice of aritation whose structure--semantics adjunction restricts to the usual one for monads. However, this comes at a cost: the semantics functor for general proto-theories is not full and faithful, unlike the one for monads. The condition that a semantics functor be full and faithful can be thought of as a kind of \emph{completeness theorem} --- it says that no information is lost when passing from a theory to its models. It is therefore desirable to retain this property of the semantics of monads if possible.

The goal then, is to find a notion of algebraic theory that generalises monads for which the semantics functor is full and faithful with a left adjoint; equivalently the semantics functor should exhibit the category of theories as a reflective subcategory of the category of all functors into the base category. We achieve this (for well-behaved base categories) with a special kind of proto-theory enriched in topological spaces, which we call a \emph{complete topological proto-theory}.

We also pursue an analogy between the theory of proto-theories and that of groups. Under this analogy, monads correspond to finite groups, and complete topological proto-theories correspond to profinite groups. We give several characterisations of complete topological proto-theories in terms of monads, mirroring characterisations of profinite groups in terms of finite groups.
\end{abstract}

\cleardoublepage
\phantomsection
\addcontentsline{toc}{chapter}{Acknowledgements}
\chapter*{Acknowledgements}
I would like to thank my supervisor Tom Leinster for the countless hours of guidance and discussion that made this thesis possible. His unique way of thinking has had a profound and lasting influence on my own, for which I am very grateful.

I would also like to thank the School of Mathematics at the University of Edinburgh, and especially my many fellow PhD students (too many to name) for making my time here so enjoyable. I will look back fondly on the many long conversations over tea and biscuits that took place while I should have been working on this thesis.

Finally I would like to thank Dorota and my family for their constant love, support and encouragement.

\cleardoublepage
\phantomsection
\addcontentsline{toc}{chapter}{Table of contents}
\tableofcontents 
\cleardoublepage
\phantomsection
\pagenumbering{arabic}

\chapter{Introduction}
\label{chap:intro}
This thesis is concerned with algebraic theories and their category-theoretic generalisations. The ``structure'' and ``semantics'' of the title refer to a certain type of adjunction that arises naturally for many different notions of algebraic theory including Lawvere theories, monads, PROPs, PROs, operads and monoids. We have two broad objectives. First, we wish to find a common generalisation of all of these in such a way that their structure--semantics adjunctions arise naturally via the same mechanism. Second, we search for a ``convenient category of monads'', that is, an extension of the category of monads on a given base category that remedies certain deficiencies of the category of monads, while maintaining other desirable properties.

\section{Algebraic Theories}
Algebraic theories (in the classical sense of universal algebra, also sometimes called equational presentations) are logical theories of an extremely simple type. They describe structures defined by a collection of operations of various arities and equations between terms built up from these operations. Despite their simplicity, many of the structures of greatest interest to mathematicians are described by algebraic theories. For example, the theory of groups has three operations, namely multiplication of arity 2 (denoted by concatenation), inversion of arity 1 (denoted $(-)^{-1}$) and a constant identity element (denoted $e$), which is thought of as an operation of arity 0. The axioms of the theory of groups are
\begin{align*}
x(yz) &= (xy)z, \\
ex &= x, \\
xe&= x \text{ and} \\
x x^{-1} &= e.
\end{align*}
A model of this theory is just a group; it is a set equipped with instantiations of these operations of the appropriate arities such that the axioms hold universally.

The theory of fields is an example of a logical theory that is not algebraic. The theory of fields is the same as the theory of rings (which is an algebraic theory) but with the additional axioms
\[
\forall x. (\neg(x=0) \implies \exists y. (xy= 1)) \quad \text{and} \quad \neg(0=1).
\]
Since these involve logical quantifiers and connectives (besides the implicit universal quantification present in algebraic theories), the usual axiomatisation of fields is not an algebraic theory, and indeed one can show that there is no algebraic theory that describes fields.

The simplicity of algebraic theories makes them very amenable to category-theoretic generalisation. Let us consider the two most well-known categorical notions of algebraic theory: Lawvere theories and monads.

Lawvere theories provide perhaps the most direct translation of algebraic theories into category-theoretic terms. Indeed, when they were first introduced by Lawvere in~\cite{lawvere63}, he referred to what we now call a Lawvere theory simply as an algebraic theory. Every algebraic theory gives rise to a Lawvere theory and vice versa, but this is not quite a one-to-one correspondence: many algebraic theories can give rise to the same Lawvere theory.

Given an algebraic theory, the Lawvere theory it gives rise to describes not the algebraic theory itself, but the structure possessed by the collection of all terms-up-to-equivalence of the theory. It is this structure that is relevant when talking about models of a theory, and so in some sense Lawvere theories are the more fundamental notion, with algebraic theories merely providing presentations of their Lawvere theories. Just as a group may have many different presentations in terms of generators and relations, so a Lawvere theory may have many different presentations in terms of algebraic theories.

Monads are the second major category-theoretic notion of algebraic theory, and they are closely related to Lawvere theories; for a historical overview, see Hyland and Power~\cite{hylandPower07}. Indeed, monads actually generalise Lawvere theories: the category of Lawvere theories is equivalent to the category of finitary monads on $\Set$ (that is, the monads whose underlying endofunctor preserves filtered colimits). This result is due to Linton~\cite{linton66}. Thus one might wonder why we would look for a common generalisation of Lawvere theories and monads when we already have one, namely monads themselves.

The answer (aside from the fact that there are other notions of algebraic theory that we would also like to generalise) is that there is an important sense in which monads do \emph{not} generalise Lawvere theories: their semantics. A monad naturally exists attached to a particular base category, and algebras (i.e.\ models) for the monad are objects of that base category equipped with structure defined in terms of the monad. Thus if we view a Lawvere theory as a finitary monad on $\Set$, then \emph{a priori} it only makes sense to talk about models of the Lawvere theory in $\Set$. However, there is a natural notion of a model of a Lawvere theory in any finite product category, not just in $\Set$. Of course, models of the Lawvere theory in $\Set$ do coincide with algebras for the corresponding monad, but the more flexible semantics available to Lawvere theories cannot be explained by viewing them just as a special kind of monad.

We can now clarify what we mean when we say that we are looking for a common generalisation of Lawvere theories and monads that is compatible with their semantics.  We would like a general notion of algebraic theory with its own notion of semantics, with Lawvere theories and monads appearing as special cases, in such a way that when the general semantics is specialised to these cases, we recover the semantics of Lawvere theories and of monads in their full generality.

As mentioned above, there are other notions of algebraic theory we would like to generalise. These include:
\begin{itemize}
\item PROPs and PROs, which are analogues of Lawvere theories that take models in symmetric and non-symmetric monoidal categories as opposed to finite product categories;
\item operads, whose models take values in arbitrary multicategories;
\item monads with arities, which are monads on a category that are determined by their values on a given subcategory, in the same way that a finitary monad on $\Set$ is determined by its values on the subcategory of finite sets; and
\item monoids, which can be thought of as very simple algebraic theories with their actions as models.
\end{itemize}
These notions are reviewed in Chapter~\ref{chap:notions}, alongside classical algebraic theories, Lawvere theories and monads.

\section{Proto-theories}

Let us consider in more detail why we might want to find a common generalisation of these notions. The goal is not to replace them with something superior or necessarily to prove a large number of results about them simultaneously. Indeed, this would probably be impossible; there are very significant differences between these notions, for the good reason that they were developed for use in different contexts. In the words of Saunders Mac Lane (\cite{maclane71}, Chapter~4), ``good theory does not search for the maximum generality, but the right generality''.

Rather, we would like to see what they have in common and where they diverge from one another, and in particular what it is that they share that allows them all to be called notions of algebraic theory. Because these notions differ from each other in so many important respects, this common core will necessarily be very simple, almost trivial; nevertheless, it is enough to develop a good notion of semantics. Our common generalisation should be seen not as something that supersedes the existing notions of algebraic theory, but as something that precedes them, like a common ancestor from which they have all evolved (albeit in a conceptual sense, not a historical one). The name we use for this common generalisation, proto-theory, is intended to evoke this idea.

The definition of a proto-theory is extremely simple; it is simply a 1-cell in some 2-category that lies in the left class of a given factorisation system. In practice what this often amounts to is a bijective-on-objects functor between categories, possibly preserving some extra structure. Why then give it a special name? The idea is to promote a point of view that makes certain constructions more intuitive. Consider the definition of a generalised element: a generalised element of an object of a category is simply a morphism with that object as its codomain. Nevertheless, in some contexts thinking of morphisms as element-like-things makes certain constructions more intuitive. Similarly, thinking of bijective-on-objects functors as theory-like-things makes the construction of structure--semantics adjunctions more intuitive.

We think of a bijective-on-objects functor $L \from \cat{A} \to \cat{L}$ as an algebraic theory as follows. The objects of $\cat{A}$ are thought of as shapes for the inputs and outputs for some operations. The morphisms in $\cat{A}$ are then canonical ways of transforming one shape into another. The morphisms in $\cat{L}$ are the operations (or terms-up-to-equivalence) of the theory; each one has an input shape (also called its arity) and an output shape. Composition in $\cat{L}$ corresponds to substitution of terms, and the equations of the theory are encoded in the equations that hold between composites in $\cat{L}$.

Suppose we have some proto-theory $L \from \cat{A} \to \cat{L}$ and we wish to consider models of $L$ in a category $\cat{B}$. Intuitively, a model of $L$ in $\cat{B}$ should consist of an object $b$ of $\cat{B}$ together with an interpretation of each operation of the theory. More precisely, if $l \from La \to La'$ is an operation of $L$ with arity $a$ and output shape $a'$, then the $L$-model structure on $b$ should give us a way of transforming ``$a$-indexed families of elements of $b$'' into $a'$-indexed families.

This does not make sense \emph{a priori}; for arbitrary categories $\cat{A}$ and $\cat{B}$ there is no canonical notion of a family of elements of an objects in $\cat{B}$ indexed by an object of $\cat{A}$. Instead, we need to specify such a notion as extra structure. This leads to the second major definition of this thesis, that of an \emph{interpretation of arities} or \emph{aritation} for short. Once we have specified an interpretation of arities from $\cat{A}$ in $\cat{B}$, then we can define models of $L$ in $\cat{B}$, and such models form a category equipped with a canonical forgetful functor to $\cat{B}$. This is encapsulated in a \emph{semantics functor}
\[
\sem \from \proth(\cat{A})^{\op} \to \catover{\cat{B}},
\]
where $\proth(\cat{A})$ denotes the category of proto-theories with arities in $\cat{A}$. This semantics functor always has a left adjoint which is called a \emph{structure functor}, and together they form the \emph{structure--semantics adjunction} for the chosen aritation.

The existence of the structure functor comes at a cost, namely that we must be willing to tolerate categories that are much larger than those that are commonly dealt with. The reason is as follows. Let $\cat{B}$ be some category, and $\cat{M}$ some category of objects of $\cat{B}$ equipped with extra structure, with forgetful functor $U \from \cat{M} \to \cat{B}$. Then if we have an interpretation of arities from $\cat{A}$ in $\cat{B}$, we can apply the corresponding structure functor to $U$ to obtain a proto-theory with arities in $\cat{A}$, which is a bijective-on-objects functor $\str(U) \from \cat{A} \to \thr(U)$ for some category $\thr(U)$ defined in terms of $U$. Suppose the cardinality of the set $\ob(\cat{M})$ of objects of $\cat{M}$ is $\kappa$; then each hom-set in $\thr(U)$ can have cardinality as large as $2^{\kappa}$. In particular, even if $\cat{M}$ is locally small but has a large set of objects, the hom-sets of $\thr(U)$ can not only fail to be small, but can have cardinality as large as the \emph{power set} of $\ob(\cat{M})$.

There are several ways to avoid having to deal with such large sets. The first would be to restrict our attention to small categories. If $\kappa$ above is small, then so is $2^{\kappa}$, and we never have to deal with large categories at all. However this is undesirable because many of the categories we are most interested in are not in fact small; in particular we are often interested in the category of all small sets, or categories of all small sets equipped with some structure and these categories are of course not small.

The second way is to give up on the existence of a left adjoint to the semantics functor and restrict our attention to proto-theories and categories of structures satisfying some size constraints. This seems like a reasonable approach, although not the one we pursue in this thesis. We have chosen rather to see how the theory develops naturally without imposing size restrictions, allowing sets to get as large as they need to in order for the constructions we are interested in to make sense. Having gained this ``big picture'' view, one can later impose whatever size conditions are appropriate for the particular situation one is interested in, but if this were done from the start one might miss out on useful insights granted by a broader perspective.

In Chapter~\ref{chap:adjunction}, we develop the notions of proto-theories, aritations and their structure--semantics adjunctions in the special case of proto-theories in $\CAT$, which are just bijective-on-objects functors. Aside from the definitions, the main content here is the construction of the structure--semantics adjunction for an arbitrary aritation. We repeat this process in Chapter~\ref{chap:structure}, but now for proto-theories in the full generality of an arbitrary 2-category. Again we define the appropriate notions of proto-theory and aritation and construct the structure--semantics adjunction. We then show that all the examples of notions of algebraic theory from Chapter~\ref{chap:notions} arise in this way, with the exception of monoids, which are dealt with in Section~\ref{sec:str-sem-adj-monoids}, and monads (possibly with arities), which are dealt with in Chapter~\ref{chap:canonical1}.

As mentioned above, since the definition of proto-theory is extremely simple one would not necessarily expect there to be many interesting theorems that hold in the full generality of completely arbitrary proto-theories. However there are a few results that can be proved at this level of abstraction, or at least with mild assumptions on the proto-theories and aritations in question; a few such results are explored in Chapter~\ref{chap:general}. In particular, the bird's-eye-view provided by proto-theories allows us to give a uniform proof of the fact that forgetful functors from categories of algebras for both monads and Lawvere theories create all limits, while also explaining why this is not the case for other classes of proto-theory. We also prove that the forgetful functors from categories of models of proto-theories have the property of being \emph{amnestic isofibrations}, at least in the examples we are most interested in. Along the way we show that a choice we made when defining the semantics of proto-theories that may have seemed somewhat arbitrary (namely that semantics is defined by a strict, rather than weak pullback in $\CAT$) was not arbitrary after all, in the sense that the two choices result in equivalent categories of models.

\section{A convenient category of monads}

Our second broad objective in this thesis is to find a well-behaved extension of the category of monads on a given category. In particular, we would like a notion of semantics for this extension, generalising that of monads, with certain desirable properties that the usual semantics of monads lacks.

Recall that every adjunction gives rise to a monad. More precisely, there is a canonical (contravariant) functor from the category of right adjoints into a given category $\cat{B}$ (with commuting triangles as morphisms) to the category of monads on $\cat{B}$. This functor is adjoint to the semantics functor that sends a monad to the forgetful functor from its category of Eilenberg--Moore algebras. This is the classical structure--semantics adjunction for monads.

We could instead regard the semantics functor as a functor into the category of \emph{all} functors into $\cat{B}$ rather than just the right adjoints, and ask whether this version of the semantics functor has an adjoint; that is, do arbitrary functors have a best approximation by a monadic functor? One can show that such an approximation exists for a given functor if and only if the right Kan extension of that functor along itself exists and is a pointwise Kan extension; in this case the resulting monad is the pointwise codensity monad of the functor. However, codensity monads do not always exist and so the answer to our question is negative. Nevertheless, one can ask whether there is some generalisation of the notion of a monad, with a semantics extending the usual semantics of monads such that such a left adjoint to the semantics functor does exist. In Chapter~\ref{chap:canonical1} we show that the notion of proto-theories with arities in $\cat{B}$ provides such a notion, where the semantics is provided by an aritation that we call the \emph{canonical aritation} on $\cat{B}$.

However, when we pass from monads to proto-theories, a desirable property of the semantics of monads is lost. The semantics functor from the category of monads on $\cat{B}$ to the category of right adjoints into $\cat{B}$ is full and faithful. This is a kind of completeness theorem; it implies that no information is lost when passing from a monad to its category of algebras. More precisely, since the semantics functor has a left adjoint, it being full and faithful is equivalent to the counit of this adjunction being an isomorphism, so the semantics functor exhibits (the opposite of) the category of monads as a reflective subcategory of the category of right adjoints into $\cat{B}$. Since the counit is an isomorphism, we can recover a monad (up to isomorphism) from its category of algebras. Thinking of monads as algebraic theories, this means that a monad gives a complete description of the algebraic structure possessed by its models. There is no superfluous information in the monad that is not reflected in its algebras, and its algebras do not possess any additional algebraic structure other than that described by the monad; this is a very desirable property for a notion of algebraic theory to have.

We can now say precisely what we mean when we say that we are looking for a convenient category of monads. A convenient category of monads on $\cat{B}$ is a category $\cat{C}$ in which the category $\monad(\cat{B})$ of monads on $\cat{B}$ can be embedded as a full subcategory, equipped with an adjunction
\[
\xymatrix{
{\catover{\cat{B} }}\ar@<5pt>[r]_-{\perp}^-{\str}\ & {\cat{C}^{\op}}\ar@<5pt>[l]^-{\sem}
}
\]
that extends the structure--semantics adjunction for monads, with $\sem$ full and faithful.

Unfortunately, the semantics of proto-theories induced by the canonical aritation on $\cat{B}$ does not have this property. We prove this in Chapter~\ref{chap:canonical2}, by establishing a relationship between proto-theories and groups. One can define an action of a group $G$ on an object $b$ of an arbitrary category $\cat{B}$ --- it is simply a monoid homomorphism from $G$ to the monoid $\cat{B}(b,b)$ of endomorphisms of that object. There is also an appropriate notion of equivariant map between such $G$-objects, and so they form a category with a forgetful functor to $\cat{B}$. This describes the semantics part of a structure--semantics adjunction with groups as the notion of algebraic theory.

We define a full and faithful functor from the category of groups to the category of proto-theories with arities in $\finset$, the category of finite sets, in such a way that the structure--semantics adjunction for proto-theories induced by the canonical aritation on $\finset$ extends the structure--semantics adjunction for groups described above. In particular, the monad on the category of proto-theories on $\finset$ induced by the former adjunction restricts to the monad on the category of groups induced by the latter adjunction. We then show that this monad on the category of groups is the profinite completion monad. This monad is known not to be idempotent, meaning that the monad on the category of proto-theories is not idempotent. It follows that the structure--semantics adjunction for proto-theories on $\finset$ is not idempotent, and in particular the semantics functor is not full and faithful. Thus proto-theories do not in general satisfy the completeness theorem.

However, this negative result suggests an analogy between proto-theories and groups that turns out to be very fruitful, and the rest of this thesis is spent pursuing it with the ultimate goal of finding a convenient category of monads. Under this analogy, the structure--semantics monad on the category of proto-theories corresponds to the profinite completion monad on the category of groups. The profinite completion monad is the codensity monad of the inclusion of the category of finite groups, and we might wonder whether there is a similar characterisation of the structure--semantics monad as a codensity monad, and if so what the analogue of the category of finite groups is. In the second part of Chapter~\ref{chap:canonical2}, we show that, under mild assumptions on $\cat{B}$, the structure--semantics monad is the codensity monad of the inclusion of the category of monads into the category of proto-theories, and so in some sense monads play a role analogous to that of finite groups.

Although the profinite completion monad on the category of groups is not idempotent, there is a closely related monad which is, namely the profinite completion monad on the category of topological groups. This suggests that by considering some notion of topological proto-theories, analogous to topological groups, we may find a structure--semantics monad that is idempotent, which is a first step towards a convenient category of monads. We do this in Chapter~\ref{chap:topology}, giving a definition of topological proto-theory and showing that their semantics extends the semantics of monads. Then we show that, under certain conditions on the base category $\cat{B}$, the topological structure--semantics adjunction is idempotent. The conditions we impose on $\cat{B}$ appear to be quite restrictive, however they hold in the most important examples, namely the categories of sets and finite sets, as well as in the category of vector spaces over any field.

Any idempotent adjunction can be factored as reflection and a coreflection. Thus we have a reflective subcategory of the category of topological proto-theories, and the restriction of the topological semantics functor to this subcategory is full and faithful. We call the objects of this subcategory complete topological proto-theories. As the algebras for the topological structure--semantics monad, they are analogous to profinite groups, which are the algebras for the topological profinite completion monad.

In Chapter~\ref{chap:complete} we first show that monads are complete topological proto-theories,  from which it follows that the category of complete topological proto-theories is a convenient category of monads. We then pursue the analogy between complete topological proto-theories and profinite groups, giving several characterisations of the category of complete topological proto-theories that mirror similar characterisations of the category of profinite groups. In particular we can define complete topological proto-theories without even mentioning the structure--semantics adjunction: they are precisely the topological proto-theories that can be written as limits of diagrams of monads. In addition, the category of complete topological proto-theories is the smallest reflective subcategory of the category of topological proto-theories that contains the monads.

The final section of Chapter~\ref{chap:complete} deals with some examples of categories of models of complete topological proto-theories that are not monadic. These categories are described by \emph{equational presentations} in the sense of Manes~\cite{manes76}, and include the categories of complete lattices and complete Boolean algebras. These are structures that can be defined by operations and equations that are highly infinitary in nature. Indeed, they may have operations of arbitrarily high arity, and as a result free algebras do not exist and so these categories are not monadic. Nonetheless, we show that every category that is equationally presentable over $\Set$ is the category of models for some complete topological proto-theory on $\Set$.

Our use of the term ``convenient category of monads'' is inspired by the idea of a ``convenient category of topological spaces'' from Steenrod~\cite{steenrod67}. In both cases a ``convenient category of $X$'' refers to a modified version of the category of $X$, that has certain desirable properties that the category of $X$ itself lacks. However, the specific requirements we ask for in a convenient category of monads are unrelated to the requirements for a convenient category of topological spaces.

\section{Further work}

Finally, in Chapter~\ref{chap:open}, we discuss some questions that remain unanswered and which could provide interesting directions for further work. First there is the question of what are the most appropriate notions of morphisms between proto-theories and aritations. Many of the existing notions of algebraic theory are closely related; for example Lawvere theories can be described by finitary monads, and there are various canonical functors between the categories of Lawvere theories, PROPs, PROs and operads, and these are compatible with their semantics to varying degrees. It would be illuminating to understand these relationships in terms of morphisms between aritations or proto-theories. 

There is a sense in which the theory of proto-theories, aritations and structure--semantics adjunctions can be generalised from $\CAT$ to other symmetric monoidal categories; we do not emphasise this generalisation in this thesis because all of the known examples of interest are in the context of $\CAT$. Our second open question is whether there are examples of structure--semantics adjunctions in this more general context that have mathematical significance.

The third open question concerns the analogy between groups and proto-theories, and specifically between profinite groups and complete topological proto-theories. There are many characterisations of the category of profinite groups. In Chapter~\ref{chap:complete} we prove the proto-theory analogues of some, but not all of these. Thus it remains an open question whether proto-theoretic analogues of the other characterisations of profinite groups exist.

\chapter{Background material}
\label{chap:background}

In this chapter we review some background material that will be used throughout the rest of this thesis. In Section~\ref{sec:sets-background}, we introduce the set-theoretic assumptions necessary for dealing with the large categories that appear later in the thesis. Section~\ref{sec:2-monad-background} covers 2-categories and 2-monads, which will be used in Chapter~\ref{chap:structure} to describe certain notions of algebraic theory in terms of proto-theories. In Section~\ref{sec:fact-background} we recall the notions of factorisation systems and enhanced factorisation systems, and in particular the bijective-on-objects/full and faithful factorisation system on $\CAT$, and in Section~\ref{sec:bo-properties-background} we describe some additional properties of bijective-on-objects functors. Section~\ref{sec:dense-codense-background} covers density and codensity, including codensity monads, and Section~\ref{sec:idem-background} recalls the notions of idempotent monads and adjunctions. Finally in Section~\ref{sec:profinite-background} we recall the definition of a profinite group and some equivalent ways of characterising them.

\section{Set-theoretic preliminaries}
\label{sec:sets-background}

As mentioned in the introduction, in order to define and prove results about general structure--semantics adjunctions, we will need to deal with categories that are larger than usual. The appropriate way to do this is using the notion of a Grothendieck universe. Informally, this means that there is a set $U$ of sets that is closed under all the usual set forming operations, such as unions, products, power sets, and so on.

We think of the elements of $U$ as ``small'' sets. We can then do most ordinary mathematics while only ever referring to small sets --- it is usually only necessary to talk about sets that are not elements of $U$ when we wish to discuss the totality of all small structures of a given type as a mathematical structure in its own right. For example, we could talk about the collection of all groups that have small underlying sets. Since $U$ is a set, such collections are themselves sets (albeit not small) and we can manipulate them using the usual set-forming operations.

More precisely, in the context of Zermelo--Fraenkel set theory with the axiom of choice (ZFC), a Grothendieck universe is defined as follows.

\begin{defn}
A set $U$ is a \demph{Grothendieck universe} if
\begin{enumerate}
\item whenever $X \in U$ and $Y \in X$, then $Y \in U$;
\item whenever $X, Y \in U$, then $\{X,Y\} \in U$;
\item whenever $X \in U$, then $\pow (X) \in U$, where $\pow(X)$ is the power set of $X$; and
\item whenever $I \in U$ and $X_i \in U$ for each $i \in I$, then $\Union_{i \in I} X_i \in U$.
\end{enumerate}
\end{defn}

For the rest of this thesis, we will assume the existence of a Grothendieck universe $U$.

\begin{defn}
\begin{enumerate}
\item A set $X$ is \demph{small} if $X$ is in bijection with some $X' \in U$.
\item We define \demph{large set} to be synonymous with ``set'', and use it when we wish to emphasise that the set in question is not necessarily small.
\item A \demph{properly large set} is a large set that is not small.
\item A \demph{class} is a collection of sets defined by some first-order formula, not necessarily forming a set.
\end{enumerate}
\end{defn}

By default, the collections of objects and morphisms of a category may be classes. If we wished to avoid this, and only deal with categories with sets of objects and morphisms, we could posit the existence of a second Grothendieck universe above the first. However, since we will not need to perform any complex set-theoretic manipulations on categories with proper classes of morphisms, we prefer to avoid this and deal with these categories on a somewhat informal basis. 

\begin{defn}
Let $\cat{C}$ be a category with object class $\ob(C)$ and morphism class $\mor(C)$. Then:
\begin{itemize}
\item if $\mor(C)$ (and hence $\ob(C)$) is a small set, then $\cat{C}$ is \demph{small};
\item if $\mor(C)$ is a large set then $\cat{C}$ is \demph{large} (this implies that $\ob(C)$ is a large set);
\item if $\ob(C)$ is a large set and each $\cat{C}(c,c')$ is a small set, then $\cat{C}$ is \demph{locally small}; and
\item we call a category $\cat{C}$ \demph{huge} when we wish to emphasise that it does not necessarily satisfy any of the above size conditions.
\end{itemize}
If $\cat{C}$ is large and not small it is called \demph{properly large}, and if $\cat{C}$ is huge and not large it is called \demph{properly huge}. If $\cat{C}$ is equivalent to a small category then $\cat{C}$ is called \demph{essentially small}, and if $\cat{C}$ is equivalent to a large category then $\cat{C}$ is called \demph{essentially large}. Similarly, if $\cat{C}$ is essentially large and each $\cat{C}(c,c')$ is small, then $\cat{C}$ is called \demph{essentially locally small}.

For almost all purposes an essentially small category may be treated as if it were small. We may sometimes abuse terminology slightly by calling categories small when in fact they are only essentially small, and similarly for large and essentially large categories.
\end{defn}

We now define notation for some categories of various sizes that we shall use frequently.

\begin{defn}
\begin{enumerate}
\item Write $\SET$ for the (properly huge) category of sets.
\item Write $\Set$ for the (properly large) category of small sets.
\item Write $\finset$ for the (essentially small) category of finite sets.
\item Write $\CAT$ for the (properly huge) category of large categories.
\item Write $\Cat$ for the (properly large) category of small categories.
\item Write $\TOP$ for the (properly huge) category of topological spaces.
\item Write $\Top$ for the (properly large) category of small topological spaces.
\item Write $\MONOID$ for the (properly huge) category of large monoids.
\item Write $\monoid$ for the (properly large) category of small monoids.
\item Write $\TopMonoid$ for the (properly large) category of small topological monoids.
\item Write $\Gp$ for the (properly large) category of small groups.
\item Write $\FinGp$ for the (essentially small) category of finite groups.
\item Write $\TopGp$ for the (properly large) category of small topological groups.
\end{enumerate}
\end{defn}

\section{Categories with algebraic structure and 2-monads}
\label{sec:2-monad-background}

At several points in this thesis we will have reason to consider categories equipped with some kind of extra structure. The kinds of structure we are most interested in are best characterised in terms of 2-monads on the category $\CAT$ of all large categories. We collect here the basic 2-categorical definitions and notation that we shall use in later chapters. The definitions of 2-category, 2-functor and 2-natural transformation were first developed by Kelly and Street in~\cite{kellyStreet74}, and the theory of 2-monads was developed by Blackwell, Kelly and Power in~\cite{blackwellKellyPower89}.

\begin{defn}
A \demph{2-monad} on the 2-category $\CAT$ of large categories consists of a 2-functor $T \from \CAT \to \CAT$ together with 2-natural transformations $\eta \from \id_{\CAT} \to T$ and $\mu \from TT \to T$ such that the usual monad axioms hold strictly.
\end{defn}

\begin{defn}
Let $\mnd{T} = (T, \eta, \mu)$ be a 2-monad on $\CAT$. A \demph{$\mnd{T}$-algebra} consists of a category $\cat{C}$ together with a functor $Y \from T\cat{C} \to \cat{C}$ such that the usual axioms for an algebra for a monad hold strictly.
\end{defn}

\begin{defn}
Let $\mnd{T} = (T, \eta, \mu)$ be a 2-monad on $\CAT$, and $(\cat{C}, Y)$ and $(\cat{D}, Z)$ be $\mnd{T}$-algebras. A \demph{pseudo-$\mnd{T}$-morphism} $(\cat{C}, Y) \to (\cat{D}, Z)$ consists of a functor $F \from \cat{C} \to \cat{D}$ together with a natural isomorphism $f \from Z \of TF \to F \of Y$ such that
\[
\vcenter{
\xymatrix{
\cat{C}\ar[r]^{\eta_{\cat{C}}}\ar[d]_{F} & T\cat{C}\ar[r]^Y\ar[d]_{TF}\drtwocell\omit{^f} & \cat{C}\ar[d]^F \\
\cat{D}\ar[r]_{\eta_{\cat{D}}} & T\cat{D}\ar[r]_Z& \cat{D}
}}
\quad = \quad
\vcenter{
\xymatrix{
\cat{C}\ar@{=}[r]\ar[d]_F & \cat{C}\ar[d]^F \\
\cat{D}\ar@{=}[r] & \cat{D}
}}
\]
and
\[
\vcenter{
\xymatrix{
TT\cat{C}\ar[r]^{\mu_{\cat{C}}}\ar[d]_{TTF} & T\cat{C}\ar[r]^{Y}\ar[d]_{TF}\drtwocell\omit{^f} & \cat{C}\ar[d]^F \\
TT\cat{D}\ar[r]_{\mu_{\cat{D}}} & T\cat{D}\ar[r]_{Z} & \cat{D}
}}
\quad = \quad
\vcenter{
\xymatrix{
TT\cat{C}\ar[r]^{TY}\ar[d]_{TTF}\drtwocell\omit{^Tf\:\:} & T\cat{C}\ar[r]^{Y}\ar[d]_{TF}\drtwocell\omit{^f} & \cat{C}\ar[d]^F \\
TT\cat{D}\ar[r]_{TZ} & T\cat{D}\ar[r]_{Z} & \cat{D}.
}}
\]
\end{defn}

\begin{defn}
Let $\mnd{T} = (T, \eta,\mu)$ be a 2-monad on $\CAT$, let $(\cat{C}, Y)$ and $(\cat{D}, Z)$ be $\mnd{T}$-algebras and let $(F,f)$ and $(G,g)$ be pseudo-$\mnd{T}$-morphisms $(\cat{C}, Y) \to (\cat{D}, Z)$. A \demph{$\mnd{T}$-transformation} $(F,f) \to (G,g)$ consists of a natural transformation $\phi \from F \to G$ such that
\[
\vcenter{
\xymatrix
@R=40pt
@C=40pt{
T\cat{C}\rtwocell^{TF}_{TG}{\:\:T\phi}\ar[d]_Y\drtwocell\omit{g} & T\cat{D}\ar[d]^Z \\
\cat{C}\ar[r]_G & \cat{D}
}}
\quad = \quad
\vcenter{
\xymatrix
@R=40pt
@C=40pt{
T\cat{C}\ar[r]^{TF}\ar[d]_{Y}\drtwocell\omit{f} & T\cat{D}\ar[d]^Z \\
\cat{C}\rtwocell^F_G{\phi} & \cat{D}.
}}
\]
\end{defn}

\begin{defn}
\label{defn:2-monad-algebras}
Let $\mnd{T}$ be a 2-monad on $\CAT$. We write $\talg$ for the 2-category of $\mnd{T}$-algebras, pseudo-$\mnd{T}$-morphisms and $\mnd{T}$-transformations.
\end{defn}

\section{Factorisation systems}
\label{sec:fact-background}

Factorisation systems generalise some of the important properties of the classes of surjective and injective functions between sets. The notion of a factorisation system was introduced by Freyd and Kelly in~\cite{freydKelly72}. Over the years many variants have been defined, however, when we write ``factorisation system'', we always refer to this original notion, which is also sometimes called an \emph{orthogonal factorisation system}.

\begin{defn}
Let $e \from a \to c$ and $n \from b \to d$ be morphisms in a category $\cat{C}$. Then we say that \demph{$e$ is left orthogonal to $n$} or \demph{$n$ is right orthogonal to $e$} and write $e \perp n$ if, for every commutative square of the form
\[
\xymatrix{
a \ar[r]^f\ar[d]_e & b\ar[d]^n \\
c\ar[r]_g & d
}
\]
there is a unique $h \from  c \to b$ such that $h \of e = f$ and $n \of h =g$. We call such an $h$ a \demph{fill-in} for this square.
\end{defn}

\begin{defn}
\label{defn:fact-system}
Let $\cat{C}$ be a category. A \demph{factorisation system} on $\cat{C}$ consists of two classes $\ofsfont{E}$ and $\ofsfont{N}$ of morphisms in $\cat{C}$ that are each closed under composition and each contain all the isomorphisms, such that
\begin{enumerate}
\item every morphism $f$ in $\cat{C}$ can be written as a composite $e \of n$ where $e \in \ofsfont{E}$ and $n \in \ofsfont{N}$; and
\item for every $e \in \ofsfont{E}$ and $n \in \ofsfont{N}$ we have $e \perp n$.
\end{enumerate}
\end{defn}

It is possible to define factorisation systems without reference to the orthogonality relation; the following characterisation is due to Joyal (Definition C.0.19 in~\cite{joyal08}).

\begin{lem}
Let $\ofsfont{E}$ and $\ofsfont{N}$ be two classes of morphisms in a category $\cat{C}$. Then $(\ofsfont{E},\ofsfont{N})$ is a factorisation system on $\cat{C}$ if and only if $\ofsfont{E}$ and $\ofsfont{N}$ are both closed under composition and contain all the isomorphisms, and in addition every morphism in $\cat{C}$ can be factored as a map in $\cat{E}$ followed by a map in $\cat{N}$ and this factorisation is unique up to unique isomorphism. \hfill \qed
\end{lem}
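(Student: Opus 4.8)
The plan is to prove the equivalence in both directions, treating the ``only if'' direction as essentially a repackaging of the definition and concentrating the real work on the ``if'' direction, where we must construct the orthogonality relation from the uniqueness-of-factorisation hypothesis.

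For the ``only if'' direction, suppose $(\ofsfont{E},\ofsfont{N})$ is a factorisation system in the sense of Definition~\ref{defn:fact-system}. Closure under composition and containment of isomorphisms are immediate, and existence of factorisations is part~(i). It remains to prove uniqueness up to unique isomorphism. So suppose $f = n \of e = n' \of e'$ with $e,e' \in \ofsfont{E}$ and $n,n' \in \ofsfont{N}$. I would apply the orthogonality $e \perp n'$ to the square whose top edge is $e'$ and bottom edge is $n$ (which commutes because both composites equal $f$) to get a fill-in $h$ with $h \of e = e'$ and $n' \of h = n$; symmetrically $e' \perp n$ gives $h'$ with $h' \of e' = e$ and $n \of h' = n$. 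Then $h' \of h$ and $\id$ are both fill-ins for the square $e,e,n,n$ (using $e \perp n$), so $h' \of h = \id$, and similarly $h \of h' = \id$, so $h$ is an isomorphism; its uniqueness as a morphism commuting with both factorisations is exactly the uniqueness clause in $e \perp n'$.

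The ``if'' direction is the substantive part and the main obstacle. Assume $\ofsfont{E}, \ofsfont{N}$ are closed under composition, contain the isomorphisms, and every morphism factors as an $\ofsfont{E}$-map followed by an $\ofsfont{N}$-map, uniquely up to unique isomorphism. I need to show $e \perp n$ for every $e \in \ofsfont{E}$, $n \in \ofsfont{N}$. Given a commutative square with top $f \from a \to b$, left $e \from a \to c$, right $n \from b \to d$, bottom $g \from c \to d$, the idea is to factor the diagonal-ish data: factor $f = n_1 \of e_1$ with $e_1 \in \ofsfont{E}$, $n_1 \in \ofsfont{N}$, say through an object $p$. Then $n \of f = (n \of n_1) \of e_1$ is an $(\ofsfont{E},\ofsfont{N})$-factorisation of $n \of f = g \of e$ (using closure of $\ofsfont{N}$ under composition), while $g \of e$ is also such a factorisation through $c$ (with the $\ofsfont{E}$-part $e$ and $\ofsfont{N}$-part $g$). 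By the uniqueness hypothesis there is a unique isomorphism $\theta \from p \to c$ with $\theta \of e_1 = e$ and $g \of \theta = n \of n_1$. Then I would set $h = n_1 \of \theta^{-1} \from c \to b$ and check $h \of e = n_1 \of \theta^{-1} \of e = n_1 \of e_1 = f$ and $n \of h = n \of n_1 \of \theta^{-1} = g \of \theta \of \theta^{-1} = g$, so $h$ is a fill-in.

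For uniqueness of the fill-in, suppose $h, h' \from c \to b$ both satisfy $h \of e = f = h' \of e$ and $n \of h = g = n \of h'$. Here I would again use the uniqueness-of-factorisation hypothesis: factor $h = \bar n \of \bar e$ and $h' = \bar n' \of \bar e'$ with $\bar e, \bar e' \in \ofsfont{E}$ and $\bar n, \bar n' \in \ofsfont{N}$. Precomposing with $e \in \ofsfont{E}$ gives factorisations $h \of e = \bar n \of (\bar e \of e)$ and $h' \of e = \bar n' \of (\bar e' \of e)$ of the common morphism $f$, and postcomposing with $n \in \ofsfont{N}$ gives factorisations $n \of h = (n \of \bar n) \of \bar e$ and $n \of h' = (n \of \bar n') \of \bar e'$ of the common morphism $g$. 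Chaining the unique isomorphisms produced by these two applications of uniqueness — first to identify the codomains of $\bar e$ and $\bar e'$, then checking the resulting comparison is compatible with both $h$ and $h'$ — forces $h = h'$. The care needed here is bookkeeping: one must verify that the isomorphism obtained from the ``$f$'' factorisations and the one from the ``$g$'' factorisations agree, which again follows from the uniqueness clause (an iso commuting with a given factorisation is unique). This diagram-chasing with repeated appeals to uniqueness-up-to-unique-isomorphism is the delicate point; everything else is formal.
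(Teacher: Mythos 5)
The paper states this lemma without proof (it cites Joyal and closes it with a \qed), so there is no in-paper argument to compare against; I am judging your proposal on its own. Your ``only if'' direction is correct and standard (modulo a typo: the second fill-in should satisfy $n \of h' = n'$, not $n \of h' = n$). The genuine problem is in the existence half of the ``if'' direction. In the lifting square for $e \perp n$, the bottom edge $g \from c \to d$ is an \emph{arbitrary} morphism; nothing places it in $\ofsfont{N}$. Your argument declares $g \of e$ to be an $(\ofsfont{E},\ofsfont{N})$-factorisation of $n \of f = g \of e$ ``with the $\ofsfont{E}$-part $e$ and $\ofsfont{N}$-part $g$'', and the construction of $\theta$ and hence of the fill-in $h = n_1 \of \theta^{-1}$ rests entirely on this. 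That step fails whenever $g \notin \ofsfont{N}$, so as written you have only verified orthogonality against squares whose bottom edge happens to lie in $\ofsfont{N}$.

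The repair is to factor $g$ as well: write $g = n_2 \of e_2$ through an object $q$. Then $(e_1,\, n \of n_1)$ and $(e_2 \of e,\, n_2)$ are two genuine $(\ofsfont{E},\ofsfont{N})$-factorisations of the common morphism $n \of f = g \of e$ (using closure of both classes under composition), so there is a unique isomorphism $\theta \from p \to q$ with $\theta \of e_1 = e_2 \of e$ and $n_2 \of \theta = n \of n_1$; setting $h = n_1 \of \theta^{-1} \of e_2 \from c \to b$ gives $h \of e = n_1 \of e_1 = f$ and $n \of h = n_2 \of e_2 = g$. Your uniqueness sketch is essentially recoverable, but note that the two comparison isomorphisms $\phi$ (from the factorisations of $f$) and $\psi$ (from those of $g$) are not forced to agree by uniqueness applied to $f$ and $g$ separately: $\phi$ satisfies $\bar n' \of \phi = \bar n$ but only $\phi \of (\bar e \of e) = \bar e' \of e$, while $\psi$ satisfies $\psi \of \bar e = \bar e'$ but only $n \of \bar n' \of \psi = n \of \bar n$. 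One obtains $\phi = \psi$ by observing that each is the unique comparison isomorphism between the two induced factorisations of the single morphism $n \of f = g \of e$, and only then does $h = \bar n \of \bar e = \bar n' \of \psi \of \bar e = \bar n' \of \bar e' = h'$ follow.
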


\begin{remark}
\label{rem:ofs-pullback}
Another way to express the fact that a morphism $e \from a \to c$ is left orthogonal to a morphism $n \from b \to d$ is that the square
\[
\xymatrix{
\cat{C}(c,b)\ar[r]^{n_*}\ar[d]_{e^*} & \cat{C}(c,d)\ar[d]^{e^*} \\
\cat{C}(a,b)\ar[r]_{n_*} & \cat{C}(a,d)
}
\]
is a pullback in $\SET$.
\end{remark}

The prototypical factorisation system is on the category of sets, with $\ofsfont{E}$ being the class of all surjections and $\ofsfont{N}$ being the class of all injections. The main example that shall concern us in this thesis is as follows.

\begin{lem}
\label{lem:bo-ff-factorisation}
There is a factorisation system $(\ofsfont{E},\ofsfont{N})$ on the category $\CAT$ of all large categories, with $\ofsfont{E}$ being the class of all functors that are bijective on objects, and $\ofsfont{N}$ being the class of full and faithful functors.
\end{lem}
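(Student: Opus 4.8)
### Proof proposal

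The plan is to verify the two axioms of Definition~\ref{defn:fact-system} directly. Both classes $\ofsfont{E}$ (bijective-on-objects functors) and $\ofsfont{N}$ (full and faithful functors) are evidently closed under composition and contain all isomorphisms, so the content lies in the factorisation and orthogonality conditions.

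For the factorisation condition, I would start with an arbitrary functor $F \from \cat{A} \to \cat{B}$ and construct its bo/ff factorisation by brute force. Define a category $\cat{C}$ with $\ob(\cat{C}) = \ob(\cat{A})$ and $\cat{C}(a, a') = \cat{B}(Fa, Fa')$, with composition and identities inherited from $\cat{B}$; this is visibly a well-defined category. Let $E \from \cat{A} \to \cat{C}$ be the identity on objects and act as $F$ does on hom-sets, and let $N \from \cat{C} \to \cat{B}$ send $a$ to $Fa$ and act as the identity on hom-sets. Then $E$ is bijective on objects by construction, $N$ is full and faithful by construction, and $N \of E = F$. This step is routine; the only thing to check carefully is functoriality of $E$ and $N$, which follows immediately since composition in $\cat{C}$ is defined to agree with that in $\cat{B}$.

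For the orthogonality condition, suppose $E \from \cat{A} \to \cat{C}$ is bijective on objects, $N \from \cat{D} \to \cat{B}$ is full and faithful, and we have a commuting square $G \of E = N \of H$ with $H \from \cat{A} \to \cat{D}$ and $G \from \cat{C} \to \cat{B}$. I need a unique functor $K \from \cat{C} \to \cat{D}$ with $K \of E = H$ and $N \of K = G$. On objects, since $E$ is a bijection on objects, each object of $\cat{C}$ is $Ea$ for a unique $a$, so I am forced to set $K(Ea) = Ha$; this is forced by $K \of E = H$ and it is consistent with $N \of K = G$ precisely because $NHa = GEa$. On morphisms, given $c \from Ea \to Ea'$ in $\cat{C}$, the morphism $Gc \from GEa \to GEa'$ in $\cat{B}$ equals a morphism $NHa \to NHa'$, and since $N$ is full and faithful there is a unique morphism $Ha \to Ha'$ in $\cat{D}$ mapping to it under $N$; define $Kc$ to be that morphism. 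Faithfulness of $N$ gives uniqueness of $Kc$ and also forces functoriality of $K$ (a composite or identity in $\cat{D}$ is determined by its image under $N$), as well as the identity $K \of E = H$ (both sides agree after applying $N$, and on objects directly). Thus $K$ exists and is unique, establishing $E \perp N$.

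I do not anticipate a genuine obstacle here; the main point requiring a little care is the bookkeeping in the orthogonality argument — specifically, using bijectivity-on-objects of $E$ to pin down $K$ on objects and fullness-plus-faithfulness of $N$ to pin it down on morphisms, and then checking that these choices assemble into a functor. Everything else is formal. One could alternatively phrase the orthogonality check via Remark~\ref{rem:ofs-pullback}, but the direct construction of the fill-in is more transparent and is the approach I would present.
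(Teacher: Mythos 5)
Your proof is correct and complete; the paper itself omits the argument entirely, stating only that the result is ``well-known and the proof is elementary,'' and what you have written is precisely the standard elementary argument being alluded to. Both halves --- the explicit construction of the intermediate category with $\ob(\cat{C}) = \ob(\cat{A})$ and $\cat{C}(a,a') = \cat{B}(Fa,Fa')$, and the fill-in $K$ pinned down on objects by bijectivity of $E$ and on morphisms by full faithfulness of $N$ --- are exactly right, so there is nothing to add or compare.
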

\begin{proof}
This is well-known and the proof is elementary; we therefore omit it.
\end{proof}

The observation in Remark~\ref{rem:ofs-pullback} that orthogonality can be expressed in terms of pullbacks in $\SET$ allows us to generalise the notion of a factorisation system to enriched categories, and in particular to 2-categories, which are categories enriched in $\CAT$.

\begin{defn}
\label{defn:cat-fact-sys}
Let $\cat{X}$ be a 2-category, and let $(\ofsfont{E}, \ofsfont{N})$ be a factorisation system on the underlying 1-category of $\cat{X}$. The $(\ofsfont{E},\ofsfont{N})$ is a $\CAT$-factorisation system on $\cat{X}$ if, for every $(E\from \cat{A} \to \cat{C}) \in \ofsfont{E}$ and $(N\from \cat{B} \to \cat{D}) \in \ofsfont{N}$, the square

\[
\xymatrix{
\cat{X}(\cat{C},\cat{B})\ar[r]^{N_*}\ar[d]_{E^*} & \cat{X}(\cat{C},\cat{D})\ar[d]^{E^*} \\
\cat{X}(\cat{A},\cat{B})\ar[r]_{N_*} & \cat{X}(\cat{A},\cat{D})
}
\]
is a pullback in $\CAT$.
\end{defn}

On the level on objects, the fact that this square is a pullback is orthogonality of $\ofsfont{E}$ and $\ofsfont{N}$ in the unenriched sense, as per Remark~\ref{rem:ofs-pullback}. On the level of morphisms however, the condition of being a pullback says the following.

Let $E \from \cat{A} \to \cat{C}$ and $N \from \cat{B} \to \cat{D}$ be 1-cells in $\cat{X}$ with $E \in \ofsfont{E}$ and $N\in \ofsfont{N}$. Let $F_1, F_2 \from \cat{A} \to \cat{B}$ and $G_1, G_2 \from \cat{C} \to \cat{D}$ be 1-cells such that the square
\[
\xymatrix{
\cat{A}\ar[r]^{F_i} \ar[d]_{E} & \cat{B}\ar[d]^{N} \\
\cat{C}\ar[r]_{G_i} & \cat{D}
}
\]
commutes for $i = 1,2$, and let $H_1, H_2 \from \cat{C} \to \cat{B}$ be the fill-ins for these two squares respectively. Then, given 2-cells $\alpha \from F_1 \to F_2$ and $\beta \from G_1 \to G_2$ such that
\[
\vcenter{
\xymatrix
@R=40pt
@C=40pt{
\cat{A}\rtwocell^{F_2}_{F_1}{^\alpha} \ar[d]_E & \cat{B}\ar[d]^N \\
\cat{C}\ar[r]_{G_1} & \cat{D}
}}
\quad = \quad
\vcenter{
\xymatrix
@R=40pt
@C=40pt{
\cat{A}\ar[r]^{F_2} \ar[d]_E & \cat{B}\ar[d]^N \\
\cat{C}\rtwocell_{G_1}^{G_2}{^\beta} & \cat{D},
}}
\]
there is a unique 2-cell $\gamma \from H_1 \to H_2$ such that
\[
\vcenter{
\xymatrix
@R=40pt
@C=40pt{
\cat{A}\ar[r]^{F_2} \ar[d]_E & \cat{B} \\
\cat{C}\urtwocell_{H_1}^{H_2}{^\gamma}&
}}
\quad = \quad
\vcenter{
\xymatrix
@R=40pt
@C=40pt{
\cat{A}\rtwocell_{F_1}^{F_2}{^\alpha} & \cat{B}
}}
\]
and
\[
\vcenter{
\xymatrix
@R=40pt
@C=40pt{
& \cat{B}\ar[d]^N \\
\cat{C}\urtwocell_{H_1}^{H_2}{^\gamma}\ar[r]_{G_1} & \cat{D}
}}
\quad = \quad
\vcenter{
\xymatrix
@R=40pt
@C=40pt{
\cat{C}\rtwocell_{G_1}^{G_2}{^\beta} & \cat{D}.
}}
\]

There is a further strengthening of the notion of factorisation system that is available in the setting of 2-categories (and not in general enriched categories).

\begin{defn}
\label{defn:strong-orthog}
Let $\cat{X}$ be a 2-category, and let $E \from \cat{A} \to \cat{C}$ and $N \from \cat{B} \to \cat{D}$ be 2-cells in $\cat{X}$. We say that $E$ is \demph{strongly left orthogonal to $N$} or that \demph{$N$ is strongly right orthogonal to $E$} if, for all 1-cells $F \from \cat{A} \to \cat{B}$ and $G \from \cat{C} \to \cat{D}$ and invertible 2-cells
\[
\xymatrix{
\cat{A}\ar[r]^F\ar[d]_E\drtwocell\omit{^\phi} & \cat{B}\ar[d]^N \\
\cat{C}\ar[r]_G & \cat{D}
}
\]
there is a unique 1-cell $H \from \cat{C} \to \cat{B}$ and invertible 2-cell $\theta \from G \to N \of H$ such that $H \of E = F$ and
\[
\vcenter{
\xymatrix{
\cat{A}\ar[r]^F\ar[d]_E & \cat{B}\ar[d]^N \\
\cat{C}\ar[ur]^H\ar[r]_G^*!/ur3pt/{\labelstyle{\theta} \objectstyle\Uparrow } & \cat{D}
}}
\quad = \quad
\vcenter{
\xymatrix{
\cat{A}\ar[r]^F\ar[d]_E\drtwocell\omit{^\phi} & \cat{B}\ar[d]^N \\
\cat{C}\ar[r]_G & \cat{D}.
}}
\]
We say that $(H, \theta)$ is a \demph{fill-in} for $\phi$.
\end{defn}

\begin{defn}
Let $(\ofsfont{E}, \ofsfont{N})$ be a $\CAT$-factorisation system on a 2-category $\cat{X}$. We say that $(\ofsfont{E}, \ofsfont{N})$ is an \demph{enhanced factorisation system} if in addition every element of $\ofsfont{E}$ is strongly left orthogonal to every element of $\ofsfont{N}$.
\end{defn}

\begin{lem}
\label{lem:bo-ff-factorisation-enhanced}
The bijective-on-objects/full-and-faithful factorisation system on $\CAT$ is an enhanced factorisation system.
\end{lem}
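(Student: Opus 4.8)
The plan is to check the two conditions in the definition of an enhanced factorisation system. By Lemma~\ref{lem:bo-ff-factorisation} we already know that $(\ofsfont{E},\ofsfont{N})$, with $\ofsfont{E}$ the bijective-on-objects functors and $\ofsfont{N}$ the full and faithful functors, is an (unenriched) factorisation system on $\CAT$. So it remains to show, first, that it is a $\CAT$-factorisation system in the sense of Definition~\ref{defn:cat-fact-sys}, and second, that every bijective-on-objects functor is strongly left orthogonal to every full and faithful functor in the sense of Definition~\ref{defn:strong-orthog}. Both parts rest on the same elementary device: a bijective-on-objects functor $E \from \cat{A} \to \cat{C}$ lets us transport data along its object bijection, while a full and faithful functor $N \from \cat{B} \to \cat{D}$ lets us lift morphisms uniquely and detect equality of morphisms after applying $N$. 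Throughout, fix $E \in \ofsfont{E}$ and $N \in \ofsfont{N}$.

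For the $\CAT$-factorisation system condition, the pullback square of Definition~\ref{defn:cat-fact-sys} is a pullback on objects precisely because $E \perp N$, which is part of Lemma~\ref{lem:bo-ff-factorisation}; so only the morphism-level condition needs attention. I would take $F_1, F_2 \from \cat{A} \to \cat{B}$ and $G_1, G_2 \from \cat{C} \to \cat{D}$ with $N \of F_i = G_i \of E$, fill-ins $H_1, H_2 \from \cat{C} \to \cat{B}$, and 2-cells $\alpha \from F_1 \to F_2$, $\beta \from G_1 \to G_2$ with $N\alpha = \beta E$ (whiskerings). Since $E$ is bijective on objects, every object of $\cat{C}$ is $Ea$ for a unique object $a$ of $\cat{A}$, and the requirement $\gamma E = \alpha$ forces us to set $\gamma_{Ea} := \alpha_a$; this defines a candidate 2-cell $\gamma$ on all of $\cat{C}$ and makes uniqueness immediate. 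The equality $N\gamma = \beta$ then holds on components by construction, using $N\alpha = \beta E$; and naturality of $\gamma$ at an arbitrary morphism of $\cat{C}$ follows by applying the faithful functor $N$ and invoking naturality of $\beta$ together with $N \of H_i = G_i$. This gives the pullback-in-$\CAT$ property.

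For strong orthogonality, suppose given $F \from \cat{A} \to \cat{B}$, $G \from \cat{C} \to \cat{D}$, and an invertible 2-cell $\phi \from G \of E \to N \of F$. I would construct the fill-in $(H,\theta)$ as follows. On objects, $H \of E = F$ forces $H$ to be $F$ precomposed with the inverse of the object bijection of $E$, and $\theta E = \phi$ forces $\theta_{Ea} := \phi_a$; since $E$ is bijective on objects this determines an invertible $\theta_c$ at every object $c$ of $\cat{C}$. For a morphism $f \from c \to c'$ of $\cat{C}$, naturality of $\theta$ forces $N(H(f)) = \theta_{c'} \of G(f) \of \theta_c^{-1}$, and since $N$ is full and faithful there is a unique morphism $H(f)$ of $\cat{B}$ with this image; I take that as the definition of $H$ on morphisms. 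One then checks that $H$ is a functor (apply faithfulness of $N$ to the identity and composition equations), that $H \of E = F$ also on morphisms (apply naturality of $\phi$ and then faithfulness of $N$), that $\theta$ is natural (true by the very definition of $H$ on morphisms), and that $\theta E = \phi$ (true on components). Uniqueness is forced: the two constraints pin down any fill-in on objects, hence pin down $\theta$, and then naturality of $\theta$ together with faithfulness of $N$ pins down $H$ on morphisms.

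The ``hard part'' is really only bookkeeping: one must verify that the morphism-level prescriptions ($\gamma$ in the first part, $H$ on morphisms in the second) are well-defined, natural, and — in the case of $H$ — functorial. Each of these verifications reduces, by applying the faithful functor $N$, to an identity of morphisms in $\cat{D}$ that holds by naturality of $\beta$ or of $\phi$, so no idea beyond those already used for Lemma~\ref{lem:bo-ff-factorisation} is required; the proof is essentially a careful unwinding of the two universal properties of $E$ (bijectivity on objects) and $N$ (fullness and faithfulness).
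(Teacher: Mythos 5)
Your proof is correct, but it takes a different route from the paper: the paper disposes of this lemma in one line by citing Proposition~23 of Street and Walters~\cite{streetWalters78}, whereas you give a direct, self-contained verification of both the $\CAT$-factorisation-system condition and the strong orthogonality condition. Your two constructions are sound — defining $\gamma$ (respectively $\theta$ and $H$ on objects) by transport along the object bijection of $E$, and then defining or constraining everything at the level of morphisms by lifting uniquely through the full and faithful $N$ — and the verifications you sketch (apply $N$, invoke naturality of $\beta$ or $\phi$, conclude by faithfulness) all go through. What the citation buys the paper is brevity; what your argument buys is transparency, and it is worth noting that your techniques are exactly the ones the paper is forced to deploy explicitly anyway in its proof of the subsequent ``two-dimensional strong orthogonality'' result (Lemma~\ref{lem:enhanced-2-dim}), of which your first part is essentially the special case where the $\phi_i$ are identities. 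One small point of care: in the $\CAT$-factorisation-system part you should make explicit that the whiskering hypothesis $N\alpha = \beta E$ is what guarantees $N\gamma_{Ea} = N\alpha_a = \beta_{Ea}$ componentwise, so that $N\gamma = \beta$ holds and not merely $\gamma E = \alpha$; you do say this, and with that the argument is complete.
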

\begin{proof}
This follows from Proposition~23 in Street and Walters~\cite{streetWalters78}.
\end{proof}

An enhanced factorisation system on a 2-category $\cat{X}$ is a factorisation system on the underlying 1-category of $\cat{X}$ with two additional properties: we have the 2-dimensional orthogonality property as described after Definition~\ref{defn:cat-fact-sys}, and we have the strong orthogonality property of Definition~\ref{defn:strong-orthog}. In the case of the bijective-on-objects/full-and-faithful factorisation we have an additional ``two-dimensional strong orthogonality'' property that combines the two. I was unable to find any mention of this additional property in the literature, although it may be known. I also do not know whether the analogous result holds in \emph{any} enhanced factorisation system; in any case, we shall only need it for the bijective-on-objects/full-and-faithful factorisation system on $\CAT$.

\begin{lem}
\label{lem:enhanced-2-dim}
Let $E \from \cat{A} \to \cat{C}$ be a bijective-on-objects functor and let $N \from \cat{B} \to \cat{D}$ be a full and faithful functor. Let $F_1, F_2 \from \cat{A} \to \cat{B}$ and $G_1, G_2 \from \cat{C} \to \cat{D}$ be functors. For $i = 1, 2,$ let
\[
\xymatrix{
\cat{A}\ar[r]^{F_i}\ar[d]_E\drtwocell\omit{^\phi_i} & \cat{B}\ar[d]^N \\
\cat{C}\ar[r]_{G_i} & \cat{D}
}
\]
be a natural isomorphism with fill-in
\[
\xymatrix{
& \cat{B}\ar[d]^N \\
\cat{C}\ar[r]_{G_i}^*!/ur5pt/{\labelstyle{\theta_i} \!\! \objectstyle\Uparrow } \ar[ur]^{H_i} & \cat{D},
}
\]
and let
\[
\vcenter{
\xymatrix{
\cat{A}\rtwocell^{F_2}_{F_1} {^\alpha} & \cat{B}
}}
\quad \text{and} \quad
\vcenter{
\xymatrix{
\cat{C}\rtwocell^{G_2}_{G_1} {^\beta} & \cat{D}
}}
\]
be natural transformations such that
\begin{equation}
\label{eq:enhanced-compat}
\vcenter{
\xymatrix
@C=40pt
@R=40pt{
\cat{A}\rtwocell^{F_2}_{F_1}{^\alpha}\ar[d]_E\drtwocell\omit{^\phi_1} & \cat{B}\ar[d]^N \\
\cat{C}\ar[r]_{G_1} & \cat{D}
}}
\quad = \quad
\vcenter{
\xymatrix
@C=40pt
@R=40pt{
\cat{A}\ar[r]^{F_2}\ar[d]_E \drtwocell\omit{^\phi_2} & \cat{B}\ar[d]^N \\
\cat{B}\rtwocell^{G_2}_{G_1}{^\beta} & \cat{D}.
}}
\end{equation}
Then there is a unique natural transformation $\gamma \from H_1 \to H_2$ such that
\[
\vcenter{
\xymatrix
@C=40pt
@R=40pt{
\cat{A}\ar[r]^{F_2}\ar[d]_{E} & \cat{B} \\
\cat{C}\urtwocell^{H_2}_{H_1}{^\gamma} &
}}
\quad = \quad
\vcenter{
\xymatrix
@C=40pt
@R=40pt{
\cat{A}\rtwocell^{F_2}_{F_1}{^\alpha}\ar[d]_E & \cat{B} \\
\cat{C}\ar[ur]_{H_1} & 
}}
\]
and
\[
\vcenter{
\xymatrix
@C=50pt
@R=50pt{
& \cat{B}\ar[d]^N \\
\cat{C}\urtwocell^{H_2}_{H_1}{^\gamma} \ar[r]_{G_1}^*!/ur6pt/{\labelstyle{\theta_1} \!\! \objectstyle\Uparrow } & \cat{D}
}}
\quad = \quad
\vcenter{
\xymatrix
@C=50pt
@R=50pt{
& \cat{B}\ar[d]^N_*!/dl10pt/{\labelstyle{\theta_2} \!\! \objectstyle\Uparrow } \\
\cat{C}\rtwocell^{G_2}_{G_1}{^\beta}\ar[ur]^{H_2} & \cat{D}.
}}
\]
\end{lem}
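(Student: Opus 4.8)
The plan is to construct $\gamma$ directly on objects, using that $E$ is bijective on objects, and then to verify its naturality together with the two required identities by applying $N$ and invoking faithfulness of $N$. To set up, write $\phi_i$ for the given natural isomorphism filling the $i$-th square, so that $\phi_i$ is recovered by whiskering $\theta_i$ with $E$ (using the fill-in identity $H_i \circ E = F_i$), and record the hypothesis \eqref{eq:enhanced-compat} in components: at each object $a$ of $\cat{A}$ it asserts $N\alpha_a \circ \theta_{1,Ea} = \theta_{2,Ea} \circ \beta_{Ea}$ (with the evident orientations of the isomorphisms).

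First I would define $\gamma$. Since the object function of $E$ is a bijection, every object of $\cat{C}$ has the form $Ea$ for a unique object $a$ of $\cat{A}$, and then $H_i(Ea) = (H_i \circ E)a = F_i a$. Set $\gamma_{Ea} := \alpha_a \from H_1(Ea) \to H_2(Ea)$; this choice is forced by the first displayed equation, namely that $\gamma$ whiskered with $E$ equals $\alpha$. The substantive step is to check that this family of morphisms is natural. Given a morphism $g \from Ea \to Ea'$ of $\cat{C}$ --- which in general is \emph{not} of the form $Ef$, as $E$ need not be full --- the naturality square for $\gamma$ at $g$ is an equation between two morphisms of $\cat{B}$, and since $N$ is faithful it suffices to prove the image equation in $\cat{D}$. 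There I would rewrite each $N(H_i g)$ as $\theta_{i,Ea'} \circ G_i g \circ \theta_{i,Ea}^{-1}$ using naturality of $\theta_i$, then apply the component form of \eqref{eq:enhanced-compat} at $a$ and at $a'$ together with naturality of $\beta$ at $g$; the two sides then visibly coincide. Faithfulness of $N$ then upgrades this to naturality of $\gamma$ in $\cat{B}$, so $\gamma \from H_1 \to H_2$ is a genuine natural transformation.

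It remains to verify the two displayed equations and uniqueness. The first, that $\gamma$ whiskered with $E$ is $\alpha$, holds by construction. The second, $N\gamma \circ \theta_1 = \theta_2 \circ \beta$, is exactly \eqref{eq:enhanced-compat} read componentwise, and since every object of $\cat{C}$ has the form $Ea$ it holds at every object, hence globally. For uniqueness, any $\gamma'$ satisfying the second equation must have $N\gamma'_c = \theta_{2,c} \circ \beta_c \circ \theta_{1,c}^{-1}$ at each object $c$, so $N\gamma' = N\gamma$, and faithfulness of $N$ forces $\gamma' = \gamma$.

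I expect the only real obstacle to be the naturality computation in the second paragraph: it requires combining naturality of the $\theta_i$, two instances of the compatibility square \eqref{eq:enhanced-compat}, and naturality of $\beta$ in the correct order, which is a short but fiddly diagram chase. Everything else is routine bookkeeping with the bijective-on-objects and faithfulness hypotheses; in fact fullness of $N$ is not needed in this argument, having already been consumed in producing the fill-ins $(H_i, \theta_i)$.
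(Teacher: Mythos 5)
Your proposal is correct and follows essentially the same route as the paper: define $\gamma_{Ea}=\alpha_a$ using bijectivity of $E$ on objects, verify naturality after applying the faithful $N$ by combining naturality of the $\theta_i$, the compatibility equation~\bref{eq:enhanced-compat} at $a$ and $a'$, and naturality of $\beta$ (the paper organises this as a commuting cube, your equational chase is the same computation), and check the two displayed identities componentwise over objects of the form $Ea$. The only cosmetic difference is that you derive uniqueness from the second identity via faithfulness of $N$, whereas the paper gets it from $\gamma E=\alpha$; both are immediate.
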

\begin{proof}
Let us define $\gamma$ component-wise. Given $c \in \cat{C}$, there is a unique $a \in \cat{A}$ such that $c = E(a)$, and then $H_i c = F_i a$. Thus $\alpha_a$ gives a map $H_1 c \to H_2 c$; we define $\gamma_c = \alpha_a$. We must check that this does define a natural transformation $\gamma \from H_1 \to H_2$.

Let $ f \from c \to c'$ in $\cat{C}$, and let $a, a' \in \cat{A}$ such that $Ea = c$ and $Ea' = c'$. We wish to show that
\[
\xymatrix{
(H_1 E a = F_1 a) \ar[r]^{\alpha_a} \ar[d]_{H_1 f} & (H_2 Ea = F_2 a)\ar[d]^{H_2 f} \\
(H_1 E a' = F_1 a') \ar[r]^{\alpha_{a'}} & (H_2 Ea' = F_2 a')
}
\]
commutes. Since $N$ is full and faithful, it is sufficient to check that this square commutes after applying $N$ to it. Consider the following cube:
\[
\xymatrix{
& G_1 E a\ar[rr]^{\beta_{Ea}}\ar[dl]_{(\phi_1)_a}\ar'[d][dd]_{G_1 f} & & G_2 Ea\ar[dl]^{(\phi_2)_a}\ar[dd]^{G_2 f} \\
NH_1\ar[rr]^(.65){N\alpha_a}\ar[dd]_{NH_1 f} E a & & NH_2 E a\ar[dd]_(.3){NH_2 f} & \\
& G_1 E a'\ar'[r][rr]^(.4){\beta_{Ea'}}\ar[dl]_{ (\phi_1)_{a'}} & & G_2 E a'\ar[dl]^{(\phi_2)_{a'}} \\
NH_1 E a'\ar[rr]_{N\alpha_{a'}} & & NH_2 E a'. & 
}
\]
The back square commutes by naturality of $\beta$, and the top and bottom squares commute by Equation~\bref{eq:enhanced-compat}. Recall that $ \phi_i = \theta_i E$ as part of what it means for $\theta_i$ to be a fill-in for $\phi_i$. Noting this, the left and right-hand squares commute by naturality of $\theta_1$ and $\theta_2$ respectively. Since all the morphisms from the back of the cube to the front are isomorphisms, it follows that the front face of the cube commutes, and this is precisely what was required to show that $\gamma$ is a natural transformation $H_1 \to H_2$.

It is clear from the definition that $\gamma$ is unique such that $\gamma E = \alpha$. Thus, all that remains is to establish the equality
\[
\vcenter{
\xymatrix
@C=50pt
@R=50pt{
& \cat{B}\ar[d]^N \\
\cat{C}\urtwocell^{H_2}_{H_1}{^\gamma} \ar[r]_{G_1}^*!/ur6pt/{\labelstyle{\theta_1} \!\! \objectstyle\Uparrow } & \cat{D}
}}
\quad = \quad
\vcenter{
\xymatrix
@C=50pt
@R=50pt{
& \cat{B}\ar[d]^N_*!/dl10pt/{\labelstyle{\theta_2} \!\! \objectstyle\Uparrow } \\
\cat{C}\rtwocell^{G_2}_{G_1}{^\beta}\ar[ur]^{H_2} & \cat{D}.
}}
\]
Since $E\from \cat{A} \to \cat{C}$ is bijective on objects, it is sufficient to show that these two natural transformations become equal when whiskered with $E$. But we have
\[
\vcenter{
\xymatrix
@C=50pt
@R=50pt{
\cat{A}\ar[r]^{F_2}\ar[d]_E& \cat{B}\ar[d]^N \\
\cat{C}\urtwocell^{H_2}_{H_1}{^\gamma} \ar[r]_{G_1}^*!/ur6pt/{\labelstyle{\theta_1} \!\! \objectstyle\Uparrow } & \cat{D}
}}
\quad = \quad
\vcenter{
\xymatrix
@C=50pt
@R=50pt{
\cat{A}\rtwocell^{F_2}_{F_1}{^\alpha} \ar[d]_E & \cat{B}\ar[d]^N \\
\cat{C}\ar[r]_{G_1}^*!/ur10pt/{\labelstyle{\theta_1} \!\! \objectstyle\Uparrow }\ar[ur]^{H_1} & \cat{D}
}}
\quad = \quad
\vcenter{
\xymatrix
@C=50pt
@R=50pt{
\cat{A}\rtwocell^{F_2}_{F_1}{^\alpha}\ar[d]_E\drtwocell\omit{^\phi_1} & \cat{B}\ar[d]^N \\
\cat{C}\ar[r]_{G_1} & \cat{D}
}}
\]
and
\[
\vcenter{
\xymatrix
@C=50pt
@R=50pt{
\cat{A}\ar[r]^{F_2}\ar[d]_E& \cat{B}\ar[d]^N_*!/dl10pt/{\labelstyle{\theta_2} \!\! \objectstyle\Uparrow } \\
\cat{C}\rtwocell^{G_2}_{G_1}{^\beta}\ar[ur]^{H_2} & \cat{D}
}}
\quad = \quad
\vcenter{
\xymatrix
@C=50pt
@R=50pt{
\cat{A}\ar[r]^{F_2}\ar[d]_E \drtwocell\omit{^\phi_2} & \cat{B}\ar[d]^N \\
\cat{B}\rtwocell^{G_2}_{G_1}{^\beta} & \cat{D},
}}
\]
so these two natural transformations are equal by Equation~\bref{eq:enhanced-compat}, as required.
\end{proof}

\section{Bijective-on-objects functors}
\label{sec:bo-properties-background}
Let $\cat{A}$ and $\cat{L}$ be large categories and suppose $L \from \cat{A} \to \cat{L}$ is a bijective-on-objects functor. We shall often consider functors of the form
\[
L^* \from [\cat{L},\cat{C}] \to [\cat{A},\cat{C}]
\]
where $\cat{C}$ is some other category. Such functors enjoy several useful properties. These properties are likely to be well-known but I was not able to find them in the literature.

\begin{defn}
\label{defn:creation-of-limits}
Let $D \from \cat{I} \to \cat{A}$ and $G \from \cat{A} \to \cat{C}$ be functors. We say that \demph{$G$ creates limits of $D$} if, for every limit cone $(\lambda_i \from c \to GDi)_{i \in \cat{I}}$ for $G \of D$, there is a unique cone $(\mu_i \from a \to Di)_{i \in \cat{I}}$ on $D$ such that $Ga = c$ and $G\mu_i = \lambda_i$ for each $i \in \cat{I}$, and this cone is a limit cone.

We say that \demph{$G$ creates limits of shape $\cat{I}$} if $G$ creates limits of $D \from \cat{I} \to \cat{A}$ for all such $D$.
\end{defn}
This is the definition of creation of limits from Section~V.1 of~\cite{maclane71}; note that this is somewhat stricter than the definition that is sometimes used by more recent authors.

Similarly, when we speak of monadic functors we mean this in the sense of~VI.7 of~\cite{maclane71}, rather than the slightly weaker sense that is commonly used by modern authors. Explicitly:

\begin{defn}
A functor $G \from \cat{C} \to \cat{B}$ is \demph{monadic} if it has a left adjoint and the canonical comparison functor from $\cat{C}$ to the category of algebras for the induced monad on $\cat{B}$ is an isomorphism of categories. We say that $G$ is \demph{weakly monadic} if it has a right adjoint and the comparison functor is an equivalence.
\end{defn}

\begin{lem}
\label{lem:bo-restrict-create-lims}
Let $\cat{C}$ and $\cat{I}$ be categories and suppose $\cat{C}$ has limits of shape $\cat{I}$. Let $L \from \cat{A} \to \cat{L}$ be a bijective-on-objects functor between large categories. Then $L^* \from [\cat{L},\cat{C}] \to [\cat{A},\cat{C}]$ creates limits of shape $\cat{I}$. Dually, if $\cat{C}$ has colimits of shape $\cat{I}$, then $L^*$ creates such colimits.
\end{lem}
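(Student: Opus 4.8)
Let $D \from \cat{I} \to [\cat{L},\cat{C}]$ be a diagram, and let $(\lambda_i \from P \to L^* D i)_{i \in \cat{I}}$ be a limit cone for $L^* \of D$ in $[\cat{A},\cat{C}]$. I want to show there is a unique functor $Q \from \cat{L} \to \cat{C}$ and unique cone $(\mu_i \from Q \to Di)$ with $L^* \mu_i = \lambda_i$, and that this cone is a limit cone.

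**Pointwise computation.** The key observation is that limits in functor categories $[\cat{L},\cat{C}]$ and $[\cat{A},\cat{C}]$ are computed pointwise (since $\cat{C}$ has limits of shape $\cat{I}$), and that the functor $L^*$, being restriction along $L$, agrees with these pointwise descriptions in a way that is controlled by $L$ being bijective on objects. Concretely, for an object $a \in \cat{A}$, $(L^* D i)(a) = (Di)(La)$, and evaluation at $a$ sends the limit cone $(\lambda_i)$ to a limit cone for $i \mapsto (Di)(La)$ in $\cat{C}$; so $P(a) = \lim_i (Di)(La)$, computed as a chosen limit in $\cat{C}$. Since $L$ is bijective on objects, every object of $\cat{L}$ is $La$ for a unique $a$, so this already forces the object-assignment of any putative lift $Q$: we must have $Q(La) = P(a)$, i.e. $Q$ and $P$ agree on objects under the identification $L$.

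**Defining $Q$ on morphisms.** Here is where bijectivity-on-objects really does the work. A morphism $\ell \from La \to La'$ in $\cat{L}$ need not come from $\cat{A}$, so I cannot define $Q(\ell)$ by restriction. Instead I use the universal property: for each $i$, $Di$ is a functor on $\cat{L}$, so $(Di)(\ell) \from (Di)(La) \to (Di)(La')$ is defined, and these maps form a cone from the limit $P(a) = \lim_i (Di)(La)$ over the diagram $i \mapsto (Di)(La')$ (naturality of the $(Di)$ in $i$, i.e. the fact that $D$ is a diagram in $[\cat{L},\cat{C}]$, gives the compatibility). The universal property of $P(a')$ as a limit then yields a unique map $Q(\ell) \from P(a) \to P(a')$ commuting with the limit projections. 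Functoriality of $Q$ (preservation of identities and composites) follows from uniqueness in the universal property. By construction $Q \of L = $ the composite that sends morphisms of $\cat{A}$ through $L$ and then applies the pointwise limit, which is exactly the functor $L^* $ would assign; more precisely, because $(Di)(L(-))$ restricted to $\cat{A}$-morphisms is $(L^* Di)(-)$, one checks $L^* Q = P$ not just on objects but on morphisms, and that the $\mu_i$ defined by $(\mu_i)_{La} := (\lambda_i)_a$ are natural in $\cat{L}$ and satisfy $L^* \mu_i = \lambda_i$.

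**Uniqueness and limit property.** Uniqueness of $(Q, (\mu_i))$ with $L^* Q = P$ and $L^*\mu_i = \lambda_i$: on objects $Q$ is forced as above; on morphisms, any lift $Q'$ with $L^*\mu_i = \lambda_i$ must have its projections $(\mu_i)_{La} = (\lambda_i)_a$, and naturality of $\mu_i$ forces $(Di)(\ell) \of (\mu_i)_{La} = (\mu_i)_{La'} \of Q'(\ell)$ for all $i$, which by the universal property pins down $Q'(\ell) = Q(\ell)$. Finally, that $(\mu_i \from Q \to Di)$ is a limit cone in $[\cat{L},\cat{C}]$: a cone $(\nu_i \from R \to Di)$ with vertex $R \from \cat{L} \to \cat{C}$ restricts to a cone $L^*\nu_i$ with vertex $L^*R$, inducing unique $k \from L^*R \to P$ in $[\cat{A},\cat{C}]$; then check that $k$ is of the form $L^* \bar{k}$ for a unique natural transformation $\bar k \from R \to Q$ using bijectivity-on-objects (the components $k_a$ determine a candidate $\bar k_{La} := k_a$, and its naturality on all of $\cat{L}$ follows from the universal property of the pointwise limits $P(a)$, exactly as in the construction of $Q$). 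The dual statement for colimits follows by applying the proven statement to $L^{\op} \from \cat{A}^{\op} \to \cat{L}^{\op}$, which is again bijective on objects, and $\cat{C}^{\op}$.

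**Main obstacle.** The only real subtlety — and the reason this is not completely immediate from "limits in functor categories are pointwise" — is that $L^*$ does not have a section on morphisms, so the lift $Q$ must be built on morphisms of $\cat{L}$ via the universal property rather than by transporting along $L$; I expect the bookkeeping for functoriality of $Q$ and naturality of the $\mu_i$ (and of the induced $\bar k$ in the limit-cone verification) to be the part requiring care, though each individual check is a routine appeal to uniqueness in a universal property.
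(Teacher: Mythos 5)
Your proof is correct and follows essentially the same route as the paper's: both reduce to the fact that limits in a functor category $[\cat{L},\cat{C}]$ are jointly created by the evaluation functors, combined with the observation that $\ev_a \of L^* = \ev_{La}$ and that bijectivity on objects lets the pointwise limit cones at the objects $La$ determine a unique functor on all of $\cat{L}$. The only difference is that the paper cites this creation-by-evaluation fact as known, whereas you prove it inline (constructing $Q$ on morphisms via the universal property and checking functoriality, naturality, and uniqueness by hand).
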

\begin{proof}
Recall that if $\cat{C}$ has limits of shape $\cat{I}$, so does the functor category $[\cat{B},\cat{C}]$ for any category $\cat{B}$. Furthermore, given a limit cone $\lambda_b$ on
\[
\cat{I}\toby{D} [\cat{B}, \cat{C}] \toby{\ev_b} \cat{C}
\]
for each $b \in \cat{B}$, then there is a unique functor $X \from \cat{B} \to \cat{C}$ and a unique cone $\lambda$ on $D$ with vertex $X$ such that each $\ev_b$ sends $\lambda$ to $\lambda_b$, and furthermore this cone is a limit cone.

Let $D \from \cat{I} \to [\cat{L},\cat{C}]$, and suppose 
\[
(\mu_i \from Y \to L^*(D(i)))_{i \in \cat{I}}
\]
is a limit cone on $L^* \of D$. Then, for each $a \in \cat{A}$,
\[
((\mu_i)_a \from Ya \to D(i) (La))_{i \in \cat{I}}
\]
is a limit cone for $\ev_a \of L^* \of D \from \cat{I} \to \cat{C}$. But
\[
\xymatrix{
[\cat{L},\cat{C}]\ar[r]^{L^*}\ar[dr]_{\ev_{La}} & [\cat{A},\cat{C}]\ar[d]^{\ev_a} \\
& \cat{C}
}
\]
commutes, and so the $(\mu_i)_a$ also define limit cones on each $\ev_{La} \of D$. Since every object of $\cat{L}$ is of the form $La$ for a unique $a \in \cat{A}$, we therefore have a unique functor $X \from \cat{L} \to \cat{C}$ and a unique cone $\lambda$ on $D$ with vertex $X$ such that
\[
(\lambda_i)_{La} = (\mu_i)_a \from XLa = Ya \to D(i)(La)
\]
for each $a \in \cat{A}$ and $i \in \cat{I}$, and furthermore $\lambda$ is a limit cone. But then by construction, this cone is unique such that $L^*(\lambda) = \mu$, as required.
\end{proof}

This has the following immediate consequence.

\begin{cor}
\label{cor:rest-bo-monadic}
Let $L \from \cat{A} \to \cat{L}$ be a bijective-on-objects functor, and let $\cat{C}$ be a category with coequalisers. Then the functor
\[
L^* \from [\cat{L}, \cat{C}] \to [\cat{A}, \cat{C}]
\]
is monadic if and only if it has a left adjoint.
\end{cor}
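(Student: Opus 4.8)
The plan is to reduce to the precise form of Beck's monadicity theorem from Section~VI.7 of~\cite{maclane71}, which characterises monadic functors --- with the comparison functor an isomorphism of categories, as in our convention --- as exactly those functors $G$ that have a left adjoint and create coequalisers of $G$-split pairs, i.e.\ of parallel pairs $f,g$ whose image $Gf,Gg$ admits a split coequaliser. One implication of the corollary is immediate from the definition: a monadic functor has a left adjoint by fiat. So the content is the converse.

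Assume then that $L^* \from [\cat{L},\cat{C}] \to [\cat{A},\cat{C}]$ has a left adjoint. Since $\cat{C}$ has coequalisers, Lemma~\ref{lem:bo-restrict-create-lims} applies with $\cat{I}$ the parallel-pair category and tells us that $L^*$ creates coequalisers: for any parallel pair $f,g$ in $[\cat{L},\cat{C}]$ and any coequaliser cocone on $(L^*f, L^*g)$ in $[\cat{A},\cat{C}]$, there is a unique cocone on $(f,g)$ lying over it, and this cocone is a coequaliser. I would then observe that a split coequaliser is an absolute colimit, hence in particular a coequaliser; so if $(f,g)$ is an $L^*$-split pair, the given split coequaliser of $(L^*f, L^*g)$ is in particular a coequaliser cocone, which therefore lifts uniquely to a coequaliser of $(f,g)$. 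This is exactly the condition that $L^*$ creates coequalisers of $L^*$-split pairs, and, combined with the hypothesis that $L^*$ has a left adjoint, Beck's theorem yields that $L^*$ is monadic.

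There is really no obstacle here --- the corollary is, as advertised, an immediate consequence of Lemma~\ref{lem:bo-restrict-create-lims} --- and the only point to be careful about is that the ``creates coequalisers'' property supplied by Definition~\ref{defn:creation-of-limits} is a priori stronger than the ``creates coequalisers of $G$-split pairs'' hypothesis of Beck's theorem: the former demands a unique coequaliser lift over \emph{every} coequaliser cocone on the image, the latter only over those that happen to be split coequalisers. Since we are given more than we need, the reduction goes through, and I would simply record this and cite Beck. It is also worth remarking in passing that $L^*$ reflects isomorphisms --- since $L$ is bijective on objects, every object of $\cat{L}$ is $La$ for some $a$, so a natural transformation in $[\cat{L},\cat{C}]$ is invertible as soon as its restriction along $L$ is --- although this is already subsumed in the creation property used above.
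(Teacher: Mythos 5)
Your proof is correct and follows the same route as the paper: both reduce to Beck's monadicity theorem (Theorem~1 in~VI.7 of Mac~Lane) and invoke Lemma~\ref{lem:bo-restrict-create-lims} to conclude that $L^*$ creates all coequalisers, hence in particular those of $L^*$-split pairs. Your version merely spells out the intermediate steps (split coequalisers are coequalisers; creation in the strict sense implies the hypothesis of Beck's theorem) that the paper leaves implicit.
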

\begin{proof}
The monadicity theorem (Theorem~1 in~VI.7 of~\cite{maclane71}) states that a functor is monadic if and only if it has a left adjoint and creates certain coequalisers. But by the above lemma, $L^*$ creates \emph{all} coequalisers.
\end{proof}

Thus functors of the form $L^*$ with $L$ bijective on objects are closely related to monadic functors. Indeed, even when they fail to have a left adjoint they have the following properties in common with monadic functors.

\begin{defn}
\label{defn:isofibration}
Let $U \from \cat{D} \to \cat{C}$ be a functor. We say that $U$ is an \demph{isofibration} if, for every $d \in \cat{D}$, $c \in \cat{C}$ and isomorphism $i \from Ud \to c$, there is an object $d'$ in $\cat{D}$ and isomorphism $j \from d \to d'$ such that $Ud' = c$ and $Uj = i$.
\end{defn}

\begin{defn}
Let $U \from \cat{D} \to \cat{C}$ be a functor. We say that $U$ is \demph{amnestic} if $U$ reflects identities in the following sense: an isomorphism in $\cat{D}$ is an identity if and only if it is sent to one by $U$.
\end{defn}

\begin{lem}
\label{lem:amnestic-isofib}
A functor $U \from \cat{D} \to \cat{C}$ is an amnestic isofibration if and only if, for every $d \in \cat{D}$, $c \in \cat{C}$ and isomorphism $i \from Ud \to c$, there is a \emph{unique} pair $(d', j)$ where $d' \in \cat{D}$ and $j \from d \to d'$ is an isomorphism such that $Ud' = c$ and $Uj = i$.
\end{lem}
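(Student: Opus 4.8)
The plan is to prove the two implications separately, each by a short diagram chase that simply unpacks the relevant definitions.

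For the ``only if'' direction, I would assume $U$ is an amnestic isofibration and fix $d \in \cat{D}$, $c \in \cat{C}$ and an isomorphism $i \from Ud \to c$. The \emph{existence} of a pair $(d',j)$ with $j \from d \to d'$ an isomorphism, $Ud' = c$ and $Uj = i$ is precisely the isofibration property, so the only thing to establish is \emph{uniqueness}. Given two such pairs $(d_1, j_1)$ and $(d_2, j_2)$, I would consider the composite $j_2 \of j_1^{-1} \from d_1 \to d_2$, note that it is an isomorphism whose image under $U$ is $i \of i^{-1} = \id_c$, and apply amnesticity to conclude that $j_2 \of j_1^{-1}$ is an identity. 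This forces $d_1 = d_2$ and $j_1 = j_2$, i.e.\ equality of the two pairs.

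For the ``if'' direction, I would assume the unique-lifting property of the statement. Taking just the existence clause gives immediately that $U$ is an isofibration, so it remains to verify amnesticity. Suppose $j \from d \to d'$ is an isomorphism in $\cat{D}$ with $Uj$ an identity; then $Ud = Ud'$, call this object $c$, and $Uj = \id_c$. Apply the unique-lifting property to $d$ together with the identity isomorphism $\id_{Ud} \from Ud \to Ud$: both $(d, \id_d)$ and $(d', j)$ are then lifts of $\id_{Ud}$ starting at $d$, so by uniqueness $(d',j) = (d,\id_d)$. In particular $j = \id_d$, which is exactly what amnesticity requires.

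I do not anticipate any real obstacle, since the lemma is essentially a repackaging of the two definitions; both directions are a couple of lines. The only points needing a moment's attention are that equality of the pairs $(d',j)$ genuinely encodes both ``same codomain object'' and ``same morphism'', and that in each application of amnesticity the morphism in question is an honest isomorphism of $\cat{D}$ whose $U$-image is literally an identity arrow rather than merely some isomorphism.
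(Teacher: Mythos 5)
Your proof is correct and follows essentially the same argument as the paper: the uniqueness in the ``only if'' direction via applying amnesticity to $j_2 \of j_1^{-1}$, and the amnesticity in the ``if'' direction via comparing $(d',j)$ with the lift $(d,\id_d)$ of the identity, are exactly the steps the paper takes.
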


\begin{proof}
Suppose $U$ has this property; then clearly $U$ is an isofibration. Suppose $j \from d \to d'$ is an isomorphism such that $Uj = \id_{Ud}$. Then by assumption $j$ is unique such, but $\id_d \from d \to d$ is another such isomorphism, so $d = d'$ and $j = \id_d$.

Conversely suppose $U$ is an amnestic isofibration, and let $d \in \cat{D}$, $c \in \cat{C}$ and $i \from Ud \to c$ be an isomorphism. By the isofibration property there is \emph{some} $d' \in \cat{D}$ and isomorphism $j \from d \to d'$ such that $Ud' = c$ and $uj = i$; let us show that they are unique. Suppose $d'' \in \cat{D}$ and $j' \from d \to d''$ such that $Ud'' = c$ and $Uj' = i$. Then $j' \of j^{-1} \from d' \to d''$ is an isomorphism and
\[
U(j' \of j^{-1}) = U(j') \of U(j)^{-1} = i \of i^{-1} = \id_c.
\]
Since $U$ is amnestic, it follows that $d'=d''$ and $j' \of j^{-1} = \id_{d'}$, so $j' = j$.
\end{proof}

\begin{lem}
\label{lem:bo-restrict-isofib}
Let $L \from \cat{A} \to \cat{L}$ be a bijective-on-objects functor. Then for any category $\cat{C}$, the functor
\[
L^* \from [\cat{L}, \cat{C}] \to [\cat{A},\cat{C}]
\]
is an amnestic isofibration.
\end{lem}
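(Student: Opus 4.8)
The plan is to reduce to Lemma~\ref{lem:amnestic-isofib}: it suffices to show that for every functor $Y \from \cat{L} \to \cat{C}$, every functor $X \from \cat{A} \to \cat{C}$, and every natural isomorphism $i \from L^*Y \to X$ (that is, $i \from Y \of L \to X$), there is a \emph{unique} pair $(Y', j)$ consisting of a functor $Y' \from \cat{L} \to \cat{C}$ and a natural isomorphism $j \from Y \to Y'$ such that $L^*Y' = X$ and $L^*j = i$.

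First I would construct $Y'$, using that $L$ is bijective on objects. Every object of $\cat{L}$ has the form $La$ for a \emph{unique} $a \in \cat{A}$, so the requirement $L^*Y' = X$ forces $Y'(La) = Xa$ on objects; I take this as the definition. For a morphism $g \from La \to La'$ in $\cat{L}$ I set $Y'(g) = i_{a'} \of Y(g) \of i_a^{-1} \from Xa \to Xa'$, which makes sense because each component $i_a \from Y(La) \to Xa$ is invertible. Functoriality of $Y'$ is a routine check from functoriality of $Y$ together with $i_a^{-1} \of i_a = \id$. To verify $L^*Y' = X$ on morphisms, for $f \from a \to a'$ in $\cat{A}$ I compute $Y'(Lf) = i_{a'} \of Y(Lf) \of i_a^{-1}$ and apply naturality of $i$ at $f$, which gives $i_{a'} \of Y(Lf) = Xf \of i_a$, hence $Y'(Lf) = Xf$.

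Next I would define $j \from Y \to Y'$ componentwise by $j_{La} = i_a$; naturality of $j$ at a morphism $g \from La \to La'$ is immediate from the definition of $Y'(g)$, and $j$ is a natural isomorphism because each $i_a$ is. By construction $(L^*j)_a = j_{La} = i_a$, so $L^*j = i$, and $(Y', j)$ is a lift of the required form. For uniqueness, suppose $(Y'', j')$ is another such pair: the condition $L^*Y'' = X$ forces $Y''(La) = Xa$ and $L^*j' = i$ forces $j'_{La} = i_a$ for every $a$, and then naturality of $j'$ forces $Y''(g) = j'_{La'} \of Y(g) \of (j'_{La})^{-1} = i_{a'} \of Y(g) \of i_a^{-1} = Y'(g)$ for each $g \from La \to La'$, so $Y'' = Y'$ and $j' = j$.

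I do not expect a genuine obstacle: the argument is a direct conjugation of $Y$ by $i$. The two points to be careful about are that bijectivity on objects is used twice --- once to \emph{define} $Y'$ (and $j$) on objects, and once in the uniqueness argument to recover $Y''$ from the constraints --- and that it is naturality of $i$, rather than just invertibility of its components, that upgrades $L^*Y' = X$ from an equality of object-assignments to an equality of functors.
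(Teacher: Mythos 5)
Your proposal is correct and follows essentially the same route as the paper: reduce to Lemma~\ref{lem:amnestic-isofib} and construct the lift by conjugating $Y$ by the components of $i$, using bijectivity of $L$ on objects to define the lift and to force uniqueness. You simply spell out the functoriality, the check that $L^*Y' = X$ on morphisms, and the uniqueness argument in more detail than the paper does.
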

\begin{proof}
We will show that $L^*$ satisfies the condition in the previous lemma. Let $F \from \cat{L} \to \cat{C}$ and $G \from \cat{A} \to \cat{C}$ be functors and $\phi \from F \of L \to G$ be a natural isomorphism. We define a functor $G' \from \cat{L} \to \cat{C}$ as follows.

Given an object $La \in \cat{L}$ (every object of $\cat{L}$ is of this form for a unique $a \in \cat{A}$), define $G'(La) = Ga$. Given a morphism $l \from La \to La'$ in $\cat{L}$, define $G'l$ to be the composite
\[
G'(La) = Ga \toby{\phi_a^{-1}} FLa \toby{Fl} FLa' \toby{\phi_{a'}} Ga' = G'(La').
\]
This is clearly functorial, and defining $\theta_{La} = \phi_a$ for $a \in \cat{A}$ makes $\theta$ into a natural isomorphism $F \to G'$, and it is unique such that $L^*(\theta) = \phi$.
\end{proof}

Before continuing we pause to make note of the relationship between monadic and weakly monadic functors; this will be used in Section~\ref{sec:profinite-background} to show that the category of profinite groups is monadic over various categories.

\begin{lem}
\label{lem:monadic-amnestic-isofib}
A functor $U \from \cat{D} \to \cat{C}$ is monadic if and only if it is a weakly monadic amnestic isofibration.
\end{lem}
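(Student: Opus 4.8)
The plan is to prove the two implications separately, using the characterisation of amnestic isofibrations from Lemma~\ref{lem:amnestic-isofib}. The ``only if'' direction is the easier one: suppose $U \from \cat{D} \to \cat{C}$ is monadic. Then by definition it has a left adjoint, and the comparison functor $K \from \cat{D} \to \cat{C}^{\mnd{T}}$ to the Eilenberg--Moore category of the induced monad $\mnd{T}$ is an isomorphism of categories. Since $K$ is an isomorphism and $U = U^{\mnd{T}} \of K$ where $U^{\mnd{T}}$ is the forgetful functor, it suffices to check that $U^{\mnd{T}}$ is a weakly monadic amnestic isofibration. It is weakly monadic trivially (the comparison functor for $U^{\mnd{T}}$ is the identity, which is an equivalence). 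For the amnestic isofibration property, given a $\mnd{T}$-algebra $(d, \xi)$ and an isomorphism $i \from d \to c$ in $\cat{C}$, one transports the algebra structure along $i$ to get a unique $\mnd{T}$-algebra structure $\xi'$ on $c$ making $i$ into an algebra isomorphism; this is a standard and routine verification.

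For the ``if'' direction, suppose $U$ is a weakly monadic amnestic isofibration. Then $U$ has a left adjoint, so there is an induced monad $\mnd{T}$ on $\cat{C}$ and a comparison functor $K \from \cat{D} \to \cat{C}^{\mnd{T}}$ which, by weak monadicity, is an equivalence of categories. Moreover $U = U^{\mnd{T}} \of K$. The goal is to upgrade $K$ from an equivalence to an isomorphism. Since $K$ is an equivalence, it is full, faithful and essentially surjective; the point is to show it is actually bijective on objects and then genuinely invertible. Here I would use that both $U$ and $U^{\mnd{T}}$ are amnestic isofibrations (the latter by the computation in the ``only if'' direction), and that $U^{\mnd{T}} \of K = U$. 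Given an object $(d', \xi)$ of $\cat{C}^{\mnd{T}}$, essential surjectivity of $K$ gives some $d \in \cat{D}$ and an isomorphism $K d \cong (d', \xi)$ in $\cat{C}^{\mnd{T}}$; applying $U^{\mnd{T}}$ gives an isomorphism $Ud \cong d'$ in $\cat{C}$. Now use the amnestic isofibration property of $U$ to lift this isomorphism to a \emph{unique} isomorphism $d \to d''$ in $\cat{D}$ over it; then $K d''$ and $(d', \xi)$ both lie over $d' \in \cat{C}$ and are isomorphic via an isomorphism lying over $\id_{d'}$, and since $U^{\mnd{T}}$ is amnestic this isomorphism is an identity, so $Kd'' = (d',\xi)$. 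This shows $K$ is surjective on objects; injectivity on objects follows similarly, by taking two objects of $\cat{D}$ mapping to the same algebra, using fullness and faithfulness of $K$ to get an isomorphism between them over $\id$, and using that $U$ is amnestic to conclude they are equal. Hence $K$ is a bijective-on-objects equivalence, i.e.\ an isomorphism of categories, so $U$ is monadic.

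The main obstacle I anticipate is bookkeeping the interaction of the two amnestic-isofibration structures (on $U$ and on $U^{\mnd{T}}$) with the comparison functor $K$: one has to be careful that the isomorphism produced by the isofibration property of $U$ really does map, under $K$, to an isomorphism lying over the identity of $\cat{C}$, which is what licenses the appeal to amnesticity. This hinges on the identity $U^{\mnd{T}} \of K = U$ holding strictly (not just up to isomorphism), which it does for the comparison functor of an adjunction, so the argument goes through. A subtlety worth flagging explicitly in the write-up is why weak monadicity is exactly the right hypothesis: an arbitrary equivalence of categories need not be an isomorphism, and it is precisely the amnesticity plus the isofibration property that rigidifies the equivalence $K$ into an isomorphism, which is the content of the lemma.
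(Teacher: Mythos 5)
Your proof is correct and follows essentially the same route as the paper's (which cites the nLab argument): both directions rigidify the comparison equivalence $K$ into an isomorphism using the amnestic-isofibration data on $U$ and $U^{\mnd{T}}$. The only difference is presentational — the paper packages the key step as ``$K$ is itself an amnestic isofibration'' via a cancellation property and then concludes, whereas you unfold that into the direct object-level surjectivity and injectivity checks; the underlying mechanism is identical.
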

\begin{proof}
The following argument appears at~\cite{nlabMonadic}. Suppose $U$ has a left adjoint, inducing a monad $\mnd{T}$ on $\cat{C}$, and $K \from \cat{D} \to \cat{C}^{\mnd{T}}$ is  the comparison functor. It is clear that $U^{\mnd{T}} \from \cat{C}^{\mnd{T}} \to \cat{C}$ is an amnestic isofibration, as is any isomorphism. Since amnestic isofibrations are closed under composition and $U = U^\mnd{T} \of K$, it follows that if $U$ is monadic, it is an amnestic isofibration.

Conversely suppose $U$ is a weakly monadic amnestic isofibration. The facts that $U= U^{\mnd{T}} \of K$ and $U^{\mnd{T}}$ is an amnestic isofibration implies that $K$ is also an amnestic isofibration. Since $K$ is an equivalence it is essentially surjective on objects, but then the fact that it is an isofibration implies that it is \emph{actually} surjective on objects. Meanwhile, the fact that $K$ is full and faithful and amnestic implies that it must be injective on objects. Thus it is full and faithful and bijective on objects, so it is an isomorphism.
\end{proof}

\section{Density and codensity}
\label{sec:dense-codense-background}
The notions of density and codensity were introduced by Isbell in~\cite{isbell60} under the names left adequacy and right adequacy respectively. We will make use of both of these notions; density when discussing monads with arities in Sections~\ref{sec:notions-monads-arites} and~\ref{sec:canonical1-monads-arities}, and codensity in its relation to codensity monads. Throughout this section let $\cat{A}$ and $\cat{B}$ be locally large categories and let $F \from \cat{A} \to \cat{B}$ be a functor.

\begin{defn}
We define the \demph{nerve functor} of $F$ to be the composite
\[
N_F \from \cat{B} \incl [\cat{B}^{\op}, \SET] \toby{(F^{\op})^*} [\cat{A}^{\op},\SET],
\]
where the first factor is the Yoneda embedding. Dually, the \demph{conerve functor} of $F$ is the composite
\[
N^F \from \cat{B} \incl [ \cat{B}, \SET]^{\op} \toby{(F^*)^{\op}} [\cat{A}, \SET]^{\op}.
\]
\end{defn}

\begin{defn}
\label{defn:dense-codense}
We say that $F$ is \demph{dense} if $N_F$ is full and faithful, and that $F$ is \demph{codense} if $N^F$ is full and faithful.
\end{defn}

An important special case is when $\cat{A}$ is a full subcategory of $\cat{B}$ and $F$ is the inclusion; in this situation we call $\cat{A}$ a dense (respectively codense) subcategory of $\cat{B}$. In particular, dense subcategories are always assumed to be full.

Density of $F$ is equivalent to the condition that every object of $\cat{B}$ is canonically a colimit of objects in $\cat{A}$, in a sense that we now make precise.

\begin{defn}
For every object $b \in \cat{B}$ there is a canonical functor
\[
(F \downarrow b) \to \cat{A} \toby{F} \cat{B}
\]
where the functor $(F \downarrow b) \to \cat{A}$ is the evident forgetful functor. There is a canonical cocone on this diagram with vertex $b$, and whose component at $(f \from Fa \to b) \in (F\downarrow b)$ is $f$ itself. We call this the \demph{$F$-cocone on $b$}.

Dually, there is a functor
\[
(b \downarrow F) \to \cat{A} \toby{F} \cat{B}
\]
and a canonical cone on this diagram with vertex $b$, which we call the \demph{$F$-cone on $b$}.
\end{defn}

\begin{lem}
The following are equivalent:
\begin{enumerate}
\item the functor $F$ is dense;
\item for every $b \in \cat{B}$, the $F$-cocone on $\cat{B}$ is a colimit cocone; and
\item the identity functor $\cat{B} \to \cat{B}$ is the pointwise left Kan extension of $F$ along itself.
\end{enumerate}
Dually, the following are equivalent:
\begin{enumerate}
\item the functor $F$ is codense;
\item for every $b \in \cat{B}$, the $F$-cone on $\cat{B}$ is a limit cone; and
\item the identity functor $\cat{B} \to \cat{B}$ is the pointwise right Kan extension of $F$ along itself.
\end{enumerate}
\end{lem}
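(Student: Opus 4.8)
The plan is to funnel all three statements through one natural bijection and then read off each equivalence. Fix $b \in \cat{B}$. First I would observe that, for any $b' \in \cat{B}$, a cocone with vertex $b'$ on the composite $(F \downarrow b) \to \cat{A} \toby{F} \cat{B}$ is precisely the same data as a natural transformation $N_F b \to N_F b'$: a cocone assigns to each object $(a, f \from Fa \to b)$ of the comma category a morphism $\lambda_{(a,f)} \from Fa \to b'$ compatible with the morphisms of $\cat{A}$, and this is exactly a family of functions $\cat{B}(Fa, b) \to \cat{B}(Fa, b')$ natural in $a \in \cat{A}^{\op}$. This bijection is natural in $b'$; under it the $F$-cocone on $b$ (whose component at $(a,f)$ is $f$ itself) corresponds to the identity transformation of $N_F b$; and for $h \from b \to b'$ in $\cat{B}$ the cocone obtained from the $F$-cocone on $b$ by postcomposing with $h$ corresponds to $N_F(h)$. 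Consequently the canonical comparison map $\cat{B}(b, b') \to \{\text{cocones on the } b\text{-diagram with vertex } b'\}$, which comes from the universal property of the $F$-cocone, is identified with $N_F \from \cat{B}(b,b') \to [\cat{A}^{\op}, \SET](N_F b, N_F b')$.

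Granting this, the equivalence of (i) and (ii) is immediate: the $F$-cocone on $b$ is a colimit cocone if and only if that comparison map is a bijection for every $b'$, which, by the identification above, holds for all $b$ exactly when $N_F$ is full and faithful, i.e. when $F$ is dense.

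For (ii) $\iff$ (iii) I would recall that a left Kan extension of $F$ along $F$ is \emph{pointwise} precisely when, for each $b$, its value at $b$ together with the cocone assembled from the unit exhibits it as the colimit of $(F \downarrow b) \to \cat{A} \toby{F} \cat{B}$. If (ii) holds, then $\id_{\cat{B}}$ equipped with the identity transformation $\id_F \from F \to \id_{\cat{B}} \of F$ has exactly this property: the cocone assembled from $\id_F$ at $b$ is the $F$-cocone, and for $h \from b \to b'$ the map of colimits induced by the functor $(F \downarrow b) \to (F \downarrow b')$ is $h$ itself, so the data genuinely assemble into the identity functor; hence $\id_{\cat{B}}$ is the pointwise left Kan extension. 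Conversely, if $\id_{\cat{B}}$ (necessarily with unit $\id_F$, up to the unique isomorphism coming from uniqueness of Kan extensions) is the pointwise left Kan extension, then unwinding the definition says exactly that each $F$-cocone on $b$ is a colimit cocone, which is (ii). The dual statements follow by applying the proven equivalences to $F^{\op} \from \cat{A}^{\op} \to \cat{B}^{\op}$, using that $N^F = (N_{F^{\op}})^{\op}$ and that the $F$-cone on $b$ is the $F^{\op}$-cocone on $b$ in $\cat{B}^{\op}$.

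The bulk of the work, and the only place where genuine care is needed, is in setting up the first bijection and checking its naturality in $b'$ together with its compatibility with the two comparison maps (the colimit comparison on one side, $N_F$ on the other). None of this is difficult, but it is the kind of bookkeeping that is easy to get turned around; everything afterwards is a direct translation. I expect the subtlest point to be the one flagged in the (ii) $\Rightarrow$ (iii) direction: verifying that the pointwise colimits assemble into the \emph{identity} functor with the \emph{identity} unit, rather than merely agreeing with them objectwise.
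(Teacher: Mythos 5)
Your argument is correct, but note that the paper does not actually prove this lemma — it simply cites Propositions~1 and~2 of Mac~Lane X.6 — so there is no in-paper proof to compare against; what you have written is essentially the standard textbook argument that the citation delegates to. The central bijection between cocones on $(F \downarrow b) \to \cat{A} \toby{F} \cat{B}$ with vertex $b'$ and natural transformations $N_F b \to N_F b'$ is set up correctly, it does identify the colimit comparison map with $N_F \from \cat{B}(b,b') \to [\cat{A}^{\op},\SET](N_F b, N_F b')$, and (i)$\iff$(ii) follows cleanly. The one point deserving more than the parenthetical you give it is (iii)$\Rightarrow$(ii): if (iii) is read as asserting only that \emph{some} unit $\alpha \from F \to F$ exhibits $\id_{\cat{B}}$ as the pointwise left Kan extension, then "necessarily with unit $\id_F$, up to the unique isomorphism coming from uniqueness of Kan extensions" is not immediate — uniqueness of Kan extensions compares two extensions you already have, whereas here you must first show $\alpha$ is invertible (which can be done: the induced endomorphism $\alpha^*$ of $N_F b$ commutes with every $N_F(h)$, and bijectivity of the colimit comparison at $b'=b$ forces it to be invertible, whence $\alpha$ is a natural isomorphism and the extension transports to the identity unit). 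If, as in Mac~Lane, (iii) is understood with the canonical unit $\id_F$, your proof is complete as written. The dualisation via $F^{\op}$ is fine.
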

\begin{proof}
This is well-known; see for example Propositions~1 and~2 in X.6 of~\cite{maclane71}.
\end{proof}
The third of these conditions makes it clear that there is a connection between density and codensity and Kan extensions. In particular we can use the left and right Kan extensions of a functor along itself to measure the failure of a functor to be dense or codense. It turns out that these Kan extensions naturally come equipped with the structure of a comonad or monad respectively; this was observed by Kock in~\cite{kock66}.

\begin{defn}
\label{defn:codensity-monad}
Let $T \from \cat{B} \to \cat{B}$ be a functor and $\kappa \from T \of F \to F$ be a natural transformation exhibiting $T$ as the right Kan extension of $F$ along itself. We define natural transformations $\eta \from \id_{\cat{B}} \to T$ and $\mu \from T \of T \to T$, using the universal property of Kan extensions, to be the unique natural transformations such that we have
\[
\vcenter{
\xymatrix@=50pt{
{\cat{A}}\ar[r]^{F}\ar[dr]_F\druppertwocell\omit{<-3.3>\kappa} & {\cat{B}}\dtwocell^{<2>\id_{\cat{B}}}_{<1.2>T}{\eta} \\
& {\cat{B}}
}}
\quad = \quad
\vcenter{
\xymatrix@=50pt{
{\cat{A}}\ar[r]^F\druppertwocell\omit{=<-3.3>}\ar[dr]_F & {\cat{B}}\ar@{=}[d] \\
& {\cat{B}}
}}
\]
and
\[
\vcenter{
\xymatrix@C=20pt@R=40pt{
{\cat{A}}\ar[ddrr]_F\ar[rr]^F\ddrruppertwocell\omit{<-3>\kappa}  & & {\cat{B}}\ar[dd]_{T}\ar[dr]^{T}\dduppertwocell\omit{<-3>\mu} &\\
 & & & {\cat{B}}\ar[dl]^{T} \\
 & &{\cat{B}} &
}}
\quad = \quad
\vcenter{
\xymatrix@C=70pt@R=40pt{
{\cat{A}}\ar[r]^F\ar[dr]_F\ar[ddr]_F\druppertwocell\omit{<-2.5> \; \kappa}\ddruppertwocell\omit{<-2.5>\kappa} & {\cat{B}}\ar[d]^{T} \\
& {\cat{B}}\ar[d]^{T} \\
&{\cat{B}.}
}}
\]
It is straightforward to check that $(T, \eta,\mu)$ is a monad, and we call it the \demph{codensity monad} of $F$. If the Kan extension $T$ is a pointwise Kan extension (as defined in e.g.\ Definition~1.3.4 of~\cite{riehl14}), then we call $(T, \eta,\mu)$ the \demph{pointwise codensity monad} of $F$. Dually there is a notion of a \demph{(pointwise) density comonad}.
\end{defn}
Thus a functor is codense if and only if its codensity monad is trivial.

\begin{defn}
\label{defn:codensity-monad-comparison}
Let $\mnd{T} = (T, \eta, \mu)$ be the codensity monad of $F$ with $\kappa \from T \of F \to F$ the natural transformation making $T$ the right Kan extension of $F$ along itself. Then for each $a \in \cat{A}$ the map
\[
\kappa_a \from TFa \to Fa
\]
makes $Fa$ into a $\mnd{T}$-algebra, and if $f \from a \to a'$ in $\cat{A}$, then $Ff$ is a $\mnd{T}$-algebra homomorphism $(Fa, \kappa_a) \to (Fa', \kappa_{a'})$. Thus the assignments $a \mapsto (Fa, \kappa_a)$ and $f \mapsto Ff$ define a functor $K \from \cat{A} \to \cat{B}^{\mnd{T}}$ such that
\[
\xymatrix{
\cat{A}\ar[r]^K\ar[dr]_{F} & \cat{B}^{\mnd{T}}\ar[d]^{U^{\mnd{T}}} \\
& \cat{B}
}
\]
commutes, where $U^{\mnd{T}} \from \cat{B}^{\mnd{T}} \to \cat{B}$ is the forgetful functor from the category of $\mnd{T}$-algebras. We call $K$ the \demph{canonical comparison functor} for $F$.
\end{defn}

We now record here some lemmas that will aid us in identifying codensity monads.

\begin{lem}
\label{lem:codensity-iso-kleisli}
Let $U \from \cat{A} \to \cat{C}$ and $G \from \cat{D} \to \cat{C}$ be functors and suppose $G$ has a left adjoint $F$. Then the monad induced by the adjunction $F \dashv G$ is the pointwise codensity monad of $U$ if and only if for each $c, c' \in \cat{C}$ there is a bijection
\begin{equation}
\label{eq:codensity-iso-kleisli}
\Phi \from \cat{D}(Fc, Fc') \to [\cat{A},\Set](\cat{C}(c', U-), \cat{C}(c,U-))
\end{equation}
such that
\begin{enumerate}
\item if $f \from Fc \to Fc'$ and $f' \from Fc' \to Fc''$ then $\Phi(f' \of f) = \Phi (f) \of \Phi(f')$ and
\item if $g \from c \to c'$ then $\Phi(Fg) = g^* \from \cat{C}(c', U-) \to \cat{C}(c,U-)$.
\end{enumerate}
\end{lem}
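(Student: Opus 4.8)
The plan is to prove both implications by comparing Kleisli categories. Write $\mnd{T} = (T, \eta, \mu)$ for the monad induced by $F \dashv G$, so that $T = GF$. The comparison functor $\kl{\mnd{T}} \to \cat{D}$ is fully faithful and acts as $c \mapsto Fc$ on objects, so that $\kl{\mnd{T}}(c,c') = \cat{C}(c, GFc') \iso \cat{D}(Fc, Fc')$ by the adjunction transpose; under this bijection Kleisli composition corresponds to composition in $\cat{D}$, and the canonical functor $\cat{C} \to \kl{\mnd{T}}$ corresponds to $g \mapsto Fg$. On the codensity side I will show that, when the pointwise right Kan extension of $U$ along itself exists --- equivalently, when the codensity monad $\mnd{T}'$ of $U$ is defined as a \emph{pointwise} codensity monad --- its Kleisli category is described by
\[
\kl{\mnd{T}'}(c,c') \iso [\cat{A},\Set]\bigl(\cat{C}(c',U-),\, \cat{C}(c,U-)\bigr),
\]
naturally in $c$ and $c'$, with Kleisli composition corresponding to composition of natural transformations \emph{in the opposite order} and with the canonical functor $\cat{C} \to \kl{\mnd{T}'}$ sending $g \from c \to c'$ to $g^* = \cat{C}(g, U-)$. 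Granting these two descriptions, a bijection $\Phi$ as in the statement is exactly an isomorphism of categories $\kl{\mnd{T}} \iso \kl{\mnd{T}'}$ commuting with the two canonical functors from $\cat{C}$: condition (i) says precisely that $\Phi$ respects composition once composition on each side is transported to $\cat{D}$ and to $[\cat{A},\Set]$ respectively, and condition (ii) says precisely that it intertwines $g \mapsto Fg$ with $g \mapsto g^*$. Since a monad is recovered up to isomorphism from its Kleisli adjunction, and an isomorphism of Kleisli categories commuting with the canonical functors from $\cat{C}$ carries one Kleisli adjunction to the other up to isomorphism, such a $\Phi$ exists if and only if $\mnd{T} \iso \mnd{T}'$ --- once we know $\mnd{T}'$ is defined.

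To establish the description of $\kl{\mnd{T}'}$, suppose the pointwise right Kan extension exists, so that $T'c' = \lim\bigl((c' \downarrow U) \to \cat{A} \toby{U} \cat{C}\bigr)$; write $D_{c'}$ for this comma-category diagram. Since $\cat{C}(c,-)$ preserves this limit, $\kl{\mnd{T}'}(c,c') = \cat{C}(c, T'c')$ is the set of cones on $D_{c'}$ with vertex $c$, and matching the leg of a cone at an object $(k \from c' \to Ua)$ with the effect of the $a$-component on $k$ identifies this with the set of natural transformations $\cat{C}(c',U-) \to \cat{C}(c,U-)$; this gives the displayed bijection. (Read backwards, the same computation says that $T'c'$ represents $c \mapsto [\cat{A},\Set](\cat{C}(c',U-),\cat{C}(c,U-)) = \Cone(c, D_{c'})$, so that the pointwise Kan extension exists as soon as these representing objects exist.) For the monad structure I would unwind Definition~\ref{defn:codensity-monad}. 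The unit $\eta'_{c'} \from c' \to T'c'$ is the map induced by the tautological cone $(k)_{k \from c' \to Ua}$ on $D_{c'}$, since this is the unique natural transformation $\id_{\cat{C}} \to T'$ whose components at objects of the form $Ua$ compose with the Kan-extension structure maps $\kappa_a$ to give identities; hence $\eta'_{c'} \of g$ corresponds under the bijection to the natural transformation $k \mapsto k \of g = g^*(k)$, which gives the claim about the canonical functor. Similarly, feeding the defining equation for $\mu'$ and the description of $T'$ on morphisms via the limit projections through the bijection turns Kleisli composition into composition of natural transformations in the opposite order.

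With these two Kleisli descriptions in hand the two implications are short. For the ``if'' direction, suppose $\Phi$ is given satisfying (i) and (ii). Conditions (i) and (ii) make the composite $\cat{C}(c,Tc') \iso \cat{D}(Fc,Fc') \toby{\Phi} [\cat{A},\Set](\cat{C}(c',U-),\cat{C}(c,U-))$ natural in $c$ and in $c'$ (for instance, naturality in $c'$ uses $\Phi(Fh \of f) = \Phi(f) \of \Phi(Fh) = \Phi(f) \of h^*$ for $h \from c' \to c''$, which is the action of $h$ on the right-hand side; naturality in $c$ is similar). By the Yoneda lemma this exhibits $Tc'$ as $\lim D_{c'}$ for every $c'$, so the pointwise right Kan extension of $U$ along itself exists and $\mnd{T}'$ is defined; the same composite is then an isomorphism of Kleisli categories commuting with the canonical functors from $\cat{C}$, so $\mnd{T} \iso \mnd{T}'$, i.e.\ $\mnd{T}$ is the pointwise codensity monad of $U$. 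For the ``only if'' direction, if $\mnd{T}$ is the pointwise codensity monad of $U$ then the two Kleisli descriptions give isomorphisms $\cat{D}(Fc,Fc') \iso \kl{\mnd{T}}(c,c') = \kl{\mnd{T}'}(c,c') \iso [\cat{A},\Set](\cat{C}(c',U-),\cat{C}(c,U-))$; taking $\Phi$ to be the composite, properties (i) and (ii) are exactly the compatibility of this composite with Kleisli composition and with the canonical functor from $\cat{C}$ recorded above.

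The main obstacle is the middle paragraph: identifying not merely the hom-sets but the \emph{monad structure} of the pointwise codensity monad in terms of natural transformations between the functors $\cat{C}(c,U-)$. The hom-set bijection is a one-line consequence of the fact that $\cat{C}(c,-)$ preserves limits together with the end description of a natural transformation, but transporting the unit and multiplication of Definition~\ref{defn:codensity-monad} across it requires some care with the universal properties of the limits $T'c'$; one could also phrase this via the conerve $N^U$, whose full image in $[\cat{A},\Set]^{\op}$ carries the Kleisli structure above, but the verification that the induced monad coincides with the one of Definition~\ref{defn:codensity-monad} is essentially the same work.
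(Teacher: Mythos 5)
Your proposal is correct and follows essentially the same route as the paper's proof: identify the Kleisli category of the codensity monad with the category whose hom-sets are the natural transformations $\cat{C}(c',U-)\to\cat{C}(c,U-)$, identify the Kleisli category of the induced monad with the full image of $F$ in $\cat{D}$, read conditions (i) and (ii) as saying that $\Phi$ is an isomorphism of Kleisli categories compatible with the free functors, and use representability/Yoneda to get existence of the pointwise codensity monad in the ``if'' direction. You spell out the transport of the unit and multiplication across the hom-set bijection in more detail than the paper, which simply asserts these identifications, but the argument is the same.
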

\begin{proof}
The pointwise codensity monad $\mnd{T} = (T, \eta, \mu)$ of $U$ exists if and only if for each $c'$, the canonical functor
\[
(c' \downarrow U) \to \cat{A} \toby{U} \cat{C}
\]
has a limit. Unpacking the definition of a cone on this diagram, this is equivalent to the existence of an object $Tc'$ such that natural transformations $\cat{C}(c',U-) \to \cat{C}(c,U-)$ correspond to morphisms $c \to Tc'$, naturally in $c \in \cat{C}$. But if there is a correspondence as in Equation~\bref{eq:codensity-iso-kleisli}, then $GFc'$ is such an object, so the pointwise codensity monad exists.

But now the category with the same objects as $\cat{C}$, and whose morphisms are natural transformations $\cat{C}(c', U-) \to \cat{C}(c, U-)$ is precisely the Kleisli category of the codensity monad. And the category with the same objects as $\cat{C}$ and whose morphisms are morphisms $Fc \to Fc'$ is the Kleisli category of the monad induced by $F \ladj G$. But if two monads have isomorphic Kleisli categories, and the isomorphism is compatible with their respective free functors, then the monads are isomorphic.

Conversely, if the pointwise codensity monad exists and is isomorphic to the monad induced by $F \dashv G$, then the two monads have isomorphic Kleisli categories, yielding the required functorial correspondence.
\end{proof}

\begin{lem}
\label{lem:codense-radj-monad}
Let $I \from \cat{A} \to \cat{D}$ be a codense functor, and let $G \from \cat{D} \to \cat{C}$ be a functor with a left adjoint $F$. Then the pointwise codensity monad of $G \of I$ exists and is isomorphic to the monad induced by $F \ladj G$.
\end{lem}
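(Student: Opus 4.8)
The plan is to deduce the result from Lemma~\ref{lem:codensity-iso-kleisli}, applied with $U = G \of I \from \cat{A} \to \cat{C}$. By that lemma it suffices to construct, for each pair $c, c' \in \cat{C}$, a bijection
\[
\Phi \from \cat{D}(Fc, Fc') \to [\cat{A},\Set]\bigl(\cat{C}(c', (G \of I)-), \cat{C}(c, (G \of I)-)\bigr)
\]
subject to conditions~(i) and~(ii) of that lemma; its conclusion then simultaneously provides the existence of the pointwise codensity monad of $G \of I$ and its identification with the monad induced by $F \ladj G$.

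To build $\Phi$ I would compose two bijections. First, codensity of $I$: by Definition~\ref{defn:dense-codense} the conerve $N^I$ is full and faithful, so for all $d, d' \in \cat{D}$ the map $g \mapsto (-) \of g$ is a bijection
\[
\cat{D}(d', d) \toby{\sim} [\cat{A},\Set]\bigl(\cat{D}(d, I-), \cat{D}(d', I-)\bigr);
\]
instantiating $d' = Fc$ and $d = Fc'$ gives $\cat{D}(Fc, Fc') \iso [\cat{A},\Set]\bigl(\cat{D}(Fc', I-), \cat{D}(Fc, I-)\bigr)$. Second, the adjunction $F \ladj G$ yields isomorphisms $\cat{D}(Fc, Ia) \iso \cat{C}(c, GIa)$ and $\cat{D}(Fc', Ia) \iso \cat{C}(c', GIa)$ natural in $a \in \cat{A}$, hence isomorphisms $\cat{D}(Fc, I-) \iso \cat{C}(c, GI-)$ and $\cat{D}(Fc', I-) \iso \cat{C}(c', GI-)$ of functors $\cat{A} \to \Set$; pre- and post-composing a natural transformation with these gives a bijection
\[
[\cat{A},\Set]\bigl(\cat{D}(Fc', I-), \cat{D}(Fc, I-)\bigr) \iso [\cat{A},\Set]\bigl(\cat{C}(c', GI-), \cat{C}(c, GI-)\bigr).
\]
Then $\Phi$ is the composite of the two.

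It remains to check conditions~(i) and~(ii). Condition~(i) holds because $g \mapsto (-) \of g$ is contravariant in $g$ and the second bijection, being pre-/post-composition by fixed natural isomorphisms, respects composition of natural transformations; so $\Phi$ is contravariant as required. For condition~(ii), I would trace $Fg$ for $g \from c \to c'$: the codensity bijection sends it to $(-) \of Fg \from \cat{D}(Fc', I-) \to \cat{D}(Fc, I-)$, and since the adjunction bijection $\cat{D}(Fc', Ia) \iso \cat{C}(c', GIa)$ is natural in the variable $Fc'$, precomposition by $Fg$ corresponds under it to precomposition by $g$; hence the transported natural transformation is exactly $g^* \from \cat{C}(c', GI-) \to \cat{C}(c, GI-)$. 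I expect the only point demanding any care to be keeping the variances straight and correctly invoking naturality of the adjunction hom-set bijection in step~(ii); everything else is a routine diagram chase.
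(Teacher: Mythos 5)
Your proposal is correct and follows essentially the same route as the paper: both reduce to Lemma~\ref{lem:codensity-iso-kleisli} and obtain the required bijection by chaining the adjunction isomorphism $\cat{C}(c, GI-) \iso \cat{D}(Fc, I-)$ with the full-and-faithfulness of the conerve of $I$. The only difference is that you spell out the verification of conditions~(i) and~(ii) in more detail than the paper, which simply asserts compatibility with composition and the tracing of $g^*$.
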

\begin{proof}
It is sufficient to establish a bijection
\[
[\cat{A},\SET](\cat{C}(c', G \of I - ), \cat{C}(c, G \of I-)) \iso \cat{D}(Fc, Fc')
\]
satisfying the conditions of Lemma~\ref{lem:codensity-iso-kleisli}. But we have
\begin{align*}
[\cat{A},\SET](\cat{C}(c', G \of I - ), \cat{C}(c, G \of I-)) & \iso [\cat{A},\SET](\cat{C}(Fc',  I - ), \cat{C}(Fc,  I-)) && \text{(since $F \ladj G$)}\\
&\iso \cat{D}(Fc, Fc') && \text{(since $I$ is codense)},
\end{align*}
and this bijection is compatible with composition. Furthermore, tracing $f^*$ through this sequence of bijections gives $Ff$.
\end{proof}

\section{Idempotent adjunctions and monads}
\label{sec:idem-background}

In this section we review what it means for an adjunction or monad to be idempotent, and some of the consequences of these properties.

\begin{lem}
Let $\mnd{T} = (T, \eta, \mu)$ be a monad on a category $\cat{C}$, and let $U^{\mnd{T}}\from \cat{C}^{\mnd{T}} \to \cat{C}$ be the forgetful functor from the category of $\mnd{T}$-algebras. Then the following are equivalent:
\begin{enumerate}
\item the functor $U^{\mnd{T}} \from \cat{C}^{\mnd{T}} \to \cat{C}$ is full and faithful;
\item the natural transformation $\mu \from TT \to T$ is an isomorphism; and
\item for every $\mnd{T}$-algebra $(a, \alpha)$, the map $\alpha \from Ta \to a$ is an isomorphism.
\end{enumerate}
\end{lem}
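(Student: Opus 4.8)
The plan is to establish the cyclic chain (i) $\Rightarrow$ (ii) $\Rightarrow$ (iii) $\Rightarrow$ (i). Throughout I would use the fact that $U^{\mnd{T}}$ is automatically faithful --- a morphism of $\mnd{T}$-algebras is determined by its underlying morphism in $\cat{C}$ --- so the real content of (i) is that $U^{\mnd{T}}$ is full, i.e.\ that \emph{every} morphism in $\cat{C}$ between the underlying objects of two algebras is a homomorphism.

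For (i) $\Rightarrow$ (ii): I would fix $a \in \cat{C}$ and work with the free algebras $(Ta, \mu_a)$ and $(TTa, \mu_{Ta})$. The monad unit law gives $\mu_a \of \eta_{Ta} = \id_{Ta}$, so $\eta_{Ta} \from Ta \to TTa$ is a section of $\mu_a$ in $\cat{C}$; by fullness it underlies a homomorphism $(Ta,\mu_a) \to (TTa,\mu_{Ta})$, and writing out the homomorphism equation together with the other unit law $\mu_{Ta} \of T\eta_{Ta} = \id_{TTa}$ yields $\eta_{Ta} \of \mu_a = \id_{TTa}$. Hence $\mu_a$ is invertible, and since $a$ is arbitrary $\mu$ is an isomorphism.

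For (ii) $\Rightarrow$ (iii): I would first observe that when $\mu$ is invertible the two unit laws $\mu_a \of \eta_{Ta} = \id_{Ta} = \mu_a \of T\eta_a$ force $\eta_{Ta} = T\eta_a$. Then for an arbitrary algebra $(a,\alpha)$, the unit law $\alpha \of \eta_a = \id_a$ makes $\eta_a$ a section of $\alpha$, while naturality of $\eta$ at $\alpha \from Ta \to a$ gives $\eta_a \of \alpha = T\alpha \of \eta_{Ta} = T\alpha \of T\eta_a = T(\alpha \of \eta_a) = \id_{Ta}$, so $\alpha$ is an isomorphism. For (iii) $\Rightarrow$ (i), given algebras $(a,\alpha)$, $(b,\beta)$ and any $f \from a \to b$ in $\cat{C}$, I would use that $\alpha$ and $\beta$ are isomorphisms with $\alpha^{-1} = \eta_a$ and $\beta^{-1} = \eta_b$ (by the unit laws again), so that $\beta \of Tf \of \alpha^{-1} = \beta \of Tf \of \eta_a = \beta \of \eta_b \of f = f$ by naturality of $\eta$ --- which is exactly the homomorphism condition for $f$.

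None of the steps is really hard; the one requiring the most care is (i) $\Rightarrow$ (ii), where I have to choose the right pair of free algebras and the right candidate morphism so that applying fullness of $U^{\mnd{T}}$ produces precisely the equation $\eta_{Ta} \of \mu_a = \id_{TTa}$. The remaining steps are direct manipulations of the monad and algebra axioms and the naturality of $\eta$.
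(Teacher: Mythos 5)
Your proof is correct in every step: the application of fullness to $\eta_{Ta}\from (Ta,\mu_a)\to (TTa,\mu_{Ta})$ does yield $\eta_{Ta}\of\mu_a=\id_{TTa}$ via the unit law $\mu_{Ta}\of T(\eta_{Ta})=\id_{TTa}$, the cancellation $\eta_{Ta}=T\eta_a$ is legitimate because an isomorphism is monic, and the final naturality computation gives exactly the homomorphism condition. The paper itself does not prove this lemma — it simply cites Proposition~4.2.3 of Borceux — and your cyclic argument (i)$\Rightarrow$(ii)$\Rightarrow$(iii)$\Rightarrow$(i) is essentially the standard proof found there, so there is nothing to criticise.
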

\begin{proof}
See Proposition~4.2.3 in~\cite{borceux94v2}.
\end{proof}

\begin{defn}
A monad satisfying the conditions of the previous lemma is called an \demph{idempotent monad}.
\end{defn}

Recall the following definitions.

\begin{defn}
Let $\cat{C}$ be a category and $\cat{A}$ a full subcategory of $\cat{C}$. We say that $\cat{A}$ is \demph{replete} in $\cat{C}$ if, whenever we have an isomorphism $a \iso c$ in $\cat{C}$ where $a \in \cat{A}$ then $c \in \cat{A}$. We say that $\cat{A}$ is \demph{reflective} in $\cat{C}$ if the inclusion $\cat{A} \incl \cat{C}$ has a left adjoint.
\end{defn}

\begin{prop}
Let $\cat{C}$ be a category. There is a bijective correspondence between idempotent monads on $\cat{C}$ and reflective, replete subcategories of $\cat{C}$. This correspondence sends an idempotent monad to its category of algebras, and sends a reflective subcategory to the monad induced by the reflection.
\end{prop}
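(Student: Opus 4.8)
The plan is to write down the two constructions explicitly and verify that they are mutually inverse, with all the real content hiding in the unit/counit triangle identities.

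\textbf{From an idempotent monad to a subcategory.} Given an idempotent monad $\mnd{T} = (T,\eta,\mu)$ on $\cat{C}$, I would take $\cat{A}_{\mnd{T}}$ to be the full subcategory of $\cat{C}$ on those objects $c$ for which $\eta_c \from c \to Tc$ is invertible. Repleteness is immediate since a retract in the defining condition is preserved by isomorphism. For reflectivity, the unit law gives $\mu_c \of \eta_{Tc} = \id_{Tc}$, and since $\mu$ is invertible (idempotence) this forces $\eta_{Tc} = \mu_c^{-1}$; hence $Tc \in \cat{A}_{\mnd{T}}$ for every $c$, so $T$ corestricts to a functor $L \from \cat{C} \to \cat{A}_{\mnd{T}}$. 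Since for an idempotent monad the algebra structure on an object, when it exists, is forced to be the inverse of $\eta$, the forgetful functor $U^{\mnd{T}}$ is injective on objects (alternatively, combine the full-and-faithfulness from the characterisation of idempotent monads with the fact that $U^{\mnd{T}}$ is an amnestic isofibration, as in the proof of Lemma~\ref{lem:monadic-amnestic-isofib}, to get injectivity on objects). Thus $U^{\mnd{T}}$ restricts to an isomorphism of categories $\cat{C}^{\mnd{T}} \iso \cat{A}_{\mnd{T}}$, which makes the phrase ``category of algebras'' literally produce a subcategory of $\cat{C}$; transporting the free--forgetful adjunction across this isomorphism exhibits $L$ as left adjoint to the inclusion $\cat{A}_{\mnd{T}} \incl \cat{C}$, with unit $\eta$.

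\textbf{From a reflective replete subcategory to a monad.} Given a reflective replete full subcategory $\cat{A}$, with inclusion $J$, reflector $L \ladj J$, unit $\eta$ and counit $\epsilon$, I would use the standard fact that $J$ full and faithful makes $\epsilon \from L \of J \to \id_{\cat{A}}$ invertible. The induced monad is $\mnd{T} = (J \of L,\ \eta,\ J\epsilon L)$, whose multiplication is then invertible, so $\mnd{T}$ is idempotent by the lemma preceding the proposition.

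\textbf{The round trips.} Starting from an idempotent $\mnd{T}$, forming $\cat{A}_{\mnd{T}}$, and then the monad induced by the reflection of the previous paragraph: the underlying endofunctor is $J \of L = T$, the unit is $\eta$, and the multiplication at $c$ is $\epsilon_{Tc}$; but $\epsilon_a$ is the unique map with $J\epsilon_a \of \eta_{Ja} = \id$, so $\epsilon_{Tc} = \eta_{Tc}^{-1} = \mu_c$, and we recover $\mnd{T}$ on the nose. Conversely, starting from $\cat{A}$ and forming $\mnd{T} = (J \of L, \eta, \ldots)$: for $a \in \cat{A}$ the triangle identity together with invertibility of $\epsilon$ forces $\eta_{Ja}$ invertible, so $\cat{A} \subseteq \cat{A}_{\mnd{T}}$; and if $\eta_c$ is invertible then $c \iso Tc = J L c \in \cat{A}$, so repleteness gives $c \in \cat{A}$, whence $\cat{A}_{\mnd{T}} = \cat{A}$ as subcategories.

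The one place that needs care — and the only thing beyond routine diagram-chasing — is the identification $\cat{C}^{\mnd{T}} \iso \cat{A}_{\mnd{T}}$ as a subcategory rather than merely as a category with a full and faithful functor to $\cat{C}$; this rests on $U^{\mnd{T}}$ being injective on objects, i.e.\ on the uniqueness of the algebra structure for an idempotent monad (equivalently amnesticity plus full-and-faithfulness). Everything else is unwinding the triangle identities.
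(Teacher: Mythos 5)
Your proof is correct, and since the paper's own ``proof'' of this proposition is simply a citation to Corollary~4.2.4 of Borceux, your argument is essentially the standard one that reference supplies: identify $\cat{C}^{\mnd{T}}$ with the full subcategory on objects where $\eta$ is invertible, and check the two round trips via the unit laws and triangle identities. The only step you assert without writing out is that every object with $\eta_c$ invertible actually \emph{carries} an algebra structure (so that $U^{\mnd{T}} \from \cat{C}^{\mnd{T}} \to \cat{A}_{\mnd{T}}$ is surjective on objects, not just injective); this is the one-line verification that $\eta_c^{-1}$ satisfies the associativity axiom, using $T\eta = \mu^{-1}$, and it goes through.
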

\begin{proof}
See Corollary~4.2.4 of~\cite{borceux94v2}.
\end{proof}

Let us now consider a type of adjunction that is closely related to the notion of an idempotent monad.

\begin{lem}
\label{lem:idem-adj-conditions}
Let $F \from \cat{C} \to \cat{D}$ be a functor with right adjoint $G$, with unit $\eta$ and counit $\epsilon$. Then the following conditions are equivalent:
\begin{enumerate}
\item $F\eta$ is an isomorphism;
\item $\epsilon F$ is an isomorphism;
\item $G\epsilon F$ is an isomorphism, that is, the monad induced by the adjunction is idempotent;
\item $GF \eta = \eta GF$;
\item $G\epsilon$ is an isomorphism;
\item \label{part:idem-adj-etaG} $\eta G$ is an isomorphism;
\item $F \eta G$ is an isomorphism, that is, the comonad induced by the adjunction is idempotent;
\item $FG \epsilon = \epsilon FG$; and
\item $FG\epsilon F = \epsilon FGF$.
\end{enumerate}
\end{lem}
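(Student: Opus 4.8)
The plan is to prove a single cycle of implications threading through all nine conditions, using only the two triangle identities $\epsilon F \of F\eta = \id_F$ and $G\epsilon \of \eta G = \id_G$, the naturality of $\eta$ and $\epsilon$, and a duality argument that halves the work.

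First I would record the elementary fact that if $\alpha$ and $\beta$ are composable natural transformations with $\alpha \of \beta = \id$, then $\alpha$ is invertible if and only if $\beta$ is --- each being the two-sided inverse of the other. Applied to the two triangle identities this yields $(1) \iff (2)$ and $(5) \iff (6)$ at once, while $(2) \Rightarrow (3)$ is immediate because functors preserve isomorphisms. For $(3) \iff (4)$ I would view $\mnd{T} = (T, \eta, \mu)$ with $T = GF$ and $\mu = G\epsilon F$ as the induced monad, noting that the triangle identities give exactly its unit laws $\mu \of T\eta = \id_T = \mu \of \eta T$. If $\mu$ is invertible, cancelling it gives $T\eta = \eta T$, i.e.\ $(4)$; conversely, given $(4)$, naturality of $\eta$ at the components $\mu_X$ yields $\eta T \of \mu = T\mu \of \eta TT$, and using $(4)$ twice to rewrite $\eta TT$ as $TT\eta$ turns the right-hand side into $T(\mu \of T\eta) = \id_{TT}$, so $\mu$ is invertible with inverse $\eta T$.

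The step that bridges the ``$F$-side'' conditions to the ``$G$-side'' ones is $(4) \Rightarrow (5)$. Since $\eta G$ is a section of $G\epsilon$ by the triangle identity, it is enough to check $\eta G \of G\epsilon = \id_{GFG}$; naturality of $\eta$ at the morphisms $G\epsilon_X$ rewrites $\eta G \of G\epsilon$ as $GFG\epsilon \of \eta GFG$, and replacing $\eta GFG$ by $GF(\eta G)$ using $(4)$ collapses this to $GF(G\epsilon \of \eta G) = \id$. At this point the chain $(1) \iff (2) \Rightarrow (3) \iff (4) \Rightarrow (5) \iff (6)$ is in hand. I would then apply all of the above to the opposite adjunction $G^\op \ladj F^\op$ (whose unit is $\epsilon^\op$ and counit $\eta^\op$); under the dictionary that swaps $F$ with $G$ and $\eta$ with $\epsilon$, this gives the mirror chain $(6) \Rightarrow (7) \iff (8) \Rightarrow (1)$ --- in particular the dual of $(4) \Rightarrow (5)$ is precisely $(8) \Rightarrow (1)$ --- closing the loop, so $(1)$ through $(8)$ are all equivalent.

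It then remains to splice in $(9)$. For $(2) \Rightarrow (9)$, naturality of $\epsilon$ at the components $\epsilon_{FX}$ gives $\epsilon F \of FG\epsilon F = \epsilon F \of \epsilon FGF$, and cancelling the invertible $\epsilon F$ leaves $(9)$. For the converse, applying $G$ to the equation in $(9)$ turns its two sides into $T\mu$ and $\mu T$ respectively, so $T\mu = \mu T$; then naturality of $\eta$ at $\mu_X$ gives $\eta T \of \mu = T\mu \of \eta TT = \mu T \of \eta TT = \id_{TT}$, the last step by the monad unit law, so $\mu$ is invertible and $(3)$ holds. The main obstacle I anticipate is purely bookkeeping: tracking which functor is whiskered on which side in each naturality square, and setting up the opposite-adjunction translation carefully enough that the dual of $(4) \Rightarrow (5)$ really reads $(8) \Rightarrow (1)$ rather than some mismatched pair.
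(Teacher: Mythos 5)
Your proof is correct, and it is worth noting that the paper does not actually prove this lemma at all --- it simply cites 3.4 of Clark and Wisbauer --- so your argument supplies a self-contained proof where the paper defers to the literature. The logical skeleton is sound: the observation that in a retraction $\alpha \of \beta = \id$ either factor is invertible exactly when the other is correctly yields $(1)\iff(2)$ and $(5)\iff(6)$ from the two triangle identities; the monad-theoretic reading of $(3)\iff(4)$ (cancelling $\mu$ against the two unit laws in one direction, and using $\eta T\of\mu = T\mu\of\eta TT = TT\eta$-rewriting in the other) checks out; the bridge $(4)\Rightarrow(5)$ via naturality of $\eta$ at $G\epsilon_X$ is the genuinely non-formal step and you have it right; and the duality dictionary does send $(1),(2),(3),(4),(5),(6)$ to $(5),(6),(7),(8),(1),(2)$ respectively, so the dual of $(4)\Rightarrow(5)$ is indeed $(8)\Rightarrow(1)$ and the cycle closes. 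The handling of $(9)$ is also correct: naturality of $\epsilon$ at $\epsilon_{FX}$ plus cancellation of the invertible $\epsilon F$ gives $(2)\Rightarrow(9)$, and applying $G$ to $(9)$ to get $T\mu = \mu T$ and then running the unit law gives $(9)\Rightarrow(3)$. The only stylistic remark is that your single long cycle makes each individual equivalence slightly harder to extract than a presentation organised around the three clusters $\{1,2,3,4\}$, $\{5,6,7,8\}$, $\{9\}$ with explicit bridges, but that is a matter of taste, not correctness.
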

\begin{proof}
This is well-known; see for example~3.4 in~\cite{clarkWisbauer11}.
\end{proof}
An adjunction satisfying the conditions of the above lemma is called an \demph{idempotent adjunction.}

\begin{lem}
\label{lem:idem-adj-factor}
Let $F \from \cat{C} \to \cat{D}$ be a functor with a right adjoint $G$ with unit $\eta$ and counit $\epsilon$, and suppose the adjunction $F \ladj G$ is idempotent. Define $\cat{A}$ to be the full subcategory of $\cat{C}$ on those objects $c$ for which $\eta_c \from c \to GFc$ is an isomorphism. 
Write:
\begin{itemize}
\item $R \from \cat{A} \incl \cat{C}$ for the inclusion;
\item $F' \from \cat{A} \to \cat{D}$ for $F \of R$;
\item $G' \from \cat{D} \to \cat{A}$ for the factorisation of $G$ through $\cat{A}$, which exists by Lemma~\ref{lem:idem-adj-conditions}.\bref{part:idem-adj-etaG}; and
\item $L \from \cat{C} \to \cat{A}$ for the composite $G' \of F$.
\end{itemize}
Then we have $L \ladj R$ and $F' \ladj G'$, with $R$ and $F'$ full and faithful, and the adjunction $F \ladj G$ is isomorphic to the composite adjunction
\[
\xymatrix{
{\cat{C}}\ar@<5pt>[r]_-{\perp}^-{L} & \cat{A}\ar@<5pt>[l]^-{R} \ar@<5pt>[r]_-{\perp}^-{F'} & \cat{D}.\ar@<5pt>[l]^-{G'}
}
\]
Furthermore $\cat{A}$ is replete in $\cat{C}$, and can be identified up to isomorphism with the category of algebras for the monad induced by $F \ladj G$.
\end{lem}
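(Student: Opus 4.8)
The plan is to verify each assertion in turn, using the characterisations of idempotency in Lemma~\ref{lem:idem-adj-conditions} to exhibit explicit units and counits built from $\eta$, $\epsilon$ and the inverses $\eta_a^{-1}$, which exist for $a \in \cat{A}$ by the definition of $\cat{A}$. Two preliminary observations carry the weight, and they are the only places idempotency is genuinely used. First, $G$ factors through $\cat{A}$ as $G = R \of G'$ by the hypothesis used to define $G'$, so $R \of G' = G$ on the nose and $R \of L = R \of G' \of F = G \of F$. Second, $GFc \in \cat{A}$ for every $c$: evaluating the triangle identity $(G\epsilon)\of(\eta G) = \id_G$ at $Fc$ gives $G\epsilon_{Fc} \of \eta_{GFc} = \id_{GFc}$, and $G\epsilon_{Fc}$ is invertible by Lemma~\ref{lem:idem-adj-conditions}(v), so $\eta_{GFc}$ is invertible. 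Hence $L = G' \of F \from \cat{C} \to \cat{A}$ is well defined with $R \of L = GF$, and $\eta$ corestricts to a natural transformation $\id_{\cat{C}} \to R \of L$. The three easy assertions follow at once: $R$ is full and faithful since $\cat{A}$ is a full subcategory; $\cat{A}$ is replete since naturality of $\eta$ along an isomorphism $c \iso a$ with $a \in \cat{A}$ writes $\eta_c$ as a composite of isomorphisms; and $F'$ is full and faithful because for $a, a' \in \cat{A}$ the composite of $F \from \cat{A}(a,a') \to \cat{D}(F'a, F'a')$ with the adjunction bijection $\cat{D}(Fa,Fa') \iso \cat{C}(a, GFa')$ is post-composition with the isomorphism $\eta_{a'}$.

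Next, the two adjunctions. For $L \ladj R$ take the corestricted $\eta$ as unit and $\epsilon^L_a := \eta_a^{-1} \from LRa \to a$ as counit (note $LRa \in \cat{A}$ and, as an object of $\cat{C}$, equals $GFa$). One triangle identity is immediate since $\eta_a^{-1}$ inverts $\eta_a$; the other, $\epsilon^L L \of L\eta = \id_L$, reduces after applying the faithful $R$ to $\eta_{GFc}^{-1} \of GF\eta_c = \id$, which holds because $GF\eta_c = \eta_{GFc}$ by Lemma~\ref{lem:idem-adj-conditions}(iv). For $F' \ladj G'$ take the (now invertible) corestriction of $\eta$ to $\id_{\cat{A}} \to G'F'$ as unit and $\epsilon$ itself as counit (using $F'G'd = FGd$); both triangle identities reduce, after applying $R$, to the triangle identities of $F \ladj G$. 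In particular the unit of $F' \ladj G'$ is a natural isomorphism, reconfirming that $F'$ is full and faithful.

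For the identification of $F \ladj G$ with the composite adjunction, note that the composite's right adjoint $R \of G'$ is literally $G$, while its left adjoint $F' \of L = F \of (R\of G') \of F = FGF$ is naturally isomorphic to $F$ via $F\eta$, invertible by Lemma~\ref{lem:idem-adj-conditions}(i). It then remains to check that $(F\eta, \id_G)$ is an isomorphism of adjunctions, i.e.\ that it carries the unit to the unit and the counit to the counit. Unwinding the composite's unit $R\eta^{F'}L \of \eta^L$ and counit $\epsilon^{F'} \of F'\epsilon^L G'$ in terms of $\eta$, $\epsilon$ and $\eta^{-1}$ on $\cat{A}$, this is a short diagram chase using Lemma~\ref{lem:idem-adj-conditions}(iv) and the triangle identities. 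I expect this last bookkeeping to be the fiddliest part of the argument, although it is still routine.

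Finally, $\cat{A}$ is reflective in $\cat{C}$ (via $L \ladj R$) and replete, so by the bijective correspondence between idempotent monads and reflective replete subcategories it is isomorphic over $\cat{C}$ to the category of algebras for the idempotent monad induced by the reflection $L \ladj R$; and that monad is $(GF, \eta, R\epsilon^L L) = (GF, \eta, G\epsilon F)$, since $R\epsilon^L_{Lc} = \eta_{GFc}^{-1} = G\epsilon_{Fc}$ by the triangle identity and Lemma~\ref{lem:idem-adj-conditions}(v), i.e.\ it is exactly the monad induced by $F \ladj G$. Alternatively one argues directly: since the induced monad $\mnd{T}$ is idempotent, $U^{\mnd{T}} \from \cat{C}^{\mnd{T}} \to \cat{C}$ is full and faithful by the lemma characterising idempotent monads, and injective on objects because an algebra structure, being forced to invert the unit, is unique when it exists; hence $U^{\mnd{T}}$ is an isomorphism onto its image, which is precisely the full subcategory of objects with invertible unit, namely $\cat{A}$.
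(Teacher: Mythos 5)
Your proof is correct. Note that the paper does not actually prove this lemma — its "proof" is a citation to 3.6 of Clark and Wisbauer — so there is no in-paper argument to compare against; what you have written is a self-contained verification of the cited result, and it holds up. The load-bearing steps all check: $GFc \in \cat{A}$ follows from the triangle identity $G\epsilon_{Fc} \of \eta_{GFc} = \id$ together with invertibility of $G\epsilon$; the triangle identities for $L \ladj R$ and $F' \ladj G'$ reduce (after applying the faithful $R$ where needed) to those of $F \ladj G$ plus the identity $GF\eta = \eta GF$; and the comparison $(F\eta, \id_G)$ really is a map of adjunctions, since the composite unit at $c$ works out to $\eta_{GFc} \of \eta_c = GF\eta_c \of \eta_c$ and the composite counit at $d$ to $\epsilon_d \of F\eta_{Gd}^{-1}$, exactly matching the conjugation by $F\eta$. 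The one place I would ask for an extra line is your final "alternative" identification of $\cat{A}$ with $\cat{C}^{\mnd{T}}$: you establish that an algebra structure on $c$, when it exists, must be $\eta_c^{-1}$ (giving injectivity of $U^{\mnd{T}}$ on objects), but to conclude that the image of $U^{\mnd{T}}$ is \emph{all} of $\cat{A}$ you also need the converse, namely that $(c, \eta_c^{-1})$ satisfies the multiplication axiom whenever $\eta_c$ is invertible; this follows from $\mu_c = G\epsilon_{Fc} = \eta_{GFc}^{-1} = T\eta_c^{-1}$, but should be said. Since your first argument — via the bijection between idempotent monads and replete reflective subcategories, together with the computation $R\epsilon^L L = G\epsilon F$ — already closes the claim, the alternative argument is redundant and its small gap is harmless.
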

\begin{proof}
See~3.6 of~\cite{clarkWisbauer11}.
\end{proof}

Finally we note some of the consequences of a codensity monad being idempotent. This result may already be known, but I am not aware of it in the literature.

\begin{lem}
\label{lem:cm-idem-lem}
Let $U \from \cat{M} \to \cat{B}$ be a functor with a codensity monad $\mnd{T}$. Suppose that $\mnd{T}$ is idempotent, so that its category of algebras $\cat{B}^{\mnd{T}}$ can be identified with a reflective, replete subcategory of $\cat{B}$. Then
\begin{enumerate}
\item
\label{part:cm-idem-lem-1}
the canonical comparison functor $K \from \cat{M} \to \cat{B}^{\mnd{T}}$ is codense;
\item
\label{part:cm-idem-lem-2}
the full subcategory $\cat{B}^{\mnd{T}} \incl \cat{B}$ consists precisely of those objects of $\cat{B}$ of the form $\lim_{i \in \cat{I}} UDi$, where $D \from \cat{I} \to \cat{M}$ is a diagram in $\cat{M}$; and
\item
\label{part:cm-idem-lem-3}
the full subcategory $\cat{B}^{\mnd{T}} \incl \cat{B}$ is the smallest reflective, replete subcategory of $\cat{B}$ through which $U$ factors.
\end{enumerate}
\end{lem}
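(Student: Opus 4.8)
The plan is first to record a concrete description of $\cat{B}^{\mnd{T}}$, and then to prove the three parts in the order (ii), (iii), (i): part (iii) follows easily from (ii), and (i) is the least routine.

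\textbf{Preliminaries.} By the characterisation of idempotent monads recalled above, a pair $(b,\beta)$ is a $\mnd{T}$-algebra exactly when $\beta \from Tb \to b$ is invertible, in which case $\beta = \eta_b^{-1}$. Thus the identification in the hypothesis realises $\cat{B}^{\mnd{T}}$ as the full, replete, reflective subcategory of $\cat{B}$ on those $b$ for which $\eta_b$ is invertible, with reflector the corestriction of $T$ and with $U^{\mnd{T}}$ becoming the inclusion. For each $a \in \cat{M}$ the pair $(Ua, \kappa_a)$ is a $\mnd{T}$-algebra by Definition~\ref{defn:codensity-monad-comparison}, so $\kappa_a$ --- and hence $\eta_{Ua}$ --- is invertible; that is, every $Ua$ lies in $\cat{B}^{\mnd{T}}$, so $U$ factors through the inclusion $\cat{B}^{\mnd{T}} \incl \cat{B}$ and, under the identification, the canonical comparison functor $K \from \cat{M} \to \cat{B}^{\mnd{T}}$ is just $U$ with its codomain cut down to $\cat{B}^{\mnd{T}}$. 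Throughout I use that $\mnd{T}$, as a codensity monad, is computed pointwise, so that $Tb$ is the limit of the $U$-cone on $b$, i.e.\ of $(b \downarrow U) \to \cat{M} \toby{U} \cat{B}$, the comparison map $b \to Tb$ being $\eta_b$ (characterised by: its composite with the limit leg at $(a, f \from b \to Ua)$ equals $f$).

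\textbf{Parts (ii) and (iii).} For ``$\supseteq$'' in (ii): any $\lim_{i \in \cat{I}} UDi$ that exists in $\cat{B}$ is the limit of a diagram landing in $\cat{B}^{\mnd{T}}$, and a reflective, replete subcategory is closed under all limits that exist in the ambient category, so $\lim_i UDi \in \cat{B}^{\mnd{T}}$. For ``$\subseteq$'': if $b \in \cat{B}^{\mnd{T}}$ then $\eta_b$ is invertible, so $b \iso Tb$, which by the preliminaries is the limit of $(b \downarrow U) \to \cat{M} \toby{U} \cat{B}$; this exhibits $b$ in the required form, with $\cat{I} = (b \downarrow U)$ and $D$ the projection to $\cat{M}$. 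Part (iii) then follows: $\cat{B}^{\mnd{T}}$ is reflective, replete and factors $U$; and if $\cat{C} \incl \cat{B}$ is any reflective, replete subcategory through which $U$ factors, then every $Ua$ lies in $\cat{C}$, so by (ii) every object of $\cat{B}^{\mnd{T}}$ is a limit in $\cat{B}$ of a diagram landing in $\cat{C}$, whence $\cat{B}^{\mnd{T}} \subseteq \cat{C}$ since $\cat{C}$ is replete and closed under such limits.

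\textbf{Part (i).} Write $R \from \cat{B}^{\mnd{T}} \incl \cat{B}$ for the inclusion, which is fully faithful, continuous and reflective, with $U = R \of K$; fix $b \in \cat{B}^{\mnd{T}}$. Since $R$ is fully faithful, the comma category $(b \downarrow K)$ is canonically isomorphic to $(b \downarrow U)$, and $R$ sends the $K$-cone on $b$ to the $U$-cone on $b$. By the preliminaries the limit of this $U$-diagram in $\cat{B}$ is $Tb$, which lies in $\cat{B}^{\mnd{T}}$; as $R$ is fully faithful and continuous and the limit is attained inside $\cat{B}^{\mnd{T}}$, the $K$-diagram over $(b \downarrow K)$ also has limit $Tb$ in $\cat{B}^{\mnd{T}}$, and the comparison from the $K$-cone on $b$ to this limit is $\eta_b$. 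Since $\eta_b$ is invertible, the $K$-cone on $b$ is a limit cone; as $b$ was arbitrary, $K$ is codense. (Alternatively, one can run this through Lemma~\ref{lem:codensity-iso-kleisli} applied to the identity adjunction on $\cat{B}^{\mnd{T}}$, assembling the required bijection $\cat{B}^{\mnd{T}}(b,b') \iso [\cat{M},\Set]\bigl(\cat{B}^{\mnd{T}}(b',K-),\cat{B}^{\mnd{T}}(b,K-)\bigr)$ out of the natural isomorphism $\cat{B}^{\mnd{T}}(x,K-) \iso \cat{B}(x,U-)$, the pointwise-Kan description of $Tb'$, and $\eta_{b'}$.)

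\textbf{Main obstacle.} Parts (ii) and (iii) are bookkeeping with closure of reflective subcategories under limits. The delicate point is (i): one must verify that the comparison from the $K$-cone on $b$ to the limit $Tb$ really is $\eta_b$ (equivalently, that the bijection fed into Lemma~\ref{lem:codensity-iso-kleisli} is the canonical one and is compatible with composition), and one must use $\mnd{T}$ as a \emph{pointwise} codensity monad, since this is what makes the identification $Tb = \lim_{(b\downarrow U)} U$ available in all three parts.
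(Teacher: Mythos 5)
Your proof is correct and follows essentially the same route as the paper's: both arguments rest on the pointwise description $Tb \iso \lim_{(b\downarrow U)} U$, the closure of a reflective replete subcategory under limits, and the invertibility of $\eta_b$ for $b \in \cat{B}^{\mnd{T}}$. The only difference is organisational — the paper proves (i) first and deduces the forward inclusion of (ii) from codensity of $K$, whereas you prove (ii) directly and treat (i) separately — and your explicit remarks about $(b \downarrow K) \iso (b \downarrow U)$ and the need for the codensity monad to be pointwise are points the paper leaves implicit.
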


\begin{proof}
\begin{enumerate}
\item
Let $b \in \cat{B}^{\mnd{T}} \subseteq \cat{B}$. To show that $K$ is codense, we must show that $b$ is the limit of the canonical diagram
\[
(b \downarrow K) \to \cat{M} \toby{K} \cat{B}^{\mnd{T}}.
\]
But since $\cat{B}^{\mnd{T}}$ is reflective in $\cat{B}$, limits computed in $\cat{B}^{\mnd{T}}$ coincide with limits computed in $\cat{B}$, and the limit of this diagram in $\cat{B}$ is by definition $Tb$. But since $\mnd{T}$ is idempotent and $b \in \cat{B}^{\mnd{T}} \subseteq \cat{B}$, we have $b \iso Tb$, and so $b$ is a limit of this diagram.
\item
As a reflective, replete subcategory, $\cat{B}^{\mnd{T}}$ is closed under limits in $\cat{B}$ and so contains every object of this form. On the other hand, since $K$ is codense, every object $b$ of $\cat{B}^{\mnd{T}}$ is the limit of the diagram
\[
(b \downarrow K) \to \cat{M} \toby{K} \cat{B}^{\mnd{T}}.
\]
which is of the form described.
\item
Any reflective, replete subcategory must be closed under limits, and so by~\bref{part:cm-idem-lem-2}, if $U$ factors through such a subcategory then it must contain $\cat{B}^{\mnd{T}}$. Thus, since $\cat{B}^{\mnd{T}}$ is itself reflective and replete, it is the smallest such. \qedhere
\end{enumerate}
\end{proof}

\section{Profinite groups}
\label{sec:profinite-background}
In Chapters~\ref{chap:canonical2},~\ref{chap:topology} and~\ref{chap:complete}, we will develop an analogy between the notion of algebraic theory developed in this thesis and some aspects of group theory. As part of this comparison, we will make frequent reference to \emph{profinite groups} and we take the opportunity here to collect some basic definitions and results concerning these.

In particular, there are many ways of characterising the category of profinite groups up to equivalence. Although these are well-known, I could not find a comprehensive list of these characterisations.

\begin{defn}
\label{defn:profinite-group}
A \demph{profinite group} is a small topological group that can be written as a small limit of finite discrete groups in the category of small topological groups. We write $\profGp$ for the full subcategory of $\TopGp$ consisting of the profinite groups.
\end{defn}

\begin{prop}
A small topological group is profinite if and only if it is compact, Hausdorff and totally disconnected.
\end{prop}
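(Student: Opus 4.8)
The plan is to prove both directions of the equivalence between profinite groups (limits of finite discrete groups in $\TopGp$) and compact Hausdorff totally disconnected topological groups.

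First I would handle the easy direction: a profinite group is compact, Hausdorff and totally disconnected. Each finite discrete group has all three properties, and each is preserved under the relevant limit construction. Concretely, a small limit in $\TopGp$ is computed as a subspace of a product; an arbitrary product of compact Hausdorff spaces is compact Hausdorff by Tychonoff, and a product of totally disconnected spaces is totally disconnected. The limit is then a closed subgroup of this product (it is the equaliser-type subset cut out by the diagram, which is closed since the spaces are Hausdorff), and closed subspaces inherit compactness, Hausdorffness and total disconnectedness. So this direction is essentially bookkeeping.

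The substantive direction is the converse: given a compact Hausdorff totally disconnected group $G$, exhibit it as a limit of finite discrete groups. The key topological input is that in a compact Hausdorff totally disconnected \emph{group}, the open normal subgroups form a neighbourhood basis of the identity. This follows because (i) in a compact Hausdorff totally disconnected space the clopen sets form a basis, so there is a clopen neighbourhood $U$ of $e$; (ii) a standard compactness argument produces an open subgroup inside $U$ (take the clopen $U$, use continuity of multiplication and compactness to find a symmetric open $V$ with $V^n \subseteq U$ for all $n$, or more directly build an open subgroup); (iii) an open subgroup has finite index by compactness, and its core (intersection of finitely many conjugates) is an open normal subgroup of finite index contained in $U$. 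Then I would form the diagram indexed by these open normal subgroups $N$, sending $N$ to the finite discrete group $G/N$, with the evident quotient maps, and check that the canonical map $G \to \lim_N G/N$ is an isomorphism of topological groups: it is injective because the $N$ have trivial intersection (Hausdorffness plus the basis property), it is continuous by construction, its image is dense and closed hence everything, and since $G$ is compact and the limit Hausdorff it is a homeomorphism onto its image, which is all of the limit.

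The main obstacle is step (ii)--(iii) above: producing an open (normal, finite-index) subgroup inside an arbitrary clopen neighbourhood of the identity. This is the one genuinely non-formal point and uses the interaction of the group structure with compactness in an essential way; everything else is routine preservation-of-properties or a diagram chase. I would expect the author either to cite a standard reference (e.g. Ribes--Zalesskii or Wilson on profinite groups) for the characterisation of the identity neighbourhood basis, or to spell out the compactness argument. Given the style of this background chapter, citing a standard source for this proposition seems most likely.
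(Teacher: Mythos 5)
Your outline is mathematically sound and is the standard argument, and your closing guess is exactly right: the paper simply cites Corollary~1.2.4 of Wilson's \emph{Profinite Groups} rather than spelling out the proof. Nothing further is needed.
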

\begin{proof}
See for example Corollary~1.2.4 in~\cite{wilson98}.
\end{proof}

\begin{prop}
The codensity monad of the functor $\FinGp \incl \TopGp$ that sends a finite group to the corresponding discrete group is idempotent.
\end{prop}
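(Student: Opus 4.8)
The plan is to factor the inclusion $\FinGp \incl \TopGp$ through the profinite groups and to recognise its codensity monad as the reflection monad onto $\profGp$, which is automatically idempotent. Write the functor in question as the composite $\FinGp \overset{J}{\incl} \profGp \overset{K}{\incl} \TopGp$. First I would show that $K$ admits a left adjoint, the \emph{profinite completion} functor: for a small topological group $G$, set $\hat{G} = \lim_{N} G/N$, the limit in $\TopGp$ over the open normal subgroups $N$ of finite index in $G$, each $G/N$ regarded as a finite discrete group. Since limits compose, this is again a small limit of finite discrete groups, so it lies in $\profGp$; equivalently, $\profGp$ is closed under small limits in $\TopGp$ because, by the characterisation of profinite groups as the compact Hausdorff totally disconnected topological groups, it is closed under small products and closed subgroups. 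The canonical map $G \to \hat{G}$ is then universal among continuous homomorphisms from $G$ into profinite groups (any homomorphism to $\lim_k F_k$ with each $F_k$ finite discrete factors through the quotients $G \to G/\ker(G\to F_k)$), so $G \mapsto \hat{G}$ is left adjoint to $K$; and $\profGp$ is replete in $\TopGp$ since being profinite is invariant under isomorphism of topological groups.

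Next I would verify that $J \from \FinGp \incl \profGp$ is codense, i.e.\ that for each profinite group $G$ the canonical cone over $(G \downarrow J) \to \FinGp \overset{J}{\incl} \profGp$ is a limit cone; since $\profGp$ is closed under limits in $\TopGp$, this cone is a limit cone in $\profGp$ if and only if it is one in $\TopGp$. The required input is the standard structure theory of profinite groups: the open normal subgroups of finite index in $G$ form a neighbourhood basis of the identity, every continuous homomorphism from $G$ to a finite discrete group factors through one of the corresponding quotient maps $G \to G/N$, and these quotient maps are cofinal in the comma category $(G \downarrow J)$. Combined with the canonical isomorphism $G \cong \lim_N G/N$, this exhibits $G$ as the limit of the $J$-diagram, so $J$ is codense.

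With these facts in place, Lemma~\ref{lem:codense-radj-monad} --- applied with the codense functor taken to be $J$, the functor with a left adjoint taken to be $K$, and that left adjoint the profinite completion --- shows that the pointwise codensity monad of $K \of J = (\FinGp \incl \TopGp)$ exists, hence coincides with its codensity monad, and is isomorphic to the monad induced on $\TopGp$ by the adjunction (profinite completion)~$\ladj K$. That monad is precisely the one induced by the reflection onto the replete reflective subcategory $\profGp \subseteq \TopGp$, and so is idempotent by the correspondence between idempotent monads on $\TopGp$ and replete reflective subcategories of $\TopGp$ (equivalently: since $K$ is full and faithful the counit of the adjunction is invertible, so condition~(ii) of Lemma~\ref{lem:idem-adj-conditions} holds and the adjunction is idempotent). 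Hence the codensity monad of $\FinGp \incl \TopGp$ is idempotent. I expect the main obstacle to be the codensity of $J$: recovering a profinite group as the limit over \emph{all} its finite quotients depends essentially on the (standard but non-trivial) fact that a profinite group has enough open finite-index normal subgroups, and some care is needed over the cofinality argument reducing the comma category $(G \downarrow J)$ to the poset of such subgroups.
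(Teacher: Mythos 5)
Your argument is correct, but it takes a genuinely different route from the paper: the paper proves this proposition simply by citing Deleanu's Theorem~3.1, whereas you essentially reconstruct a proof of that theorem. Concretely, you factor $\FinGp \incl \TopGp$ as $J \from \FinGp \incl \profGp$ followed by $K \from \profGp \incl \TopGp$, establish the profinite completion as a left adjoint to $K$ and the codensity of $J$ from the structure theory of profinite groups (open normal finite-index subgroups forming a neighbourhood basis, $G \cong \lim_N G/N$, and an initiality/cofinality argument reducing $(G \downarrow J)$ to the poset of such subgroups), and then invoke Lemma~\ref{lem:codense-radj-monad} to identify the codensity monad of the composite with the reflection monad onto $\profGp$, which is idempotent because $K$ is full and faithful. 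It is worth noting that this reverses the logical direction of the paper: immediately after this proposition the paper \emph{deduces} that $\FinGp$ is codense in $\profGp$ and that $\profGp$ is the category of algebras, as consequences of idempotency via Lemma~\ref{lem:cm-idem-lem}, whereas you prove codensity of $J$ first and obtain idempotency from it. There is no circularity, since your inputs (the characterisation of profinite groups and the quotient structure) are independent standard facts; what your approach buys is a self-contained proof that also yields the content of the subsequent proposition along the way, at the cost of importing more profinite group theory than the paper's one-line citation. The only step deserving extra care is the one you already flag: the reduction of the limit over $(G \downarrow J)$ to the limit over the quotient maps $G \to G/N$, which requires checking that every object $h \from G \to H$ of the comma category receives a map from such a quotient in a suitably coherent (initial) way.
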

\begin{proof}
This was proved by Deleanu in Theorem~3.1 of~\cite{deleanu83}.
\end{proof}

\begin{prop}
The category of algebras for the codensity monad of $\FinGp \incl \TopGp$ is $\profGp$, and the forgetful functor to $\TopGp$ is the usual inclusion. Furthermore, $\FinGp$ is codense in $\profGp$, and $\profGp$ is the smallest reflective subcategory of $\TopGp$ containing $\FinGp$.
\end{prop}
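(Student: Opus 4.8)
The plan is to deduce everything from Lemma~\ref{lem:cm-idem-lem}, applied to the inclusion $U \from \FinGp \incl \TopGp$. By the preceding proposition the codensity monad $\mnd{T} = (T,\eta,\mu)$ of $U$ is idempotent, so Lemma~\ref{lem:cm-idem-lem} applies with $\cat{M} = \FinGp$ and $\cat{B} = \TopGp$. It tells us that $\TopGp^{\mnd{T}}$ is, up to isomorphism, a reflective, replete subcategory of $\TopGp$ with $U^{\mnd{T}}$ the inclusion, that the comparison functor $K \from \FinGp \to \TopGp^{\mnd{T}}$ is codense, and that $\TopGp^{\mnd{T}}$ is the smallest reflective, replete subcategory of $\TopGp$ through which $U$ factors. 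The work is then to identify $\TopGp^{\mnd{T}}$ with $\profGp$ and $K$ with the evident inclusion $\FinGp \incl \profGp$, after which the three assertions of the proposition follow directly.

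First I would check the object-level identification using part~\bref{part:cm-idem-lem-2} of Lemma~\ref{lem:cm-idem-lem}: the objects of $\TopGp^{\mnd{T}}$, viewed inside $\TopGp$, are exactly those of the form $\lim_{i \in \cat{I}} U D i$ for diagrams $D \from \cat{I} \to \FinGp$. Since $\FinGp$ is essentially small and $\TopGp$ is locally small, every comma category $(b \downarrow U)$ is essentially small, so each $Tb$ is an (essentially) \emph{small} limit of finite discrete groups; hence every object of $\TopGp^{\mnd{T}}$ lies in $\profGp$ in the sense of Definition~\ref{defn:profinite-group}. Conversely, $\TopGp$ is complete, so any small limit of finite discrete groups exists in $\TopGp$, and as a reflective subcategory is closed under all limits that exist in the ambient category (and $\FinGp \subseteq \TopGp^{\mnd{T}}$), we get $\profGp \subseteq \TopGp^{\mnd{T}}$. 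Thus $\TopGp^{\mnd{T}} = \profGp$ as replete subcategories of $\TopGp$, and $U^{\mnd{T}}$ is the usual inclusion, which is the first sentence of the proposition. Unwinding Definition~\ref{defn:codensity-monad-comparison} shows $K$ sends a finite group $a$ to the $\mnd{T}$-algebra whose underlying object is the discrete group $a$, which under the identification $\TopGp^{\mnd{T}} = \profGp$ is just $a$ regarded as a profinite group; so $K$ is the inclusion $\FinGp \incl \profGp$.

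With these identifications, part~\bref{part:cm-idem-lem-1} of Lemma~\ref{lem:cm-idem-lem} says precisely that $\FinGp$ is codense in $\profGp$. For the final assertion, part~\bref{part:cm-idem-lem-3} gives that $\profGp$ is the smallest reflective, replete subcategory of $\TopGp$ through which $U$ factors; I would then drop the word ``replete'' by the standard observation that any reflective subcategory $\cat{A} \subseteq \TopGp$ has a repletion that is again reflective and contains $\cat{A}$, so if $\cat{A}$ contains $\FinGp$ then $U$ factors through this repletion, which therefore contains $\profGp$. Since $\profGp$ is itself reflective and contains $\FinGp$ (via $K$), it is the smallest such. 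The main obstacle is the size bookkeeping of the second paragraph — making sure the limits over comma categories that define $T$ really do match the ``small limits'' of Definition~\ref{defn:profinite-group} — together with the minor repleteness point; all the substantive categorical content is carried by Lemma~\ref{lem:cm-idem-lem} and the idempotence result.
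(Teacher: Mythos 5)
Your proposal is correct and follows essentially the same route as the paper: invoke the idempotence of the codensity monad from the preceding proposition, then apply Lemma~\ref{lem:cm-idem-lem}, using part~\bref{part:cm-idem-lem-2} to identify the category of algebras with the closure of $\FinGp$ under small limits (i.e.\ $\profGp$ by Definition~\ref{defn:profinite-group}) and parts~\bref{part:cm-idem-lem-1} and~\bref{part:cm-idem-lem-3} for the remaining assertions. The paper states this in three lines; your extra care over the size of the limiting diagrams and over dropping ``replete'' fills in details the paper leaves implicit, but does not change the argument.
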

\begin{proof}
By the previous proposition, this codensity monad is idempotent, and so its category of algebras can be identified with the closure of $\FinGp \incl \TopGp$ under small limits by Lemma~\ref{lem:cm-idem-lem}.\bref{part:cm-idem-lem-2}, but this is precisely the definition of $\profGp$. The other assertions then follow from Lemma~\ref{lem:cm-idem-lem}.\bref{part:cm-idem-lem-1} and Lemma~\ref{lem:cm-idem-lem}.\bref{part:cm-idem-lem-3}.
\end{proof}

We will show that $\profGp$ is monadic over $\Set$, $\Top$ and $\Gp$ by applying a result of Gildenhuys and Kennison, namely Theorem~3.1 from~\cite{kennisonGildenhuys71}, which we will restate for convenience. First however, we recall some definitions.

\begin{defn}
Let $\mnd{T}$ be a monad on $\Set$. Then a \demph{Birkhoff subcategory} of $\Set^{\mnd{T}}$ is a full subcategory closed under products, subalgebras and homomorphic images.
\end{defn}

The celebrated Birkhoff Variety Theorem states that every Birkhoff subcategory of the category of algebras for a finitary algebraic theory is itself the category of algebras for a finitary algebraic theory. However, we shall not need this result, but only the fact that any Birkhoff subcategory is in particular a reflective subcategory, which can be seen by a routine application of the General Adjoint Functor Theorem.

The following definition is from Section~2 of Gildenhuys and Kennison~\cite{kennisonGildenhuys71}.
\begin{defn}
Let $U \from \scat{M} \to \Set$ with $\scat{M}$ small, and let $\mnd{T}$ be the codensity monad of $U$. Then the \demph{category of $\scat{M}$-objects}, denoted $\scat{M}\obj$, is defined to be the smallest full subcategory of $\Set^{\mnd{T}}$ through which the comparison functor $K \from \scat{M} \to \Set^{\mnd{T}}$ factors and which is closed under small limits.
\end{defn}

The following notion is defined in Section~1 of~\cite{kennisonGildenhuys71}, under the name ``separating triple'' rather than ``separating monad''.
\begin{defn}
Let $U \from \scat{M} \to \Set$ be a functor with $\scat{M}$ small. A \demph{separating monad} for $U$ consists of a monad $\mnd{T}_0$ and a full and faithful functor $S \from \scat{M} \to \Set^{\mnd{T}_0}$ whose image is closed under the formation of subalgebras, such that
\[
\xymatrix{
\scat{M}\ar[r]^S\ar[dr]_U & \Set^{\mnd{T}_0}\ar[d]^{U^{\mnd{T}_0}} \\
& \Set
}
\]
commutes.
\end{defn}

\begin{defn}
We say that a monad $\mnd{T}$ on $\Set$ \demph{admits a group operation} if there exists a morphism of monads from the free group monad on $\Set$ to $\mnd{T}$.
\end{defn}

\begin{prop}
\label{prop:kennison-gildenhuys}
Let $U \from \scat{M} \to \Set$ with $\scat{M}$ small, let $\mnd{T}$ be the codensity monad of $U$, and let $\tilde{\mnd{T}}$ be the codensity monad of the composite
\[
\scat{M}\toby{U} \Set \toby{D} \Top
\]
where $D$ is the discrete space functor. Let $\mnd{T}_0$ be a finitary separating monad for $U$ such that $\mnd{T}_0$ admits a group operation. Suppose $\scat{M}$ has and $U$ preserves finite products, and suppose $U$ takes values in the finite sets. Then $\Set^{\mnd{T}}$  can be identified with the smallest Birkhoff subcategory of the category of compact Hausdorff $\mnd{T}_0$-algebras through which the comparison functor from $\scat{M}$ factors. Furthermore, we have equivalences
\[
\scat{M}\obj \eqv \Top^{\tilde{\mnd{T}}} \eqv \Set^{\mnd{T}},
\]
compatible with the forgetful functors to $\Set$.
\end{prop}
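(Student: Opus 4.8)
The plan is to deduce this directly from Theorem~3.1 of Gildenhuys and Kennison~\cite{kennisonGildenhuys71}, so the work is almost entirely one of matching their hypotheses and conclusions to ours. First I would set up the dictionary: their ``separating triple'' is our separating monad; their hypothesis that $\mnd{T}_0$-algebras carry a natural underlying group structure is our condition that $\mnd{T}_0$ admits a group operation (a morphism of monads from the free group monad, as in the definition above); and our category $\scat{M}\obj$ of $\scat{M}$-objects is their definition from Section~2 taken verbatim. The remaining hypotheses --- $\scat{M}$ small, $\scat{M}$ has and $U$ preserves finite products, $U$ landing in the finite sets, and $\mnd{T}_0$ finitary --- are exactly theirs.

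With the dictionary in place, the identification of $\Set^{\mnd{T}}$ with the smallest Birkhoff subcategory of the category of compact Hausdorff $\mnd{T}_0$-algebras through which the comparison functor $K \from \scat{M} \to \Set^{\mnd{T}_0}$ factors is precisely the content of their theorem. Here I would only pause to recall, as noted before the statement, that a Birkhoff subcategory is automatically reflective by the General Adjoint Functor Theorem, so that ``smallest'' makes sense, and that the category of compact Hausdorff $\mnd{T}_0$-algebras is well behaved because $\mnd{T}_0$ is finitary.

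For the chain of equivalences $\scat{M}\obj \eqv \Top^{\tilde{\mnd{T}}} \eqv \Set^{\mnd{T}}$, compatible with the forgetful functors to $\Set$: the equivalence $\scat{M}\obj \eqv \Set^{\mnd{T}}$ says that the closure of $K(\scat{M})$ under small limits inside $\Set^{\mnd{T}}$ is all of $\Set^{\mnd{T}}$, i.e.\ that $K$ is codense, and the equivalence $\Top^{\tilde{\mnd{T}}} \eqv \Set^{\mnd{T}}$ says that such an object --- being a limit of images of finite sets --- carries an essentially unique compact Hausdorff topology, the discrete space functor $D$ supplying the topology on the finite objects; both statements are part of what Gildenhuys and Kennison establish, and the compatibility with the forgetful functors to $\Set$ is immediate from the constructions. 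The one genuine obstacle is bookkeeping: one must check that the several ``smallest subcategory closed under (small limits, or products, subalgebras and quotients)'' constructions appearing in our formulation and in~\cite{kennisonGildenhuys71} really do coincide, and that our group-operation hypothesis is exactly what their proof uses when it passes between equationally compact and compact Hausdorff algebras. Once this is verified, no new mathematics is needed.
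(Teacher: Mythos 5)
Your proposal is correct and follows essentially the same route as the paper: the paper's proof of this proposition is simply the citation ``This is Theorem~3.1 in~\cite{kennisonGildenhuys71}'', with the terminological dictionary (separating monad, $\scat{M}$-objects, admitting a group operation) already established in the surrounding definitions. Your additional bookkeeping about matching hypotheses and the reflectivity of Birkhoff subcategories is sound but not something the paper spells out.
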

Note that speaking of Birkhoff subcategories of the category of compact Hausdorff $\mnd{T}_0$-algebras does make sense, because this category is monadic over $\Set$; this is Proposition~7.1 in Manes~\cite{manes69}.

\begin{proof}
This is Theorem~3.1 in~\cite{kennisonGildenhuys71}.
\end{proof}

\begin{prop}
\label{prop:profinite-codensity-set-top-gp}
The category $\profGp$ of profinite groups is monadic over $\Set$, $\Top$ and $\Gp$. Furthermore, in each case the corresponding monad is the codensity monad of the forgetful functor from $\FinGp$ to each of these categories.
\end{prop}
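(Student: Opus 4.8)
The plan is to apply Proposition~\ref{prop:kennison-gildenhuys} with $\scat{M} = \FinGp$, with $U \from \FinGp \to \Set$ the underlying-set functor, and with $\mnd{T}_0$ the free group monad on $\Set$, so that $\Set^{\mnd{T}_0} = \Gp$. First I would verify its hypotheses: $\FinGp$ is essentially small and has finite products which $U$ preserves, $U$ takes values in the finite sets, the free group monad is finitary and admits a group operation (its identity endomorphism of monads), and it is a separating monad for $U$ since the inclusion $\FinGp \incl \Gp$ is full and faithful, commutes with the forgetful functors to $\Set$, and has image closed under subalgebras (a subgroup of a finite group is finite).

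Write $\mnd{T}$ for the codensity monad of $U$ and $\tilde{\mnd{T}}$ for that of $\FinGp \toby{U} \Set \toby{D} \Top$. Proposition~\ref{prop:kennison-gildenhuys} then identifies $\Set^{\mnd{T}}$ with the smallest Birkhoff subcategory of the category of compact Hausdorff groups through which the comparison functor from $\FinGp$ factors, and gives an equivalence $\Top^{\tilde{\mnd{T}}} \eqv \Set^{\mnd{T}}$ compatible with the forgetful functors to $\Set$. The key step is to recognise that smallest Birkhoff subcategory as $\profGp$: each finite group is compact, Hausdorff and totally disconnected; the class of such groups --- equivalently, the closed subgroups of products of finite discrete groups --- is closed in the compact Hausdorff groups under products, closed subgroups and Hausdorff quotient groups, so is a Birkhoff subcategory containing all finite groups; and conversely any Birkhoff subcategory containing the finite groups is closed under products and closed subgroups, hence contains all of them. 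Since $\FinGp \toby{U} \Set \toby{D} \Top$ is just the underlying-space functor $\FinGp \to \Top$, we conclude $\Set^{\mnd{T}} \eqv \profGp$ as categories over $\Set$, and likewise $\Top^{\tilde{\mnd{T}}} \eqv \profGp$ as categories over $\Top$ (the compatibility over $\Top$, beyond that over $\Set$ supplied by the proposition, being a routine check on the topologies carried by codensity-monad algebras).

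Next I would identify the monads and treat $\Gp$, which the proposition does not mention directly. For each $\cat{C} \in \{\Set, \Top, \Gp\}$ the forgetful functor $V \from \profGp \to \cat{C}$ has a left adjoint --- the profinite completion of the free group for $\cat{C} = \Set$, the free topological group followed by the reflector $\TopGp \to \profGp$ for $\cat{C} = \Top$, and the profinite completion of the discrete group for $\cat{C} = \Gp$ --- and since $\FinGp$ is codense in $\profGp$ (by the earlier proposition), Lemma~\ref{lem:codense-radj-monad} shows that the monad induced by each such adjunction is the codensity monad of the corresponding forgetful functor $\FinGp \to \cat{C}$. Each $V$ is moreover an amnestic isofibration (routinely: transport topologies along isomorphisms for the isofibration property; an isomorphism of profinite groups lying over an identity is an identity, for amnesticity), so by Lemma~\ref{lem:monadic-amnestic-isofib} it suffices to show each $V$ is weakly monadic. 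For $\cat{C} = \Set$ and $\cat{C} = \Top$ this is immediate from the equivalences above, the comparison functor being, up to those equivalences, an Eilenberg--Moore forgetful functor. For $\cat{C} = \Gp$ I would invoke Beck's monadicity theorem: $V$ reflects isomorphisms because a continuous bijective homomorphism of compact Hausdorff groups is automatically an isomorphism, and $V$ preserves coequalizers of $V$-split pairs because any such pair, being split, is also split over the (now monadic) functor $\profGp \to \Set$, so its coequalizer is created there and is necessarily carried by $V$ to the absolute split coequalizer in $\Gp$.

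I expect the main obstacle to be the case $\cat{C} = \Gp$, which lies outside the scope of Proposition~\ref{prop:kennison-gildenhuys} and needs the separate verification of Beck's hypotheses, using the monadicity of $\profGp \to \Set$ (established via the $\Set$ case) to lift and preserve the relevant coequalizers; the identification of the smallest Birkhoff subcategory of compact Hausdorff groups with $\profGp$ is the main conceptual ingredient underlying the $\Set$ and $\Top$ cases, and the passage from weak to strict monadicity is handled uniformly by Lemma~\ref{lem:monadic-amnestic-isofib}.
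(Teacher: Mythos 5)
Your proposal is correct and follows essentially the same skeleton as the paper's proof: reduce strict monadicity to weak monadicity via Lemma~\ref{lem:monadic-amnestic-isofib}, apply Proposition~\ref{prop:kennison-gildenhuys} with $\scat{M}=\FinGp$ and $\mnd{T}_0$ the free group monad to handle $\Set$ and $\Top$, and treat $\Gp$ separately by the monadicity theorem using the triangle of forgetful functors $\profGp\to\Gp\to\Set$, with Lemma~\ref{lem:codense-radj-monad} supplying the codensity-monad identification over $\Gp$. The one step you do genuinely differently is the identification of $\Set^{\mnd{T}}$ with $\profGp$: you take the ``smallest Birkhoff subcategory'' clause of Proposition~\ref{prop:kennison-gildenhuys} and match it against the compact-Hausdorff-totally-disconnected characterisation, which requires the external facts that this class is closed under Hausdorff quotients and that every such group embeds as a closed subgroup of a product of finite discrete groups. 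The paper instead uses the other clause of the same proposition, $\FinGp\obj\eqv\Set^{\mnd{T}}$, together with the observation that limits in $\Set^{\mnd{T}}$ are computed as in $\TopGp$, so that $\Set^{\mnd{T}}$ is literally the closure of the finite discrete groups under small limits, which is Definition~\ref{defn:profinite-group} verbatim. The paper's route is more self-contained relative to its own definitions; yours buys a concrete description of the Birkhoff subcategory at the cost of importing standard profinite-group facts. Everything else, including your compressed split-pair argument for the $\Gp$ case, checks out.
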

\begin{proof}
It is clear that the forgetful functors to each of these categories are amnestic isofibrations, therefore it is enough to show that these functors are \emph{weakly} monadic by Lemma~\ref{lem:monadic-amnestic-isofib}.

Consider Proposition~\ref{prop:kennison-gildenhuys} with $\scat{M}$ being $\FinGp$ and $\mnd{T}_0$ the free group monad (which certainly admits a group operation and is finitary), and write $\mnd{T}$ for the codensity monad of $\FinGp$ over $\Set$. The proposition then tells us that $\Set^{\mnd{T}}$ can be identified with a Birkhoff subcategory, and thus a reflective subcategory, of the category of compact Hausdorff groups. Limits in the category of compact Hausdorff groups are computed as in the category of topological groups, and hence so are limits in $\Set^{\mnd{T}}$.

But in addition, we have $\FinGp\obj \eqv \Set^{\mnd{T}}$, that is every object of $\Set^{\mnd{T}}$ is a small limit (in $\Set^{\mnd{T}}$) of finite groups. But $\Set^{\mnd{T}}$ is complete and limits are computed as in the category of topological groups as noted above. Thus we can identify $\Set^{\mnd{T}}$ up to equivalence with the category of topological groups that are limits of finite discrete groups, which by Definition~\ref{defn:profinite-group} is exactly $\profGp$. Thus $\profGp$ is weakly monadic over $\Set$.

Again by Proposition~\ref{prop:kennison-gildenhuys}, we have
\[
\Set^{\mnd{T}} \eqv \Top^{\tilde{\mnd{T}}}
\]
where $\tilde{\mnd{T}}$ is the codensity monad of the forgetful functor from the category of finite discrete groups to $\Top$. In particular the latter is also equivalent to $\profGp$ in a way compatible with the forgetful functors, so $\profGp$ is weakly monadic over $\Top$.

To see that $\profGp$ is monadic over $\Gp$ we use a standard argument, applying the monadicity theorem (Theorem~1 in~VI.7 of~\cite{maclane71}). The forgetful functor $V \from \profGp \to \Gp$ has a left adjoint by a standard application of the General Adjoint Functor Theorem. To see that $V$ creates coequalisers of $V$-split pairs; consider the commuting diagram of forgetful functors
\[
\xymatrix{
\profGp \ar[rr]^V\ar[dr]_{V'} && \Gp\ar[dl]^{V''}. \\
& \Set &
}
\]
Let $f,g \from G \to H$ be a $V$-split pair. Then $f,g$ are also $V'$ split, and so their coequaliser is created by $V'$ since $V'$ is monadic. Thus $f$ and $g$ have a coequaliser $e$ in $\profGp$ and it is preserved by $V'$. Thus $Ve \of Vf = Ve \of Vg$ ,and $V'' Ve$ is the coequaliser of $V'' V f$ and $V'' Vg$, so since $V''$ is monadic, it follows that $Ve$ is the coequaliser of $Vf$ and $Vg$; that is, the equaliser of $f$ and $g$ is preserved by $V$.

Furthermore, if $e'$ is some morphism in $\profGp$ such that $e' \of f = e' \of g$ and $Ve'$ is a coequaliser of $Vf$ and $Vg$, then $V'e' = V'' Ve'$ is a coequaliser of $V'f$ and $V'g$ (since the coequaliser of $Vf$ and $Vg$ is split and so is absolute). Thus, since $V'$ creates coequalisers of $V'$-split pairs, $e'$ must be a coequaliser of $f$ and $g$. Thus $V$ creates coequalisers of $V$-split pairs, so is monadic. The fact that the corresponding monad on $\Gp$ is the codensity monad of $\FinGp \incl \Gp$ follows from the fact that $\FinGp$ is codense in $\profGp$ and Lemma~\ref{lem:codense-radj-monad}.
\end{proof}

\begin{remark}
\label{rem:prof-gp-chars}
To summarise the results of this section, the category of profinite groups can be characterised up to isomorphism in the following equivalent ways:
\begin{enumerate}
\item the full subcategory of $\TopGp$ consisting of the small limits of finite discrete groups;
\item \label{part:prof-gp-chars-chaus-td} the full subcategory of $\TopGp$ consisting of the compact Hausdorff, totally disconnected groups;
\item the smallest replete reflective subcategory of $\TopGp$ containing the finite discrete groups;
\item the category of algebras for the codensity monad of $\FinGp \incl \TopGp$;
\item the category of algebras for the codensity monad of $\FinGp \incl \Gp$;
\item the category of algebras for the codensity monad of the forgetful functor $\FinGp \to \Top$; and
\item the category of algebras for the codensity monad of the forgetful functor $\FinGp \to \Set$.
\end{enumerate}
In fact there are many variants of~\bref{part:prof-gp-chars-chaus-td}, characterising profinite groups in terms of other properties of their underlying topological spaces. These characterisations will not be relevant for our purposes so we omit them here; for a full account see Section II.4 of~\cite{johnstone82}.
\end{remark}

\chapter{Notions of algebraic theory}
\label{chap:notions}

In this chapter we recall various notions of algebraic theory, and their associated structure--semantics adjunctions. One of the main goals of this thesis is to find a common generalisation of these, in order to identify which features unite them and allow them all to be regarded as notions of algebraic theory, and which features distinguish them from one another. A first approximation to this common generalisation will be defined in Chapter~\ref{chap:adjunction}, and this will be refined in Chapter~\ref{chap:structure}.

In Section~\ref{sec:notions-classical} we recall the classical notion of algebraic theory. We then discuss the various categorical generalisations of this notion that have been developed. Specifically we review Lawvere theories (Section~\ref{sec:notions-lawvere}), monads (Section~\ref{sec:notions-monads}), PROPs and PROs (Section~\ref{sec:notions-PROP}), operads (Section~\ref{sec:notions-operads}), monads with arities (Section~\ref{sec:notions-monads-arites}) and monoids (Section~\ref{sec:notions-monoids}). Finally in Section~\ref{sec:notions-compare} we summarise and compare all of these.

\section{Classical algebraic theories}
\label{sec:notions-classical}

First, let us recall the original, non-categorical definition of an algebraic theory. The following definitions are adapted from Chapter~1 of~\cite{johnstone87}, but a similar presentation can be found in Chapter~1 of~\cite{manes76}, or any textbook on universal algebra.

\begin{defn}
An \demph{operational type} consists of a set $\Omega$, whose elements we call operation symbols, together with, for each $\omega \in \Omega$, a natural number $\alpha (\omega)$ called the \demph{arity} of $\omega$. We write $\Omega_n$ for the subset of $\Omega$ consisting of all operation symbols with arity $n \in \nat$.
\end{defn}

\begin{defn}
Given a natural number $n$, we define the set $F_{\Omega} (n)$ of \demph{$\Omega$-terms in the variables $x_1, \ldots, x_n$} to be the smallest set such that:
\begin{enumerate}
\item for $i = 1, \ldots, n$, we have $x_i \in F_{\Omega} (n)$ (where $x_i$ is just thought of as an abstract variable), and
\item if $\omega \in \Omega_m$ and $t_1, \ldots, t_m \in F_{\Omega} (n)$ then $\omega ( t_1, \ldots , t_m) \in F_{\Omega}(n)$.
\end{enumerate}
If $t \in F_{\Omega}(n)$, we call $n$ the \demph{arity} of $t$.
\end{defn}

\begin{remark}
If $t \in F_{\Omega}(n)$ and $n \leq m$, then $t$ can also be regarded as an element of $F_{\Omega} (m)$, so the arity of $t$ is not in fact well defined. Strictly speaking then, we should define a term to be a pair $(n, t)$ where $n$ is a natural number and $t \in F_{\Omega}(n)$, and then define the arity of such a term to be $n$. However we usually omit explicit mention of $n$ for the sake of brevity, in a mild abuse of notation. Note however, that when we speak of the arity of a term $t$, we do \emph{not} necessarily mean the \emph{smallest} number $n$ such that $t \in F_{\Omega}(n)$.
\end{remark}

\begin{defn}
\label{defn:alg-theory}
An \demph{algebraic theory} consists of an operational type $\Omega$ together with a set $E$ of pairs of $\Omega$-terms where, within each pair, both terms have the same arity (but different pairs may have different arities). 
\end{defn}

One should think of the pairs in $E$ as ``formal equations'' between $\Omega$-terms --- these are the axioms of the algebraic theory. We now turn to semantics.

\begin{defn}
Let $\Omega$ be an operational type. Then an \demph{$\Omega$-structure} consists of a set $A$ together with, for each $\omega \in \Omega_n$, a function
\[
[\omega]_A \from A^n \to A
\]
called the \demph{interpretation} of $\omega$ in $A$.

If $A$ and $B$ are $\Omega$-structures, a function $h \from A \to B$ is called a $\Omega$-structure homomorphism if, for every $\omega \in \Omega_n$, the diagram
\[
\xymatrix{
A^n \ar[r]^{h^n}\ar[d]_{[\omega]_A} & B^n\ar[d]^{[\omega]_B} \\
A \ar[r]_{h} & B
}
\]
commutes.
\end{defn}

In order to say what it means for a structure to satisfy certain equations, we must extend the interpretations of operation symbols to interpretations of terms.

\begin{defn}
Let $A$ be an $\Omega$-structure, for an operational type $\Omega$. The interpretation of a term $t \in F_{\Omega} (n)$ in $A$ is a function
\[
[t]_A \from A^n \to A
\]
defined recursively as follows:
\begin{enumerate}
\item if $t = x_i$ for some $i$, then $[t]_A = \pi_i \from A^n \to A$, the product projection onto the $i$-th factor; and
\item if $t = \omega (t_1, \ldots, t_m)$ for some $\omega \in \Omega_m$ and $t_i \in F_{\Omega} (n)$, then $[t]_A$ is defined to be the composite
\[
A^n \toby{\Delta} (A^n)^m \toby{ [t_1]_A \times \cdots \times [t_m]_A} A^m \toby{[\omega]_A} A,
\]
where $\Delta \from A^n \to (A^n)^m$ is the diagonal map.
\end{enumerate}
\end{defn}

\begin{defn}
Let $(\Omega, E)$ be an algebraic theory, and $A$ an $\Omega$-structure. Then $A$ is a \demph{model} of, or \demph{algebra} for $(\Omega, E)$ if, for each $(s, t) \in E$, with $s, t \in F_{\Omega}(n)$, we have an equality of functions
\[
[s]_A = [t]_A \from  A^n \to A.
\]
A \demph{$(\Omega,E)$-model homomorphism} between $(\Omega, E)$-models is an $\Omega$-structure homomorphism between the underlying $\Omega$-structures.
\end{defn}

\begin{ex}
\label{ex:theory-groups}
Define an operational type $\Omega$ with
\[
\Omega_0 = \{ e \},\quad \Omega_1 = \{i \},\quad \Omega_2 = \{ m \},
\]
and $\Omega_n = \emptyset$ for all other values of $n$. Define
\begin{align*}
E = \{ & (m(x_1, m(x_2,x_3)), m(m(x_1,x_2),x_3) ), \\
& (m(e, x_1), x_1 ), \\
& (m(x_1, e),x_1 ), \\
& (m(x_1, i(x_1)), e) \}.
\end{align*}
Then $(\Omega, E)$ is the \demph{theory of groups} and a model of $(\Omega, E)$ is a \demph{group}.
\end{ex}

\begin{remark}
There is a problem with the definition of algebraic theory given in Definition~\ref{defn:alg-theory}. Suppose $A$ is a group, and define a binary operation $q \from A \times A \to A$ by
\[
q(a,b) = [m]_A (a, [i]_A(b)).
\]
Then we can describe the multiplication and inverses of $A$ in terms of $q$ as follows:
\begin{align*}
[m]_A (a,b) &= q (a, q( q( b, b), b)) \\
[i]_A ( a) &= q( q(a,a), a). \\
\end{align*}
Therefore, we could express the group axioms entirely in terms of the identity and $q$, and obtain a new algebraic theory with a single constant $e$ and a single binary operation symbol $q$, but whose category of models is isomorphic to the category of groups. Thus it seems incorrect to describe the theory described in Example~\ref{ex:theory-groups} as \emph{the} theory of groups, since there are in fact multiple theories whose models are groups. Rather, we should think of what is described as an algebraic theory in Definition~\ref{defn:alg-theory} as a \emph{presentation} (or axiomatisation) of a theory, and any given theory (whatever we ultimately decide that means) as potentially having multiple distinct presentations. In the rest of this section, we try to identify what features such a ``presentation-independent'' notion of algebraic theory should have, so that we can generalise them to other categorical contexts.
\end{remark}

\begin{defn}
Let $\Omega$ be an operational type, and let $t \in F_{\Omega} (n)$ and $(s_i)_{i = 1}^n$ be a family of terms $s_i \in F_{\Omega}(m)$ indexed by $i = 1, \ldots, n$. Then we write
\[
t[(s_i / x_i)_{i =1}^n ] \in F_{\Omega} (m)
\]
for the term obtained by replacing each occurrence of $x_i$ in $t$ with the corresponding term $s_i$.
\end{defn}

\begin{defn}
\label{defn:equiv-terms}
Given an algebraic theory $(\Omega, E)$, define a family of relations $\sim_n \subseteq F_{\Omega}(n) \times F_{\Omega}(n)$ indexed by all natural numbers $n$ to be the smallest such that
\begin{enumerate}
\item
\label{part:equiv-terms-equiv}
each $\sim_n$ is an equivalence relation on $F_{\Omega}(n)$;
\item
\label{part:equiv-terms-axioms}
if $(s, t) \in E$, where $s, t \in F_{\Omega}(n)$, then $s \sim_n t$;
\item
\label{part:equiv-terms-sub1}
if $s \sim_n t$ and $u_i \in F_{\Omega}(m)$ for each $i = 1, \ldots, n$, then $s[(u_i/x_i)_{i=1}^n] \sim_m t[(u_i/x_i)_{i = 1}^n]$; and
\item
\label{part:equiv-terms-sub2}
if $u \in F_{\Omega}(n)$ and $s_i \sim_m t_i$ for each $i = 1, \ldots n$, where $s_i, t_i \in F_{\Omega} (m)$, then $u[(s_i/x_i)_{i = 1}^n] \sim_m u[(t_i/x_i)_{i = 1}^n]$,
\end{enumerate}
in the sense that if $(\sim'_n)_{n \in \nat}$ is another such family of equivalence relations, then $\sim_n \subseteq \sim'_n \subseteq F_{\Omega}(n) \times F_{\Omega} (n)$ for each $n \in \nat$.

An equivalence class of $\sim_n$ is called an \demph{operation} of the theory $(\Omega, E)$ with arity $n$. Write $\oper(n)$ for the set of operations of $(\Omega, E)$ of arity $n$.
\end{defn}

\begin{remark}
\label{rem:deduction-rules}
It is helpful to think of the conditions \bref{part:equiv-terms-equiv}--\bref{part:equiv-terms-sub2} above as describing deduction rules for the logic of algebraic theories. Then the elements of $E$ are the axioms of the theory, and an expression of the form $s \sim_n t$ is a theorem, and a proof is a sequence of applications of the rules \bref{part:equiv-terms-equiv}--\bref{part:equiv-terms-sub2}, starting from the axioms.
\end{remark}

\begin{thm}[The Completeness Theorem]
\label{thm:classical-completeness}
Let $(\Omega, E)$ be an algebraic theory, and let $s, t \in F_{\Omega}(n)$. Then $s \sim_n t$ if and only if, for every model $A$ of $(\Omega, E)$ we have an equality of functions
\[
[s]_A = [t]_A \from A^n \to A.
\]
\end{thm}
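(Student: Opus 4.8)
The plan is to prove the two directions separately. The ``only if'' direction (soundness) is the easier one: I would argue by induction on the structure of a proof of $s \sim_n t$, i.e.\ by induction on the derivation using the rules \bref{part:equiv-terms-equiv}--\bref{part:equiv-terms-sub2} (cf.\ Remark~\ref{rem:deduction-rules}). The key lemma needed here is that interpretation respects substitution: for a term $u \in F_\Omega(n)$, terms $s_1,\dots,s_n \in F_\Omega(m)$, and a model $A$, we have
\[
[u[(s_i/x_i)_{i=1}^n]]_A = [u]_A \of ([s_1]_A \times \cdots \times [s_n]_A) \of \Delta \from A^m \to A,
\]
which itself is proved by an easy induction on $u$. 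Granting this, the base cases of the main induction are: reflexivity (trivial), and the axioms in $E$, which hold in $A$ precisely because $A$ is a \emph{model} of $(\Omega,E)$. The inductive steps for symmetry and transitivity are immediate, and the steps for the two substitution rules \bref{part:equiv-terms-sub1} and \bref{part:equiv-terms-sub2} follow directly from the substitution lemma together with the inductive hypothesis.

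The ``if'' direction (completeness, the one that gives the theorem its name) is where the real content lies, and the standard approach is to build a \emph{syntactic} (or term) model in which $[s] = [t]$ forces $s \sim_n t$. Concretely, I would take the model $F$ whose underlying set is $F_\Omega(1)/{\sim_1}$ --- the set $\oper(1)$ of unary operations --- made into an $\Omega$-structure by defining, for $\omega \in \Omega_m$, the interpretation $[\omega]_F \from \oper(1)^m \to \oper(1)$ to send a tuple of classes $([t_1],\dots,[t_m])$ to the class of $\omega(t_1,\dots,t_m)$, viewing each $t_j \in F_\Omega(1)$; rule \bref{part:equiv-terms-sub2} guarantees this is well defined. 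One then checks that $F$ is genuinely a model of $(\Omega,E)$: given $(p,q)\in E$ with $p,q\in F_\Omega(k)$, one must show $[p]_F = [q]_F$ as functions $\oper(1)^k \to \oper(1)$, which amounts to showing that for any $t_1,\dots,t_k \in F_\Omega(1)$ we have $p[(t_i/x_i)] \sim_1 q[(t_i/x_i)]$ --- and this is exactly rule \bref{part:equiv-terms-sub1} applied to the instance $p \sim_k q$ coming from \bref{part:equiv-terms-axioms}. The crucial computation is then that for any $t \in F_\Omega(n)$ and any $a_1,\dots,a_n \in \oper(1)$, with $a_i = [u_i]$ for chosen representatives $u_i\in F_\Omega(1)$, we have $[t]_F(a_1,\dots,a_n) = [t[(u_i/x_i)_{i=1}^n]] \in \oper(1)$; this is proved by induction on $t$ using the substitution lemma and the definition of $[\omega]_F$.

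With the syntactic model in hand, the conclusion is quick. Suppose $[s]_A = [t]_A$ for \emph{every} model $A$; in particular $[s]_F = [t]_F \from \oper(1)^n \to \oper(1)$. Evaluate both sides at the tuple $([x_1],\dots,[x_n]) \in \oper(1)^n$ of classes of the variables regarded as elements of $F_\Omega(1)$. By the computation just described (with $u_i = x_i$, so that $s[(x_i/x_i)] = s$), the left side is $[s] \in \oper(n)$ and the right side is $[t]$, wait --- here one must be slightly careful: $x_i$ as an element of $F_\Omega(1)$ is a single variable, and substituting $x_i$ for $x_i$ in $s\in F_\Omega(n)$ returns $s$ itself but now regarded in $F_\Omega(1)$ only after renaming; cleaner is to instead substitute the $n$ distinct variables $x_1,\dots,x_n$ viewed in $F_\Omega(n)$, i.e.\ use the term model on $F_\Omega(n)/{\sim_n}$ and evaluate at $([x_1],\dots,[x_n])$, obtaining $[s]_F(\dots) = [s]$ and $[t]_F(\dots) = [t]$. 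Hence $[s] = [t]$ in $F_\Omega(n)/{\sim_n}$, i.e.\ $s \sim_n t$, as desired.

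The main obstacle is purely bookkeeping: making the syntactic model construction precise, in particular verifying well-definedness of the interpretations on $\sim$-classes (which is where \bref{part:equiv-terms-sub2} is essential) and verifying that $E$-axioms hold in it (where \bref{part:equiv-terms-sub1} is essential), and then correctly tracking variables through the substitution computations. There is no deep difficulty --- the proof is the classical Birkhoff/Lindenbaum--Tarski argument specialised to equational logic --- but the variable-renaming subtleties flagged above need to be handled cleanly, most easily by working with the term model on $F_\Omega(n)/{\sim_n}$ directly for the given $n$ rather than trying to reuse a single fixed term model for all arities.
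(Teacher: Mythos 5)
Your proof is correct. The paper does not actually prove this theorem --- it simply cites it as Corollary~1.5 of the reference for Chapter~1 of Johnstone --- and the argument you give (soundness by induction on derivations via a substitution lemma, completeness via the Lindenbaum--Tarski term model) is the standard one that the cited source uses, so there is nothing to compare against. The one point where you hesitated --- whether to use $F_{\Omega}(1)/{\sim_1}$ or $F_{\Omega}(n)/{\sim_n}$ --- is a real issue, since evaluating in the unary term model would force all $n$ variables to be identified; your fix of working with the term model on $F_{\Omega}(n)/{\sim_n}$ and evaluating at $([x_1],\ldots,[x_n])$ is exactly right, with well-definedness of the operations coming from \bref{part:equiv-terms-sub2} and the validity of the axioms of $E$ coming from \bref{part:equiv-terms-axioms} together with \bref{part:equiv-terms-sub1}.
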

\begin{proof}
This is Corollary~1.5 in~\cite{johnstone87}.
\end{proof}

Informally, the completeness theorem states that the properties possessed by all models of $(\Omega, E)$ (that is, the only equations that hold between interpretations of terms in all models) are precisely those that can be derived syntactically via the process described in Remark~\ref{rem:deduction-rules}.

Let us note some features of the collection of all operations of a theory.
\begin{enumerate}
\item Every operation has an arity $n \in \nat$, since every term does.
\item Every $i = 1, \ldots, n$ gives rise to an $n$-ary operation (namely the equivalence class of $x_i \in F_{\Omega}(n)$).
\item If $t$ is an operation with arity $n$, and $(s_i)_{i = 1}^n$ is a family of operations of arity $m$, indexed by $i = 1, \ldots, n$, then there is an operation $t[(s_i/ x_i)_{i = 1}^n]$ of arity $m$, obtained by substituting $s_i$ in for each occurrence of $x_i$ in $t$, for each $i = 1, \ldots, n$ (note that this is well-defined, since conditions~\bref{part:equiv-terms-sub1} and~\bref{part:equiv-terms-sub2} of Definition~\ref{defn:equiv-terms} imply that equivalence of terms is preserved by substitution from both sides respectively).
\end{enumerate}

\begin{remark}
\label{rem:alg-th-features}
We may abstract from the observations above to obtain a list of some features that we might expect a notion of algebraic theory to have; we will later examine how both Lawvere theories and monads display these features. First of all, independently of any \emph{particular} theory:
\begin{enumerate}
\item[0.] there is a collection of \emph{arities} $\cat{A}$.
\end{enumerate}
Next, any individual theory should have the following features.
\begin{enumerate}
\item\label{part:alg-th-features-arity} For each arity $a$, there is a totality $\oper(a)$ of $a$-ary operations of the theory.
\item Every element of an arity $a$ gives rise to an $a$-ary operation; that is, there is in some sense a map $a \to \oper(a)$;
\item \label{part:alg-th-features-sub} Given an $a$-ary operation $\tau \in \oper(a)$ and a family $(\sigma_i)_{i \in a}$ of $b$-ary operations indexed by the arity $a$, we can substitute the $\sigma_i$'s into $\tau$ to form a new $b$-ary operation. That is, in some sense we have a map $\oper(a) \times \oper(b)^a \to \oper(b)$.
\end{enumerate}
The points above are left deliberately vague, and we make no assertions at this stage about what kind of entity the arities, operations and totalities of operations are. In particular, the notation and terminology may suggest that we treat each arity $a$ and totality of $a$-ary operations as if they are sets, but this is intended as a guide for intuition only. When we make these features precise for particular notions of algebraic theory, we will take $a$ and $\oper(a)$ to be various different kinds of mathematical objects.
\end{remark}

\section{Lawvere theories}
\label{sec:notions-lawvere}

Lawvere theories provide the most direct translation of the classical notion of algebraic theory into category-theoretic terms. Indeed, every algebraic theory gives rise to a Lawvere theory (and every Lawvere theory arises in this way), and two algebraic theories give rise to isomorphic Lawvere theories if and only if they have isomorphic categories of models. Lawvere theories were first defined in Lawvere's PhD thesis~\cite{lawvere63}, where they are called simply ``algebraic theories''. Lawvere theories have been generalised to an enriched setting in Power~\cite{power99} and Nishizawa and Power~\cite{nishizawaPower09}, however we will deal only with the ordinary non-enriched version. Some of the differences between our approach and that of~\cite{nishizawaPower09} Are discussed in Section~\ref{sec:structure-examples} in the subsection on Lawvere theories.

\begin{defn}
Let $\fin$ be a skeleton of $\finset$, so that the objects of $\fin$ are sets of the form
\[
n = \{ x_1, \ldots x_n \},
\]
where the elements $x_i$ are arbitrary. We think of the $x_i$ as abstract variables.
\end{defn}

\begin{defn}
\label{defn:lawvere-theory}
A \demph{Lawvere theory} consists of a large category $\cat{L}$ together with a functor $L \from \fin^{\op} \to \cat{L}$ that is bijective on objects and preserves finite products (that is, it sends coproducts in $\fin$ to products in $\cat{L}$). A Lawvere theory morphism from $L' \from \fin^{\op} \to \cat{L}'$ to $L \from \fin^{\op} \to \cat{L}$ is a functor $P \from \cat{L}' \to \cat{L}$ such that $P \of L' = L$. We write $\LAW$ for the category of Lawvere theories and $\Law$ for the category of locally small Lawvere theories.
\end{defn}

It is common to require Lawvere theories to be locally small; note that we do not make this restriction.

\begin{remark}
Let $(\Omega, E)$ be an algebraic theory in the sense of Definition~\ref{defn:alg-theory}. Then we may define an associated Lawvere theory $L \from \fin^{\op} \to \cat{L}$ as follows. The objects of $\cat{L}$ are the same as the objects of $\fin$, and the hom-set $\cat{L}(m,n)$ is given by the set $\oper(m)^n$, the set of $n$-tuples of $m$-ary operations (that is, equivalence classes of $m$-ary terms) for the theory $(\Omega, E)$. Composition is induced by substitution of terms: if $(\tau_i)_{i = 1}^n \in \cat{L}(m, n) = \oper(m)^n$ and $(\sigma_j)_{j=1}^m \in \oper(p)^m = \cat{L}(p,m)$, their composite is the $n$-tuple of $p$-ary operations whose $i$-th member is $\tau_i[(\sigma_j/x_j)_{i = j}^m]$. The functor $L \from \fin^{\op} \to \cat{L}$ sends $f \from n \to m$ to the $n$-tuple of $m$-ary operations whose $i$-th member is (the equivalence class of the term) $f(x_i)$; this also determines the identities in $\cat{L}$.

Conversely, given a Lawvere theory $L \from \fin^{\op} \to \cat{L}$, we may define an algebraic theory $(\Omega, E)$, where $\Omega_n = \cat{L}(Ln, L1)$, and where the equations in $E$ are precisely those that hold in $L$, when a variable $x_i$ is interpreted as a projection $Ln \to L1$, and formal substitution of terms is interpreted as composition in $L$.
\end{remark}

\begin{remark}
Let us highlight how the features of a general notion of algebraic theory described in Remark~\ref{rem:alg-th-features} manifest themselves in the case of Lawvere theories. Let $L \from \fin^{\op} \to \cat{L}$ be a Lawvere theory.
\begin{enumerate}
\item[0.] The arities for Lawvere theories are the natural numbers.
\item For $n \in \nat$, the $n$-ary operations of a Lawvere theory form the set $\cat{L}(Ln, L1)$.
\item For each $n$, we have a function $n \to \cat{L}(Ln, L1)$ that sends $x_i$ to the $n$-ary operation it represents.
\item Suppose we have an $n$-ary operation $\tau \in \cat{L}(Ln, L1)$, and a family $(\sigma_i)_{i = 1}^n$ of $m$-ary operations indexed by $i=1, \ldots, n$. Then the $\sigma_i$ correspond to a morphism $\sigma \in \cat{L}(Lm, Ln)$, since $Ln$ is the $n$-fold power of $L1$ (because in $\fin$, the object $n$ is the $n$-th copower of $1$). The result of substituting the $\sigma_i$ into $\tau$ is given by the composite $\tau \of \sigma \in \cat{L}(Lm, L1)$.
\end{enumerate}
\end{remark}

We now turn to the semantics of Lawvere theories.

\begin{defn}
By a \demph{finite product category} we mean a category $\cat{B}$ equipped with, for each finite family $b_1, \ldots, b_n$ of objects of $\cat{B}$, a specified object $b_1 \times \cdots \times b_n$ and maps $\pi_i \from b_1 \times \cdots \times b_n \to b_i$ exhibiting $b_1 \times \cdots \times b_n$ as the product of $b_1, \ldots, b_n$. This is in contrast to a \demph{category with finite products}, which is a category in which all finite products exist, but are not specified.

A \demph{finite product preserving functor} between finite product categories, however, is still only required to preserve the property of being a product, not the distinguished choice of product.
\end{defn}

For the rest of this section, fix a large finite product category $\cat{B}$.

\begin{remark}
Note that for each $b \in \cat{B}$, the assignment $n \mapsto b^n$ has a unique extension to a product preserving functor $b^{(-)} \from \fin^{\op} \to \cat{B}$, and every morphism $f \from b \to c$ extends uniquely to a natural transformation $f^{(-)} \from b^{(-)} \to c^{(-)}$.
\end{remark}

\begin{defn}
\label{defn:law-models}
Let $L \from \fin^{\op} \to \cat{L}$ be a Lawvere theory. Then a \demph{model} $x$ of $L$ in $\cat{B}$ consists of an object $d^x$ and a functor $\Gamma^x \from \cat{L} \to \cat{B}$ such that
\[
\xymatrix{
\fin^{\op}\ar[r]^{L} \ar[dr]_{(d^x)^{(-)}} & \cat{L}\ar[d]^{\Gamma^x}\\
& \cat{B}
}
\]
commutes.

A \demph{$L$-model homomorphism} $x \to y$ consists of a morphism $h \from d^x \to d^y$ in $\cat{B}$ together with a natural transformation $\kappa \from \Gamma^x \to \Gamma^y$ such that
\[
\vcenter{
\xymatrix
@R=50pt
@C=50pt{
\fin^{\op} \ar[r]^{L}&  \cat{L}\rtwocell^{\Gamma^x}_{\Gamma^y}<5> {\kappa} & \cat{B}
}}
\quad = \quad
\vcenter{
\xymatrix
@R=50pt
@C=50pt{
\fin^{\op} \rtwocell^{(d^x)^{(-)}}_{(d^y)^{(-)}} <5>{\:\:\:\:\:\: h^{(-)}} & \cat{B}.
}}
\]
\end{defn}

\begin{remark}
\label{rem:Law-model-hom}
In the definition of an $L$-model homomorphism, note that the components of $\kappa$ are completely determined by the components of $h^{(-)}$ because $L$ is bijective on objects, and these are in turn determined by $h$ itself. Thus, being an $L$-model homomorphism is in fact a \emph{property} of a morphism $h$ between $L$-models, rather than an additional \emph{structure}. Furthermore, given \emph{any} natural transformation $\kappa \from \Gamma^x \to \Gamma^y$, we can define $h = \kappa_{L1}$ and then $h$ is an $L$-model homomorphism. Thus we could have defined a homomorphism of $L$-models to be a natural transformation, but we want to emphasise that such a natural transformation is determined by its component at $L1$, which is a morphism between the underlying models of the $L$-models.
\end{remark}

\begin{defn}
\label{defn:lawvere-sem-objects}
Let $L \from \fin^{\op} \to \cat{L}$ be a Lawvere theory. We write $\mod_{\law}(L)$ for the category whose objects are models of $L$ in $\cat{B}$, and whose morphisms are $L$-model homomorphisms. We write $\sem_{\law}(L)\from \mod_{\law}(L) \to \cat{B}$ for the obvious forgetful functor.
\end{defn}

\begin{remark}
\label{rem:law-model-non-standard}
Our definition of a model of a Lawvere theory in Definition~\ref{defn:law-models} is slightly non-standard --- usually a model is defined as a finite-product preserving functor out of $\cat{L}$. However, Lemma~\ref{lem:law-model-non-standard} below shows that the two definitions yield equivalent categories of models.

We have chosen this non-standard definition partly for pragmatic reasons; it will make it easier to fit Lawvere theories into the general framework to be described in subsequent chapters. However there is also a conceptual reason, namely that the forgetful functor from the category of models not only reflects isomorphisms, but also reflects \emph{equalities}. That is, if an object can be equipped with two model structures such that the identity on that object is a homomorphism between them then the two model structures are in fact identical. This is not the case with the less restrictive definition. This matches mathematical practice: for example, one does not distinguish between two group structures on the same underlying set that differ only in the choice of which binary cartesian product of the underlying set was used to define the multiplication.
\end{remark}

\begin{lem}
\label{lem:law-model-non-standard}
Let $L \from \fin^{\op} \to \cat{L}$ be a Lawvere theory. Then $\mod_{\law}(L)$ is equivalent to the full subcategory of $[\cat{L}, \cat{B}]$ consisting of the finite product preserving functors.
\end{lem}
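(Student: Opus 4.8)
The plan is to build an explicit functor
$\Phi \from \mod_{\law}(L) \to [\cat{L},\cat{B}]$, land it in the full subcategory on the finite-product-preserving functors, and then show it is full, faithful and essentially surjective. On objects put $\Phi(d^x,\Gamma^x) = \Gamma^x$, and on a homomorphism $(h,\kappa) \from x \to y$ put $\Phi(h,\kappa) = \kappa$; this is manifestly a functor. The first thing to verify is that each $\Gamma^x$ preserves finite products, so that $\Phi$ does take values in the stated subcategory. This uses that $L$ is bijective on objects and product-preserving: every object of $\cat{L}$ is $Ln$ for a unique $n$, every finite-product cone in $\cat{L}$ is isomorphic to the image under $L$ of a product cone in $\fin^{\op}$ (the terminal object being $L0$, a binary product $Lm \times Ln$ being $L(m+n)$), and since $\Gamma^x \of L = (d^x)^{(-)}$ is product-preserving by construction, $\Gamma^x$ sends such cones to product cones; hence $\Gamma^x$ preserves all finite products.

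Faithfulness of $\Phi$ is immediate from Remark~\ref{rem:Law-model-hom}: a homomorphism $(h,\kappa)$ satisfies $h = \kappa_{L1}$, so $\kappa$ determines the whole pair. For fullness, given models $x,y$ and a natural transformation $\kappa \from \Gamma^x \to \Gamma^y$, I would set $h \coloneqq \kappa_{L1} \from d^x \to d^y$ and check that $(h,\kappa)$ is a homomorphism, i.e.\ that $\kappa L = h^{(-)}$. The component of $\kappa L$ at $n$ is $\kappa_{Ln} \from (d^x)^n \to (d^y)^n$; since $\Gamma^x\of L = (d^x)^{(-)}$ and $\Gamma^y \of L = (d^y)^{(-)}$ send the projections $Ln \to L1$ to the product projections, naturality of $\kappa$ at these projections forces $\pi_i \of \kappa_{Ln} = h \of \pi_i$ for all $i$, whence $\kappa_{Ln} = h^n$ by the universal property of the product. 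Thus $(h,\kappa)$ is a homomorphism with $\Phi(h,\kappa) = \kappa$.

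The substantive point, and the main obstacle, is essential surjectivity onto the product-preserving functors, because the definition of a model requires the triangle $\Gamma^x \of L = (d^x)^{(-)}$ to commute \emph{strictly}, whereas an arbitrary finite-product-preserving functor $G \from \cat{L} \to \cat{B}$ only gives $G\of L \iso (G(L1))^{(-)}$. To resolve this I would invoke Lemma~\ref{lem:bo-restrict-isofib}: since $L$ is bijective on objects, $L^* \from [\cat{L},\cat{B}] \to [\fin^{\op},\cat{B}]$ is an (amnestic) isofibration. Concretely, put $d \coloneqq G(L1)$; as $G \of L$ and $d^{(-)}$ are both finite-product-preserving functors $\fin^{\op} \to \cat{B}$ with the same value at $1$, there is a canonical natural isomorphism $\theta \from G\of L = L^*G \to d^{(-)}$ (both being canonically isomorphic to $(\text{value at }1)^{(-)}$). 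Lifting $\theta$ through the isofibration $L^*$ produces a functor $\Gamma^x \from \cat{L} \to \cat{B}$ with $L^*\Gamma^x = d^{(-)}$, that is $\Gamma^x \of L = d^{(-)}$, together with a natural isomorphism $G \iso \Gamma^x$. Then $x \coloneqq (d,\Gamma^x)$ is a model, $\Phi(x) = \Gamma^x \iso G$, and $\Gamma^x$ is product-preserving (being isomorphic to $G$). Combining the three properties, $\Phi$ is an equivalence between $\mod_{\law}(L)$ and the full subcategory of $[\cat{L},\cat{B}]$ on the finite-product-preserving functors.
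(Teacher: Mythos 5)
Your proof is correct and follows essentially the same route as the paper's: show each $\Gamma^x$ preserves finite products because the product projections of $\cat{L}$ lie in the image of $L$ and $\Gamma^x \of L = (d^x)^{(-)}$ is product-preserving; use Remark~\ref{rem:Law-model-hom} to identify $\mod_{\law}(L)$ with a full subcategory of the product-preserving functors; and then show every product-preserving $G$ is isomorphic to a strict model. The only divergence is in that last step: where you lift the canonical isomorphism $G \of L \iso d^{(-)}$ through the amnestic isofibration $L^*$ (Lemma~\ref{lem:bo-restrict-isofib}), the paper instead builds the strictified functor $\Gamma'$ by hand, conjugating each $\Gamma(l)$ by the comparison isomorphisms $\gamma_n \from \Gamma(Ln) \to d^n$ --- which is precisely the proof of that isofibration lemma specialised to this situation, so the two arguments coincide.
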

\begin{proof}
Suppose $x = (d^x, \Gamma^x)$ is a model of $L$, so by definition $\Gamma^x \of L = (d^x)^{(-)}$. The product projections in $\cat{L}$ all lie in the image of $L \from \fin^{\op} \to \cat{L}$, and each $(d^x)^{(-)} \from \fin^{\op} \to \cat{B}$ preserves finite products, so the condition that $\Gamma^x \of L = (d^x)^{(-)}$ implies that $\Gamma$ does send the product projections to product projections in $\cat{B}$. Hence $\Gamma^x$ is a finite product preserving functor. As noted in Remark~\ref{rem:Law-model-hom}, every natural transformation between $L$-models defines a homomorphism, so we can identify $\mod_{\law}(L)$ with a full subcategory of the category of finite product preserving functors $\cat{L} \to \cat{B}$. Thus to show that the two categories are equivalent, it is sufficient to show that every finite product preserving functor is isomorphic to an $L$-model.

Let $\Gamma \from \cat{L} \to \cat{B}$ be a functor that preserves finite products. Let $d = \Gamma(L1)$. Every $n \in \fin$ is the $n$-th copower of $1$; write $\iota_i \from 1 \to n$ for the $i$-th copower coprojection for $i= 1, \ldots, n$. Then, since $\Gamma$ preserves finite products, we have isomorphisms $\gamma_n \from \Gamma(Ln) \to d^n$ such that
\[
\xymatrix{
\Gamma(Ln)\ar[r]^{\gamma_n} \ar[dr]_{\Gamma(L\iota_i)} & d^n\ar[d]^{\pi_i} \\
& \Gamma(L1) = d
}
\]
commutes for each $n \in \fin$ and $i = 1, \ldots, n$. We define a functor $\Gamma' \from \cat{L} \to \cat{B}$ by setting $\Gamma' (Ln) = d^n$, and, given $l \from Ln \to Lm$, by setting $\Gamma'(l)$ to be the unique morphism $d^n \to d^m$ such that
\[
\xymatrix{
\Gamma(Ln)\ar[r]^{\gamma_n}\ar[d]_{\Gamma(l)} & d^n\ar[d]^{\Gamma'(l)} \\
\Gamma(Lm)\ar[r]^{\gamma_m} \ar[dr]_{\Gamma(L\iota_i)} & d^m\ar[d]^{\pi_i} \\
& d
}
\]
commutes for each $i = 1, \ldots, m$. It is then clear that $(d, \Gamma')$ is an $L$-model and the $\gamma_n$ form the components of a natural isomorphism $\Gamma \to \Gamma'$.
\end{proof}

Let us unpack the definition of a model of a Lawvere theory. Let $L \from \fin^{\op} \to \cat{L}$ be a Lawvere theory and $(d, \Gamma)$ a model of $L$ in $\cat{B}$. By the lemma above, $\Gamma$ preserves finite products and so, since every object of $\fin^{\op}$ is a finite power of $1$, it follows that $\Gamma$ is determined by its values on morphisms of the form $l \from Ln \to L1$ in $\cat{L}$. Such morphisms are the $n$-ary operations of the corresponding algebraic theory, so to equip $d$ with an $L$-model structure is to provide an interpretation of each $n$-ary operation of the theory.

Functoriality of $\Gamma$ says that, if $l \from Ln \to L1$ and $k_i \from Lm \to L1$ in $\cat{L}$ for $i = 1, \ldots, n$ then
\[
\Gamma(l) \of (\Gamma(k_1), \ldots , \Gamma( k_n) ) = \Gamma (l \of (k_1, \ldots, k_n)) \from d^m \to d,
\]
or in other words, the interpretations of the operations of the theory are compatible with the process of substituting operations into one another.

\begin{defn}
\label{defn:lawvere-sem-morphisms}
Let
\[
\xymatrix{
\cat{L}'\ar[rr]^P & &\cat{L}\\
& \fin^{\op}\ar[ul]^{L'}\ar[ur]_{L}
}
\]
be a morphism of Lawvere theories, and define a functor $\sem_{\law}(P) \from \mod_{\law}(L) \to \mod_{\law}(L')$ as follows. Given $x = (d^x, \Gamma^x) \in \mod_{\law}(L)$, we set $d^{\sem_{\law}(P) (x)} = d^x$ and set $\Gamma^{\sem_{\law}(P)(x)} $ to be the composite
\[
\cat{L}' \toby{P} \cat{L} \toby{\Gamma^x} \cat{B}.
\]
If $h \from x \to y$ is an $L$-model homomorphism then $\sem_{\law}(P) (h)$ is the $L'$-model homomorphism $\sem_{\law}(P)(x) \to \sem_{\law}(P)(y)$ with the same underlying morphism in $\cat{B}$ (and this is necessarily a homomorphism).
\end{defn}

\begin{prop}
\label{prop:lawvere-sem}
Together, Definitions~\ref{defn:lawvere-sem-objects} and~\ref{defn:lawvere-sem-morphisms} define a functor $\LAW^{\op} \to \catover{\cat{B}}$.
\end{prop}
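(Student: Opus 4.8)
The statement is essentially a routine verification, and the plan is to check the three things that together say $\sem_{\law}$ is a functor $\LAW^{\op} \to \catover{\cat{B}}$: that $\sem_{\law}(P)$ is a well-defined functor for each Lawvere theory morphism $P$, that it lies over $\cat{B}$, and that the assignment $P \mapsto \sem_{\law}(P)$ is (contravariantly) functorial in $P$.

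First I would fix a Lawvere theory morphism $P \from \cat{L}' \to \cat{L}$, so that $P \of L' = L$, and check that the recipe of Definition~\ref{defn:lawvere-sem-morphisms} really does produce a functor $\mod_{\law}(L) \to \mod_{\law}(L')$. On objects: if $x = (d^x, \Gamma^x)$ is an $L$-model then $(\Gamma^x \of P) \of L' = \Gamma^x \of L = (d^x)^{(-)}$, so $(d^x, \Gamma^x \of P)$ is indeed an $L'$-model. On morphisms: given an $L$-model homomorphism $h \from x \to y$ with witnessing natural transformation $\kappa \from \Gamma^x \to \Gamma^y$, I would whisker by $P$ to obtain $\kappa P \from \Gamma^x \of P \to \Gamma^y \of P$, whose component at $L'1$ is $\kappa_{P(L'1)} = \kappa_{L1} = h$; by Remark~\ref{rem:Law-model-hom} this exhibits $h$ as an $L'$-model homomorphism between the images, so $\sem_{\law}(P)(h)$ is well-defined. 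Functoriality of $\sem_{\law}(P)$ is then immediate, since whiskering by $P$ preserves identity and vertical composites of natural transformations, and this is precisely how identities and composites of model homomorphisms are assembled.

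Next I would observe that $\sem_{\law}(P)$ leaves underlying objects and underlying morphisms (in $\cat{B}$) untouched, so that $\sem_{\law}(L') \of \sem_{\law}(P) = \sem_{\law}(L)$ strictly; this says exactly that $\sem_{\law}(P)$ is a morphism $\sem_{\law}(L) \to \sem_{\law}(L')$ in $\catover{\cat{B}}$. Finally, for functoriality of $\sem_{\law}$ itself I would check that $\sem_{\law}(\id_{\cat{L}}) = \id_{\mod_{\law}(L)}$, which is immediate since precomposition with $\id_{\cat{L}}$ changes nothing, and that for composable Lawvere theory morphisms $Q \from \cat{L}'' \to \cat{L}'$ and $P \from \cat{L}' \to \cat{L}$ one has $\sem_{\law}(P \of Q) = \sem_{\law}(Q) \of \sem_{\law}(P)$; on object parts this reduces to associativity $\Gamma^x \of (P \of Q) = (\Gamma^x \of P) \of Q$, and on morphisms to the corresponding identity for whiskered natural transformations.

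There is no substantive obstacle; the only point requiring care is the variance bookkeeping — a Lawvere theory morphism from $L'$ to $L$ induces a functor from $\mod_{\law}(L)$ to $\mod_{\law}(L')$, so $\sem_{\law}$ is genuinely contravariant on $\LAW$ — together with the repeated appeal to Remark~\ref{rem:Law-model-hom}, which lets us treat homomorphisms of Lawvere-theory models as mere morphisms in $\cat{B}$ carrying an automatic compatibility condition.
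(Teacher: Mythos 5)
Your verification is correct and is exactly the routine check the paper has in mind — the paper's own proof consists of the single line ``This is immediate,'' and your write-up simply makes that immediacy explicit (well-definedness via $P \of L' = L$, compatibility with the forgetful functors, and contravariant functoriality from associativity of composition and whiskering). No gaps; the variance bookkeeping and the appeal to Remark~\ref{rem:Law-model-hom} are both handled correctly.
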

\begin{proof}
This is immediate.
\end{proof}

Let $(U \from \cat{M} \to \cat{B}) \in \catover{\cat{B}}$. Imagine that $U$ is the forgetful functor from the category of models of some Lawvere theory $L \from \fin^{\op} \to \cat{L}$, but we do not know anything about $L$. How might we attempt to recover it from $\cat{M}$ and $U$?

We know that for every morphism $l \from Ln \to Lm$ in $\cat{L}$ we would have a map $\Gamma^x (l) \from (Ux)^n \to (Ux)^m$ for each $x \in \cat{M}$ (that is, for each $L$-model). Furthermore, we know that for each morphism $h \from x \to y$ in $\cat{M}$, the square
\[
\xymatrix{
(Ux)^n \ar[r]^{\Gamma^x(l)}\ar[d]_{(Uh)^n} & (Ux)^m\ar[d]^{(Uh)^m} \\
(Uy)^n \ar[r]_{\Gamma^y(l)} & (Uy)^m
}
\]
commutes. In other words, every morphism $l \from Ln \to Lm$ in $\cat{L}$ gives rise to a natural transformation $U^n \to U^m$. Therefore, if we wanted to recover a Lawvere theory from $U$, we might reasonably attempt to do so by defining the morphisms $Ln \to Lm$ in our Lawvere theory to be the natural transformations $U^n \to U^m$. This motivates the following definition.

\begin{defn}
\label{defn:lawvere-str-objects}
Let $U \from \cat{M} \to \cat{B}$ be an object of $\catover{\cat{B}}$. Then we define a category $\thr_{\law}(U)$ to have the same objects as $\fin$, and with hom-sets given by
\[
\thr_{\law}(U) ( n, m) = [\cat{M}, \cat{B}](U^n, U^m),
\]
with composition as in $[\cat{M},\cat{B}]$. We define a functor $\str_{\law}(U) \from \fin^{\op} \to \thr_{\law}(U)$ to be the identity on objects, and sending $f \from n \to m$ in $\fin$ to the unique natural transformation $f^* \from U^m \to U^n$ such that
\[
\xymatrix{
U^m\ar[r]^{f^*}\ar[dr]_{\pi_{f(i)}} & U^n\ar[d]^{\pi_i} \\
& U
}
\]
commutes for each $i = 1, \ldots, n$.
\end{defn}
Here, and elsewhere, $\str$ stands for ``structure'' and $\thr$ stands for ``theory''. Together $\str_{\law}(U)$ and $\thr_{\law}(U)$ give the Lawvere theory of the most general kind of algebraic structure possessed by all the objects of $\cat{M}$, when we regard them as objects of $\cat{B}$ equipped with extra structure.
\begin{defn}
\label{defn:lawvere-str-morphisms}
Let
\[
\xymatrix{
\cat{M}'\ar[rr]^Q \ar[dr]_{U'} && \cat{M}\ar[dl]^{U} \\
& \cat{B} & 
}
 \]
 be a morphism in $\catover{\cat{B}}$. We define a functor $\str_{\law}(Q) \from \thr_{\law}(U) \to \thr_{\law}(U')$, sending a natural transformation
\[
\gamma \from U^m \to U^n
\]
to the natural transformation
\[
\gamma Q \from U^m \of Q = (U')^m \to U^n \of Q = (U')^n.
\]
\end{defn}

\begin{prop}
\label{prop:lawvere-str}
Together, Definitions~\ref{defn:lawvere-str-objects} and~\ref{defn:lawvere-str-morphisms} define a functor $\str_{\law} \from \catover{\cat{B}} \to \LAW^{\op}.$
\end{prop}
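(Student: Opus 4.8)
The plan is to verify three things in turn: that $\str_{\law}(U)$ is a genuine Lawvere theory for each object $U$ of $\catover{\cat{B}}$, that $\str_{\law}(Q)$ is a Lawvere theory morphism for each morphism $Q$, and that these two assignments respect identities and composition with the correct (contravariant) variance. Throughout, the single fact that does all the work is that $U^n$ is the $n$-fold product of $U = U^1$ in $[\cat{M},\cat{B}]$, with projections $\pi_i$, since products in a functor category are computed pointwise and $(Ux)^n$ is the chosen $n$-fold product of $Ux$ in $\cat{B}$.

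First I would note that $\thr_{\law}(U)$ is a well-defined category because its hom-sets are hom-sets of $[\cat{M},\cat{B}]$ and composition is inherited from there, so associativity and unitality are automatic. The functor $\str_{\law}(U)$ is the identity on objects, hence bijective on objects. Functoriality and preservation of finite products both follow from the universal property defining $f^*$: for composable $f,g$ in $\fin$ one computes $\pi_i \of (f^* \of g^*) = (\pi_i \of f^*)\of g^* = \pi_{f(i)}\of g^* = \pi_{(g\of f)(i)}$ for each $i$, so $f^* \of g^* = (g\of f)^*$ by uniqueness, and identities are handled the same way; this is exactly (contravariant) functoriality on $\fin^{\op}$. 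For products, $\str_{\law}(U)$ sends the copower coprojection $\iota_i\from 1\to n$ in $\fin$ to $\iota_i^* = \pi_i\from U^n\to U$ (immediate from the defining property since $\pi_1\of\iota_i^* = \pi_{\iota_i(1)} = \pi_i$), and these exhibit $n$ as the $n$-fold product of $1$ in $\thr_{\law}(U)$ precisely because natural transformations $U^k \to U^n$ correspond to $n$-tuples of natural transformations $U^k \to U$; the nullary case is the observation that $U^0$ is terminal in $[\cat{M},\cat{B}]$.

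Next I would treat $\str_{\law}(Q)$ for a morphism $Q\from\cat{M}'\to\cat{M}$ over $\cat{B}$, so $U\of Q = U'$. Whiskering by $Q$ is well-defined on the relevant hom-sets because $U^n\of Q = (U\of Q)^n = (U')^n$ and projections whisker to projections; it preserves composition and identities of natural transformations, so $\str_{\law}(Q)$ is a functor, clearly the identity on objects. That it commutes with the structure functors, $\str_{\law}(Q)\of\str_{\law}(U) = \str_{\law}(U')$, again follows by uniqueness: whiskering the equation $\pi_i\of f^* = \pi_{f(i)}$ by $Q$ shows $f^* Q$ satisfies the property defining the $U'$-version of $f^*$. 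Hence $\str_{\law}(Q)$ is a Lawvere theory morphism $\str_{\law}(U)\to\str_{\law}(U')$, i.e.\ a morphism $\str_{\law}(U')\to\str_{\law}(U)$ in $\LAW^{\op}$, as the variance demands. Finally, functoriality of $\str_{\law}$ itself is immediate: $\str_{\law}(\id_{\cat{M}})$ is whiskering by an identity, hence the identity functor, and for composable $Q'\from\cat{M}''\to\cat{M}'$, $Q\from\cat{M}'\to\cat{M}$ one has $\gamma(Q\of Q') = (\gamma Q)Q'$, giving $\str_{\law}(Q\of Q') = \str_{\law}(Q')\of\str_{\law}(Q)$, which is composition in $\LAW^{\op}$.

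I do not expect any substantive obstacle; the content is entirely routine verification. The only points that need genuine care are the pointwise-product identification of $U^n$ in $[\cat{M},\cat{B}]$ — on which both functoriality of $\str_{\law}(U)$ and its preservation of finite products rest — and bookkeeping the several layers of $(-)^{\op}$ together with the direction-reversal that whiskering induces, so that $\str_{\law}$ lands in $\LAW^{\op}$ rather than $\LAW$.
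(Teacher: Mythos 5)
Your proof is correct and takes essentially the same route as the paper: the only non-routine point is that $\str_{\law}(U)$ preserves finite products, and both arguments reduce this to the fact that $U^n$ is the pointwise $n$-fold product in $[\cat{M},\cat{B}]$ together with the fact that $\thr_{\law}(U)$ is by construction a full subcategory of $[\cat{M},\cat{B}]$ (the paper phrases this last step as ``full and faithful functors reflect limits,'' while you unpack it as the hom-set bijection $\thr_{\law}(U)(k,n)\iso\thr_{\law}(U)(k,1)^n$ induced by the $\pi_i = \iota_i^*$). The remaining verifications you carry out explicitly are exactly the ones the paper declares obvious, and they are done correctly.
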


\begin{proof}
The only part that is not obvious is that $\str_{\law}(U) \from \fin^{\op} \to \thr_{\law}(U)$ preserves finite products. Note that
\[
\xymatrix
@R=35pt
@C=35pt{
\fin^{\op}\ar[r]^{\str_{\law}(U)}\ar[d] & \thr_{\law}(U)\ar[d] \\
[\cat{B}, \cat{B}] \ar[r]_{U^*} & [\cat{M}, \cat{B}]
}
\]
commutes, where the left-hand vertical arrow sends $n \in \fin$ to $\id_{\cat{B}}^n \from \cat{B} \to \cat{B}$, and the right-hand vertical arrow is a full inclusion. But the left and bottom sides of this square preserve finite products (since finite products commute with finite powers), and hence so does the top-right composite. Since the inclusion $\thr_{\law}(U) \incl [\cat{M}, \cat{B}]$ is full and faithful, and so reflects limits, it follows that $\str_{\law}(U)$ preserves finite products.
\end{proof}

\begin{prop}
\label{prop:str-sem-adj-law}
The functors from Propositions~\ref{prop:lawvere-sem} and~\ref{prop:lawvere-str} form an adjunction
\[
\xymatrix{
{\catover{\cat{B}}}\ar@<5pt>[r]_-{\perp}^{\str_{\law}}\ & {\LAW^{\op}.}\ar@<5pt>[l]^{\sem_{\law}}
}
\]
\end{prop}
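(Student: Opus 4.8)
The plan is to verify the adjunction by exhibiting, for $U = (U\from\cat{M}\to\cat{B})$ in $\catover{\cat{B}}$ and a Lawvere theory $L = (L\from\fin^{\op}\to\cat{L})$, a bijection
\[
\LAW^{\op}(\str_{\law}(U), L) \;\cong\; \catover{\cat{B}}(U, \sem_{\law}(L)),
\]
natural in both arguments. By Definitions~\ref{defn:lawvere-theory} and~\ref{defn:lawvere-str-objects}, an element of the left-hand side is a functor $P\from\cat{L}\to\thr_{\law}(U)$ with $P\of L = \str_{\law}(U)$; necessarily such a $P$ is the identity on objects (after identifying $\ob\cat{L}$ with $\ob\fin$ via $L$) and sends each $l\from Ln\to Lm$ in $\cat{L}$ to a natural transformation $P(l)\from U^n\to U^m$. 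An element of the right-hand side is a functor $Q\from\cat{M}\to\mod_{\law}(L)$ lying over $\cat{B}$.

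First I would define the passage from $P$ to $Q$. For $x\in\cat{M}$, the components $P(l)_x\from (Ux)^n\to (Ux)^m$ assemble into a functor $\Gamma^{Qx}\from\cat{L}\to\cat{B}$ with $\Gamma^{Qx}(Ln) = (Ux)^n$, and the hypothesis $P\of L = \str_{\law}(U)$ forces $\Gamma^{Qx}\of L = (Ux)^{(-)}$; hence $Qx := (Ux,\Gamma^{Qx})$ is an $L$-model. For $h\from x\to y$ in $\cat{M}$, naturality of each $P(l)$ at $h$ is precisely the statement that $Uh$ underlies an $L$-model homomorphism $Qx\to Qy$, which we take to be $Qh$ (well-defined by Remark~\ref{rem:Law-model-hom}, since being a homomorphism is a property of the underlying map); this produces a functor $Q$ over $\cat{B}$. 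Conversely, from $Q$ over $\cat{B}$ I would define $P$ to be the identity on objects with $P(l) := (\Gamma^{Qx}(l))_{x\in\cat{M}}$; this family is a natural transformation $U^n\to U^m$ because each $Qh$ is a homomorphism, $P$ is functorial because each $\Gamma^{Qx}$ is, and $P\of L = \str_{\law}(U)$ because every model in the image of $Q$ has underlying object $Ux$ and hence $\Gamma^{Qx}\of L = (Ux)^{(-)}$. Checking that these two passages are mutually inverse is immediate: the first followed by the second returns $P$ on the nose, and the second followed by the first returns $Q$, again using Remark~\ref{rem:Law-model-hom} to recover the homomorphism data on morphisms of $\cat{M}$.

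Finally I would verify naturality of the bijection in $U$ and in $L$, which is a routine diagram chase against the definitions of $\str_{\law}$ and $\sem_{\law}$ on morphisms (Definitions~\ref{defn:lawvere-str-morphisms} and~\ref{defn:lawvere-sem-morphisms}). There is no serious obstacle here: the content of the proposition is simply that an $L$-model structure on $Ux$ for every $x$, compatible with the morphisms of $\cat{M}$, is the \emph{same data} as a Lawvere-theory morphism $L\to\thr_{\law}(U)$, with both descriptions amounting to the single assignment $l\mapsto(\Gamma^{Qx}(l))_{x}$. The only point needing a little care is the bookkeeping between $\fin$ and $\fin^{\op}$, together with the observation (used in both directions) that a model with underlying object $Ux$ automatically sends $Ln$ to $(Ux)^n$, so that $P(l)$ genuinely has source $U^n$ and target $U^m$.
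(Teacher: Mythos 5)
Your proof is correct and takes essentially the same approach as the paper's: both establish the adjunction by exhibiting the bijection $\LAW(L,\str_{\law}(U)) \cong \catover{\cat{B}}(U,\sem_{\law}(L))$ via the two explicit passages $P \mapsto (x \mapsto (Ux, P(-)_x))$ and $Q \mapsto (l \mapsto \Gamma^{Q(-)}(l))$, leaving the inverse and naturality checks as routine. You supply slightly more detail on why the two constructions are mutually inverse, but the argument is the same.
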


\begin{proof}
A version of this is Theorem~2 in~III.1 of~\cite{lawvere63}, which deals with the case when $\cat{B} = \Set$, and with slightly stronger size restrictions on the categories and functors involved. The general result is presumably known, but I was unable to find it in the literature. In fact it will follow from Proposition~\ref{prop:law-sem-from-proth}, but it is also straightforward to prove directly, and we sketch such a proof here.

Let $L \from \fin^{\op} \to \cat{L}$ be a Lawvere theory and $(U \from \cat{M} \to \cat{B}) \in \catover{\cat{B}}$. We will describe a bijection
\[
\catover{\cat{B}}(U, \sem_{\law}(L)) \iso \LAW ( L, \str_{\law}(U)).
\]
Let
\[
\xymatrix{
\cat{M}\ar[rr]^Q\ar[dr]_U && \mod_{\law}(L)\ar[dl]^{\sem_{\law}(L)} \\
& \cat{B} &
}
\]
be a morphism in $\catover{\cat{B}}$. We will define a morphism $\widebar{Q} \from L \to \str_{\law}(U)$ in $\LAW$. Given $l \from Ln \to Ln'$ in $\cat{L}$, for every model $x= (d^x, \Gamma^x)$ of $L$ in $\cat{B}$, we have a map
\[
\Gamma^x (l) \from (d^x)^n = U^n (x) \to (d^x)^{n'} = U^{n'} (x),
\]
and by the definition of $L$-model homomorphisms, these are natural in $x \in \mod_{\law}(L)$. Thus $\Gamma^{(-)} (l)$ is a natural transformation $U^n \to U^{n'}$, that is, a morphism $n \to n'$ in $\thr_{\law}(U)$; we define $\widebar{Q} (l) = \Gamma^{(-)}(l)$.

In the other direction, let
\[
\xymatrix{
\cat{L}\ar[rr]^P & &\thr_{\law}(U)\\
& \fin^{\op}\ar[ul]^{L}\ar[ur]_{\str_{\law}(U)}
}
\]
be a morphism in $\LAW$; we must define a corresponding morphism $\widebar{P} \from U \to \sem_{\law}(L)$ in $\catover{\cat{B}}$. That is, for each object $m \in \cat{M}$, we must equip $Um$ with an $L$-model structure such that for any map $f \from m \to m'$ in $\cat{M}$, the map $Um$ becomes an $L$-model homomorphism. To define an $L$-model structure on $Um$ we must give, for each $l \from Ln \to Ln'$ in $\cat{L}$, a map $(Um)^n \to (Um)^{n'}$. We take this map to be $P(l)_m$.

It remains to check that these are inverse bijections natural in $U$ and $L$; this is straightforward and we omit it.
\end{proof}

\begin{defn}
Let $\cat{M}$ be a large category and $U \from \cat{M} \to \Set$ a functor. We say that $U$ is tractable if, for all $n, n' \in \nat$, the set of natural transformations $U^n \to U^{n'}$ is small. We write $\catover{\Set}_{\tract}$ for the full subcategory of $\catover{\Set}$ consisting of the tractable functors.
\end{defn}

Recall that $\Law$ denotes the full subcategory of $\LAW$ consisting of those Lawvere theories $L \from \fin^{\op} \to \cat{L}$ where $\cat{L}$ is locally small.

\begin{prop}
\label{prop:str-sem-law-set-ff}
In the case when $\cat{B} = \Set$, the adjunction of Proposition~\ref{prop:str-sem-adj-law} restricts to an adjunction
\[
\xymatrix{
{\catover{\Set}_{\tract}}\ar@<5pt>[r]_-{\perp}^{\str_{\law}}\ & {\Law^{\op},}\ar@<5pt>[l]^{\sem_{\law}}
}
\]
and $\sem_{\law} \from \Law^{\op} \to \catover{\Set}_{\tract}$ is full and faithful.
\end{prop}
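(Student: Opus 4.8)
The plan is to verify that each of the two functors of Proposition~\ref{prop:str-sem-adj-law} carries the relevant full subcategory into the other, so that the adjunction restricts automatically, and then to compute the counit of the restricted adjunction using the Yoneda lemma.

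That $\str_{\law}$ restricts is immediate: if $U \from \cat{M} \to \Set$ is tractable then by definition every hom-set $\thr_{\law}(U)(n,n') = [\cat{M},\Set](U^n,U^{n'})$ is small, so $\thr_{\law}(U)$ is locally small and $\str_{\law}(U) \in \Law^{\op}$. For $\sem_{\law}$, fix a locally small Lawvere theory $L\from\fin^{\op}\to\cat{L}$ and write $U = \sem_{\law}(L)$. By Remark~\ref{rem:Law-model-hom} the morphisms of $\mod_{\law}(L)$ are functions between the small underlying sets of models, so $\mod_{\law}(L)$ is locally small. Using Lemma~\ref{lem:law-model-non-standard} to identify $\mod_{\law}(L)$ with the category of finite-product-preserving functors $\cat{L}\to\Set$, the functor $U^n \from \mod_{\law}(L) \to \Set$ is, naturally in its argument, the representable $[\cat{L},\Set](\cat{L}(Ln,-),-)$ restricted to this subcategory: here we use that $\cat{L}$ is locally small, so $\cat{L}(Ln,-)$ is genuinely $\Set$-valued, and that $\Gamma(Ln)\iso\Gamma(L1)^n$ for finite-product-preserving $\Gamma$. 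Applying the Yoneda lemma first in $[\mod_{\law}(L),\Set]$ and then in $[\cat{L},\Set]$ gives natural isomorphisms
\[
[\mod_{\law}(L),\Set](U^n,U^{n'}) \iso [\cat{L},\Set]\bigl(\cat{L}(Ln',-),\cat{L}(Ln,-)\bigr) \iso \cat{L}(Ln,Ln'),
\]
and the last set is small because $\cat{L}$ is, so $U = \sem_{\law}(L)$ is tractable.

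Since both functors preserve the subcategories in question, the adjunction of Proposition~\ref{prop:str-sem-adj-law} restricts formally, with its unit and counit unchanged; this uses only that $\Law^{\op}\incl\LAW^{\op}$ and $\catover{\Set}_{\tract}\incl\catover{\Set}$ are full. Full-and-faithfulness of the restricted $\sem_{\law}\from\Law^{\op}\to\catover{\Set}_{\tract}$ is equivalent to its counit being invertible. Unwinding the proof of Proposition~\ref{prop:str-sem-adj-law}, the component of the counit at $L$ corresponds (passing between $\LAW^{\op}$ and $\LAW$) to the Lawvere theory morphism $L\to\str_{\law}(U)$ that is the identity on objects and sends $l\from Ln\to Ln'$ to the natural transformation $\Gamma^{(-)}(l)\from U^n\to U^{n'}$. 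As this morphism is already a bijection on objects, it suffices to show it is a bijection on hom-sets, and I would do so by checking that it is inverse to the chain of Yoneda isomorphisms displayed above: tracing $\id_{Ln}$ forward through that chain yields the universal element witnessing $U^n\iso[\cat{L},\Set](\cat{L}(Ln,-),-)$, so by naturality an arbitrary $\alpha\from U^n\to U^{n'}$ must equal $\Gamma^{(-)}(l)$ for the unique $l$ it determines. Hence the counit is an isomorphism and $\sem_{\law}$ is full and faithful.

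The main obstacle is the representability step together with the bookkeeping at the end. One must be careful that local smallness of $\cat{L}$ is precisely what makes $\cat{L}(Ln,-)$ a legitimate $\Set$-valued functor and hence a model (which is the structural reason the statement is restricted to $\Law$), that $\mod_{\law}(L)$ is locally small so that the Yoneda lemma applies there, and finally that the Yoneda chain genuinely coincides with the map $l\mapsto\Gamma^{(-)}(l)$ rather than merely being some bijection between sets of the same cardinality — this last identification is routine but is the one place where a careful diagram chase is required.
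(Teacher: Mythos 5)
Your argument is correct, but it is worth noting that the paper does not actually prove this proposition: it simply cites Theorems~1 and~2 of Section~III.1 of Lawvere's thesis, so what you have written is a self-contained reconstruction of the content of those citations rather than a parallel to an argument in the text. Your route is the standard one and it works: tractability of $U$ immediately forces $\thr_{\law}(U)$ to be locally small, so $\str_{\law}$ restricts; conversely, for $L$ locally small the functor $\cat{L}(Ln,-)$ is a genuine $\Set$-valued finite-product-preserving functor, hence (via Lemma~\ref{lem:law-model-non-standard}) an object of $\mod_{\law}(L)$ representing $U^n$, and the two applications of Yoneda identify $[\mod_{\law}(L),\Set](U^n,U^{n'})$ with the small set $\cat{L}(Ln,Ln')$, giving both tractability of $\sem_{\law}(L)$ and — once you check, as you indicate, that the composite isomorphism is inverse to $l\mapsto\Gamma^{(-)}(l)$ — invertibility of the counit, hence full faithfulness. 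The one point I would insist you spell out rather than wave at is the representability step: you need that $\cat{L}(Ln,-)$ preserves finite products (it does, being a representable on a category with finite products) so that it actually lies in the full subcategory you are applying Yoneda over; with that in place the rest is the routine naturality chase you describe. What your approach buys over the paper's is a proof that does not depend on unpacking Lawvere's original size conventions, which differ slightly from the universe-based conventions used here; what it costs is the page of verification the paper avoids by citation.
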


\begin{proof}
The fact that the adjunction restricts in this way is Theorem~2 in~III.1 of~\cite{lawvere63}, and the fact that $\sem_{\law}$ is full and faithful is Theorem~1 in the same section.
\end{proof}

\begin{remark}
\label{rem:ff-completeness-interpret}
Recall that a right adjoint is faithful if and only if every component of the counit of the adjunction is an epimorphism, and is full and faithful if and only if the counit is an isomorphism (See Theorem~1 in~IV.3 of~\cite{maclane71}). Let us interpret each of these conditions in the case of the adjunction from Proposition~\ref{prop:str-sem-law-set-ff}. The monomorphisms in $\Law$ (corresponding to epimorphisms in $\Law^{\op}$) are precisely the morphisms of Lawvere theories given by faithful functors. The component of the counit at a Lawvere theory $L \from \fin^{\op} \to \cat{L}$ is the functor
\[
E_L \from \cat{L} \to \thr_{\law}(\sem_{\law}(L))
\]
that sends $l \from Lm \to Ln$ to the natural transformation $E_L(l) = \Gamma^{(-)} (l) \from \sem_{\law}(L)^m \to \sem_{\law}(L)^n$ with $x$-component
\[
\Gamma^x (l) \from \sem_{\law}(L)^m (x) = (d^x)^m \to \sem_{\law}(L)^n (x) = (d^x)^n
\]
for each $x \in \mod_{\law}(L)$. Since $L$ preserves finite products and every object of $\fin$ is a copower of $1$, this functor is faithful if and only if, whenever $l, l' \from Ln \to L1$ in $\cat{L}$ are such that $\Gamma^x (l) = \Gamma^x(l')$ for every $x \in \mod_{\law}(L)$, we in fact have $l = l'$. That is, any two operations of the theory $L$ that have the same interpretation in every model, are in fact equal. Thus faithfulness of $\sem_{\law}$ is very closely analogous to the completeness theorem (Theorem~\ref{thm:classical-completeness}) for classical algebraic theories.

Next we interpret what it means for $\sem_{\law}$ to be \emph{full} and faithful, or equivalently for each $E_L$ to be an isomorphism. This occurs precisely when, for any Lawvere theory $L\from \fin^{\op} \to \cat{L}$, every natural transformation $\sem_{\law}(L)^n \to \sem_{\law}$ is of the form $\Gamma^{(-)}(l)$ for some $l \from Ln \to L1$. A natural transformation $\sem_{\law}(L)^n \to \sem_{\law}(L)$ can be thought of as an $n$-ary operation possessed by every $L$-model and preserved by every $L$-model homomorphism --- that is, it is a kind of additional algebraic structure possessed by $L$-models. Thus, the assertion that $\sem_{\law}$ is full and faithful, or equivalently that each $E_L$ is an isomorphism says that $L$-models do not possess any extra algebraic structure other than that already described by the theory $L$. This is a kind of ``structural completeness theorem''.
\end{remark}

\section{Monads}
\label{sec:notions-monads}

Throughout this section, let $\cat{B}$ be any large category. We assume the reader is familiar with the basic theory of monads, as described, for example, in Chapter~VI of~\cite{maclane71} or Chapter~4 of~\cite{borceux94v2}. Note however, that when we refer to ``morphisms of monads'', we mean this in the sense of Definition~4.5.8 of~\cite{borceux94v2}, rather than as in Section~1 of~\cite{street72}. In particular, morphisms of monads are always between monads on the same category, and a morphism $(T, \eta, \mu) \to (T', \eta', \mu')$ consists of a natural transformation $T \to T'$ making certain diagrams commute. We write $\monad(\cat{B})$ for the category of monads and monad morphisms on $\cat{B}$.

\begin{remark}
Let us make sense of the conditions of Remark~\ref{rem:alg-th-features} in the context of monads as algebraic theories. Let $\mnd{T} = (T, \eta, \mu)$ be a monad on $\cat{B}$.
\begin{enumerate}
\item[0.] The arities for monads on the category $\cat{B}$ are the objects of $\cat{B}$.
\item Given an arity $b$ (that is, an object of $\cat{B}$), the totality of all $b$-ary operations for $\mnd{T}$ is given by the object $Tb \in \cat{B}$.
\item The unit of the monad gives a map $\eta_b \from b \to Tb$ for each $b$.
\item One way of restating condition~\ref{rem:alg-th-features}.\bref{part:alg-th-features-sub} is that, any $a$-indexed family of $b$-ary operations should give rise to a way of turning $a$-ary operations into $b$-ary operations. A morphism $f \from a \to Tb$ can be thought of as an $a$-indexed family of $b$-ary operations in some sense, and every such morphism canonically gives rise to a morphism $Ta \to Tb$ (which is a way of turning $a$-ary operations into $b$-ary operations), namely the composite
\[
Ta \toby{Tf} TTb \toby{\mu_b} Tb.
\]
\end{enumerate}
\end{remark}

\begin{defn}
We write $\radjover{\cat{B}}$ for the category whose objects are functors $U \from \cat{M} \to \cat{B}$ with $\cat{M}$ large, equipped with a specified left adjoint and choice of unit and counit, and whose morphisms from $U' \from \cat{M}' \to \cat{B}$ to $U \from \cat{M} \to \cat{B}$ are functors $Q \from \cat{M}' \to \cat{M}$ such that $U \of Q = U'$. There is an evident forgetful functor $\radjover{\cat{B}} \to \catover{\cat{B}}$, and this is full and faithful.
\end{defn}

\begin{defn}
We write $\sem_{\monad} \from \monad(\cat{B})^{\op} \to \radjover{\cat{B}}$ for the functor defined as follows. On objects, $\sem_{\monad}$ sends a monad $\mnd{T}$ to the forgetful functor $U^{\mnd{T}} \from \cat{B}^{\mnd{T}} \to \cat{B}$ from the category of Eilenberg--Moore algebras for $\mnd{T}$ with its usual left adjoint, unit and counit. On morphisms, $\sem_{\monad}$ sends a monad morphism $\phi \from \mnd{T}' \to \mnd{T}$ to the functor $\sem_{\monad}(\phi) \from \cat{B}^{\mnd{T}} \to \cat{B}^{\mnd{T}'}$ that sends a $\mnd{T}$-algebra $a \from Td \to d$ to the $\mnd{T}'$-algebra $a \of \phi_b \from T'b \to Tb \to b$, and sends a $\mnd{T}$-algebra homomorphism to the $\mnd{T}'$-algebra homomorphism with the same underlying morphism in $\cat{B}$.
\end{defn}

\begin{prop}
\label{prop:sem-str-adj-monad}
There is a functor $\str_{\monad} \from \radjover{\cat{B}} \to \monad(\cat{B})^{\op}$ that, on objects, sends a right adjoint to the monad induced by the adjunction, and this gives a structure--semantics adjunction
\[
\xymatrix{
{\radjover{\cat{B}} }\ar@<5pt>[r]_-{\perp}^-{\str_{\monad}}\ & {\monad(\cat{B})^{\op}.}\ar@<5pt>[l]^-{\sem_{\monad}}
} \tagqed
\]
\end{prop}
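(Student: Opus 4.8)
The plan is to construct $\str_{\monad}$ explicitly, verify it is a functor, and then produce the adjunction by exhibiting a natural bijection of hom-sets. First I would define $\str_{\monad}$ on objects in the obvious way: given $(U \from \cat{M} \to \cat{B})$ in $\radjover{\cat{B}}$ with its specified left adjoint $F$, unit $\eta^{F}$ and counit $\epsilon^{F}$, set $\str_{\monad}(U) = (UF, \eta^{F}, U\epsilon^{F}F)$, the monad induced by the adjunction. On a morphism $Q \from \cat{M}' \to \cat{M}$ with $U \of Q = U'$, the unit $\eta^{F'} \from \id_{\cat{B}} \to U'F' = U(QF')$ transposes across $F \ladj U$ to a natural transformation $F \to QF'$, and whiskering with $U$ gives $UF \to U'F'$; a short diagram chase with the triangle identities shows this is a morphism of monads $\str_{\monad}(U) \to \str_{\monad}(U')$, i.e.\ a morphism $\str_{\monad}(U') \to \str_{\monad}(U)$ in $\monad(\cat{B})^{\op}$. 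Functoriality (identities and composites) is then a routine mate calculation.

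Next I would establish, naturally in $U \in \radjover{\cat{B}}$ and $\mnd{T} \in \monad(\cat{B})^{\op}$, a bijection
\[
\radjover{\cat{B}}(U, \sem_{\monad}(\mnd{T})) \iso \monad(\cat{B})(\mnd{T}, \str_{\monad}(U)),
\]
which is exactly the adjunction $\str_{\monad} \ladj \sem_{\monad}$ unwound. The left-hand side consists of functors $Q \from \cat{M} \to \cat{B}^{\mnd{T}}$ with $U^{\mnd{T}} \of Q = U$; by the universal property of the Eilenberg--Moore category these correspond bijectively to natural transformations $\alpha \from T \of U \to U$ satisfying the two $\mnd{T}$-algebra axioms (componentwise, $Q$ equips each $Um$ with a $\mnd{T}$-algebra structure). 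Transposing $\alpha$ across $F \ladj U$, that is, forming
\[
T \toby{T\eta^{F}} TUF \toby{\alpha F} UF = \str_{\monad}(U),
\]
yields a natural transformation $\beta \from T \to UF$, and the triangle identities show that $\alpha \mapsto \beta$ is a bijection with inverse $\beta \mapsto U\epsilon^{F} \of \beta U$. It then remains to check that $\alpha$ satisfies the algebra unit and associativity axioms if and only if $\beta$ preserves the unit and multiplication of the monad; since the two assignments are mutually inverse, it suffices to verify each implication in the convenient direction. Naturality of the bijection in both variables is a further bookkeeping check.

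The hard part will be this last compatibility verification — translating the two Eilenberg--Moore algebra axioms for $\alpha$ into the two monad-morphism axioms for $\beta$ through the mate correspondence — together with tracking naturality; everything else is formal, and indeed the whole statement is classical (Eilenberg--Moore; see also the treatment in~\cite{borceux94v2}), so one could alternatively simply cite it. I would also record that by construction $\str_{\monad}(\sem_{\monad}(\mnd{T})) = \mnd{T}$ on the nose and the counit of the adjunction is the identity, so $\sem_{\monad}$ is full and faithful, while the unit at $U$ is the canonical comparison functor $\cat{M} \to \cat{B}^{\str_{\monad}(U)}$.
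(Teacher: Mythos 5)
Your proposal is correct, but it takes a genuinely different route from the paper. The paper declines to prove the statement directly: it defers to Corollary~\ref{cor:str-sem-adj-monad}, which establishes the larger adjunction between $\cmover{\cat{B}}$ and $\monad(\cat{B})^{\op}$ by citing Dubuc's theorem (Proposition~\ref{prop:codensity-natural-bij}) that a functor $U$ with codensity monad $\mnd{T}$ satisfies $\catover{\cat{B}}(U,\sem_{\monad}(\mnd{S})) \iso \monad(\cat{B})(\mnd{S},\mnd{T})$ naturally in $\mnd{S}$, and then restricts along the inclusion $\radjover{\cat{B}} \incl \cmover{\cat{B}}$ using the fact that a right adjoint's codensity monad is the induced monad $(UF,\eta,U\epsilon F)$. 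You instead build everything by hand: $\str_{\monad}$ on morphisms via mates, and the hom-set bijection via the universal property of Eilenberg--Moore categories combined with the transposition $\mathrm{Nat}(TU,U)\iso\mathrm{Nat}(T,UF)$ given by $\alpha\mapsto \alpha F\of T\eta^{F}$ and $\beta\mapsto U\epsilon^{F}\of\beta U$ (your formulas and their mutual inversion via the triangle identities are right, as is the identification of algebra axioms with monad-morphism axioms under this correspondence). What each approach buys: yours is self-contained and explicit, and your closing observation that $\str_{\monad}\of\sem_{\monad}=\id$ with identity counit recovers Proposition~\ref{prop:sem-mnd-ff} at no extra cost; the paper's route gets the stronger statement on all of $\cmover{\cat{B}}$ essentially for free and avoids the mate bookkeeping, at the price of importing Dubuc's result. (The paper also later gives a second derivation, via Theorem~\ref{thm:str-sem-rest-monad}, by restricting the general structure--semantics adjunction for the canonical aritation; that argument is closer in spirit to your direct computation.) The only detail I would urge you to write out rather than wave at is the check that your morphism-level definition of $\str_{\monad}$ agrees with the map induced on monad morphisms by the hom-set bijection, since that is what naturality of the bijection in $U$ actually requires.
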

\begin{proof}
We could prove this directly, however, it follows from the more general result Corollary~\ref{cor:str-sem-adj-monad}, and so we defer the proof until then.
\end{proof}
One may ask whether it is possible to extend the definition of $\str_{\monad}$ to a larger subcategory of $\catover{\cat{B}}$ than $\radjover{\cat{B}}$. In other words, is it possible that for a functor $U \from \cat{M} \to \cat{B}$ \emph{without} a left adjoint there may nonetheless exist a monad $\str_{\monad}(U)$ such that there is an isomorphism
\[
\catover{\cat{B}} (U, \sem_{\monad}(\mnd{T})) \iso \monad(\cat{B})(\mnd{T}, \str_{\monad}(U)),
\]
natural in $\mnd{T} \in \monad(\cat{B})$? Indeed this \emph{does} sometimes occur; more precisely it occurs whenever the codensity monad of $U$ exists, as defined in Definition~\ref{defn:codensity-monad}.

\begin{defn}
Write $\cmover{\cat{B}}$ for the full subcategory of $\catover{\cat{B}}$ on those functors into $\cat{B}$ that have a codensity monad.
\end{defn}

\begin{prop}
\label{prop:codensity-natural-bij}
Let $(U \from \cat{M} \to \cat{B}) \in \cmover{\cat{B}}$ have codensity monad $\mnd{T}$. Then there is a bijection
\[
\catover{\cat{B}} ( U, \sem_{\monad} (\mnd{S})) \iso \monad(\cat{B})(\mnd{S}, \mnd{T}),
\]
natural in $\mnd{S} \in \monad(\cat{B})$.
\end{prop}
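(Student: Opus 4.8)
The plan is to factor the desired bijection through a third description of both sides, namely the set of \emph{$\mnd{S}$-actions on $U$}: natural transformations $\lambda \from S \of U \to U$ satisfying $\lambda \of (\eta^S U) = \id_U$ and $\lambda \of (\mu^S U) = \lambda \of (S\lambda)$, where $\mnd{S} = (S,\eta^S,\mu^S)$.

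First I would record the standard observation that a morphism $U \to \sem_{\monad}(\mnd{S}) = U^{\mnd{S}}$ in $\catover{\cat{B}}$ --- i.e.\ a functor $Q \from \cat{M} \to \cat{B}^{\mnd{S}}$ with $U^{\mnd{S}} \of Q = U$ --- is exactly an $\mnd{S}$-action on $U$. Such a $Q$ must send each $m$ to an $\mnd{S}$-algebra whose underlying object is $Um$, say with structure map $\lambda_m$, and each morphism $f$ of $\cat{M}$ to $Uf$; functoriality of $Q$ over $\cat{B}$ is then precisely naturality of $\lambda = (\lambda_m)_m$, and the Eilenberg--Moore axioms for the individual $(Um,\lambda_m)$ are precisely the two displayed equations taken componentwise. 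This correspondence is visibly natural in $\mnd{S}$: precomposing with a monad morphism $\psi \from \mnd{S}' \to \mnd{S}$ corresponds to $\lambda \mapsto \lambda \of (\psi U)$.

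Second, and this is the heart of the argument, I would match $\mnd{S}$-actions on $U$ with monad morphisms $\mnd{S} \to \mnd{T}$. Let $\kappa \from T \of U \to U$ be the natural transformation exhibiting $T$ as the right Kan extension of $U$ along itself, as in Definition~\ref{defn:codensity-monad}; its universal property says that for any endofunctor $G$ of $\cat{B}$, sending $\theta \from G \to T$ to $\kappa \of (\theta U)$ is a bijection onto $[\cat{M},\cat{B}](G\of U, U)$. So every natural transformation $\phi \from S \to T$ corresponds to $\lambda_\phi := \kappa \of (\phi U)$, and it remains to check that $\phi$ is a monad morphism if and only if $\lambda_\phi$ is an action. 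The key leverage is that $\eta^T$ and $\mu^T$ are themselves defined (Definition~\ref{defn:codensity-monad}) by $\kappa \of (\eta^T U) = \id_U$ and $\kappa \of (\mu^T U) = \kappa \of (T\kappa)$; since the unit and multiplication compatibility conditions for $\phi$ are equalities of natural transformations with codomain $T$, the uniqueness clause of the Kan extension property lets me test each of them after applying $\kappa \of ((-)U)$. For the unit condition this immediately yields $\lambda_\phi \of (\eta^S U) = \id_U$. For the multiplication condition, a short computation using functoriality of whiskering, the defining equation of $\mu^T$, and the naturality identity $(\phi U)\of(S\lambda_\phi) = (T\lambda_\phi)\of(\phi SU)$ rewrites $\kappa\of((\mu^T\of T\phi\of\phi S)U)$ as $\lambda_\phi \of (S\lambda_\phi)$ and $\kappa\of((\phi\of\mu^S)U)$ as $\lambda_\phi \of (\mu^S U)$, so the condition becomes the associativity axiom for $\lambda_\phi$. (One may also note that $\kappa$ is itself the canonical $\mnd{T}$-action, recovering the comparison functor of Definition~\ref{defn:codensity-monad-comparison}.)

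I anticipate that this multiplication compatibility is the only step demanding real attention --- everything else is unwinding definitions --- but it is a routine two-step diagram chase once $\mu^T$ is expanded and naturality of $\phi$ is used. To finish, I would verify that $\lambda \leftrightarrow \phi_\lambda$ is natural in $\mnd{S}$ (for $\psi \from \mnd{S}' \to \mnd{S}$, the action $\lambda \of (\psi U)$ corresponds to $\phi_\lambda \of \psi$ because $\kappa \of ((\phi_\lambda\of\psi)U) = (\kappa\of(\phi_\lambda U))\of(\psi U)$) and then compose the two natural bijections. No new set-theoretic difficulties arise, since the hypothesis that $U$ has a codensity monad is exactly what makes the Kan extension universal property available in the first place.
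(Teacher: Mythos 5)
Your proof is correct. The paper does not argue this proposition itself but simply cites Theorem~II.1.1 of Dubuc, and your argument --- factoring the bijection through $\mnd{S}$-actions on $U$ and then invoking the universal property of the right Kan extension $\kappa \from T \of U \to U$ together with the defining equations for $\eta^T$ and $\mu^T$ --- is precisely the standard proof found there, with all the key verifications (the naturality trick $(\phi U)\of(S\lambda_\phi) = (T\lambda_\phi)\of(\phi S U)$ in particular) correctly identified.
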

\begin{proof}
This is Theorem~II.1.1 in Dubuc~\cite{dubuc70}.
\end{proof}

\begin{prop}
Let $(U \from \cat{M} \to \cat{B}) \in \radjover{\cat{B}}$ have left adjoint $F$ with unit $\eta$ and counit $\epsilon$. Then $(GF, \eta, G\epsilon F)$ is a codensity monad of $G$. In particular, there is a natural inclusion $\radjover{\cat{B}} \incl \cmover{\cat{B}}$.
\end{prop}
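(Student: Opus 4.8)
The plan is to verify directly that the monad $(UF, \eta, U\epsilon F)$ induced by the adjunction $F \ladj U$ is the codensity monad of $U$ in the sense of Definition~\ref{defn:codensity-monad}; I write $U$ for the functor (denoted $G$ in the statement). The crucial point is that $UF$ is in fact a \emph{pointwise} right Kan extension of $U$ along itself. For each $b \in \cat{B}$, the comma category $(b \downarrow U)$ has an initial object, namely $(Fb, \eta_b \from b \to UFb)$: a morphism $(Fb, \eta_b) \to (m, g)$ in $(b \downarrow U)$ is a map $h \from Fb \to m$ with $Uh \of \eta_b = g$, and $h \mapsto Uh \of \eta_b$ is exactly the adjunction bijection $\cat{M}(Fb, m) \iso \cat{B}(b, Um)$, so there is precisely one such $h$. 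Hence the limit of the canonical diagram $(b \downarrow U) \to \cat{M} \toby{U} \cat{B}$ is its value at the initial object, namely $UFb$, so $UF$ is the pointwise right Kan extension of $U$ along $U$; the associated natural transformation $\kappa \from (UF) \of U \to U$ works out to $U\epsilon$, since the unique morphism $F(Um) \to m$ lying over $\id_{Um}$ is the counit component $\epsilon_m$.

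It then remains to check that, taking $\kappa = U\epsilon$, the natural transformations produced by the universal property in Definition~\ref{defn:codensity-monad} are precisely $\eta$ and $U\epsilon F$. For the unit this is the identity $U\epsilon \of (\eta U) = \id_U$, which is one of the triangle identities for $F \ladj U$. For the multiplication one must check $\kappa \of (T\kappa) = \kappa \of (\mu U)$ with $T = UF$ and $\mu = U\epsilon F$; unwinding the whiskerings, this is the image under $U$ of the equation $\epsilon \of (FU\epsilon) = \epsilon \of (\epsilon FU)$ of natural transformations $FUFU \to \id_{\cat{M}}$, which holds by naturality of $\epsilon \from FU \to \id_{\cat{M}}$ applied at $\epsilon$ itself. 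Since Definition~\ref{defn:codensity-monad} pins $\eta$ and $\mu$ down uniquely, this identifies $(UF, \eta, U\epsilon F)$ as the codensity monad of $U$. (Alternatively, the bare statement that the codensity monad exists and is isomorphic to this monad follows from Lemma~\ref{lem:codense-radj-monad} with $I = \id_{\cat{M}}$, which is codense because the co-Yoneda embedding is full and faithful; but the direct argument has the advantage of identifying the codensity monad on the nose.)

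For the final clause, the inclusion $\radjover{\cat{B}} \incl \cmover{\cat{B}}$ is the corestriction of the forgetful functor $\radjover{\cat{B}} \to \catover{\cat{B}}$: by the first part its image lies in the full subcategory $\cmover{\cat{B}}$, and it is full and faithful because the forgetful functor to $\catover{\cat{B}}$ is. I do not anticipate a genuine obstacle here; the only fiddly point is correctly bookkeeping the pastings and whiskerings of Definition~\ref{defn:codensity-monad} when verifying that $\eta$ and $U\epsilon F$ are the canonically induced cells, so I would carry out those two verifications componentwise.
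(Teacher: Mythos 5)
Your proof is correct. The paper itself gives no argument here — it simply cites Proposition~6.1 of Leinster's \emph{Codensity and the ultrafilter monad} — and your direct verification is exactly the standard proof behind that citation: $(Fb,\eta_b)$ is initial in $(b \downarrow U)$ via the adjunction bijection, so the pointwise right Kan extension of $U$ along itself is $UF$ with $\kappa = U\epsilon$, and the triangle identity and naturality of $\epsilon$ at $\epsilon$ identify the induced unit and multiplication as $\eta$ and $U\epsilon F$. All steps check out, including the corestriction argument for the full and faithful inclusion $\radjover{\cat{B}} \incl \cmover{\cat{B}}$.
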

\begin{proof}
This is well-known; see for example Proposition~6.1 in~\cite{leinster13}.
\end{proof}

\begin{cor}
\label{cor:str-sem-adj-monad}
There is an adjunction
\[
\xymatrix{
{\cmover{\cat{B}} }\ar@<5pt>[r]_-{\perp}^-{\str_{\monad}}\ & {\monad(\cat{B})^{\op}}\ar@<5pt>[l]^-{\sem_{\monad}},
} 
\]
where $\str_{\monad}$ sends a functor to its codensity monad, and $\sem_{\monad}$ sends a monad to the forgetful functor from its category of algebras. Furthermore, this adjunction restricts to the adjunction
\[
\xymatrix{
{\radjover{\cat{B}} }\ar@<5pt>[r]_-{\perp}^-{\str_{\monad}}\ & {\monad(\cat{B})^{\op}}\ar@<5pt>[l]^-{\sem_{\monad}}
}
\]
described in Proposition~\ref{prop:sem-str-adj-monad}.
\end{cor}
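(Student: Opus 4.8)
The plan is to obtain the adjunction by the standard ``pointwise'' construction of a left adjoint, feeding in Proposition~\ref{prop:codensity-natural-bij} (Dubuc's theorem) as the source of the required universal arrows. First I would record that $\sem_{\monad}$, although introduced as a functor $\monad(\cat{B})^{\op} \to \radjover{\cat{B}}$, may be viewed as a functor $\monad(\cat{B})^{\op} \to \cmover{\cat{B}}$: each Eilenberg--Moore forgetful functor $U^{\mnd{T}}$ is a right adjoint, hence has a codensity monad by the proposition immediately preceding this corollary, so $U^{\mnd{T}} \in \cmover{\cat{B}}$. Moreover the inclusion $\radjover{\cat{B}} \incl \cmover{\cat{B}}$ is full and faithful, since $\radjover{\cat{B}} \to \catover{\cat{B}}$ is full and faithful and $\cmover{\cat{B}}$ is a \emph{full} subcategory of $\catover{\cat{B}}$; in particular $\cmover{\cat{B}}(U, \sem_{\monad}(\mnd{S})) = \catover{\cat{B}}(U, \sem_{\monad}(\mnd{S}))$ for all $U \in \cmover{\cat{B}}$ and $\mnd{S} \in \monad(\cat{B})$.

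Next, for $U \in \cmover{\cat{B}}$ I would let $\str_{\monad}(U)$ be its codensity monad, regarded as an object of $\monad(\cat{B})^{\op}$. Proposition~\ref{prop:codensity-natural-bij} then supplies a bijection
\[
\monad(\cat{B})^{\op}(\str_{\monad}(U), \mnd{S}) = \monad(\cat{B})(\mnd{S}, \str_{\monad}(U)) \iso \catover{\cat{B}}(U, \sem_{\monad}(\mnd{S})) = \cmover{\cat{B}}(U, \sem_{\monad}(\mnd{S})),
\]
natural in $\mnd{S} \in \monad(\cat{B})^{\op}$. By the Yoneda lemma this exhibits a universal arrow from $U$ to $\sem_{\monad}$. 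Since such a universal arrow exists for every object of $\cmover{\cat{B}}$, the pointwise construction of adjoints (e.g.\ Theorem~2 in~IV.1 of~\cite{maclane71}) shows that $U \mapsto \str_{\monad}(U)$ extends uniquely to a functor $\str_{\monad} \from \cmover{\cat{B}} \to \monad(\cat{B})^{\op}$ for which the displayed bijection is also natural in $U$, and that $\str_{\monad} \ladj \sem_{\monad}$. By construction $\str_{\monad}$ sends a functor to its codensity monad, as required.

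For the restriction statement I would invoke the general fact that a right adjoint which factors through a full subcategory has a left adjoint restricting to that subcategory: $\sem_{\monad}$ factors as $\monad(\cat{B})^{\op} \toby{\sem_{\monad}} \radjover{\cat{B}} \incl \cmover{\cat{B}}$ with the second arrow full and faithful, so for $U \in \radjover{\cat{B}}$ we get $\radjover{\cat{B}}(U, \sem_{\monad}(\mnd{S})) = \cmover{\cat{B}}(U, \sem_{\monad}(\mnd{S})) \iso \monad(\cat{B})(\mnd{S}, \str_{\monad}(U))$ naturally in $\mnd{S}$. Hence the composite of $\str_{\monad}$ with $\radjover{\cat{B}} \incl \cmover{\cat{B}}$ is left adjoint to $\sem_{\monad} \from \monad(\cat{B})^{\op} \to \radjover{\cat{B}}$, and by the proposition preceding this corollary the codensity monad of a right adjoint with chosen unit $\eta$ and counit $\epsilon$ is the induced monad $(GF, \eta, G\epsilon F)$, so on objects this restricted functor is exactly the assignment of Proposition~\ref{prop:sem-str-adj-monad}. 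This is the promised adjunction between $\radjover{\cat{B}}$ and $\monad(\cat{B})^{\op}$.

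I do not expect any genuine obstacle here: all the substantive content is in the two cited results (Dubuc's theorem and the identification of the codensity monad of a right adjoint with the induced monad), and what remains is bookkeeping. The only points to watch are the variance conventions arising from the two opposite categories, and the fact that functoriality of $\str_{\monad}$ together with naturality of the bijection in $U$ are handed to us by the pointwise-adjoint theorem and need not be verified by hand; the hom-``sets'' in play are legitimate large sets under the standing universe assumption, so size causes no trouble.
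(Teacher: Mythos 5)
Your proposal is correct and follows essentially the same route as the paper: the natural bijection from Proposition~\ref{prop:codensity-natural-bij} gives the adjunction (the paper simply calls this ``immediate'' where you spell out the pointwise-adjoint argument), and the restriction follows because $\sem_{\monad}$ takes values in the full subcategory $\radjover{\cat{B}}$ while the preceding proposition identifies the codensity monad of a right adjoint with the induced monad. No gaps.
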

\begin{proof}
The first part is immediate from the Proposition~\ref{prop:codensity-natural-bij}. The right adjoint $\sem_{\monad}$ evidently takes values in the full subcategory $\radjover{\cat{B}}$, and so the adjunction does restrict as claimed. The fact that the values of $\str_{\monad}$ on the objects of $\radjover{\cat{B}}$ are as claimed in Proposition~\ref{prop:sem-str-adj-monad} is the content of the previous proposition.
\end{proof}

Recall from Remark~\ref{rem:ff-completeness-interpret} that a semantics functor being full and faithful can be thought of as a kind of completeness theorem: it says that no information is lost when passing from theories to models, and that the models do not have any algebraic structure besides that specified by the theory. The following result shows that the semantics of monads satisfies such a completeness theorem.
\begin{prop}
\label{prop:sem-mnd-ff}
The semantics functor
\[
\sem_{\monad} \from \monad(\cat{B})^{\op} \to \cmover{\cat{B}}
\]
from Corollary~\ref{cor:str-sem-adj-monad} is full and faithful. Equivalently, the counit of the adjunction in Proposition~\ref{prop:sem-str-adj-monad} is an isomorphism.
\end{prop}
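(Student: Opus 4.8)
The plan is to use the standard fact, recalled in Remark~\ref{rem:ff-completeness-interpret}, that a right adjoint is full and faithful precisely when its counit is an isomorphism. So it suffices to show that for each monad $\mnd{T}$ on $\cat{B}$ the component at $\mnd{T}$ of the counit of the adjunction $\str_{\monad} \dashv \sem_{\monad}$,
\[
\epsilon_{\mnd{T}} \from \str_{\monad}(\sem_{\monad}(\mnd{T})) \to \mnd{T},
\]
taken in $\monad(\cat{B})^{\op}$, is an isomorphism.

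First I would pin down the domain $\str_{\monad}(\sem_{\monad}(\mnd{T}))$. By definition $\sem_{\monad}(\mnd{T})$ is the forgetful functor $U^{\mnd{T}} \from \cat{B}^{\mnd{T}} \to \cat{B}$ together with its usual Eilenberg--Moore adjunction $F^{\mnd{T}} \dashv U^{\mnd{T}}$. The (unlabelled) proposition immediately preceding the statement says that the codensity monad of a right adjoint $G$ with left adjoint $F$, unit $\eta$ and counit $\epsilon$ is $(GF, \eta, G\epsilon F)$; applying this to the Eilenberg--Moore adjunction gives $\str_{\monad}(U^{\mnd{T}}) = (U^{\mnd{T}}F^{\mnd{T}}, \eta^{\mnd{T}}, U^{\mnd{T}}\epsilon^{\mnd{T}}F^{\mnd{T}})$, and the classical description of the Eilenberg--Moore construction (as in Chapter~VI of~\cite{maclane71}) identifies this with $\mnd{T}$ itself, on the nose. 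Thus $\str_{\monad}(\sem_{\monad}(\mnd{T})) = \mnd{T}$, so $\epsilon_{\mnd{T}}$ is in fact a monad endomorphism of $\mnd{T}$.

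Next I would show $\epsilon_{\mnd{T}} = \id_{\mnd{T}}$. The unit of the structure--semantics adjunction at an object $U \in \cmover{\cat{B}}$ is, by the construction underlying Corollary~\ref{cor:str-sem-adj-monad}, the canonical comparison functor of Definition~\ref{defn:codensity-monad-comparison} into the category of algebras for the codensity monad of $U$. For $U = U^{\mnd{T}}$, whose codensity monad is $\mnd{T}$ with structure transformation $\kappa_{(d,a)} = a$, this comparison functor $\cat{B}^{\mnd{T}} \to \cat{B}^{\mnd{T}}$ sends every $\mnd{T}$-algebra to itself, so $\eta_{\sem_{\monad}(\mnd{T})} = \id_{\cat{B}^{\mnd{T}}}$. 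The triangle identity $\sem_{\monad}(\epsilon_{\mnd{T}}) \of \eta_{\sem_{\monad}(\mnd{T})} = \id_{\sem_{\monad}(\mnd{T})}$ then forces $\sem_{\monad}(\epsilon_{\mnd{T}}) = \id_{\cat{B}^{\mnd{T}}}$. Finally I would invoke faithfulness of $\sem_{\monad}$: for a monad morphism $\phi \from \mnd{S} \to \mnd{T}$, the functor $\sem_{\monad}(\phi)$ sends the free algebra $(Tb, \mu^{\mnd{T}}_b)$ to $(Tb, \mu^{\mnd{T}}_b \of \phi_{Tb})$, and composing this structure map with $S\eta^{\mnd{T}}_b$, using naturality of $\phi$ and the unit law for $\mnd{T}$, recovers $\phi_b = (\mu^{\mnd{T}}_b \of \phi_{Tb}) \of S\eta^{\mnd{T}}_b$. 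Hence $\sem_{\monad}(\phi)$ determines $\phi$ (so $\sem_{\monad}$ is faithful), and when $\sem_{\monad}(\phi) = \id_{\cat{B}^{\mnd{T}}}$, so that $\mu^{\mnd{T}}_b \of \phi_{Tb} = \mu^{\mnd{T}}_b$, this forces $\phi_b = \mu^{\mnd{T}}_b \of T\eta^{\mnd{T}}_b = \id_{Tb}$. Therefore $\epsilon_{\mnd{T}} = \id_{\mnd{T}}$ is an isomorphism for every $\mnd{T}$, the counit is an isomorphism, and $\sem_{\monad}$ is full and faithful.

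I expect the only delicate point to be the identification in the third paragraph of the unit of the structure--semantics adjunction with the comparison functor, since the paper defers the actual construction of this adjunction to the general setting of Corollary~\ref{cor:str-sem-adj-monad}. If one prefers not to rely on that construction, the same conclusion can be reached purely from the universal property in Proposition~\ref{prop:codensity-natural-bij}: the counit $\epsilon_{\mnd{T}}$ corresponds, under the natural bijection $\catover{\cat{B}}(U^{\mnd{T}}, \sem_{\monad}(\mnd{S})) \iso \monad(\cat{B})(\mnd{S}, \str_{\monad}(U^{\mnd{T}})) = \monad(\cat{B})(\mnd{S}, \mnd{T})$ of that proposition, to $\id_{U^{\mnd{T}}}$, and one checks directly that Dubuc's bijection carries $\id_{U^{\mnd{T}}}$ to $\id_{\mnd{T}}$.
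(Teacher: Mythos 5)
Your proof is correct, but it takes a genuinely different route from the paper: the paper's entire proof of this proposition is a citation to Theorem~6 of Street~\cite{street72}, whereas you give a self-contained argument from the adjunction formalism. Your two key computations both check out against the paper's definitions: the identification $\str_{\monad}(\sem_{\monad}(\mnd{T})) = \mnd{T}$ on the nose is exactly what the paper later records in the proof of Proposition~\ref{prop:sem-str-counit-restr}, and the faithfulness argument --- recovering $\phi_b$ as the structure map of $\sem_{\monad}(\phi)(Tb,\mu_b)$ precomposed with $S\eta_b$, via naturality of $\phi$ and the unit law --- is the classical computation and is sound. You correctly flag the one genuinely delicate point, namely that the unit of the adjunction of Corollary~\ref{cor:str-sem-adj-monad} at $U^{\mnd{T}}$ is the canonical comparison functor (equivalently, that Dubuc's bijection in Proposition~\ref{prop:codensity-natural-bij} carries $\id_{U^{\mnd{T}}}$ to $\id_{\mnd{T}}$); this does require unwinding Dubuc's construction, since the paper never states what the unit is, but it is a standard fact and your fallback via the universal property is the right way to discharge it. What the paper's approach buys is brevity and an appeal to an established reference; what yours buys is a proof readable entirely within the thesis, at the cost of one black-box identification that a fully self-contained treatment would still need to verify. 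An alternative that avoids the adjunction machinery altogether would be to prove fullness directly: given $Q \from \cat{B}^{\mnd{T}} \to \cat{B}^{\mnd{S}}$ over $\cat{B}$, define $\phi_b$ as the structure map of $Q(Tb,\mu_b)$ composed with $S\eta_b$ and check it is a monad morphism with $\sem_{\monad}(\phi) = Q$ --- but your route is equally valid.
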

\begin{proof}
This is part of Theorem~6 in~Street~\cite{street72}.
\end{proof}

\section{PROPs and PROs}
\label{sec:notions-PROP}

Lawvere theories allow us to uniformly describe algebraic structures in finite product categories. However there are many algebraic structures that make sense in more general categories: for example, one can define monoids in arbitrary monoidal categories, and commutative monoids in any symmetric monoidal category. The notion of a PROP (which stands for PROduct and Permutation category) was first developed by Mac Lane in~\cite{maclane63}, and has a relationship to symmetric monoidal categories analogous to that between Lawvere theories and finite product categories. Likewise, PROs (dropping the permutations) play this role for (non-symmetric) monoidal categories.

\begin{defn}
A \demph{PROP} is a large strict symmetric monoidal category, whose objects are the natural numbers, and whose tensor product is given on objects by addition of natural numbers. A morphism of PROPs is a strict symmetric monoidal functor that is the identity on objects.

A \demph{PRO} is a large strict monoidal category whose objects are the natural numbers and whose tensor product is given by addition. A morphism of PROs is a strict monoidal functor that is the identity on objects.

We write $\PROP$  for the category of PROPs and their morphisms, and we write $\PRO$ for the category of PROs and their morphisms.
\end{defn}

\begin{remark}
In this thesis, when we speak of ``monoidal categories'', we implicitly mean ``unbiased monoidal categories''. That is, we assume that a monoidal category $\cat{B}$ is equipped, not just with a unit object and a binary tensor product $\cat{B} \times \cat{B} \to \cat{B}$, but with a choice of $n$-fold tensor product $\cat{B}^n \to \cat{B}$ for each $n \in \nat$. In particular, there is canonical $n$-th power functor $(-)^{\tensor n} \from \cat{B} \to \cat{B}$ for each $n$, rather than just an isomorphism class of such functors. This definition is equivalent to the usual definition in which only a binary tensor product and unit are specified, in that they yield equivalent categories of monoidal categories. This is Corollary~3.2.5 in~\cite{leinster04}.
\end{remark}

\begin{defn}
\label{defn:PROP-model}
Let $\cat{P}$ be a PROP. Then a \demph{model} of $\cat{P}$ in a symmetric monoidal category $\cat{B}$ is a symmetric monoidal functor (that is, a functor that preserves the tensor, unit and symmetry up to specified coherent isomorphisms) $\Gamma \from \cat{P} \to \cat{B}$ such that, for each $n \in \nat$, the distinguished isomorphism
\[
\Gamma(1)^{\tensor n} \toby{\iso} \Gamma(n)
\]
arising from the fact that $n$ is the $n$-th tensor power of $1$ in $\cat{P}$, is in fact an \emph{identity}.

Similarly, a \demph{model} of a PRO $\cat{P}$ in a monoidal category $\cat{B}$ is a monoidal functor $\Gamma \from \cat{P} \to \cat{B}$ (that is, a functor preserving the tensor and unit up to coherent isomorphism) such that the distinguished isomorphism
\[
\Gamma(1)^{\tensor n} \toby{\iso} \Gamma(n)
\]
is an identity for each $n \in \nat$.
\end{defn}

\begin{defn}
A homomorphism $\Gamma \to \Gamma'$ between models of a PROP (respectively PRO) $\cat{P}$ is a monoidal natural transformation $\Gamma \to \Gamma'$.
\end{defn}

\begin{defn}
\label{defn:sem-PROP-objects}
Let $\cat{B}$ be a large symmetric monoidal category and $\cat{P}$ a PROP. We write  $\mod_{\PROP} (\cat{B}$) for the category of models and model homomorphisms of $\cat{P}$ in $\cat{B}$. We write $\sem_{\PROP} (\cat{P}) \from \mod_{\PROP}(\cat{P}) \to \cat{B}$ for the functor sending a model $\Gamma$ to $\Gamma(1)$ and a homomorphism to its component at $1$.

Similarly if we let $\cat{B}$ be a large monoidal category and $\cat{P}$ a PRO, then we define
$\sem_{\PRO} (\cat{P}) \from \mod_{\PRO}(\cat{P}) \to \cat{B}$ similarly.
\end{defn}

\begin{remark}
\label{rem:prop-model-non-standard}
Definition~\ref{defn:PROP-model} is non-standard; it is more common to define a model of a PROP as a symmetric monoidal functor out of $\cat{P}$ without the requirement of strictly preserving tensor powers of $1$. The situation here is analogous to that in Remark~\ref{rem:law-model-non-standard}:  the categories of models obtained according to our definition and the standard definition are equivalent (the proof of which is similar to Lemma~\ref{lem:law-model-non-standard}) and our definition has the advantage that the forgetful functor reflects equalities as well as isomorphisms, which more closely matches how mathematicians usually think about algebraic structures.

Note that a model of a PROP as we have defined it is not the same as a \emph{strict} symmetric monoidal functor: tensor powers of $1$ are preserved strictly, but other tensor products need only be preserved up to coherent isomorphism.
\end{remark}

\begin{defn}
\label{defn:sem-PROP-morphisms}
Let $P \from \cat{P}' \to \cat{P}$ be a morphism of PROPs. We define a functor 
\[
\sem_{\PROP} (P) \from \mod_{\PROP}(\cat{P}) \to \mod_{\PROP} (\cat{P}')
\]
by sending a model $ \Gamma \from \cat{P} \to \cat{B}$ to $\Gamma \of P \from \cat{P}' \to \cat{B}$, and sending a homomorphism $\alpha \from \Gamma \to \Gamma'$ to $\alpha P$. If instead we let $P$ be a morphism of PROs, we define $\sem_{\PRO} (P) \from \mod_{\PRO} (\cat{P}) \to \mod_{\PRO}(\cat{P}')$ similarly.
\end{defn}

\begin{prop}
\label{prop:PROP-sem-functor}
Definitions~\ref{defn:sem-PROP-objects} and~\ref{defn:sem-PROP-morphisms} define functors
\[
\sem_{\PROP} \from \PROP^{\op} \to \catover{\cat{B}}
\]
and
\[
\sem_{\PRO} \from \PRO^{\op} \to \catover{\cat{B}}.
\]
\end{prop}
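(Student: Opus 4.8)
The plan is to verify functoriality in the three standard stages: well-definedness on objects, well-definedness on morphisms, and compatibility with identities and composition. Since the notion of a PRO and of its models is obtained from that of a PROP simply by discarding the symmetry, I would prove the two statements in parallel, writing out only the PROP case and remarking that the PRO case is identical word for word with every reference to the symmetry deleted.

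At the object level, I would first check that $\mod_{\PROP}(\cat{P})$ is genuinely a category: the identity natural transformation on a monoidal functor is monoidal, and the vertical composite of two monoidal natural transformations is monoidal, both by the routine coherence bookkeeping for (unbiased) monoidal functors. Then $\sem_{\PROP}(\cat{P})$ is functorial because it is just the restriction to $\mod_{\PROP}(\cat{P})$ of the evaluation functor $\ev_1 \from [\cat{P},\cat{B}] \to \cat{B}$ at the object $1 \in \cat{P}$; hence $\sem_{\PROP}(\cat{P}) \from \mod_{\PROP}(\cat{P}) \to \cat{B}$ is a bona fide object of $\catover{\cat{B}}$.

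At the morphism level, given a morphism of PROPs $P \from \cat{P}' \to \cat{P}$ --- that is, a strict symmetric monoidal functor which is the identity on objects --- I would check that $\Gamma \of P$ is again a model of $\cat{P}'$ whenever $\Gamma$ is a model of $\cat{P}$: a composite of symmetric monoidal functors is symmetric monoidal, and $\Gamma \of P$ strictly preserves tensor powers of $1$ because $P$ does (being strict and identity-on-objects, $P$ sends the canonical isomorphism $1^{\tensor n} \iso n$ to an identity) and $\Gamma$ does by hypothesis; likewise the whiskered transformation $\alpha P$ is monoidal when $\alpha$ is. I would then observe that $\sem_{\PROP}(P)$ is a morphism over $\cat{B}$, i.e.\ that $\sem_{\PROP}(\cat{P}') \of \sem_{\PROP}(P) = \sem_{\PROP}(\cat{P})$, which reduces to $P(1) = 1$ (since $P$ is the identity on objects), so that $(\Gamma \of P)(1) = \Gamma(1)$ and $(\alpha P)_1 = \alpha_1$.

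Finally, functoriality of $\sem_{\PROP}$ itself is immediate: $\sem_{\PROP}(\id)$ is the identity, and $\sem_{\PROP}(P \of P') = \sem_{\PROP}(P') \of \sem_{\PROP}(P)$ because precomposition and whiskering are strictly associative and unital, matching the contravariance recorded by $\PROP^{\op}$. The whole argument is routine; the only place I would slow down is the single observation that each of the operations in play --- composing monoidal functors, whiskering a monoidal natural transformation, and applying an identity-on-objects strict morphism of PROPs --- preserves the (non-standard) strictness condition that tensor powers of $1$ are sent to identities, so that $\sem_{\PROP}$ never escapes the chosen category of models. I do not expect any genuine obstacle.
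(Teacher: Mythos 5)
Your proposal is correct and matches the paper, which simply records that the verification is ``a straightforward check'' and omits the details; your write-up is exactly that check carried out, and you rightly isolate the one point worth pausing on, namely that precomposition with an identity-on-objects strict morphism of PROPs preserves the non-standard strictness condition on tensor powers of $1$.
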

\begin{proof}
This is a straightforward check.
\end{proof}

These semantics functors have left adjoints that can be constructed in a straightforward manner. However, these left adjoints will fall out of the general machinery we develop in this thesis, so we postpone discussion of them until Section~\ref{sec:structure-examples}.

\section{Operads}
\label{sec:notions-operads}

Before defining operads, we first define multicategories, since these provide the context in which operads will take their models, and operads themselves are a special case of multicategories.

\begin{defn}
A \demph{multicategory} $\cat{B}$ consists of
\begin{itemize}
\item a (possibly large) set $\ob(\cat{B})$ of \demph{objects};
\item for all $n \in \nat$ and $b_1, \ldots, b_n, b \in \ob( \cat{B})$, a (possibly large) set $\cat{B}(b_1, \ldots, b_n ; b)$ of \demph{morphisms} with domain $b_1, \ldots, b_n$ and codomain $b$;
\item for all $b \in \ob (\cat{B})$ a distinguished \demph{identity morphism} $\id_b \in \cat{B}(b; b)$;
\item for all $n, k_1, \ldots, k_n \in \nat$ and $b, b_i, b_i^j \in \ob(\cat{B})$ for $i = 1, \ldots, n$ and $j = 1, \ldots, k_i$, a function
\begin{align*}
\cat{B}(b_1, \ldots, b_n; b) \times \cat{B}(b_1^1, \ldots, b_1^{k_1} ; b_1) \times \cdots \times \cat{B}(b_n^1, \ldots, b_n^{k_n} ; b_n) \\
\to \cat{B}( b_1^1, \ldots, b_1^{k_1}, \ldots, b_n^1, \ldots, b_n^{k_n} ; b)
\end{align*}
called composition,
\end{itemize}
satisfying identity and associativity axioms, that are described explicitly in Definition~3.5.1 of~\cite{leinster04}.
\end{defn}

\begin{defn}
A \demph{morphism of multicategories} $F \from \cat{B} \to \cat{B}'$ consists of
\begin{itemize}
\item a function $F \from \ob (\cat{B}) \to \ob (\cat{B}')$, and
\item for all $n \in \nat$ and $b_1, \ldots, b_n, b \in \ob (\cat{B} )$, a function, also denoted $F$,
\[
\cat{B}(b_1, \ldots, b_n; b) \to \cat{B}' (Fb_1, \ldots, Fb_n; Fb),
\]
\end{itemize}
that preserves identities and composition in the obvious sense.
\end{defn}

\begin{defn}
A transformation $F \to F'$ between morphisms of multicategories $\cat{B} \to \cat{B}'$ consists of, for each $b \in \ob (\cat{B})$, a unary morphism $\alpha_b \in \cat{B}'( Fb; F' b)$, such that, for any
\[
f \from b_1, \ldots, b_n \to b
\]
in $\cat{B}$, we have
\[
\alpha_b \of Ff = F'f \of (\alpha_{b_1}, \ldots, \alpha_{b_n}).
\]
\end{defn}

\begin{defn}
We write $\multicat$ for the 2-category of large multicategories, with their morphisms and transformations.
\end{defn}

\begin{defn}
An \demph{operad} is a multicategory with a single object. If $P$ is an operad with object $*$, we usually write $P(n)$ for
\[
P(\underbrace{*, \ldots,  *}_{n \text{ times}} ; *)
\]
in order to simplify notation. A morphism of operads is simply a morphism of multicategories. We write $\operad$ for the full subcategory of (the underlying $1$-category of) $\multicat$ consisting of the large operads.
\end{defn}
Note that the unique object of an operad regarded as a one-object multicategory is still part of the data defining that operad; in particular, if two operads have identical sets of operations and composition functions but distinct objects we still regard them as being distinct (albeit isomorphic). This is a minor technicality, but we will make use of it in Proposition~\ref{prop:iso-proth-operad} to obtain an isomorphism of categories, rather than an equivalence.

\begin{defn}
Given a multicategory $\cat{B}$, write $\cat{B}_0$ for the category whose objects are the objects of $\cat{B}$, and with $\cat{B}_0(b, b') = \cat{B}(b; b')$. That is, it is the category obtained by discarding all the morphism of $\cat{B}$ except for the unary ones.
\end{defn}

\begin{defn}
\label{defn:sem-operad-objects}
Let $P$ be an operad and $\cat{B}$ a multicategory. We write $\mod_{\operad} (P)$ to be the category $\multicat (P, \cat{B})$ of multicategory morphisms $P \to \cat{B}$ and transformations between them. We call the objects of $\mod_{\operad} (P)$ \demph{models} of $P$, and the morphisms \demph{homomorphisms} of $P$-models.

We write $\sem_{\operad} \from \mod_{\operad}(P) \to \cat{B}_0$ for the functor that sends a model $\Gamma \from P \to \cat{B}$ to $\Gamma(*)$ (where $*$ is the unique object of the operad $P$ regarded as a multicategory), and sends a transformation $\alpha \from \Gamma \to \Gamma'$ to its unique component $\alpha_*$.
\end{defn}

\begin{defn}
\label{defn:sem-operad-morphisms}
Given an operad morphism $F \from P' \to P$, define a functor $\sem_{\operad} (F) \from \mod_{\operad}(P) \to \mod_{\operad}(P')$ by sending a $P$-model $\Gamma \from P \to \cat{B}$ to $\Gamma \of F \from P' \to \cat{B}$, and sending a homomorphism $\alpha \from \Gamma \to \Gamma'$ to the result of whiskering $\alpha$ with $P$ in the 2-category $\multicat$, namely $\alpha P$.
\end{defn}

\begin{prop}
\label{prop:operad-sem-functor}
Definitions~\ref{defn:sem-operad-objects} and~\ref{defn:sem-operad-morphisms} together define a functor
\[
\sem_{\operad} \from \operad^{\op} \to \catover{\cat{B}_0}.
\]
\end{prop}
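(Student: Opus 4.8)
The plan is to reduce everything to the 2-categorical structure of $\multicat$ together with one elementary observation about operads. First I would note that $\mod_{\operad}(P) = \multicat(P,\cat{B})$ is by definition a hom-category of the 2-category $\multicat$, hence genuinely a category, and that $\sem_{\operad}(P) \from \mod_{\operad}(P) \to \cat{B}_0$ is a functor: the identity transformation on a model $\Gamma$ has unary component $\id_{\Gamma(*)}$ at $*$, and the composite of transformations $\alpha \from \Gamma \to \Gamma'$ and $\beta \from \Gamma' \to \Gamma''$ has component $\beta_* \of \alpha_*$ at $*$, which is exactly composition in $\cat{B}_0$ by the definition of $\cat{B}_0$. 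This handles the ``objects'' part of the construction.

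Next, for an operad morphism $F \from P' \to P$, the assignment $\sem_{\operad}(F)$ of Definition~\ref{defn:sem-operad-morphisms} is precisely precomposition/whiskering by the $1$-cell $F$ in $\multicat$, i.e.\ the value on $F$ of the representable $2$-functor $\multicat(-,\cat{B})$; it is therefore automatically a functor $\multicat(P,\cat{B}) \to \multicat(P',\cat{B})$, so no separate check of functoriality is needed. What does need checking is that it is a morphism in $\catover{\cat{B}_0}$, that is, that $\sem_{\operad}(P') \of \sem_{\operad}(F) = \sem_{\operad}(P)$. This is where the one extra ingredient enters: an operad is a one-object multicategory with the object as part of its data, so any operad morphism $F$ sends the unique object $*'$ of $P'$ to the unique object $*$ of $P$. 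Hence for a $P$-model $\Gamma$ we have $(\Gamma \of F)(*') = \Gamma(F(*')) = \Gamma(*)$, and for a homomorphism $\alpha$ we have $(\alpha F)_{*'} = \alpha_{F(*')} = \alpha_*$; these are exactly the equalities saying that $\sem_{\operad}(F)$ commutes with the forgetful functors to $\cat{B}_0$.

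Finally, contravariant functoriality of $\sem_{\operad}$ in $F$ — that $\sem_{\operad}(\id_P) = \id_{\mod_{\operad}(P)}$ and $\sem_{\operad}(F \of F') = \sem_{\operad}(F') \of \sem_{\operad}(F)$ — follows immediately from the corresponding strict identities for whiskering in $\multicat$, namely $\Gamma \of \id_P = \Gamma$, $\alpha \id_P = \alpha$, $\Gamma \of (F \of F') = (\Gamma \of F) \of F'$ and $\alpha(F \of F') = (\alpha F)F'$. There is no real obstacle here; the proof is pure unwinding of definitions, entirely parallel to Proposition~\ref{prop:PROP-sem-functor}. The only point worth stating explicitly — and the only thing that is not formal $2$-category theory — is the remark that operad morphisms preserve the distinguished object, which is what makes the triangles over $\cat{B}_0$ commute on the nose.
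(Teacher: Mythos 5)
Your proposal is correct and matches the paper, which simply records this proposition as ``a straightforward check'' and omits the details; your write-up supplies exactly that check. The one substantive observation you isolate --- that an operad morphism necessarily sends the unique object of $P'$ to the unique object of $P$, which is what makes the triangle over $\cat{B}_0$ commute strictly --- is indeed the only non-formal ingredient, and the rest follows from the $2$-functoriality of $\multicat(-,\cat{B})$ as you say.
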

\begin{proof}
Again, this is a straightforward check.
\end{proof}

As with PROPs and PROs, we postpone discussion of the left adjoint to this structure functor until Section~\ref{sec:structure-examples}, although it is straightforward to describe explicitly.

\section{Monads with arities}
\label{sec:notions-monads-arites}
The theory of monads with arities generalises that of ordinary monads, and is developed in Weber \cite{weber2007} and Berger, Melli\`es and Weber~\cite{bergerMelliesWeber12}. All of the definitions and results of this section appear in \cite{bergerMelliesWeber12}. Roughly speaking, a monad with arities is a monad $\mnd{T}$ on some category $\cat{B}$ that is completely determined by the values $Ta$ for $a$ in some subcategory $\cat{A} \incl \cat{B}$, called the category of arities. The object $Tb$ for general $b \in \cat{B}$ is built up out of the $Ta$'s in a canonical way. The prototypical example to keep in mind is that of finitary monads on $\Set$ --- in this case, the category of arities consists of the finite sets.

Recall from Definition~\ref{defn:dense-codense} that a functor $I \from \cat{A} \to \cat{B}$ is dense if its nerve functor
\[
N_{I} \from \cat{B} \to [\cat{B}^{\op}, \SET] \toby{I^*} [ \cat{A}^{\op}, \SET]
\]
is full and faithful. If $I$ is the inclusion of a full subcategory $\cat{A}$ of $\cat{B}$, then we say that $\cat{A}$ is a dense subcategory of $\cat{B}$, and write $N_{\cat{A}}$ for $N_I$.

\begin{defn}
\label{defn:cat-with-arities}
The 2-category $\ACAT$ has objects, morphisms and 2-cells as follows.
\begin{description}
\item[Objects] of $\ACAT$ are of the form $(\cat{B}, \cat{A})$ where $\cat{B}$ is a large category and $\cat{A}$ is a dense subcategory of $\cat{B}$; we call $(\cat{B},\cat{A})$ a \demph{large category with arities}.
\item[Morphisms] $(\cat{B}, \cat{A}) \to (\cat{B}',\cat{A}')$ are functors $F \from \cat{B} \to \cat{B}'$ such that the composite $N_{\cat{A}'} \of F$ sends the $\cat{A}$-cocones in $\cat{B}$ to colimit cocones in $[\cat{A}'^{\op},\SET]$. Such a functor is called \demph{arity-respecting}.
\item[2-cells] are just ordinary natural transformations between functors.
\end{description}
\end{defn}

\begin{defn}
A \demph{monad with arities} is a monad in the 2-category $\ACAT$. Explicitly, a monad with arities on a category with arities $(\cat{B}, \cat{A})$ is a monad $\mnd{T} = (T,\eta,\mu)$ on $\cat{B}$ such that the composite $N_{\cat{A}} \of T$ sends the $\cat{A}$-cones in $\cat{B}$ to colimit cocones in $[\cat{A}^{\op}, \SET]$.
\end{defn}

For the rest of this section, fix a large category with arities $(\cat{B},\cat{A})$.

\begin{defn}
\label{defn:arities-algebras}
Let $\mnd{T}$ be a monad with arities on $(\cat{B}, \cat{A})$. Write $\Theta_{\mnd{T}}$ for the full subcategory of $\cat{B}^{\mnd{T}}$ consisting of the algebras of the form $(Ta, \mu_a)$ where $a \in \cat{A}$. Write $j_{\mnd{T}} \from \cat{A} \to \Theta_{\mnd{T}}$ for $F^{\mnd{T}}\from \cat{B} \to \cat{B}^{\mnd{T}}$ with domain restricted to $\cat{A}$ and codomain restricted to $\Theta_{\mnd{T}}$.
\end{defn}

\begin{prop}
The subcategory $\Theta_{\mnd{T}}$ is dense in $\cat{B}^{\mnd{T}}$ so $(\cat{B}^{\mnd{T}}, \Theta_{\mnd{T}})$ is an object of $\ACAT$, and the forgetful functor $U^{\mnd{T}} \from \cat{B}^{\mnd{T}} \to \cat{B}$ is arity respecting. In addition, $U^{\mnd{T}}$ exhibits $(\cat{B}^{\mnd{T}}, \Theta_{\mnd{T}})$ as an Eilenberg--Moore object for the monad $\mnd{T}$ in the 2-category $\ACAT$, in the sense of Street~\cite{street72}.
\end{prop}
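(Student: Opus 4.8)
The plan is to reduce the statement to the \emph{nerve theorem} for monads with arities, which is the main technical result of~\cite{bergerMelliesWeber12}, and then to deduce the Eilenberg--Moore assertion from the fact that $U^{\mnd{T}}$ already exhibits $\cat{B}^{\mnd{T}}$ as the Eilenberg--Moore object of $\mnd{T}$ in $\CAT$.

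First I would establish that $\Theta_{\mnd{T}}$ is dense in $\cat{B}^{\mnd{T}}$. The key observation is that the free--forgetful adjunction gives, for $a \in \cat{A}$ and a $\mnd{T}$-algebra $(b,\beta)$, a natural bijection $\cat{B}^{\mnd{T}}((Ta,\mu_a),(b,\beta)) \iso \cat{B}(a,b)$, so that the square
\[
\xymatrix{
\cat{B}^{\mnd{T}} \ar[r]^-{N_{\Theta_{\mnd{T}}}} \ar[d]_{U^{\mnd{T}}} & [\Theta_{\mnd{T}}^{\op},\SET] \ar[d]^{j_{\mnd{T}}^*} \\
\cat{B} \ar[r]_-{N_{\cat{A}}} & [\cat{A}^{\op},\SET]
}
\]
commutes up to a canonical isomorphism, where $j_{\mnd{T}}^* = (j_{\mnd{T}}^{\op})^*$ is restriction along $j_{\mnd{T}} \from \cat{A} \to \Theta_{\mnd{T}}$. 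Faithfulness of $N_{\Theta_{\mnd{T}}}$ then follows at once from faithfulness of $U^{\mnd{T}}$ and of $N_{\cat{A}}$ together with commutativity of the square. The substance is in fullness: given a natural transformation $\phi \from N_{\Theta_{\mnd{T}}}(b,\beta) \to N_{\Theta_{\mnd{T}}}(b',\beta')$, its restriction along $j_{\mnd{T}}$ is $N_{\cat{A}}$ of a unique morphism $f \from b \to b'$ by density of $\cat{A}$ in $\cat{B}$, and one must check that $f$ is a $\mnd{T}$-algebra homomorphism $(b,\beta) \to (b',\beta')$ inducing $\phi$ on all of $\Theta_{\mnd{T}}$; this is exactly the step where the hypothesis that $\mnd{T}$ has arities (i.e.\ that $N_{\cat{A}} \of T$ sends $\cat{A}$-cocones to colimit cocones) is used essentially. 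The precise form of the argument also yields the characterisation of the essential image of $N_{\Theta_{\mnd{T}}}$ as those presheaves $X$ on $\Theta_{\mnd{T}}$ with $j_{\mnd{T}}^* X$ in the essential image of $N_{\cat{A}}$, i.e.\ that the square above is a (pseudo-)pullback. I expect this to be the main obstacle, and since it is precisely the nerve theorem of~\cite{bergerMelliesWeber12} I would cite it rather than reproduce the full verification.

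Granting density, $(\cat{B}^{\mnd{T}}, \Theta_{\mnd{T}})$ is an object of $\ACAT$ by definition. For $U^{\mnd{T}}$ being arity-respecting, I would use the pullback square together with two facts: that $N_{\cat{A}}$ sends $\cat{A}$-cocones to colimit cocones (which holds for any dense, hence full, subcategory, since $N_{\cat{A}}(b)$ is canonically the colimit of representables over its category of elements $(\cat{A}\downarrow b)$), and that, via the adjunction bijection above, $N_{\cat{A}} \of U^{\mnd{T}}$ carries a $\Theta_{\mnd{T}}$-cocone in $\cat{B}^{\mnd{T}}$ to a colimit cocone in $[\cat{A}^{\op},\SET]$ — the identification of the relevant indexing categories again invokes the arities hypothesis on $\mnd{T}$. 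This is part of the nerve theorem and I would quote it accordingly.

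Finally, for the Eilenberg--Moore statement: $U^{\mnd{T}} \from \cat{B}^{\mnd{T}} \to \cat{B}$ already exhibits $\cat{B}^{\mnd{T}}$ as the Eilenberg--Moore object of $\mnd{T}$ in $\CAT$, so for any $(\cat{C},\cat{C}_0) \in \ACAT$ the category $[\cat{C},\cat{B}^{\mnd{T}}]$ is isomorphic to the category of algebras for the induced monad $[\cat{C},\mnd{T}]$ on $[\cat{C},\cat{B}]$, compatibly with restriction to $\cat{C}_0$ and with the $2$-cells. It therefore suffices to show that, under this isomorphism, the full subcategory $\ACAT((\cat{C},\cat{C}_0),(\cat{B}^{\mnd{T}},\Theta_{\mnd{T}}))$ of arity-respecting functors corresponds to the full subcategory of algebras for $\ACAT((\cat{C},\cat{C}_0),\mnd{T})$ inside $\ACAT((\cat{C},\cat{C}_0),(\cat{B},\cat{A}))$, i.e.\ the $[\cat{C},\mnd{T}]$-algebras whose underlying functor $\cat{C} \to \cat{B}$ is arity-respecting. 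For a functor $G \from \cat{C} \to \cat{B}^{\mnd{T}}$: if $G$ is arity-respecting then so is $U^{\mnd{T}} \of G$, since $U^{\mnd{T}}$ is arity-respecting (just proved) and arity-respecting functors are closed under composition; conversely, if $U^{\mnd{T}} \of G$ is arity-respecting then so is $G$, which is one last application of the pullback square, since $N_{\Theta_{\mnd{T}}} \of G$ is determined on $\cat{C}_0$-cocones by $N_{\cat{A}} \of U^{\mnd{T}} \of G$. Assembling these correspondences gives the universal property, and I would note that this is the Eilenberg--Moore-object part of the nerve theorem in~\cite{bergerMelliesWeber12}.
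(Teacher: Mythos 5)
Your proposal is correct and ultimately rests on the same source as the paper, whose entire proof is a citation to Proposition~2.3 of~\cite{bergerMelliesWeber12}; your reconstruction of the density argument, the arity-respecting property via the (pseudo-)pullback square, and the reduction of the Eilenberg--Moore universal property in $\ACAT$ to the one in $\CAT$ is a faithful sketch of what that cited result contains. No gaps.
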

\begin{proof}
This is Proposition~2.3 in~\cite{bergerMelliesWeber12}.
\end{proof}

This concludes our short review of \cite{bergerMelliesWeber12}; we pause to note one consequence of the previous Proposition.

\begin{defn}
Write $(\ACAT / (\cat{B}, \cat{A}) )_{\ra}$ for the category whose objects are morphisms in $\ACAT$ whose codomain is $(\cat{B},\cat{A})$ that have a left adjoint (in the 2-category $\ACAT$), and whose morphisms are commutative triangles of arity respecting functors. Explicitly, an object of $(\ACAT/(\cat{B},\cat{A}))_{\ra}$ is an arity-respecting functor into $(\cat{B},\cat{A})$ with a left adjoint that is also arity-respecting.

Write $\Amonad (\cat{B},\cat{A})$ for the category of monads with arities on $(\cat{B},\cat{A})$, as a full subcategory of $\monad(\cat{B})$.
\end{defn}

\begin{prop}
\label{prop:monad-arities-adj}
There is an adjunction
\[
\xymatrix{
{(\ACAT/(\cat{B},\cat{A}))_{\ra} }\ar@<5pt>[r]_-{\perp}^-{\str_{\Amonad}}\ & {\Amonad(\cat{B}, \cat{A})^{\op}}\ar@<5pt>[l]^-{\sem_{\Amonad}}
}
\]
where $\sem_{\Amonad}$ sends a monad with arities $\mnd{T}$ to $(\cat{B}^{\mnd{T}}, \Theta_{\mnd{T}})$ as defined in Definition~\ref{defn:arities-algebras} with its forgetful functor to $\cat{B}$, and $\str_{\Amonad}$ sends an adjunction to the monad it induces.
\end{prop}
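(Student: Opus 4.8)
The plan is to obtain this adjunction from the universal property of the Eilenberg--Moore object $(\cat{B}^{\mnd{T}},\Theta_{\mnd{T}})$ in the $2$-category $\ACAT$ supplied by the preceding Proposition, in the same spirit in which the structure--semantics adjunction for ordinary monads flows from Street's formal theory of monads~\cite{street72} (compare Corollary~\ref{cor:str-sem-adj-monad}). Concretely, I would first check that $\sem_{\Amonad}$ and $\str_{\Amonad}$ are well defined as functors, and then produce the defining natural hom-set bijection. That $\str_{\Amonad}$ is well defined is immediate: by the explicit description recalled above, an object of $(\ACAT/(\cat{B},\cat{A}))_{\ra}$ is an arity-respecting functor into $(\cat{B},\cat{A})$ equipped with an arity-respecting left adjoint, i.e.\ an adjunction internal to $\ACAT$, and such an adjunction induces a monad internal to $\ACAT$ --- precisely a monad with arities on $(\cat{B},\cat{A})$ --- with functoriality inherited verbatim from $\str_{\monad}$.

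For $\sem_{\Amonad}$, the preceding Proposition already gives that $U^{\mnd{T}}\from\cat{B}^{\mnd{T}}\to\cat{B}$ is arity-respecting and that $(\cat{B}^{\mnd{T}},\Theta_{\mnd{T}})$ is an Eilenberg--Moore object for $\mnd{T}$ in $\ACAT$; what remains is to see that the free functor $F^{\mnd{T}}$ is arity-respecting as well. For this I would use the Yoneda-type identification
\[
N_{\Theta_{\mnd{T}}}(F^{\mnd{T}}b)(j_{\mnd{T}}a)\;\cong\;\cat{B}^{\mnd{T}}(Ta,F^{\mnd{T}}b)\;\cong\;\cat{B}(a,Tb)\;=\;N_{\cat{A}}(Tb)(a),
\]
natural in $a\in\cat{A}$ and $b\in\cat{B}$, which shows that $F^{\mnd{T}}$ carries $\cat{A}$-cocones to colimit cocones exactly because $N_{\cat{A}}\of T$ does --- that is, exactly because $\mnd{T}$ is a monad \emph{with arities}. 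Hence $\sem_{\Amonad}(\mnd{T})$, equipped with $U^{\mnd{T}}$ and $F^{\mnd{T}}$, really is an object of $(\ACAT/(\cat{B},\cat{A}))_{\ra}$; on morphisms $\sem_{\Amonad}$ is the restriction of $\sem_{\monad}$, and one checks the values lie in $\ACAT$ using the characterisation of arity-respecting functors in~\cite{bergerMelliesWeber12}.

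The heart of the argument is the bijection. Fixing a monad with arities $\mnd{T}$ and an object of $(\ACAT/(\cat{B},\cat{A}))_{\ra}$ with underlying adjunction $F\ladj U\from\cat{M}\to\cat{B}$ and induced monad $\mnd{S}=\str_{\Amonad}(U)$, a morphism $U\to\sem_{\Amonad}(\mnd{T})$ is an arity-respecting comparison functor $K\from\cat{M}\to\cat{B}^{\mnd{T}}$ with $U^{\mnd{T}}\of K=U$. Since $\sem_{\Amonad}(\mnd{T})$ is the Eilenberg--Moore object of $\mnd{T}$ in $\ACAT$, Street's correspondence~\cite{street72} between such comparison $1$-cells over the base and monad morphisms $\mnd{T}\to\mnd{S}$ yields
\[
(\ACAT/(\cat{B},\cat{A}))_{\ra}\bigl(U,\sem_{\Amonad}(\mnd{T})\bigr)\;\cong\;\Amonad(\cat{B},\cat{A})(\mnd{T},\mnd{S}),
\]
and one then checks this bijection is natural in $\mnd{T}\in\Amonad(\cat{B},\cat{A})^{\op}$ and in $U$. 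This is precisely the asserted adjunction $\str_{\Amonad}\ladj\sem_{\Amonad}$.

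I expect the main obstacle to be bookkeeping rather than anything conceptual: confirming that the $2$-categorical universal property of $(\cat{B}^{\mnd{T}},\Theta_{\mnd{T}})$ restricts to exactly the morphisms present in $(\ACAT/(\cat{B},\cat{A}))_{\ra}$, with no spurious side condition on the comparison functor. The crucial point is that a functor $K$ over $\cat{B}$ from an arity-respecting $U\from(\cat{M},\cat{A}_{\cat{M}})\to(\cat{B},\cat{A})$ to $U^{\mnd{T}}$ is automatically arity-respecting: there is a natural isomorphism $j_{\mnd{T}}^{*}\of N_{\Theta_{\mnd{T}}}\of K\cong N_{\cat{A}}\of U$, and the restriction functor $j_{\mnd{T}}^{*}\from[\Theta_{\mnd{T}}^{\op},\SET]\to[\cat{A}^{\op},\SET]$ is cocontinuous and conservative (because $j_{\mnd{T}}$ is essentially surjective), hence reflects colimit cocones; so $N_{\Theta_{\mnd{T}}}\of K$ sends the $\cat{A}_{\cat{M}}$-cocones in $\cat{M}$ to colimit cocones, just as $N_{\cat{A}}\of U$ does. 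Once this and the cognate compatibilities with~\cite{bergerMelliesWeber12} are pinned down, the remainder of the proof is formal.
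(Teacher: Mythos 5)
Your proposal is correct and takes essentially the same route as the paper: the paper's entire proof is the one-line observation that $(\cat{B}^{\mnd{T}},\Theta_{\mnd{T}})$ is an Eilenberg--Moore object for $\mnd{T}$ in the 2-category $\ACAT$, combined with Theorem~6 of Street~\cite{street72}. The additional verifications you supply --- that $F^{\mnd{T}}$ is arity-respecting and that a comparison functor over $\cat{B}$ into $U^{\mnd{T}}$ is automatically arity-respecting --- are exactly the bookkeeping the paper leaves implicit in that citation.
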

\begin{proof}
This follows from the fact $(\cat{B}^{\mnd{T}}, \Theta_{\mnd{T}})$ is an Eilenberg--Moore object for $\mnd{T}$ in $\ACAT$ and Theorem~6 of Street~\cite{street72}.
\end{proof}

This structure--semantics adjunction for monads with arities both generalises and specialises the adjunction for ordinary monads from Proposition~\ref{prop:sem-str-adj-monad}. It generalises it in the sense that by setting $\cat{A} = \cat{B}$ we recover the usual structure--semantics for monads, and it is a specialisation of it in the sense that both squares in
\[
\xymatrix{
{(\ACAT/(\cat{B},\cat{A}))_{\ra} }\ar@<5pt>[r]_-{\perp}^-{\str_{\Amonad}}\ar[d] & {\Amonad(\cat{B}, \cat{A})^{\op}}\ar@<5pt>[l]^-{\sem_{\Amonad}}\ar[d] \\
{(\catover{\cat{B}})_{\ra} }\ar@<5pt>[r]_-{\perp}^-{\str_{\monad}}\ & {\monad(\cat{B})^{\op}}\ar@<5pt>[l]^-{\sem_{\monad}}
}
\]
commute, where the vertical arrows are the evident forgetful functors.

\section{Monoids}
\label{sec:notions-monoids}

Our final example of a notion of algebraic theory is extremely simple: an ordinary monoid in $\SET$ can be viewed as an algebraic theory with only unary operations. A model of this theory is simply an action of the monoid. We deal with them separately rather simply noting that every monoid gives rise to, say, a Lawvere theory, because their simplicity makes their semantics much more widely applicable. Indeed, one can define actions of a monoid in any category whatsoever. For this section, we fix a large category $\cat{B}$.

\begin{defn}
\label{defn:sem-monoid-objects}
Let $M$ be a large monoid. A \demph{model} of $M$ (or an \demph{object equipped with an $M$-action}) in $\cat{B}$ consists of an object $b \in \cat{B}$ together with a monoid homomorphism $\alpha \from M \to \cat{B}(b, b)$. A \demph{homomorphism} of $M$-models from $(b, \alpha)$ to $(b', \alpha')$ consists of a morphism $h \from b \to b'$ such that
\[
\xymatrix{
M \ar[r]^{\alpha}\ar[d]_{\alpha'} & \cat{B}(b, b)\ar[d]^{h_*} \\
\cat{B}(b', b') \ar[r]_{h^*} & \cat{B}(b, b')
}
\]
commutes.

Together the $M$-models and homomorphisms in $\cat{B}$ form a category $\mod_{\MONOID}(\cat{B})$, and there is a natural forgetful functor $\sem_{\MONOID} \from \mod_{\MONOID} \to \cat{B}$.
\end{defn}

\begin{defn}
\label{defn:sem-monoid-morphisms}
Let $f \from M' \to M$ be a monoid homomorphism. We define a functor
\[
\sem_{\MONOID} (f) \from \mod_{\MONOID} (M) \to \mod_{\MONOID}(M')
\]
by sending an $M$-model $(b, \alpha)$ to the $M'$-model $(b, \alpha \of f)$, and sending an $M$-model homomorphism to the $M'$-model homomorphism with the same underlying morphism in $\cat{B}$.
\end{defn}

\begin{prop}
Together, Definitions~\ref{defn:sem-monoid-objects} and~\ref{defn:sem-monoid-morphisms} define a functor $\sem_{\MONOID} \from \MONOID^{\op} \to \catover{\cat{B}}$.
\end{prop}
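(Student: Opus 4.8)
The statement is a routine functoriality verification, so the plan is simply to check, in turn, that $\sem_{\MONOID}$ is well-defined on objects, well-defined on morphisms (with the correct variance), and respects identities and composition contravariantly. There is no substantial obstacle; the work is entirely bookkeeping, which is presumably why an author might record the result as ``immediate''.

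First I would confirm that for each large monoid $M$, Definition~\ref{defn:sem-monoid-objects} really produces an object of $\catover{\cat{B}}$: the $M$-models and their homomorphisms form a category $\mod_{\MONOID}(M)$, since the defining square for an $M$-model homomorphism is stable under pasting (so composites of homomorphisms are homomorphisms) and is trivially satisfied by $h = \id_b$; and the assignment $(b,\alpha) \mapsto b$, $h \mapsto h$ is manifestly functorial, giving the forgetful functor $\sem_{\MONOID}(M) \from \mod_{\MONOID}(M) \to \cat{B}$.

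Next, for a monoid homomorphism $f \from M' \to M$ I would check that $\sem_{\MONOID}(f)$ of Definition~\ref{defn:sem-monoid-morphisms} is a functor $\mod_{\MONOID}(M) \to \mod_{\MONOID}(M')$ lying over $\cat{B}$. On objects, $\alpha \of f \from M' \to \cat{B}(b,b)$ is again a monoid homomorphism, being a composite of two, so $(b, \alpha \of f)$ is a genuine $M'$-model; on morphisms, if $h \from (b,\alpha) \to (b',\alpha')$ is an $M$-model homomorphism then precomposing its defining square with $f \from M' \to M$ exhibits $h$ as an $M'$-model homomorphism $(b,\alpha\of f) \to (b',\alpha'\of f)$. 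Since $\sem_{\MONOID}(f)$ acts as the identity on the underlying objects and morphisms in $\cat{B}$, it preserves identities and composition and satisfies $\sem_{\MONOID}(M') \of \sem_{\MONOID}(f) = \sem_{\MONOID}(M)$, hence is a morphism $\sem_{\MONOID}(M) \to \sem_{\MONOID}(M')$ in $\catover{\cat{B}}$ — which is the correct target for the image of $f$ under a functor out of $\MONOID^{\op}$.

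Finally I would verify functoriality. The functor $\sem_{\MONOID}(\id_M)$ sends $(b,\alpha)$ to $(b, \alpha \of \id_M) = (b,\alpha)$ and fixes morphisms, so it is the identity of $\mod_{\MONOID}(M)$; and for composable monoid homomorphisms $g \from M'' \to M'$ and $f \from M' \to M$ we have $\sem_{\MONOID}(f \of g)(b,\alpha) = (b, (\alpha \of f) \of g) = \sem_{\MONOID}(g)\bigl(\sem_{\MONOID}(f)(b,\alpha)\bigr)$, with the same identity on morphisms, so $\sem_{\MONOID}(f\of g) = \sem_{\MONOID}(g) \of \sem_{\MONOID}(f)$, as required for a functor on $\MONOID^{\op}$. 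The only point deserving a moment's care is precisely this variance bookkeeping — that $f \from M' \to M$ induces a functor in the direction $\mod_{\MONOID}(M) \to \mod_{\MONOID}(M')$ — and it works out exactly because $\sem_{\MONOID}(f)$ reindexes a model along $f$.
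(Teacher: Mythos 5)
Your verification is correct and complete; the paper offers no proof for this proposition, treating it as immediate, and your argument is exactly the routine check the author leaves implicit. The one point genuinely worth recording — that a homomorphism $f \from M' \to M$ induces a functor in the direction $\mod_{\MONOID}(M) \to \mod_{\MONOID}(M')$ by reindexing along $f$, which is what makes the functor land in $\MONOID^{\op}$ rather than $\MONOID$ — is handled correctly in your write-up.
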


\begin{defn}
Let $(U \from \cat{M} \to \cat{B}) \in \catover{\cat{B}}$. Define $\str_{\MONOID} (U)$ to be the monoid of natural transformations $U \to U$.

Now let
\[
\xymatrix{
\cat{M}' \ar[rr]^Q\ar[dr]_{U'}  & & \cat{M}\ar[dl]^{U} \\
& \cat{B}
 }
 \]
be a morphism in $\catover{\cat{B}}$. Define $\str_{\MONOID} (Q) \from \str_{\MONOID} (U) \to \str_{\MONOID}(U')$ by sending a natural transformation $\gamma \from U \to U$ to
\[
\gamma Q \from U \of Q = U' \to U \of Q = U'.
\]
This defines a functor $\str_{\MONOID} \from \catover{\cat{B}} \to \MONOID^{\op}$.
\end{defn}

\begin{prop}
\label{prop:str-sem-adj-monoid}
We have an adjunction
\[
\xymatrix{
{\catover{\cat{B}}}\ar@<5pt>[r]_-{\perp}^-{\str_{\MONOID}}\ & {\MONOID^{\op}.}\ar@<5pt>[l]^-{\sem_{\MONOID}}
}
\]
\end{prop}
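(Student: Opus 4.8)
The plan is to exhibit directly a bijection
\[
\catover{\cat{B}}\bigl(U, \sem_{\MONOID}(M)\bigr) \iso \MONOID\bigl(M, \str_{\MONOID}(U)\bigr),
\]
natural in $(U\from\cat{M}\to\cat{B})\in\catover{\cat{B}}$ and $M\in\MONOID$, and to note that this is the hom-set bijection witnessing the claimed adjunction $\str_{\MONOID}\ladj\sem_{\MONOID}$, recalling that a morphism $\str_{\MONOID}(U)\to M$ in $\MONOID^{\op}$ is by definition a monoid homomorphism $M\to\str_{\MONOID}(U)$.

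First I would unpack the left-hand side. A morphism $U\to\sem_{\MONOID}(M)$ in $\catover{\cat{B}}$ is a functor $Q\from\cat{M}\to\mod_{\MONOID}(M)$ with $\sem_{\MONOID}(M)\of Q = U$. On objects such a $Q$ assigns to each $m\in\cat{M}$ an $M$-model whose underlying object is $Um$, that is, a monoid homomorphism $\alpha_m\from M\to\cat{B}(Um,Um)$; on morphisms $Q$ must send $f\from m\to m'$ to an $M$-model homomorphism whose underlying map in $\cat{B}$ is $Uf$, and by Definition~\ref{defn:sem-monoid-objects} this amounts to the condition $Uf\of\alpha_m(x) = \alpha_{m'}(x)\of Uf$ for all $x\in M$.

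The key observation is that, for a fixed $x\in M$, this last condition over all $f$ says precisely that $\bigl(\alpha_m(x)\bigr)_{m\in\cat{M}}$ is a natural transformation $U\to U$, i.e.\ an element of $\str_{\MONOID}(U) = [\cat{M},\cat{B}](U,U)$. Thus from $Q$ I obtain a function $\phi_Q\from M\to\str_{\MONOID}(U)$, $x\mapsto(\alpha_m(x))_m$; that each $\alpha_m$ preserves units and multiplication, which can be checked componentwise, is exactly the statement that $\phi_Q$ is a monoid homomorphism. Conversely, given a monoid homomorphism $\phi\from M\to\str_{\MONOID}(U)$, set $\alpha_m(x) = \phi(x)_m\from Um\to Um$; since $\phi(xy)=\phi(x)\of\phi(y)$ and $\phi(1)=\id_U$ in $[\cat{M},\cat{B}](U,U)$, each $\alpha_m$ is a monoid homomorphism $M\to\cat{B}(Um,Um)$, and naturality of each $\phi(x)$ gives that $Uf$ is an $M$-model homomorphism $(Um,\alpha_m)\to(Um',\alpha_{m'})$, so $(m\mapsto(Um,\alpha_m),\ f\mapsto Uf)$ is a functor $Q_\phi\from\cat{M}\to\mod_{\MONOID}(M)$ over $\cat{B}$. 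The assignments $Q\mapsto\phi_Q$ and $\phi\mapsto Q_\phi$ are visibly mutually inverse.

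Finally I would check naturality of this bijection in $U$ and $M$. For a monoid homomorphism $f\from M'\to M$, precomposing $\phi$ with $f$ corresponds to composing $Q$ with $\sem_{\MONOID}(f)$, by Definition~\ref{defn:sem-monoid-morphisms} and the fact that both operations merely reindex the action along $f$; for a morphism $R\from U'\to U$ in $\catover{\cat{B}}$, whiskering natural transformations with $R$ corresponds to composing $Q$ with $R$, directly from the definition of $\str_{\MONOID}(R)$. Both verifications are entirely routine. I do not expect a genuine obstacle here — the only things to watch are the direction of the arrows (the $\op$ on $\MONOID$) and the bookkeeping of naturality; alternatively the statement could be deduced as a special case of the general structure--semantics adjunction for proto-theories developed later, but the direct argument above is shorter.
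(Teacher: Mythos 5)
Your proposal is correct and follows essentially the same route as the paper: the paper's proof also directly exhibits the hom-set bijection, sending a functor over $\cat{B}$ to the monoid homomorphism $x \mapsto (\alpha_m(x))_{m \in \cat{M}}$ and conversely, though the paper only sketches it and omits the naturality verification you spell out. Your closing remark that the result could instead be deduced from the general proto-theory adjunction is also borne out by the paper, which does exactly that in Section~\ref{sec:str-sem-adj-monoids}.
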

\begin{proof}
Given a monoid $M$ and a functor $U \from \cat{M} \to \cat{B}$, we sketch a bijection
\[
\catover{\cat{B}}(U, \sem_{\MONOID} (M)) \iso \MONOID(M, \str_{\MONOID}(U));
\]
the remaining details are straightforward to fill in.

Let
\[
\xymatrix{
\cat{M}\ar[rr]^-{P}\ar[dr]_U && \mod_{\MONOID}(M)\ar[dl]^{\sem_{\MONOID}(M)} \\
& \cat{B} &
}
\]
be a morphism in $\catover{\cat{B}}$. Then we define a monoid homomorphism $\widebar{P} \from M \to \str_{\MONOID}(U)$ as follows. For $x \in M$ and $m \in \cat{M}$, the functor $P$ equips $m$ with an $M$-action, and in particular $x$ gives rise to a morphism $Um \to Um$. These morphisms, indexed by $m \in \cat{M}$, form the components of a natural transformation $U \to U$, i.e.\ an element of $\str_{\MONOID} (U)$. We define $\widebar{P}(x)$ to be this natural transformation.

In the other direction, suppose
\[
Q \from M \to \str_{\MONOID}(U)
\]
is a monoid homomorphism. Then for each $m \in \cat{M}$, we equip $Um$ with an $M$-action by letting $x \in M$ act on $Um$ via $Q(x)_m \from Um \to Um$. This assignment of an $M$-action to $Um$ for each $m \in \cat{M}$ defines a functor $\widebar{Q} \from \cat{M} \to \sem_{\MONOID}(M)$.
\end{proof}

\section{Comparison of notions of algebraic theory}
\label{sec:notions-compare}
In this chapter we have discussed several existing notions of algebraic theory. In subsequent chapters we will describe a general notion of algebraic theory that generalises all of these, but first let us pause to summarise and compare them to one another. Each has its own notion of arity, and its own context in which models of a theory make sense. These features of the various notions of algebraic theory are summarised in Table~\ref{tab:notions}.

\begin{table}[h]
\centering
\begin{tabular}{p{4.5cm}p{4.5cm}p{4.5cm}}
\toprule
Notion of algebraic theory& Takes models in & Arities \\
\midrule
Lawvere theories & A finite product category $\cat{B}$ & Natural numbers\\
Monads & A category $\cat{B}$ & Objects of $\cat{B}$ \\
PROPs & A symmetric monoidal category $\cat{B}$ & Natural numbers \\
PROs & A monoidal category $\cat{B}$ & Natural numbers \\
Operads & A multicategory $\cat{B}$ & Natural numbers \\
Monads with arities & A category $\cat{B}$ & Objects of a dense subcategory $\cat{A}$ of $\cat{B}$ \\
Monoids & A category $\cat{B}$ & A single arity $1$ \\
\bottomrule
\end{tabular}
\caption{Notions of algebraic theory.}
\label{tab:notions}
\end{table}

\chapter{The structure--semantics adjunction}
\label{chap:adjunction}

In this chapter we define a special case of the general notion of \emph{proto-theories} and their structure--semantics adjunctions, with the fully general version being introduced in the Chapter~\ref{chap:structure}. We begin with this special case because it is easier to motivate from a conceptual point of view, although the more general notion encompasses more examples.

In Section~\ref{sec:proto-theories} we give the definition of a proto-theory and compare them to Lawvere theories in order to indicate how they serve as a kind of algebraic theory. In Section~\ref{sec:semantics-proto-theory} we discuss how we need some extra data before we can interpret a proto-theory in a given category, leading up to the definition of an aritation and the induced semantics functor. We then define the structure functor for a given aritation in Section~\ref{sec:structure-def}, and in Section~\ref{sec:str-sem-adj} we show that structure is left adjoint to semantics. In Section~\ref{sec:profunctor} we investigate proto-theories from the point of view of profunctors, and in Section~\ref{sec:str-sem-adj-monoids} we discuss the simplest examples of proto-theories, namely monoids, and their semantics.

\section{Proto-theories}
\label{sec:proto-theories}

In this section we define proto-theories in $\CAT$, which will serve as a general notion of algebraic theory.

\begin{defn}
\label{defn:proto-theory}
Let $\cat{A}$ be a large category. A \demph{proto-theory} with arities $\cat{A}$ is a functor $L \from \cat{A} \to \cat{L}$ that is bijective on objects. A morphism of proto-theories from $L' \from \cat{A} \to \cat{L}' $ to $L \from \cat{A} \to \cat{L}$ consists of a functor $P \from \cat{L}' \to \cat{L}$ such that $P \of L' = L$. We write $\proth(\cat{A})$ for the category of proto-theories with arities $\cat{A}$ and their morphisms.
\end{defn}

Proto-theories clearly generalise Lawvere theories, as defined in Definition~\ref{defn:lawvere-theory}: a Lawvere theory is just a proto-theory with arities $\fin^{\op}$, with the extra condition that $L \from \fin^{\op} \to \cat{L}$ preserves finite products. In order to gain some intuition for how a proto-theory can be thought of as a kind of algebraic theory, it is therefore useful to consider how the various features of a proto-theory are interpreted in the special case of a Lawvere theory.

Fundamental to any notion of algebraic theory is a notion of \emph{arities}, that are thought of as shapes for possible configurations of elements. For classical algebraic theories, the arities are the natural numbers, and a ``configuration of elements of shape $n$'' in a set $X$ is simply an $n$-tuple of elements of $X$. Lawvere theories also have the natural numbers as their arities --- this is reflected in the fact the objects of $\fin^{\op}$ can be identified with the natural numbers. Thus, for a general proto-theory $L \from \cat{A} \to \cat{L}$, one should think of the objects of $\cat{A}$ as ``arities'' for possible configurations of elements. We will discuss what plays the role of such an ``$a$-ary configuration'' when we come to the semantics of proto-theories; for now keep in mind the intuition that for Lawvere theories, an $n$-ary configuration in a set is an $n$-tuple of elements.

In a Lawvere theory $L \from \fin^{\op} \to \cat{L}$, the operations of a particular arity $n$ are the morphisms $Ln \to L1$. An $n$-ary operation is thought of as something that transforms an $n$-tuple of elements (that is, a $n$-ary configuration) into a single element (in an abstract sense --- operations of a theory only have the ``potential'' to turn tuples into elements; this potential is only realised when they are given a concrete interpretation in a particular model). Morphisms in $\cat{L}$ with arbitrary codomain $Lm$ can then be identified with $m$-tuples of operations, since $Lm = (L1)^m$. However, for a general category of arities $\cat{A}$, there will not necessarily be a distinguished object to play the role that $1$ plays here. Thus, for a proto-theory $L \from \cat{A} \to \cat{L}$, if we want to think of a morphism $l \in \cat{L}(La', La)$ as an operation of the proto-theory, then such an operation will not only have an \emph{arity} of its input, given by $a'$, but also a \emph{shape} of its output, given by $a$. Thus, $l$ should be though of as something that has the potential to turn $a'$-ary configurations into $a$-ary configurations (again, in an abstract sense).

For Lawvere theories, the morphisms in $\fin^{\op}$ describe permissible ways of transforming a configuration of elements of one shape into another. More precisely, if $f$ is a function $ \{1, \ldots, n\} \to \{1, \ldots, m \}$ (defining a morphism $m \to n$ in $\fin^{\op}$), and we have an $m$-tuple $(e_i)_{i=1}^m$ of elements of some set, then we can define an $n$-tuple $(e_{f(i)})_{i=1}^n$. This allows such morphisms to act on operations of a proto-theory by transforming their input arities: if $\tau$ is an operation of an algebraic theory with arity $n$, and $f$ is as above, then we obtain an $m$-ary operation $\tau[(x_{f(i)}/x_i)_{i=1}^n]$. Similarly, if $L \from \cat{A} \to \cat{L}$ is a proto-theory and we are given a morphism $f \from a'' \to a'$ in $\cat{A}$ and an operation $l \from La' \to La$ of $L$ (with arity $a'$), then we obtain a new operation $l \of Lf$ with arity $a''$ (and the same shape). Similarly morphisms in $a$ can transform the shape of the output of an operation. Thus we should think of morphisms in $\cat{A}$ as ways of transforming configurations of one shape into configurations of another; not as part of any particular theory, but as part of the underlying logic that $\cat{A}$ represents.

Composition in a Lawvere theory represents the process of building compound operations by substituting operations into one another, and should be thought of similarly in a proto-theory. Then the ``axioms'' of the proto-theory are encoded in the equations that hold between composites in $\cat{L}$.

\section{The semantics of proto-theories}
\label{sec:semantics-proto-theory}

In classical universal algebra, an interpretation of an $n$-ary operation $\omega$ from some theory on a set $X$ consists of a function $X^n \to X$; that is, it is something that turns $n$-ary configurations of elements of $X$ into elements of $X$. Let $\cat{A}$ and $\cat{B}$ be large categories, and $L \from \cat{A} \to \cat{L}$ a proto-theory. Then we would like to say something like ``an interpretation of an $a'$-ary operation $l \in \cat{L}(La', La)$ on an object $b \in \cat{B}$ is something that turns $a'$-ary configurations of elements of $b$ into elements of $b$''.

There are two problems with this: firstly, it does not take the ``output shape'' $a$ of the operation $l$ into account. This is easily fixed by amending our statement to ``an interpretation of an $a'$-ary operation $l \in \cat{L}(La', La)$ on an object $b \in \cat{B}$ is something that turns $a'$-ary configurations of elements of $b$ into $a$-ary configurations of elements of $b$''. 

Secondly, we do not yet have a notion of an ``$a$-ary configuration of elements of $b$''. We solve this problem by fiat: we just suppose that for each $a \in \cat{A}$ and $b \in \cat{B}$ there is a totality of $a$-ary configurations of elements of $b$, denoted $\lpair a, b \rpair$. We remain completely agnostic as to what sort of thing this should be, except that we want it to be functorial in both $a$ and $b$; (recall that we interpret morphisms in $\cat{A}$ as abstract ways of transforming configurations of elements, and a morphism $b \to b'$ should extend to a map between the totalities of configurations of any given shape). Thus, $\lpair a, b \rpair$ could live in some third category $\cat{C}$. This motivates the following definition.

\begin{defn}
\label{defn:aritation}
Let $\cat{A}$, $\cat{B}$ and $\cat{C}$ be large categories. An \demph{interpretation of arities from $\cat{A}$ in $\cat{B}$, with values in $\cat{C}$} (called an \demph{aritation} for short) is a functor
\[
\lpair -, - \rpair \from \cat{A} \times \cat{B} \to \cat{C}.
\]
For such an aritation, $\cat{A}$ is called the \demph{category of arities} and $\cat{B}$ is called the $\demph{base category}$. For a given such aritation, write
\[
\currylo \from \cat{B} \to [\cat{A}, \cat{C}]
\]
and
\[
\curryhi \from \cat{A} \to [\cat{B}, \cat{C}]
\]
for the functors obtained by currying $\lpair - , - \rpair$.
\end{defn}

\begin{ex}
\label{ex:canon-aritation}
Let $\cat{B}$ be a locally small category. Then the hom-functor
\[
\cat{B}(-,-) \from \cat{B}^{\op} \times \cat{B} \to \Set
\]
can be regarded as an aritation $\lpair - , - \rpair = \cat{B}(-,-)$. The corresponding
\[
\currylo \from \cat{B} \to [\cat{B}^{\op}, \Set]
\]
and
\[
\curryhi \from \cat{B}^{\op} \to [\cat{B}, \Set]
\]
are the two Yoneda embeddings. This example will be explored in more detail later in this thesis, especially in Chapters~\ref{chap:canonical1} and~\ref{chap:canonical2}.
\end{ex}

\begin{ex}
\label{ex:fin-prod-aritation}
Let $\cat{B}$ be a large finite product category. Then define an aritation
\[
\lpair - , - \rpair \from \fin^{\op} \times \cat{B} \to \cat{B}
\]
sending $(n, b) \mapsto b^n$. A version of this aritation is closely related to the semantics of Lawvere theories, as we shall see in Section~\ref{sec:structure-examples}.
\end{ex}

Recall that, if $L \from \cat{A} \to \cat{L}$ is a proto-theory, then a morphism $l \in \cat{L}(La', La)$ is thought of as an operation of input arity $a'$ and output shape $a$. Thus, an interpretation of such an operation in some object $b \in \cat{B}$ should send $a'$-ary configurations of elements of $b$ to $a$-ary configurations; that is, it should be a morphism $\lpair a', b \rpair \to \lpair a, b \rpair$. A model of the proto-theory should be an object with an interpretation of each such operation, respecting the process of substitution of operations and the axioms of the proto-theory, which, recall, are encoded in the composition of $\cat{L}$. Putting this all together gives the following definition.

\begin{defn}
\label{defn:sem-objects}
Let $\lpair - , - \rpair \from \cat{A} \times \cat{B} \to \cat{C}$ be an aritation, and $L \from \cat{A} \to \cat{L}$ a proto-theory with arities $\cat{A}$. Then the category $\mod(L)$ and the functors $\sem(L) \from \mod(L) \to \cat{B}$ and $I(L) \from \mod(L) \to [\cat{L}, \cat{C}]$ are defined by the following pullback square in $\CAT$:
\[
\xymatrix{
{\mod(L)}\ar[r]^{I(L)}\ar[d]_{\sem(L)}\pullbackcorner  & {[\cat{L}, \cat{C} ] }\ar[d]^{L^*} \\
{\cat{B}}\ar[r]_{\currylo} & {[\cat{A}, \cat{C}].}
}
\]
We call $\mod(L)$ the \demph{category of models} of $L$.
\end{defn}

Note that $\mod(L)$ and $\sem(L)$ depend crucially on the aritation $\lpair-, - \rpair$. This dependence is usually clear from the context, so we do not make it explicit.

\begin{defn}
\label{defn:models-explicit}
Let $\lpair - , - \rpair \from \cat{A} \times \cat{B} \to \cat{C}$ be an aritation, and $(L \from \cat{A} \to \cat{L}) \in \proth(\cat{A})$. We introduce the following terminology for the objects and morphisms of $\mod(L)$.
\begin{enumerate}
\item
\label{part:models-explicit-models}
An object of $\mod(L)$ is called a \demph{model of $L$} or an \demph{$L$-model}. Explicitly, an $L$-model $x \in \mod(L)$ consists of an object $d^x \in \cat{B}$ and a functor $\Gamma^x \from \cat{L} \to \cat{C}$ such that
\[
\Gamma^x \of L = \lpair -, d^x \rpair \from \cat{A} \to \cat{C}.
\] 
\item
\label{part:models-explicit-homomorphisms}
A morphism in $\mod(L)$ is called an \demph{$L$-model homomorphism}. Explicitly, an $L$-model homomorphism $x \to y$ consists of a morphism $h \from d^x \to d^y$ in $\cat{B}$ such that for every $l \in \cat{L}(La' , La)$, the square
\[
\xymatrix{
{\big(\lpair a', d^x \rpair = \Gamma^x (La')\big)}\ar[r]^{\lpair a', h \rpair}\ar[d]_{\Gamma^x(l)} & {\big(\lpair a', d^y \rpair = \Gamma^y (La')\big)}\ar[d]^{\Gamma^y(l)} \\
{\big(\lpair a, d^x \rpair = \Gamma^x (La)\big)}\ar[r]^{\lpair a, h \rpair} & {\big(\lpair a, d^y \rpair = \Gamma^y (La)\big)}
}
\]
commutes.
\end{enumerate}
\end{defn}

Let $x = (d^x, \Gamma^x)$ be a model of a proto-theory $L \from \cat{A} \to \cat{L}$ for a particular aritation $\lpair-,- \rpair \from \cat{A} \times \cat{B} \to \cat{C}$. Then $\Gamma^x$ provides an interpretation of the operations of the proto-theory in the following sense. An operation of $L$ is a morphism $l \from La' \to La$ in $\cat{L}$. The functor $\Gamma^x$ gives an interpretation of such an operation as a morphism
\[
\lpair a', d^x \rpair \to \lpair a, d^x \rpair,
\]
which we can think of as a way of turning $a'$-ary configurations of elements of $d^x$ into $a$-ary configurations as desired. The functoriality of $\Gamma^x$ means that these interpretations respect the process of substituting one operation into another, which is encoded in the composition of $\cat{L}$.

A homomorphism of $L$-models is simply a map between their underlying objects that commutes with the interpretations of each operation of $L$. The forgetful functor $\sem(L) \from \mod(L) \to \cat{B}$ sends an $L$-model to its underlying object in $\cat{B}$ and sends a homomorphism to its underlying morphism.
\begin{defn}
\label{defn:sem-morphisms}
Let $\lpair -, - \rpair \from \cat{A} \times \cat{B} \to \cat{C}$ be an aritation, and let
\[
\xymatrix{
\cat{L}'\ar[rr]^P & & \cat{L} \\
& \cat{A}\ar[ul]^{L'}\ar[ur]_{L}  &
}
\]
be a morphism in $\proth(\cat{A})$. Then $\sem(P) \from \mod(L) \to \mod(L')$ is defined to be the unique functor making
\[
\xymatrix{
{\mod(L)}\ar[rrr]^{I(L)} \ar[dr]^{\sem(P)} \ar[dd]_{\sem(L)} & & & {[\cat{L}, \cat{C} ]}\ar[dl]_{P^*}\ar[dd]^{L^*} \\
& {\mod(L')}\ar[r]^{I(L')}\ar[dl]^{\sem(L')} & {[\cat{L}', \cat{C}]}\ar[dr]_{L'^*} & \\
 {\cat{B}}\ar[rrr]_{\currylo} & & &{[\cat{A}, \cat{C}]} 
}
\]
commute. The universal property of the pullback defining $\mod(L')$ ensures that such a $\sem(P)$ exists and is unique.

Explicitly, $\sem(P)$ sends an $L$-model $x$ to the $L'$-model $\sem(P)(x)$ with $d^{\sem(P)(x)} = d^x$ and $\Gamma^{\sem(P)(x)} = \Gamma^x \of P$, and is the identity on morphisms.
\end{defn}

A morphism of proto-theories 
\[
\xymatrix{
\cat{L}'\ar[rr]^P & & \cat{L} \\
& \cat{A}\ar[ul]^{L'}\ar[ur]_{L}  &
}
\]
is an interpretation of $L'$ in $L$; that is, it assigns to every operation of $L'$ a corresponding operation of $L$ of the appropriate arity and shape. This gives a canonical way of turning an $L$-model $x = (d^x, \Gamma^x)$ into an $L'$-model: given an operation for the proto-theory $L$, its interpretation for the new $L'$-model structure on $d^x$ is the interpretation (for the $L$-model $x$) of the operation of $L$ that it is sent to by $P$. This is precisely what the functor $\sem(P) \from \mod(L) \to \mod(L')$ does.

\begin{prop}
The assignments denoted $\sem$ in Definitions~\ref{defn:sem-objects} and~\ref{defn:sem-morphisms} together define a functor
\[
\sem \from \proth(\cat{A})^{\op} \to \catover{\cat{B}}.
\]
\end{prop}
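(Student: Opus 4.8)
The plan is to read everything off the universal property of the pullbacks that \emph{define} the categories $\mod(L)$, so that almost no separate verification is needed; the explicit descriptions at the ends of Definitions~\ref{defn:sem-objects} and~\ref{defn:sem-morphisms} then serve only as a cross-check. Recall that $\catover{\cat{B}} = \CAT/\cat{B}$ has commuting triangles over $\cat{B}$ as morphisms.

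First, $\sem$ is well-defined on objects with no work: since $\CAT$ has pullbacks, $\mod(L)$ is an honest category and $\sem(L) \from \mod(L) \to \cat{B}$ an honest functor, hence an object of $\catover{\cat{B}}$. Second, $\sem$ is well-defined on morphisms essentially by Definition~\ref{defn:sem-morphisms}: given $P \from \cat{L}' \to \cat{L}$ with $P \of L' = L$ (a morphism $L' \to L$ in $\proth(\cat{A})$), the pair $(\sem(L),\, P^* \of I(L))$ is a cone on the cospan defining $\mod(L')$, because $\currylo \of \sem(L) = L^* \of I(L)$ (the pullback square for $\mod(L)$) and $L'^* \of P^* \of I(L) = (P \of L')^* \of I(L) = L^* \of I(L)$. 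Hence there is a unique $\sem(P) \from \mod(L) \to \mod(L')$ with $\sem(L') \of \sem(P) = \sem(L)$ and $I(L') \of \sem(P) = P^* \of I(L)$; the first equation says exactly that $\sem(P)$ is a morphism $\sem(L) \to \sem(L')$ over $\cat{B}$.

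The only real content is functoriality, and I would argue it by uniqueness in the pullback property rather than by unwinding the constructions. For identities, $\id_{\mod(L)}$ satisfies $\sem(L) \of \id = \sem(L)$ and $I(L) \of \id = (\id_{\cat{L}})^* \of I(L)$, which are the two equations characterising $\sem(\id_L)$, so $\sem(\id_L) = \id_{\mod(L)}$. For composites, suppose $L'' \toby{P'} L' \toby{P} L$ in $\proth(\cat{A})$, so that $P \of P' \from L'' \to L$. The functor $\sem(P') \of \sem(P) \from \mod(L) \to \mod(L'')$ satisfies
\[
\sem(L'') \of \sem(P') \of \sem(P) = \sem(L') \of \sem(P) = \sem(L)
\]
and, using $(P \of P')^* = P'^* \of P^*$ for the precomposition functors,
\[
I(L'') \of \sem(P') \of \sem(P) = P'^* \of I(L') \of \sem(P) = P'^* \of P^* \of I(L) = (P \of P')^* \of I(L).
\]
These are exactly the two equations that uniquely characterise $\sem(P \of P')$, so $\sem(P \of P') = \sem(P') \of \sem(P)$. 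In the variance of $\proth(\cat{A})^{\op}$, where a morphism $L' \to L$ of proto-theories becomes a morphism $L \to L'$, these identities say precisely that $\sem \from \proth(\cat{A})^{\op} \to \catover{\cat{B}}$ preserves identities and composition, which is the claim.

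Finally, as a sanity check one passes to the explicit formulas: $\sem(P)$ sends $x = (d^x, \Gamma^x)$ to $(d^x, \Gamma^x \of P)$ and is the identity on underlying morphisms, whence $\sem(\id_L)(x) = (d^x, \Gamma^x) = x$ and $\sem(P \of P')(x) = (d^x, \Gamma^x \of (P \of P')) = \sem(P')\bigl(\sem(P)(x)\bigr)$ are immediate, as is the agreement on morphisms. I do not anticipate any genuine obstacle; the only thing to be careful about is the bookkeeping of the contravariance — making sure the cones fed into each pullback carry the correct legs, and consistently using $(P \of P')^* = P'^* \of P^*$.
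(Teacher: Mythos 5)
Your proof is correct and takes exactly the approach the paper intends: the paper's own proof is the single sentence "Functoriality of $\sem$ is immediate from the universal property of pullbacks," and your argument simply spells out that uniqueness argument (checking that $\id_{\mod(L)}$ and $\sem(P')\of\sem(P)$ satisfy the two equations characterising $\sem(\id_L)$ and $\sem(P\of P')$ respectively). No gaps.
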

\begin{proof}
Functoriality of $\sem$ is immediate from the universal property of pullbacks.
\end{proof}

Let us consider the semantics functors that arise from the aritations of Examples~\ref{ex:canon-aritation} and~\ref{ex:fin-prod-aritation}.

\begin{ex}
For a locally small category $\cat{B}$, the aritation defined in Example~\ref{ex:canon-aritation} gives rise to a functor
\[
\sem \from \proth(\cat{B}^{\op})^{\op} \to \catover{\cat{B}}.
\]
Given a proto-theory $L \from \cat{B}^{\op} \to \cat{L}$, a model of $L$ (that is, an object of $\mod(L)$) consists of an object $d^x \in \cat{B}$ together with a functor $\Gamma^x \from \cat{L} \to \Set$ such that the composite $\Gamma^x \of L$ is the representable $\cat{B}(-, d^x) \from \cat{B}^{\op} \to \Set$. An $L$-model homomorphism $(d^x, \Gamma^x) \to (d^y, \Gamma^y)$ consists of a morphism $h \from d^x \to d^y$ such that the natural transformation
\[
h_* \from \cat{B}(-, d^x) \to \cat{B}(-, d^y)
\]
extends to a (necessarily unique) natural transformation $\Gamma^x \to \Gamma^y$.
\end{ex}

\begin{ex}
Let $\cat{B}$ be a large finite product category, and consider the aritation defined in Example~\ref{ex:fin-prod-aritation}. This aritation gives rise to a functor
\[
\proth(\fin^{\op})^{\op} \to \catover{\cat{B}}.
\]
Given a proto-theory $L \from \fin^{\op} \to \cat{L}$, a model of $L$ consists of an object $d^x \in \cat{B}$ together with a functor $\Gamma^x \from \cat{L} \to \cat{B}$ such that the composite $\Gamma^x \of L$ is the functor $(d^x)^{(-)} \from \fin^{\op} \to \cat{B}$. An $L$-model homomorphism $(d^x, \Gamma^x) \to (d^y, \Gamma^y)$ consists of a morphism $h \from d^x \to d^y$ such that for every $l \from Ln \to Lm$ in $\cat{L}$, the square
\[
\xymatrix{
(d^x)^n \ar[r]^{h^n}\ar[d]_{\Gamma^x l} & (d^y)^n\ar[d]^{\Gamma^y l} \\
(d^x)^m \ar[r]_{h^m} & (d^y)^{m}
}
\]
commutes. In particular, if $L \in \proth(\fin^{\op})$ is a Lawvere theory (that is, $L$ preserves finite products) then the notions of $L$-model and $L$-model homomorphism agree with those for Lawvere theories.
\end{ex}

\section{The structure functor}
\label{sec:structure-def}

Let $\lpair -,- \rpair \from \cat{A} \times \cat{B} \to \cat{C}$ be an aritation. Let  $(U \from \cat{M} \to \cat{B})\in \catover{\cat{B}}$ be any functor, but let us think of it for now as a forgetful functor, so that $U$ sends objects and morphisms of $\cat{M}$ to their ``underlying'' objects and morphisms in $\cat{B}$. Let $L \from \cat{A} \to \cat{L}$ be a proto-theory, and consider a morphism $Q \from U \to \sem(L)$ in $\catover{\cat{B}}$. Such a morphism is a way of assigning, for each object of $\cat{M}$, an $L$-model structure to its underlying object in $\cat{B}$ in such a way that the underlying morphism of each morphism in $\cat{M}$ becomes a homomorphism between the corresponding $L$-models.

If there were an initial such $Q$ (that is, an initial object in the comma category $(U \downarrow \sem)$), then we could think of the corresponding $\mod(L)$ as the ``best approximation to $\cat{M}$ by algebraic structure''. The proto-theory $L$ would in some sense describe the most general kind of algebraic structure possessed by all the objects of $\cat{M}$, in that for any other proto-theory $L'$, an assignment of $L'$-model structures to all objects of $\cat{M}$ (that is, a morphism $U \to \sem(L')$ in $\catover{\cat{B}}$) would be the same thing as an interpretation of $L'$ in $L$.

Of course, the existence of such an algebraic approximation for each $U$ is the same thing as the existence of a left adjoint to $\sem \from \proth(\cat{A})^{\op} \to \catover{\cat{B}}$; in this section we construct such an adjoint.

\begin{defn}
\label{defn:str-objects}
Let $\lpair -, - \rpair \from \cat{A} \times \cat{B} \to \cat{C}$ be an aritation and let $(U \from \cat{M} \to \cat{B}) \in \catover{\cat{B}}$. Define a category $\thr(U)$ and functors $\str(U) \from \cat{A} \to \thr(U)$ and $J(U) \from \thr(U) \to [\cat{M}, \cat{C}]$ via the bijective-on-objects/full-and-faithful factorisation of the composite
\[
\cat{A} \toby{\curryhi} [\cat{B}, \cat{C}] \toby{U^*} [ \cat{M},\cat{C}].
\]
That is, $\str(U) \from \cat{A} \to \thr(U)$ is the identity on objects, and $J(U) \from \thr(U) \to [\cat{M}, \cat{C}]$ is full and faithful, and
\[
\xymatrix{
{\cat{A}}\ar[r]^{\curryhi} \ar[d]_{\str(U)} & {[\cat{B},\cat{C}]} \ar[d]^{U^*} \\
{\thr(U)}\ar[r]_{J(U)} & {[\cat{M}, \cat{C}]}
}
\]
commutes.
\end{defn}

 Explicitly, the objects of $\thr(U)$ are the objects of $\cat{A}$, and a morphism $a' \to a$ in $\thr(U)$ is a natural transformation
\[
\lpair a', U-\rpair \to \lpair a, U-\rpair,
\]
and composition is the usual composition of natural transformations. The functor $\str(U)$ is the identity on objects, and sends a morphism $f \from a' \to a$ to the natural transformation
\[
\lpair f, - \rpair \from \lpair a', U- \rpair \to \lpair a , U- \rpair.
\]
The functor $J(U) \from \thr(U) \to [\cat{M}, \cat{C}]$ sends an object $a$ to the functor $\lpair a, U- \rpair$ and is the identity on morphisms.

\begin{lem}
\label{lem:str-well-defined}
Let $\lpair -, - \rpair \from \cat{A} \times \cat{B} \to \cat{C}$ be an aritation and let 
\[
\xymatrix{
\cat{M}'\ar[rr]^Q\ar[dr]_{U'} & & \cat{M}\ar[dl]^{U} \\
& \cat{B} &
}
\]
be a morphism in $\catover{\cat{B}}$. Then the square
\[
\xymatrix{
{\cat{A}} \ar[rr]^{\str(U')} \ar[d]_{\str(U)} & & {\thr(U')}\ar[d]^{J(U')} \\
{\thr(U)} \ar[r]_{J(U)} & {[\cat{M}, \cat{C}]} \ar[r]_{Q^*} & {[\cat{M}', \cat{C}]}
} 
\]
commutes, and there is a unique functor $\str(Q) \from \thr(U) \to \thr(U')$ making
\[
\xymatrix{
{\cat{A}} \ar[rr]^{\str(U')} \ar[d]_{\str(U)} & & {\thr(U')}\ar[d]^{J(U')} \\
{\thr(U)}\ar[urr]^{\str(Q)} \ar[r]_{J(U)} & {[\cat{M}, \cat{C}]} \ar[r]_{Q^*} & {[\cat{M}', \cat{C}]}
}
\]
commute.
\end{lem}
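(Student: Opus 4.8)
The plan is to deduce both assertions from the orthogonality of the bijective-on-objects/full-and-faithful factorisation system on $\CAT$ (Lemma~\ref{lem:bo-ff-factorisation}), applied to the factorisations that define $\thr(U)$ and $\thr(U')$ in Definition~\ref{defn:str-objects}. No genuinely hard step is involved; the whole proof is bookkeeping with precomposition functors together with one appeal to orthogonality.

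First I would check that the square claimed to commute really does. By the factorisation defining $\thr(U')$ we have $J(U') \of \str(U') = (U')^* \of \curryhi$, and similarly $J(U) \of \str(U) = U^* \of \curryhi$. Since $U \of Q = U'$, precomposition is contravariantly functorial, so $Q^* \of U^* = (U \of Q)^* = (U')^*$ as functors $[\cat{B},\cat{C}] \to [\cat{M}',\cat{C}]$. Hence both legs of the square equal $(U')^* \of \curryhi \from \cat{A} \to [\cat{M}',\cat{C}]$, and the square commutes. This is a pure diagram chase; the only thing to be careful about is the direction of the composite, i.e. that it is $Q^* \of U^* = (U\of Q)^*$ and not the other way round.

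Second, for the existence and uniqueness of $\str(Q)$, I would observe that in the commuting square just established the left-hand vertical $\str(U) \from \cat{A} \to \thr(U)$ is bijective on objects and the right-hand vertical $J(U') \from \thr(U') \to [\cat{M}',\cat{C}]$ is full and faithful, both directly from Definition~\ref{defn:str-objects}. By Lemma~\ref{lem:bo-ff-factorisation} we have $\str(U) \perp J(U')$, so there is a unique fill-in $\str(Q) \from \thr(U) \to \thr(U')$ satisfying $\str(Q) \of \str(U) = \str(U')$ and $J(U') \of \str(Q) = Q^* \of J(U)$, which is exactly the functor and the two commutativities required. The ``main obstacle'', such as it is, is simply identifying which maps lie in the left class and which in the right class of the factorisation system so that the fill-in lemma applies.

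If an explicit description were wanted instead of the abstract appeal to orthogonality, I would note that $\thr(U)$ has the objects of $\cat{A}$, with a morphism $a' \to a$ a natural transformation $\lpair a', U-\rpair \to \lpair a, U-\rpair$, and similarly for $\thr(U')$ with $U'-$ in place of $U-$; then $\str(Q)$ is the identity on objects and sends such a natural transformation to its whiskering with $Q$, namely $\lpair a', UQ-\rpair \to \lpair a, UQ-\rpair$, which is $\lpair a', U'-\rpair \to \lpair a, U'-\rpair$ since $UQ = U'$. Functoriality of this assignment and the two triangle identities are then immediate, giving an alternative, equally routine, proof; but the orthogonality argument is cleaner and I would present that one.
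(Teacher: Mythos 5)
Your proof is correct and follows essentially the same route as the paper's: both verify the commutativity of the outer square by reducing each leg to $(U')^*\of\curryhi$ via the defining factorisations and $U\of Q=U'$, and both then obtain $\str(Q)$ as the unique diagonal fill-in from the bijective-on-objects/full-and-faithful factorisation system (Lemma~\ref{lem:bo-ff-factorisation}). The explicit whiskering description you sketch at the end is likewise exactly what the paper records afterwards in Definition~\ref{defn:str-morphisms}.
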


\begin{proof}
In the diagram
\[
\xymatrix{
{\cat{A}} \ar[rr]^{\str(U')} \ar[dd]_{\str(U)}\ar[dr]^{\curryhi} & & {\thr(U')}\ar[dd]^{J(U')} \\
& {[\cat{B},\cat{C}]}\ar[d]_{U^*}\ar[dr]^{U'^*} & \\
{\thr(U)} \ar[r]_{J(U)} & {[\cat{M}, \cat{C}]} \ar[r]_{Q^*} & {[\cat{M'}, \cat{C}],}
} 
\]
the upper right triangle and lower left quadrilateral commute by definition of $\str$ (Definition~\ref{defn:str-objects}), and the lower right triangle commutes since $ U \of Q = U'$. Hence the outer square commutes.

The existence and uniqueness of $\str(Q) \from \thr(U) \to \thr(U')$ then follows from the fact that $\str(U)$ is bijective on objects and $J(U')$ is full and faithful, and the fact that the bijective-on-objects and full-and-faithful functors form a factorisation system on $\CAT$ (Lemma~\ref{lem:bo-ff-factorisation}).
\end{proof}

\begin{defn}
\label{defn:str-morphisms}
Let $\lpair -, - \rpair \from \cat{A} \times \cat{B} \to \cat{C}$ be an aritation and let $Q$ be a morphism from $ U' \from \cat{M}' \to \cat{B}$ to $U \from \cat{M} \to \cat{B}$ in $\catover{\cat{B}}$. Then $\str(Q) \from \thr(U) \to \thr(U')$ is defined as in Lemma~\ref{lem:str-well-defined}. Explicitly, $\str(Q)$ is the identity on objects, and sends a natural transformation $\gamma \from \lpair a , U- \rpair \to \lpair a', U - \rpair$ to the natural transformation
\[
\gamma Q \from \lpair a, U \of Q - \rpair = \lpair a, U' - \rpair \to \lpair a', U \of Q - \rpair = \lpair a', U' - \rpair.
\]
\end{defn}

\begin{prop}
Given an aritation $\lpair -, - \rpair \from \cat{A} \times \cat{B} \to \cat{C}$, the assignments denoted $\str$ in Definitions~\ref{defn:str-objects} and~\ref{defn:str-morphisms} together define a functor $\str \from \catover{\cat{B}} \to \proth(\cat{A})^{\op}$.
\end{prop}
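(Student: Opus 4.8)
The plan is to verify the two functoriality axioms for $\str$ — preservation of identities and of composition — using the uniqueness clause in the bijective-on-objects/full-and-faithful factorisation system on $\CAT$ (Lemma~\ref{lem:bo-ff-factorisation}), exactly in the style of the proof of Lemma~\ref{lem:str-well-defined}. Recall that for $(U \from \cat{M} \to \cat{B}) \in \catover{\cat{B}}$ the functor $\str(U) \from \cat{A} \to \thr(U)$ is bijective on objects and $J(U) \from \thr(U) \to [\cat{M},\cat{C}]$ is full and faithful, and that for a morphism $Q$ the functor $\str(Q)$ is characterised as the unique functor making the diagram in Lemma~\ref{lem:str-well-defined} commute, i.e.\ by the equations $\str(Q) \of \str(U) = \str(U')$ and $J(U') \of \str(Q) = Q^* \of J(U)$.

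For identities: given $U$, the morphism $\id_U$ is the identity functor on $\cat{M}$, so $\id_{\cat{M}}^* = \id_{[\cat{M},\cat{C}]}$, and then $\id_{\thr(U)}$ trivially satisfies $\id_{\thr(U)} \of \str(U) = \str(U)$ and $J(U) \of \id_{\thr(U)} = \id_{[\cat{M},\cat{C}]}^* \of J(U)$. Since $\str(U)$ is bijective on objects and $J(U)$ is full and faithful, a functor satisfying these equations is unique, hence $\str(\id_U) = \id_{\thr(U)}$.

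For composition: let $Q \from U' \to U$ and $Q' \from U'' \to U'$ be morphisms in $\catover{\cat{B}}$, so that $Q \of Q' \from U'' \to U$ and $\str(Q \of Q') \from \thr(U) \to \thr(U'')$, while $\str(Q') \of \str(Q) \from \thr(U) \to \thr(U'')$ as well. I would check that the latter satisfies the equations characterising the former. Using $(Q \of Q')^* = Q'^* \of Q^*$ on functor categories together with the defining equations for $\str(Q)$ and $\str(Q')$, one computes
\[
J(U'') \of \str(Q') \of \str(Q) = Q'^* \of J(U') \of \str(Q) = Q'^* \of Q^* \of J(U) = (Q \of Q')^* \of J(U),
\]
and $\str(Q') \of \str(Q) \of \str(U) = \str(Q') \of \str(U') = \str(U'')$. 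Since $\str(U)$ is bijective on objects and $J(U'')$ is full and faithful, uniqueness of the fill-in forces $\str(Q \of Q') = \str(Q') \of \str(Q)$, which is precisely functoriality with values in the opposite category $\proth(\cat{A})^{\op}$.

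I expect no genuine obstacle here: given Lemma~\ref{lem:str-well-defined}, the statement reduces to bookkeeping, and indeed the same two axioms can be read straight off the explicit whiskering formula of Definition~\ref{defn:str-morphisms} — $\str(\id_U)$ sends $\gamma$ to $\gamma\,\id_{\cat{M}} = \gamma$, and $\str(Q \of Q')$ sends $\gamma$ to $\gamma(Q \of Q') = (\gamma Q)Q' = \str(Q')\bigl(\str(Q)(\gamma)\bigr)$. The only point requiring a moment's care is the variance: it is the contravariance of $(-)^*$ in the functor-category argument that makes $\str$ land in $\proth(\cat{A})^{\op}$ rather than $\proth(\cat{A})$.
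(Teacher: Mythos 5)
Your proof is correct and follows essentially the same route as the paper: the paper also establishes $\str(Q \of Q') = \str(Q') \of \str(Q)$ by observing that both functors are diagonal fill-ins for the same outer square (using that $\str(U)$ is bijective on objects and $J(U'')$ is full and faithful) and invoking uniqueness of fill-ins. Your additional explicit check of identities and the whiskering sanity check are fine but not needed beyond what the paper records.
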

\begin{proof}
We must show that if we have morphisms
\[
\xymatrix{
\cat{M}'' \ar[r]^{Q'}\ar[dr]_{U''} & \cat{M}'\ar[r]^Q\ar[d]_{U'} & \cat{M}\ar[dl]^U \\
& \cat{B} &
}
\]
	in $\catover{\cat{B}}$, then $\str(Q \of Q') = \str(Q') \of \str(Q)$. The diagram
\[
\xymatrix{
{\cat{A}}\ar[rrr]^{\str(U'')}\ar[dr]^{\str(U')} \ar[ddd]_{\str(U)}  & & & {\thr(U'')}\ar[ddd]^{J(U'')} \\
& {\thr(U')}\ar[dr]_{J(U')}\ar[urr]_{\str(Q')} & & & \\
& & {[\cat{M}',\cat{C}]}\ar[dr]^{Q'^*} & \\
{\thr(U)}\ar[uur]_{\str(Q)}\ar[r]_{J(U)}  & {[\cat{M},\cat{C}]}\ar[rr]_{(Q \of Q')^*}\ar[ur]^{Q^*} & & {[\cat{M}'', \cat{C}]}
}
\]
commutes, and so $\str(Q') \of \str(Q)$ is a diagonal fill-in for the outer square. But so is $\str(Q \of Q')$ by definition, so by uniqueness, $\str(Q \of Q') = \str(Q') \of \str(Q)$.
\end{proof}

Let us examine the structure functor in more detail for the aritations defined in Examples~\ref{ex:canon-aritation} and~\ref{ex:fin-prod-aritation}.

\begin{ex}
Let $\cat{B}$ be a locally small category and consider the aritation from Example~\ref{ex:canon-aritation}. This aritation gives rise to a structure functor
\[
\str \from \catover{\cat{B}} \to \proth(\cat{B}^{\op})^{\op};
\]
let us examine what this functor does explicitly. Given $U \from \cat{M} \to \cat{B}$, the category $\thr(U)$ has the same objects as $\cat{B}$ and a morphism $b \to b'$ in $\thr(U)$ is a natural transformation
\[
\cat{B}(b, U-) \to \cat{B}(b', U-),
\]
and $\str(U) \from \cat{B}^{\op} \to \thr(U)$ sends $f \from b' \to b$ to $f^* \from \cat{B}(b, U-) \to \cat{B}(b', U-)$.
\end{ex}

\begin{ex}
Let $\cat{B}$ be a finite product category. Then the aritation from Example~\ref{ex:fin-prod-aritation} gives rise to a structure functor
\[
\str \from \catover{\cat{B}} \to \proth(\fin^{\op})^{\op}.
\]
Given $U \from \cat{M} \to \cat{B}$, the category $\thr(U)$ has the natural numbers as objects, and a morphism $n \to m$ in $\thr(U)$ is a natural transformation $U^n \to U^m$.
\end{ex}

\section{The structure--semantics adjunction}
\label{sec:str-sem-adj}

Throughout this section, fix an aritation $\lpair -, - \rpair \from \cat{A} \times \cat{B} \to \cat{C}$. We show that for any such aritation, $\str$ does indeed provide a left adjoint for $\sem$. We do this by establishing, for $L \in \proth(\cat{A})$ and $U \in \catover{\cat{B}}$, a bijection of hom-sets
\[
\catover{\cat{B}}(U, \sem(L)) \iso \proth(\cat{A})(L, \str(U))
\]
that is natural in $L$ and $U$.

\begin{lem}
Let
\[
\xymatrix{
\cat{M}\ar[rr]^R\ar[dr]_{U} && \mod(L)\ar[dl]^{\sem(L)} \\
& \cat{B} &
}
\]
be a morphism in $\catover{\cat{B}}$, where $(U \from \cat{M} \to \cat{B}) \in \catover{\cat{B}}$, and $(L \from \cat{A} \to \cat{L}) \in \proth(\cat{A})$. Then for every $l \from La' \to La$ in $\cat{L}$, there is a natural transformation
\[
\Gamma^{R(-)} (l) \from \lpair a', U - \rpair \to \lpair a, U- \rpair
\]
with components
\[
\Gamma^{Rm} (l) \from \Gamma^{Rm} (La') =  \lpair a', U m \rpair \to \Gamma^{Rm} (La) =  \lpair a, U m \rpair.
\]
\end{lem}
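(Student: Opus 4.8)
The plan is to unwind the definition of $\mod(L)$ as a strict pullback in $\CAT$ (Definition~\ref{defn:sem-objects}) and to read off the required natural transformation from the functoriality of $R$ together with the naturality of the natural-transformation datum carried by each morphism $Rf$.

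First I would recall the explicit description of $\mod(L)$ and its morphisms. Since $\mod(L)$ is the strict pullback of $\currylo \from \cat{B} \to [\cat{A},\cat{C}]$ along $L^* \from [\cat{L},\cat{C}] \to [\cat{A},\cat{C}]$, an object $x$ is a pair $(d^x,\Gamma^x)$ with $\Gamma^x \of L = \lpair -, d^x\rpair$, and a morphism $x \to y$ is a pair $(h,\alpha)$ consisting of $h \from d^x \to d^y$ in $\cat{B}$ and a natural transformation $\alpha \from \Gamma^x \to \Gamma^y$ with $L^*\alpha = \currylo(h)$, i.e.\ $\alpha_{La''} = \lpair a'', h\rpair$ for every $a'' \in \cat{A}$ (this determines $\alpha$ uniquely, as in Definition~\ref{defn:models-explicit}, since $L$ is bijective on objects). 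Next, because $\sem(L) \of R = U$, each model $Rm$ has underlying object $d^{Rm} = Um$, so $\Gamma^{Rm} \of L = \lpair -, Um\rpair$; in particular $\Gamma^{Rm}(La') = \lpair a', Um\rpair$ and $\Gamma^{Rm}(La) = \lpair a, Um\rpair$, so for the fixed $l \from La' \to La$ in $\cat{L}$ the arrow $\Gamma^{Rm}(l) \from \lpair a', Um\rpair \to \lpair a, Um\rpair$ is exactly the proposed $m$-component. I would thus set $\Gamma^{R(-)}(l)_m := \Gamma^{Rm}(l)$.

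It then remains to verify naturality. Given $f \from m \to m'$ in $\cat{M}$, the morphism $Rf \from Rm \to Rm'$ of $\mod(L)$ is, by the pullback description, a pair $(Uf,\alpha^f)$ with $\alpha^f \from \Gamma^{Rm} \to \Gamma^{Rm'}$ satisfying $\alpha^f_{La''} = \lpair a'', Uf\rpair$ for all $a''$. Applying naturality of $\alpha^f$ to the morphism $l \from La' \to La$ of $\cat{L}$ yields $\alpha^f_{La} \of \Gamma^{Rm}(l) = \Gamma^{Rm'}(l) \of \alpha^f_{La'}$, that is $\lpair a, Uf\rpair \of \Gamma^{R(-)}(l)_m = \Gamma^{R(-)}(l)_{m'} \of \lpair a', Uf\rpair$, which is precisely the naturality square for $\Gamma^{R(-)}(l)$ at $f$. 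There is no substantial obstacle here; the only point requiring care is to use the strict-pullback description of morphisms of $\mod(L)$, so that $Rf$ carries the natural transformation $\alpha^f$ whose naturality at $l$ does all the work.
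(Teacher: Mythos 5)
Your proof is correct and is essentially the paper's argument: the paper simply observes that $Rf$ is an $L$-model homomorphism with underlying map $Uf$ and invokes the defining condition of Definition~\ref{defn:models-explicit}\bref{part:models-explicit-homomorphisms}, which is exactly the commuting square you obtain by applying naturality of $\alpha^f$ at $l$. Your extra step of re-deriving that condition from the strict-pullback description of morphisms of $\mod(L)$ is just an unpacking of that same definition, so there is no substantive difference.
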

\begin{proof}
We must show that for each morphism $f \from m \to m'$ in $\cat{M}$, the square
\[
\xymatrix{
{\lpair a', Um \rpair} \ar[r]^{\lpair a', Uf \rpair} \ar[d]_{\Gamma^{Rm}(l)} & {\lpair a', Um' \rpair} \ar[d]^{\Gamma^{Rm'} (l)} \\
{\lpair a, Um \rpair} \ar[r]_{\lpair a, Uf \rpair} & {\lpair a, Um' \rpair}
}
\]
commutes. But since $\sem (L) \of R = U$, we know that $Uf$ is the underlying map of the $L$-model homomorphism $Rf \from Rm \to Rm'$. So the commutativity of this square follows from the definition of $L$-model homomorphism (Definition~\ref{defn:models-explicit}\bref{part:models-explicit-homomorphisms}).
\end{proof}

We now construct, for $U \in \catover{\cat{B}}$ and $L \in \proth(\cat{A})$, a function (in fact a bijection)
\[
\Psi_{U,L} \from \catover{\cat{B}}(U, \sem(L)) \to \proth(L, \str(U)).
\]

\begin{defn}
\label{defn:adj-bij}
Let
\[
\xymatrix{
\cat{M}\ar[rr]^R\ar[dr]_{U} && \mod(L)\ar[dl]^{\sem(L)} \\
& \cat{B} &
}
\]
be a morphism in $\catover{\cat{B}}$, where $(U \from \cat{M} \to \cat{B}) \in \catover{\cat{B}}$, and $(L \from \cat{A} \to \cat{L}) \in \proth(\cat{A})$. Then we define $\Psi_{U, L} (R) \from \cat{L} \to \thr(U)$ as follows.
\begin{description}
\item [On objects:] For an arbitrary object $La \in \cat{L}$, define $\Psi_{U, L} (R)(La) = a$ (Recalling that $L$ is bijective on objects so any object of $\cat{L}$ is of this form for a unique $a$, and the objects of $\thr(U)$ are exactly the objects of $\cat{A}$);
\item [On morphisms:] Given $l \from La' \to La$ in $\cat{L}$, define $\Psi_{U, L} (R)(l)$ to be the natural transformation $\Gamma^{R(-)} (l) \from \lpair a', U-\rpair \to \lpair a , U- \rpair$ from the previous lemma.
\end{description}
It is clear that this is a functor $\cat{L} \to \thr(U)$. We will usually omit explicit mention of $U$ and $L$ and write $\Psi_{U,L} = \Psi$.
\end{defn}

\begin{lem}
For $U$, $L$ and $R$ as above, we have $\Psi (R) \of L = \str(U)$.
\end{lem}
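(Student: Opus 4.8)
The plan is to verify the claimed equality of functors $\cat{A} \to \thr(U)$ separately on objects and on morphisms, simply unwinding the definition of $\Psi(R)$ (Definition~\ref{defn:adj-bij}) against that of $\str(U)$ (Definition~\ref{defn:str-objects}).

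On objects there is nothing to do: since $L$ is bijective on objects, for each $a \in \cat{A}$ the object $La \in \cat{L}$ satisfies $\Psi(R)(La) = a$ by definition, while $\str(U)$ is the identity on objects; so both composites send $a$ to $a$.

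On morphisms, fix $f \from a' \to a$ in $\cat{A}$, so that $Lf \from La' \to La$ in $\cat{L}$. By definition $\Psi(R)(Lf) = \Gamma^{R(-)}(Lf)$, the natural transformation $\lpair a', U- \rpair \to \lpair a, U- \rpair$ whose component at $m \in \cat{M}$ is $\Gamma^{Rm}(Lf)$. The one substantive step is to identify these components. Since $Rm$ is an $L$-model, Definition~\ref{defn:models-explicit}\bref{part:models-explicit-models} gives $\Gamma^{Rm} \of L = \lpair -, d^{Rm} \rpair$; and since $R$ is a morphism over $\cat{B}$, i.e.\ $\sem(L) \of R = U$, we have $d^{Rm} = Um$. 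Hence $\Gamma^{Rm}(Lf) = (\Gamma^{Rm} \of L)(f) = \lpair f, Um \rpair$. Therefore $\Psi(R)(Lf)$ is the natural transformation with $m$-component $\lpair f, Um \rpair$, which is precisely $\str(U)(f) = \lpair f, U- \rpair$.

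I do not expect a genuine obstacle: the statement is pure bookkeeping with the definitions. The only point requiring a little care is that a natural transformation with codomain $\cat{C}$ is determined by its components, so that the component-wise agreement of $\Gamma^{R(-)}(Lf)$ with $\str(U)(f)$ suffices to conclude they are equal as morphisms of $\thr(U)$. Combining the object part and the morphism part gives $\Psi(R) \of L = \str(U)$.
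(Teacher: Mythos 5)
Your proof is correct and follows essentially the same route as the paper's: both check equality on objects trivially and then identify the components of $\Psi(R)(Lf)$ with $\lpair f, Um\rpair$ via the model condition $\Gamma^{Rm}\of L = \lpair -, d^{Rm}\rpair = \lpair -, Um\rpair$. Nothing to add.
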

\begin{proof}
It is clear that the two functors are equal on objects. Given $f \from a' \to a$ in $\cat{A}$, the natural transformation $\str(U)(f)  \from \lpair a', U- \rpair \to \lpair a, U- \rpair$ has components
\[
\lpair f, Um \rpair \from \lpair a' , Um \rpair \to \lpair a, Um \rpair,
\]
for each $m \in \cat{M}$, whereas $\Psi (R) (Lf)$ has components $ \Gamma^{Rm} (Lf)$ for $m \in \cat{M}$. But by the definition of $L$-algebra (Definition~\ref{defn:models-explicit}\bref{part:models-explicit-models}), we have $\Gamma^{Rm} \of L = \lpair - , d^{Rm} \rpair = \lpair -, Um \rpair$. Hence $\str(U)(f) = \Psi(R)(Lf)$, as required.
\end{proof}

We have constructed a mapping $\Psi \from \catover{\cat{B}}(U, \sem(L)) \to \proth(\cat{A})(L, \str(U))$; to establish that $\str \ladj \sem$, we must show that $\Psi$ is a bijection and is natural in $U$ and $L$.

\begin{lem}
\label{lem:adj-nat}
The mapping $\Psi_{U,L}$ is natural in $U$ and $L$.
\end{lem}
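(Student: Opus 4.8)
The plan is to unwind both sides of the claimed naturality squares and observe that, under the definition of $\Psi$ (Definition~\ref{defn:adj-bij}), each side is assembled componentwise out of the very same natural transformations $\Gamma^{R(-)}(l)$. Recall that $\str \from \catover{\cat{B}} \to \proth(\cat{A})^{\op}$ is to be left adjoint to $\sem$, so the content of the lemma is the commutativity of two squares. First, for a morphism $Q$ from $U' \from \cat{M}' \to \cat{B}$ to $U \from \cat{M} \to \cat{B}$ in $\catover{\cat{B}}$ and any $R \from \cat{M} \to \mod(L)$ with $\sem(L) \of R = U$, one must show
\[
\Psi_{U', L}(R \of Q) = \str(Q) \of \Psi_{U, L}(R) \from \cat{L} \to \thr(U').
\]
Second, for a morphism $P \from \cat{L}' \to \cat{L}$ in $\proth(\cat{A})$ (so $P \of L' = L$) and $R$ as above, one must show
\[
\Psi_{U, L'}(\sem(P) \of R) = \Psi_{U, L}(R) \of P \from \cat{L}' \to \thr(U).
\]

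For the first square I would note that every functor involved acts on objects by sending $La$ (respectively $L'a$) to $a$, so both composites agree on objects; the only thing to check is agreement on a morphism $l \from La' \to La$ of $\cat{L}$. By Definition~\ref{defn:adj-bij}, $\Psi_{U', L}(R \of Q)(l)$ is the natural transformation $\lpair a', U' - \rpair \to \lpair a, U' - \rpair$ whose component at $m \in \cat{M}'$ is $\Gamma^{R(Qm)}(l)$, using that $U' m = U(Qm)$. On the other side, $\str(Q)$ acts by whiskering with $Q$ (Definition~\ref{defn:str-morphisms}), so $\str(Q)\bigl(\Psi_{U,L}(R)(l)\bigr)$ is the whiskering of $\Gamma^{R(-)}(l)$ with $Q$, which has component $\Gamma^{R(Qm)}(l)$ at $m$. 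Hence the two natural transformations coincide.

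For the second square, again both composites send $L'a \mapsto a$ on objects (using $P(L'a) = La$). On a morphism $l' \from L'a' \to L'a$ of $\cat{L}'$, recall from Definition~\ref{defn:sem-morphisms} that $\sem(P)$ carries an $L$-model $x$ to the $L'$-model with underlying object $d^x$ and functor $\Gamma^x \of P$. Therefore the component at $m \in \cat{M}$ of $\Psi_{U, L'}(\sem(P) \of R)(l')$ is $\Gamma^{\sem(P)(Rm)}(l') = (\Gamma^{Rm} \of P)(l') = \Gamma^{Rm}(P l')$, which is precisely the component at $m$ of $\bigl(\Psi_{U,L}(R) \of P\bigr)(l') = \Psi_{U,L}(R)(P l') = \Gamma^{R(-)}(P l')$. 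This establishes the second square.

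No genuine difficulty arises: the only subtlety is keeping track of the variances — both $\str(Q)$ and $\sem(P)$ are reversed by the opposite category occurring in $\proth(\cat{A})^{\op}$ — together with recognising that, because $\Psi$ is defined at the level of components, whiskering and precomposition on the $\catover{\cat{B}}$ side match exactly the corresponding operations on the $\proth(\cat{A})$ side. After this lemma, exhibiting an explicit inverse to $\Psi$ and thereby concluding $\str \ladj \sem$ should be routine, so I expect no real obstacle in the present argument.
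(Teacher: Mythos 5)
Your proof is correct and follows essentially the same route as the paper's: both naturality squares are verified componentwise by unwinding Definitions~\ref{defn:adj-bij}, \ref{defn:str-morphisms} and~\ref{defn:sem-morphisms}, reducing each side to the same components $\Gamma^{R(Qm)}(l)$ and $\Gamma^{Rm}(Pl')$ respectively.
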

\begin{proof}
First we show $\Psi$ is natural in $U$. Let
\[
\xymatrix{
\cat{M}'\ar[rr]^Q\ar[dr]_{U'} && \cat{M}\ar[dl]^{U} \\
& \cat{B} &
}
\]
and
\[
\xymatrix{
\cat{M} \ar[rr]^{R}\ar[dr]_U & & \mod(L)\ar[dl]^{\sem(L)} \\
& \cat{B} &
}
\]
be morphisms in $\catover{\cat{B}}$. We must show that $\Psi_{U', L}(R \of Q) = \str (Q) \of \Psi_{U,L} (R)$. It is clear that these two functors $\cat{L} \to \thr(U')$ are equal on objects. Let $l \from La' \to La$ in $\cat{L}$. Then both $\Psi_{U', L}(R \of Q) (l)$ and $\str (Q) \of \Psi_{U,L} (R) (l)$ are natural transformations $\lpair a', U' - \rpair \to \lpair a, U' - \rpair$, and, taking the component at $m' \in \cat{M}'$, we have
\begin{align*}
(\str (Q) \of \Psi_{U,L} (R))_{m'} &= \Psi_{U, L} (R)_{Q(m')} && \text{(by Definition~\ref{defn:str-morphisms})}\\
&= \Gamma^{R \of Q (m')} (l) && \text{(by Definition~\ref{defn:adj-bij})} \\
&= \Psi_{U',L} (R \of Q) (l)_{m'}  && \text{(by Definition~\ref{defn:adj-bij})} \\
\end{align*}
as required. Now let
\[
\xymatrix{
\cat{L}'\ar[rr]^P && \cat{L} \\
& \cat{A}\ar[ul]^{L'}\ar[ur]_{L} &
}
\]
be a morphism in $\proth(\cat{A})$ and let $R \from U \to \sem(L)$ in $\catover{\cat{B}}$ as before. We must show that $\Psi_{U,L}(R) \of P = \Psi_{U, L'}(\sem(P) \of R)$. As before, these functors are clearly equal on objects. Let $l' \from L' a' \to L' a$. Then, taking the component at $m \in \cat{M}$, we have
\begin{align*}
\Psi_{U,L} (R) (Pl')_m &= \Gamma^{Rm}(Pl') && \text{(by Definition~\ref{defn:adj-bij})} \\
& = \Gamma^{Rm} \of P (l') && \\
& = \Gamma^{(\sem(P) \of R) m} (l') && \text{(by Definition~ \ref{defn:sem-morphisms})} \\
& = \Psi_{U, L'}(\sem(P) \of R)(l')_m && \text{(by Definition~ \ref{defn:adj-bij})}
\end{align*}
as required.
\end{proof}

We now construct an inverse
\[
\Theta_{U,L} \from \proth(A)(L, \str(U)) \to \catover{\cat{B}}(U, \sem(L))
\]
to $\Psi_{U,L}$.

\begin{defn}
\label{defn:adj-bij-inv}
Let
\[
\xymatrix{
\cat{L}\ar[rr]^S & & \thr(U) \\
& \cat{A}\ar[ul]^L \ar[ur]_{\str(U)}
}
\]
be a morphism in $\proth(\cat{A})$, where $(U \from \cat{M} \to \cat{B})\in \catover{\cat{B}}$. Define
\[
\Theta_{U,L} (S) \from \cat{M} \to \mod(L)
\]
as follows.
\begin{description}
\item [On objects:] Given $m \in \cat{M}$, define $d^{\Theta_{U,L} (S)(m)} = Um$ and, for $l \from La' \to La$ in $\cat{L}$, define 
\[
\Gamma^{ \Theta_{U,L} (S)(m)} (l) = S(l)_m \from \lpair a', Um \rpair \to \lpair a, Um \rpair.
\]
\item [On morphisms:] Given a morphism $h \from m \to m'$, we define $\Theta_{U,L} (S) (h) \from \Theta_{U,L} (S)(m) \to \Theta_{U,L} (S)(m')$ to be the $L$-model homomorphism with underlying morphism $Uh \from Um \to Um'$ in $\cat{B}$.
\end{description}
We will omit mention of $U$ and $L$ and write $\Theta_{U,L} = \Theta$ when it is convenient and unambiguous to do so.
\end{defn}
We must check that this definition makes sense.
\begin{lem}
The functor $\Theta_{U,L} (S) \from \cat{M} \to \mod(L)$ described in Definition~\ref{defn:adj-bij-inv} is well-defined.
\end{lem}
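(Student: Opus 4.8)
The plan is to verify the three things bundled into the assertion of well-definedness: that $\Theta_{U,L}(S)$ sends each object of $\cat{M}$ to a genuine object of $\mod(L)$, that it sends each morphism of $\cat{M}$ to a genuine morphism of $\mod(L)$, and that these assignments preserve identities and composition. Write $\Theta(S) = \Theta_{U,L}(S)$ for brevity. Since $S$ is a morphism of proto-theories we have $S \of L = \str(U)$, and in particular, as $\str(U)$ is the identity on objects, $S$ sends $La$ to $a$, so for $l \from La' \to La$ the arrow $S(l)$ is a morphism $a' \to a$ in $\thr(U)$, that is (via the full and faithful $J(U) \from \thr(U) \incl [\cat{M},\cat{C}]$) a natural transformation $\lpair a', U-\rpair \to \lpair a, U-\rpair$; I will use this description repeatedly.

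For the object part, I would note that $\Gamma^{\Theta(S)(m)}$ as described in Definition~\ref{defn:adj-bij-inv} is exactly the composite functor
\[
\cat{L} \toby{S} \thr(U) \toby{J(U)} [\cat{M},\cat{C}] \toby{\ev_m} \cat{C},
\]
since $S(l)_m$ is precisely the component at $m$ of the natural transformation $J(U)(S(l))$; being a composite of functors, it is a functor. To see that $(Um, \Gamma^{\Theta(S)(m)})$ is an object of $\mod(L)$, by the defining pullback (Definition~\ref{defn:sem-objects}) it remains to check $\Gamma^{\Theta(S)(m)} \of L = \currylo(Um) = \lpair -, Um\rpair$; but $\Gamma^{\Theta(S)(m)} \of L = \ev_m \of J(U) \of \str(U) = \ev_m \of U^* \of \curryhi$ by the commuting square of Definition~\ref{defn:str-objects}, and this sends $a$ to $\lpair a, Um\rpair$ and $f \from a' \to a$ to $\lpair f, Um\rpair$, which is precisely $\lpair -, Um\rpair$.

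For the morphism part, I would take $h \from m \to m'$ in $\cat{M}$ and check that there is a unique morphism $\Theta(S)(m) \to \Theta(S)(m')$ of $\mod(L)$ lying over $Uh \from Um \to Um'$. By Definition~\ref{defn:models-explicit}\bref{part:models-explicit-homomorphisms}, such a morphism exists iff for every $l \from La' \to La$ in $\cat{L}$ the square with horizontal edges $\lpair a', Uh\rpair$ and $\lpair a, Uh\rpair$ and vertical edges $\Gamma^{\Theta(S)(m)}(l) = S(l)_m$ and $\Gamma^{\Theta(S)(m')}(l) = S(l)_{m'}$ commutes; but this is exactly the naturality square of the natural transformation $J(U)(S(l)) \from \lpair a', U-\rpair \to \lpair a, U-\rpair$ at $h$, hence commutes. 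Uniqueness is automatic: a morphism of $\mod(L)$ is, by the pullback, a pair consisting of a morphism of $\cat{B}$ and a natural transformation in $[\cat{L},\cat{C}]$ whose restriction along $L$ is the $\currylo$-image of the former; since $L$ is bijective on objects this restriction determines all components of the natural transformation, so a morphism of $\mod(L)$ is determined by its underlying $\cat{B}$-morphism. Consequently $\Theta(S)$ preserves identities and composites, because $U$ does and the underlying $\cat{B}$-morphisms match; and $\sem(L) \of \Theta(S) = U$ by construction, so $\Theta(S)$ is even a morphism in $\catover{\cat{B}}$.

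I do not expect a real obstacle here: the argument is a direct unwinding of the description of $\thr(U)$ as a full subcategory of $[\cat{M},\cat{C}]$ and of $\mod(L)$ as a strict pullback in $\CAT$. The only point worth flagging — and it is used twice — is that $L$ is bijective on objects, which is what makes $\Gamma^{\Theta(S)(m)}$ defined on all of $\cat{L}$ and what forces the uniqueness of an $L$-model homomorphism over a prescribed base morphism.
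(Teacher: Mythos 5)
Your proof is correct and follows essentially the same route as the paper's: the object part reduces to $S \of L = \str(U)$ (you phrase this via the commuting square of Definition~\ref{defn:str-objects}, the paper computes $S(Lf)_m = \str(U)(f)_m$ directly), and the homomorphism condition is exactly the naturality square of $S(l)$ at $h$. The extra remarks on uniqueness and functoriality are sound but not needed beyond what the paper records.
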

\begin{proof}
We must check that for $m \in \cat{M}$ the proposed definition of $\Theta(S)(m)$ is indeed an $L$-model, and that for $h \from m \to m'$, the map $Uh$ does give a homomorphism $\Theta (S)(m) \to \Theta (S)(m')$.

First we must show that for $f \from a' \to a$ in $\cat{A}$, we have
\[
\Gamma^{ \Theta (S)(m)} (Lf) = \lpair f, Um \rpair \from \lpair a', Um \rpair \to \lpair a, Um \rpair.
\]
But
\[
\Gamma^{ \Theta (S)(m)} (Lf) = S ( Lf)_m = \str(U) (f)_m = \lpair f, Um \rpair,
\]
since $S \of L = \str(U)$.

To check that $Uh$ gives a homomorphism of $L$-models, we must check that, for each $l \from La' \to La$, the square
\[
\xymatrix{
{\lpair a', Um \rpair}\ar[r]^{\lpair a', Uh \rpair}\ar[d]_{\Gamma^{\Theta (S)(m)}(l) = S (l)_m} & {\lpair a', Um' \rpair }\ar[d]^{\Gamma^{\Theta(S)(m')}(l) = S (l)_{m'}} \\
{\lpair a, Um \rpair}\ar[r]_{\lpair a, Uh \rpair} & {\lpair a, Um' \rpair, }
}
\]
commutes, but this is simply a naturality square for $S(l) \from \lpair a', U- \rpair \to \lpair a , U -  \rpair$.
\end{proof}

\begin{lem}
\label{lem:adj-bij}
The mappings 
\[
\Psi \from \catover{\cat{B}} ( U, \sem(L)) \to \proth(\cat{A})(L, \str(U))
\]
and 
\[
\Theta \from  \proth(\cat{A})(L, \str(U)) \to  \catover{\cat{B}} ( U, \sem(L))
\]
are inverse bijections.
\end{lem}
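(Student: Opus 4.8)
The plan is to verify directly that $\Psi_{U,L}$ and $\Theta_{U,L}$ are mutually inverse by unwinding the definitions; there is no genuine obstacle here, only bookkeeping, so the "hard part" is simply to organise the chase so that each identity becomes transparent.

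The one structural observation I would isolate first is this: since $L \from \cat{A} \to \cat{L}$ is bijective on objects, the functor $L^* \from [\cat{L},\cat{C}] \to [\cat{A},\cat{C}]$ is faithful and injective on objects, and hence so is its pullback $\sem(L) \from \mod(L) \to \cat{B}$. Concretely, a morphism of $\mod(L)$ is determined by its underlying morphism in $\cat{B}$, and an object of $\mod(L)$ lying over a given $b \in \cat{B}$ is determined by its functor $\Gamma \from \cat{L} \to \cat{C}$ (which in turn satisfies $\Gamma \of L = \lpair -, b\rpair$). This is exactly what makes the phrase ``the $L$-model homomorphism with underlying morphism $Uh$'' in Definition~\ref{defn:adj-bij-inv} unambiguous, and it is the only thing beyond routine substitution that the argument uses.

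Next I would check $\Theta_{U,L} \of \Psi_{U,L} = \id$. Given $R \from U \to \sem(L)$ in $\catover{\cat{B}}$ and $m \in \cat{M}$, the model $\Theta(\Psi(R))(m)$ has underlying object $Um = d^{Rm}$ (because $\sem(L)\of R = U$) and structure morphisms $\Gamma^{\Theta(\Psi(R))(m)}(l) = \Psi(R)(l)_m = \Gamma^{Rm}(l)$ for each $l \from La' \to La$; hence $\Theta(\Psi(R))(m) = Rm$ as $L$-models. On a morphism $h \from m \to m'$, both $\Theta(\Psi(R))(h)$ and $R(h)$ are $L$-model homomorphisms whose underlying morphism in $\cat{B}$ is $Uh$ (for $R(h)$ because $\sem(L)(R(h)) = U(h) = Uh$), so by the observation above they coincide; thus $\Theta(\Psi(R)) = R$. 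For the reverse composite $\Psi_{U,L} \of \Theta_{U,L} = \id$, take $S \from L \to \str(U)$ in $\proth(\cat{A})$. On objects, $\Psi(\Theta(S))$ sends $La$ to $a$, and so does $S$, since $S \of L = \str(U)$ and $\str(U)$ is the identity on objects; as $L$ is bijective on objects this covers every object of $\cat{L}$. On a morphism $l \from La' \to La$, the natural transformation $\Psi(\Theta(S))(l) = \Gamma^{\Theta(S)(-)}(l) \from \lpair a', U-\rpair \to \lpair a, U-\rpair$ has component $\Gamma^{\Theta(S)(m)}(l) = S(l)_m$ at each $m \in \cat{M}$, and a natural transformation is determined by its components, so $\Psi(\Theta(S))(l) = S(l)$. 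Hence $\Psi(\Theta(S)) = S$, and $\Psi_{U,L}$, $\Theta_{U,L}$ are inverse bijections. Together with the naturality established in Lemma~\ref{lem:adj-nat}, this is what yields the structure--semantics adjunction $\str \ladj \sem$.
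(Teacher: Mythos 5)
Your proof is correct and follows essentially the same route as the paper's: both composites are verified by direct unwinding, with faithfulness of $\sem(L)$ used to compare $\Theta\Psi(R)$ and $R$ on morphisms, and a componentwise check used for $\Psi\Theta(S) = S$. One small inaccuracy worth fixing: $L^*$ is faithful (a natural transformation between functors out of $\cat{L}$ is determined by its components, and $L$ is surjective on objects) but it is \emph{not} in general injective on objects, since two functors $F, G \from \cat{L} \to \cat{C}$ with $F \of L = G \of L$ can still differ on morphisms of $\cat{L}$ not in the image of $L$; your argument only ever uses faithfulness and the pullback description of objects of $\mod(L)$, so nothing else is affected.
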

\begin{proof}
Let
\[
\xymatrix{
\cat{M}\ar[rr]^{R}\ar[dr]_U && \mod(L)\ar[dl]^{\sem(L)} \\
& \cat{B} &
}
\]
be a morphism in $\catover{\cat{B}}$. We must show that $ \Theta \Psi (R) = R$. First we show they are equal on objects. Let $m \in \cat{M}$. Then
\[
d^{\Theta \Psi (R) (m)} = \sem(L) \of \Theta  \Psi (R) (m) = Um = \sem(L) \of R (m) = d^{Rm},
\]
so $\Theta  \Psi (R)(m)$ and $Rm$ have the same underlying object. Let $l \from La' \to La$ in $\cat{A}$. Then
\begin{align*}
\Gamma^{\Theta \Psi (R)(m)} (l) &= \Psi (R) (l)_m && \text{(by Definition~\ref{defn:adj-bij-inv})} \\
& = \Gamma^{Rm}(l) && \text{(by Definition~\ref{defn:adj-bij})}
\end{align*}
so $\Gamma^{\Theta \Psi (R)(m)} = \Gamma^{Rm}$.  Thus $\Theta \Psi (R)(m) = Rm$, as required.

Now, note that
\[
\sem(L) \of \Theta  \Psi (R) = U = \sem(L) \of R.
\]
Since $\sem(L)$ is faithful by construction, it follows that $\Theta \Psi (R)$ and $R$ are equal on morphisms, hence $\Theta \Psi (R) = R$.

Now let
\[
\xymatrix{
\cat{L}\ar[rr]^S && \thr(U) \\
& \cat{A}\ar[ul]^L\ar[ur]_{\str(U)} &
}
\]
be a morphism in $\proth(\cat{A})$. We must show that $ \Psi \Theta (S) = S$. Evidently they are equal on objects, since they are both morphisms $L \to \str(U)$ in $\proth(\cat{A})$. Suppose $l \from La' \to La$ in $\cat{L}$. Then, for $m \in \cat{M}$, we have
\begin{align*}
\Psi \Theta (S) (l) _m & = \Gamma^{\Theta(S)(m)} (l) && \text{(by Definition~\ref{defn:adj-bij})} \\
& = S(l)_m && \text{(by Definition~\ref{defn:adj-bij-inv})}
\end{align*}
as required. Hence $\Theta$ and $\Psi$ are mutually inverse.
\end{proof}

\begin{thm}
\label{thm:str-sem-adj}
We have an adjunction
\[
\xymatrix{
{\catover{\cat{B} }}\ar@<5pt>[r]_-{\perp}^-{\str}\ & {\proth(\cat{A})^{\op},}\ar@<5pt>[l]^-{\sem}
}
\]
called the \demph{structure--semantics adjunction} for the aritation $\lpair - , - \rpair$.
\end{thm}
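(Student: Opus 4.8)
The plan is to read the adjunction off directly from the work already done in this chapter: Lemmas~\bref{lem:adj-nat} and~\bref{lem:adj-bij}, together with the functoriality of $\str$ and $\sem$ established earlier, already contain everything, so only some bookkeeping remains.

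First I would invoke the standard hom-set criterion for adjunctions (IV.1 of~\cite{maclane71}): to say that $\str \from \catover{\cat{B}} \to \proth(\cat{A})^{\op}$ is left adjoint to $\sem \from \proth(\cat{A})^{\op} \to \catover{\cat{B}}$ is precisely to give, for each $U \in \catover{\cat{B}}$ and $L \in \proth(\cat{A})^{\op}$, a bijection
\[
\proth(\cat{A})^{\op}(\str(U), L) \iso \catover{\cat{B}}(U, \sem(L))
\]
natural in $U$ and in $L$. Unwinding the opposite category, a morphism $\str(U) \to L$ in $\proth(\cat{A})^{\op}$ is the same thing as a morphism $L \to \str(U)$ in $\proth(\cat{A})$, so the left-hand side is $\proth(\cat{A})(L, \str(U))$; the mutually inverse maps $\Psi_{U,L}$ of Definition~\bref{defn:adj-bij} and $\Theta_{U,L}$ of Definition~\bref{defn:adj-bij-inv} then supply the bijection, this being exactly Lemma~\bref{lem:adj-bij}.

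It remains to verify naturality, which is precisely Lemma~\bref{lem:adj-nat}: it shows $\Psi_{U,L}$, and hence its inverse $\Theta_{U,L}$, is natural in $U$ and in $L$ as an object of $\proth(\cat{A})$, and reversing arrows this is the same as naturality in $L$ as an object of $\proth(\cat{A})^{\op}$. Since naturality in each variable separately yields naturality in the pair, the family $\{\Theta_{U,L}\}$ is a natural isomorphism between the two hom-functors on $(\catover{\cat{B}})^{\op} \times \proth(\cat{A})^{\op}$, which establishes the adjunction. (Alternatively one could exhibit the unit $\Theta_{U,\str(U)}(\id_{\str(U)}) \from U \to \sem(\str(U))$ and counit --- the latter being the morphism $L \to \str(\sem(L))$ of Remark~\bref{rem:ff-completeness-interpret} --- and check the triangle identities, but given the lemmas already proved the hom-set route is the most economical.)

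I do not expect any real obstacle here; the theorem is essentially a corollary of the preceding two lemmas. The only point needing care is the variance bookkeeping around $\proth(\cat{A})^{\op}$ --- in particular that the ``naturality in $L$'' of Lemma~\bref{lem:adj-nat} is exactly the naturality required on the $\proth(\cat{A})^{\op}$ side, and that it is $\str$, not $\sem$, that plays the role of left adjoint.
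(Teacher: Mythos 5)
Your proposal is correct and is exactly the paper's argument: the theorem is stated there as an immediate consequence of Lemmas~\ref{lem:adj-nat} and~\ref{lem:adj-bij}, via the same hom-set bijection $\Psi_{U,L}$/$\Theta_{U,L}$ with the same variance bookkeeping. (The only slip is in your parenthetical alternative: Remark~\ref{rem:ff-completeness-interpret} describes the counit for the Lawvere-theory adjunction specifically, not the general proto-theory one, but since you do not use that route it does not matter.)
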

\begin{proof}
This follows from Lemmas~\ref{lem:adj-nat} and~\ref{lem:adj-bij}.
\end{proof}

\section{Profunctor viewpoint}
\label{sec:profunctor}

In this section we explore another way of looking at proto-theories and their semantics for a given aritation.

\begin{defn}
Let $\cat{A}$ and $\cat{A}'$ be large categories. A \demph{profunctor} $M \from \cat{A} \proto \cat{A}'$ (also known as a \demph{module} or \demph{bimodule}) is a functor $M \from (\cat{A}')^{\op} \times \cat{A} \to \SET$. Given profunctors $M \from \cat{A} \proto \cat{A}'$ and $M' \from \cat{A}' \proto \cat{A}''$, the \demph{composite} profunctor $M' \tensor M \from \cat{A} \proto \cat{A}''$ is defined by the following coend:
\[
M' \tensor M (a'', a) = \int^{a' \in \cat{A}'} M' (a'', a') \times M (a', a)
\]
for $a'' \in \cat{A}''$ and $a \in \cat{A}$.
\end{defn}

\begin{prop}
There is a bicategory $\PROF$, with large categories as objects, profunctors as $1$-cells, and natural transformations as 2-cells. Given a large category $\cat{A}$, the identity profunctor on $\cat{A}$ is given by the hom-functor $\cat{A}(-,-) \from \cat{A}^{\op} \times \cat{A} \to \SET$.
\end{prop}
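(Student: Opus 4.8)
The plan is to verify the bicategory axioms for $\PROF$ directly, treating this as a standard but somewhat tedious bookkeeping exercise rather than a source of deep difficulty. First I would check that composition of profunctors is well-defined: given $M \from \cat{A} \proto \cat{A}'$ and $M' \from \cat{A}' \proto \cat{A}''$, the coend $\int^{a' \in \cat{A}'} M'(a'', a') \times M(a', a)$ exists in $\SET$ because $\SET$ is cocomplete and $\cat{A}'$ is (at worst) large, so the coend is a quotient of the large set $\coprod_{a'} M'(a'', a') \times M(a', a)$ — hence a (large) set, keeping us within the size conventions laid out in Section~\ref{sec:sets-background}. Functoriality of $M' \tensor M$ in $a''$ and $a$ follows from functoriality of coends in parameters. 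I would then define horizontal composition on 2-cells: given natural transformations $\alpha \from M_1 \to M_2$ and $\alpha' \from M_1' \to M_2'$, the induced map on coends $\alpha' \tensor \alpha$ is the unique map commuting with the coprojections, and naturality of this assignment together with the interchange law follows from the universal property of coends applied componentwise.

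Next I would exhibit the associator and unitors. For associativity, the key point is the Fubini theorem for coends: both $(M'' \tensor M') \tensor M$ and $M'' \tensor (M' \tensor M)$ are canonically isomorphic to the iterated coend $\int^{a', a''} M''(-, a'') \times M'(a'', a') \times M(a', -)$, and this canonical isomorphism is natural, giving the associator $\mathfrak{a}$. For the unit, I would invoke the co-Yoneda lemma: for a profunctor $M \from \cat{A} \proto \cat{A}'$, we have
\[
(M \tensor \cat{A}(-,-))(a', a) = \int^{a_0 \in \cat{A}} M(a', a_0) \times \cat{A}(a_0, a) \iso M(a', a),
\]
naturally in $a'$ and $a$, and similarly $\cat{A}'(-,-) \tensor M \iso M$; these supply the right and left unitors. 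The pentagon and triangle coherence axioms then hold because all the isomorphisms involved are the canonical comparison maps between (iterated) coends, and any diagram of such canonical maps commutes — this can be checked by tracing a general element $[(x', x)]$ of a representative coproduct through each path, or cited as the standard coherence for the bicategory of profunctors.

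The step I expect to be the main obstacle — or at least the one requiring the most care — is the size/foundational check: one must confirm that the coend defining composition stays within the universe of ``large sets'' (elements of collections indexed by the Grothendieck universe, as opposed to proper classes), since later chapters will want to manipulate $\PROF$ and its hom-categories. Because $\cat{A}, \cat{A}', \cat{A}''$ are large (their morphism collections are large sets, not mere classes) and each $M(a', a)$ is a set, the coproduct $\coprod_{a' \in \ob(\cat{A}')} M'(a'', a') \times M(a', a)$ is indexed by a large set and each summand is a set, so the coproduct and its quotient are large sets; thus $\PROF$ genuinely has large categories as objects and profunctors (which are themselves large) as $1$-cells, with hom-categories $\PROF(\cat{A}, \cat{A}')$ being (possibly huge) functor categories $[\cat{A}'^{\op} \times \cat{A}, \SET]$. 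I would remark on this explicitly but not belabor it. Everything else — the verification that identities compose correctly, that the coherence $2$-cells are invertible, and that horizontal and vertical composition of $2$-cells interchange — is routine and I would either relegate it to a ``straightforward check'' or cite the standard reference (e.g.\ B\'enabou's original treatment, or Borceux~\cite{borceux94v2}) for the construction of the bicategory of profunctors.

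\begin{proof}
The construction is standard; see for example~\cite{borceux94v2}. We sketch the main points. Composition of profunctors is well-defined because $\SET$ is cocomplete, so the coend $M' \tensor M$ exists; since the indexing category $\cat{A}'$ is large and each hom-set of profunctor values is a set, $M' \tensor M$ is again a profunctor valued in (large) sets. Functoriality in the two variable slots is inherited from that of coends. Horizontal composition of natural transformations is induced by the universal property of coends, and the interchange law with vertical composition follows from the same universal property. The associativity constraint is the canonical isomorphism
\[
(M'' \tensor M') \tensor M \iso M'' \tensor (M' \tensor M)
\]
provided by the Fubini theorem for coends, both sides being canonically the iterated coend over $\cat{A}' $ and $\cat{A}''$. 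The left and right unit constraints are the canonical isomorphisms
\[
\cat{A}'(-,-) \tensor M \iso M \iso M \tensor \cat{A}(-,-)
\]
given by the co-Yoneda lemma, where $M \from \cat{A} \proto \cat{A}'$. The pentagon and triangle coherence axioms hold because every $2$-cell in sight is a canonical comparison between (iterated) coends, and any diagram built from such canonical comparisons commutes. This completes the verification that $\PROF$ is a bicategory with the stated structure.
\end{proof}
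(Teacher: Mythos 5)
Your proposal is correct, and it actually does more work than the paper, which proves this proposition purely by citation: "This is well-known. The bicategory $\PROF$ was first defined by B\'enabou... see also Section~7.8 of~\cite{borceux94v1}." Your sketch supplies the verification the paper omits, and the key points are all right: existence of the composition coend from cocompleteness of $\SET$, the associator from Fubini, the unitors from the co-Yoneda lemma (your variance bookkeeping in $\int^{a_0} M(a', a_0) \times \cat{A}(a_0, a) \iso M(a', a)$ checks out against the paper's conventions for $M \from \cat{A} \proto \cat{A}'$ as a functor $(\cat{A}')^{\op} \times \cat{A} \to \SET$), and the size remark is apt given the paper's universe conventions and its later acknowledgement that $\PROF(\cat{C},\cat{A})$ is huge. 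The one thing I would tighten is the coherence step: "any diagram of canonical comparison maps commutes" is not a theorem one can invoke for free, so either trace the pentagon on elements as you suggest or lean fully on the citation for that part. Also note that the relevant reference is Borceux volume~1 (Section~7.8, distributors), i.e.\ \cite{borceux94v1}, not \cite{borceux94v2}, which the paper uses for monads.
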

\begin{proof}
This is well-known. The bicategory $\PROF$ was first defined by B\'enabou in~\cite{benabou73}; see also Section~7.8 of~\cite{borceux94v1} for an overview.
\end{proof}

\begin{prop}
There is a canonical bicategory homomorphism $\mathcal{P} \from \CAT^{\op} \to \PROF$ that is the identity on objects, sends a functor $F \from \cat{C} \to \cat{D}$ to the profunctor $\mathcal{P}(F) \from \cat{D} \proto \cat{C}$ given by $\cat{D}(F-, -) \from \cat{C}^{\op} \times \cat{D} \to \SET$, and sends a natural transformation $\alpha \from F \to G$ to the natural transformation $\cat{D}(\alpha - , -) \from \cat{D}(F-,-) \to \cat{D}(G-, -)$. 
\end{prop}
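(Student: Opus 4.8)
The plan is to verify directly that the assignments described define a homomorphism of bicategories $\mathcal{P} \from \CAT^{\op} \to \PROF$. Recall that a bicategory homomorphism (pseudofunctor) consists of: the action on objects (here the identity), the action on $1$-cells (sending $F \from \cat{C} \to \cat{D}$ to $\cat{D}(F-,-)$), the action on $2$-cells (sending $\alpha$ to $\cat{D}(\alpha-,-)$), together with coherent invertible $2$-cells $\mathcal{P}(G) \tensor \mathcal{P}(F) \iso \mathcal{P}(G \of F)$ (compositor) and $\cat{A}(-,-) \iso \mathcal{P}(\id_{\cat{A}})$ (unitor), subject to the usual associativity and unit axioms. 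The key point to keep straight is the variance: since the source is $\CAT^{\op}$, a composite $\cat{C} \toby{F} \cat{D} \toby{G} \cat{E}$ in $\CAT$ is a composite $\cat{E} \to \cat{C}$ in $\CAT^{\op}$, and it must be sent to the composite profunctor $\mathcal{P}(G) \tensor \mathcal{P}(F) \from \cat{E} \proto \cat{C}$, which is $\mathcal{P}(F) $ applied after $\mathcal{P}(G)$ in the coend order as defined above.

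First I would check that $\mathcal{P}$ is well-defined on $1$-cells and $2$-cells: $\cat{D}(F-,-)$ is a functor $\cat{C}^{\op} \times \cat{D} \to \SET$, hence a profunctor $\cat{D} \proto \cat{C}$, and $\cat{D}(\alpha-,-)$ is a natural transformation between the appropriate profunctors; functoriality of this assignment in $\alpha$ (respecting vertical composition and identities of $2$-cells) is immediate from functoriality of hom. Next I would construct the unitor: $\mathcal{P}(\id_{\cat{A}}) = \cat{A}(\id-,-) = \cat{A}(-,-)$, which is literally the identity profunctor on $\cat{A}$, so the unitor is the identity $2$-cell. Then the compositor: for $F \from \cat{C} \to \cat{D}$ and $G \from \cat{D} \to \cat{E}$, I must exhibit an invertible natural transformation
\[
\mathcal{P}(G) \tensor \mathcal{P}(F)\ (e, c) = \int^{d \in \cat{D}} \cat{E}(Gd, e) \times \cat{D}(Fc, d) \;\iso\; \cat{E}(GFc, e) = \mathcal{P}(GF)(e,c).
\]
This is precisely an instance of the co-Yoneda lemma (density formula): the coend $\int^{d} \cat{E}(Gd,e) \times \cat{D}(Fc,d)$ collapses, via the map induced by $(\psi \from Gd \to e,\ \phi \from Fc \to d) \mapsto \psi \of G\phi$, to $\cat{E}(GFc,e)$, naturally in $c$ and $e$. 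I would cite the co-Yoneda lemma here rather than re-prove it.

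Finally I would verify the coherence axioms. The left and right unit coherence axioms reduce to the statement that composing with the identity profunctor (via its defining co-Yoneda isomorphism) agrees with the compositor when one of the two functors is an identity; since both the unitor and the relevant structure maps in $\PROF$ are the canonical co-Yoneda isomorphisms, this is a routine diagram chase using uniqueness of maps out of a coend. The associativity (pentagon) axiom likewise amounts to the compatibility of the two ways of collapsing a nested double coend $\int^{d}\int^{d'} \cat{E}'(Hd',e) \times \cat{E}(Gd,d') \times \cat{D}(Fc,d)$ down to $\cat{E}'(HGFc,e)$, both of which send a triple of morphisms to the appropriate composite $\cat{E}$-morphism; again this follows from the universal property of coends. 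I expect the main obstacle — though it is more bookkeeping than genuine difficulty — to be keeping the variance and the direction of the coherence cells consistent throughout (the $(-)^{\op}$ on the source means $\mathcal{P}$ is contravariant on $1$-cells, so one must be careful that the compositor goes $\mathcal{P}(G)\tensor\mathcal{P}(F) \to \mathcal{P}(GF)$ and not the other way), and confirming that the canonical co-Yoneda maps genuinely satisfy the bicategory axioms rather than their opposites. Since the statement asserts only existence of this homomorphism and it is flagged as well-known, I would present the unitor and compositor explicitly and then remark that the coherence conditions hold by the universal property of coends, omitting the full diagram chases.
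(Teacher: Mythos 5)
Your proposal is correct, but it takes a different route from the paper: the paper does not verify the axioms at all, instead deferring to Proposition~7.8.5 of Borceux's \emph{Handbook of Categorical Algebra}, vol.~1. Your direct verification is the standard content behind that citation, and the key points are all in place: the unitor is literally the identity because $\cat{A}(\id_{\cat{A}}-,-) = \cat{A}(-,-)$ is the paper's identity profunctor, the compositor is the co-Yoneda collapse $\int^{d}\cat{D}(Fc,d)\times\cat{E}(Gd,e)\iso\cat{E}(GFc,e)$, and the pentagon and unit axioms reduce to uniqueness of maps out of (iterated) coends. What your approach buys is self-containedness and an explicit description of the structure cells, which is genuinely useful later in the paper (e.g.\ when identifying Eilenberg--Moore objects in $\PROF$); what the citation buys is brevity for a fact the author regards as standard. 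Two small points of bookkeeping: with the paper's convention that the composite of $M\from\cat{A}\proto\cat{A}'$ followed by $M'\from\cat{A}'\proto\cat{A}''$ is written $M'\tensor M$, the composite you need is $\mathcal{P}(F)\tensor\mathcal{P}(G)$ evaluated at $(c,e)\in\cat{C}^{\op}\times\cat{E}$ rather than $\mathcal{P}(G)\tensor\mathcal{P}(F)$ at $(e,c)$ --- your coend is the right set, so this is purely notational, but worth aligning with the paper; and alongside the pentagon and unit axioms you should record that the compositor is natural in $F$ and $G$ (equivalently, compatible with whiskering), which again follows immediately from the explicit formula $(\phi,\psi)\mapsto\psi\of G\phi$.
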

\begin{proof}
This is essentially Proposition~7.8.5 of~\cite{borceux94v1}.
\end{proof}

Recall from Street~\cite{street72} that one can talk about monads in an arbitrary 2-category, or indeed \emph{bi}category, not just in $\CAT$. In particular we can consider monads in $\PROF$.
\begin{prop}
\label{prop:monad-in-prof}
The category $\proth(\cat{A})$ is equivalent to the category $\monad_{\PROF} (\cat{A})$ of monads on $\cat{A}$ in the bicategory $\PROF$.
\end{prop}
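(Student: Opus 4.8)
The plan is to exhibit the claimed equivalence by unpacking what a monad on $\cat{A}$ in the bicategory $\PROF$ is, and matching it with the data of a bijective-on-objects functor out of $\cat{A}$. Recall that a monad on $\cat{A}$ in $\PROF$ consists of a profunctor $M \from \cat{A} \proto \cat{A}$, i.e.\ a functor $M \from \cat{A}^{\op} \times \cat{A} \to \SET$, together with 2-cells (natural transformations) $\eta \from \cat{A}(-,-) \to M$ and $\mu \from M \tensor M \to M$ satisfying the usual unit and associativity axioms. So the first step is to produce, from a proto-theory $L \from \cat{A} \to \cat{L}$, the profunctor $\pow(L) = \cat{L}(L-,L-) \from \cat{A}^{\op} \times \cat{A} \to \SET$; since $L$ is bijective on objects, this is the hom-functor of $\cat{L}$ transported along $L$ on objects. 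The composition and identities of $\cat{L}$ furnish $\mu$ (using that $\cat{L}(L-,L-) \tensor \cat{L}(L-,L-)$ has, at $(a'',a)$, the coend $\int^{a'} \cat{L}(La',La'') \times \cat{L}(La,La')$, and composition in $\cat{L}$ induces a map out of this coend) and $\eta$ (from the action of $L$ on morphisms, $\cat{A}(a,a') \to \cat{L}(La,La')$). The monad axioms for $(\pow(L),\eta,\mu)$ are exactly associativity and unitality of composition in $\cat{L}$.

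Conversely, given a monad $(M,\eta,\mu)$ on $\cat{A}$ in $\PROF$, I would build a category $\cat{L}_M$ with the same objects as $\cat{A}$, hom-sets $\cat{L}_M(a,a') = M(a,a')$, composition given by $\mu$ evaluated at the relevant components (the coend map $M(a,a') \times M(a',a'') \to (M\tensor M)(a,a'') \to M(a,a'')$ — note one must be careful about variance, since $M(a',a'') = M(-,-)$ at $(a',a'')$ and the coend is over the middle variable), and identities from $\eta$. The monad axioms give exactly the category axioms, and there is a canonical functor $L_M \from \cat{A} \to \cat{L}_M$ that is the identity on objects and acts on morphisms via $\eta \from \cat{A}(-,-) \to M$; this is bijective on objects by construction. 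Morphisms of monads in $\PROF$ (natural transformations $M \to M'$ commuting with $\eta$ and $\mu$) correspond precisely to identity-on-objects functors $\cat{L}_M \to \cat{L}_{M'}$ commuting with the structure functors from $\cat{A}$, i.e.\ to morphisms in $\proth(\cat{A})$. One then checks the two constructions are mutually inverse up to natural isomorphism: the round trip starting from $L$ returns a proto-theory isomorphic to $L$ (the comparison being an isomorphism of categories, since both are identity on objects and agree on hom-sets via a canonical bijection), and the round trip from $M$ returns an isomorphic monad.

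I would organize this as: (i) spell out the definition of monad in $\PROF$ and of its morphisms, recalling that identity profunctors are hom-functors so that $\eta$ has the stated domain; (ii) define the functor $\proth(\cat{A}) \to \monad_{\PROF}(\cat{A})$ on objects and morphisms and verify the axioms; (iii) define the reverse functor and verify the axioms; (iv) exhibit the natural isomorphisms witnessing that these are quasi-inverse. Much of this is bookkeeping, so I would only sketch the verification that coherence for the coend-composition matches associativity in $\cat{L}$, and note that the Yoneda lemma (or the explicit description of the coend over a hom-functor) is what makes $M \tensor \cat{A}(-,-) \iso M \iso \cat{A}(-,-) \tensor M$, which is precisely the unit condition.

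The main obstacle I anticipate is purely notational: getting the variances right in the coend formula for profunctor composition and in the definition of $\mu$, and being careful that the ``composite $M' \tensor M$'' as written in the paper composes in the order that matches composition in $\cat{L}$ (the paper's convention is $M' \tensor M \from \cat{A} \proto \cat{A}''$ for $M \from \cat{A} \proto \cat{A}'$ and $M' \from \cat{A}' \proto \cat{A}''$, so $M(a',a)$ plays the role of a morphism $a \to a'$ and $\mu$ must send a pair of such to a morphism $a \to a''$). Once the translation dictionary between ``element of $M(a,a')$'' and ``morphism $a \to a'$ in $\cat{L}$'' is fixed, every axiom matches on the nose, and there is no deeper difficulty — in particular no size issues arise because $\PROF$ is already defined on large categories and proto-theories are allowed to be merely locally large, matching the $\SET$-valued profunctors used here.
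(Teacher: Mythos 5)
Your proposal is correct and matches the paper's approach: the paper simply cites Cheng's Corollary~3.8 for the identification of monads in $\PROF$ with identity-on-objects functors (the content you verify by hand) and then, exactly as you do, upgrades this to an equivalence using the fact that every bijective-on-objects functor is isomorphic in $\proth(\cat{A})$ to an identity-on-objects one. One small slip in your closing remarks: with the paper's conventions an element of $M(a',a)$ is a generalised morphism $a' \to a$, not $a \to a'$ (the identity profunctor is $\cat{A}(-,-)$ with the first, contravariant argument as domain), but your actual construction of $\cat{L}_M$, with composition $M(a,a') \times M(a',a'') \to M(a,a'')$, uses the correct variance, so nothing breaks.
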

\begin{proof}
This follows from Corollary~3.8 in Cheng~\cite{cheng11}, which shows that monads in $\PROF$ can be identified with \emph{identity}-on-objects functors, yielding an isomorphism between $\monad_{\PROF} (\cat{A})$ and the full subcategory of $\proth(\cat{A})$ of identity-on-objects proto-theories. Since every proto-theory on $\cat{A}$ is isomorphic in $\proth(\cat{A})$ to one that is the identity on objects, this yields the desired equivalence.
\end{proof}

For the rest of this section, fix an aritation $\lpair -,- \rpair \from \cat{A} \times \cat{B} \to \cat{C}$.

\begin{lem}
\label{lem:prof-monadic}
Let $(L \from \cat{A} \to \cat{L}) \in \proth(\cat{A})$. Then the functor 
\[
\mathcal{P}(L)_* \from \PROF(\cat{C}, \cat{L}) \to \PROF(\cat{C},\cat{A})
\]
is monadic.
\end{lem}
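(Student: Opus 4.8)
The plan is to recognise $\mathcal{P}(L)_*$ as (naturally isomorphic to) a restriction functor along a bijective-on-objects functor, and then quote Corollary~\ref{cor:rest-bo-monadic}. Unwinding the definition of the bicategory $\PROF$, we have $\PROF(\cat{C},\cat{L}) = [\cat{L}^{\op}\times\cat{C},\SET]$ and $\PROF(\cat{C},\cat{A}) = [\cat{A}^{\op}\times\cat{C},\SET]$, and I claim that under these identifications $\mathcal{P}(L)_*$ becomes the precomposition functor $(L^{\op}\times\id_{\cat{C}})^*$. Indeed, for a profunctor $M \from \cat{C}\proto\cat{L}$ and objects $a\in\cat{A}$, $c\in\cat{C}$, the coend defining the composite gives
\[
(\mathcal{P}(L)\tensor M)(a,c) = \int^{l\in\cat{L}} \cat{L}(La,l)\times M(l,c) \iso M(La,c),
\]
the last isomorphism being the density (co-Yoneda) formula for the presheaf $M(-,c)$; this is natural in $a$, $c$ and $M$, so $\mathcal{P}(L)_*(M)\iso M\of(L^{\op}\times\id_{\cat{C}})$ as claimed.

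Since $L$ is bijective on objects, so is $L^{\op}$, hence so is $L^{\op}\times\id_{\cat{C}}\from\cat{A}^{\op}\times\cat{C}\to\cat{L}^{\op}\times\cat{C}$, and both of these are large categories. As $\SET$ has coequalisers, Corollary~\ref{cor:rest-bo-monadic} applies to $(L^{\op}\times\id_{\cat{C}})^*$ and tells us that this functor --- and therefore $\mathcal{P}(L)_*$ --- is monadic as soon as it has a left adjoint. So it remains only to exhibit such a left adjoint. One option is the left Kan extension $\operatorname{Lan}_{L^{\op}\times\id_{\cat{C}}}$, which exists because the colimits in its pointwise formula are colimits of $\SET$-valued functors indexed by large categories (the relevant comma categories), and $\SET$ admits such colimits --- the same fact that makes composition of profunctors between large categories well defined in the first place. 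Alternatively, and perhaps more cleanly: post-composition by the profunctor $\cat{L}(-,L-)\from\cat{A}\proto\cat{L}$ is left adjoint to $\mathcal{P}(L)_*$, because $\cat{L}(-,L-)\ladj\cat{L}(L-,-)=\mathcal{P}(L)$ as $1$-cells of $\PROF$ (the standard conjoint--companion adjunction of a functor), and composition with an adjoint pair of $1$-cells yields an adjoint pair of functors between hom-categories. Either way $\mathcal{P}(L)_*$ has a left adjoint, and hence is monadic.

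There is no serious obstacle here; the only point requiring attention is the size bookkeeping in the construction of the left adjoint --- one must check that the comma categories involved are genuinely large (not properly huge) and that $\SET$, as the category of \emph{all} sets, really does carry the large colimits needed. Both are fine, and the latter is in any case already implicit in the earlier assertion that $\PROF$ is a bicategory whose objects are the large categories. Everything else --- the co-Yoneda identification of $\mathcal{P}(L)_*$ with restriction and the appeal to Corollary~\ref{cor:rest-bo-monadic} --- is routine.
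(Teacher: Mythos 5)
Your proof is correct and follows essentially the same route as the paper's: identify $\mathcal{P}(L)_*$ with restriction along the bijective-on-objects functor $L^{\op}\times\id_{\cat{C}}$, invoke Corollary~\ref{cor:rest-bo-monadic}, and obtain the required left adjoint as a left Kan extension (the paper uses exactly this Kan-extension argument, so your companion--conjoint alternative is a bonus rather than a divergence). The only additions are your explicit co-Yoneda verification of the identification, which the paper simply asserts, and the size remarks, which match the paper's.
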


\begin{proof}
We have $\PROF(\cat{C}, \cat{L}) = [\cat{L}^{\op} \times \cat{C}, \SET]$ and $\PROF(\cat{C}, \cat{A}) = [\cat{L}^{\op} \times \cat{A}, \SET]$, and viewed in this light the functor $\mathcal{P}(L)_*$ is
\[
(L^{\op} \times \id_{\cat{C}})^* \from [\cat{L}^{\op} \times \cat{C}, \SET] \to [\cat{A}^{\op} \times \cat{C}, \SET].
\]
Since $L^{\op} \times \id_{\cat{C}} \from \cat{A}^{\op} \times \cat{C} \to \cat{L}^{\op} \times \cat{C}$ is bijective on objects, it follows from Corollary~\ref{cor:rest-bo-monadic} that the functor above is monadic if and only if it has a left adjoint. But since $\cat{A}^{\op} \times \cat{C}$ is large and $\SET$ has large limits, it follows that left Kan extensions along $L^{\op} \times \id_{\cat{C}}$ exist, giving a left adjoint to $(L^{\op} \times \id_{\cat{C}})^*$.
\end{proof}

Corollary 8.1 of Street~\cite{street72} says that a 1-cell in a 2-category is monadic if and only if it is sent to a monadic functor by each covariant representable 2-functor. The corresponding result holds for bicategories as well, and it follows that $\mathcal{P}(L)\from \cat{L} \proto \cat{A}$ exhibits $\cat{L}$ as the Eilenberg--Moore object for the monad on $\cat{A}$ in $\PROF$ corresponding to $L$.

\begin{prop}
Let $(L \from \cat{A} \to \cat{L}) \in \proth(\cat{A})$. The square
\[
\xymatrix{
[\cat{L},\cat{C}] \ar[r]^-{\mathcal{P}}\ar[d]_{L^*} & \PROF(\cat{C}, \cat{L})\ar[d]^{\mathcal{P}(L)_*} \\
[\cat{A},\cat{C}] \ar[r]_-{\mathcal{P}} & \PROF(\cat{C},\cat{A}) 
}
\]
is a pullback.
\end{prop}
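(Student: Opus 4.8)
The plan is to exhibit the canonical comparison functor
\[
K \from [\cat{L},\cat{C}] \to \cat{Q}, \qquad F \mapsto \bigl(F \of L,\ \cat{C}(F-,-)\bigr)
\]
into the pullback category $\cat{Q}$ of the cospan as an isomorphism of categories; since the square commutes, this is exactly what it means for it to be a (strict) pullback in $\CAT$. First I would unwind the four functors concretely, exactly as in the proof of Lemma~\ref{lem:prof-monadic}: both copies of $\mathcal{P}$ act by $H \mapsto \cat{C}(H-,-)$, the functor $L^*$ is precomposition with $L$, and, under the co-Yoneda isomorphism, $\mathcal{P}(L)_*$ is the restriction functor $(L^{\op}\times\id_{\cat{C}})^* \from [\cat{L}^{\op}\times\cat{C},\SET]\to[\cat{A}^{\op}\times\cat{C},\SET]$. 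Making this identification explicit (consistent with the strict-pullback conventions used for semantics elsewhere in the thesis) is worthwhile, because with it the square commutes strictly: for $F \from \cat{L}\to\cat{C}$ one has $(L^{\op}\times\id_{\cat{C}})^*\cat{C}(F-,-) = \cat{C}(FL-,-) = \mathcal{P}(L^* F)$, and likewise on morphisms. An object of $\cat{Q}$ is then precisely a pair $(G, M)$ with $G \from \cat{A}\to\cat{C}$ and $M \from \cat{L}^{\op}\times\cat{C}\to\SET$ satisfying $\cat{C}(G-,-) = M(L-,-)$ as functors on $\cat{A}^{\op}\times\cat{C}$, and a morphism of $\cat{Q}$ is a compatible pair of transformations.

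For full faithfulness of $K$ I would invoke the Yoneda lemma in the form: $F \mapsto \cat{C}(F-,-)$ is injective on objects, and it identifies natural transformations between the functors with natural transformations between the associated representable profunctors. Given a morphism $(\alpha,\beta) \from KF \to KF'$ of $\cat{Q}$, Yoneda yields a unique natural transformation $\gamma \from F \to F'$ corresponding to $\beta$; then $\cat{C}(\gamma L-,-) = (L^{\op}\times\id_{\cat{C}})^*\beta = \cat{C}(\alpha-,-)$ forces $\gamma L = \alpha$, so $(\alpha,\beta) = K\gamma$ with $\gamma$ unique. Hence $K$ is bijective on hom-sets, and injectivity on objects is immediate since the second component $\cat{C}(F-,-)$ already determines $F$.

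The substantive step is surjectivity of $K$ on objects, and this is where bijectivity of $L$ on objects does the work. Given $(G,M)\in\cat{Q}$, for each $l\in\cat{L}$ let $a_l\in\cat{A}$ be the unique object with $L a_l = l$ and set $F l := G a_l$; then $M(l,-) = M(L a_l,-) = \cat{C}(G a_l,-) = \cat{C}(F l,-)$, so each ``row'' $M(l,-)$ is representable, represented by $Fl$. By parametrized representability --- a profunctor whose rows are all representable is itself representable, with the functor determined on morphisms by applying Yoneda to $M(g,-) \from M(l',-)\to M(l,-)$ for $g \from l \to l'$ in $\cat{L}$ --- the assignment $l \mapsto Fl$ extends uniquely to a functor $F \from \cat{L}\to\cat{C}$ with $\cat{C}(F-,-) = M$. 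Finally $FL = G$: on objects this reads $G a_{La} = Ga$, and on a morphism $f \from a\to a'$, comparing $M(Lf,-)$ (which equals $\cat{C}(Gf,-)$, since $M(L-,-) = \cat{C}(G-,-)$ holds also on morphisms) with $\cat{C}(F(Lf)-,-)$ forces $F(Lf) = Gf$ by Yoneda. Thus $KF = (G,M)$.

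I expect the main obstacle to be keeping the surjectivity argument watertight: one must ensure that $\cat{C}(G-,-) = M(L-,-)$ is an honest equality of functors on $\cat{A}^{\op}\times\cat{C}$ rather than merely a natural isomorphism (hence the care taken with the co-Yoneda identification at the outset), and one must check that the extension of $F$ to morphisms supplied by parametrized representability is genuinely functorial and genuinely restricts to $G$ along $L$. None of this is deep, but it is precisely the place where the hypothesis that $L$ is bijective on objects is essential, since it is what allows the representing objects to be transported from $\cat{A}$, where they are furnished by $G$, to all of $\cat{L}$.
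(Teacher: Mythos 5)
Your proposal is correct and takes essentially the same route as the paper: both identify the pullback explicitly as pairs $(G,M)$ satisfying $M(L-,-)=\cat{C}(G-,-)$ (via the co-Yoneda identification of $\mathcal{P}(L)_*$ with restriction along $L^{\op}\times\id_{\cat{C}}$, exactly as in Lemma~\ref{lem:prof-monadic}), and both use bijectivity of $L$ on objects to define the candidate functor $\cat{L}\to\cat{C}$ on objects and the Yoneda lemma to define it on morphisms. The only difference is one of presentation: you verify full faithfulness of the comparison functor explicitly, whereas the paper constructs the inverse on objects and declares the extension to morphisms straightforward.
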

\begin{proof}
Recall that an object of the pullback of $\mathcal{P}(F)_*$ and $\mathcal{P}$ consists of a pair $(F, M)$ where $F \from \cat{A} \to \cat{C}$ and $M \from \cat{C} \proto \cat{L}$ such that $\mathcal{P}(F) = \mathcal{P}(L) \tensor M \from \cat{C} \proto \cat{A}$, and morphisms in the pullback are defined similarly. We will construct an isomorphism between this explicit description of the pullback and $[\cat{L}, \cat{C}]$, compatible with the functors to $\PROF(\cat{C}, \cat{L})$ and $[\cat{A}, \cat{C}]$. We will construct this isomorphism on objects; it is straightforward to extend it to morphisms.

Let $(F,M)$ be as above. This means that
\[
M(L-, -) = \cat{C}(F-, -) \from \cat{A}^{\op} \times \cat{C} \to \SET.
 \]
 We define a functor $G \from \cat{L} \to \cat{C}$ as follows. On objects, we set $G(La) = Fa$ (recalling that every object of $\cat{L}$ is of the form $La$ for a unique $a$). Suppose $l \from La \to La'$ in $\cat{L}$. This defines a natural transformation
 \[
 M(l, -) \from M(La', -) \to M(La, -)
 \]
 which is equivalently a natural transformation $\cat{C}(Fa', -) \to \cat{C}(Fa, -)$, which, by the Yoneda lemma, is given by a unique morphism $Fa \to Fa'$; we define $Gl$ to be this morphism. The uniqueness in the definition of each $Gl$ then guarantees that $G$ is a functor $\cat{L} \to \cat{C}$.
 
 We check that $G \of L = F$ and $\mathcal{P}(G) = M$ and that this characterises $G$ uniquely. By definition $G \of L = F$ on objects. If $f \from a \to a'$, then 
 \[
 M(Lf, -) = \cat{C}(Ff, -) \from \cat{C}(Fa', -) \to \cat{C}(Fa, -)
 \]
 by assumption, so $GLf = Ff$ as required. We have $\cat{P}(G) =M$ if and only if $M(-,-) = \cat{C}(G-, -)$, but this is clear from how $G$ is defined, and $G$ is clearly unique such that these to properties hold.
\end{proof}

\begin{prop}
\label{prop:sem-pullback-monadic}
Any functor of the form $\sem(L) \from \mod(L) \to \cat{B}$ for some aritation $\lpair -,-\rpair \from \cat{A} \times \cat{B} \to \cat{C}$ and proto-theory $L \from \cat{A} \to \cat{L}$ is a pullback of a monadic functor whose codomain is locally large.
\end{prop}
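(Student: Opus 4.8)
The plan is to assemble the statement from the two pullback squares already established in this section, namely the defining pullback for $\mod(L)$ from Definition~\ref{defn:sem-objects} and the pullback
\[
\xymatrix{
[\cat{L},\cat{C}] \ar[r]^-{\mathcal{P}}\ar[d]_{L^*} & \PROF(\cat{C}, \cat{L})\ar[d]^{\mathcal{P}(L)_*} \\
[\cat{A},\cat{C}] \ar[r]_-{\mathcal{P}} & \PROF(\cat{C},\cat{A})
}
\]
from the previous proposition, together with the monadicity result of Lemma~\ref{lem:prof-monadic}. First I would recall that $\mod(L)$ is defined so that
\[
\xymatrix{
{\mod(L)}\ar[r]^{I(L)}\ar[d]_{\sem(L)}\pullbackcorner  & {[\cat{L}, \cat{C} ] }\ar[d]^{L^*} \\
{\cat{B}}\ar[r]_{\currylo} & {[\cat{A}, \cat{C}]}
}
\]
is a pullback in $\CAT$. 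Pasting this onto the square above gives a composite rectangle, and by the pasting lemma for pullbacks (if the right-hand square is a pullback, the outer rectangle is a pullback if and only if the left-hand square is), the outer rectangle
\[
\xymatrix{
{\mod(L)}\ar[r]\ar[d]_{\sem(L)}  & {\PROF(\cat{C},\cat{L}) }\ar[d]^{\mathcal{P}(L)_*} \\
{\cat{B}}\ar[r]_-{\mathcal{P}\of\currylo} & {\PROF(\cat{C},\cat{A})}
}
\]
is a pullback in $\CAT$.

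Next I would observe that $\mathcal{P}(L)_* \from \PROF(\cat{C},\cat{L}) \to \PROF(\cat{C},\cat{A})$ is precisely the functor shown to be monadic in Lemma~\ref{lem:prof-monadic}, and that its codomain $\PROF(\cat{C},\cat{A}) = [\cat{A}^{\op} \times \cat{C}, \SET]$ is locally large: since $\cat{A}^{\op}\times\cat{C}$ is a large category and $\SET$ is locally small (indeed its hom-collections are sets), the functor category $[\cat{A}^{\op}\times\cat{C},\SET]$ has hom-collections that are small for any fixed pair of functors — more carefully, a natural transformation between two such functors is a class of components indexed by a large set, so it need only be locally large, which is exactly what the statement asserts. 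Thus $\sem(L)$ is exhibited as a pullback of the monadic functor $\mathcal{P}(L)_*$ whose codomain is locally large, completing the proof.

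The only genuine subtlety — and the step I would flag as needing a word of care — is the size bookkeeping. One must confirm that ``pullback of a monadic functor'' is meant in the strict (1-categorical) sense used throughout the chapter, which it is, since Definition~\ref{defn:sem-objects} and the previous proposition both produce strict pullbacks in $\CAT$; the pasting lemma then applies without any coherence fuss. I would also make explicit that $[\cat{A}^{\op}\times\cat{C},\SET]$ is locally large rather than locally small, since $\cat{A}$ and $\cat{C}$ are only assumed large — this is the precise sense of ``locally large'' recorded in Section~\ref{sec:sets-background}, and it matches the hypothesis in the statement. No new ideas are required beyond pasting the two squares and invoking Lemma~\ref{lem:prof-monadic}; the proof is essentially a one-line diagram chase once the pieces are named, so I would keep it short.

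\begin{proof}
By Definition~\ref{defn:sem-objects}, $\sem(L) \from \mod(L) \to \cat{B}$ fits into a pullback square in $\CAT$ with $L^* \from [\cat{L},\cat{C}] \to [\cat{A},\cat{C}]$ along $\currylo \from \cat{B} \to [\cat{A},\cat{C}]$. By the previous proposition, $L^*$ is itself a pullback of $\mathcal{P}(L)_* \from \PROF(\cat{C},\cat{L}) \to \PROF(\cat{C},\cat{A})$ along $\mathcal{P} \from [\cat{A},\cat{C}] \to \PROF(\cat{C},\cat{A})$. Pasting these two pullback squares and applying the pasting lemma for pullbacks, $\sem(L)$ is a pullback of $\mathcal{P}(L)_*$ along $\mathcal{P} \of \currylo$. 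By Lemma~\ref{lem:prof-monadic}, $\mathcal{P}(L)_*$ is monadic, and its codomain $\PROF(\cat{C},\cat{A}) = [\cat{A}^{\op}\times\cat{C},\SET]$ is locally large since $\cat{A}^{\op}\times\cat{C}$ is large and $\SET$ has small hom-sets. This is exactly the assertion.
\end{proof}
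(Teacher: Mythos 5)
Your proof is correct and is essentially identical to the paper's: paste the defining pullback for $\mod(L)$ onto the pullback square from the preceding proposition, apply the pasting lemma, and invoke Lemma~\ref{lem:prof-monadic} together with the observation that $\PROF(\cat{C},\cat{A}) = [\cat{A}^{\op}\times\cat{C},\SET]$ is locally large. The only slip is your claim that $\SET$ has small hom-sets --- it is only locally large, since hom-sets between large sets need not be small --- but this does not affect the conclusion, which only requires local largeness of the functor category.
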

\begin{proof}
Consider the diagram
\[
\xymatrix{
\mod(L)\ar[r]^{J(L)}\ar[d]_{\sem(L)}\pullbackcorner & [\cat{L}, \cat{C}]\ar[d]_{L^*}\ar[r]^-{\mathcal{P}}\pullbackcorner & \PROF(\cat{C},\cat{L})\ar[d]^{\mathcal{P}(L)_*} \\
\cat{B}\ar[r]_{\currylo}& [\cat{A},\cat{C}]\ar[r]_-{\mathcal{P}} & \PROF(\cat{C},\cat{A}).
}
\]
The left-hand square is a pullback by definition of $\sem(L)$, and the right-hand square is a pullback by the previous proposition. Thus the outer rectangle is a pullback, and the morphism on its right-hand edge is monadic by Lemma~\ref{lem:prof-monadic}.

By definition, $\PROF(\cat{C},\cat{A}) = [\cat{A}^{\op} \times \cat{C}, \SET]$, and this is locally large since $\cat{A}^\op \times \cat{C}$ is large and $\SET$ is locally large.
\end{proof}

\begin{remark}
The pullback square appearing in the above proof gives a new perspective on models of a proto-theory. Recall that $\cat{L}$ is the Eilenberg--Moore object of the monad $\mnd{T} = (T, \eta, \mu)$ on $\cat{A}$ in $\PROF$ corresponding to $L \from \cat{A} \to \cat{L}$. But, as in any bicategory, morphisms into the Eilenberg--Moore object of a monad on $\cat{A}$ correspond to morphisms into $\cat{A}$ equipped with an action of the monad. Thus to equip an object $d \in \cat{B}$ with the structure of an $L$-model is to equip the profunctor $\mathcal{P}(\lpair -, d \rpair) \from \cat{C} \proto \cat{A}$ with an action of the monad $\mnd{T}$; that is, a morphism $T \tensor \mathcal{P}(\lpair -, d \rpair) \to \mathcal{P}(\lpair -, d \rpair)$ that is compatible with the unit and multiplication of the monad.
\end{remark}

We can refine Proposition~\ref{prop:sem-pullback-monadic} slightly. We saw that any functor of the form $\sem(L) \from \mod(L) \to \cat{B}$ for an aritation $\lpair -,-\rpair \from \cat{A} \times \cat{B} \to \cat{C}$ and proto-theory $L \from \cat{A} \to \cat{L}$ is a pullback of a monadic functor along
\[
\cat{B} \toby{\currylo} [\cat{A},\cat{C}] \toby{\mathcal{P}} \PROF(\cat{C},\cat{A}).
\]
But the category $\PROF(\cat{C},\cat{A})$ is huge, since its objects are arbitrary functors $\cat{A}^{\op} \times \cat{C} \to \SET$. Thus we might still wonder whether $\sem(L)$ can be expressed as  a pullback of a monadic functor whose codomain is only large. The answer is yes:

\begin{prop}
\label{prop:sem-pullback-monadic-large}
Let $\lpair -, - \rpair \from \cat{A} \times \cat{B} \to \cat{C}$ be an aritation and $L \in \proth(\cat{A})$. Then there is a large category $\cat{D}\in \CAT$, a monad $\mnd{T}$ on $\cat{D}$ and a functor $H \from \cat{B} \to \cat{D}$ such that we have a pullback square
\[
\xymatrix{
\mod(L)\ar[d]_{\sem(L)}\ar[r]\pullbackcorner & \cat{D}^{\mnd{T}}\ar[d]^{U^{\mnd{T}}} \\
\cat{B} \ar[r]_{H} & \cat{D}.
}
\]
\end{prop}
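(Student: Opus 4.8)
The plan is to reuse the pullback analysis from the proof of Proposition~\ref{prop:sem-pullback-monadic}, but to replace the huge category $\PROF(\cat{C},\cat{A})$ by a suitable large full subcategory closed under the relevant monad. Write $\mnd{S}$ for the monad on $\PROF(\cat{C},\cat{A})$ induced by the monadic functor $\mathcal{P}(L)_*\from\PROF(\cat{C},\cat{L})\to\PROF(\cat{C},\cat{A})$ of Lemma~\ref{lem:prof-monadic}, so that $\mathcal{P}(L)_*$ is (up to the canonical isomorphism) the forgetful functor $U^{\mnd{S}}$ of an isomorphism $\PROF(\cat{C},\cat{L})\iso\PROF(\cat{C},\cat{A})^{\mnd{S}}$ of categories over $\PROF(\cat{C},\cat{A})$. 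Put $H_0 = \mathcal{P}\of\currylo\from\cat{B}\to\PROF(\cat{C},\cat{A})$, and let $\cat{D}$ be the full subcategory of $\PROF(\cat{C},\cat{A})$ whose objects are the profunctors $S^n H_0(b)$ for all $n\geq 0$ and all $b\in\ob(\cat{B})$.

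First I would check that $\cat{D}$ is a large category, i.e. lies in $\CAT$. Its object collection is the image of the set $\nat\times\ob(\cat{B})$ under a (class-)function, hence a set by replacement, using that $\cat{B}$ is large; its hom-collections are hom-sets of $\PROF(\cat{C},\cat{A}) = [\cat{A}^{\op}\times\cat{C},\SET]$, which are genuine (large) sets, since a natural transformation between two functors $\cat{A}^{\op}\times\cat{C}\to\SET$ is an element of a set-indexed product of sets. By construction $\ob(\cat{D})$ is closed under the object-map of $S$, and since $\cat{D}$ is full, $S$ together with the components of the unit and multiplication of $\mnd{S}$ at objects of $\cat{D}$ restrict to a monad $\mnd{T}$ on $\cat{D}$, the monad axioms being inherited from $\mnd{S}$. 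Fullness of $\cat{D}$ also shows that $\cat{D}^{\mnd{T}}$ is precisely the full subcategory of $\PROF(\cat{C},\cat{A})^{\mnd{S}}\iso\PROF(\cat{C},\cat{L})$ on those algebras whose carrier lies in $\cat{D}$; transporting along the isomorphism, this says exactly that
\[
\xymatrix{
\cat{D}^{\mnd{T}}\ar[r]\ar[d]_{U^{\mnd{T}}} & \PROF(\cat{C},\cat{L})\ar[d]^{\mathcal{P}(L)_*} \\
\cat{D}\ar[r]_-{\iota} & \PROF(\cat{C},\cat{A})
}
\]
is a pullback in $\CAT$, where $\iota\from\cat{D}\incl\PROF(\cat{C},\cat{A})$ is the inclusion; and $\cat{D}^{\mnd{T}}$ is again large by the same size bookkeeping. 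Since $\im(H_0)\subseteq\ob(\cat{D})$, the functor $H_0$ corestricts to a functor $H\from\cat{B}\to\cat{D}$ with $\iota\of H = H_0$.

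Finally I would assemble the pullback. The proof of Proposition~\ref{prop:sem-pullback-monadic} shows that $\sem(L)\from\mod(L)\to\cat{B}$ is the pullback of $\mathcal{P}(L)_*$ along $H_0 = \iota\of H$. Pasting this with the pullback square for $\cat{D}^{\mnd{T}}$ above --- using the pasting lemma for strict pullbacks in the direction ``the outer and the right-hand square being pullbacks implies the left-hand square is a pullback'' --- yields that
\[
\xymatrix{
\mod(L)\ar[r]\ar[d]_{\sem(L)} & \cat{D}^{\mnd{T}}\ar[d]^{U^{\mnd{T}}} \\
\cat{B}\ar[r]_-{H} & \cat{D}
}
\]
is a pullback in $\CAT$, which is the required statement.

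I do not expect any real obstacle here: the one delicate point is the bookkeeping that $\cat{D}$ (and $\cat{D}^{\mnd{T}}$) are large rather than merely huge, which reduces to replacement applied to $\nat\times\ob(\cat{B})$ together with the observation that hom-collections of $\PROF(\cat{C},\cat{A})$ are sets. Everything else is a direct application of the pullback pasting lemma and the monadicity already established in Lemma~\ref{lem:prof-monadic}.
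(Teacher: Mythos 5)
Your proof is correct and follows essentially the same route as the paper: both cut the huge codomain of the monadic functor from Proposition~\ref{prop:sem-pullback-monadic} down to the closure of the image of $\cat{B}$ under countably many iterates of the monad's endofunctor, restrict the monad to this large full subcategory, and conclude by pasting strict pullback squares. The only difference is cosmetic — you work directly with $\PROF(\cat{C},\cat{A})$ where the paper quotes the previous proposition abstractly, and you spell out the replacement argument for the size bookkeeping slightly more explicitly.
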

\begin{proof}
By the previous proposition, there is a locally large category $\cat{E}$, a monad $\mnd{S} = (S, \eta, \mu)$ on $\cat{E}$, a functor $K \from \cat{B} \to \cat{E}$ and a pullback square
\[
\xymatrix{
\mod(L)\ar[d]_{\sem(L)}\ar[r]\pullbackcorner & \cat{E}^{\mnd{S}}\ar[d]^{U^{\mnd{S}}} \\
\cat{B} \ar[r]_{K} & \cat{E}.
}
\]
Let $\cat{D}$ be the smallest full subcategory of $\cat{E}$ that contains the image of $K$ and such that $S$ restricts to an endofunctor of $\cat{D}$. Then $\cat{D}$ is large: it has a large set of objects since $\cat{B}$ does and only countably many iterates of $S$ are needed to close the image of $K$ under $S$, and it is locally large since $\cat{E}$ is. Write $H \from \cat{B} \to \cat{D}$ for the factorisation of $K$ through $\cat{D}$.

Clearly $\mnd{S}$ restricts to a monad $\mnd{T}$ on $\cat{D}$, and a $\mnd{T}$-algebra is just an $\mnd{S}$-algebra whose underlying object lies in $\cat{D}$. That is, we have a pullback square
\[
\xymatrix{
\cat{D}^{\mnd{T}}\ar[r]\ar[d]_{U^{\mnd{T}}}\pullbackcorner & \cat{E}^{\mnd{S}}\ar[d]^{U^{\mnd{S}}} \\
\cat{D}\ar[r] &\cat{E}.
}
\]
It follows that we have a commutative diagram
\[
\xymatrix{
\mod(L)\ar[d]_{\sem(L)}\ar[r] &\cat{D}^{\mnd{T}}\ar[r]\ar[d]_{U^{\mnd{T}}} & \cat{E}^{\mnd{S}}\ar[d]^{U^{\mnd{S}}} \\
\cat{B} \ar[r]_{H} & \cat{D}\ar[r]& \cat{E},
}
\]
in which the bottom composite is $K$. But the right-hand square is a pullback, and so is the outer rectangle. It follows that the left-hand square is also a pullback.
\end{proof}

We can use the viewpoint of proto-theories as monads in the bicategory $\PROF$ to deduce some useful properties of the category of proto-theories.

\begin{prop}
\label{prop:monoid-monadic}
If $(\cat{V}, \tensor, I)$ is a monoidal biclosed category with small colimits, then the forgetful functor $\monoid(\cat{V}) \to \cat{V}$ is monadic.
\end{prop}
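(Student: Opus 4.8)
The plan is to verify the hypotheses of Beck's monadicity theorem (Theorem~1 in~VI.7 of~\cite{maclane71}) for the forgetful functor $U \from \monoid(\cat{V}) \to \cat{V}$. In its precise form that theorem gives an isomorphism of categories between $\monoid(\cat{V})$ and the category of algebras for the induced monad, which is exactly monadicity in the sense used in this thesis. So there are two things to establish: that $U$ has a left adjoint, and that $U$ creates coequalisers of those parallel pairs whose image under $U$ admits a split coequaliser in $\cat{V}$.

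First I would construct the left adjoint, namely the free monoid (tensor algebra) functor $F \from \cat{V} \to \monoid(\cat{V})$. On an object $X$ one sets $FX = \coprod_{n \in \nat} X^{\tensor n}$, with $X^{\tensor 0} = I$ and $X^{\tensor(n+1)} = X^{\tensor n} \tensor X$; this coproduct exists since $\cat{V}$ has small colimits. Because $\cat{V}$ is biclosed, both $X \tensor (-)$ and $(-) \tensor X$ are left adjoints and hence preserve all colimits, so in particular $FX \tensor FX \iso \coprod_{m,n} X^{\tensor m} \tensor X^{\tensor n}$; the canonical isomorphisms $X^{\tensor m}\tensor X^{\tensor n} \iso X^{\tensor(m+n)}$ then assemble into a multiplication $FX \tensor FX \to FX$, with unit the coprojection $I = X^{\tensor 0} \incl FX$. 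Associativity and unitality are checked summand by summand. The universal property --- that monoid homomorphisms $FX \to M$ correspond naturally to morphisms $X \to UM$ in $\cat{V}$ --- is then routine: a morphism $X \to UM$ extends uniquely over each $X^{\tensor n}$ by iterated multiplication in $M$, and the resulting map out of the coproduct is forced to be a homomorphism. This yields $F \ladj U$.

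Next I would show that $U$ strictly creates coequalisers of $U$-split pairs. Let $f, g \from M \to N$ be monoid homomorphisms such that $Uf, Ug$ has a split coequaliser $q \from UN \to Q$ in $\cat{V}$. Split coequalisers are absolute, and applying the bifunctor $(-) \tensor (-)$ to the splitting data shows directly that $q^{\tensor k} \from (UN)^{\tensor k} \to Q^{\tensor k}$ is a split coequaliser of $(Uf)^{\tensor k}$ and $(Ug)^{\tensor k}$ for each $k$; in particular all the maps $q^{\tensor k}$ are epimorphisms. Since $q$ and $q^{\tensor 2}$ are epimorphisms coequalising the appropriate pairs, there is a unique pair consisting of a monoid structure on $Q$ and a monoid homomorphism $\overline{q} \from N \to Q$ lying over $q$: the multiplication $Q \tensor Q \to Q$ is induced from that of $N$ by the universal property of $q^{\tensor 2}$, the unit is $q$ composed with the unit of $N$, the monoid axioms for $Q$ follow by cancelling the epimorphisms $q$ and $q^{\tensor 3}$, and uniqueness is forced since $q^{\tensor 2}$ is epic. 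Finally $\overline{q}$ is the coequaliser of $f$ and $g$ in $\monoid(\cat{V})$, because any monoid homomorphism out of $N$ coequalising $f$ and $g$ factors uniquely through $q$ in $\cat{V}$ and that factorisation is a homomorphism by cancelling $q^{\tensor 2}$. This is precisely strict creation of the coequaliser, so Beck's theorem applies and $U$ is monadic.

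The main obstacle is the first step: one must be careful that biclosedness is exactly what makes $\coprod_n X^{\tensor n}$ into a monoid --- it is the preservation of the $\nat$-indexed coproduct by $\tensor$ in each variable that is used --- and the verification of its universal property, though routine, is the only part that requires genuine bookkeeping. By contrast the creation-of-coequalisers step is the standard absolute-coequaliser argument and presents no real difficulty, once one observes that $\tensor$ preserves split coequalisers in each variable automatically.
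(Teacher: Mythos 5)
Your proof is correct and follows essentially the same route as the paper: both apply Beck's monadicity theorem, verifying creation of the relevant (split) coequalisers and existence of a left adjoint. The only difference is that where you build the free monoid explicitly as the tensor algebra $\coprod_{n} X^{\tensor n}$ (valid precisely because biclosedness makes $\tensor$ preserve the countable coproduct in each variable), the paper instead cites Theorem~23.4 of Kelly for the existence of the left adjoint, which under these hypotheses amounts to the same construction.
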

\begin{proof}
The following is a well-known argument. We apply the monadicity theorem (Theorem~1 in~VI.7 of~\cite{maclane71}). It is straightforward to see that the forgetful functor creates the relevant coequalisers, so all that is necessary is to show that it has a left adjoint.

Theorem~23.4 of Kelly~\cite{kelly80} states that if $\cat{V}$ has countable coproducts and, for each $V \in \cat{V}$ both of the functors $V \tensor -$ and $- \tensor V$ preserve countable coproducts then this forgetful functor has a left adjoint. This is in particular the case when $\cat{V}$ is biclosed since then these functors are left adjoints, so preserve all colimits.
\end{proof}

\begin{prop}
\label{prop:proth-prof-monadic}
The functor $U \from \proth(\cat{A}) \to [\cat{A}^{\op} \times \cat{A}, \SET]$ sending $L \from \cat{A} \to \cat{L}$ to $\cat{L}(L-, L-) \from \cat{A}^{\op} \times \cat{A} \to \SET$ is weakly monadic.
\end{prop}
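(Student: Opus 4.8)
The plan is to exploit the identification of proto-theories with monads in the bicategory $\PROF$. By Proposition~\ref{prop:monad-in-prof}, $\proth(\cat{A})$ is equivalent to the category $\monad_{\PROF}(\cat{A})$ of monads on $\cat{A}$ in $\PROF$; and a monad on a fixed object of any bicategory is precisely a monoid in the monoidal hom-category at that object, so $\monad_{\PROF}(\cat{A})$ is isomorphic to $\monoid(\cat{V})$ where $\cat{V} = (\PROF(\cat{A},\cat{A}), \tensor, \cat{A}(-,-))$. Now $\PROF(\cat{A},\cat{A}) = [\cat{A}^{\op}\times\cat{A},\SET]$, and — unwinding the identification of monads in $\PROF$ with identity-on-objects functors used in the proof of Proposition~\ref{prop:monad-in-prof}, together with the fact that every proto-theory is isomorphic in $\proth(\cat{A})$ to an identity-on-objects one — the monad corresponding to $L \from \cat{A}\to\cat{L}$ has underlying profunctor $(a,a') \mapsto \cat{L}(La,La')$, which is exactly $U(L)$. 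So the first step is to check carefully that this chain of equivalences and isomorphisms commutes with the forgetful functors down to $[\cat{A}^{\op}\times\cat{A},\SET]$; granting that, $U$ corresponds to the forgetful functor $\monoid(\cat{V}) \to \cat{V}$.

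Next I would invoke Proposition~\ref{prop:monoid-monadic}: if $\cat{V}$ is a monoidal biclosed category with small colimits, then $\monoid(\cat{V}) \to \cat{V}$ is monadic. So it remains to verify the hypotheses for $\cat{V} = (\PROF(\cat{A},\cat{A}), \tensor, \cat{A}(-,-))$. Small colimits in $[\cat{A}^{\op}\times\cat{A},\SET]$ exist and are computed pointwise, since $\SET$ is cocomplete. For biclosedness, the functors $M \tensor -$ and $- \tensor M$ on $\PROF(\cat{A},\cat{A})$ are cocontinuous — composition of profunctors is a coend, coends commute with colimits, and cartesian product in $\SET$ preserves colimits in each variable — and each has a right adjoint, given by the standard end formula for the lifting and extension hom-profunctors; this is the classical fact that $\PROF$ is a biclosed bicategory. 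Hence $\monoid(\cat{V}) \to \cat{V}$ is monadic.

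Finally, $U$ is the composite of an equivalence $\proth(\cat{A}) \eqv \monoid(\cat{V})$ with a monadic functor $\monoid(\cat{V}) \to \cat{V}$. Such a composite is weakly monadic: it has a left adjoint (a quasi-inverse of the equivalence composed with the left adjoint of the monadic factor), the induced monad on $[\cat{A}^{\op}\times\cat{A},\SET]$ is isomorphic to the one induced by the monadic factor alone, and the comparison functor for the composite is the composite of the equivalence with the (iso) comparison functor of the monadic factor, hence an equivalence. This gives weak monadicity of $U$.

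The main obstacle I expect is the size bookkeeping in the second paragraph: since $\cat{A}$ is only large, the coends defining composition in $\PROF(\cat{A},\cat{A})$ are colimits indexed by a large (but genuine) set, and one must confirm they land in $\SET$ and that the required ends for the internal homs still exist at this size — i.e. that the conventions of Section~\ref{sec:sets-background} make $\SET$ suitably complete and cocomplete. Modulo that, the argument is essentially transporting Proposition~\ref{prop:monoid-monadic} along the equivalence of Proposition~\ref{prop:monad-in-prof}.
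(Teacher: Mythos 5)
Your proposal is correct and follows essentially the same route as the paper: the paper likewise identifies $\proth(\cat{A})$ with $\monoid(\PROF(\cat{A},\cat{A}))$ via Proposition~\ref{prop:monad-in-prof}, notes that $[\cat{A}^{\op}\times\cat{A},\SET]$ is cocomplete and biclosed (citing B\'enabou rather than writing out the end formulas), applies Proposition~\ref{prop:monoid-monadic}, and concludes weak monadicity of the composite with the equivalence. Your extra care about the size of the coends is reasonable but is absorbed by the universe conventions of Section~\ref{sec:sets-background}, since $\SET$ has all large (co)limits.
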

\begin{proof}
Recall that a bicategory with one object is precisely a monoidal category. In particular, composition of profunctors makes the functor category $[\cat{A}^{\op} \times \cat{A}, \SET]$ into a monoidal category. The category of monoids in this monoidal category is equivalent to $\proth(\cat{A})$, by Proposition~\ref{prop:monad-in-prof}, and under this equivalence, the forgetful functor $\monoid([\cat{A}^{\op} \times \cat{A}, \SET]) \to [\cat{A}^{\op} \times \cat{A}, \SET]$ corresponds to the functor described above.

But $[\cat{A}^{\op} \times \cat{A}, \SET] \iso \PROF(\cat{A}, \cat{A})$ is biclosed (This follows from Theorem~2.3.3 in B\'enabou~\cite{benabou73}) and cocomplete and so by the previous proposition, the forgetful functor
\[
\monoid([\cat{A}^{\op} \times \cat{A}, \SET]) \to [\cat{A}^{\op} \times \cat{A}, \SET]
\]
is monadic. Hence $U$ (as a composite of a monadic functor with an equivalence) is weakly monadic.
\end{proof}

\begin{cor}
The category $\proth(\cat{A})$ has all large limits.
\end{cor}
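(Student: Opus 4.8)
The plan is to deduce this immediately from Proposition~\ref{prop:proth-prof-monadic}, which says that the functor $U \from \proth(\cat{A}) \to [\cat{A}^{\op} \times \cat{A}, \SET]$ is weakly monadic. Unwinding the definition of ``weakly monadic'', this means there is a monad $\mnd{T}$ on $\cat{E} := [\cat{A}^{\op} \times \cat{A}, \SET]$ such that the canonical comparison functor is an equivalence $\proth(\cat{A}) \eqv \cat{E}^{\mnd{T}}$ commuting with the forgetful functors to $\cat{E}$. Since the existence of limits is preserved and reflected by equivalences of categories, it suffices to show that $\cat{E}^{\mnd{T}}$ has all large limits.

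First I would observe that $\cat{E} = [\cat{A}^{\op} \times \cat{A}, \SET]$ has all large limits. Limits in functor categories are computed pointwise, so this reduces to the fact that $\SET$ admits limits of large diagrams: given $D \from \cat{I} \to \SET$ with $\cat{I}$ large, the product $\prod_{i \in \ob(\cat{I})} Di$ is a genuine set (the indexing collection is a large \emph{set}, not a proper class, and $\SET$ is closed under such products in the universe-based setting of Section~\ref{sec:sets-background}), and $\lim D$ is carved out of it by an equaliser, which is again a subset. Next I would invoke the standard fact that the forgetful functor $U^{\mnd{T}} \from \cat{E}^{\mnd{T}} \to \cat{E}$ creates all limits that exist in $\cat{E}$ (see e.g.\ Section~VI.2 of~\cite{maclane71}, or Chapter~4 of~\cite{borceux94v2}); this argument is insensitive to the size of the indexing category. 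Combining the two observations, $\cat{E}^{\mnd{T}}$ has all large limits, and hence so does $\proth(\cat{A})$.

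I do not anticipate a genuine obstacle here: the only delicate point is the set-theoretic one, namely confirming that ``large'' limits (indexing categories with a large \emph{set} of objects and morphisms, as opposed to proper-class-indexed diagrams) really do exist in $\SET$, which is exactly what the conventions of Section~\ref{sec:sets-background} are designed to guarantee. Everything else is a formal consequence of weak monadicity together with creation of limits by Eilenberg--Moore forgetful functors.
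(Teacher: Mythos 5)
Your proposal is correct and follows essentially the same route as the paper, which deduces the result from Proposition~\ref{prop:proth-prof-monadic} together with the facts that $[\cat{A}^{\op}\times\cat{A},\SET]$ has all large limits and that monadic forgetful functors create limits. Your extra care in passing from weak monadicity to genuine creation of limits via the equivalence with the Eilenberg--Moore category is a reasonable (and slightly more scrupulous) elaboration of the same argument.
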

\begin{proof}
By the previous proposition, this category is monadic over $[\cat{A}^{\op} \times \cat{A}, \SET]$, which has all large limits since $\SET$ does. Since monadic functors create all limits, the result follows.
\end{proof}

\section{Example: monoids}
\label{sec:str-sem-adj-monoids}

As seen in Section~\ref{sec:notions-monoids}, monoids can be thought of as an extremely simple kind of algebraic theory, and as such they have their own structure--semantics adjunction, as in Proposition~\ref{prop:str-sem-adj-monoid}. We will explore how more complicated notions of algebraic theory arise from proto-theories in later chapters, but for now let us see how monoids fit into this framework. Throughout this section, fix a large category $\cat{B}$.

\begin{defn}
Let $\scat{1}$ denote the category with a single object, and just an identity morphism.
\end{defn}

\begin{lem}
\label{lem:monoid-theory-iso}
We have an isomorphism of categories
\[
\proth(\scat{1}) \iso \MONOID,
\]
where $\MONOID$ is the category of large monoids.
\end{lem}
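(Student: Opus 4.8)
The plan is to unwind both sides of the claimed isomorphism and observe that they carry literally the same data. A proto-theory with arities $\scat{1}$ is, by Definition~\ref{defn:proto-theory}, a bijective-on-objects functor $L \from \scat{1} \to \cat{L}$. Since $\scat{1}$ has exactly one object, bijectivity on objects forces $\cat{L}$ to have exactly one object as well; thus $\cat{L}$ is a one-object category, which is precisely a monoid: its set of endomorphisms $\cat{L}(*,*)$ with composition and identity. The functor $L$ itself is then forced — it must send the unique object to the unique object and the identity to the identity — so it carries no extra information. Hence on objects the assignment $(L \from \scat{1} \to \cat{L}) \mapsto \cat{L}(*,*)$ is a bijection between proto-theories with arities $\scat{1}$ and (large) monoids.

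Next I would check morphisms. A morphism of proto-theories from $L' \from \scat{1} \to \cat{L}'$ to $L \from \scat{1} \to \cat{L}$ is a functor $P \from \cat{L}' \to \cat{L}$ with $P \of L' = L$; since both $\cat{L}'$ and $\cat{L}$ are one-object categories and the triangle over $\scat{1}$ commutes automatically on objects, $P$ is determined by its action on $\cat{L}'(*,*) \to \cat{L}(*,*)$, and functoriality of $P$ says exactly that this map preserves composition and the identity, i.e.\ is a monoid homomorphism. Conversely every monoid homomorphism arises this way. So the object assignment extends to a bijection on hom-sets, and this bijection visibly respects composition and identities (composition of functors between one-object categories is composition of the underlying monoid homomorphisms). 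This gives a functor $\proth(\scat{1}) \to \MONOID$ that is bijective on objects and fully faithful, hence an isomorphism of categories.

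I would write this up as: first define the functor $\Phi \from \proth(\scat{1}) \to \MONOID$ on objects by $\Phi(L \from \scat{1} \to \cat{L}) = \cat{L}(*,*)$ (noting $\cat{L}$ has a unique object by bijectivity-on-objects), and on morphisms by sending $P$ to its restriction to endomorphism monoids; then construct the inverse $\Psi$ sending a monoid $M$ to the unique one-object category $BM$ with $BM(*,*) = M$ together with the unique functor $\scat{1} \to BM$, and a homomorphism to the induced functor; then verify $\Phi\Psi = \id$ and $\Psi\Phi = \id$ on the nose. The only mild subtlety — and it is the closest thing to an obstacle — is the bookkeeping that $\Psi\Phi$ is the \emph{identity} rather than merely naturally isomorphic to it: one needs that a bijective-on-objects functor out of $\scat{1}$ recovers its (unique-object) codomain exactly, which is immediate but worth stating, and that the choice of ``the'' unique object is harmless since a one-object category is literally determined by its monoid. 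Everything else is a routine check that I would dispatch in a sentence.
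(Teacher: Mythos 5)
Your proposal is correct and follows essentially the same route as the paper's proof: identify a bijective-on-objects functor out of $\scat{1}$ with a one-object category (i.e.\ a monoid), and note that the commuting triangle over $\scat{1}$ is automatic so morphisms of proto-theories are exactly monoid homomorphisms. Your write-up is merely more explicit about the inverse functor and the on-the-nose identities, which the paper leaves implicit.
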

\begin{proof}
A functor out of $\scat{1}$ just picks out an object of its codomain, and such a functor is bijective on objects if and only if its codomain has a single object, that is, it is a monoid. So the objects of $\proth(\scat{1})$ can be identified with the monoids. Furthermore, any functor between 1-object categories (i.e.\ any monoid homomorphism) makes the appropriate triangle commute, and so defines a morphism in $\proth(\scat{1})$.
\end{proof}

\begin{defn}
Define an aritation
\[
\lpair -, - \rpair \from \scat{1} \times \cat{B} \to \cat{B}
\]
to be the projection onto the second factor (note that this is an isomorphism of categories).
\end{defn}
\begin{remark}
The aritation defined above gives rise to an adjunction
\[
\xymatrix{
{\catover{\cat{B}}}\ar@<5pt>[r]_-{\perp}^-{\str}\ & {\proth(\scat{1})^{\op}}\ar@<5pt>[l]^-{\sem}
}
\]
as in Theorem~\ref{thm:str-sem-adj}.
\end{remark}

\begin{prop}
The adjunction in the above remark coincides under the isomorphism $\proth(\scat{1}) \iso \MONOID$ from Lemma~\ref{lem:monoid-theory-iso} with the adjunction
\[
\xymatrix{
{\catover{\cat{B}}}\ar@<5pt>[r]_-{\perp}^-{\str_{\MONOID}}\ & {\MONOID^{\op}}\ar@<5pt>[l]^-{\sem_{\MONOID}}
}
\]
from Proposition~\ref{prop:str-sem-adj-monoid}.
\end{prop}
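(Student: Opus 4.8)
The plan is to unwind both structure--semantics adjunctions explicitly and observe that, under the isomorphism $\Phi \from \proth(\scat{1}) \iso \MONOID$ of Lemma~\ref{lem:monoid-theory-iso} (which sends a proto-theory $L \from \scat{1} \to \cat{L}$ to the monoid $\cat{L}(\ast,\ast)$, where $\ast$ is the unique object of $\cat{L}$), the two constructions are literally the same. The first step is to pin down what the aritation $\lpair -,-\rpair \from \scat{1} \times \cat{B} \to \cat{B}$ does after currying: since it is the projection onto the second factor, $\currylo \from \cat{B} \to [\scat{1}, \cat{B}]$ is the canonical isomorphism $\cat{B} \iso [\scat{1},\cat{B}]$, and $\curryhi \from \scat{1} \to [\cat{B},\cat{B}]$ is the functor picking out $\id_{\cat{B}}$.

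Next I would treat semantics. Let $L \from \scat{1} \to \cat{L}$ be a proto-theory and set $M = \Phi(L) = \cat{L}(\ast,\ast)$. Unwinding Definition~\ref{defn:models-explicit}, an object of $\mod(L)$ is a pair $(d^x, \Gamma^x)$ with $\Gamma^x \from \cat{L} \to \cat{B}$ and $\Gamma^x(\ast) = d^x$; but a functor out of the one-object category $\cat{L}$ sending $\ast$ to $d^x$ is exactly a monoid homomorphism $M \to \cat{B}(d^x, d^x)$, so this is precisely the data of an $M$-model in the sense of Definition~\ref{defn:sem-monoid-objects}. Likewise, for $l \in \cat{L}(L\ast, L\ast)$ corresponding to $m \in M$, the commuting square in Definition~\ref{defn:models-explicit}\bref{part:models-explicit-homomorphisms} becomes $h \of \alpha^x(m) = \alpha^y(m) \of h$, which is exactly the condition defining a homomorphism of $M$-models. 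Hence $\mod(L) = \mod_{\MONOID}(\Phi L)$ as categories over $\cat{B}$, so $\sem(L) = \sem_{\MONOID}(\Phi L)$; and on morphisms, if $P \from \cat{L}' \to \cat{L}$ corresponds to $f \from M' \to M$ then $\sem(P)$ (which postcomposes $\Gamma^x$ with $P$) corresponds to precomposing $\alpha^x$ with $f$, i.e.\ to $\sem_{\MONOID}(f)$. Thus $\sem_{\MONOID} \of \Phi^{\op} = \sem$ on the nose.

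For structure, given $(U \from \cat{M} \to \cat{B}) \in \catover{\cat{B}}$, the composite $\scat{1} \toby{\curryhi} [\cat{B},\cat{B}] \toby{U^*} [\cat{M},\cat{B}]$ simply picks out the functor $U$, so its bijective-on-objects/full-and-faithful factorisation is $\scat{1} \toby{\str(U)} \thr(U) \incl [\cat{M},\cat{B}]$, where $\thr(U)$ is the full subcategory on the single object $U$ --- that is, the monoid $[\cat{M},\cat{B}](U,U)$ of natural transformations $U \to U$, which is precisely $\str_{\MONOID}(U)$. On morphisms, $\str(Q)$ sends $\gamma \from U \to U$ to $\gamma Q \from U' \to U'$ by Definition~\ref{defn:str-morphisms}, which is exactly $\str_{\MONOID}(Q)$; hence $\Phi^{\op} \of \str = \str_{\MONOID}$.

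Finally, since both pairs of functors have been identified, the adjunction of Theorem~\ref{thm:str-sem-adj} for this aritation is carried by $\Phi$ to the adjunction of Proposition~\ref{prop:str-sem-adj-monoid}; indeed, tracing through Definitions~\ref{defn:adj-bij} and~\ref{defn:adj-bij-inv}, the hom-set bijection $\Psi$ translates termwise into the bijection $\catover{\cat{B}}(U, \sem_{\MONOID}(M)) \iso \MONOID(M, \str_{\MONOID}(U))$ exhibited in the proof of Proposition~\ref{prop:str-sem-adj-monoid}. There is no real obstacle here: the content is entirely a matter of unwinding definitions. The only thing requiring care is the bookkeeping that distinguishes genuine equalities --- e.g.\ a functor $\cat{L} \to \cat{B}$ versus the corresponding monoid homomorphism --- from the various canonical isomorphisms used along the way ($[\scat{1},\cat{B}] \iso \cat{B}$, $\thr(U)$ viewed as a one-object category versus as a monoid, and so on), and checking that these comparisons are mutually coherent, which is routine.
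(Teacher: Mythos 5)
Your proposal is correct and follows essentially the same route as the paper: identify $\mod(L)$ with the category of $M$-actions by unwinding the definitions (the paper does this via the pullback square and the fact that $\currylo$ is an isomorphism), and conclude that the semantics functors agree. You additionally verify the structure functors and the hom-set bijection explicitly, which the paper leaves implicit (relying on the identification of the right adjoints); this is harmless extra thoroughness rather than a different argument.
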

\begin{proof}
Let $M$ be a monoid, and write $L \from \scat{1} \to M$ for the unique such functor. Then the category $\mod_{\MONOID} (M)$ of actions of $M$ in the category $\cat{B}$ can be identified with the functor category $[M, \cat{B}]$, and the forgetful functor with the functor $[M, \cat{B}] \to \cat{B}$ given by evaluation at the unique object of $M$.

Now $\sem(M) \from \mod(M) \to \cat{B}$ is defined by the pullback
\[
\xymatrix{
\mod(M)\ar[r]\ar[d]_{\sem(M)}\pullbackcorner & [M, \cat{B}]\ar[d]^{L^*} \\
\cat{B}\ar[r]_{\currylo} & [\scat{1},\cat{B}].
}
\]
But $\currylo$ is an isomorphism, so we can identify $\mod(M)$ with $[M,\cat{B}]$, and $\sem(M)$ with the composite $\currylo^{-1} \of L^*$, which is precisely the evaluation functor described above.
\end{proof}

\section{Chu spaces}
\label{sec:chu}

In this section we look at aritations and their structure--semantics adjunctions from the point of view of Chu spaces and the Chu construction for closed symmetric monoidal categories. These notions were first developed by Barr and Chu in~\cite{barr79}, and a historical overview can be found in~\cite{barr06}. In particular, the definition below first appeared in the appendix to~\cite{barr79}.

\begin{defn}
Let $\cat{V}$ be a closed symmetric monoidal category with tensor $\tensor$ and internal hom $[-,-]$, and let $C$ be an object of $\cat{V}$. Then the category $\chu(\cat{V}, C)$ of \demph{Chu spaces (in $\cat{V}$ over $C$)} is defined as follows.
\begin{description}
\item[Objects:] An object of $\chu(\cat{V}, C)$ consists of two objects $A$ and $B$ of $\cat{V}$ together with a morphism $\lpair - , - \rpair \from A \times B \to C$ in $\cat{V}$.
\item[Morphisms:] A morphism $(A, B, \lpair - , - \rpair) \to (A', B', \lpair - , - \rpair')$ consists of morphisms $f \from A \to A'$ and $g \from B' \to B$ in $\cat{V}$ such that
\[
\xymatrix{
A \tensor B' \ar[r]^{\id_{A} \tensor g}\ar[d]_{f \tensor \id_{B'}} & A \tensor B\ar[d]^{\lpair - , - \rpair} \\
A' \tensor B'\ar[r]_{\lpair-, - \rpair'} & C
}
\]
commutes.
\end{description}
For a Chu space $\lpair-,-\rpair \from A \tensor B \to C$ of $\chu(\cat{V}, C)$ in $\cat{V}$ we call $A$ the object of \demph{points}, $B$ the object of \demph{states}, $C$ the object of \demph{truth values} and $\lpair-,-\rpair$ the \demph{pairing}.
\end{defn}

Clearly an aritation $\lpair -, - \rpair \from \cat{A} \times \cat{B} \to \cat{C}$ is a Chu space in $\CAT$ over $\cat{C}$. Let us interpret the semantics and structure functors for such an aritation in terms of Chu spaces.

Let $L \from \cat{A} \to \cat{L}$ be a proto-theory. Recall that by definition we have a pullback square
\[
\xymatrix{
\mod(L) \pullbackcorner \ar[r]^{J(L)}\ar[d]_{\sem(L)} & [\cat{L},\cat{C}]\ar[d]^{L^*} \\
\cat{B} \ar[r]_{\currylo} &[\cat{A},\cat{C}].
}
\]
The functor $J(L) \from \mod(L) \to [\cat{L}, \cat{C}]$ corresponds to a functor $\lpair - , - \rpair' \from \cat{L} \times \mod(L) \to \cat{C}$ and we can think of $( \cat{L},\mod(L), \lpair-,-\rpair')$ as an object of $\chu(\CAT, \cat{C})$. The commutativity of the above pullback corresponds to the commutativity of
\[
\xymatrix{
\cat{A} \times \mod(L)\ar[r]^{L \times \id}\ar[d]_{\id_{\cat{A}} \times \sem(L)} & \cat{L} \times \mod(L)\ar[d]^{\lpair-,-\rpair'} \\
\cat{A} \times \cat{B} \ar[r]_{\lpair-,- \rpair}& \cat{C},
}
\]
which says that $(L, \sem(L))$ is a morphism $(\cat{A}, \cat{B}, \lpair-,-\rpair) \to (\cat{L}, \mod(L), \lpair-,-\rpair')$ in $\chu(\CAT, \cat{C})$. 

Let $\cat{B}' \in \CAT$, and  $\lpair-,-\rpair'' \from \cat{L} \times \cat{B}' \to \cat{C}$, so that $(\cat{L}, \cat{B}', \lpair-,-\rpair'') \in \chu(\CAT, \cat{C})$, and let $G \from \cat{B}' \to \cat{B}$ be such that $(L, G)$ is a Chu space morphism $  (\cat{A}, \cat{B}, \lpair -,- \rpair) \to (\cat{L}, \cat{B}', \lpair-,-\rpair'')$. Then
\[
\xymatrix{
\cat{B}'\ar[r]^K\ar[d]_{G} & [\cat{L}, \cat{C}]\ar[d]^{L^*} \\
\cat{B}\ar[r]_{\currylo} &[\cat{A}, \cat{L}]
}
\]
commutes, where $K$ is the transpose of $\lpair-,-\rpair''$. Thus, by the universal property of pullbacks, there is a unique functor $G' \from \cat{B}' \to \mod(L)$ such that $ \sem(L) \of G' = G$ and $J(L) \of G' = K$. Equivalently, $G'$ is unique such that $(\id_{\cat{L}}, G')$ is a morphism $( \cat{L}, \mod(L), \lpair -,- \rpair') \to (\cat{L}, \cat{B}', \lpair-,-\rpair'')$ in $\chu(\CAT, \cat{C})$ with
\[
(\id_{\cat{L}}, G') \of (L, \sem(L)) = (L,G).
\]
In other words, $(\cat{L},\mod(L), \lpair-,-\rpair')$ and $(L, \sem(L))$ provide the universal way of extending $\cat{L}$ to a Chu space and $L$ to a morphism of Chu spaces out of $(\cat{A}, \cat{B}, \lpair-,- \rpair)$.

Now let $M \in \CAT$ and $U \from \cat{M} \to \cat{B}$; we will give a similar universal property of $\str(U) \from \cat{A} \to \thr(U)$ in terms of Chu spaces. First let us fix some terminology: call a morphism
\[
(F,G) \from (\cat{A}, \cat{B}, \lpair-,-\rpair) \to (\cat{A}', \cat{B}', \lpair -,- \rpair')
\]
of Chu spaces in $\CAT$ \demph{bijective on objects} if $F \from \cat{A} \to \cat{A}'$ is a bijective-on-objects functor.

Recall that by definition, we have a commutative square
\[
\xymatrix{
\cat{A} \ar[r]^{\curryhi}\ar[d]_{\str(U)} & [\cat{B}, \cat{C}]\ar[d]^{U^*} \\
\thr(U)\ar[r]_{I(U)} & [\cat{M}, \cat{C}].
}
\]
Writing $\lpair-,-\rpair' \from \thr(U) \times \cat{M} \to \cat{C}$ for the transpose of $I(U)$, this corresponds to the commutativity of
\[
\xymatrix
@C=40pt{
\cat{A}\times\cat{M}\ar[r]^-{\str(U)\times \id}\ar[d]_{\id_{\cat{A}} \times U} & \thr(U)\times \cat{M}\ar[d]^{\lpair-,-\rpair'} \\
\cat{A} \times \cat{B} \ar[r]_{\lpair-,-\rpair} & \cat{C}
}
\]
which says that $(\str(U), U)$ is a Chu space morphism $(\cat{A}, \cat{B}, \lpair-,-\rpair) \to (\thr(U), \cat{M}, \lpair-,-\rpair') $, and it is bijective on objects since $\str(U)$ is.

Let $\cat{A} \in \CAT$, and $\lpair-,-\rpair'' \from \cat{A}' \times \cat{M} \to \cat{C}$, so that $(\cat{A}', \cat{M}, \lpair-,-\rpair'')$ is an object of $\chu(\CAT, \cat{C})$. Let $F \from \cat{A} \to \cat{A}'$ be a bijective-on-objects functor such that $(F, U)$ is a morphism $(\cat{A}, \cat{B}, \lpair-,-\rpair) \to (\cat{A}', \cat{M}, \lpair-,-\rpair'')$ in $\chu(\CAT, \cat{C})$. This means that the bottom-left triangle in
\[
\xymatrix{
\cat{A}\ar[dd]_{F}\ar[rr]^{\str(U)}\ar[dr]^{\curryhi} & & \thr(U)\ar[dd]^{I(U)} \\
& [\cat{B}, \cat{C}]\ar[dr]^{U^*} & \\
\cat{A}'\ar[rr]_{M} & & [\cat{M}, \cat{C}]
}
\]
commutes, where $M$ is the transpose of $\lpair-,-\rpair'' \from \cat{A}' \times \cat{M} \to \cat{C}$. The top-right triangle commutes by definition of $\str(U)$. Since $F$ is bijective on objects and $I(U)$ is full and faithful, there exists a unique $F' \from \cat{A}' \to \thr(U)$ making both triangles in
\[
\xymatrix{
\cat{A}\ar[r]^{\str(U)}\ar[d]_F & \thr(U)\ar[d]^{I(U)} \\
\cat{A}'\ar[ur]^{F'}\ar[r]_M & [\cat{M}, \cat{C}].
}
\]
commute. That is, $F'$ is the unique bijective-on-objects functor such that $(F', \id_\cat{M})$ is a Chu space morphism $(\cat{A}', \cat{M}, \lpair-,-\rpair'') \to (\str(U), \cat{M}, \lpair-,-\rpair')$ such that
\[
(F', \id_{\cat{M}}) \of (F, U) = (\str(U), U).
\]
Thus $(\thr(U),\cat{M}, \lpair-,-\rpair')$ and $(\str(U), U)$ provide the universal way of extending $\cat{M}$ to a Chu space and $U$ to a morphism of Chu spaces out of $(\cat{A}, \cat{B}, \lpair-,- \rpair)$.

\chapter{Monads and the canonical aritation}
\label{chap:canonical1}
In this chapter we define a canonical aritation associated with each locally small category, and show that the resulting structure--semantics adjunction generalises the structure--semantics adjunction for monads described in Section~\ref{sec:notions-monads} of Chapter~\ref{chap:notions}.

\begin{defn*}
Let $\cat{B}$ be a locally small category. The \demph{canonical aritation} on $\cat{B}$ is given by the hom-functor
\[
\lpair -, - \rpair = \cat{B}(-,-) \from \cat{B}^{\op} \times \cat{B} \to \Set.
\]
In particular, the category of arities for this aritation is $\cat{B}^{\op}$, the base category is $\cat{B}$, and it takes values in $\Set$.
 \end{defn*}
 This aritation gives rise to a structure-semantics adjunction of the form
 \[
\xymatrix{
{\catover{\cat{B} }}\ar@<5pt>[r]_-{\perp}^-{\str}\ & {\proth(\cat{B}^{\op})^{\op}.}\ar@<5pt>[l]^-{\sem}
}
\]
 
 In Section~\ref{sec:alt-model-algebra} we give an alternative definition of a model of a proto-theory for the canonical aritation. This alternative formulation is often more convenient to work with in practice, but is only available for the canonical aritation. In Section~\ref{sec:monads-are-proths} we show how we can think of monads as proto-theories, and how the canonical aritation provides an extension of the usual semantics of monads. Finally in Section~\ref{sec:canonical1-monads-arities} we describe a variant of the canonical aritation for when the base category is equipped with a specified dense subcategory and we show that the corresponding semantics generalises the semantics of monads with arities as discussed in Section~\ref{sec:notions-monads-arites}.

 \section{An alternative description of $L$-models}
 \label{sec:alt-model-algebra}
 
In the case of the canonical aritation, there is an alternative formulation of the definition of a model of a proto-theory with arities $\cat{B}$, which is often more convenient to work with. In this section we state this alternative definition and show that it is equivalent to Definition~\ref{defn:models-explicit}. In this section only, we shall refer to models in the alternative formulation as ``algebras''. Once we have shown that models and algebras are the same thing, we shall use the term ``model'' to refer to them both, relying on notation to distinguish between the two equivalent formulations.

For the rest of this section, let $\cat{B}$ be a fixed locally small category and let $(L \from \cat{B}^{\op} \to \cat{L}) \in \proth(\cat{B})$. Whenever we refer to structure and semantics functors, we mean those induced by the canonical aritation on $\cat{B}$.

\begin{defn}
\label{defn:L-alg}
An \demph{algebra} $x$ of $L$ consists of an object $d^x \in \cat{B}$ together with a collection of maps
\[
\alpha^x_b \from \cat{L}(Ld^x, Lb) \to \cat{B}(b, d^x)
\]
natural in $b \in \cat{B}$, such that
\begin{enumerate}
\item
\label{part:L-alg-id}
$\alpha^x_{d^x} (\id_{L(d^x)}) = \id_{d^x}$; and
\item
\label{part:L-alg-comp}
for all $l \from Ld^x \to Lb$ and $k \from Lb \to Lb'$, we have
\[
\alpha^x_{b'} (k \of l) = \alpha^x_{b'} (k \of L( \alpha^x_b(l))).
\]
\end{enumerate}
\end{defn}

\begin{defn}
\label{defn:alg-hom}
An \demph{algebra homomorphism} between $L$-algebras $x \to y$ consists of a morphism $h \from d^x \to d^y$ in $\cat{B}$ such that for all $b \in \cat{B}$ and $l \from Ld^x \to Lb$ in $\cat{L}$, we have
\[
h \of \alpha^x_b(l) = \alpha^y_b (l \of L(h) ).
\]
\end{defn}

Let us compare the definitions of $L$-algebras and $L$-models. An $L$-model structure on $d \in \cat{B}$ consists of a functor $\Gamma \from \cat{L} \to \cat{B}$ such that $\Gamma \of L = \cat{B}(-, d)$; in particular, for all $b, b' \in \cat{B}$, we have a map
\[
\cat{L}(Lb, Lb') \to \Set( \cat{B}(b, d), \cat{B}(b', d)).
\]
An element of $\cat{L}(Lb, Lb')$ is an operation of the proto-theory $L$ with arity $b$ and output shape $b'$; the $L$-model structure on $d$ gives a concrete interpretation of such an operation as a way of turning (generalised) elements of $d$ of shape $b$ into elements of shape $b'$.

An $L$-algebra structure on $d$ consists of, for each $b \in \cat{B}$, a map
\[
\cat{L}(Ld, Lb) \to \cat{B}(b, d);
\]
that is, a way of turning operations of arity $d$ and output shape $b$ into elements of $d$ of shape $b$. Thus the equivalence between the two, which we establish below, says that in order to interpret operations of $L$ of arbitrary arity in the object $d$, it is enough to give a way of turning $d$-ary operations into elements of $d$.

This may seem surprising, but the same phenomenon occurs with monads. For a monad $\mnd{T} = (T, \eta, \mu)$ on $\cat{B}$, we think of $Tb$ as the ``object of $b$-ary operations'' of $\mnd{T}$, for any object $\cat{B}$. Thus we might think of generalised elements of  $Tb$ of shape $b'$ (that is, maps $b' \to Tb$) as operations with arity $b$ and output shape $b'$. Reasoning \emph{a priori} then, we might think that a $\mnd{T}$-model structure on $d \in \cat{B}$ should provide for each morphism $b' \to Tb$ a way of turning elements of $d$ of shape $b$ into elements of shape $b'$. That is, we should have a map
\[
\cat{B}(b', Tb) \to  \Set(\cat{B}(b,d), \cat{B}(b',d)).
\]
We could define a $\mnd{T}$-model along these lines and end up with a definition equivalent to the usual notion of $\mnd{T}$-algebra. However we know, of course, that a $\mnd{T}$-algebra structure on $d$ can be described much more simply, by a map $Td \to d$. Since elements of $Td$ are $d$-ary operations, such a map gives a way of turning $d$-ary operations into elements of $d$.

This similarity between algebras of an arbitrary proto-theory and algebras for a monad is not a coincidence: we show below that the semantics of proto-theories for the canonical aritation generalises the semantics of monads.

The following simple consequence of the definition shall often be useful.

\begin{lem}
\label{lem:alg-func}
Let $x = (d^x, \alpha^x)$ be an $L$-algebra. Then for all $b \in \cat{B}$ and $f \from b \to d^x$, we have
\[
\alpha^x_b (Lf) =f.
\]
\end{lem}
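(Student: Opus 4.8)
The plan is to derive the identity purely from the naturality of the structure maps $(\alpha^x_b)_b$ together with the unit axiom, condition~\bref{part:L-alg-id} of Definition~\ref{defn:L-alg}. Since we are using the canonical aritation, $L \from \cat{B}^{\op} \to \cat{L}$, so the assignment $b \mapsto \cat{L}(Ld^x, Lb)$ is contravariant in $b \in \cat{B}$, as is $b \mapsto \cat{B}(b, d^x)$; thus the data of an $L$-algebra packages $\alpha^x$ as a natural transformation $\cat{L}(Ld^x, L-) \Rightarrow \cat{B}(-, d^x)$ of functors $\cat{B}^{\op} \to \Set$.

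First I would write out the naturality square of $\alpha^x$ along the morphism $f \from b \to d^x$ of $\cat{B}$. This morphism corresponds to a morphism $d^x \to b$ of $\cat{B}^{\op}$, and applying $L$ gives $Lf \from Ld^x \to Lb$ in $\cat{L}$; the vertical legs of the naturality square are then postcomposition with $Lf$ on one side and precomposition with $f$ on the other. Naturality therefore yields, for every $l \in \cat{L}(Ld^x, Ld^x)$,
\[
\alpha^x_b(Lf \of l) = \alpha^x_{d^x}(l) \of f.
\]
Then I would specialise to $l = \id_{Ld^x}$: the left-hand side becomes $\alpha^x_b(Lf \of \id_{Ld^x}) = \alpha^x_b(Lf)$, while by condition~\bref{part:L-alg-id} of Definition~\ref{defn:L-alg} the right-hand side becomes $\id_{d^x} \of f = f$. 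This gives $\alpha^x_b(Lf) = f$, as required.

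The argument is short, and the only point needing care is keeping track of the variances: because $L$ emanates from $\cat{B}^{\op}$ rather than $\cat{B}$, one must be sure the naturality square is oriented correctly (and it is worth noting that only the unit axiom, not the compatibility axiom~\bref{part:L-alg-comp}, is used). Beyond that bookkeeping there is no real obstacle; the statement is essentially a two-line computation.
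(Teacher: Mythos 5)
Your proposal is correct and is essentially the paper's own proof: both write $Lf = Lf \of \id_{Ld^x}$, apply naturality of $\alpha^x$ to get $\alpha^x_{d^x}(\id_{Ld^x}) \of f$, and then invoke the unit axiom. The variance bookkeeping you flag is handled the same way in the paper.
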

\begin{proof}
We have
\[
\alpha^x_b(Lf) = \alpha^x_b ( Lf \of \id_{Ld^x}  ) = \alpha^x_{d^x} (\id_{Ld^x}) \of f = f,
\]
where the second equality is by naturality of $\alpha^x$, and the third is by Definition~\ref{defn:L-alg}.\bref{part:L-alg-id}.
\end{proof}

We now show that the notions of $L$-model and $L$-algebra coincide.

\begin{prop}
\begin{enumerate}
\item
\label{part:mod-to-alg}
Given an $L$-model $x = (d^x, \Gamma^x)$, we may define an $L$-algebra $(d^x, \alpha^x)$ with the same underlying object by defining
\begin{align*}
\alpha^x_b \from \cat{L}(Ld^x, Lb)& \to \cat{B}(b, d^x) \\
\hfill l &\mapsto \Gamma^x(l)(\id_{d^x})
\end{align*}
for each $b \in \cat{B}$, recalling that $\Gamma^x$ is a functor $\cat{L} \to \Set$ with $\Gamma^x \of L = \cat{B}(-, d^x)$, so $\Gamma^x (l)$ is a function $\cat{B}(d^x,d^x) \to \cat{B}(b,d^x)$.
\item
\label{part:alg-to-mod}
Given an $L$-algebra $x = (d^x, \alpha^x)$, we may define an $L$-model $(d^x, \Gamma^x)$ with the same underlying object by defining, for each $l \in \cat{L}(Lb', Lb)$,
\begin{align*}
\Gamma^x (l) \from \cat{B}(b', d^x) &\to \cat{B}(b, d^x) \\
\hfill f & \mapsto \alpha^x_{b'} (l \of L(f)).
\end{align*}
\item
\label{part:mod-alg-inv}
These two assignments, from model to algebra and vice versa, are inverse. 
\item
\label{part:mod-alg-hom}
Given $L$-models (or equivalently algebras) $x$ and $y$, a morphism $d^x \to d^y$ is an $L$-model homomorphism $(d^x, \Gamma^x) \to (d^y, \Gamma^y)$ if and only if it is an $L$-algebra homomorphism $(d^x, \alpha^x) \to (d^y, \alpha^y)$.
\end{enumerate}
\end{prop}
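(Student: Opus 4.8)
The plan is to prove the four parts of the proposition essentially in order, since each builds on the previous ones. For part~\bref{part:mod-to-alg}, given an $L$-model $(d^x, \Gamma^x)$, I would define $\alpha^x_b(l) = \Gamma^x(l)(\id_{d^x})$ and check the two algebra axioms. Naturality of $\alpha^x$ in $b$ follows from functoriality of $\Gamma^x$ together with the identity $\Gamma^x \of L = \cat{B}(-,d^x)$: for $g \from b \to b'$, the map $\cat{B}(g, d^x)$ is $\Gamma^x(Lg)$, and one computes both sides using functoriality. Axiom~\bref{part:L-alg-id} is immediate: $\alpha^x_{d^x}(\id_{Ld^x}) = \Gamma^x(\id_{Ld^x})(\id_{d^x}) = \id_{d^x}$. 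For axiom~\bref{part:L-alg-comp}, given $l \from Ld^x \to Lb$ and $k \from Lb \to Lb'$, I need $\Gamma^x(k \of l)(\id_{d^x}) = \Gamma^x(k \of L(\alpha^x_b(l)))(\id_{d^x})$. The left side is $\Gamma^x(k)(\Gamma^x(l)(\id_{d^x})) = \Gamma^x(k)(\alpha^x_b(l))$. The right side is $\Gamma^x(k)(\Gamma^x(L(\alpha^x_b(l)))(\id_{d^x})) = \Gamma^x(k)(\cat{B}(\alpha^x_b(l), d^x)(\id_{d^x})) = \Gamma^x(k)(\alpha^x_b(l))$, using $\Gamma^x \of L = \cat{B}(-,d^x)$ and that $\cat{B}(\alpha^x_b(l),d^x)$ precomposes with $\alpha^x_b(l)$. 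So the two agree.

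For part~\bref{part:alg-to-mod}, given an $L$-algebra $(d^x,\alpha^x)$, I define $\Gamma^x(l) \from \cat{B}(b',d^x) \to \cat{B}(b,d^x)$ by $f \mapsto \alpha^x_{b'}(l \of Lf)$ for $l \in \cat{L}(Lb',Lb)$, and must check that $\Gamma^x$ is a functor with $\Gamma^x \of L = \cat{B}(-,d^x)$. Functoriality on identities: $\Gamma^x(\id_{Lb'})(f) = \alpha^x_{b'}(\id_{Lb'} \of Lf) = \alpha^x_{b'}(Lf) = f$ by Lemma~\ref{lem:alg-func} (here I should note $f \from b' \to d^x$ so the lemma applies with the roles adjusted; more precisely $Lf \from Ld^x \to Lb'$... wait, $f\from b'\to d^x$ gives $Lf\from Ld^x\to Lb'$ in $\cat{L}$ since $L$ is contravariant on $\cat{B}$, so $\alpha^x_{b'}(Lf)=f$ is exactly Lemma~\ref{lem:alg-func}). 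For composition, given $l \from Lb' \to Lb$ and $k \from Lb \to Lb''$, I need $\Gamma^x(k \of l)(f) = \Gamma^x(k)(\Gamma^x(l)(f))$, i.e. $\alpha^x_{b''}(k \of l \of Lf) = \alpha^x_{b''}(k \of L(\alpha^x_{b'}(l \of Lf)))$; this is precisely axiom~\bref{part:L-alg-comp} applied with the operation $l \of Lf \from Ld^x \to Lb$. Finally $\Gamma^x(Lg)(f) = \alpha^x_{b'}(Lg \of Lf) = \alpha^x_{b'}(L(f \of g)) = f \of g = \cat{B}(g,d^x)(f)$ by Lemma~\ref{lem:alg-func} again, so $\Gamma^x \of L = \cat{B}(-,d^x)$.

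Part~\bref{part:mod-alg-inv} is then a direct verification that the two constructions are mutually inverse. Starting from a model $\Gamma^x$, passing to $\alpha^x$ and back gives $\Gamma'^x(l)(f) = \alpha^x_{b'}(l \of Lf) = \Gamma^x(l \of Lf)(\id_{d^x}) = \Gamma^x(l)(\Gamma^x(Lf)(\id_{d^x})) = \Gamma^x(l)(\cat{B}(f,d^x)(\id_{d^x})) = \Gamma^x(l)(f)$. Starting from an algebra $\alpha^x$, passing to $\Gamma^x$ and back gives $\alpha'^x_b(l) = \Gamma^x(l)(\id_{d^x}) = \alpha^x_b(l \of L(\id_{d^x})) = \alpha^x_b(l)$. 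For part~\bref{part:mod-alg-hom}, I would unwind both homomorphism conditions in terms of the formulas above. An $L$-model homomorphism condition says the naturality square for $\lpair a,h\rpair$ commutes, which in the canonical aritation means $\cat{B}(b,h) \of \Gamma^x(l) = \Gamma^y(l) \of \cat{B}(b',h)$ for all $l \in \cat{L}(Lb',Lb)$; evaluating at $\id_{d^x}$ (and using the already-established translations) should reduce this to the $L$-algebra homomorphism identity $h \of \alpha^x_b(l) = \alpha^y_b(l \of Lh)$, and conversely a short computation recovers the full naturality square from the algebra condition plus naturality of $\alpha^x,\alpha^y$. I do not anticipate a serious obstacle here: the whole proposition is a careful but routine Yoneda-style bookkeeping exercise. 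The one place demanding attention is keeping track of variances (that $L$ is contravariant on $\cat{B}$, so a morphism $f\from b'\to d^x$ in $\cat{B}$ yields $Lf \from Ld^x \to Lb'$ in $\cat{L}$) so that Lemma~\ref{lem:alg-func} is invoked with the correct orientation; getting a sign or a direction wrong there is the main risk, and I would double-check each application of that lemma against Definition~\ref{defn:L-alg}.
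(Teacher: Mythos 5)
Your proposal is correct and follows essentially the same route as the paper's proof: the same formulas, the same reduction of functoriality/composition axioms to each other via the composite operation $l \of Lf$, the same two uses of Lemma~\ref{lem:alg-func}, and the same specialisation to $b = d^x$, $f = \id_{d^x}$ for the homomorphism equivalence. The only differences are that the paper writes out the naturality check for $\alpha^x$ and the converse direction of part~\bref{part:mod-alg-hom} in full, but your sketches of those steps are the intended computations and go through without issue.
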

\begin{proof}
\bref{part:mod-to-alg}: First we check that $\alpha^x$ is natural. Given $g \from b' \to b$ in $\cat{B}$ and $l \from  Ld^x \to Lb $ in $\cat{L}$, we have
\begin{align*}
\alpha^x_{b'} ( Lg \of l) &= \Gamma^x (Lg \of l ) (\id_{d^x}) \\
& = \Gamma^x (Lg) \of \Gamma^x (l) (\id_{d^x}) \\
& = g^* \of \Gamma^x (l) (\id_{d^x}) \\
&= \alpha^x_b (l) \of g
\end{align*}
as required. Clearly
\[
\alpha^x_{d^x} (\id_{Ld^x}) = \Gamma^x(\id_{Ld^x})(\id_{d^x}) = \id_{d^x}.
\]
Now let $l \from Ld^x \to Lb$ and $k \from Lb \to Lb'$. Then
\begin{align*}
\alpha^x_{b'}(k \of l) &= \Gamma^x( k \of l) (\id_{d^x}) \\
&= \Gamma^x(k) ( \Gamma^x (l) (\id_{d^x})) \\
&= \Gamma^x (k) (\alpha^x_b (l)) \\
&= \Gamma^x (k) \of \alpha^x_b(l)^* (\id_{d^x}) \\
&= \Gamma^x(k) \of \Gamma^x (L \alpha^x_b(l)) (\id_{d^x}) \\
&= \Gamma^x ( k \of L \alpha^x_b(l) )  (\id_{d^x}) \\
&= \alpha^x_{b'} (k \of  L \alpha^x_b (l))
\end{align*}
as required. So $(d^x, \alpha^x)$ is indeed an $L$-algebra.

\bref{part:alg-to-mod}: We check that $\Gamma^x$ as defined is functorial. Certainly
\begin{align*}
\Gamma^x (\id_{Lb}) \from \cat{B}(b, d^x) &\to \cat{B}(b, d^x) \\
\hfill f & \mapsto \alpha^x_b (Lf) = f
\end{align*}
by Lemma~\ref{lem:alg-func}. Let $ l \from Lb \to Lb'$ and $k \from Lb' \to Lb''$. Then if $f \in \cat{B}(b, d^x)$, we have
\begin{align*}
\Gamma^x (k \of l) (f) &= \alpha^x_{b''} ( k \of l \of Lf) \\
&= \alpha^x_{b''}(k \of L\alpha^x_{b'}( l \of Lf ) ) \\
&= \Gamma^x (k) (\alpha^x_{b'}( l \of Lf)) \\
&= \Gamma^x (k) ( \Gamma^x(l)(f)) \\
&= \Gamma^x (k) \of \Gamma^x(l) (f)
\end{align*}
and hence $ \Gamma^x (k \of l) = \Gamma^x (k) \of \Gamma^x(l)$. In addition, if $f \from b \to d^x$ and $g \from b' \to b$ them
\[
\Gamma^x(Lg)(f) = \alpha^x_{b'} (Lg \of Lf) = \alpha^x_{b'} (L(f\of g)) = f \of g
 \]
 by Lemma~\ref{lem:alg-func}, so $\Gamma^x \of L = \cat{B}(-, d^x)$. Hence $(d^x, \Gamma^x)$ is an $L$-model.

\bref{part:mod-alg-inv}: In one direction we must show that for any $L$-model $(d^x, \Gamma^x)$, any $l \in \cat{L}(Lb , Lb')$ and any $f \in \cat{B}(b, d^x)$, we have
\[
\Gamma^x(l) (f) = \Gamma^x(l \of Lf ) (\id_{d^x}).
\] 
But
\[
\Gamma^x( l \of Lf )(\id_{d^x}) = \Gamma^x (l) \of \Gamma^x (Lf) (\id_{d^x}) = \Gamma^x(l) \of f^* (\id_{d^x}) = \Gamma^x(l) (f).
\]
In the other direction, we must show that for any $L$-algebra $(d^x, \alpha^x)$, any $b \in \cat{B}$ and any $l \in \cat{L}(Ld^x, Lb)$, we have 
\[
\alpha^x_b (l) = \alpha^x_b ( l \of L(\id_{d^x})).
\]
But this is immediate.

\bref{part:mod-alg-hom}: Let $h \from d^x \to d^y$ be a model homomorphism; that means that for all $l \in \cat{L} (Lb, Lb')$ and $f \in \cat{B}(b, d^x)$, we have
\begin{equation}
\label{eqn:mod-hom}
\Gamma^y(l)(h \of f) = h \of \Gamma^x(l) (f),
\end{equation}
or equivalently,
\begin{equation}
\label{eqn:alg-hom}
\alpha^y_{b'}( l \of Lf \of Lh) = h \of \alpha^x_{b'}( l \of Lf ).
\end{equation}
But in the case when $b = d^x$ and $f = \id_{d^x}$, this is precisely what is required for $h$ to be an algebra homomorphism. Conversely, if $h$ in an algebra homomorphism, then~\bref{eqn:alg-hom} holds as a special case of Definition~\ref{defn:alg-hom}, and hence~\bref{eqn:mod-hom} holds, and so $h$ is also a model homomorphism.
\end{proof}

From now on, when we write ``$x$ is a model of $L$'', this will be understood to mean we have both an $L$-model denoted $(d^x, \Gamma^x)$ and the corresponding $L$-algebra denoted $(d^x, \alpha^x)$, and we will freely use whichever manifestation of the structure is most convenient at the time. We will also use the term ``model'' to refer to either of these, relying on the difference in notation to indicate which is intended.

 \section{Monads} 
 \label{sec:monads-are-proths}
Throughout this section, let $\cat{B}$ be a locally small category unless stated otherwise. We will show that the structure--semantics adjunction that arises from the canonical aritation of $\cat{B}$ generalises the adjunction between right adjoints into $\cat{B}$ and monads on $\cat{B}$ that was described in Proposition~\ref{prop:sem-str-adj-monad}. More precisely, we will show that there is a canonical full and faithful functor $\monad(\cat{B}) \incl \proth(\cat{B}^{\op})$  such that
\[
\vcenter{
\xymatrix{
{\radjover{\cat{B}}}\ar[r]^{\str_{\monad}}\ar[d] & {\monad(\cat{B})^{\op}}\ar[d] \\
{\catover{\cat{B}}}\ar[r]_{\str} & {\proth(\cat{B}^{\op})^{\op}}
}}
\quad \quad \text{and} \quad \quad
\vcenter{
\xymatrix{
{\radjover{\cat{B}}}\ar[d] & {\monad(\cat{B})^{\op}}\ar[d]\ar[l]_{\sem_{\monad}} \\
{\catover{\cat{B}}} & {\proth(\cat{B}^{\op})^{\op}}\ar[l]^{\sem}
}}
\]
commute, where the left-hand vertical arrows are the obvious forgetful functor.

First we construct the embedding $\monad(\cat{B}) \incl \proth(\cat{B}^{\op})$.

\begin{defn}
\label{defn:kle-mor}
Given monads $\mnd{T}$ and $\mnd{T}'$ on $\cat{B}$, a \demph{Kleisli morphism} $\hat{P} \from \mnd{T} \to \mnd{T}'$ is an operation $\hat{P}$ that sends morphisms $f \from b \to Tb'$ in $\cat{B}$ to morphisms $\hat{P}(f) \from b \to T'b'$ such that 
\begin{enumerate}
\item
\label{part:kle-mor-unit}
If $k \from b \to b'$ then $\hat{P}(\eta_{b'} \of k) = \eta_{b'}' \of k$, and
\item
\label{part:kle-mor-mult}
If $f \from b \to Tb'$ and $g \from b' \to Tb''$, then $\hat{P}(\mu_{b''} \of Tg \of f) = \mu_{b''}' \of T'\hat{P}(g) \of \hat{P}(f)$.
\end{enumerate}
\end{defn}
Clearly a Kleisli morphism $\hat{P} \from \mnd{T} \to \mnd{T}'$ is precisely the same as a functor $P \from \cat{B}_{\mnd{T}} \to \cat{B}_{\mnd{T}'}$ such that $ P \of F_{\mnd{T}} = F_{\mnd{T}'}$, where $F_{\mnd{T}} \from \cat{B} \to \cat{B}_{\mnd{T}}$ is the canonical free functor from $\cat{B}$ to the Kleisli category of $\mnd{T}$.

The following lemma will be used in the proof of Proposition~\ref{prop:kle-ff}.

\begin{lem}
\label{lem:kleisli}
For any Kleisli morphism $\hat{P} \from \mnd{T} \to \mnd{T}'$ and for every $f \from b \to Tb'$ in $\cat{B}$, we have $\hat{P}(f) = \hat{P}(\id_{Tb'}) \of f$.
\end{lem}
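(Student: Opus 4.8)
The plan is to recognise the morphism $f \from b \to Tb'$ as an instance of the Kleisli-style composite appearing in condition~\bref{part:kle-mor-mult} of Definition~\ref{defn:kle-mor}, arranged so that $\hat{P}(\id_{Tb'})$ emerges on the right-hand side. Using the monad unit law $\mu_{b'} \of \eta_{Tb'} = \id_{Tb'}$, we may write
\[
f = \mu_{b'} \of T(\id_{Tb'}) \of \eta_{Tb'} \of f,
\]
which is exactly the composite $\mu_{b''} \of Tg \of f_0$ of condition~\bref{part:kle-mor-mult} with $b'' = b'$, with $f_0 = \eta_{Tb'} \of f \from b \to T(Tb')$, and with $g = \id_{Tb'} \from Tb' \to Tb'$ (so the intermediate object is $Tb'$).

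First I would apply condition~\bref{part:kle-mor-mult} to this data, obtaining
\[
\hat{P}(f) = \mu'_{b'} \of T'\hat{P}(\id_{Tb'}) \of \hat{P}(\eta_{Tb'} \of f).
\]
Next I would simplify $\hat{P}(\eta_{Tb'} \of f)$ by condition~\bref{part:kle-mor-unit} applied with $k = f$, which gives $\hat{P}(\eta_{Tb'} \of f) = \eta'_{Tb'} \of f$. Substituting this in, and then using naturality of $\eta'$ to slide $\hat{P}(\id_{Tb'})$ past the unit, namely $T'\hat{P}(\id_{Tb'}) \of \eta'_{Tb'} = \eta'_{T'b'} \of \hat{P}(\id_{Tb'})$, the right-hand side becomes $\mu'_{b'} \of \eta'_{T'b'} \of \hat{P}(\id_{Tb'}) \of f$; the monad unit law $\mu'_{b'} \of \eta'_{T'b'} = \id_{T'b'}$ then collapses this to $\hat{P}(\id_{Tb'}) \of f$, which is the desired identity.

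I do not expect any genuine obstacle here: the argument is a short diagram chase, and the only points requiring care are choosing the correct instances of the two conditions of Definition~\ref{defn:kle-mor} and invoking the correct monad unit law (the relevant one is $\mu \of \eta T = \id$, not $\mu \of T\eta = \id$). Conceptually the statement just says that the functor $P \from \cat{B}_{\mnd{T}} \to \cat{B}_{\mnd{T}'}$ corresponding to $\hat{P}$ is pinned down on an arbitrary Kleisli arrow $\bar f \from b \to b'$ by the constraint $P \of F_{\mnd{T}} = F_{\mnd{T}'}$ together with its value on the Kleisli co-unit, since $\bar f$ factors as the co-unit at $b'$ composed with $F_{\mnd{T}}(f)$; one could equally present the whole proof in that language if preferred.
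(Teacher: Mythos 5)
Your proof is correct and follows essentially the same route as the paper's: both rewrite $f$ as $\mu_{b'} \of T(\id_{Tb'}) \of \eta_{Tb'} \of f$ and apply condition~(ii) followed by condition~(i) of Definition~\ref{defn:kle-mor}. The only (immaterial) difference is in the final simplification: you collapse $\mu'_{b'} \of T'\hat{P}(\id_{Tb'}) \of \eta'_{Tb'}$ using naturality of $\eta'$ and the unit law for $\mnd{T}'$, whereas the paper instead runs conditions~(i) and~(ii) backwards to reassemble $\hat{P}(\mu_{b'} \of T(\id_{Tb'}) \of \eta_{Tb'}) = \hat{P}(\id_{Tb'})$.
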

\begin{proof}
We have
\begin{align*}
\hat{P}(f) &= \hat{P}(\mu_{b'} \of \eta_{Tb'} \of f) = \hat{P}(\mu_{b'} \of T(\id_{Tb'}) \of \eta_{Tb'} \of f) & \\
&= \mu'_{b'} \of T' \hat{P}(\id_{Tb'}) \of \hat{P}(\eta_{Tb'} \of f) & \text{(by Definition \ref{defn:kle-mor}.\bref{part:kle-mor-mult})} \\
&= \mu'_{b'} \of T' \hat{P}(\id_{Tb'}) \of \eta'_{Tb'} \of f & \text{(by \ref{defn:kle-mor}.\bref{part:kle-mor-unit})} \\
&= \mu'_{b'} \of T' \hat{P}(\id_{Tb'}) \of \hat{P}(\eta_{Tb'}) \of f & \text{(by \ref{defn:kle-mor}.\bref{part:kle-mor-unit})} \\
&= \hat{P} (\mu_{b'} \of T(\id_{Tb'}) \of \eta_{Tb'}) \of f & \text{(by \ref{defn:kle-mor}.\bref{part:kle-mor-mult})} \\
&= \hat{P}(\id_{Tb'}) \of f
\end{align*}
as required.
\end{proof}

\begin{defn}
\label{defn:kle-functor-objects}
Given a monad $\mnd{T}$ on $\cat{B}$, define the proto-theory $\kl{\mnd{T}} \in \proth(\cat{B}^{\op})$ to be $F_\mnd{T}^{\op} \from \cat{B}^{\op} \to \cat{B}_{\mnd{T}}^{\op}$.
\end{defn}

\begin{remark}
\label{rem:bo-no-op}
Clearly a bijective-on objects functor out of $\cat{B}^{\op}$ is essentially the same thing as a bijective-on-objects functor out of $\cat{B}$. In order to avoid a proliferation of $\op$'s, we can therefore identify $\kl{\mnd{T}}$ (which we have defined to be $F_{\mnd{T}}^{\op} \from \cat{B}^{\op} \to \cat{B}_{\mnd{T}}^{\op}$) with $F_{\mnd{T}} \from \cat{B} \to \cat{B}_{\mnd{T}}$. Similarly, when we define $\kle \from \monad(\cat{B}) \to \proth(\cat{B}^{\op})$ on monad morphisms $\phi \from \mnd{T} \to \mnd{T}'$ (as we are about to), we will define $\kl{\phi}$ as a functor $\cat{B}_{\mnd{T}} \to \cat{B}_{\mnd{T}'}$, even though strictly speaking it should be a functor $\cat{B}_{\mnd{T}}^{\op} \to \cat{B}_{\mnd{T}'}^{\op}$. This minor abuse of notation should not cause any confusion, but is worth keeping in mind.
\end{remark}

\begin{defn}
\label{defn:kle-functor-morphisms}
Given a monad morphism $\phi \from \mnd{T} \to \mnd{T}'$, define $\kl{\phi} \from\kl{\mnd{T}} \to \kl{\mnd{T}'}$ in $\proth(\cat{B}^{\op})$ to be the functor $\cat{B}_{\mnd{T}} \to \cat{B}_{\mnd{T}'}$ with corresponding Kleisli morphism $\widehat{\kl{\phi}} \from \mnd{T} \to \mnd{T}'$  given by 
\[
\widehat{\kl{\phi}}(f) = \phi_{b} \of f
\]
for $f \from b' \to Tb$.
\end{defn}

\begin{lem}
\label{lem:kle-functor}
Definitions~\ref{defn:kle-functor-objects} and~\ref{defn:kle-functor-morphisms} together give a well-defined functor $\kle \from \monad(\cat{B}) \to \proth(\cat{B}^{\op})$; that is, for each monad morphism $\phi \from \mnd{T} \to \mnd{T}'$, the given definition of $\kle{\phi}$ is indeed a Kleisli morphism.
\end{lem}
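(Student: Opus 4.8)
The plan is to verify directly that $\widehat{\kl{\phi}}$ as given in Definition~\ref{defn:kle-functor-morphisms} satisfies the two conditions of Definition~\ref{defn:kle-mor}, using only the monad morphism axioms for $\phi$. Recall that a monad morphism $\phi \from \mnd{T} \to \mnd{T}'$ is a natural transformation $\phi \from T \to T'$ satisfying $\phi \of \eta = \eta'$ (compatibility with units) and $\phi \of \mu = \mu' \of \phi T' \of T\phi = \mu' \of T'\phi \of \phi T$ (compatibility with multiplications); naturality of $\phi$ will also be used repeatedly. The definition sets $\widehat{\kl{\phi}}(f) = \phi_b \of f$ for $f \from b' \to Tb$, so I need to check this is well-typed (indeed $\phi_b \from Tb \to T'b$, so $\phi_b \of f \from b' \to T'b$, as required) and then check conditions \bref{part:kle-mor-unit} and \bref{part:kle-mor-mult}.

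For condition~\bref{part:kle-mor-unit}, given $k \from b' \to b$, I would compute
\[
\widehat{\kl{\phi}}(\eta_b \of k) = \phi_b \of \eta_b \of k = \eta'_b \of k,
\]
where the second equality is exactly the unit-compatibility axiom $\phi \of \eta = \eta'$ (at component $b$). For condition~\bref{part:kle-mor-mult}, given $f \from b' \to Tb$ and $g \from b \to Tb''$ (adjusting the index names from the statement to avoid clashes), I would compute $\widehat{\kl{\phi}}(\mu_{b''} \of Tg \of f)$ and massage it into $\mu'_{b''} \of T'\widehat{\kl{\phi}}(g) \of \widehat{\kl{\phi}}(f)$. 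Explicitly:
\begin{align*}
\widehat{\kl{\phi}}(\mu_{b''} \of Tg \of f) &= \phi_{b''} \of \mu_{b''} \of Tg \of f \\
&= \mu'_{b''} \of T'\phi_{b''} \of \phi_{Tb''} \of Tg \of f \\
&= \mu'_{b''} \of T'\phi_{b''} \of T'g \of \phi_b \of f \\
&= \mu'_{b''} \of T'(\phi_{b''} \of g) \of \phi_b \of f \\
&= \mu'_{b''} \of T'\widehat{\kl{\phi}}(g) \of \widehat{\kl{\phi}}(f),
\end{align*}
where the second line uses multiplication-compatibility of $\phi$ (in the form $\phi \of \mu = \mu' \of T'\phi \of \phi T$), the third line uses naturality of $\phi$ applied to $g \from b \to Tb''$ (so $\phi_{Tb''} \of Tg = T'g \of \phi_b$), the fourth line is functoriality of $T'$, and the last line unwinds the definition of $\widehat{\kl{\phi}}$.

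This lemma is essentially a routine unwinding, so there is no real obstacle; the only thing to be careful about is the direction conventions built into $\kl{\mnd{T}}$. Since $\kl{\mnd{T}}$ is defined as $F_{\mnd{T}}^{\op} \from \cat{B}^{\op} \to \cat{B}_{\mnd{T}}^{\op}$ but, per Remark~\ref{rem:bo-no-op}, we identify it with $F_{\mnd{T}} \from \cat{B} \to \cat{B}_{\mnd{T}}$, I would note at the outset that it suffices to exhibit $\kl{\phi}$ as a functor $\cat{B}_{\mnd{T}} \to \cat{B}_{\mnd{T}'}$ commuting with the free functors, i.e. as a Kleisli morphism $\mnd{T} \to \mnd{T}'$ in the sense of Definition~\ref{defn:kle-mor}, which is exactly what the two displayed computations establish. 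Functoriality of $\kle$ itself (preservation of identities and composites of monad morphisms) then follows immediately from the formula $\widehat{\kl{\phi}}(f) = \phi_b \of f$, since $\widehat{\kl{\id_{\mnd{T}}}}(f) = f$ and $\widehat{\kl{\psi \of \phi}}(f) = (\psi_b \of \phi_b) \of f = \widehat{\kl{\psi}}(\widehat{\kl{\phi}}(f))$ using Lemma~\ref{lem:kleisli} to see that composition of the corresponding functors corresponds to composition of Kleisli morphisms; I would include this as a short closing remark.
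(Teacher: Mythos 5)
Your proof is correct and follows essentially the same route as the paper: both verify the two Kleisli-morphism axioms by the same direct computations, using unit-compatibility of $\phi$ for the first and multiplication-compatibility plus naturality of $\phi$ for the second. The only differences are cosmetic (index naming) and your optional closing remark on functoriality of $\kle$, which goes slightly beyond what the lemma asks.
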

\begin{proof}
First we check Definition~\ref{defn:kle-mor}.\bref{part:kle-mor-unit}. Given $k \from b \to b'$,
\begin{align*}
\widehat{\kl{\phi}}(\eta_{b'} \of k) = \phi_{b'} \of \eta_{b'} \of k = \eta'_{b'} \of k
\end{align*}
as required. And if $f \from b \to Tb'$, $g \from b' \to Tb''$, then
\begin{align*}
\widehat{\kl{\phi}}(\mu_{b''} \of Tg \of f) & = \phi_{b''} \of \mu_{b''} \of Tg \of f \\
&= \mu'_{b''} \of T' \phi_{b''} \of \phi_{Tb''} \of Tg \of f \\
&= \mu'_{b''} \of T' \phi_{b''} \of T'g \of \phi_{b'} \of f \\
&= \mu'_{b''} \of T' \widehat{\kl{\phi}}(g) \of \widehat{\kl{\phi}}(f)
\end{align*}
as required.
\end{proof}

We have defined the proto-theory associated to a monad $\mnd{T}$ to be given by its Kleisli category. This makes precise the standard intuition that $Tb$ is ``the object of $b$-ary operations'' in the following way. Recall that, in an arbitrary category $\cat{B}$, we cannot talk about elements of an object $b \in \cat{B}$, but we can talk about ``generalised elements''; a generalised element of $b$ is simply a morphism with codomain $b$, and the domain of that morphism is sometimes called the ``shape'' of the generalised element. So, given a monad $\mnd{T} = (T, \eta, \mu)$ on $\cat{B}$, a generalised element of $Tb$ with shape $b'$ is a morphism $b' \to Tb$, which is precisely the same as an operation of $\kl{\mnd{T}}$ with arity $\cat{B}$ and shape $b'$ in the sense of proto-theories.

\begin{prop}
\label{prop:kle-ff}
The functor $\kle \from \monad(\cat{B}) \to \proth(\cat{B}^{\op})$ is full and faithful, and its essential image consists of the bijective-on-objects functors out of $\cat{B}$ with right adjoints.
\end{prop}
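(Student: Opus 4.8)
The plan is to prove the two claims — full-faithfulness of $\kle$, and the identification of its essential image — separately, with full-faithfulness coming first since it is the more substantial part.

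\textbf{Full-faithfulness.} Fix monads $\mnd{T} = (T,\eta,\mu)$ and $\mnd{T}' = (T',\eta',\mu')$ on $\cat{B}$. A morphism $\kl{\mnd{T}} \to \kl{\mnd{T}'}$ in $\proth(\cat{B}^{\op})$ is (identifying $\op$'s away as in Remark~\ref{rem:bo-no-op}) exactly a functor $\cat{B}_{\mnd{T}} \to \cat{B}_{\mnd{T}'}$ commuting with the free functors, i.e.\ a Kleisli morphism $\hat{P} \from \mnd{T} \to \mnd{T}'$. So I must show that $\phi \mapsto \widehat{\kl{\phi}}$ is a bijection from monad morphisms $\mnd{T} \to \mnd{T}'$ to Kleisli morphisms $\mnd{T} \to \mnd{T}'$. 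For injectivity: given $\hat{P}$, Lemma~\ref{lem:kleisli} says $\hat{P}(f) = \hat{P}(\id_{Tb'}) \of f$ for all $f \from b \to Tb'$, so $\hat{P}$ is determined by the morphisms $\hat{P}(\id_{Tb}) \from Tb \to T'b$; and if $\hat{P} = \widehat{\kl{\phi}}$ then $\hat{P}(\id_{Tb}) = \phi_{Tb} \of \id_{Tb} = \phi_{Tb}$, wait — more directly, $\widehat{\kl{\phi}}(\id_{Tb})$ should recover $\phi_b$: indeed taking $f = \eta_b \from b \to Tb$ gives $\widehat{\kl{\phi}}(\eta_b) = \phi_b \of \eta_b = \eta'_b$, which is not yet $\phi_b$; instead I use that $\widehat{\kl{\phi}}$ applied to a suitable $f$ recovers $\phi$. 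The clean statement is: $\phi_b = \hat{P}(\id_{Tb})$ when $\hat{P} = \widehat{\kl\phi}$, because $\widehat{\kl\phi}(\id_{Tb}) = \phi_{Tb}$... Let me instead define the candidate inverse: given a Kleisli morphism $\hat{P}$, set $\phi_b := \hat{P}(\id_{Tb}) \from Tb \to T'b$. I would then check that $\phi$ is natural (using Lemma~\ref{lem:kleisli} and naturality of $\eta$, together with Kleisli-morphism axioms applied to $\eta_{b'} \of k$ and composites), that $\phi$ commutes with the units ($\phi_b \of \eta_b = \hat P(\id_{Tb}) \of \eta_b = \hat P(\eta_b) = \eta'_b$ by Lemma~\ref{lem:kleisli} and Definition~\ref{defn:kle-mor}.\bref{part:kle-mor-unit}), and that $\phi$ commutes with the multiplications (apply Definition~\ref{defn:kle-mor}.\bref{part:kle-mor-mult} with $f = \id_{TTb}$, $g = \id_{Tb}$, using $\mu_b = \mu_b \of T(\id_{Tb}) \of \id_{TTb}$, and compare with $\mu'_b \of \phi_{Tb} \of T'\mu_b$... taking care with the exact form of the monad-morphism square for $\mu$). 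Finally I verify the two assignments are mutually inverse: $\widehat{\kl{\phi}}(\id_{Tb}) = \phi_b$ recovers $\phi$, and $\hat P(f) = \hat P(\id_{Tb'})\of f = \phi_{b'} \of f = \widehat{\kl{\phi}}(f)$ recovers $\hat P$ by Lemma~\ref{lem:kleisli}. Naturality in $\mnd{T},\mnd{T}'$ (functoriality of the bijection) is then routine.

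\textbf{Essential image.} A proto-theory $L \from \cat{B}^{\op} \to \cat{L}$ is in the essential image of $\kle$ iff, up to isomorphism in $\proth(\cat{B}^{\op})$, it is $F_{\mnd{T}}^{\op}$ for some monad $\mnd{T}$ — equivalently (via $\op$), $\cat{L}^{\op}$ is the Kleisli category of a monad on $\cat{B}$ over $\cat{B}$. The standard fact (Eilenberg--Moore / Kleisli theory) is that a bijective-on-objects functor $G \from \cat{B} \to \cat{D}$ underlies the free functor of a Kleisli category of a monad on $\cat{B}$ if and only if $G$ has a right adjoint: given such a $G \dashv H$, the induced monad $\mnd{T} = HG$ has Kleisli category isomorphic to $\cat{D}$ over $\cat{B}$ precisely because $G$ is bijective on objects (the comparison functor $\cat{B}_{\mnd{T}} \to \cat{D}$ is always full and faithful and identity-on-$\cat{B}$, hence an iso once it is bijective on objects, which follows from $G$ being so); conversely $F_{\mnd{T}}$ always has the right adjoint given by $b' \mapsto Tb'$ on objects. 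Translating through $\op$: $L$ is in the essential image iff $L$, viewed as a bijective-on-objects functor out of $\cat{B}$ (Remark~\ref{rem:bo-no-op}), has a right adjoint. I would spell out the iso $\cat{B}_{\mnd T} \cong \cat{D}$ is a morphism in $\proth(\cat{B}^{\op})$, so "essential image" is exactly as claimed.

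\textbf{Main obstacle.} I expect the genuinely fiddly part to be the book-keeping in showing the candidate $\phi_b = \hat P(\id_{Tb})$ satisfies the multiplication axiom for a monad morphism — one must pick the right instance of Definition~\ref{defn:kle-mor}.\bref{part:kle-mor-mult} and massage it using Lemma~\ref{lem:kleisli} and the monad axioms to land exactly on the square $\phi \of \mu = \mu' \of T'\phi \of \phi T$. The essential-image half is conceptually standard (Kleisli adjunctions) but requires carefully noting where bijectivity-on-objects is used to upgrade the comparison functor from full-and-faithful to an isomorphism, and checking the $\op$ translation does not introduce a variance error.
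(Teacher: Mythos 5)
Your proposal is correct and follows essentially the same route as the paper: full-faithfulness via the mutually inverse assignments $\phi \mapsto \widehat{\kl{\phi}}$ and $\hat P \mapsto (\phi_b := \hat P(\id_{Tb}))$, with Lemma~\ref{lem:kleisli} doing the work, and the essential image via the Kleisli comparison functor being full, faithful and bijective on objects. The only wobble is the momentary mis-indexing of $\widehat{\kl{\phi}}(\id_{Tb})$ as $\phi_{Tb}$ rather than $\phi_b$ (the subscript in Definition~\ref{defn:kle-functor-morphisms} is the index of the codomain $Tb$, not the domain), which you correctly resolve before the final verification.
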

The first part of this proposition says that when we pass from monads to proto-theories we are not throwing away any \emph{structure} --- having an associated monad is merely a \emph{property} of a proto-theory.

The second part tells us exactly what this property is. Recall that an arbitrary functor $F \from \cat{B} \to \cat{D}$ has a right adjoint if and only if the presheaf $\cat{D}(F-, d)$ is representable for each $d \in \cat{D}$. Thus, a proto-theory $L \from \cat{B}^{\op} \to \cat{L}$ comes from a monad if and only if, for each $b \in \cat{B}$, the ``presheaf of $b$-ary operations'' $\cat{L}(Lb, L-) \from \cat{B}^{\op} \to \SET$ (with variable shape) is representable. In that case, the object that represents it is $Tb$, the ``object of $b$-ary operations'', for the corresponding monad.

\begin{proof}
First let us show that $\kle$ is faithful. Suppose $\phi, \theta \from \mnd{T} \to \mnd{T}'$ with $\phi \neq \theta$. Then there is some $b \in \cat{B}$ for which $ \phi_b \neq \theta_b$. Then 
\[
\widehat{\kl{\phi}}(\id_{Tb}) = \phi_b \neq \theta_b = \widehat{\kl{\theta}}(\id_{Tb}),
\]
so $\kl{\phi} \neq \kl{\theta}$ as required.

Next we check that $\kle$ is full. Suppose $P \from \cat{B}_{\mnd{T}} \to \cat{B}_{\mnd{T}'}$ with $P F_{\mnd{T}} = F_{\mnd{T}'}$. For each $b \in \cat{B}$, define
\[
\phi_b = \hat{P}(\id_{Tb}) \from Tb \to T'b.
\]
We must show that $\phi$ is natural. Given $f \from b \to b'$,
\begin{align*}
T'f \of \phi_b &= T'f \of \hat{P}(\id_{Tb}) = \mu'_{b'} \of T' \eta'_{b'} \of T'f \of \hat{P}(\id_{Tb}) & \\
&= \mu'_{b'} \of T' \hat{P}(\eta_{b'} \of f) \of \hat{P}(\id_{Tb}) & \text{(by \ref{defn:kle-mor}.\bref{part:kle-mor-unit})}\\
&= \hat{P}(\mu_{b'} \of T(\eta_{b'} \of f) \of \id_{Tb}) & \text{(by \ref{defn:kle-mor}.\bref{part:kle-mor-mult})}\\
& = \hat{P}(Tf) = \hat{P}(\id_{Tb'}) \of Tf & \text{(by Lemma~\ref{lem:kleisli})} \\
&= \phi_{b'} \of Tf
\end{align*}
as required. Next we check that $\phi$ is compatible with the units:
\[
\phi_b \of \eta_b = \hat{P}(\id_{Tb}) \of \eta_b = \hat{P}(\eta_b) = \eta'_b,
\]
using Lemma~\ref{lem:kleisli} and Definition~\ref{defn:kle-mor}.\bref{part:kle-mor-unit}. Finally we check compatibility with the multiplications:
\begin{align*}
\mu'_{b} \of T'\phi_b \of \phi_{Tb} & = \mu'_b \of T'\hat{P}(\id_{Tb}) \of \hat{P}(\id_{TTb}) & \\
& = \hat{P}(\mu_b \of T\id_{Tb} \of \id_{TTb}) = \hat{P}(\mu_b) & \text{(by \ref{defn:kle-mor}.\bref{part:kle-mor-mult})} \\
&= \hat{P}(\id_{Tb}) \of \mu_b = \phi_b \of \mu_b. & \text{(by Lemma~\ref{lem:kleisli})}
\end{align*}
Thus $\phi$ is a monad morphism $\mnd{T} \to \mnd{T}'$, and it is clear from Lemma~\ref{lem:kleisli} that $\kl{\phi} = P$, which concludes the proof that $\kle$ is full.

All that remains is to show that the essential image of $\kle$ consists of the bijective-on-objects functors with right adjoints. Certainly every object of $\proth(\cat{B}^{\op})$ in the image of $\kle$ \emph{does} have a right adjoint, since all functors of the form $F_{\mnd{T}}$ do. Let $L \from \cat{B} \to \cat{L}$ be a bijective on objects functor with a right adjoint. Then let $\mnd{T}$ be the monad on $\cat{B}$ generated by $L$ and its right adjoint. The comparison functor $K \from \cat{B}_{\mnd{T}} \to \cat{L}$ is always full and faithful. However it is also bijective on objects since both $F_{\mnd{T}}$ and $L$ are, and $K \of F_{\mnd{T}} = L$. Hence $K$ is an isomorphism, so $L$ is in the essential image of $\kle$.
\end{proof}

We have shown that $\kle$ exhibits $\monad(\cat{B})$ as a full subcategory of $\proth(\cat{B}^{\op})$.

\begin{prop}
\label{prop:codensity-str}
Let $(U \from \cat{M} \to \cat{B}) \in \catover{\cat{B}}$. Then $U$ has a pointwise codensity monad if and only if $\str(U) \in \proth(\cat{B}^{\op})$ lies in the essential image of $\kle \from \monad(\cat{B}) \incl \proth(\cat{B}^{\op})$, and then the codensity monad of $U$ is the essentially unique monad $\mnd{T}$ such that $\str(U) \iso \kle(\mnd{T})$.
\end{prop}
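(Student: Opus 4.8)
The plan is to deduce this from Lemma~\bref{lem:codensity-iso-kleisli} by applying it to the Kleisli adjunction of a candidate monad. Throughout, $\str$ and $\thr$ refer to the canonical aritation on $\cat{B}$, so $\thr(U)$ has the objects of $\cat{B}$ as its objects, $\thr(U)(b',b) = [\cat{M},\Set](\cat{B}(b',U-),\cat{B}(b,U-))$, and $\str(U) \from \cat{B}^{\op} \to \thr(U)$ is the identity on objects and acts on morphisms (up to variance) by $g \mapsto g^*$, where $g^* \from \cat{B}(b',U-) \to \cat{B}(b,U-)$ is precomposition with $g \from b \to b'$.

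The first step is a purely formal reduction. Given a monad $\mnd{T}$ on $\cat{B}$, a morphism $\kle(\mnd{T}) \to \str(U)$ in $\proth(\cat{B}^{\op})$ is exactly a functor $P \from \cat{B}_{\mnd{T}}^{\op} \to \thr(U)$ with $P \of F_{\mnd{T}}^{\op} = \str(U)$; since $F_{\mnd{T}}^{\op}$ and $\str(U)$ are both bijective on objects, so is any such $P$, and hence $P$ is an isomorphism in $\proth(\cat{B}^{\op})$ precisely when it is fully faithful. Passing to opposite categories, such a $P$ is the same as a functor $Q \from \cat{B}_{\mnd{T}} \to \thr(U)^{\op}$ with $Q \of F_{\mnd{T}} = \str(U)^{\op}$, and $P$ is fully faithful iff $Q$ is. So $\str(U)$ lies in the essential image of $\kle$ if and only if there exist a monad $\mnd{T}$ on $\cat{B}$ and a fully faithful functor $Q \from \cat{B}_{\mnd{T}} \to \thr(U)^{\op}$ with $Q \of F_{\mnd{T}} = \str(U)^{\op}$.

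The second step identifies the data of such a $Q$ with the data in Lemma~\bref{lem:codensity-iso-kleisli}. Such a $Q$ is the identity on objects, so it amounts to a family of functions
\[
\Phi_{b,b'} \from \cat{B}_{\mnd{T}}(b,b') = \cat{B}(b,Tb') \longrightarrow \thr(U)^{\op}(b,b') = [\cat{M},\Set](\cat{B}(b',U-),\cat{B}(b,U-))
\]
that preserves composition and sends $F_{\mnd{T}}g = \eta_{b'} \of g$ to $g^*$ for every $g \from b \to b'$ in $\cat{B}$ (preservation of identities being the case $g = \id$). I would then apply Lemma~\bref{lem:codensity-iso-kleisli} to our $U \from \cat{M} \to \cat{B}$, taking $G \from \cat{B}_{\mnd{T}} \to \cat{B}$ to be the forgetful functor with left adjoint $F = F_{\mnd{T}}$, so that the induced monad is $\mnd{T}$: the two conditions on $\Phi$ above are precisely conditions~(i) and~(ii) of the lemma, and $Q$ is fully faithful iff every $\Phi_{b,b'}$ is a bijection. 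Hence such a $Q$ with all $\Phi_{b,b'}$ bijective exists if and only if $\mnd{T}$ is the pointwise codensity monad of $U$.

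Combining the two steps gives the proposition: $\str(U)$ lies in the essential image of $\kle$ iff some monad $\mnd{T}$ is the pointwise codensity monad of $U$, i.e.\ iff $U$ has a pointwise codensity monad; and when this holds, the monad $\mnd{T}$ with $\str(U) \iso \kle(\mnd{T})$ is exactly that (pointwise) codensity monad. Essential uniqueness of $\mnd{T}$ follows from Proposition~\bref{prop:kle-ff}: $\kle$ is fully faithful, so $\kle(\mnd{T}) \iso \kle(\mnd{T}')$ forces $\mnd{T} \iso \mnd{T}'$. I do not expect a genuine obstacle here once Lemma~\bref{lem:codensity-iso-kleisli} is available; the only thing that needs care is the bookkeeping of opposite categories relating $\proth(\cat{B}^{\op})$, $\thr(U)$ and the Kleisli category $\cat{B}_{\mnd{T}}$, and checking that the explicit form of $\str(U)$ for the canonical aritation matches condition~(ii) of that lemma.
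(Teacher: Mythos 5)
Your proof is correct, but it takes a genuinely different route from the paper's. The paper argues directly from the definition of a pointwise codensity monad: it observes that cones on $(b \downarrow U) \to \cat{M} \toby{U} \cat{B}$ with vertex $b'$ are natural transformations $\cat{B}(b,U-) \to \cat{B}(b',U-)$, so the pointwise codensity monad exists if and only if each presheaf $\thr(U)(b, \str(U)-)$ is representable, i.e.\ if and only if $\str(U)$ admits an adjoint; it then invokes Proposition~\ref{prop:kle-ff}, which identifies the essential image of $\kle$ with the bijective-on-objects functors admitting adjoints. You instead bypass Proposition~\ref{prop:kle-ff} (except for the essential-uniqueness clause) and reduce everything to Lemma~\ref{lem:codensity-iso-kleisli} applied to the Kleisli adjunction: an isomorphism $\kle(\mnd{T}) \iso \str(U)$ in $\proth(\cat{B}^{\op})$ is, after unwinding the op's, exactly a family of bijections $\cat{B}(b,Tb') \iso [\cat{M},\Set](\cat{B}(b',U-),\cat{B}(b,U-))$ satisfying conditions (i) and (ii) of that lemma, and your bookkeeping of the variances checks out against the explicit description of $\str(U)$ for the canonical aritation. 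What your approach buys is that the final clause of the proposition --- that the codensity monad \emph{is} the monad $\mnd{T}$ with $\str(U) \iso \kle(\mnd{T})$ --- comes out explicitly from the lemma, whereas the paper's proof establishes only the biconditional and leaves the identification of the monad implicit in the chain of equivalences. The paper's argument is slightly more self-contained in that it works straight from limits and representability rather than routing through the Kleisli-category characterisation of codensity monads.
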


\begin{proof}
The functor $U$ has a pointwise codensity monad if and only if, for each $b \in \cat{B}$, the diagram
\[
(b \downarrow U) \to \cat{M} \toby{U} \cat{B}
\]
has a limit. But a cone on this diagram with vertex $b'$ is essentially the same as a natural transformation
\[
\cat{B}(b, U-) \to \cat{B}(b', U-).
\]
Thus $U$ has a pointwise codensity monad if and only if, for each $b \in \cat{B}$, the presheaf sending an arbitrary $b' \in \cat{B}$ to
\[
[\cat{M}, \Set](\cat{B}(b, U-), \cat{B}(b', U-))
\]
is representable. But this presheaf is precisely $\thr(U) ( b, \str(U) -)$, so it being representable for each $b$ is equivalent to $\str(U) \from \cat{B}^{\op} \to \thr(U)$ having a left adjoint. By the previous proposition this is the same as $\str(U)$ being in the essential image of $\kle$.
\end{proof}

 Before examining how the semantics of monads relates to the semantics of proto-theories, we note the following:

\begin{prop}
\label{prop:mnd-incl-limits}
Suppose $\cat{B}$ admits all small limits. Then the inclusion $\kle \from \monad(\cat{B}) \incl \proth(\cat{B}^{\op})$ preserves all small limits.
\end{prop}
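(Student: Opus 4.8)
The plan is to reduce the statement to two facts that are already available: first, that $\monad(\cat{B})$ has all small limits when $\cat{B}$ does, and second, that limits in $\monad(\cat{B})$ are computed ``pointwise'' in a way compatible with the Kleisli construction. I would begin by observing that $\proth(\cat{B}^\op)$ has all small limits (indeed all large limits, by the Corollary following Proposition~\ref{prop:proth-prof-monadic}), so it suffices to exhibit, for each small diagram $D \from \cat{I} \to \monad(\cat{B})$, a limit cone in $\monad(\cat{B})$ whose image under $\kle$ is a limit cone in $\proth(\cat{B}^\op)$. Write $\mnd{T}_i = (T_i, \eta_i, \mu_i)$ for the monads in the diagram. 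Since $\cat{B}$ is complete, $[\cat{B},\cat{B}]$ is complete with limits computed pointwise, and the limit $T = \lim_i T_i$ carries a canonical monad structure (the unit and multiplication induced by the universal property from the $\eta_i$ and $\mu_i$); this $\mnd{T}$ together with the projections $\pi_i \from \mnd{T} \to \mnd{T}_i$ is the limit of $D$ in $\monad(\cat{B})$. This is the standard construction of limits of monads and I would cite it or sketch it in a line, since monadicity of $\monad(\cat{B})$ over $[\cat{B},\cat{B}]$ (via the forgetful functor, which creates limits) makes it routine.

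The substance is to check that $\kle(\mnd{T}) = F_{\mnd{T}}^\op \from \cat{B}^\op \to \cat{B}_{\mnd{T}}^\op$, equipped with the morphisms $\kle(\pi_i)$, is a limit cone in $\proth(\cat{B}^\op)$. By Remark~\ref{rem:bo-no-op} I can work with the non-opposite pictures and with Kleisli morphisms directly. A morphism in $\proth(\cat{B}^\op)$ from a proto-theory $L \from \cat{B}^\op \to \cat{L}$ to $\kle(\mnd{T}')$, for $\mnd{T}'$ a monad, is a functor $\cat{L} \to \cat{B}_{\mnd{T}'}$ over $\cat{B}$; so I want to show that for an arbitrary proto-theory $L \from \cat{B}^\op \to \cat{L}$, the natural map
\[
\proth(\cat{B}^\op)\bigl(L, \kle(\mnd{T})\bigr) \to \lim_{i \in \cat{I}} \proth(\cat{B}^\op)\bigl(L, \kle(\mnd{T}_i)\bigr)
\]
is a bijection. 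Unwinding: a compatible family of morphisms $L \to \kle(\mnd{T}_i)$ is a compatible family of functors $P_i \from \cat{L} \to \cat{B}_{\mnd{T}_i}$ over $\cat{B}$; on objects each $P_i$ is forced (by bijectivity on objects of $L$ and $F_{\mnd{T}_i}$), so the data is: for each $l \from Lb \to Lb'$ in $\cat{L}$, a compatible family of elements $P_i(l) \in \cat{B}_{\mnd{T}_i}(b,b') = \cat{B}(b, T_i b')$, i.e.\ by the universal property of $T b' = \lim_i T_i b'$, a single element of $\cat{B}(b, Tb')$. One then checks this assembles into a functor $\cat{L} \to \cat{B}_{\mnd{T}}$ over $\cat{B}$ — functoriality (respect for identities and composition in $\cat{L}$, hence the Kleisli-morphism conditions of Definition~\ref{defn:kle-mor}) follows because the monad structure on $\mnd{T}$ was defined componentwise from those of the $\mnd{T}_i$, so each required equation holds after applying every $\pi_i$ and hence holds by the universal property. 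This gives the inverse bijection, naturally in $L$, which is exactly the statement that $\kle$ preserves the limit.

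The main obstacle I anticipate is purely bookkeeping rather than conceptual: carefully verifying that the componentwise-defined maps $\cat{B}(b,Tb')$ are closed under Kleisli composition for $\mnd{T}$ and that identities are preserved, i.e.\ that the candidate $P \from \cat{L} \to \cat{B}_{\mnd{T}}$ really is a functor over $\cat{B}$. The clean way to handle this is to note that for each $i$ the projection $\pi_i$ is a monad morphism, hence $\kle(\pi_i) \of P$ would have to equal $P_i$, and since the $P_i$ are functors and the family $(\pi_i b')$ is jointly monic (being a limit cone in $\cat{B}$, whose legs are jointly monic), any equation between parallel morphisms $b \to Tb'$ that holds after post-composition with every $\pi_i b'$ holds outright; apply this to the functoriality equations for $P$, which reduce to the already-known functoriality equations for the $P_i$. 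I would also remark that completeness of $\cat{B}$ is used exactly twice — once to build the limit monad and once to guarantee the legs of the limit cones in $\cat{B}$ are jointly monic — and that without it the inclusion need not even have a domain that is complete. Finally, since $\kle$ is full and faithful (Proposition~\ref{prop:kle-ff}), one could alternatively phrase the argument as: $\kle$ reflects limits automatically, and a limit of monads maps to a limit of proto-theories because the limit proto-theory, computed in $\proth(\cat{B}^\op)$, has each ``presheaf of $b$-ary operations'' $\thr$-representable (being a limit of representables in the relevant sense), hence lies in the essential image of $\kle$ by Proposition~\ref{prop:kle-ff} and must be $\kle$ of the limit monad; I would mention this as a remark but carry out the direct hom-set computation as the actual proof since it is shortest.
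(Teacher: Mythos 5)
Your proof is correct, but it takes a genuinely different route from the paper's. The paper argues abstractly: it forms the commuting square whose verticals are the forgetful functors $\monad(\cat{B}) \to [\cat{B},\cat{B}]$ and $\proth(\cat{B}^{\op}) \to [\cat{B}^{\op} \times \cat{B}, \SET]$ (the latter sending $L$ to $\cat{L}(L-,L-)$), notes that the left vertical and the bottom composite preserve small limits, and that the right vertical is monadic by Proposition~\ref{prop:proth-prof-monadic} and hence creates limits; preservation by $\kle$ follows formally. You instead construct the limit monad pointwise and verify the universal property of $\kle(\mnd{T})$ directly on hom-sets, using that $\cat{B}(b,-)$ preserves limits to identify $\cat{B}(b,Tb') \iso \lim_i \cat{B}(b,T_ib')$ and the joint monicity of the projections $(\pi_{i})_{b'}$ to check that the assembled assignment is a functor over $\cat{B}$ and is unique. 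Your argument is more elementary and self-contained — it avoids invoking the monadicity of proto-theories over profunctors, makes explicit exactly where completeness of $\cat{B}$ enters, and produces the limit cone concretely — at the cost of the bookkeeping you acknowledge; the paper's argument is shorter and recycles machinery it has already set up. (Two minor points: completeness of $\proth(\cat{B}^{\op})$ is not actually needed to prove preservation, since that only asserts the image of a limit cone is a limit cone; and your sketched third route via fullness and faithfulness of $\kle$ plus representability of the presheaves of operations would also work, but as you say it is not the shortest path.)
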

\begin{proof}
Since $\cat{B}$ has all small limits, small limits of monads on $\cat{B}$ can be computed component-wise; that is, the forgetful functor $\monad(\cat{B}) \to [\cat{B},\cat{B}]$ creates and preserves limits.

Write
\[
\cat{Q} \from [\cat{B}, \cat{B}] \to [\cat{B}^{\op} \times \cat{B}, \SET]
\]
for the functor sending an endofunctor $F$ of $\cat{B}$ to $\cat{B}(-, F-) \from \cat{B}^{\op} \times \cat{B} \to \SET$. It is clear that $\cat{Q}$ preserves limits.

Now consider the square
\[
\xymatrix{
\monad(\cat{B})\ar[r]^{\kle}\ar[d] & \proth(\cat{B}^{\op})\ar[d] \\
[\cat{B},\cat{B}]\ar[r]_-{\cat{Q}} & [\cat{B}^{\op} \times \cat{B}, \SET],
}
\]
where the right-hand vertical arrow is the forgetful functor from Proposition~\ref{prop:proth-prof-monadic}. This diagram commutes, and the left and the bottom functors preserve small limits as already noted. But the right-hand functor is monadic by Proposition~\ref{prop:proth-prof-monadic} and so creates limits. It follows that $\kle$ preserves limits.
\end{proof}

Now we show that $\str \ladj \sem$ restricts to the adjunction $\str_{\monad} \ladj \sem_{\monad}$.

\begin{prop}
\label{prop:syn-rest}
The diagram
\[
\xymatrix{
{\radjover{\cat{B}}}\ar[r]^{\str_{\monad}}\ar[d] & {\monad(\cat{B})^{\op}}\ar[d]^{\kle^{\op}} \\
{\catover{\cat{B}}}\ar[r]_{\str} & {\proth(\cat{B}^{\op})^{\op}}
}
\]
commutes up to isomorphism.
\end{prop}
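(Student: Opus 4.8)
The plan is to prove commutativity on objects and then extend to morphisms by functoriality. Fix an object $(U \from \cat{M} \to \cat{B}) \in \radjover{\cat{B}}$, so $U$ has a specified left adjoint $F$ with unit $\eta$ and counit $\epsilon$. On the one hand, $\str_{\monad}(U)$ is the induced monad $\mnd{T} = (UF, \eta, U\epsilon F)$; following it around via $\kle^{\op}$ gives the proto-theory $\kle(\mnd{T}) = F_{\mnd{T}}^{\op} \from \cat{B}^{\op} \to \cat{B}_{\mnd{T}}^{\op}$, i.e.\ (identifying along Remark~\ref{rem:bo-no-op}) the Kleisli category of $\mnd{T}$. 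On the other hand, going the other way around the square, we forget $U$ to $\catover{\cat{B}}$ and apply $\str$, obtaining the proto-theory $\str(U) \from \cat{B}^{\op} \to \thr(U)$ from Definition~\ref{defn:str-objects} for the canonical aritation: here $\thr(U)$ has the objects of $\cat{B}$, and $\thr(U)(b',b) = [\cat{M},\Set](\cat{B}(b',U-), \cat{B}(b,U-))$. So the object-level claim is that $\kle(\mnd{T}) \iso \str(U)$ in $\proth(\cat{B}^{\op})$.

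First I would establish this isomorphism directly. Since $\mnd{T}$ is the monad of $F \ladj U$, the Kleisli category $\cat{B}_{\mnd{T}}$ has hom-sets $\cat{B}_{\mnd{T}}(b,b') = \cat{B}(b, UFb')$, which by the adjunction $F \ladj U$ is naturally isomorphic to $\cat{D}(Fb, Fb')$ where $\cat{D} = \cat{M}$; wait — more to the point, I would invoke Lemma~\ref{lem:codensity-iso-kleisli} together with the preceding Proposition stating that $(UF, \eta, U\epsilon F)$ \emph{is} the pointwise codensity monad of $U$. That proposition gives, for each $b, b'$, a bijection $\cat{M}(Fb, Fb') \iso [\cat{M},\Set](\cat{B}(b', U-), \cat{B}(b, U-))$ compatible with composition and with $g \mapsto g^*$. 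Chasing the variances (the category of arities is $\cat{B}^{\op}$, so $\str(U)$ being identity-on-objects and sending $f \from b' \to b$ in $\cat{B}^{\op}$ to $\cat{B}(f,U-)$ matches $F_{\mnd{T}}^{\op}$ sending $f$ to the Kleisli image of $f$), this bijection is exactly an isomorphism of categories $\cat{B}_{\mnd{T}}^{\op} \to \thr(U)$ commuting with the functors from $\cat{B}^{\op}$, hence an isomorphism $\kle(\mnd{T}) \iso \str(U)$ in $\proth(\cat{B}^{\op})$. Alternatively one can appeal to Proposition~\ref{prop:codensity-str}, which already identifies $\str(U)$ with $\kle$ of the codensity monad whenever the latter exists — and for $U \in \radjover{\cat{B}}$ it does, and equals $\mnd{T}$.

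Second, I would check naturality of this isomorphism in $U$, i.e.\ that for a morphism $Q \from U' \to U$ in $\radjover{\cat{B}}$ the two resulting squares of proto-theory morphisms commute. Both $\kle^{\op} \of \str_{\monad}$ and $\str \of (\text{forget})$ are functors $\radjover{\cat{B}} \to \proth(\cat{B}^{\op})^{\op}$, and on morphisms everything is determined by whiskering: $\str(Q)$ sends a natural transformation $\gamma$ to $\gamma Q$ (Definition~\ref{defn:str-morphisms}), while $\kle(\str_{\monad}(Q))$ is the Kleisli morphism induced by the monad map $\str_{\monad}(Q)$, which is again built from the comparison with $\cat{M}'$; tracing an element of a hom-set through the isomorphism of the previous paragraph on each side and using that the bijection of Lemma~\ref{lem:codensity-iso-kleisli}/Proposition~\ref{prop:codensity-str} is natural, these agree. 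This is a routine diagram chase once the object-level isomorphism is pinned down.

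The main obstacle is bookkeeping of the opposite categories and of which hom-set is contravariant in which variable: the canonical aritation has arities $\cat{B}^{\op}$, a proto-theory for it is a b.o.\ functor out of $\cat{B}^{\op}$, and $\kle(\mnd{T})$ was \emph{defined} as $F_{\mnd{T}}^{\op}$, so one must be careful (as flagged in Remark~\ref{rem:bo-no-op}) that the natural-transformation direction in $\thr(U)(b',b) = [\cat{M},\Set](\cat{B}(b',U-),\cat{B}(b,U-))$ lines up with the Kleisli composition direction. Once the variances are matched, the content is entirely supplied by Proposition~\ref{prop:codensity-str} (or Lemma~\ref{lem:codensity-iso-kleisli}) and the fact that a right adjoint's induced monad is its codensity monad; the rest is formal.
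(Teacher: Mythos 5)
Your proposal is correct and matches the paper's strategy: the paper proves the object-level claim by exactly the chain of isomorphisms $\thr(U)(b,b') \iso [\cat{M},\Set](\cat{M}(Fb,-),\cat{M}(Fb',-)) \iso \cat{M}(Fb',Fb) \iso \cat{B}(b',UFb)$ underlying Lemma~\ref{lem:codensity-iso-kleisli}, pins the bijection down explicitly as $\gamma \mapsto \gamma_{Fb}(\eta_b)$, and then checks identities, composition, compatibility with the functors out of $\cat{B}^{\op}$, and naturality in a morphism $Q$ of $\radjover{\cat{B}}$. The only caution is that the naturality check you call routine is where the paper spends most of its effort, and it requires the explicit formula for the isomorphism rather than the mere existence statement of Proposition~\ref{prop:codensity-str} (which only places $\str(U)$ in the essential image of $\kle$), so you should retain the concrete description of the bijection and carry out the short triangle-identity computation rather than leaving it implicit.
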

\begin{proof}
Let $(\cat{M}, U, F, \eta, \epsilon) \in \radjover{\cat{B}}$. Both $\thr (U)$ and $\cat{B}_{(UF, \eta, U\epsilon F)}$ have as objects the objects of $\cat{B}$, and we have
\begin{align*}
\thr(U) (b, b') &= [\cat{M}, \Set] ( \cat{B}(b, U-), \cat{B}(b', U-)) \\
& \iso [\cat{M},\Set](\cat{M}(Fb, -), \cat{M}(Fb', -)) \\
& \iso \cat{M}(Fb', Fb) \\
& \iso \cat{B}(b', UFb) \\
& = \cat{B}_{(UF, \eta, U\epsilon F)} (b', b).
\end{align*}
Tracing through the steps in this isomorphism, a natural transformation $\gamma \from \cat{B}(b, U-) \to \cat{B}(b', U-)$ is sent to $\gamma_{Fb}(\eta_b) \from b' \to UFb$. In particular, the identity natural transformation on $\cat{B}(b, U-)$ is sent to $ \eta_b$, so identities in $\thr(U)$ agree with those in $\cat{B}_{(UF, \eta, U\epsilon F)}^{\op}$. Now suppose $\gamma \from \cat{B}(b, U-) \to \cat{B}(b', U-)$ and $\delta \from \cat{B}(b', U-) \to \cat{B}(b'', U-)$. Then the composite of $\gamma_{Fb}(\eta_b)$ and $\delta_{Fb'}(\eta_{b'})$ in the Kleisli category is given by
\begin{align*}
U \epsilon_{Fb} \of UF \gamma_{Fb}(\eta_b) \of \delta_{Fb'}(\eta_{b'}) & = \delta_{Fb}(U\epsilon_{Fb} \of UF\gamma_{Fb}(\eta_b) \of \eta_{b'}) \\
&= \delta_{Fb}(U\epsilon_{Fb} \of \eta_{UFb}\of \gamma_{Fb}(\eta_b)) \\
&=\delta_{Fb}(\gamma_{Fb}(\eta_b)),
\end{align*}
which corresponds to the composite $\delta \of \gamma$. So composition in $\thr(U)$ agrees with composition in $\cat{B}_{(UF, \eta, U\epsilon F)}^{\op}$. Hence we have an isomorphism $\thr(U) \iso \cat{B}_{(UF, \eta, U\epsilon F)}^{\op}$. Furthermore, this isomorphism is compatible with the functors $\str(U) \from \cat{B}^{\op} \to \thr(U)$ and $F_{\str_{\monad}(U)} \from \cat{B} \to \cat{B}_{\str_{\monad}(U)}$: for $f \from b' \to b$ in $\cat{B}$, the natural transformation $\str(U)(f) \from \cat{B}(b, U-) \to \cat{B}(b', U-)$ is given by composition with $f$, and so corresponds to
\[
f^*_b (\eta_b) = \eta_b \of f \in \cat{B}(b', UFb),
\]
and this is precisely $F_{\str_{\monad}(U)} (f) \in \cat{B}_{\str_{\monad}(U)} (b' ,b)$.

We have shown that the two composites in the diagram in the proposition are equal on objects; we now show that they are equal on morphisms. Suppose $Q \from (\cat{M}, U,F,\eta,\epsilon) \to (\cat{M}', U',F',\eta',\epsilon')$ in $\radjover{\cat{B}}$. We must show that
\[
\xymatrix{
{\thr(U')}\ar[r]^{\iso}\ar[d]_{\str (Q)} & {\cat{B}_{\str_{\monad}(U')}^{\op}}\ar[d]^{(\kle \of \str_{\monad} (Q))^{\op}} \\
{\thr(U)}\ar[r]_{\iso} & {\cat{B}_{\str_{\monad}(U)}^{\op}}
}
\]
commutes. Let $\gamma \from \cat{B}(b, U'-) \to \cat{B}(b', U'-)$. Then the bottom composite sends this to
\[
\gamma_{QFb} (\eta_b) \from b' \to UFb = U' QF b
\]
(since $\eta_b \from b \to UFb = U' QF b$, it is valid to apply $\gamma_{QFb}$ to it). The top composite sends $\gamma$ to the composite
\[
b' \toby{\gamma_b(\eta'_b) } U'F'b \toby{U'F' \eta_b} U'F'UFb = U'F'U' QFb \toby{U'\epsilon'_{QFb}} U'QFb = UFb.
\]
But
\begin{align*}
U'\epsilon'_{QFb} \of U' F' \eta_b \of \gamma_b(\eta'_b) & = \gamma_{QFb} (U'\epsilon'_{QFb} \of U' F' \eta_b \of \eta'_b) \\
&= \gamma_{QFb} (U'\epsilon'_{QFb} \of \eta'_{UFb} \of \eta_b) \\
&= \gamma_{QFb}(U' \epsilon'_{QFb} \of \eta'_{U' QFb} \of \eta_b) \\
&= \gamma_{QFb}(\eta_b),
\end{align*}
as required.
\end{proof}

\begin{prop}
\label{prop:sem-rest}
The diagram
\[
\xymatrix{
{\radjover{\cat{B}}}\ar[d] & {\monad(\cat{B})^{\op}}\ar[d]^{\kle^{\op}}\ar[l]_{\sem_{\monad}} \\
{\catover{\cat{B}}} & {\proth(\cat{B}^{\op})^{\op}}\ar[l]^{\sem}
}
\]
commutes up to isomorphism.
\end{prop}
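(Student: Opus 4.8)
The plan is to compare the two composite functors $\radjover{\cat{B}} \leftarrow \monad(\cat{B})^{\op}$ directly. One composite is $\sem_{\monad}$ followed by the forgetful functor $\radjover{\cat{B}} \to \catover{\cat{B}}$, which sends a monad $\mnd{T}$ to the forgetful functor $U^{\mnd{T}} \from \cat{B}^{\mnd{T}} \to \cat{B}$ from its Eilenberg--Moore category. The other composite is $\kle^{\op}$ followed by $\sem$, which sends $\mnd{T}$ to $\sem(\kl{\mnd{T}}) \from \mod(\kl{\mnd{T}}) \to \cat{B}$, where $\kl{\mnd{T}} = F_{\mnd{T}}^{\op} \from \cat{B}^{\op} \to \cat{B}_{\mnd{T}}^{\op}$ and the semantics is that of the canonical aritation on $\cat{B}$. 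So the task is to exhibit, for each monad $\mnd{T}$, an isomorphism $\mod(\kl{\mnd{T}}) \iso \cat{B}^{\mnd{T}}$ over $\cat{B}$, naturally in $\mnd{T}$.

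First I would use the alternative description of models from Section~\ref{sec:alt-model-algebra}: since we are working with the canonical aritation on $\cat{B}$, an object of $\mod(\kl{\mnd{T}})$ may be presented as an \emph{algebra} in the sense of Definition~\ref{defn:L-alg}, i.e.\ an object $d \in \cat{B}$ together with maps $\alpha_b \from \cat{B}_{\mnd{T}}(d, b) \to \cat{B}(b, d)$ natural in $b \in \cat{B}$, satisfying the unit and composition axioms. Here I am using that $\cat{B}_{\mnd{T}}^{\op}(Ld, Lb) = \cat{B}_{\mnd{T}}(d, b) = \cat{B}(d, Tb)$ by definition of the Kleisli category. Unpacking this, an algebra structure is a family of functions $\cat{B}(d, Tb) \to \cat{B}(b, d)$ natural in $b$. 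By the Yoneda lemma (in the form of naturality over $b \in \cat{B}$, noting that $\cat{B}(d, T-)$ is the functor $\cat{B} \to \Set$ being acted on), such a natural family is determined by a single morphism: applying $\alpha$ to $\id_{Td} \in \cat{B}(d, Td)$ gives a map $a \from Td \to d$, and conversely $a$ recovers $\alpha$ via $\alpha_b(f) = a \of Tf \of \eta_d$ — or more directly one checks $\alpha_b(f)$ is recovered by functoriality in $b$. I would then verify that the two algebra axioms of Definition~\ref{defn:L-alg} translate precisely into the unit law $a \of \eta_d = \id_d$ and associativity law $a \of Ta = a \of \mu_d$ for an Eilenberg--Moore algebra; Lemma~\ref{lem:alg-func} is the relevant bookkeeping tool here. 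This gives a bijection between $\kl{\mnd{T}}$-algebras on $d$ and $\mnd{T}$-algebra structures on $d$, with the same underlying object.

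Next I would check the correspondence on morphisms: a $\kl{\mnd{T}}$-algebra homomorphism (Definition~\ref{defn:alg-hom}) $h \from d^x \to d^y$ satisfies $h \of \alpha^x_b(l) = \alpha^y_b(l \of L h)$ for all $l$, and specialising $l = \id_{Td^x}$ (equivalently using Lemma~\ref{lem:kleisli}-style reasoning) recovers exactly the condition $h \of a^x = a^y \of Th$ that $h$ be a $\mnd{T}$-algebra homomorphism; conversely that condition gives back the full family of equations by naturality. Thus we obtain an isomorphism of categories $\mod(\kl{\mnd{T}}) \iso \cat{B}^{\mnd{T}}$, and it is compatible with the forgetful functors to $\cat{B}$ by construction (both send an algebra to its underlying object $d$ and a homomorphism to its underlying morphism $h$). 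Finally I would check naturality in $\mnd{T}$: given a monad morphism $\phi \from \mnd{T} \to \mnd{T}'$, both $\sem_{\monad}(\phi)$ and $\sem(\kl{\phi})$ restrict a $\mnd{T}'$-algebra $a' \from T'd \to d$ to the $\mnd{T}$-algebra $a' \of \phi_d \from Td \to d$ (for $\sem_{\monad}$ this is the definition; for $\sem(\kl{\phi})$ one traces through Definition~\ref{defn:sem-morphisms} together with Definition~\ref{defn:kle-functor-morphisms}, which sends $f$ to $\phi_b \of f$), so the square of functors commutes up to the isomorphism just constructed.

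The main obstacle, I expect, is not any single deep point but rather the careful bookkeeping in the Yoneda step — keeping straight the variance (the arities category is $\cat{B}^{\op}$, so $\kl{\mnd{T}}$ is $F_{\mnd{T}}^{\op}$, and one must be disciplined about the $\op$'s as warned in Remark~\ref{rem:bo-no-op}) — and then patiently matching the two algebra axioms of Definition~\ref{defn:L-alg} against the two Eilenberg--Moore axioms. Everything else is formal. An alternative, slicker route would be to invoke Lemma~\ref{lem:prof-monadic}/Proposition~\ref{prop:sem-pullback-monadic}: $\cat{L} = \cat{B}_{\mnd{T}}^{\op}$ is the Eilenberg--Moore object in $\PROF$ of the profunctor monad $\mathcal{P}(\kl{\mnd{T}})$, and one identifies the resulting pullback defining $\mod(\kl{\mnd{T}})$ with the Eilenberg--Moore category $\cat{B}^{\mnd{T}}$ directly; but the hands-on algebra computation is probably cleaner to write down and is the one I would present.
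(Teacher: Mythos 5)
Your proposal follows essentially the same route as the paper's proof: pass to the algebra formulation of Section~\ref{sec:alt-model-algebra}, use Yoneda to collapse the natural family $\alpha^x$ to a single structure map $Td \to d$, match the two axioms of Definition~\ref{defn:L-alg} against the Eilenberg--Moore axioms, check that the two homomorphism conditions agree, and verify naturality in $\mnd{T}$ via the explicit description of $\kl{\phi}$. All of that is what the paper does.

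There is, however, a concrete variance slip in the middle that would derail the Yoneda step as written. You claim $\cat{B}_{\mnd{T}}^{\op}(Ld, Lb) = \cat{B}_{\mnd{T}}(d,b) = \cat{B}(d, Tb)$, but taking opposites reverses the hom, so in fact $\cat{B}_{\mnd{T}}^{\op}(Ld, Lb) = \cat{B}_{\mnd{T}}(b,d) = \cat{B}(b, Td)$. Consequently an algebra structure is a family $\alpha_b \from \cat{B}(b, Td) \to \cat{B}(b, d)$, i.e.\ a map of contravariant presheaves $\cat{B}(-, Td) \to \cat{B}(-, d)$; with your stated form $\cat{B}(d, T-) \to \cat{B}(-, d)$ the two sides have opposite variance in $b$ and there is no natural transformation to speak of, and the element you feed into Yoneda, $\id_{Td}$, does not lie in $\cat{B}(d, Td)$. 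With the corrected variance everything you say goes through: $s = \alpha_{Td}(\id_{Td}) \from Td \to d$, $\alpha_b(f) = s \of f$ (your displayed formula $a \of Tf \of \eta_d$ is also mistyped --- postcomposition is all that is needed), the unit axiom gives $s \of \eta_d = \id_d$, the composition axiom specialised at identities gives $s \of \mu_d = s \of Ts$, and the homomorphism conditions match. This is exactly the computation in the paper, so the fix is purely a matter of keeping the $\op$'s straight, as Remark~\ref{rem:bo-no-op} warns.
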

This follows from Theorem~14 of Street~\cite{street72}, however we prove it here for completeness.
\begin{proof}
Let $\mnd{T} = (T, \eta,\mu)$ be a monad on $\cat{B}$. First we show that a $\mnd{T}$-algebra structure on an object of $\cat{B}$ is the same as a $\kl{\mnd{T}}$-model structure, and the two notions of homomorphism coincide. First let us spell out explicitly what a $\kl{\mnd{T}}$-model is. It consists of an object $d^x$ together with a family of maps
\[
\alpha^x_b \from \cat{B}(b, Td^x) \to \cat{B}(b, d^x)
\]
that are natural in $b\in \cat{B}$, such that
\begin{equation}
\label{eqn:kle-mod-unit}
\alpha^x_{d^x} (\eta_{d^x}) = \id_{d^x},
\end{equation}
which is Definition~\ref{defn:L-alg}.\bref{part:L-alg-id}, and such that for any $l \from b' \to Tb$ and $k\from b \to Td^x$, we have
\[
\alpha^x_{b'} (\mu_{d^x} \of Tk \of l) = \alpha^x_{b'} (\mu_{d^x} \of T \eta_{d^x} \of T \alpha^x_b (k) \of l);
\]
the left-hand side here is $\alpha^x_{b'}$ applied to the composite of $k$ and $l$ in the Kleisli category of $\mnd{T}$, and the right-hand side is $\alpha^x_{b'}$ applied to the composite of $F_\mnd{T}(\alpha^x_b(k))$ and $l$ in the Kleisli category, so this is the appropriate instantiation of Definition~\ref{defn:L-alg}.\bref{part:L-alg-comp}. This last equation can be written equivalently as
\begin{equation}
\label{eqn:kle-mod-mult}
\alpha^x_{b'} (\mu_{d^x} \of Tk \of l) = \alpha^x_{b'} (T\alpha^x_b(k) \of l).
\end{equation}
By the Yoneda lemma, natural transformations $\alpha^x \from \cat{B}(-, Td^x) \to \cat{B}(-, d^x)$ correspond to morphisms $s^x = \alpha^x_{Td^x}(\id)\from Td^x \to d^x$, and then $\alpha^x = s^x_*$. Equation~\bref{eqn:kle-mod-unit} is satisfied if and only if $s^x \of \eta_{d^x} = \id_{d^x}$, which is the unit axiom for $s^x$ to be a $\mnd{T}$-algebra. If $s^x$ is a $\mnd{T}$-algebra structure, then for $k$ and $l$ as above,
\begin{align*}
\alpha^x_{b'} (\mu_{d^x} \of Tk \of l) & = s^x \of \mu_{d^x} \of Tk \of l \\
&= s^x \of Ts^x \of Tk \of l \\
&= \alpha^x_{b'}(T\alpha^x_b(k) \of l)
\end{align*}
so Equation~\bref{eqn:kle-mod-mult} is satisfied. Conversely if Equation~\bref{eqn:kle-mod-mult} holds for all $k$ and $l$, then in particular it holds when $k = \id_{Td^x}$ and $l = \id_{TTd^x}$, which gives
\[
s^x \of \mu_{d^x} = s^x \of Ts^x,
\]
so $s^x$ is a $\mnd{T}$-algebra structure. 

Let $x = (d^x, \alpha^x)$ and $y= (d^y, \alpha^y)$ be two $\kl{\mnd{T}}$-model structures with corresponding $\mnd{T}$-algebra structures $s^x$ and $s^y$. Then $h \from d^x \to d^y$ is a $\mnd{T}$-algebra homomorphism if and only if $s^y \of Th = h \of s^x$. But this is equivalent to the commutativity of
\[
\xymatrix{
{\cat{B}(-, Td^x)}\ar[d]_{\alpha^x = s^x_*}\ar[r]^{(Th)_*} & {\cat{B}(-, Td^y)}\ar[d]^{\alpha^y = s^y_*} \\
{\cat{B}(-,d^x)}\ar[r]_{h_*} & {\cat{B}(-,d^y),}
}
\]
which is what it means for $h$ to be a $\kle(\mnd{T})$-model homomorphism.

We have shown that there is an isomorphism $\mod(\kl{\mnd{T}}) \iso \cat{B}^{\mnd{T}}$. Now suppose $\phi \from \mnd{T} = (T, \eta, \mu) \to \mnd{T}' = (T', \eta', \mu')$ is a monad morphism. We must show that
\[
\xymatrix{
{\mod(\kl{\mnd{T}'})}\ar[r]^-{\iso} \ar[d]_{\sem(\kl{\phi})} & \cat{B}^{\mnd{T}'}\ar[d]^{\sem_{\monad}(\phi)} \\
{\mod(\kl{\mnd{T}})}\ar[r]_-{\iso} & \cat{B}^{\mnd{T}}
}
\]
commutes. Let $x = (d^x, \alpha^x)$ be a $\kl{\mnd{T}'}$-model, with corresponding $\mnd{T}'$-algebra structure $s^x \from T' d^x \to d^x$. Then the top composite sends this to the $\mnd{T}$-algebra
\[
Td^x \toby{\phi_{d^x}} T'd^x \toby{s^x} d^x.
\]
On the other hand, $\sem(\kl{\phi})$ sends $x$ to the $\mnd{T}$-model structure
\[
\cat{B}(-, Td^x) \toby{(\phi_{d^x})_*} \cat{B}(-,T'd^x) \toby{\alpha^x = s^x_*} \cat{B}(-,d^x),
\]
and this corresponds to the $\mnd{T}$-algebra structure
\[
s^x_* \of (\phi_{d^x})_* (\id_{Td^x}) = s^x \of \phi_{d^x},
\]
so the two $\mnd{T}$-algebra structures coincide. Hence the two composites in the diagram are equal on objects. They are equal on morphisms because they both commute with the forgetful functors to $\cat{B}$, which are all faithful.
\end{proof}

\begin{prop}
\label{prop:sem-str-counit-restr}
Writing
\[
\vcenter{
\xymatrix{
{\radjover{\cat{B}}}\ar[r]^{\str_{\monad}}\ar[d]& {\monad(\cat{B})^{\op}}\ar[d]^{\kle^{\op}} \\
{\catover{\cat{B}}}\ar[r]_{\str} & {\proth(\cat{B}^{\op})^{\op}}\ultwocell\omit{\zeta} 
}}
\quad \text{and} \quad
\vcenter{
\xymatrix{
\monad(\cat{B})^{\op} \ar[d]_{\kle^{\op}} \ar[r]^{\sem_{\monad}} & \radjover{\cat{B}}\ar[d] \\
\proth(\cat{B}^{\op})^{\op}\ar[r]_{\sem} & \catover{\cat{B}}\ultwocell\omit{\xi}
}}
\]
for the natural isomorphisms from Propositions~\ref{prop:syn-rest} and~\ref{prop:sem-rest} respectively, we have an equality
\[
\vcenter{
\xymatrix
@C=13pt{
\monad(\cat{B})^{\op} \ar@{=}[rr]\ar[dr]^{\sem_{\monad}}\ar[dd]_{\kle} & & \monad(\cat{B})^{\op}\ar[dd]^{\kle} \\
& \radjover{\cat{B}}\ar[dd]\ar[ur]^{\str_{\monad}} & \\
\proth(\cat{B}^{\op})^\op\ar[dr]_{\sem} & \ultwocell\omit{\xi} & \proth(\cat{B}^{\op})^{\op}\ultwocell\omit{\zeta} \\
& \catover{\cat{B}}\ar[ur]_{\str} &
}}
=
\vcenter{
\xymatrix
@C=13pt{
\monad(\cat{B})^{\op} \ar@{=}[rr] \ar[dd]_{\kle} & & \monad(\cat{B})^{\op}\ar[dd]^{\kle} \\
&  & \\
\proth(\cat{B}^{\op})^\op\ar[dr]_{\sem}\ar@{=}[rr]  & & \proth(\cat{B}^{\op})^{\op} \\
& \catover{\cat{B}}\ar[ur]_{\str}\ar@{}[u]|-{\labelstyle{E} \objectstyle\Uparrow } &
}}
\]
where $E \from \str \of \sem \to \id$ is the counit of the $\str \ladj \sem$ adjunction.
\end{prop}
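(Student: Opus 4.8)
The claim is an equality of two pasting diagrams of $2$-cells (natural isomorphisms and the counit $E$), so the plan is to reduce it to a component-wise verification and then use the explicit descriptions of $\zeta$, $\xi$ and $E$ that are already available in the excerpt. The strategy hinges on the following standard fact about adjunctions: if $\str \ladj \sem$ with unit $H$ and counit $E$, and $\Psi_{U,L}\from\catover{\cat B}(U,\sem(L))\iso\proth(\cat A)(L,\str(U))$ is the natural bijection of Definition~\ref{defn:adj-bij}, then for a morphism $g\from L\to\str(U)$ the transpose $\Theta(g)$ satisfies $\Theta(g)=\sem(g)$ followed by the counit appropriately; equivalently, $E_{\sem(L)}$ corresponds under $\Psi$ to $\id_{\sem(L)}$, and more generally pasting $E$ against a transposed morphism just ``undoes'' the transposition. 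So I would first restate the desired equality purely in terms of the hom-set bijections: both sides, when whiskered and evaluated, assert that a certain morphism of proto-theories equals another. Concretely, I would fix a monad $\mnd T$ on $\cat B$ and chase what each composite does to it.

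First I would make the left-hand side explicit. Reading the pasting: we start at $\mnd T\in\monad(\cat B)^\op$, apply $\sem_\monad$ to land in $\radjover{\cat B}$, then apply $\kle$ down into $\proth(\cat B^\op)^\op$ and also $\str_\monad$ then $\kle$ along the top, and the $2$-cells $\xi$ and $\zeta$ connect these two routes through $\str\of\sem\of\kle(\mnd T)$. So the left side is the composite isomorphism $\kle\str_\monad\sem_\monad(\mnd T)\xrightarrow{\zeta}\str\sem\kle(\mnd T)$ built from $\zeta_{\sem_\monad(\mnd T)}$ and $\kle$ applied to... wait — more carefully, $\zeta$ is whiskered by $\sem_\monad$ on one side and $\xi$ sits inside; I would write out that the left side is $\str(\sem(\kle\,\mnd T))\xrightarrow{}\kle(\str_\monad(\sem_\monad(\mnd T)))$ obtained by first using $\zeta^{-1}$-type data and then $\xi$. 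The right-hand side is simply $\kle$ applied (contravariantly) to the counit-like comparison: it is $\kle$ of the identity on $\mnd T$ pasted with $E_{\sem\kle(\mnd T)}\from\str\sem\kle(\mnd T)\to\kle(\mnd T)$, i.e.\ just $E$ whiskered by $\kle$ on the relevant object.

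The heart of the argument is then: under the isomorphisms $\thr(U^{\mnd T})\iso\cat B_{\mnd T}^\op$ (from the proof of Proposition~\ref{prop:syn-rest}, with $U=U^{\mnd T}$, $F=F^{\mnd T}$, $\eta,\epsilon$ the Eilenberg--Moore unit and counit, noting $\mnd T\iso(U^{\mnd T}F^{\mnd T},\eta,U^{\mnd T}\epsilon F^{\mnd T})$) and $\mod(\kle\,\mnd T)\iso\cat B^{\mnd T}$ (from Proposition~\ref{prop:sem-rest}), I claim the counit $E_{\sem(\kle\,\mnd T)}\from\str(\sem(\kle\,\mnd T))\to\kle(\mnd T)$ is carried to the canonical comparison $\cat B_{\mnd T}\to\cat B_{(U^{\mnd T}F^{\mnd T},\eta,U^{\mnd T}\epsilon F^{\mnd T})}$, which is an isomorphism. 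So both sides are this same isomorphism of proto-theories, and checking they agree is a matter of unwinding definitions. I would verify it on objects (trivial — all three categories have $\ob\cat B$ as objects and every functor in sight is the identity on objects) and on morphisms by taking a morphism $l\from b'\to Tb$ in $\cat B_{\mnd T}$, i.e.\ an operation of $\kle\,\mnd T$, and computing that $E$ sends the corresponding natural transformation $\thr(\sem(\kle\,\mnd T))(b,\str-)$ to $l$ itself via Definition~\ref{defn:adj-bij} applied to $\id_{\sem(\kle\,\mnd T)}$, and independently computing what the $\zeta$-$\xi$ composite does using the explicit formula for $\zeta$ in the proof of Proposition~\ref{prop:syn-rest} (the transformation $\gamma\from\cat B(b,U-)\to\cat B(b',U-)$ goes to $\gamma_{Fb}(\eta_b)$). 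The two calculations both produce $l$.

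The main obstacle, and where I would spend the most care, is \emph{bookkeeping of the pasting and the $\op$'s}: the diagram lives in a span of three $2$-cells with $\str\sem\kle(\mnd T)$ as an apex, everything is contravariant in the monad/proto-theory direction, and Remark~\ref{rem:bo-no-op} warns that $\kle$ secretly involves an $\op$ that we are suppressing. I would therefore begin by carefully pinning down, in a single displayed sentence, exactly which $1$-cell each route computes and which direction each $2$-cell points, then reduce to showing two explicit morphisms of proto-theories $\str(\sem(\kle\,\mnd T))\to\kle(\mnd T)$ coincide. Once the diagram is correctly parsed, the actual verification is a short Yoneda-style computation using the formulas already derived in Propositions~\ref{prop:syn-rest} and~\ref{prop:sem-rest} together with the fact (Lemma~\ref{lem:adj-bij}, and the standard identity $E_{\sem L}=\Theta(\id_{\str(\sem L)})$-style relation) that $E$ is the mate of the identity. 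I expect no genuinely hard content beyond this; the proposition is essentially a compatibility bookkeeping lemma tying together the two previous naturality results.
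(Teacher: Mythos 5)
Your proposal is correct and follows essentially the same route as the paper: write out the counit $E_{\kle(\mnd{T})}$, the isomorphism $\zeta_{U^{\mnd{T}}}$ and the comparison $\xi_{\mnd{T}}$ explicitly on hom-sets, and verify component-wise that both pastings send $g\in\cat{B}(b,Tb')$ to the natural transformation $f\mapsto\alpha^x_b(Tf\of g)$. The extra framing (counit as mate of the identity, the comparison to the Kleisli category of the induced monad being an isomorphism) is consistent with but not needed for the paper's direct computation.
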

Note that we do indeed have an \emph{equality} $\str_{\monad} \of \sem_{\monad} = \id_{\monad(\cat{B})}$.
\begin{proof}
First let us explicitly describe the natural transformations in question. First consider an arbitrary proto-theory $L \from \cat{B}^{\op} \to \cat{L}$. Then the component $E_L$ of the counit of the $\str \ladj \sem$ adjunction is as follows: given $b', b \in \cat{B}$, the map
\[
E_L \from \cat{L}(Lb',Lb) \to \thr(\sem(L))(b', b) = [\mod(L), \Set](\cat{B}(b', \sem(L)-), \cat{B}(b, \sem(L)-))
\]
sends $l \from Lb' \to Lb$ to the natural transformation whose component at $x=(d^x, \alpha^x)$ sends $f \from b' \to d^x$ to $\alpha^x_{b'}(l \of Lf)$. In particular, if $L$ is of the form $\kle(\mnd{T})$ for $\mnd{T} \in \monad(\cat{B})$ then, for
\[
g \in \kle(\mnd{T})(b', b) = \cat{B}(b, Tb')
\]
we have
\[
E_{\kle(\mnd{T})} (g)_x(f) = \alpha^x_b(\mu_{d^x} \of T(Tf \of \eta_{b'}) \of g) = \alpha_{b}^x(Tf \of g).
\]

Now let us consider $\zeta_{\sem_{\monad}(\mnd{T})}$. Note that $\mnd{T} = \str_{\monad} \of \sem_{\monad}(\mnd{T}) = \str_{\monad} ( U^{\mnd{T}})$ so we have
\[
\zeta_{U^{\mnd{T}}} \from \kle(\mnd{T}) = \kle(\str_{\monad} (U^{\mnd{T}})) \to \str(U^{\mnd{T}}).
\]
For $b, b'$ in $\cat{B}$, the map
\[
\zeta_{U^{\mnd{T}}} \from \kle(\str_{\monad}(U^{\mnd{T}})) ( b', b) = \cat{B}(b, Tb') \to \thr(U^{\mnd{T}})(b', b) = [\cat{M},\Set](\cat{B}(b', U^{\mnd{T}}-), \cat{B}(b, U^{\mnd{T}}-))
\]
sends
$g \from b \to Tb'$ to the natural transformation $\cat{B}(b', U^{\mnd{T}}-) \to \cat{B}(b, U^{\mnd{T}}-)$ whose component at $x = (d^x, s^x) \in \cat{B}^{\mnd{T}}$ sends
$f \from b' \to d^x$ to the composite
\[
b \toby{g} Tb' \toby{Tf} Td^x \toby{s^x} d^x.
\]

In addition,
\[
\xi_{\mnd{T}} \from \mod(\kle(\mnd{T})) \to \cat{B}^{\mnd{T}}
\]
sends a $\kle(\mnd{T})$-model $x = (d^x, \alpha^x)$ to $d^x$ equipped with the $\mnd{T}$-algebra structure
\[
s^x = \alpha^x_{Td^x} (\id_{Td^x}) \from Td^x \to d^x
\]
and is the identity on morphisms.

It follows that, for $b, b' \in \cat{B}$, and
\[
g \in \kle(\mnd{T})(b', b) = \cat{B}(b, Tb')
\]
the natural transformation
\begin{align*}
\str(\xi_{\mnd{T}}) \of\zeta_{U^{\mnd{T}}}(g) \from \cat{B}(b', \sem(\kle(\mnd{T}))-), \cat{B}(b, \sem(\kle(\mnd{T}))-))
\end{align*}
has component at $x = (d^x, \alpha^x) \in \mod(\kle(\mnd{T}))$ sending $f \from b' \to d^x$ to
\begin{align*}
\str(\xi_{\mnd{T}}) \of\zeta_{U^{\mnd{T}}}(g)_x(f ) &= \zeta_{U^{\mnd{T}}}(g)_{\xi_{\mnd{T}}}(f) \\
&= \alpha^x_{Td^x} (\id_{Td^x}) \of Tf \of g \\
&= \alpha^x_{b} (Tf \of g),
\end{align*}
and this is precisely the same as $E_{\kle(\mnd{T})} (g)_x (f)$ as required.
\end{proof}

Thus we have

\begin{thm}
\label{thm:str-sem-rest-monad}
The structure--semantics adjunction
\[
\xymatrix{
{\catover{\cat{B}} }\ar@<5pt>[r]_-{\perp}^-{\str}\ & {\proth(\cat{B}^{\op})^{\op}}\ar@<5pt>[l]^-{\sem}
}
\]
restricts along the inclusions 
\[
\radjover{\cat{B}} \incl \catover{\cat{B}} \quad \text{ and }\quad \kle \from \monad(\cat{B}) \incl \proth(\cat{B}^{\op})
\]
to the adjunction
\[
\xymatrix{
{\radjover{\cat{B}} }\ar@<5pt>[r]_-{\perp}^-{\str_{\monad}}\ & {\monad(\cat{B})^{\op}.}\ar@<5pt>[l]^-{\sem_{\monad}}
}
\]
\end{thm}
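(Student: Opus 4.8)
The plan is to assemble Theorem~\ref{thm:str-sem-rest-monad} from the pieces that have already been established in this section. The statement has two halves: first, that the general structure--semantics adjunction $\str \ladj \sem$ for the canonical aritation on $\cat{B}$ \emph{restricts} along the two inclusions $\radjover{\cat{B}} \incl \catover{\cat{B}}$ and $\kle \from \monad(\cat{B}) \incl \proth(\cat{B}^{\op})$, and second, that the restricted adjunction is (isomorphic to) the adjunction $\str_{\monad} \ladj \sem_{\monad}$ of Proposition~\ref{prop:sem-str-adj-monad}. Both halves are close to immediate once Propositions~\ref{prop:syn-rest}, \ref{prop:sem-rest} and~\ref{prop:sem-str-counit-restr} are in hand; the work is really just in spelling out what ``restricts'' means and citing the right diagram.

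First I would recall the general principle being used: if we have an adjunction $F \ladj G$ between $\cat{X}$ and $\cat{Y}$, full and faithful inclusions $\cat{X}_0 \incl \cat{X}$ and $\cat{Y}_0 \incl \cat{Y}$, and functors $F_0 \from \cat{X}_0 \to \cat{Y}_0$, $G_0 \from \cat{Y}_0 \to \cat{X}_0$ together with natural isomorphisms filling the two evident squares (i.e.\ $F$ restricts to $F_0$ and $G$ restricts to $G_0$ up to isomorphism), and if moreover these isomorphisms are compatible with the (co)unit in the sense made precise in Proposition~\ref{prop:sem-str-counit-restr}, then $F_0 \ladj G_0$ and the hom-set bijection of this adjunction is the restriction of that of $F \ladj G$. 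In our situation $\cat{X} = \catover{\cat{B}}$, $\cat{X}_0 = \radjover{\cat{B}}$, $\cat{Y} = \proth(\cat{B}^{\op})^{\op}$, $\cat{Y}_0 = \monad(\cat{B})^{\op}$, with $F = \str$, $G = \sem$, $F_0 = \str_{\monad}$, $G_0 = \sem_{\monad}$. The inclusion $\radjover{\cat{B}} \incl \catover{\cat{B}}$ is full and faithful by the remark following its definition, and $\kle$ is full and faithful by Proposition~\ref{prop:kle-ff}.

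The substantive inputs are then: Proposition~\ref{prop:syn-rest} supplies the natural isomorphism $\zeta$ making the left square (for $\str$) commute up to isomorphism; Proposition~\ref{prop:sem-rest} supplies the natural isomorphism $\xi$ making the right square (for $\sem$) commute up to isomorphism; and Proposition~\ref{prop:sem-str-counit-restr} records exactly the compatibility between $\zeta$, $\xi$ and the counit $E$ of $\str \ladj \sem$ that is needed to conclude that the triangle identities for the candidate restricted adjunction hold and that its counit is $\str_{\monad} \of \sem_{\monad} = \id$. Concretely, I would argue: transport the adjunction $\str \ladj \sem$ along $\zeta$ and $\xi$ to get that the composites $\kle^{\op} \of \str_{\monad}$ and $\kle^{\op} \of \sem_{\monad}$ (up to the isos $\zeta,\xi$) form an adjunction; since $\kle^{\op}$ and the inclusion $\radjover{\cat{B}} \incl \catover{\cat{B}}$ are full and faithful, this adjunction descends to an adjunction $\str_{\monad} \ladj \sem_{\monad}$ on the subcategories, with unit and counit obtained from those of $\str \ladj \sem$ by the full-and-faithfulness. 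Proposition~\ref{prop:sem-str-counit-restr} then identifies the counit of the descended adjunction with the identity, matching the known structure--semantics adjunction for monads of Proposition~\ref{prop:sem-str-adj-monad}, which pins it down uniquely (a right adjoint, here $\sem_{\monad}$, determines the adjunction up to unique isomorphism, and the counit being the identity here is exactly the content that $\str_{\monad} \of \sem_{\monad} = \id$).

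I do not anticipate a genuine obstacle here: all the real content has been front-loaded into the three preceding propositions, and the remaining step is the formal ``an adjunction between categories restricts to full replete-up-to-iso subcategories provided both functors restrict compatibly with a (co)unit'' argument. The one point requiring mild care is bookkeeping with the $\op$'s (the subcategory of $\proth(\cat{B}^{\op})$ sits inside the opposite category, and $\kle$ is contravariantly placed via $\kle^{\op}$) and making sure the natural isomorphisms $\zeta$ and $\xi$ are pasted in the correct orientation so that the triangle identities genuinely follow from the displayed equation in Proposition~\ref{prop:sem-str-counit-restr}; but this is routine diagram-chasing rather than a conceptual difficulty. Accordingly the proof will be short: cite Proposition~\ref{prop:kle-ff} for full-and-faithfulness of $\kle$, cite Propositions~\ref{prop:syn-rest} and~\ref{prop:sem-rest} for the restriction of $\str$ and $\sem$, invoke Proposition~\ref{prop:sem-str-counit-restr} for the counit compatibility, and conclude that the restricted adjunction is the one of Proposition~\ref{prop:sem-str-adj-monad}.
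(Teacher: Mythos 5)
Your proposal is correct and follows exactly the paper's own route: the paper's proof is the one-line observation that the theorem is precisely Propositions~\ref{prop:syn-rest}, \ref{prop:sem-rest} and~\ref{prop:sem-str-counit-restr} taken together, and your elaboration of the formal ``adjunctions restrict along full subcategories given compatible isomorphisms and counit compatibility'' argument is just the paper's reasoning spelled out in more detail.
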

\begin{proof}
This is precisely Propositions~\ref{prop:syn-rest},~\ref{prop:sem-rest} and~\ref{prop:sem-str-counit-restr} together.
\end{proof}

We can now prove the converse of Proposition~\ref{prop:sem-pullback-monadic-large}.

\begin{prop}
Let $\cat{B}$ be any large category (not necessarily locally small) and let $(U \from \cat{M} \to \cat{B}) \in \catover{\cat{B}}$. Then $U$ is of the form $\sem(L) \from \mod(L) \to \cat{B}$ for some aritation $\lpair - , - \rpair \from \cat{A} \times \cat{B} \to \cat{C}$ in $\CAT$ and $L \in \proth(\cat{A})$ if and only if it is the pullback of a monadic functor with large codomain.
\end{prop}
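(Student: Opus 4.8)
The ``only if'' direction is already done: Proposition~\ref{prop:sem-pullback-monadic-large} shows that any $\sem(L)$ for an aritation in $\CAT$ is the pullback of a monadic functor with large codomain, and nothing in its proof uses local smallness of $\cat{B}$. So the plan is to prove the ``if'' direction, i.e.\ that every functor $U \from \cat{M} \to \cat{B}$ that arises as a pullback of a monadic functor with large codomain is itself of the form $\sem(L)$ for some aritation $\lpair -,-\rpair \from \cat{A} \times \cat{B} \to \cat{C}$ in $\CAT$ and some $L \in \proth(\cat{A})$. The key idea is that a pullback of a monad's forgetful functor should be recognised as the semantics of a proto-theory via the canonical aritation \emph{on the codomain $\cat{D}$ of the monad}, transported back along the pullback.

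Here is the order in which I would proceed. Suppose we are given a large category $\cat{D}$, a monad $\mnd{T} = (T,\eta,\mu)$ on $\cat{D}$, a functor $H \from \cat{B} \to \cat{D}$, and a pullback square
\[
\xymatrix{
\cat{M}\ar[r]\ar[d]_{U}\pullbackcorner & \cat{D}^{\mnd{T}}\ar[d]^{U^{\mnd{T}}} \\
\cat{B}\ar[r]_{H} & \cat{D}.
}
\]
First, I would observe that $\cat{D}^{\mnd{T}}$ is locally small as soon as $\cat{D}$ is, but in general $\cat{D}$ need only be large; to apply the canonical-aritation machinery to $\cat{D}$ we need $\cat{D}$ locally large. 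I would therefore first reduce to that case: replace $\cat{D}$ by a suitable full subcategory closed under $T$ and containing the image of $H$ --- but actually local largeness of $\cat{D}$ is not automatic from ``large''. The cleaner route is to note that $\cat{D}^{\mnd{T}}$ being monadic over $\cat{D}$ forces nothing, so instead I would not assume $\cat{D}$ locally small but rather work with the canonical aritation $\lpair -,-\rpair = \cat{D}(-,-) \from \cat{D}^{\op} \times \cat{D} \to \SET$ only when $\cat{D}$ is locally large, and handle the general case by the observation that we may always enlarge $\cat{D}$ (keeping it large) so that it \emph{is} locally large, or simply note that ``pullback of a monadic functor with large codomain'' can be taken with the codomain locally large without loss of generality, since the hom-sets of $\cat{D}$ only matter through $\cat{D}(H-,-)$. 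I would make this precise as a preliminary lemma.

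Now assume $\cat{D}$ is locally large. Apply the canonical aritation on $\cat{D}$: its category of arities is $\cat{D}^{\op}$, base $\cat{D}$, values in $\SET$, and by Theorem~\ref{thm:str-sem-rest-monad} the structure--semantics adjunction for this aritation restricts to the one for monads via $\kle \from \monad(\cat{D}) \incl \proth(\cat{D}^{\op})$. In particular, by Proposition~\ref{prop:sem-rest}, $\sem(\kle(\mnd{T})) \from \mod(\kle(\mnd{T})) \to \cat{D}$ is isomorphic over $\cat{D}$ to $U^{\mnd{T}} \from \cat{D}^{\mnd{T}} \to \cat{D}$. So, writing $L_0 = \kle(\mnd{T}) \from \cat{D}^{\op} \to \cat{D}_{\mnd{T}}^{\op}$, we have that $U$ is (up to isomorphism over $\cat{B}$) the pullback of $\sem(L_0)$ along $H$. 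The remaining task is to exhibit this pullback of $\sem(L_0)$ along $H$ as itself a $\sem$ of some proto-theory, now with arities a category related to $\cat{B}$. For this I would use the aritation on $\cat{B}$ obtained by restriction along $H$: define $\lpair -,-\rpair \from \cat{D}^{\op} \times \cat{B} \to \SET$ by $\lpair d, b\rpair = \cat{D}(d, Hb)$, i.e.\ $\currylo \from \cat{B} \to [\cat{D}^{\op},\SET]$ is $N_H$ followed by nothing, namely $b \mapsto \cat{D}(-, Hb)$. Then I would check directly from Definition~\ref{defn:sem-objects} that $\mod(L_0)$ computed with \emph{this} aritation (arities $\cat{D}^{\op}$, base $\cat{B}$, values $\SET$) is precisely the pullback of $\mod(L_0)$-with-the-canonical-$\cat{D}$-aritation along $H$: indeed both are defined by pullback of $L_0^* \from [\cat{D}_{\mnd{T}}^{\op},\SET] \to [\cat{D}^{\op},\SET]$, one along $\cat{D}(-,H-) = \currylo^{\cat{D}}\of H$ and the other directly along $\currylo^{\cat{D}}$, and the pasting lemma for pullbacks identifies the composite. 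Hence $U \iso \sem(L_0)$ for the aritation $\cat{D}(-,H-) \from \cat{D}^{\op}\times\cat{B} \to \SET$ and $L_0 = \kle(\mnd{T}) \in \proth(\cat{D}^{\op})$, which is exactly of the required form with $\cat{A} = \cat{D}^{\op}$, $\cat{C} = \SET$.

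The main obstacle I anticipate is the size bookkeeping in the preliminary reduction: ``large category, not necessarily locally small'' for $\cat{B}$, combined with ``monadic functor with large codomain'', means I must be careful that $\cat{D}^{\op}$ is a legitimate object of $\CAT$ (it is --- $\cat{D}$ is large) and that the aritation $\cat{D}(-,H-)$ genuinely lands in $\SET$ rather than some larger category, which requires $\cat{D}$ to be locally large (not merely large). I would argue that if $\cat{D}$ is large but not locally large then $U^{\mnd{T}} \from \cat{D}^{\mnd{T}}\to\cat{D}$ and hence its pullback $U$ still factor through the locally-large ``Kleisli-generated'' subcategory exactly as in the proof of Proposition~\ref{prop:sem-pullback-monadic-large}, reducing to the locally large case; alternatively, since the only datum that enters is $\cat{D}(-,H-)$, one may replace $\cat{D}$ by the image of $N_H$ inside the locally large $[\cat{B}^{\op},\SET]$ --- but that changes the monad, so the factorisation-through-a-locally-large-subcategory argument is the safer one. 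Once the codomain is locally large everything else is a routine diagram chase using the pasting lemma for pullbacks and Propositions~\ref{prop:syn-rest}--\ref{prop:sem-rest}, so I would keep that part brief.
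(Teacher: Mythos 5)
Your overall strategy is the same as the paper's: paste the given pullback square with a pullback presentation of $U^{\mnd{T}} \from \cat{D}^{\mnd{T}} \to \cat{D}$ over a presheaf category on $\cat{D}^{\op}$, so that $U$ becomes $\sem(F_{\mnd{T}}^{\op})$ for the aritation $\cat{D}(-,H-)$ with arities $\cat{D}^{\op}$. The pasting step at the end is fine. But the size issues you flag as "the main obstacle" are a genuine gap, and none of the reductions you sketch resolves them. First, your final aritation is claimed to land in $\cat{C} = \SET$, but $\SET$ is properly huge, not large, so $\cat{D}^{\op}\times\cat{B} \to \SET$ is not an aritation in $\CAT$ in the sense required by the statement (Definition~\ref{defn:aritation} requires all three categories to be large). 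The paper's fix is to let $\kappa$ be the supremum of the cardinalities of the hom-sets of $\cat{D}$ and take the value category to be the \emph{large} category $\SET_{\kappa}$ of sets of cardinality at most $\kappa$; no analogue of this appears in your proposal.

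Second, your route to the pullback presentation of $U^{\mnd{T}}$ goes through $\kle$ and Proposition~\ref{prop:sem-rest}, but the whole of that machinery (Section~\ref{sec:monads-are-proths} and the canonical aritation of Example~\ref{ex:canon-aritation}) is set up only for \emph{locally small} $\cat{D}$, where hom-sets land in $\Set$. A merely large $\cat{D}$ is automatically locally large in the paper's conventions (its hom-sets are subsets of the large set $\mor(\cat{D})$), but that is not enough, and the reductions you propose do not produce a locally small replacement: passing to a full subcategory containing the image of $H$ and closed under $T$ does not shrink hom-sets, and enlarging $\cat{D}$ obviously cannot help either. The paper sidesteps this entirely by citing Street's Theorem~14 directly, which exhibits $U^{\mnd{T}}$ as the pullback of $(F_{\mnd{T}}^{\op})^* \from [\cat{D}_{\mnd{T}}^{\op},\SET_{\kappa}] \to [\cat{D}^{\op},\SET_{\kappa}]$ along $\currylo \from \cat{D} \to [\cat{D}^{\op},\SET_{\kappa}]$ for \emph{any} large $\cat{D}$, once the value category is cut down to $\SET_{\kappa}$. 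With that substitution (Street's theorem over $\SET_{\kappa}$ in place of Proposition~\ref{prop:sem-rest} over $\Set$, and $\SET_{\kappa}$ in place of $\SET$ as the value category of the final aritation), your argument becomes the paper's proof.
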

\begin{proof}
In Proposition~\ref{prop:sem-pullback-monadic-large} we saw that a functor of the form $\sem(L)$ is a pullback of a monadic functor with large codomain; now we show the converse.

Suppose we have a monad $\mnd{T}$ on a large category $\cat{D}$ and a pullback square
\[
\xymatrix{
\cat{M}\pullbackcorner\ar[d]_U \ar[r] & \cat{D}^{\mnd{T}} \ar[d]^{U^{\mnd{T}}} \\
\cat{B}\ar[r]_H & \cat{D}.
}
\]

Let $\kappa$ be the supremum of the cardinalities of the hom-sets of $\cat{D}$ and write $\SET_{\kappa}$ for the large category of sets of cardinality at most $\kappa$; note that $\SET_{\kappa}$ is large. Then by Theorem~14 of Street~\cite{street72}, we have a pullback square
\[
\xymatrix{
\cat{D}^{\mnd{T}}\pullbackcorner\ar[d]_{U^{\mnd{T}}}\ar[r] & [\cat{D}_{\mnd{T}}^{\op}, \SET_{\kappa}]\ar[d]^{(F_{\mnd{T}}^{\op})^*} \\
\cat{D}\ar[r]_-{\currylo} & [\cat{D}^{\op},\SET_{\kappa}].
}
\]
Putting the two pullback squares together, the outer rectangle of 
\[
\xymatrix{
\cat{M}\ar[d]_U \ar[r] & \cat{D}^{\mnd{T}} \ar[d]^{U^{\mnd{T}}}\ar[r] & [\cat{D}_{\mnd{T}}^{\op}, \SET_{\kappa}]\ar[d]^{(F_{\mnd{T}}^{\op})^*}\\
\cat{B}\ar[r]_H & \cat{D}\ar[r]_-{\currylo} & [\cat{D}^{\op}, \SET_{\kappa}]
}
\]
is a pullback, and so we can identify $U \from \cat{M} \to \cat{B}$ with the functor $\sem(F_{\mnd{T}}^{\op}) \from \mod(F_{\mnd{T}}^{\op}) \to \cat{B}$ given by the aritation
\[
\cat{D}(-, H- ) \from \cat{D}^{\op} \times \cat{B} \to \SET_{\kappa}
\]
as required.
\end{proof}

\section{Monads with arities}
\label{sec:canonical1-monads-arities}
Recall from Section~\ref{sec:notions-monads-arites} that monads can be generalised to monads with arities. Berger, Melli\`es and Weber showed in~\cite{bergerMelliesWeber12} that monads with arities on $(\cat{B},\cat{A})$ can equivalently be described as certain bijective-on-objects functors out of $\cat{A}$, which they called theories with arities. In this section we show how the semantics of theories with arities (and thus monads with arities) arises from an aritation. More precisely, we define an aritation whose structure--semantics adjunction extends the structure--semantics adjunction for monads with arities described in Proposition~\ref{prop:monad-arities-adj}, in the same way that the structure--semantics adjunction for the canonical aritation extends that of ordinary monads. Throughout this section, let $(\cat{B},\cat{A})$ be a category with arities as defined in Definition~\ref{defn:cat-with-arities}, so that $\cat{B}$ is a large category, and $\cat{A}$ is a dense subcategory of $\cat{B}$. Assume in addition that $\cat{B}$ is locally small.

\begin{defn}
A \demph{$(\cat{B},\cat{A})$-theory} is a proto-theory $L \from \cat{A}^{\op} \to \cat{L}$ with arities $\cat{A}^{\op}$ such that the composite
\[
[\cat{A}^{\op},\SET] \toby{L_!} [\cat{L},\SET] \toby{L^*} [\cat{A}^{\op},\SET]
\]
restricts to an endofunctor on the essential image of $N_{\cat{A}} \from \cat{B} \to [\cat{A}^{\op} ,\SET]$, where $L_!$ denotes left Kan extension along $L$. The category of $(\cat{B},\cat{A})$-theories is the full subcategory of $\proth(\cat{A}^{\op})$ on such theories.
\end{defn}

\begin{remark}
This is Definition~3.1 of Berger, Melli\`es and Weber~\cite{bergerMelliesWeber12}, where such theories are called simply \emph{theories with arities}.
\end{remark}

Recall from Definition~\ref{defn:arities-algebras} that, for a monad with arities $\mnd{T} = (T, \eta, \mu)$ on a category with arities ($\cat{B}, \cat{A})$, the category $\Theta_{\mnd{T}}$ is defined to be the full subcategory of $\cat{B}^{\mnd{T}}$ consisting of the algebras of the form $(Ta, \mu_a)$ where $a \in \cat{A}$. Recall also that $j_{\mnd{T}} \from \cat{A} \to \Theta_{\mnd{T}}$ is $F^{\mnd{T}} \from \cat{B} \to \cat{B}^{\mnd{T}}$ with domain restricted to $\cat{A}$ and codomain restricted to $\Theta_{\mnd{T}}$. Note that $j_{\mnd{T}}$ is bijective on objects.

\begin{lem}
\label{lem:monad-arities-kle}
The assignment $\mnd{T} \mapsto (j_{\mnd{T}}^{\op} \from \cat{A}^{\op} \to \Theta_{\mnd{T}}^{\op})$ defines a full and faithful functor 
\[
\monad_{\ACAT}(\cat{B},\cat{A}) \to \proth(\cat{A}^{\op})
\]
whose essential image is the category of $(\cat{B},\cat{A})$-theories.
\end{lem}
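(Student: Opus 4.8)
The plan is to treat this as the ``category with arities'' analogue of Proposition~\ref{prop:kle-ff} and to run essentially the same argument, the one genuinely new ingredient being the systematic use of the density of $\cat{A}$ in $\cat{B}$ together with the defining property of a monad with arities, in order to transfer data between $\cat{A}$ and $\cat{B}$. For functoriality and bijectivity on objects: since the comparison functor $\cat{B}_{\mnd{T}} \to \cat{B}^{\mnd{T}}$ is full and faithful and identifies $\cat{B}_{\mnd{T}}$ with the full subcategory of free algebras, $\Theta_{\mnd{T}}$ is identified with the full subcategory of $\cat{B}_{\mnd{T}}$ on the objects lying in $\cat{A}$, and under this identification $j_{\mnd{T}}$ becomes $F_{\mnd{T}} \from \cat{B} \to \cat{B}_{\mnd{T}}$ corestricted to $\cat{A}$; in particular $j_{\mnd{T}}^{\op}$ is bijective on objects. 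A morphism of monads with arities is just a monad morphism $\phi \from \mnd{T} \to \mnd{T}'$, which induces the Kleisli functor $\kl{\phi} \from \cat{B}_{\mnd{T}} \to \cat{B}_{\mnd{T}'}$; this is the identity on objects and satisfies $\kl{\phi} \of F_{\mnd{T}} = F_{\mnd{T}'}$, so it restricts to a functor $\Theta_{\mnd{T}} \to \Theta_{\mnd{T}'}$ with $(-) \of j_{\mnd{T}} = j_{\mnd{T}'}$, i.e.\ (after dualising) a morphism $j_{\mnd{T}}^{\op} \to j_{\mnd{T}'}^{\op}$ in $\proth(\cat{A}^{\op})$, and functoriality is immediate. (Equivalently, this functor is $\kle$ followed by the restriction of proto-theories along $\cat{A}^{\op} \incl \cat{B}^{\op}$, restricted to monads with arities.)

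Next, faithfulness and fullness. Unwinding the definitions, the functor $\Theta_{\mnd{T}} \to \Theta_{\mnd{T}'}$ induced by $\phi$ sends a morphism of $\Theta_{\mnd{T}}$ from $(Ta'', \mu_{a''})$ to $(Ta, \mu_a)$ --- equivalently a morphism $g \from a'' \to Ta$ in $\cat{B}$ with $a'', a \in \cat{A}$ --- to $\phi_a \of g$. For faithfulness: if $\phi, \phi'$ induce the same functor then $\phi_a \of g = \phi'_a \of g$ for every such $g$; since $\cat{A}$ is dense in $\cat{B}$, the family of all such $g$ exhibits $N_{\cat{A}}(Ta)$ as a colimit of objects of $\cat{A}$, so $N_{\cat{A}}(\phi_a) = N_{\cat{A}}(\phi'_a)$ and hence $\phi_a = \phi'_a$ for all $a \in \cat{A}$; and because $\mnd{T}$ is a monad with arities, $N_{\cat{A}}(Tb)$ is canonically the colimit of the $N_{\cat{A}}(Ta)$ over $(\cat{A} \downarrow b)$, so $\phi$, which sends the cocone legs compatibly, is determined on all of $\cat{B}$ by its restriction to $\cat{A}$; thus $\phi = \phi'$. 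For fullness, given a morphism $P \from j_{\mnd{T}}^{\op} \to j_{\mnd{T}'}^{\op}$, i.e.\ a functor $P \from \Theta_{\mnd{T}} \to \Theta_{\mnd{T}'}$ with $P \of j_{\mnd{T}} = j_{\mnd{T}'}$, one defines $\phi_a \from Ta \to T'a$ for $a \in \cat{A}$ to be the unique morphism with $\phi_a \of g = P(g)$ for all $g \from a'' \to Ta$ with $a'' \in \cat{A}$; the family $\{P(g)\}$ is a cocone because $P$ is functorial and commutes with the $j$'s, so such a $\phi_a$ exists by density and full faithfulness of $N_{\cat{A}}$. One then checks $(\phi_a)_{a \in \cat{A}}$ is natural, extends it to $\phi \from T \to T'$ on all of $\cat{B}$ using the presentation $N_{\cat{A}}(Tb) = \colim N_{\cat{A}}(Ta)$, and verifies the monad morphism axioms --- the unit axiom is immediate from $P \of j_{\mnd{T}} = j_{\mnd{T}'}$, and the multiplication axiom reduces, via the same colimit presentations, to functoriality of $P$. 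By faithfulness, $\phi$ is the unique monad morphism inducing $P$.

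Finally, the essential image. That each $j_{\mnd{T}}^{\op}$ is a $(\cat{B},\cat{A})$-theory and that every $(\cat{B},\cat{A})$-theory is isomorphic to one of this form is exactly the content of the nerve theorem of Berger, Melli\`es and Weber (Theorem~3.4 of~\cite{bergerMelliesWeber12}); indeed the condition defining a $(\cat{B},\cat{A})$-theory was engineered precisely so that $L^{*}L_{!}$ restricts to the endofunctor $N_{\cat{A}}T$ of the essential image of $N_{\cat{A}}$ for the associated monad $\mnd{T}$. Alternatively one may re-derive it in the present framework by identifying $L^{*}L_{!}$, for $L \from \cat{A}^{\op} \to \cat{L}$, with the monad on $[\cat{A}^{\op},\SET]$ induced by $L$ regarded as a monad in $\PROF$ (Proposition~\ref{prop:monad-in-prof}), checking that for $L = j_{\mnd{T}}^{\op}$ this monad is $N_{\cat{A}}T$, and observing conversely that a $(\cat{B},\cat{A})$-theory equips $N_{\cat{A}}$ with an algebra structure for such a monad, from which $\mnd{T}$ is recovered on $\cat{B}$ using full faithfulness of $N_{\cat{A}}$ and the arities condition.

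The main obstacle is the bookkeeping in faithfulness and especially fullness: unlike in Proposition~\ref{prop:kle-ff}, where the components $\phi_b$ of a monad morphism are read off by evaluating the Kleisli functor at the identities $\id_{Tb}$, here $Ta$ and $Tb$ need not be objects of $\cat{A}$, so no such identities are available to test against; instead one must present these objects as canonical colimits of objects of $\cat{A}$ and repeatedly use the monad-with-arities condition to pass between $\cat{A}$ and $\cat{B}$. Getting the naturality of the reconstructed $\phi$ and its compatibility with the multiplications to come out cleanly is the most delicate point, though it is routine once the colimit presentations are set up.
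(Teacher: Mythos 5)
Your proposal is correct in outline, but it takes a genuinely different route from the paper: the paper's entire proof of this lemma is the single sentence ``This is a rewording of Theorem~3.4 of \cite{bergerMelliesWeber12}'', i.e.\ it outsources both the full faithfulness and the identification of the essential image to the nerve theorem of Berger, Melli\`es and Weber, whereas you cite that theorem only for the essential image and reconstruct the full-faithfulness argument by hand. Your reconstruction uses the right tools: identifying $\Theta_{\mnd{T}}$ with the full subcategory of the Kleisli category on the objects of $\cat{A}$, reading off morphisms $(Ta'',\mu_{a''}) \to (Ta,\mu_a)$ as morphisms $a'' \to Ta$ in $\cat{B}$, recovering $\phi_a$ from a functor $P$ via the colimit presentation of $Ta$ supplied by density of $\cat{A}$, and then extending to all of $\cat{B}$ via the colimit presentation of $N_{\cat{A}}(Tb)$ supplied by the arities condition. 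What the paper's approach buys is brevity; what yours buys is a self-contained argument and a clear view of where density and the arities condition are each used (density to control morphisms out of $Ta$ for $a \in \cat{A}$, the arities condition to pass to general $b$). Two small points to watch if you write this out in full: the deferred verification of the multiplication axiom needs a second application of the arities condition, to present $N_{\cat{A}}(TTb)$ as a colimit of the $N_{\cat{A}}(Ta)$ over $(\cat{A} \downarrow Tb)$ before testing the two sides of the axiom against cocone legs; and the paper's definition of $\Theta_{\mnd{T}}$ as a full subcategory of $\cat{B}^{\mnd{T}}$ only makes $j_{\mnd{T}}$ bijective on objects after the identification with the Kleisli category that you make explicit, so that identification should be stated as the working definition rather than as an aside.
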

\begin{proof}
This is a rewording of Theorem~3.4 of \cite{bergerMelliesWeber12}.
\end{proof}

In order to define the semantics of general proto-theories with arities $\cat{A}^{\op}$, and in particular $(\cat{B},\cat{A})$-theories, we need to be able to interpret the arities $\cat{A}^{\op}$ in $\cat{B}$. Since $\cat{A}$ is a subcategory of $\cat{B}$, we can do this with a suitable restriction of the canonical aritation: writing $I \from \cat{A} \incl \cat{B}$ for the inclusion, we have an aritation
\[
\cat{B}(I-, -) \from \cat{A}^{\op} \times \cat{B} \to \Set,
\]
and thus we obtain a structure--semantics adjunction
\[
\xymatrix{
\catover{\cat{B} }\ar@<5pt>[r]_-{\perp}^-{\str}\ & {\proth(\cat{A}^{\op})^{\op}.}\ar@<5pt>[l]^-{\sem}
}
\]
The following proposition relates this adjunction to the adjunction of Proposition~\ref{prop:monad-arities-adj}.
\begin{prop}
\label{prop:mnd-arity-adj-rest}
Both squares in
\[
\xymatrix{
{(\ACAT/(\cat{B},\cat{A}))_{\ra} }\ar@<5pt>[r]_-{\perp}^-{\str_{\Amonad}}\ar[d] & {\Amonad(\cat{B}, \cat{A})^{\op}}\ar@<5pt>[l]^-{\sem_{\Amonad}}\ar[d] \\
\catover{\cat{B} }\ar@<5pt>[r]_-{\perp}^-{\str}\ & {\proth(\cat{A}^{\op})^{\op}}\ar@<5pt>[l]^-{\sem}
}
\]
commute up to equivalence, where the left-hand vertical arrow is the obvious forgetful functor, and the right-hand vertical arrow is the inclusion from Lemma~\ref{lem:monad-arities-kle}.
\end{prop}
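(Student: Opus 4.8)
The plan is to transcribe, step for step, the three-part argument used for ordinary monads in Section~\ref{sec:monads-are-proths} (Propositions~\ref{prop:syn-rest}, \ref{prop:sem-rest} and~\ref{prop:sem-str-counit-restr}), replacing the canonical aritation by its restriction $\cat{B}(I-,-) \from \cat{A}^{\op} \times \cat{B} \to \Set$ along $I \from \cat{A}\incl\cat{B}$, and replacing the Kleisli embedding by the embedding $\mnd{T} \mapsto (j_{\mnd{T}}^{\op} \from \cat{A}^{\op} \to \Theta_{\mnd{T}}^{\op})$ of Lemma~\ref{lem:monad-arities-kle}. Throughout, let $(U \from \cat{M} \to \cat{B})$ be an object of $(\ACAT/(\cat{B},\cat{A}))_{\ra}$, with arity-respecting left adjoint $F$; let $\mnd{T}_U = \str_{\Amonad}(U)$ be the monad with arities it induces, and let $K \from \cat{M} \to \cat{B}^{\mnd{T}_U}$ be the comparison functor, so $U^{\mnd{T}_U} \of K = U$ and $K \of F = F^{\mnd{T}_U}$.

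\emph{The structure square.} For this aritation $\currylo = N_{\cat{A}}$ and $\curryhi$ sends $a \in \cat{A}^{\op}$ to $\cat{B}(Ia,-)$, so by Definition~\ref{defn:str-objects} the category $\thr(U)$ has the objects of $\cat{A}$ and $\thr(U)(a',a) = [\cat{M},\Set]\bigl(\cat{B}(Ia',U-),\cat{B}(Ia,U-)\bigr)$. Exactly as in the proof of Proposition~\ref{prop:syn-rest}, using $F \ladj U$ and the Yoneda lemma, this is naturally isomorphic to $\cat{M}(FIa,FIa')$, and then $\cat{M}(FIa,FIa') \iso \cat{B}(Ia, U FIa') = \cat{B}(Ia, U^{\mnd{T}_U}F^{\mnd{T}_U}Ia') \iso \cat{B}^{\mnd{T}_U}(F^{\mnd{T}_U}Ia, F^{\mnd{T}_U}Ia') = \Theta_{\mnd{T}_U}(Ia,Ia')$. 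One checks (again copying Proposition~\ref{prop:syn-rest}) that these bijections respect composition and are compatible with the bijective-on-objects functors $\str(U)$ and $j_{\mnd{T}_U}^{\op}$ out of $\cat{A}^{\op}$, yielding an isomorphism $\thr(U) \iso \Theta_{\mnd{T}_U}^{\op}$ under $\cat{A}^{\op}$ that is natural in $U$; this is the commutativity of the left-hand square.

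\emph{The semantics square.} Here one shows $\sem(j_{\mnd{T}}^{\op}) \from \mod(j_{\mnd{T}}^{\op}) \to \cat{B}$ is equivalent over $\cat{B}$ to $U^{\mnd{T}} \from \cat{B}^{\mnd{T}} \to \cat{B}$, naturally in the monad with arities $\mnd{T}$. Since $\Theta_{\mnd{T}}$ is dense in $\cat{B}^{\mnd{T}}$, the nerve $N_{\Theta_{\mnd{T}}} \from \cat{B}^{\mnd{T}} \to [\Theta_{\mnd{T}}^{\op},\Set]$ is full and faithful (Definition~\ref{defn:dense-codense}), and $(j_{\mnd{T}}^{\op})^{*} \of N_{\Theta_{\mnd{T}}} \iso N_{\cat{A}} \of U^{\mnd{T}}$ because $\cat{B}^{\mnd{T}}(F^{\mnd{T}}Ia, (d,s)) \iso \cat{B}(Ia, d)$ naturally. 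Comparing this with the pullback defining $\mod(j_{\mnd{T}}^{\op})$ (Definition~\ref{defn:sem-objects}) produces a comparison functor $\cat{B}^{\mnd{T}} \to \mod(j_{\mnd{T}}^{\op})$ over $\cat{B}$, and a short Yoneda argument shows it is full, faithful and injective on objects. That it is moreover essentially surjective — equivalently, that every presheaf on $\Theta_{\mnd{T}}$ whose restriction along $j_{\mnd{T}}$ is representable in $\cat{B}$ is the nerve of a unique $\mnd{T}$-algebra — is precisely the nerve theorem for monads with arities of~\cite{bergerMelliesWeber12}, and also follows from the fact recalled in Section~\ref{sec:notions-monads-arites} that $U^{\mnd{T}}$ exhibits $(\cat{B}^{\mnd{T}},\Theta_{\mnd{T}})$ as an Eilenberg--Moore object for $\mnd{T}$ in $\ACAT$ (Proposition~2.3 of~\cite{bergerMelliesWeber12}). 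Tracing the identifications shows the comparison commutes with the forgetful functors to $\cat{B}$, and naturality in $\mnd{T}$ is routine; this gives the right-hand square.

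Finally, one may strengthen ``the two squares commute'' to ``the structure--semantics adjunction for $\cat{B}(I-,-)$ restricts along the two inclusions to the adjunction of Proposition~\ref{prop:monad-arities-adj}'' by checking the compatibility of counits exactly as in Proposition~\ref{prop:sem-str-counit-restr} and Theorem~\ref{thm:str-sem-rest-monad}, though this is not required for the statement as phrased. The one genuinely new ingredient — and the main obstacle — is the semantics square: in contrast with the canonical-aritation case of Section~\ref{sec:alt-model-algebra}, there is no elementary ``algebra'' reformulation of $\mod(j_{\mnd{T}}^{\op})$, so one must invoke the Berger--Melli\`es--Weber nerve theorem (or, equivalently, the Eilenberg--Moore-object property in $\ACAT$) to obtain essential surjectivity of the comparison functor; everything else is a direct transcription of the corresponding arguments for ordinary monads.
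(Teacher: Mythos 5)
Your proposal is correct and follows essentially the same route as the paper: the structure square via the same chain of isomorphisms $\thr(U)(a',a)\iso\cat{M}(FIa,FIa')\iso\cat{B}(Ia,UFIa')\iso\Theta_{\mnd{T}_U}(Ia,Ia')$, and the semantics square by reducing to Proposition~3.2 of Berger--Melli\`es--Weber. The only detail worth adding is that your comparison functor must land in the \emph{strict} pullback defining $\mod(j_{\mnd{T}}^{\op})$, whereas the nerve identity $(j_{\mnd{T}}^{\op})^*\of N_{\Theta_{\mnd{T}}}\iso N_{\cat{A}}\of U^{\mnd{T}}$ holds only up to isomorphism; the paper handles this rectification by invoking the fact that $(j^{\op})^*$ is an isofibration (Lemma~\ref{lem:bo-restrict-isofib}).
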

\begin{proof}
In \cite{bergerMelliesWeber12}, the category of models of a $(\cat{B},\cat{A})$-theory $(\Theta, j)$ is defined to be the full subcategory of $[\Theta^{\op}, \Set]$ consisting of those presheaves $\Gamma$ such that $\Gamma \of j^{\op}$ belongs to the essential image of $N_{\cat{A}} \from \cat{B} \to [\cat{A}^{\op}, \Set]$. Proposition~3.2 of~\cite{bergerMelliesWeber12} shows that the category of models so defined is equivalent to the category of algebras for the corresponding monad on $\cat{B}$. Thus if we can show that the category of models of $(\Theta, j)$ in the sense of~\cite{bergerMelliesWeber12} is equivalent to the category of models of $(\Theta, j)$ as a proto-theory, we will have shown that the square involving $\sem$ and $\sem_{\Amonad}$ commutes up to equivalence.

The category of models of $(\Theta, j)$ as a proto-theory is defined by the pullback
\[
\xymatrix{
\mod(\Theta, j)\ar[r]^{J(\Theta, j)}\ar[d]_{\sem(\Theta, j)}\pullbackcorner & [\Theta^{\op}, \Set]\ar[d]^{(j^{\op})^*}\\
\cat{B} \ar[r]_-{N_{\cat{A}}} & [\cat{A}^{\op}, \Set].
}
\]
Since $N_{\cat{A}}$ is full and faithful (because $\cat{A}$ is dense in $\cat{B}$), it follows that $J(\Theta, j)$ is also full and faithful. Therefore we can identify $\mod(\Theta, j)$, up to equivalence, with the full subcategory of $[\Theta^{\op}, \Set]$ consisting of those $\Gamma$ such that $\Gamma \of j^{\op} = N_{\cat{A}}(b)$ for some $b \in \cat{B}$. Thus to establish the required equivalence, it is sufficient to show that if $\Gamma \from \Theta^{\op} \to \Set$ is such that $\Gamma \of j^{\op} \iso N_{\cat{A}}(b)$ for some $\cat{B}$, then there is some $\Gamma' \from \Theta^{\op} \to \Set$ such that $\Gamma \iso \Gamma'$ and $\Gamma' \of j^{\op} = N_{\cat{A}}(b)$. But this follows from the fact that $(j^{\op})^*$ is an isofibration; see Lemma~\ref{lem:bo-restrict-isofib}.

We now show that the square involving $\str_{\Amonad}$ and $\str$ commutes. Let $U \from (\cat{M}, \cat{N}) \to (\cat{B},\cat{A})$ be an arity-respecting functor with arity-respecting left adjoint $F$. Write $\mnd{T} = (T, \eta, \mu)$ for the induced monad with arities on $(\cat{B},\cat{A})$. Then we have, for $a, a' \in \cat{A}$,
\begin{align*}
\Theta_{\mnd{T}} (j_{\mnd{T}} (a'), j_{\mnd{T}}(a)) \iso &\cat{B}^{\mnd{T}} ( (Ta', \mu_{a'}), (Ta, \mu_{a})) \\
\iso & \cat{B}(a', Ta) \\
\iso &\cat{B}(a', UFa) \\
\iso & \cat{M}(Fa', Fa) \\
\iso &  [\cat{M}, \SET] (\cat{M}(Fa, -), \cat{M}(Fa', -)) \\
\iso & [\cat{M}, \SET] (\cat{B}(a, U-), \cat{M}(a', U-)) \\
\iso & \thr(U) (a, a'),
\end{align*}
and this composite isomorphism is functorial, hence we have $\Theta_{\mnd{T}}^{\op} \iso \thr(U)$ in $\proth(\cat{A}^{\op})$, as required.
\end{proof}

\chapter{Proto-theories with structure}
\label{chap:structure}

In Chapter~\ref{chap:adjunction} we described a general notion of algebraic theory (namely a proto-theory) and the corresponding structure--semantics adjunction. In this chapter, we generalise further, allowing us to encompass all of the notions of theory described in Chapter~\ref{chap:notions}. A proto-theory will now live in an arbitrary 2-category rather than $\CAT$; this makes the theory considerably more abstract and less intuitive, however it is worth keeping in mind that, although we work in an arbitrary 2-category, in all the examples of interest the 2-category in question will in fact be a category of categories equipped with some extra structure. Thus the intuition developed in Chapter~\ref{chap:adjunction} will carry over at least for these examples, with some added caveats about compatibility with the extra structure.

In particular, the 2-categories that will be of most interest to us, besides $\CAT$, are $\finprod$, the 2-category of large finite product categories, $\symmon$, the 2-category of large symmetric monoidal categories, $\monCAT$, the 2-category of large monoidal categories and $\multi$, the 2-category of large multicategories.

We call a 2-category equipped with the relevant structure for interpreting proto-theories and aritations a \emph{setting}. In Section~\ref{sec:setting-proto-theory-aritation} we give the definition of a setting and the notions of proto-theory and aritation within a setting, and in Section~\ref{sec:sem-general} we define the semantics functor arising from an aritation in a general setting. Then in Section~\ref{sec:str-general} we define the corresponding structure functor and show that the two are adjoint to one another. In Section~\ref{sec:concrete-settings} we discuss a particular type of setting in which the 2-category in question has a suitable forgetful functor to $\CAT$. We show how concrete settings can arise from certain 2-monads on $\CAT$ in Section~\ref{sec:settings-2-monads}, and in Section~\ref{sec:structure-examples} we see how the remaining notions of algebraic theory from Chapter~\ref{chap:notions} and their semantics can be described in terms of proto-theories and aritations in various settings.

\section{\Setting s, proto-theories and aritations}
\label{sec:setting-proto-theory-aritation}

In this section we introduce settings, which are 2-categories equipped with the structure necessary to talk about proto-theories and aritations within them. We also define these notions for a general setting, generalising the definitions of Chapter~\ref{chap:adjunction}.

\begin{defn}
A \demph{\setting} consists of a locally large 2-category $\cat{X}$ equipped with:
\begin{itemize}
\item a factorisation system $(\ofsfont{E}, \ofsfont{N})$ on its underlying $1$-category, and
\item cotensors over $\CAT$; that is, there is a 2-functor $[-, -] \from \CAT^{\op} \times \cat{X} \to \cat{X}$ and a specified isomorphism of categories
\[
\CAT ( \cat{B}, \cat{X}(\cat{A}, \cat{C})) \iso \cat{X}(\cat{A}, [\cat{B}, \cat{C}])
\]
natural in $\cat{A}, \cat{C} \in \cat{X}$ and $\cat{B} \in \CAT$.
\end{itemize}
\end{defn}

\begin{defn}
Given a \setting\ $(\cat{X}, \ofsfont{E}, \ofsfont{N})$, a \demph{proto-theory} in $\cat{X}$ with arities $\cat{A} \in \cat{X}$ consists of a 1-cell
\[
L \from \cat{A} \to \cat{L}
\]
such that $L \in \ofsfont{E}$.

A morphism of proto-theories from $L' \from \cat{A} \to \cat{L}'$  to  $L \from \cat{A} \to \cat{L}$ is simply a 1-cell $P \from \cat{L}' \to \cat{L}$ in $\cat{X}$ such that $P \of L' = L$. We write $\proth(\cat{A})$ for the category of proto-theories in $\cat{X}$ with arities $\cat{A}$ and their morphisms. Thus $\proth(\cat{A})$ depends on the setting $(\cat{X}, \ofsfont{E}, \ofsfont{N})$, leading to potential ambiguity. However it is usually clear from the context which setting is intended, and so we omit making this dependence explicit in order to simplify the notation.
\end{defn}

\begin{defn}
Let $\cat{X}$ be a \setting, and suppose  $\cat{A}, \cat{C} \in \cat{X}$ and $\cat{B} \in \CAT$. Then an \demph{interpretation of arities from $\cat{A}$ in $\cat{B}$, with values in $\cat{C}$}, or just an \demph{aritation}, consists of a functor
\[
\currylo \from \cat{B} \to \cat{X}(\cat{A}, \cat{C}),
\]
or equivalently a $1$-cell
\[
\curryhi \from \cat{A} \to [\cat{B}, \cat{C}]
\]
in $\cat{X}$.
\end{defn}

\begin{remark}
These definitions generalise those of Chapter~\ref{chap:adjunction} as follows. Recall that the 2-category $\CAT$ has a factorisation system given by the bijective-on-objects and full-and-faithful functors, and it is also cotensored over itself, with the cotensor $[\cat{B}, \cat{C}]$ being given by the usual functor category. Thus $\CAT$ is a \setting, and the category of proto-theories with arities $\cat{A} \in \CAT$ in this setting is precisely the category of proto-theories as defined in Definition~\ref{defn:proto-theory}.

An aritation in this \setting, meanwhile, consists of a functor $\cat{B} \to [\cat{A}, \cat{C}]$, which corresponds to a functor $\lpair -, - \rpair \from \cat{A} \times \cat{B} \to \cat{C}$ as in Definition~\ref{defn:aritation}. Indeed, if $\cat{X}$ is also \emph{tensored} over $\CAT$, then a third way to define an aritation is as a 1-cell
\[
\cat{B} \tensor \cat{A} \to \cat{C}
\]
in $\cat{X}$.
\end{remark}

\section{Semantics for proto-theories in a general \setting}
\label{sec:sem-general}

Throughout this section, fix a setting $(\cat{X}, \ofsfont{E}, \ofsfont{N})$ and an aritation $\currylo \from \cat{B} \to \cat{X}(\cat{A},\cat{C})$ in $\cat{X}$. We explain how such an aritation gives rise to a semantics functor
\[
\sem \from \proth(\cat{A})^{\op} \to \catover{\cat{B}}.
\]

\begin{defn}
\label{defn:sem-factors}
We define three functors as follows.
\begin{itemize}
\item The functor $\iota \from \proth(\cat{A}) \to \cat{A}/ \cat{X}$ is simply the full inclusion;
\item the functor $G \from (\cat{A}/\cat{X})^{\op} \to \catover{\cat{X}(\cat{A}, \cat{C})}$ sends $(K \from \cat{A} \to \cat{K}) \in \cat{A}/\cat{X}$ to $K^* \from \cat{X}(\cat{K}, \cat{C}) \to \cat{X}(\cat{A}, \cat{C})$, and acts similarly on morphisms; and
\item the functor $\currylo^* \from \catover{\cat{X}(\cat{A}, \cat{C})} \to \catover{\cat{B}}$ is given by pullback along $\currylo \from \cat{B} \to \cat{X}(\cat{A}, \cat{C})$.
\end{itemize}
\end{defn}

\begin{defn}
\label{defn:sem-general}
We define $\sem \from \proth(\cat{A})^{\op} \to \catover{\cat{B}}$ to be the composite
\[
\proth(\cat{A})^{\op} \toby{\iota^{\op}} (\cat{A}/ \cat{X})^{\op} \toby{G} \catover{\cat{X}(\cat{A}, \cat{C})} \toby{\currylo^*} \catover{\cat{B}}.
\]

Explicitly, given $(L \from \cat{A} \to \cat{L}) \in \proth(\cat{A})$, the functor $\sem(L) \from \mod(L) \to \cat{B}$ is defined by the pullback
\[
\xymatrix{
\mod(L)\pullbackcorner\ar[r]^{J(L)}\ar[d]_{\sem(L)} & \cat{X}(\cat{L}, \cat{C} )\ar[d]^{L^*} \\
\cat{B}\ar[r]_{\currylo} & \cat{X}(\cat{A}, \cat{C}),
}
\]
and, given a morphism
\[
\xymatrix{
\cat{L}'\ar[rr]^P & & \cat{L} \\
& \cat{A}\ar[ul]^{L'} \ar[ur]_L
}
\]
in $\proth(\cat{A})$, the functor $\sem(P) \from \mod(L) \to \mod(L')$ is defined to be the unique functor making
\[
\xymatrix{
{\mod(L)}\ar[rrr]^{I(L)} \ar[dr]^{\sem(P)} \ar[dd]_{\sem(L)} & & & {\cat{X}(\cat{L}, \cat{C} )}\ar[dl]_{P^*}\ar[dd]^{L^*} \\
& {\mod(L')}\ar[r]^{I(L')}\ar[dl]^{\sem(L)} & {\cat{X}(\cat{L}', \cat{C})}\ar[dr]_{L'^*} & \\
 {\cat{B}}\ar[rrr]_{\currylo} & & &{\cat{X}(\cat{A}, \cat{C})} 
}
\]
commute.
\end{defn}

\begin{remark}
It is clear that in the \setting\ $\CAT$ with the bijective-on-objects/full-and-faithful factorisation system, this definition coincides with the definition of $\sem$ given in Definitions~\ref{defn:sem-objects} and \ref{defn:sem-morphisms}.
\end{remark}

Note that the definition of $\sem$ does not make use of the assumption that $\cat{X}$ has cotensors; this assumption will only be used when we construct a left adjoint to $\sem$. Thus, if we were not concerned with the existence of such a left adjoint we could drop this requirement and talk about the semantics of proto-theories in a wider range of contexts.

\section{The structure functor in a general \setting}
\label{sec:str-general}

In this section we define the structure functor arising from an aritation in a general setting, and show that it is left adjoint to the semantics functor. As in the previous section, we fix a setting $(\cat{X},\ofsfont{E},\ofsfont{N})$ and an aritation $\currylo \from \cat{B} \to \cat{X}(\cat{A},\cat{C})$ in $\cat{X}$.

Such a left adjoint exists if and only if, for each $(U \from \cat{M} \to \cat{B}) \in \CAT$, the comma category $(U \downarrow \sem)$ has an initial object. Explicitly, this means that there is a commutative square
\[
\xymatrix{
\cat{M} \ar[r]^P\ar[d]_U & \cat{X}(\thr(U), \cat{C})\ar[d]^{\str(U)^*} \\
\cat{B}\ar[r]_{\currylo} & \cat{X}(\cat{A}, \cat{C})
}
\]
where $(\str(U) \from \cat{A} \to \thr(U)) \in \proth(\cat{A})$, such that for every other square
\begin{equation}
\label{eq:str-desc-explicit}
\vcenter{
\xymatrix{
\cat{M} \ar[r]^R\ar[d]_U & \cat{X}(\cat{L}, \cat{C})\ar[d]^{L^*} \\
\cat{B}\ar[r]_{\currylo} & \cat{X}(\cat{A}, \cat{C})
}}
\end{equation}
where $(L \from \cat{A} \to \cat{L}) \in \proth(\cat{A})$, there exists a unique $Q \from L \to \str(U)$ in $\proth(\cat{A})$ such that
\[
\xymatrix{
\cat{M}\ar[rr]^P\ar[dr]^R\ar[dd]_U & & \cat{X}(\thr(U),\cat{C})\ar[dd]^{\str(U)^*}\ar[dl]_{Q^*} \\
& \cat{X}(\cat{L},\cat{C})\ar[dr]_{L^*} & \\
\cat{B}\ar[rr]_{\currylo} && \cat{X}(\cat{A},\cat{C})
}
\]
commutes (noting that squares of the from \bref{eq:str-desc-explicit} correspond to morphisms $U \to \sem(L)$ in $\catover{\cat{B}}$, by the universal property of the pullback defining $\sem(L)$).

We will see that such an adjoint exists for any aritation in any setting. The construction of this adjoint will make use the existence of cotensors which was part of the definition of a setting; recall that this assumption was not used in constructing the semantics functor itself. In the absence of cotensors, it could still be the case that the adjoint exists for particular choices of aritation, and in such cases the theory we develop in this thesis will still apply. The reason we require cotensors to exist as part of the definition of a setting is to have an explicit description of the left adjoint in terms of familiar constructions, and this requirement is satisfied in all known examples of interest.
\begin{defn}
\label{defn:str-factors}
We define three functors that will soon be seen to be adjoint to those defined in Definition~\ref{defn:sem-factors}.
\begin{itemize}
\item
We define a functor $\rho \from \cat{A}/ \cat{X} \to \proth(\cat{A})$ as follows. Recall that any $K \from \cat{A} \to \cat{K}$ in $\cat{X}$ has a distinguished (and essentially unique) factorisation
\[
\cat{A} \toby{E_K} \cat{L}_K \toby{N_K} \cat{K}
\]
where $E_K \in \ofsfont{E}$ and $N_K \in \ofsfont{N}$. On objects, $\rho$ sends $K$ to $E_K$. On morphisms, it sends a morphism $P$ from $K' \from \cat{A} \to \cat{K}'$ to $ K \from \cat{A} \to \cat{K}$ to the diagonal fill-in of
\[
\xymatrix{
\cat{A}\ar[rr]^{E_K}\ar[d]_{E_{K'}} & & \cat{L}_{K}\ar[d]^{N_K} \\
\cat{L}_{K'}\ar[r]_{N_{K'}} & \cat{K}' \ar[r]_{P} & \cat{K}
}
\]
which exists and is unique since $E_{K'} \in \ofsfont{E}$ and $N_{K} \in \ofsfont{N}$.
\item
We define $F \from \catover{\cat{X}(\cat{A}, \cat{C})} \to (\cat{A} / \cat{X})^{\op}$ on objects by sending
\[
V \from \cat{M} \to \cat{X}(\cat{A}, \cat{C})
\]
to the morphism
\[
\cat{A} \to [\cat{M}, \cat{C}]
\]
that corresponds to it under the universal property of cotensors. The action of $F$ on morphisms is given by the fact that $[\cat{M}, \cat{C}]$ is functorial and contravariant in the first argument.
\item
We define the functor $(\currylo)_! \from \catover{\cat{B}} \to \catover{\cat{X}(\cat{A}, \cat{C})}$ to be given by post-composition with $\currylo \from \cat{B} \to \cat{X}(\cat{A}, \cat{C})$.
\end{itemize}
\end{defn}

\begin{lem}
\label{lem:sem-factor-adjoints}
The functors defined in Definition~\ref{defn:str-factors} are adjoint to those defined in Definition~\ref{defn:sem-factors}. Specifically, $\rho$ is right adjoint to $\iota$, whereas $F$ is left adjoint to $G$ and $(\currylo)_!$ is left adjoint to $\currylo^*$.
\end{lem}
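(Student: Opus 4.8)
The plan is to establish each of the three adjunctions separately, since they are essentially independent. Two of them are completely formal, and the third is a routine consequence of the cotensor adjunction.

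\textbf{The adjunction $(\currylo)_! \ladj \currylo^*$.} This is the standard adjunction between a slice category over a domain and a slice category over a codomain: for any morphism $g \from X \to Y$ in a category with pullbacks, post-composition with $g$ is left adjoint to pullback along $g$. Here $g = \currylo \from \cat{B} \to \cat{X}(\cat{A}, \cat{C})$, and $\catover{\cat{B}}$ has the relevant pullbacks because $\CAT$ does (the slice is over the large category $\cat{B}$, but pullbacks of the relevant shape exist in $\CAT$). I would just cite this well-known fact, or give the unit and counit explicitly: the counit at $V \from \cat{M} \to \cat{X}(\cat{A},\cat{C})$ is the canonical map from $\currylo \of (\currylo^* V) $ to $V$ induced by the pullback square, and the unit at $W \from \cat{N} \to \cat{B}$ is the comparison into the pullback of $\currylo$ along $\currylo \of W$.

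\textbf{The adjunction $\iota \ladj \rho$.} Since $\iota \from \proth(\cat{A}) \to \cat{A}/\cat{X}$ is a full subcategory inclusion, it suffices to show $\rho$ is a right adjoint to it, i.e.\ that for $(L \from \cat{A} \to \cat{L}) \in \proth(\cat{A})$ and $(K \from \cat{A} \to \cat{K}) \in \cat{A}/\cat{X}$, morphisms $\iota L \to K$ in $\cat{A}/\cat{X}$ correspond naturally to morphisms $L \to \rho K$ in $\proth(\cat{A})$. Given a morphism $P \from \cat{L} \to \cat{K}$ with $P \of L = K$, since $L \in \ofsfont{E}$ and $N_K \in \ofsfont{N}$, the square with $L$ on the left, $E_K$ on top, $N_K$ on the right and $P$ on the bottom commutes, so there is a unique fill-in $\cat{L} \to \cat{L}_K$; this is the transpose. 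Conversely a morphism $L \to \rho K = E_K$ in $\proth(\cat{A})$, post-composed with $N_K$, gives a morphism $\iota L \to K$. These are mutually inverse by uniqueness of fill-ins and the fact that $(\ofsfont{E}, \ofsfont{N})$ is a factorisation system; naturality in both variables follows similarly from uniqueness of fill-ins. (This is essentially the universal property of the $(\ofsfont{E},\ofsfont{N})$-factorisation, which makes $\ofsfont{E}$-objects a reflective subcategory of the relevant comma category.)

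\textbf{The adjunction $F \ladj G$.} By definition $G$ sends $K \from \cat{A} \to \cat{K}$ to $K^* \from \cat{X}(\cat{K},\cat{C}) \to \cat{X}(\cat{A},\cat{C})$, and $F$ sends $V \from \cat{M} \to \cat{X}(\cat{A},\cat{C})$ to the transpose $\cat{A} \to [\cat{M},\cat{C}]$ of the functor $\cat{M} \toby{V} \cat{X}(\cat{A},\cat{C})$ under the cotensor isomorphism $\CAT(\cat{M}, \cat{X}(\cat{A},\cat{C})) \iso \cat{X}(\cat{A},[\cat{M},\cat{C}])$. I would unwind the definitions: a morphism $V \to G K$ in $\catover{\cat{X}(\cat{A},\cat{C})}$ is a functor $H \from \cat{M} \to \cat{X}(\cat{K},\cat{C})$ with $K^* \of H = V$; under the cotensor isomorphism (applied with $\cat{K}$ in place of $\cat{A}$) this is a $1$-cell $\widetilde{H} \from \cat{K} \to [\cat{M},\cat{C}]$, and the condition $K^* \of H = V$ translates, by naturality of the cotensor isomorphism in the first $\cat{X}$-variable, precisely into $\widetilde{H} \of K = F V$, i.e.\ a morphism $F V \to K$ in $(\cat{A}/\cat{X})^{\op}$. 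The correspondence is bijective because the cotensor isomorphism is, and it is natural in $V$ and $K$ because the cotensor isomorphism is natural in both arguments. I expect the only mild subtlety — and hence the ``main obstacle'', though it is not a serious one — to be bookkeeping the variance and checking that the naturality of the $\CAT$-cotensor isomorphism in its $\cat{X}(\cat{A},\cat{C})$ slots is exactly what converts the slice-commutativity condition $K^* \of H = V$ into the slice-commutativity condition $\widetilde H \of K = FV$; once that is written out carefully the rest is immediate. I would present this as a short explicit hom-set bijection rather than chasing (co)units.
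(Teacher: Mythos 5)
Your proposal is correct and takes essentially the same route as the paper: all three adjunctions are established by the same hom-set bijections — the unique fill-in for the $(\ofsfont{E},\ofsfont{N})$-orthogonal square for $\iota \ladj \rho$, the cotensor isomorphism for $F \ladj G$, and the universal property of pullbacks (equivalently the standard composition/pullback adjunction between slices) for $(\currylo)_! \ladj \currylo^*$. The paper likewise only sketches these bijections, so your level of detail matches.
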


\begin{proof}
This is straightforward, therefore we only sketch the proof. First we show that $\iota \ladj \rho$. Let $ K \from \cat{A} \to \cat{K}$, and $L \from \cat{A} \to \cat{L}$, with $L \in \ofsfont{E}$. Then a morphism $\iota ( L) \to K$ consists of a 1-cell $\cat{L} \to \cat{K}$ making
\[
\xymatrix{
\cat{A}\ar[r]^{E_K}\ar[d]_{L}\ar[dr]^K & \cat{L}_{\cat{K}}\ar[d]^{N_K} \\
\cat{L}\ar[r] & \cat{K}
}
\]
commute, but since $L \in \ofsfont{E}$ and $N_K \in \ofsfont{N}$, these are in bijective correspondence with 1-cells $\cat{L} \to \cat{L}_K$ making
\[
\xymatrix{
\cat{A}\ar[r]^{L_K}\ar[d]_L & \cat{L}_K \\
\cat{L}\ar[ur]
}
\]
commute, which are precisely morphisms $L \to \rho(K)$.

Next we show that $F \ladj G$. Let $V \from \cat{P} \to \cat{X}(\cat{A}, \cat{C})$ and $K \from \cat{A} \to \cat{K}$. Then a morphism $V \to G(K)$ in $\catover{\cat{X}(\cat{A}, \cat{C})}$ is a functor $\cat{P} \to \cat{X}(\cat{K}, \cat{C})$ making
\[
\xymatrix{
\cat{P} \ar[r]\ar[dr]_{V} & \cat{X}(\cat{K}, \cat{C})\ar[d]^{K^*} \\
& \cat{X}(\cat{A}, \cat{C})
}
\]
commute. But by the universal property of cotensors, these are in bijective correspondence with 1-cells $\cat{K} \to [\cat{P}, \cat{C}]$ such that
\[
\xymatrix{
\cat{A} \ar[r]^{F(V)}\ar[d]_K & [\cat{P}, \cat{C}] \\
\cat{K} \ar[ur]
}
\]
commutes, but these are precisely morphisms $K \to F(V)$ in $ \cat{A}/\cat{X}$.

Now we show that $(\currylo)_! \ladj \currylo^*$. Let $U \from \cat{M} \to \cat{B}$ and $V \from \cat{P} \to \cat{X}(\cat{A}, \cat{C})$. Then a morphism $(\currylo)_! (U) \to V$ consists of a functor $\cat{M} \to \cat{P}$ such that
\[
\xymatrix{
\cat{M} \ar[r]\ar[d]_U & \cat{P}\ar[d]^V \\
\cat{B}\ar[r]_{\currylo} & \cat{X}(\cat{A}, \cat{C})
}
\]
commutes, and by the universal property of pullbacks, these are the same as functors from $\cat{M}$ to the pullback of $V$ along $\currylo$ making
\[
\xymatrix{
\cat{M}\ar[r]\ar[dr]_U & \bullet\ar[d]^{\currylo^*(V)} \\
& \cat{B}
}
\]
commute, that is, morphisms $U \to \currylo^*(V)$.
\end{proof}

Thus we have three composable adjunctions:
\[
\xymatrix{
{\catover{\cat{B} }}\ar@<5pt>[r]_-{\perp}^-{(\currylo)_!} & {\catover{\cat{X}(\cat{A},\cat{C})}}\ar@<5pt>[r]_-{\perp}^-F\ar@<5pt>[l]^-{\currylo^*} & {(\cat{A}/\cat{X})^{\op}}\ar@<5pt>[r]_-{\perp}^-{\rho^{\op}}\ar@<5pt>[l]^-G & {\proth(\cat{A})^{\op}.}\ar@<5pt>[l]^-{\iota^{\op}}
}
\]

\begin{defn}
\label{defn:str-general}
Write $\str \from \catover{\cat{B}} \to \proth(\cat{A})$ for the composite
\[
\catover{\cat{B}} \toby{(\currylo)_!} \catover{\cat{X}(\cat{A},\cat{C})} \toby{F} (\cat{A}/\cat{X})^{\op} \toby{\rho^{\op}} \proth(\cat{A})^{\op}.
\]
\end{defn}

\begin{thm}
\label{thm:str-sem-adj-general}
The functors defined in Definitions~\ref{defn:sem-general} and~\ref{defn:str-general} form an adjunction
\[
\xymatrix{
{\catover{\cat{B} }}\ar@<5pt>[r]_-{\perp}^-{\str}\ & {\proth(\cat{A})^{\op},}\ar@<5pt>[l]^-{\sem}
}
\]
called the \demph{structure--semantics adjunction} induced by the aritation $\currylo \from \cat{B} \to \cat{X}(\cat{A},\cat{C})$.
\end{thm}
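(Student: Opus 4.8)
The statement to prove is Theorem~\ref{thm:str-sem-adj-general}, asserting that $\str \dashv \sem$ for an arbitrary aritation in an arbitrary setting. The key observation is that both functors have already been exhibited as composites of three functors each: we have $\sem = \currylo^* \of G \of \iota^{\op}$ (Definition~\ref{defn:sem-general}) and $\str = \rho^{\op} \of F \of (\currylo)_!$ (Definition~\ref{defn:str-general}). By Lemma~\ref{lem:sem-factor-adjoints}, the three constituent functors of $\str$ are respectively left adjoint to the three constituent functors of $\sem$: explicitly $(\currylo)_! \dashv \currylo^*$, $F \dashv G$, and $\iota \dashv \rho$ (so that $\rho^{\op} \dashv \iota^{\op}$ as functors between the opposite categories). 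Since a composite of left adjoints is left adjoint to the corresponding composite of right adjoints, the theorem follows immediately. In other words, the proof is simply: ``This follows from Lemma~\ref{lem:sem-factor-adjoints} and the fact that adjunctions compose.''

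\textbf{The one subtlety to check} is that the three adjunctions are being composed in a compatible way, i.e.\ that the intermediate categories match up and that opposites are handled correctly. The chain of adjunctions displayed just before Definition~\ref{defn:str-general},
\[
\xymatrix{
{\catover{\cat{B} }}\ar@<5pt>[r]_-{\perp}^-{(\currylo)_!} & {\catover{\cat{X}(\cat{A},\cat{C})}}\ar@<5pt>[r]_-{\perp}^-F\ar@<5pt>[l]^-{\currylo^*} & {(\cat{A}/\cat{X})^{\op}}\ar@<5pt>[r]_-{\perp}^-{\rho^{\op}}\ar@<5pt>[l]^-G & {\proth(\cat{A})^{\op},}\ar@<5pt>[l]^-{\iota^{\op}}
}
\]
already records exactly this: reading left to right gives $\str$ and reading right to left gives $\sem$, with each adjacent pair being an adjunction with the leftward-pointing arrow as the right adjoint. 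Composing the three left adjoints gives $\str$ and composing the three right adjoints gives $\sem$, hence $\str \dashv \sem$.

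\textbf{What I expect to be routine versus hard.} Essentially all of the work has already been done: the content is in Lemma~\ref{lem:sem-factor-adjoints} (which establishes the three component adjunctions) and in the general categorical fact that adjoints compose. There is no real obstacle here; the only thing to be careful about is bookkeeping of the $(-)^{\op}$'s, since $\iota$ and $\rho$ are functors between $\proth(\cat{A})$ and $\cat{A}/\cat{X}$ while $\iota^{\op}$ and $\rho^{\op}$ appear in the composites, and one must note that $\iota \dashv \rho$ is equivalent to $\rho^{\op} \dashv \iota^{\op}$. So my proof would be a single sentence citing Lemma~\ref{lem:sem-factor-adjoints} and the composability of adjunctions, possibly with a parenthetical remark reminding the reader that the displayed chain of adjunctions above makes the composition explicit.

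\textbf{Proof.} This is immediate from Lemma~\ref{lem:sem-factor-adjoints}: the functors $(\currylo)_!$, $F$ and $\rho^{\op}$ whose composite is $\str$ are left adjoint respectively to the functors $\currylo^*$, $G$ and $\iota^{\op}$ whose composite is $\sem$, as displayed in the chain of adjunctions preceding Definition~\ref{defn:str-general}. Since a composite of left adjoints is left adjoint to the composite of the corresponding right adjoints, we conclude that $\str \ladj \sem$.
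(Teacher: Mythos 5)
Your proof is correct and matches the paper's own argument exactly: the paper also deduces the theorem immediately from Lemma~\ref{lem:sem-factor-adjoints} by composing the three adjunctions displayed before Definition~\ref{defn:str-general}. Your extra care with the $(-)^{\op}$ bookkeeping is sensible but, as you suspected, entirely routine.
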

\begin{proof}
This is immediate from Lemma~\ref{lem:sem-factor-adjoints}.
\end{proof}

\section{Concrete \setting s}
\label{sec:concrete-settings}

So far we have dealt with settings as completely abstract 2-categories. In this section we consider settings equipped with a forgetful 2-functor to $\CAT$, and consider how the theory of proto-theories in such settings relates to the theory of proto-theories in the setting $\CAT$ as developed in Chapter~\ref{chap:adjunction}.

\begin{defn}
\label{defn:concrete-setting}
A \demph{concrete \setting} consists of a setting $(\cat{X}, \ofsfont{E}, \ofsfont{N})$ together with a 2-functor $\und \from \cat{X} \to \CAT $ that preserves cotensors strictly, sends 1-cells in $\ofsfont{E}$ to bijective-on-objects functors, and sends 1-cells in $\ofsfont{N}$ to full and faithful functors. For brevity, we will also write $\cat{A}_0$, $F_0$ and $\gamma_0$ for the result of applying $\und$ to an object $\cat{A}$, 1-cell $F$ and 2-cell $\gamma$ in $\cat{X}$ respectively.
\end{defn}

For the rest of this section, fix a concrete setting $(\cat{X}, \ofsfont{E}, \ofsfont{N}, \und)$ and an aritation $\currylo \from \cat{B} \to \cat{X}(\cat{A},\cat{C})$ in $\cat{X}$.

\begin{defn}
We define $(H_0)_{\bullet}$ to be the composite
\[
\cat{B} \toby{\currylo} \cat{X}(\cat{A},\cat{C}) \toby{\und} [\cat{A}_0, \cat{C}_0]
\]
and write
\[
\lpair - , - \rpair_0 \from \cat{A}_0 \times \cat{B} \to \cat{C}_0
\]
and
\[
(H_0)^{\bullet} \from \cat{A}_0 \to [\cat{B}, \cat{C}_0]
 \]
 for the functors corresponding to $(H_0)_{\bullet}$ under the cartesian closed structure of $\CAT$. The aritation that is defined in any of these equivalent ways is called the \demph{underlying plain aritation} of $\currylo$.
\end{defn}

\begin{remark}
Since $\und$ is required to preserve cotensors, we can identify $[\cat{B}, \cat{C}_0] = [\cat{B}, \cat{C}]_0$, and it can be easily checked that we have
\[
(H_0)^{\bullet} = \und (\curryhi) \from \cat{A}_0 \to[\cat{B}, \cat{C}]_0 = [\cat{B}, \cat{C}_0].
\]
\end{remark}

\begin{defn}
Write $\sem_0$ and $\str_0$ for the semantics and structure functors for the underlying plain aritation of $\currylo$. Thus we have
\[
\xymatrix{
{\catover{\cat{B} }}\ar@<5pt>[r]_-{\perp}^-{\str_0}\ & {\proth(\cat{A}_0)^{\op}.}\ar@<5pt>[l]^-{\sem_0}
}
\]
\end{defn}

\begin{prop}
The triangle
\[
\xymatrix{
\catover{\cat{B}}\ar[r]^{\str}\ar[dr]_{\str_0} & \proth(\cat{A})^{\op}\ar[d]^{\und} \\
& \proth(\cat{A}_0)^{\op}
}
\]
commutes up to isomorphism, where the vertical arrow marked $\und$ is the evident functor induced by $\und \from \cat{X} \to \CAT$.
\end{prop}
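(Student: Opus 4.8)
The plan is to unwind both $\str$ and $\str_0$ into their component adjoints (as in Definitions~\ref{defn:sem-factors} and~\ref{defn:str-factors}) and exploit the fact that $\und$ preserves cotensors and the factorisation system strictly. Recall that $\str = \rho^{\op} \of F \of (\currylo)_!$, where $(\currylo)_!$ is post-composition with $\currylo$, the functor $F$ uses the cotensor in $\cat{X}$, and $\rho$ takes the $(\ofsfont{E},\ofsfont{N})$-factorisation. Similarly $\str_0 = \rho_0^{\op} \of F_0 \of ((H_0)_{\bullet})_!$, where now the factorisation is the bijective-on-objects/full-and-faithful one in $\CAT$ and the cotensor is the ordinary functor category. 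The claim is that applying $\und \from \proth(\cat{A}) \to \proth(\cat{A}_0)$ after $\str$ agrees, up to isomorphism, with $\str_0$.

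First I would give an explicit description of $\und(\str(U))$ for $(U\from\cat{M}\to\cat{B})\in\catover{\cat{B}}$. By Definition~\ref{defn:str-factors}, $\str(U)$ is the left leg $E_K \from \cat{A}\to\cat{L}_K$ of the $(\ofsfont{E},\ofsfont{N})$-factorisation of $K \from \cat{A}\to[\cat{M},\cat{C}]$, where $K$ is the transpose under the cotensor adjunction of $\currylo$ post-composed with the map $\cat{M}\to\cat{B}$; concretely $K = [\,U,\cat{C}\,]\of\curryhi$ in $\cat{X}$. Applying $\und$, and using that $\und$ preserves cotensors strictly so that $\und([\cat{M},\cat{C}]) = [\cat{M},\cat{C}_0]$ and $\und(K) = [U,\cat{C}_0]\of\und(\curryhi) = [U,\cat{C}_0]\of (H_0)^{\bullet}$, we see that $\und(K)$ is exactly the composite $\cat{A}_0 \toby{(H_0)^{\bullet}} [\cat{B},\cat{C}_0] \toby{U^*} [\cat{M},\cat{C}_0]$ whose bijective-on-objects/full-and-faithful factorisation defines $\str_0(U)$. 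Now the key point: since $\und$ sends $\ofsfont{E}$ to bijective-on-objects functors and $\ofsfont{N}$ to full and faithful functors, applying $\und$ to the factorisation $\cat{A}\toby{E_K}\cat{L}_K\toby{N_K}[\cat{M},\cat{C}]$ of $K$ yields a factorisation $\cat{A}_0\to\und(\cat{L}_K)\to[\cat{M},\cat{C}_0]$ of $\und(K)$ into a bijective-on-objects functor followed by a full and faithful one. By uniqueness (up to unique isomorphism) of such factorisations in $\CAT$ (Lemma~\ref{lem:bo-ff-factorisation}), $\und(\str(U)) = \und(E_K)$ is canonically isomorphic to $\str_0(U)$ in $\proth(\cat{A}_0)$.

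It then remains to check that this family of isomorphisms is natural in $U$, i.e.\ that it assembles into a natural isomorphism $\und\of\str \Rightarrow \str_0$. For a morphism $Q$ in $\catover{\cat{B}}$, both $\und(\str(Q))$ and $\str_0(Q)$ are the unique diagonal fill-ins of the relevant commuting squares (the uniqueness coming from $E\in\ofsfont{E}$, $N\in\ofsfont{N}$ after applying $\und$), so compatibility with the factorisation isomorphisms is forced by the uniqueness part of orthogonality; this is a routine diagram chase that I would not spell out in full. One should also note that the triangle in the statement is a triangle of functors into $\proth(\cat{A}_0)^{\op}$, so ``commutes up to isomorphism'' means up to natural isomorphism in that category, which is precisely what the above produces.

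The main obstacle is bookkeeping rather than conceptual: one must be careful that the transpose $K$ of the aritation really does push forward under $\und$ to the plain composite $U^*\of(H_0)^{\bullet}$ defining $\str_0$, which relies crucially on $\und$ preserving cotensors \emph{strictly} (so that $\und([\cat{M},\cat{C}]) = [\cat{M},\cat{C}_0]$ on the nose and $\und$ of the cotensor counit is the identity). Once that identification is made cleanly, everything else follows from the uniqueness of the bijective-on-objects/full-and-faithful factorisation. A minor subtlety worth flagging is that $\und$ need not be injective on objects or morphisms, so the isomorphism $\und\of\str\cong\str_0$ genuinely need not be an equality even if the $\cat{X}$-level factorisation were chosen strictly; this is why the statement is only up to isomorphism.
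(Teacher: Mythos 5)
Your proposal is correct and follows essentially the same route as the paper: both decompose $\str$ and $\str_0$ into the three factors $(\currylo)_!$, $F$, $\rho$ and reduce the claim to the three facts that $(H_0)_{\bullet} = \und \of \currylo$ by definition, that $\und$ preserves cotensors strictly, and that $\und$ carries the $(\ofsfont{E},\ofsfont{N})$-factorisation to the bijective-on-objects/full-and-faithful factorisation, with the isomorphism arising from uniqueness of factorisations. The paper phrases this as commutativity of the cells in a single pasted diagram of functors rather than working pointwise at an object $U$ and then checking naturality, but the mathematical content is identical.
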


\begin{proof}
Consider the diagram
\[
\xymatrix{
\catover{\cat{B}}\ar[r]^-{(\currylo)_!}\ar[dr]_{((H_0)_{\bullet})_!} & \catover{\cat{X}(\cat{A}, \cat{C})}\ar[d]^{\und_!}\ar[r]^-F & \cat{A} / \cat{X}\ar[r]^-{\rho}\ar[d]^{\und} & \proth(\cat{A})\ar[d]^{\und} \\
& \catover{[\cat{A}_0, \cat{C}_0]}\ar[r]_-{F} & \cat{A}_0 / \CAT\ar[r]_-{\rho} & \proth(\cat{A}_0),
}
\]
where $\und_! \from \catover{\cat{X}(\cat{A}, \cat{C})} \to \catover{[\cat{A}_0, \cat{C}_0]}$ is the functor given by post-composition with $\und \from \cat{X}(\cat{A}, \cat{C}) \to [\cat{A}_0, \cat{C}_0]$, and the other two vertical arrows are induced in the evident way by $\und \from \cat{X} \to \CAT$.

The top-right composite in this diagram is $\und \of \str$, whereas the bottom-left composite is $\str_0$. Thus if we can show that each of the cells in this diagram commutes, we will be done. But the left-hand triangle commutes since $(H_0)_{\bullet}$ is by definition the composite $\und \of \currylo$, the middle square commutes because $\und$ preserves cotensors, and the right-hand square commutes because $\und$ respects the factorisation systems on $\cat{X}$ and $\CAT$.
\end{proof}

\section{Settings arising from 2-monads}
\label{sec:settings-2-monads}

In this section we examine how certain 2-monads on $\CAT$ naturally give rise to concrete settings. Throughout the section, we fix a 2-monad $\mnd{T} = (T, \eta, \mu)$ on $\CAT$. Recall from Definition~\ref{defn:2-monad-algebras} that $\talg$ denotes the 2-category of strict $\mnd{T}$-algebras, pseudo-$\mnd{T}$-morphisms and $\mnd{T}$-transformations.

\begin{prop}
\label{prop:cotensors-monad-lift}
The category $\talg$ is cotensored over $\CAT$, and cotensors are preserved by the forgetful functor to $\CAT$.
\end{prop}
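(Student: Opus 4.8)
The plan is to construct the cotensor $[\cat{B}, (\cat{C}, Z)]$ for a small category $\cat{B}$ and a $\mnd{T}$-algebra $(\cat{C}, Z)$ directly, taking as underlying category the functor category $[\cat{B}, \cat{C}]$ and equipping it with a suitable $\mnd{T}$-algebra structure, then verifying the required natural isomorphism of hom-categories. So first I would write down the candidate algebra structure. The 2-functor $T$ comes with a canonical comparison $T[\cat{B},\cat{C}] \to [\cat{B}, T\cat{C}]$: for each $b \in \cat{B}$ evaluation $\ev_b \from [\cat{B},\cat{C}] \to \cat{C}$ induces $T\ev_b \from T[\cat{B},\cat{C}] \to T\cat{C}$, and these assemble (using 2-naturality of $T$ applied to the morphisms of $\cat{B}$) into a functor $T[\cat{B},\cat{C}] \to [\cat{B}, T\cat{C}]$. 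Composing with $[\cat{B}, Z] \from [\cat{B}, T\cat{C}] \to [\cat{B},\cat{C}]$ gives a candidate action $\bar{Z} \from T[\cat{B},\cat{C}] \to [\cat{B},\cat{C}]$. I would then check the unit and associativity axioms hold strictly; this reduces, via the fact that a functor into $[\cat{B},\cat{C}]$ is determined by its composites with all the $\ev_b$, to the corresponding axioms for $(\cat{C},Z)$ together with 2-naturality of $\eta$ and $\mu$. This is routine diagram-chasing and I would not spell it all out.

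The second step is to establish the natural isomorphism
\[
\CAT\bigl(\cat{B}, \talg\bigl((\cat{A},Y),(\cat{C},Z)\bigr)\bigr) \iso \talg\bigl((\cat{A},Y), [\cat{B},(\cat{C},Z)]\bigr).
\]
Here $\talg((\cat{A},Y),(\cat{C},Z))$ is the category of pseudo-$\mnd{T}$-morphisms and $\mnd{T}$-transformations. An object of the left-hand side is a functor $\cat{B} \to \talg((\cat{A},Y),(\cat{C},Z))$, i.e.\ a $\cat{B}$-indexed family of pseudo-morphisms $(F_b, f_b)$ and, for each morphism $b \to b'$ in $\cat{B}$, a $\mnd{T}$-transformation between them, functorially in $\cat{B}$. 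I would show this data corresponds exactly to a single functor $F \from \cat{A} \to [\cat{B},\cat{C}]$ (obtained by currying the $F_b$) together with a natural isomorphism $f \from \bar{Z} \of TF \to F \of Y$ (obtained by currying the $f_b$), such that the pseudo-morphism axioms for $(F,f)$ against $\bar{Z}$ are equivalent — again by testing against each $\ev_b$ — to the pseudo-morphism axioms for each $(F_b,f_b)$ against $Z$, and the coherence 2-cells between the $(F_b,f_b)$ are precisely the conditions making $f$ natural in the $\cat{B}$-variable. The same bookkeeping on the level of natural transformations gives the bijection on morphisms. Finally, naturality in $\cat{A}$, $\cat{C}$ and $\cat{B}$ is immediate from the constructions, and since $\bar{Z}$ has underlying category $[\cat{B},\cat{C}]$ by construction, the forgetful 2-functor $\talg \to \CAT$ sends $[\cat{B},(\cat{C},Z)]$ to $[\cat{B},\cat{C}]$, i.e.\ preserves cotensors strictly.

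\textbf{Main obstacle.} The genuinely delicate point is not the 1-categorical content but keeping the \emph{pseudo}-morphism coherence data straight: the cotensor is being formed in a 2-category whose 1-cells carry a non-identity invertible 2-cell, so I must be careful that currying $f_b$ over $\cat{B}$ produces a single natural isomorphism whose components are themselves natural, and that the two hexagon/square axioms in the definition of pseudo-$\mnd{T}$-morphism for $(F,f)$ really do decompose componentwise. The cleanest way to handle this is the ``test against evaluations'' principle: a 2-cell or equation of functors into $[\cat{B},\cat{C}]$ holds iff it holds after whiskering with every $\ev_b$, and $\ev_b \of \bar{Z} = Z \of T\ev_b$ by construction; I would state this as a short lemma and then let all the axiom-checking follow formally. (Alternatively, one could cite the general fact that $\talg$ for a 2-monad on a complete 2-category has all weighted limits that the base has, but giving the explicit construction is more in keeping with the concrete style of this section.)
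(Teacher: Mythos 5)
Your proof is correct. The paper gives no argument at all for this proposition — it simply cites Proposition~2.5 of Blackwell, Kelly and Power — and your explicit construction (the comparison $T[\cat{B},\cat{C}] \to [\cat{B}, T\cat{C}]$ assembled from the $T\ev_b$ and, for the action on morphisms of $\cat{B}$, the $T\ev_u$, composed with $[\cat{B},Z]$, followed by the currying argument for the universal property) is essentially the proof of that cited result specialised to $\CAT$, with the one small caveat that your ``test against evaluations'' principle must include whiskering with the $\ev_u$ for morphisms $u$ of $\cat{B}$ as well as the $\ev_b$ when verifying equations between 1-cells, which your use of 2-naturality already supplies.
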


\begin{proof}
This is Proposition~2.5 in Blackwell, Kelly and Power~\cite{blackwellKellyPower89}.
\end{proof}

\begin{prop}
\label{prop:boff-monad-lift}
Suppose the 2-functor $T \from \CAT \to \CAT$ preserves bijective-on-objects functors. Then there is a factorisation system $(\ofsfont{E}, \ofsfont{N})$ on $\talg$, where $\ofsfont{E}$ and $\ofsfont{N}$ are the classes of pseudo-$\mnd{T}$-morphisms whose underlying functors are bijective-on-objects and full and faithful respectively. Furthermore, in the factorisation of any morphism of $\talg$ as a member of $\ofsfont{E}$ followed by a member of $\ofsfont{N}$, the first factor can be taken to be a \emph{strict} $\mnd{T}$-morphism.
\end{prop}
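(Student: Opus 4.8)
The plan is to construct the factorisation of a pseudo-$\mnd{T}$-morphism $(F,f) \from (\cat{C},Y) \to (\cat{D},Z)$ by lifting the bijective-on-objects/full-and-faithful factorisation of the underlying functor $F_0 \from \cat{C} \to \cat{D}$ from $\CAT$. Write $F_0 = N_0 \of E_0$ with $E_0 \from \cat{C} \to \cat{M}$ bijective on objects and $N_0 \from \cat{M} \to \cat{D}$ full and faithful. The first task is to endow $\cat{M}$ with a $\mnd{T}$-algebra structure $W \from T\cat{M} \to \cat{M}$. Here the hypothesis that $T$ preserves bijective-on-objects functors is essential: $TE_0 \from T\cat{C} \to T\cat{M}$ lies in $\ofsfont{E}$ and $TN_0 \from T\cat{M} \to T\cat{D}$ lies in $\ofsfont{N}$ for the bijective-on-objects/full-and-faithful factorisation system on $\CAT$ (Lemma~\ref{lem:bo-ff-factorisation}), so the square
\[
\xymatrix{
T\cat{C}\ar[r]^{E_0 \of Y}\ar[d]_{TE_0} & \cat{M}\ar[d]^{N_0} \\
T\cat{M}\ar[r]_{N_0 \of Z \of TN_0} & \cat{D}
}
\]
commutes and admits a unique fill-in $W \from T\cat{M} \to \cat{M}$; this is the candidate algebra structure. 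I would then check that $(\cat{M},W)$ is a strict $\mnd{T}$-algebra: the unit and associativity axioms follow because composing them with the bijective-on-objects functor $E_0$ (respectively $TTE_0$) yields the corresponding axioms for $(\cat{C},Y)$, and both $N_0$ and $T$-images of full-and-faithful functors are monic enough to cancel on the other side, using Lemma~\ref{lem:enhanced-2-dim} or direct diagram chases. By construction $E_0 \from (\cat{C},Y) \to (\cat{M},W)$ is a \emph{strict} $\mnd{T}$-morphism, since $W \of TE_0 = E_0 \of Y$ holds on the nose.

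Next I would promote $N_0$ to a pseudo-$\mnd{T}$-morphism $(N_0,n) \from (\cat{M},W) \to (\cat{D},Z)$ and exhibit the isomorphism $f$ as a composite. Since $N_0$ is full and faithful, the natural isomorphism $f \from Z \of TF_0 \to F_0 \of Y$ factors: writing $F_0 = N_0 \of E_0$ and $TF_0 = TN_0 \of TE_0$, and using that $W \of TE_0 = E_0 \of Y$, one obtains a natural isomorphism $n \from Z \of TN_0 \to N_0 \of W$ by applying fullness and faithfulness of $N_0$ to transport $f$ across $E_0$ (here one uses that $E_0$ is bijective on objects to define $n$ componentwise and full-and-faithfulness of $N_0$ to check naturality — exactly the pattern of Lemma~\ref{lem:enhanced-2-dim}). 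The coherence axioms for $(N_0,n)$ to be a pseudo-$\mnd{T}$-morphism are then verified by whiskering with $E_0$ (resp. $TE_0$), where they reduce to the coherence axioms already known for $(F,f)$, again cancelling $E_0$ on the bijective-on-objects side. One checks directly that $(N_0,n) \of (E_0,\id) = (F,f)$ as pseudo-$\mnd{T}$-morphisms.

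Finally, to confirm this genuinely defines a factorisation system on $\talg$ I would verify the two axioms of Definition~\ref{defn:fact-system}: both $\ofsfont{E}$ and $\ofsfont{N}$ are closed under composition and contain the equivalences (immediate, since this holds for bijective-on-objects and full-and-faithful functors in $\CAT$ and $\und \from \talg \to \CAT$ reflects these along the pseudo-morphism structure), and orthogonality $\ofsfont{E} \perp \ofsfont{N}$. For orthogonality, given a commuting square of pseudo-$\mnd{T}$-morphisms with an $\ofsfont{E}$-morphism on the left and an $\ofsfont{N}$-morphism on the right, the underlying functors have a unique fill-in in $\CAT$ by Lemma~\ref{lem:bo-ff-factorisation}; I would lift this fill-in to a pseudo-$\mnd{T}$-morphism by the same bijective-on-objects/full-and-faithful cancellation argument used above, and uniqueness follows because $\und$ is faithful on the relevant hom-categories together with the uniqueness in $\CAT$. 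The main obstacle I anticipate is the bookkeeping in promoting $N_0$ to a pseudo-$\mnd{T}$-morphism and checking its two coherence hexagons: the isomorphism $n$ is defined only implicitly via full-and-faithfulness of $N_0$, so every verification must be pushed along $E_0$ or $TE_0$ before it becomes a known identity, and keeping track of which 2-cells are whiskered where is where the argument is most delicate. Everything else is a routine transfer of the $\CAT$-level factorisation through the forgetful 2-functor.
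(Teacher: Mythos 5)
There is a genuine gap at the very first step: the square you propose to fill in does not commute strictly, so plain orthogonality gives you nothing. With $F_0 = N_0 \of E_0$, the bottom-left composite of your square is $Z \of TN_0 \of TE_0 = Z \of TF_0$, while the top-right composite is $N_0 \of E_0 \of Y = F_0 \of Y$; these agree only up to the invertible $2$-cell $f \from Z \of TF_0 \to F_0 \of Y$, which is precisely the datum making $(F,f)$ a \emph{pseudo}-$\mnd{T}$-morphism rather than a strict one. Consequently you cannot obtain the algebra structure $W$ on $\cat{M}$ from the $1$-dimensional orthogonality of Lemma~\ref{lem:bo-ff-factorisation}, and your two-stage plan (first get $W$ from a strictly commuting square, then separately ``promote'' $N_0$ to a pseudo-morphism by transporting $f$) cannot get off the ground: the definition of $W$ on morphisms must already use $f$. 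The correct tool is the \emph{enhanced} factorisation system (Definition~\ref{defn:strong-orthog} and Lemma~\ref{lem:bo-ff-factorisation-enhanced}): strong orthogonality applied to the isomorphism-filled square with $TE_0 \in \ofsfont{E}$ on the left and $N_0 \in \ofsfont{N}$ on the right produces, in one step, both the functor $W$ satisfying $W \of TE_0 = E_0 \of Y$ strictly (whence the first factor is a strict $\mnd{T}$-morphism, as the statement requires) and the invertible $2$-cell $n \from Z \of TN_0 \to N_0 \of W$ with $nTE_0 = f$; the algebra axioms for $W$ and the coherence axioms for $(N_0,n)$ then follow from uniqueness of such fill-ins. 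This is exactly how the paper proceeds.

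A secondary weakness is in your treatment of orthogonality and uniqueness of fill-ins in $\talg$. The forgetful $2$-functor $\und$ is not faithful on pseudo-$\mnd{T}$-morphisms (two pseudo-morphisms can share an underlying functor but carry different $2$-cells), so uniqueness of the lifted fill-in does not follow from uniqueness in $\CAT$ plus faithfulness of $\und$; one must separately argue that the pseudo-morphism structure on the fill-in is forced, which the paper does via the two-dimensional strong orthogonality of Lemma~\ref{lem:enhanced-2-dim} together with the observation that whiskering with a bijective-on-objects functor is ``epi'' on $2$-cells. Your instinct to verify everything after whiskering with $E_0$ or $TE_0$ is the right one, but the existence half of the argument needs the enhanced (strong) orthogonality, not the ordinary one.
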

This result is known, however it does not appear in the existing literature as far as I know. The condition that $T$ preserve bijective-on-objects functor is well-known and important in the literature on 2-monads. In particular it is shown in Power~\cite{power89} that such 2-monads satisfy a strong coherence result, namely that every pseudo-algebra is equivalent to a strict algebra. In fact the proof of the main result (Theorem~3.4) of~\cite{power89} and the proof of Proposition~\ref{prop:boff-monad-lift} make use of the bijective-on-objects/full-and-faithful factorisation system in a similar way, using it to factor a certain 1-cell and equipping each factor with an algebra morphism structure.

\begin{proof}
It is clear that $\ofsfont{E}$ and $\ofsfont{N}$ both contain all the isomorphisms and are closed under composition. To show that they form a factorisation system (Definition~\ref{defn:fact-system}), we must show that every pseudo-$\mnd{T}$-morphism factors as a member of $\ofsfont{E}$ followed by a member of $\ofsfont{N}$, and that every member of $\ofsfont{E}$ is left orthogonal to every member of $\ofsfont{N}$.

We begin by showing that any pseudo-$\mnd{T}$-morphism $(H,h) \from (\cat{A}, W) \to (\cat{C},Y)$ factors as a member of $\ofsfont{E}$ that is in addition a \emph{strict} $\mnd{T}$-morphism, followed by a member of $\ofsfont{N}$. Let
\[
\cat{A} \toby{L} \cat{K} \toby{J} \cat{C}
\]
be the bijective-on-objects/full-and-faithful factorisation of the underlying functor $H$. We must equip $\cat{K}$ with a $\mnd{T}$-algebra structure $V \from T\cat{K} \to \cat{K}$  and $J$ with a pseudo-$\mnd{T}$-morphism structure $j \from Y \of TJ \to J \of V$ such that $L$ becomes a strict $\mnd{T}$-morphism $(\cat{A}, W) \to (\cat{K}, V)$ and $(H, h) = (J, j) \of L$.

We have a natural isomorphism
\[
\xymatrix{
T\cat{A} \ar[r]^{W}\ar[d]_{TL}\drrtwocell\omit{^h} & \cat{A}\ar[r]^L & \cat{K}\ar[d]^{J} \\
T\cat{K} \ar[r]_{TJ} & T\cat{C}\ar[r]_Y & \cat{C}
}
\]
and $TL$ is bijective on objects and $J$ is full and faithful. Therefore, since the bijective-on-objects and full and faithful functors form an enhanced factorisation system (Lemma~\ref{lem:bo-ff-factorisation-enhanced}) this square has a unique fill-in. That is, there is a unique functor $V \from T\cat{K} \to \cat{K}$ and an isomorphism $j \from Y \of TJ \to J \of V$ such that $V \of TL = L \of W$, and $ jTL = h \from Y \of TH = Y \of TJ \of TL \to J \of L \of W = H \of W$. Diagrammatically, we have the following equality of natural isomorphisms:
\begin{equation}
\label{eq:pseudo-T-factor}
\vcenter{
\xymatrix{
T\cat{A}\ar[r]^{W}\ar[d]_{TL} & \cat{A}\ar[r]^L & \cat{K}\ar[d]^{J} \\
T\cat{K}\ar[r]_{TJ}\ar[urr]^{V} & T\cat{C}\ar[r]_Y^*!/^5pt/{ \labelstyle j \objectstyle \Uparrow} & \cat{C}
}}
\quad = \quad
\vcenter{
\xymatrix{
T\cat{A} \ar[r]^{W}\ar[d]_{TL}\drrtwocell\omit{^h} & \cat{A}\ar[r]^L & \cat{K}\ar[d]^{J} \\
T\cat{K} \ar[r]_{TJ} & T\cat{C}\ar[r]_Y & \cat{C}.
}}
\end{equation}
We must check that $V$ is a $\mnd{T}$-algebra structure, that $L$ is a strict $\mnd{T}$-morphism and that $(J,j)$ is a pseudo-$\mnd{T}$-morphism. It is then clear from the above that $(J, j) \of L = (H, h)$.

It is evident that if $V$ is an algebra structure, then $L$ is a strict $\mnd{T}$-morphism since by definition $V \of TL = L \of W$. So we check that $V$ is an algebra structure, and simultaneously show that $(J, j)$ is a pseudo-$\mnd{T}$-morphism.

First we show that $V \of \eta_{\cat{K}} = \id_{\cat{K}}$. One of the conditions for $(H, h)$ to be a pseudo-$\mnd{T}$-morphism is that we have an equality of two-cells
\[
\vcenter{
\xymatrix{
\cat{A}\ar[rr]^{\eta_{\cat{A}}} \ar[d]_L & & T\cat{A}\ar[r]^{W}\ar[d]_{TH}\drrtwocell\omit{^h} & \cat{A} \ar[r]^L & \cat{K}\ar[d]^J \\
\cat{K}\ar[r]_{J} & \cat{C}\ar[r]_{\eta_{\cat{C}}} & T\cat{C}\ar[rr]_{Y} & & \cat{C}
}}
\quad = \quad
\vcenter{
\xymatrix{
\cat{A} \ar[r]^L \ar[d]_{L} & \cat{K}\ar[d]^J \\
\cat{K}\ar[r]_J & \cat{C},
}}
\]
noting that these two two-cells do have the same domain and codomain since $W \of \eta_{\cat{A}} = \id_{\cat{A}}$ and similarly for $Y$. But the identity is clearly a fill-in for the right-hand square, and the left-hand square is equal to
\[
\xymatrix{
\cat{A}\ar[rr]^{\eta_{\cat{A}}} \ar[dd]_L & & T\cat{A}\ar[r]^{W}\ar[d]_{TL} & \cat{A} \ar[r]^L & \cat{K}\ar[dd]^J \\
& & T{\cat{K}}\ar[d]^{TJ}\ar[urr]_{V}\drrtwocell\omit{^<-2>j} & & \\
\cat{K}\ar[r]_{J}\ar[urr]^{\eta_{\cat{K}}} & \cat{C}\ar[r]_{\eta_{\cat{C}}} & T\cat{C}\ar[rr]_{Y} & & \cat{C}
}
\]
 by Equation~\bref{eq:pseudo-T-factor}, so $(V \of \eta_{\cat{K}}, j \eta_{\cat{K}})$ is a fill-in. Hence, by uniqueness of fill-ins, we have $V \of \eta_{\cat{K}} = \id_{\cat{K}}$ as required, and in addition the two cell
\[
\xymatrix{
\cat{K}\ar[r]^{\eta_{\cat{K}}} \ar[d]_{J} & T\cat{K} \ar[r]^V\ar[d]_{TJ}\drtwocell\omit{^j} & \cat{K}\ar[d]^J \\
\cat{C}\ar[r]_{\eta_{\cat{C}}} & T\cat{C}\ar[r]_{Y} & \cat{C}
}
\]
is the identity on $J$, which is one of the axioms required for $j$ to be a pseudo-$\mnd{T}$-morphism. 

Now we show that $V \of TV = V \of \mu_{\cat{K}}$. One of the conditions that $(H, h)$ satisfies as a pseudo-$\mnd{T}$-morphism is the equality of two-cells
\[
\vcenter{
\xymatrix
@C=17pt{
TT\cat{A}\ar[rr]^{\mu_{\cat{A}}}\ar[dd]_{TTL} & & T\cat{A}\ar[r]^W\ar[d]_{TL}\ddrrtwocell\omit{^h} & \cat{A}\ar[r]^L & \cat{K}\ar[dd]^J \\
& & T\cat{K}\ar[d]_{TJ} & & \\
TT\cat{K}\ar[r]_{TTJ} & TT \cat{C}\ar[r]_{\mu_\cat{C}} & T \cat{C}\ar[rr]_{Y} & & \cat{C}
}}
\quad = \quad
\vcenter{
\xymatrix
@C=17pt{
TT\cat{A}\ar[rr]^{TW}\ar[dd]_{TTL}\ddrrtwocell\omit{^{Th\:\:\:}} & & T\cat{A}\ar[r]^W\ar[d]_{TL}\ddrrtwocell\omit{^h} & \cat{A}\ar[r]^L & \cat{K}\ar[dd]^J \\
& & T\cat{K}\ar[d]_{TJ} & & \\
TT\cat{K}\ar[r]_{TTJ} & TT \cat{C}\ar[r]_{TY} & T \cat{C}\ar[rr]_{Y} & & \cat{C}.
}}
\]
Note that these two two-cells do have the same domain and codomain, since $W \of \mu_{\cat{A}} = W \of TW$, and similarly for $Y$. But the left-hand two-cell is equal to
\[
\xymatrix{
TT\cat{A}\ar[rr]^{\mu_{\cat{A}}}\ar[dd]_{TTL} & & T\cat{A}\ar[r]^W\ar[d]_{TL} & \cat{A}\ar[r]^L & \cat{K}\ar[dd]^J \\
& & T\cat{K}\ar[d]_{TJ}\ar[urr]_{V}\drrtwocell\omit{^<-2>j} & & \\
TT\cat{K}\ar[r]_{TTJ}\ar[urr]^{\mu_{\cat{K}}} & TT \cat{C}\ar[r]_{\mu_\cat{C}} & T \cat{C}\ar[rr]_{Y} & & \cat{C},
}
\]
by Equation~\bref{eq:pseudo-T-factor} and so $(V \of \mu_{\cat{K}}, j \mu_{\cat{K}})$ is a fill-in. On the other hand, the right-hand square is equal to 
\[
\xymatrix{
TT\cat{A}\ar[rr]^{TW}\ar[dd]_{TTL} & & T\cat{A}\ar[r]^W\ar[d]_{TL} & \cat{A}\ar[r]^L & \cat{K}\ar[dd]^J \\
& & T\cat{K}\ar[d]^{TJ}\ar[urr]_{V}\drrtwocell\omit{^<-2>j} & & \\
TT\cat{K}\ar[r]_{TTJ}\ar[urr]^{TV} & TT \cat{C}\ar[r]_{TY}^*!/u5pt/{\labelstyle{Tj} \objectstyle\Uparrow \:\:\:} & T \cat{C}\ar[rr]_{Y} & & \cat{C},
}
\]
by Equation~\bref{eq:pseudo-T-factor} (and the result of applying $T$ applied to the same equation), so $(V \of TV,  j TV \of Y (Tj))$ is a fill-in. By the uniqueness of fill-ins, it follows that $V \of TV = V \of \mu_{\cat{K}}$, and in addition, we have an equality of two-cells
\[
\vcenter{
\xymatrix{
TT\cat{K} \ar[r]^{\mu_{\cat{K}}}\ar[d]_{TTJ} & T{\cat{K}}\ar[r]^{V}\ar[d]_{TJ}\drtwocell\omit{^j} & \cat{K}\ar[d]^J \\
TT\cat{C}\ar[r]_{\mu_{\cat{C}}} & T\cat{C}\ar[r]_Y & \cat{C}
}}
\quad = \quad
\vcenter{
\xymatrix{
TT\cat{K} \ar[r]^{TV}\ar[d]_{TTJ}\drtwocell\omit{^Tj\:\:\:} & T{\cat{K}}\ar[r]^{V}\ar[d]_{TJ}\drtwocell\omit{^j} & \cat{K}\ar[d]^J \\
TT\cat{C}\ar[r]_{TY} & T\cat{C}\ar[r]_Y & \cat{C},
}}
\]
which is one of the identities required for $(J, j)$ to be a pseudo-$\mnd{T}$-morphism. That completes the proof that $V$ is a $\mnd{T}$-algebra structure on $\cat{K}$, and that $(J, j)$ is a pseudo-$\mnd{T}$-morphism $(\cat{K}, V ) \to (\cat{C}, Y)$.

Now we must show that for any commutative square
\begin{equation}
\label{eq:pseudo-T-lift}
\xymatrix{
(\cat{A}, W) \ar[r]^{(F,f)}\ar[d]_{(E, e)} & (\cat{B}, X)\ar[d]^{(N,n)} \\
(\cat{C},Y)\ar[r]_{(G, g)} & (\cat{D}, Z)
}
\end{equation}
in $\talg$ where $E$ is bijective on objects  and $N$ is full and faithful, there is a unique $(H, h) \from (\cat{C}, Y) \to (\cat{B},X)$ such that $(H, h) \of (E,e) = (F, f)$ and $(N, n) \of (H, h) = (G,g)$. Since the corresponding square of underlying functors commutes, there is a unique functor $H \from \cat{C} \to \cat{B}$ such that both triangles in
\[
\xymatrix{
\cat{A}\ar[r]^F\ar[d]_E & \cat{B}\ar[d]^{N} \\
\cat{C}\ar[r]_G\ar[ur]^H & \cat{D}
}
\]
commute. Thus we just need to show that there is a unique pseudo-$\mnd{T}$-morphism structure $h$ on $H$ such that $(H, h) \of (E, e) = (F, f)$ and $(N, n) \of (H, h) = (G,g)$.

Consider the square
\[
\xymatrix{
T\cat{A}\ar[r]^W \ar[d]_{TE}\drrtwocell\omit{^e} & \cat{A}\ar[r]^E & \cat{C}\ar@{=}[d] \\
T\cat{C}\ar[rr]_Y & & \cat{C}.
}
\]
Let the unique fill-in of this natural isomorphism be given by $Y' \from T\cat{C} \to \cat{C}$ and $\kappa \from Y \to Y'$. Then we have
\[
\vcenter{
\xymatrix{
T\cat{A}\ar[r]^W\ar[d]_{TE}\drtwocell\omit{^e} & \cat{A}\ar[r]^F\ar[d]^E & \cat{B}\ar[d]^N \\
T\cat{C}\ar[r]_Y & \cat{C}\ar[r]_G & \cat{D}
}}
\quad = \quad
\vcenter{
\xymatrix{
T\cat{A}\ar[r]^W\ar[d]_{TE} & \cat{A}\ar[r]^F\ar[d]^E & \cat{B}\ar[d]^N \\
T\cat{C}\rtwocell^{Y'}_{Y}{^\kappa} & \cat{C}\ar[r]_G\ar[ur]_H & \cat{D},
}}
\]
that is, $(H \of Y', G\kappa)$ is a fill-in for the natural isomorphism displayed on the left.

The commutativity of the square~\bref{eq:pseudo-T-lift} implies an equality of natural transformations
\[
\vcenter{
\xymatrix{
T \cat{A} \ar[r]^W \ar[dr]_{TF} \ar[dd]_{TE}\rrtwocell\omit{^<3>f} & \cat{A} \ar[r]^F & \cat{B}\ar[dd]^N \\
& T\cat{B}\ar[ur]_X\ar[d]_{TN}\drtwocell\omit{^<-1>n} & \\
T\cat{C} \ar[r]_{TG} & T\cat{D} \ar[r]_Z& \cat{D}
}}
\quad = \quad
\vcenter{
\xymatrix{
T\cat{A} \ar[r]^W\ar[dd]_{TE}\drtwocell\omit{^<1>e} & \cat{A} \ar[d]^E\ar[r]^F & \cat{B}\ar[dd]^N \\
& \cat{C}\ar[dr]^G & \\
T\cat{C}\ar[r]_{TG}\ar[ur]^Y\rrtwocell\omit{^<-3>g} & T\cat{D} \ar[r]_Z & \cat{D}.
}}
\]
It follows from Lemma~\ref{lem:enhanced-2-dim} that there is a unique natural transformation $h' \from X\of TH \to H \of Y'$ such that
\begin{equation}
\label{eq:enhanced-lift-property-1}
\vcenter{
\xymatrix{
T\cat{A}\ar[r]^W\ar[dd]_{TE} & \cat{A}\ar[r]^F\ar[d]_E & \cat{B}\ar@{=}[dd] \\
 & \cat{C}\ar[dr]^H & \\
T\cat{C}\ar[ur]^{Y'}\ar[r]_{TH}\rrtwocell\omit{^<-3> h'} & T\cat{B}\ar[r]_X  & \cat{B}
}}
\quad = \quad
\vcenter{
\xymatrix{
T\cat{A}\ar[r]^W\ar[d]_{TE}\ar[dr]^{TF} & \cat{A}\ar[r]^F\drtwocell\omit{^f} & \cat{B}\ar@{=}[d] \\
T\cat{C}\ar[r]_{TH} & T \cat{B}\ar[r]_X & \cat{B}
}}
\end{equation}
and
\begin{equation}
\label{eq:enhanced-lift-property-2}
\vcenter{
\xymatrix{
& \cat{C}\ar[r]^H  & \cat{B}\ar[dd]^N \\
T\cat{C}\ar[ur]^{Y'}\ar[dr]_{TG}\ar[r]^{TH}\urrtwocell\omit{^h'}  & T\cat{B}\ar[d]_{TN}\ar[ur]_X\drtwocell\omit{^n} & \\
&  T\cat{D}\ar[r]_Z & \cat{D}
}}
\quad = \quad
\vcenter{
\xymatrix{
& \cat{C}\ar[r]^H\ar[ddr]^G & \cat{B}\ar[dd]^N \\
T\cat{C}\urtwocell^{Y'}_Y{^\kappa}\ar[dr]_{TG}\drrtwocell\omit{^g} & &  \\
& T\cat{D}\ar[r]_Z & \cat{D}.
}}
\end{equation}
The first of these equalities implies that $h'$ is an isomorphism, since $f$ is an isomorphism and $TE$ is bijective on objects.

Let $h \from X \of TH \to H \of Y$ be the composite natural isomorphism
\[
\xymatrix
@C=40pt
@R=40pt{
T\cat{C}\rtwocell^Y_{Y'}{^\kappa^{-1}\:\:\:}\ar[d]_{TH}\drtwocell\omit{^h'} & \cat{C}\ar[d]^H  \\
 T\cat{B}\ar[r]_X & \cat{B}.
}
\]
We will shortly show that $(H,h)$ is a pseudo-$\mnd{T}$-morphism $(\cat{C}, Y) \to (\cat{B}, X)$. First however, note that
\begin{align*}
&\vcenter{
\xymatrix{
T\cat{A} \ar[r]^{TE}\ar[d]_W\drtwocell\omit{e} & T\cat{C}\ar[r]^H\ar[d]_Y\drtwocell\omit{h} & T\cat{B}\ar[d]^X \\
\cat{A}\ar[r]_E & \cat{C}\ar[r]_H & \cat{B}
}}
\quad = \quad
\vcenter{
\xymatrix{
T\cat{A} \ar[r]^{TE}\ar[d]_W & T\cat{C}\ar[r]^H\dtwocell^Y_{Y'}{\kappa}\drtwocell\omit{h} & T\cat{B}\ar[d]^X \\
\cat{A}\ar[r]_E & \cat{C}\ar[r]_H & \cat{B}
}} \\
\quad = \quad &
\vcenter{
\xymatrix{
T\cat{A} \ar[r]^{TE}\ar[d]_W & T\cat{C}\ar[r]^H\ar[d]_{Y'}\drtwocell\omit{h'} & T\cat{B}\ar[d]^X \\
\cat{A}\ar[r]_E & \cat{C}\ar[r]_H & \cat{B}
}}
\quad = \quad
\vcenter{
\xymatrix{
T\cat{A}\ar[r]^{TF}\ar[d]_W\drtwocell\omit{f} & T\cat{B}\ar[d]^X \\
\cat{A}\ar[r]_F & \cat{B},
}}
\end{align*}
with the first equality following from the definition of $\kappa$, the second from the definition of $h$, and the third from Equation~\bref{eq:enhanced-lift-property-1}. Thus we will have $(H,h) \of (E, e) = (F,f)$, and the fact that $h'$ is unique satisfying Equation~\bref{eq:enhanced-lift-property-1} implies that $h$ is the unique pseudo-$\mnd{T}$-morphism structure on $H$ making this equality hold. Similarly we have
\begin{align*}
&\vcenter{
\xymatrix{
T\cat{C}\ar[r]^{TH}\ar[d]_Y\drtwocell\omit{h} & T\cat{B}\ar[r]^{TN}\ar[d]_X\drtwocell\omit{n} & T\cat{D}\ar[d]^Z \\
\cat{C}\ar[r]_H & \cat{B}\ar[r]_N & \cat{D}
}}
\quad = \quad
\vcenter{
\xymatrix{
T\cat{C}\ar[r]^{TH}\dtwocell^{Y'}_Y{\kappa^{-1}}\drtwocell\omit{h'} & T\cat{B}\ar[r]^{TN}\ar[d]_X\drtwocell\omit{n} & T\cat{D}\ar[d]^Z \\
\cat{C}\ar[r]_H & \cat{B}\ar[r]_N & \cat{D}
}} \\
& \quad = \quad
\vcenter{
\xymatrix
@R=40pt
@C=40pt{
T\cat{C}\duppertwocell^Y{\kappa}\dlowertwocell_{Y}{\kappa^{-1}}\drtwocell\omit{<-1>g}\ar[d]|(0.3){Y'}\ar[r]^{TG} & T\cat{D}\ar[d]^Z \\
\cat{C}\ar[r]_G & \cat{D}
}}
\quad = \quad
\vcenter{
\xymatrix{
T\cat{C}\ar[r]^{TG}\ar[d]_Y\drtwocell\omit{g} & T\cat{D}\ar[d]^Z \\
\cat{C}\ar[r]_G & \cat{D},
}}
\end{align*}
where the first equality follows from the definition of $h$, and the second from Equation~\bref{eq:enhanced-lift-property-2}. Thus we will have $(N, n) \of (H, h) = (G,g)$.

So all that remains is to show that $h$ is in fact a pseudo-$\mnd{T}$-morphism structure on $H$. First we show that the natural isomorphism
\[
\vcenter{
\xymatrix{
\cat{C}\ar[r]^{\eta_{\cat{C}}}\ar[d]_H & T\cat{C}\ar[r]^Y\ar[d]_{TH} \drtwocell\omit{^h} & \cat{C}\ar[d]^H \\
\cat{B}\ar[r]_{\eta_{\cat{B}}} & T\cat{B}\ar[r]_X & \cat{B}
}}
\]
is the identity. Since $E \from \cat{A} \to \cat{C}$ is bijective on objects, it is sufficient to show that it becomes the identity when whiskered with $E$. Note however, that the natural isomorphism
\[
\vcenter{
\xymatrix{
\cat{A}\ar[r]^{\eta_{\cat{A}}}\ar[d]_E & T\cat{A}\ar[r]^W\ar[d]_{TE} \drtwocell\omit{^e} & \cat{A}\ar[d]^E \\
\cat{C}\ar[r]_{\eta_{\cat{C}}} & T\cat{C}\ar[r]_Y & \cat{C}
}}
\]
is an identity since $(E, e)$ is a pseudo-$\mnd{T}$-morphism, so the natural isomorphism above is an identity if and only if it becomes the identity when pasted with this one. But we have
\[
\vcenter{
\xymatrix{
\cat{A}\ar[r]^{\eta_{\cat{A}}}\ar[d]_E & T\cat{A}\ar[r]^W\ar[d]_{TE} \drtwocell\omit{^e} & \cat{A}\ar[d]^E \\
\cat{C}\ar[r]^{\eta_{\cat{C}}}\ar[d]_H & T\cat{C}\ar[r]^Y\ar[d]_{TH} \drtwocell\omit{^h} & \cat{C}\ar[d]^H \\
\cat{B}\ar[r]_{\eta_{\cat{B}}} & T\cat{B}\ar[r]_X & \cat{B}
}}
\quad = \quad
\vcenter{
\xymatrix{
\cat{A}\ar[r]^{\eta_{\cat{A}}}\ar[d]_F & T\cat{A}\ar[r]^W\ar[d]_{TF} \drtwocell\omit{^f} & \cat{A}\ar[d]^F \\
\cat{B}\ar[r]_{\eta_{\cat{B}}} & T\cat{B}\ar[r]_X & \cat{B}
}}
\]
as established above. And the right-hand natural isomorphism is the identity as required since $(F,f)$ is a pseudo-$\mnd{T}$-morphism. Finally we show that
\begin{equation}
\label{eq:2-monad-lift-mult-condition}
\vcenter{
\xymatrix{
TT\cat{C}\ar[r]^{\mu_{\cat{C}}}\ar[d]_{TTH} & T\cat{C}\ar[r]^Y\ar[d]_{TH} \drtwocell\omit{^h} & \cat{C}\ar[d]^H \\
TT\cat{B}\ar[r]_{\mu_{\cat{B}}} & T\cat{B}\ar[r]_X & \cat{B}
}}
\quad = \quad
\vcenter{
\xymatrix{
TT\cat{C}\ar[r]^{TY}\ar[d]_{TTH}\drtwocell\omit{^Th\:\:\:} & T\cat{C}\ar[r]^Y\ar[d]_{TH} \drtwocell\omit{^h} & \cat{C}\ar[d]^H \\
TT\cat{B}\ar[r]_{TX} & T\cat{B}\ar[r]_X & \cat{B}.
}}
\end{equation}
As before, it is sufficient to show that these natural transformations become equal when whiskered with the bijective-on-objects $TTE \from TT\cat{A} \to TT\cat{C}$. In addition, the natural transformation
\[
\vcenter{
\xymatrix{
TT\cat{A}\ar[r]^{\mu_{\cat{A}}}\ar[d]_{TTE} & T\cat{A}\ar[r]^W\ar[d]_{TE} \drtwocell\omit{^e} & \cat{A}\ar[d]^E \\
TT\cat{C}\ar[r]_{\mu_{\cat{C}}} & T\cat{C}\ar[r]_Y & \cat{C}
}}
\]
is an isomorphism, so it is sufficient to check that they become equal when pasted with this. We have
{\allowdisplaybreaks
\begin{align*}
& \vcenter{
\xymatrix{
TT\cat{A}\ar[r]^{\mu_{\cat{A}}}\ar[d]_{TTE} & T\cat{A}\ar[r]^W\ar[d]_{TE} \drtwocell\omit{^e} & \cat{A}\ar[d]^E \\
TT\cat{C}\ar[r]^{\mu_{\cat{C}}}\ar[d]_{TTH} & T\cat{C}\ar[r]^Y\ar[d]_{TH} \drtwocell\omit{^h} & \cat{C}\ar[d]^H \\
TT\cat{B}\ar[r]_{\mu_{\cat{B}}} & T\cat{B}\ar[r]_X & \cat{B}
}}
\quad = \quad
\vcenter{
\xymatrix{
TT\cat{A}\ar[r]^{\mu_{\cat{A}}}\ar[d]_{TTF} & T\cat{A}\ar[r]^W\ar[d]_{TF} \drtwocell\omit{^f} & \cat{A}\ar[d]^F \\
TT\cat{B}\ar[r]_{\mu_{\cat{B}}} & T\cat{B}\ar[r]_X & \cat{B}
}} \\
= \quad &
\vcenter{
\xymatrix{
TT\cat{A}\ar[r]^{TW}\ar[d]_{TTF}\drtwocell\omit{^Tf\:\:\:} & T\cat{A}\ar[r]^W\ar[d]_{TF} \drtwocell\omit{^f} & \cat{A}\ar[d]^F \\
TT\cat{B}\ar[r]_{TX} & T\cat{B}\ar[r]_X & \cat{B}
}}
\quad = \quad
\vcenter{
\xymatrix{
TT\cat{A}\ar[r]^{TW}\ar[d]_{TTE}\drtwocell\omit{^Te\:\:\:} & T\cat{A}\ar[r]^W\ar[d]_{TE} \drtwocell\omit{^e} & \cat{A}\ar[d]^E \\
TT\cat{C}\ar[r]^{TY}\ar[d]_{TTH}\drtwocell\omit{^Th\:\:\:} & T\cat{C}\ar[r]^Y\ar[d]_{TH} \drtwocell\omit{^h} & \cat{C}\ar[d]^H \\
TT\cat{B}\ar[r]_{TX} & T\cat{B}\ar[r]_X & \cat{B}
}} \\
= \quad &
\vcenter{
\xymatrix
@R=10pt{
TT\cat{A}\ar[r]^{\mu_{\cat{A}}}\ar[dd]_{TTE} & T\cat{A}\ar[r]^W\ar[d]_{TE} \ddrtwocell\omit{^<-1>e} & \cat{A}\ar[dd]^E \\
& T\cat{C}\ar[dr]^Y & \\
TT\cat{C}\ar[ur]^{\mu_{\cat{C}}}\ar[dr]^{TY}\ar[dd]_{TTH} &  & \cat{C}\ar[dd]^H \\
\drtwocell\omit{^<-1>Th\:\:\:}& T \cat{C}\ar[ur]^Y\ar[d]_{TH} \drtwocell\omit{^<-1>h} & \\
TT\cat{B}\ar[r]_{\mu_{\cat{B}}} & T\cat{B}\ar[r]_X & \cat{B},
}}
\end{align*}
}
where the last equality comes from the fact that $(E,e)$ is a pseudo-$\mnd{T}$-morphism.
\end{proof}

\begin{cor}
\label{cor:proth-strict-equiv}
Suppose $T$ preserves bijective-on-objects functors and let $(E,e) \from (\cat{A}, W) \to (\cat{L}, U)$ be a bijective-on-objects pseudo-$\mnd{T}$-morphism. Then there is a bijective-on-objects \emph{strict} $\mnd{T}$-morphism $E' \from (\cat{A}, W) \to (\cat{L}', U')$ and a pseudo-$\mnd{T}$-isomorphism $(I, i) \from (\cat{L}', U') \to (\cat{L}, U)$ such that $(I, i) \of E' = (E, e)$.
\end{cor}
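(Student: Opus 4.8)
The plan is to apply the factorisation result Proposition~\ref{prop:boff-monad-lift} to the pseudo-$\mnd{T}$-morphism $(E,e) \from (\cat{A}, W) \to (\cat{L}, U)$. That proposition tells us that $(E,e)$ factors as a member of $\ofsfont{E}$ followed by a member of $\ofsfont{N}$, and crucially that the first factor can be taken to be a \emph{strict} $\mnd{T}$-morphism. So write this factorisation as
\[
(\cat{A}, W) \toby{E'} (\cat{L}', U') \toby{(I,i)} (\cat{L}, U),
\]
where $E'$ is a strict $\mnd{T}$-morphism whose underlying functor is bijective on objects, and $(I,i)$ is a pseudo-$\mnd{T}$-morphism whose underlying functor is full and faithful.

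The one thing still to verify is that $(I,i)$ is in fact an \emph{isomorphism} in $\talg$, not merely a member of $\ofsfont{N}$. For this I would pass to underlying functors: $\und$ sends $E \from \cat{A} \to \cat{L}$ and $E' \from \cat{A} \to \cat{L}'$ to bijective-on-objects functors and $I \from \cat{L}' \to \cat{L}$ to a full and faithful functor, with $I \of E' = E$. Since $E$ and $E'$ are both bijective on objects and $I \of E' = E$, the functor $I$ is bijective on objects; being also full and faithful, $I$ is an isomorphism of categories. Now I would invoke the standard fact that a pseudo-$\mnd{T}$-morphism whose underlying functor is an isomorphism (more generally, an equivalence) is itself invertible in $\talg$: the inverse functor $I^{-1}$ carries a pseudo-$\mnd{T}$-morphism structure induced from $i$ (conjugating the coherence isomorphism $i$ by $I^{-1}$, using that $\und$ preserves the relevant data strictly), and one checks the pseudo-$\mnd{T}$-morphism axioms hold and that $(I,i)$ and $(I^{-1}, i^{-1})$ are mutually inverse. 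This is entirely routine given that $i$ is already invertible as a $2$-cell.

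Finally, the equation $(I,i) \of E' = (E,e)$ is exactly the statement that the chosen factorisation recovers $(E,e)$, which is part of the conclusion of Proposition~\ref{prop:boff-monad-lift}. Assembling these pieces gives the corollary.

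I expect the only real content to be the observation that a bijective-on-objects member of $\ofsfont{N}$ is an isomorphism, together with the routine verification that an underlying-isomorphism pseudo-$\mnd{T}$-morphism is invertible in $\talg$; everything else is a direct citation of Proposition~\ref{prop:boff-monad-lift}. There is no serious obstacle, since the hard work (constructing the factorisation and arranging the first factor to be strict) has already been done in the preceding proposition.
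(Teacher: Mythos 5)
Your proposal is correct and follows essentially the same route as the paper: factor $(E,e)$ via Proposition~\ref{prop:boff-monad-lift} with a strict bijective-on-objects first factor, observe that $I$ must be bijective on objects since $E$ and $E'$ are, and conclude that $(I,i)$ is invertible. The only cosmetic difference is at the last step, where the paper simply notes that $(I,i)\in\ofsfont{E}\cap\ofsfont{N}$ and invokes the general fact that the intersection of the two classes of a factorisation system consists of isomorphisms, whereas you verify the same thing by hand by lifting the inverse of the underlying functor.
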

\begin{proof}
By the previous theorem, $(E,e)$ has a factorisation as a bijective-on-objects strict $\mnd{T}$-morphism $E' \from(\cat{A}, W) \to (\cat{L}', U')$ followed by a full and faithful pseudo-$\mnd{T}$-morphism $(I, i)$. But since $E$ and $E'$ are both bijective on objects, $I$ must be as well. Therefore $(I, i) \in \ofsfont{E} \cap \ofsfont{N}$, so is an isomorphism in $\talg$.
\end{proof}

\begin{thm}
\label{thm:monad-setting}
If $T$ preserves bijective-on-objects functors then $\talg$ is a concrete setting in a canonical way. In particular, for each $(\cat{A}, W), (\cat{C}, Y) \in \talg$, $\cat{B} \in \CAT$ and $\currylo \from \cat{B} \to \talg ((\cat{A}, W), (\cat{C}, Y) )$ there is a structure--semantics adjunction
\[
\xymatrix{
{\catover{\cat{B} }}\ar@<5pt>[r]_-{\perp}^-{\str}\ & {\proth(\cat{A}, W)^{\op}.}\ar@<5pt>[l]^-{\sem}
}
\]
\end{thm}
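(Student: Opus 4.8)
The plan is to assemble the statement from results already in hand; almost all of the work lies in Propositions~\ref{prop:cotensors-monad-lift} and~\ref{prop:boff-monad-lift}. First I would record that $\talg$ is a locally large $2$-category: for strict $\mnd{T}$-algebras with locally small underlying categories, a hom-category $\talg((\cat{A}, W), (\cat{C}, Y))$ has as objects the pseudo-$\mnd{T}$-morphisms (a functor together with an invertible natural transformation) and as morphisms the $\mnd{T}$-transformations, and these form a (possibly large but) genuine category. The two structural ingredients of a setting are then supplied directly: Proposition~\ref{prop:cotensors-monad-lift} equips $\talg$ with cotensors over $\CAT$, and Proposition~\ref{prop:boff-monad-lift}---whose hypothesis that $T$ preserves bijective-on-objects functors is precisely the standing assumption here---equips the underlying $1$-category of $\talg$ with the factorisation system $(\ofsfont{E}, \ofsfont{N})$, where $\ofsfont{E}$ (respectively $\ofsfont{N}$) consists of those pseudo-$\mnd{T}$-morphisms whose underlying functor is bijective on objects (respectively full and faithful). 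Hence $(\talg, \ofsfont{E}, \ofsfont{N})$ is a setting, and a proto-theory in it with arities $(\cat{A}, W)$ is a pseudo-$\mnd{T}$-morphism with bijective-on-objects underlying functor; by Corollary~\ref{cor:proth-strict-equiv} every such proto-theory is isomorphic in $\proth(\cat{A}, W)$ to a strict one.

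Next I would verify the three conditions in Definition~\ref{defn:concrete-setting} that upgrade this to a \emph{concrete} setting via the evident forgetful $2$-functor $\und \from \talg \to \CAT$. It is a $2$-functor by construction; it preserves cotensors strictly, which is the content of Proposition~\ref{prop:cotensors-monad-lift} (the underlying category of the cotensor $[\cat{B}, (\cat{C}, Y)]$ is literally the functor category $[\cat{B}, \cat{C}]$ carrying a pointwise algebra structure); and by the very definition of $\ofsfont{E}$ and $\ofsfont{N}$ in Proposition~\ref{prop:boff-monad-lift} it carries $\ofsfont{E}$ into the bijective-on-objects functors and $\ofsfont{N}$ into the full and faithful functors. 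Thus $(\talg, \ofsfont{E}, \ofsfont{N}, \und)$ is a concrete setting, which is the first assertion of the theorem.

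For the second assertion, given any objects $(\cat{A}, W), (\cat{C}, Y) \in \talg$, any $\cat{B} \in \CAT$ and any aritation $\currylo \from \cat{B} \to \talg((\cat{A}, W), (\cat{C}, Y))$, the structure--semantics adjunction $\str \ladj \sem$ between $\catover{\cat{B}}$ and $\proth(\cat{A}, W)^{\op}$ is an immediate instance of Theorem~\ref{thm:str-sem-adj-general}: in the setting $\talg$, $\sem$ is defined by the pullback-and-cotensor recipe of Definition~\ref{defn:sem-general} and $\str$ is the composite of the three left adjoints of Lemma~\ref{lem:sem-factor-adjoints}, and these are adjoint. The one point needing a little---but only a little---care, rather than a genuine obstacle, is confirming that the preservation of cotensors by $\und$ asserted in Proposition~\ref{prop:cotensors-monad-lift} is on the nose rather than merely up to isomorphism, since Definition~\ref{defn:concrete-setting} demands strictness; this is clear from the explicit construction of cotensors in a $2$-category of algebras for a $2$-monad. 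Everything else is a direct application of the cited results.
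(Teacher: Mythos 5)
Your proposal is correct and follows essentially the same route as the paper: the proof there is exactly the assembly of Proposition~\ref{prop:boff-monad-lift} (for the factorisation system, using the hypothesis on $T$), Proposition~\ref{prop:cotensors-monad-lift} (for cotensors preserved by $\und$), and the observation that $\und$ respects the two classes, after which the adjunction is an instance of Theorem~\ref{thm:str-sem-adj-general}. The extra remarks you add (local largeness, strictness of cotensor preservation, the aside via Corollary~\ref{cor:proth-strict-equiv}) are harmless elaborations of the same argument.
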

\begin{proof}
Let $\ofsfont{E}$ be the class of pseudo-$\mnd{T}$-morphisms whose underlying functors are bijective on objects, and let $\ofsfont{N}$ be the class of pseudo-$\mnd{T}$-morphisms whose underlying functors are full and faithful. Then by Proposition~\ref{prop:boff-monad-lift}, $(\ofsfont{E},\ofsfont{N})$ is a factorisation system on $\talg$, and it is evidently preserved by $\und$. By Proposition~\ref{prop:cotensors-monad-lift}, $\talg$ has cotensors over $\CAT$ and they are preserved by $\und$. Thus $(\talg, \ofsfont{E}, \ofsfont{N}, \und)$ is a concrete setting.
\end{proof}

\section{Examples}
\label{sec:structure-examples}

We saw how the structure--semantics adjunction for monoids arises from an aritation in Section~\ref{sec:str-sem-adj-monoids}, and monads (possibly with arities) were discussed in Chapter~\ref{chap:canonical1}. In this section we examine how the remaining notions of algebraic theory from Chapter~\ref{chap:notions} arise via proto-theories and aritations in general settings.

\begin{thm}
\label{thm:examples-monads-boff}
There are 2-monads $\mnd{T}_1$, $\mnd{T}_2$ and $\mnd{T}_3$ on $\CAT$ such that:
\begin{itemize}
\item For $\mnd{T}_1$, the strict algebras are the finite product categories, the pseudo-$\mnd{T}_1$-morphisms are finite product preserving functors, and the $\mnd{T}_1$-transformations are natural transformations;
\item For $\mnd{T}_2$, the strict algebras are the symmetric monoidal categories, the pseudo-$\mnd{T}_2$-morphisms are symmetric monoidal functors, and the $\mnd{T}_2$-transformations are symmetric monoidal natural transformations; and
\item For $\mnd{T}_3$, the strict algebras are the monoidal categories, the pseudo-$\mnd{T}_3$-morphisms are monoidal functors, and the $\mnd{T}_3$-transformations are monoidal natural transformations.
\end{itemize}
Furthermore, each of these 2-monads preserves bijective-on-objects functors.
\end{thm}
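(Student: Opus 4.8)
The plan is to exhibit each of the three 2-monads via the standard presentation of the relevant kind of structured category, and then to check the bijective-on-objects preservation condition uniformly. For $\mnd{T}_1$, recall that a finite product category (in the unbiased sense used here, with a chosen $n$-fold product for each $n$) is precisely an algebra for the 2-monad whose value $T_1\cat{B}$ at a category $\cat{B}$ has as objects the finite sequences $(b_1, \ldots, b_n)$ of objects of $\cat{B}$, with morphisms given by the appropriate cartesian structure (a morphism $(b_1,\ldots,b_n) \to (c_1,\ldots,c_m)$ being a function $\phi\from\{1,\ldots,m\}\to\{1,\ldots,n\}$ together with maps $b_{\phi(j)} \to c_j$), so that $T_1\cat{B}$ is the free finite-product completion of $\cat{B}$. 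Strict algebras are then finite product categories, pseudo-morphisms are finite-product-preserving functors, and $\mnd{T}_1$-transformations are arbitrary natural transformations; this is classical (see e.g. the references on 2-monads already cited, such as Blackwell--Kelly--Power). Similarly $\mnd{T}_2$ is the free (unbiased) symmetric monoidal category 2-monad, whose value at $\cat{B}$ has objects finite sequences of objects of $\cat{B}$ and morphisms a permutation together with a family of maps, and $\mnd{T}_3$ is the free (unbiased) monoidal category 2-monad, defined the same way but without permutations. In each case the identification of strict algebras, pseudo-morphisms and transformations with the stated notions is standard.

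First I would set up these three 2-monads explicitly, citing the literature for the fact that their algebras are as claimed and deferring the routine verification of the 2-monad axioms. The substantive part is the last sentence: each $T_i$ preserves bijective-on-objects functors. Here I would argue directly from the explicit description above. Let $F \from \cat{A} \to \cat{B}$ be bijective on objects. In each case $T_i F \from T_i\cat{A} \to T_i\cat{B}$ acts on an object $(a_1, \ldots, a_n)$ by $(Fa_1, \ldots, Fa_n)$. Since $F$ is a bijection on objects, the induced map on finite sequences of objects is also a bijection, so $T_i F$ is bijective on objects. This is essentially immediate from the fact that $(\ob T_i\cat{B})$ is the free monoid on $\ob\cat{B}$ (for $\mnd{T}_3$) or the set of finite multisets / the free commutative-monoid-like structure on $\ob\cat{B}$ (for $\mnd{T}_1$, $\mnd{T}_2$) — in all three cases a set-level construction that takes bijections to bijections.

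The main obstacle, such as it is, is pinning down the precise form of the free-structure 2-monads so that the object-level description is literally the free monoid (resp. free symmetric/cartesian analogue) on $\ob\cat{B}$; once that is in hand, bijective-on-objects preservation is a one-line observation. I would therefore spend most of the write-up carefully stating the explicit construction of $T_i\cat{B}$ (objects and morphisms), noting that on objects it is a functor $\Set \to \Set$ applied to $\ob\cat{B}$ that visibly preserves bijections, and then concluding that $T_i$ preserves bijective-on-objects functors. Combined with Theorem~\ref{thm:monad-setting}, this yields concrete settings $\mnd{T}_i\hyph\alg$ with structure--semantics adjunctions, which is what is needed for the applications in the remainder of the section.
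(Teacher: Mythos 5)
Your strategy for the last claim is sound, and in fact more self-contained than the paper's: the paper simply cites Kelly's clubs (\cite{kelly72}, \cite{kelly72_2}) and observes that any 2-monad of the form $T\cat{B} = \mathcal{K}\circ\cat{B}$ arising from a club preserves bijective-on-objects functors, for essentially the reason you give --- on objects such a $T$ is a coproduct of sets of the form $K_n\times(\ob\cat{B})^n$, which visibly takes bijections to bijections.

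However, there is a genuine gap in your identification of the 2-monads themselves. The explicit constructions you describe --- free finite-product completion, free strict symmetric monoidal category, free strict monoidal category, with objects the finite sequences of objects of $\cat{B}$ --- have as \emph{strict} algebras the \emph{strict} versions of these structures. The strict associativity axiom $Y\of TY = Y\of\mu_{\cat{B}}$ for an algebra $Y\from T\cat{B}\to\cat{B}$ forces, e.g., $Y(Y(b_1,b_2),b_3) = Y(b_1,b_2,b_3)$ on the nose, i.e.\ a strictly associative choice of products or tensors. That is not what the theorem asserts: the paper's finite product categories carry an arbitrary choice of specified products, and its (symmetric) monoidal categories are unbiased but non-strict, with the comparison maps between iterated tensors being isomorphisms rather than identities. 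So the strict algebras of your $\mnd{T}_i$ form a strictly smaller class than claimed, and the resulting 2-categories $\mnd{T}_i\hyph\alg$ would not be the 2-categories $\finprod$, $\symmon$, $\monCAT$ used in the rest of the section. The fix is exactly the paper's route: take the 2-monads induced by Kelly's clubs, where $\ob(T\cat{B})$ consists of pairs (formal shape with $n$ leaves; $n$-tuple of objects of $\cat{B}$) and the morphisms include the coherence isomorphisms; strict algebras for these are the non-strict structures. Your bijection argument on objects survives this change unscathed, since the object part is still a set-level construction of the form $\coprod_n K_n\times(\ob\cat{B})^n$, so the last sentence of the theorem follows just as you intended.
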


\begin{proof}
In~\cite{kelly72} and~\cite{kelly72_2}, Kelly shows that these three 2-categories arise via a special kind of 2-monad, arising from a structure known as a ``club". But it is clear from the way a 2-monad is constructed from a club (outlined in Section~3 of~\cite{kelly72_2}) that such 2-monads preserve bijective-on-objects functors.
\end{proof}

\subsection*{Lawvere theories}
It follows from Theorems~\ref{thm:monad-setting} and~\ref{thm:examples-monads-boff} that the 2-category $\finprod$ of finite product categories, product-preserving functors and natural transformations is a concrete setting. Note also, that for any finite product category $\cat{B}$ (writing $\cat{B}$ also for the underlying category of $\cat{B}$) we have an aritation in $\finprod$ determined by the functor
\[
\cat{B} \to \finprod ( \fin^{\op}, \cat{B} )
\]
sending $b \in \cat{B}$ to the functor $b^{(-)} \from \fin^{\op} \to \cat{B}$. This gives rise to a structure--semantics adjunction
\[
\xymatrix{
{\catover{\cat{B} }}\ar@<5pt>[r]_-{\perp}^-{\str}\ & {\proth(\fin^{\op})^{\op}.}\ar@<5pt>[l]^-{\sem}
}
\]
But the categories $\proth(\fin^{\op})$ and $\LAW$ are clearly isomorphic by definition, and comparing Definitions~\ref{defn:sem-general} and~\ref{defn:lawvere-sem-objects}, we obtain the following.

\begin{prop}
\label{prop:law-sem-from-proth}
The functors
\[
\sem \from \proth(\fin^{\op})^{\op} \to \catover{\cat{B}}
\]
and
\[
\sem_{\LAW} \from \LAW^{\op} \to \catover{\cat{B}}
\]
coincide. \hfill \qed
\end{prop}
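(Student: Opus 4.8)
The plan is to unwind the two definitions of $\sem$ and check that they literally agree on objects and morphisms. Since the previous section already establishes $\proth(\fin^{\op}) \iso \LAW$ (the category $\proth(\fin^{\op})$ has as objects bijective-on-objects functors out of $\fin^{\op}$, and a Lawvere theory is precisely such a functor that in addition preserves finite products — but crucially, the \emph{morphisms} in $\LAW$ and in $\proth(\fin^{\op})$ restricted to Lawvere theories are the same, namely functors over $\fin^{\op}$), the content of the proposition is that the semantics functor for the $\finprod$-aritation $b \mapsto b^{(-)}$ agrees with the classical $\sem_{\LAW}$. I would open with the remark that $\proth(\fin^{\op})$ and $\LAW$ are isomorphic, so it suffices to compare the values of the two functors on a Lawvere theory $L \from \fin^{\op} \to \cat{L}$.

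First I would spell out $\sem(L)$ for the $\finprod$-aritation. By Definition~\ref{defn:sem-general} applied in the setting $\finprod$, $\sem(L) \from \mod(L) \to \cat{B}$ is the strict pullback of $L^* \from \finprod(\cat{L}, \cat{B}) \to \finprod(\fin^{\op}, \cat{B})$ along $\currylo \from \cat{B} \to \finprod(\fin^{\op}, \cat{B})$, $b \mapsto b^{(-)}$. Thus an object of $\mod(L)$ is a pair $(d, \Gamma)$ with $d \in \cat{B}$ and $\Gamma \from \cat{L} \to \cat{B}$ a finite-product-preserving functor such that $\Gamma \of L = d^{(-)} \from \fin^{\op} \to \cat{B}$; a morphism is a natural transformation $\Gamma \to \Gamma'$ compatible with the corresponding map $d^{(-)} \to d'^{(-)}$, equivalently (as in Remark~\ref{rem:Law-model-hom}) a morphism $h \from d \to d'$ in $\cat{B}$. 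Comparing with Definition~\ref{defn:law-models}, this is \emph{exactly} the category $\mod_{\law}(L)$: there the condition on a model $(d^x, \Gamma^x)$ is also $\Gamma^x \of L = (d^x)^{(-)}$, and Lemma~\ref{lem:law-model-non-standard} (or rather its proof) shows that such a $\Gamma^x$ automatically preserves finite products, so the two pullback descriptions coincide on the nose. The forgetful functors $\sem(L)$ and $\sem_{\law}(L)$ both send $(d, \Gamma)$ to $d$ and a homomorphism to its underlying $\cat{B}$-morphism, so they are equal.

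Then I would check the comparison on morphisms of (proto-)theories: for $P \from L' \to L$ in $\proth(\fin^{\op}) = \LAW$, both $\sem(P)$ (Definition~\ref{defn:sem-general}) and $\sem_{\law}(P)$ (Definition~\ref{defn:lawvere-sem-morphisms}) send $(d, \Gamma)$ to $(d, \Gamma \of P)$ and act as the identity on underlying morphisms; this is immediate from the explicit descriptions, or from the universal property of the pullbacks together with the fact that the two pullback squares defining $\mod(L)$ and $\mod_{\law}(L)$ have been identified. I expect essentially no obstacle here — the only mild subtlety is making the identification of the two pullback squares precise, i.e.\ that the "finite-product-preserving functor $\cat{L} \to \cat{B}$ lying over $d^{(-)}$" condition coming from the $\finprod$-hom-category matches the bare condition "$\Gamma^x \of L = (d^x)^{(-)}$" of Definition~\ref{defn:law-models}; this is exactly the observation recorded in the proof of Lemma~\ref{lem:law-model-non-standard} that any functor over $L$ out of $\cat{L}$ automatically preserves finite products since all the product projections of $\cat{L}$ lie in the image of $L$. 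With that in hand the two functors $\sem$ and $\sem_{\LAW}$ are visibly the same, completing the proof.
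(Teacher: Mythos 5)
Your proposal is correct and follows the same route as the paper, which treats the proposition as immediate: the paper's entire "proof" is the preceding sentence observing that $\proth(\fin^{\op})$ (in the setting $\finprod$) and $\LAW$ are isomorphic by definition and that Definitions~\ref{defn:sem-general} and~\ref{defn:lawvere-sem-objects} give the same pullback. You simply make that comparison explicit, correctly identifying the one point worth recording — that a functor $\Gamma$ with $\Gamma \of L = d^{(-)}$ automatically preserves finite products, as in the proof of Lemma~\ref{lem:law-model-non-standard} — so the two descriptions of the category of models agree on the nose.
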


In particular, Proposition~\ref{prop:str-sem-adj-law} now follows from Theorem~\ref{thm:str-sem-adj-general}, and the structure--semantics adjunction for Lawvere theories is an instance of a structure--semantics adjunction for an aritation in a concrete setting.

In this thesis we have considered a notion of Lawvere theory that is very close to Lawvere's original definition. However, Lawvere theories can be generalised beyond this basic notion; for example, a theory of Lawvere theories relative to an arbitrary (possibly enriched) locally finitely presentable category is developed by Nishizawa and Power in \cite{nishizawaPower09}. Let us briefly discuss how this theory relates to ours; first let us fix some notation. Let $\cat{V}$ be a locally finitely presentable symmetric monoidal category, and $\cat{A}$ a locally finitely presentable $\cat{V}$-category in the sense of \cite{kelly82}. Write $\iota \from \cat{A}_f \incl \cat{A}$ for the full $\cat{V}$-inclusion of the category of finitely presentable objects of $\cat{V}$; then we have a corresponding nerve functor
\[
N_{\iota} \from \cat{A} \incl [\cat{A}^{\op}, \cat{V}] \toby{(\iota^{\op})^*} [\cat{A}_f^{\op}, \cat{V}].
\]
The following two definitions are Definitions~2.1 and~2.2 in \cite{nishizawaPower09}. 

\begin{defn}
A Lawvere $\cat{A}$-theory consists of a $\cat{V}$-category $\cat{L}$ together with an identity-on-objects strict finite $\cat{V}$-limit preserving functor $L \from \cat{A}_f^{\op} \to \cat{L}$.
\end{defn}

\begin{defn}
Given a Lawvere $\cat{A}$-theory $L \from \cat{A}_f^{\op} \to \cat{L}$, we define its $\cat{V}$-category $\mod(L)$ of models via the pullback
\[
\xymatrix{
\mod(L)\ar[r]\ar[d]\pullbackcorner & [\cat{L}, \cat{V}]\ar[d]^{L^*} \\
\cat{A} \ar[r]_{N_{\iota}} & [\cat{A}_f^{\op}, \cat{V}]
}
\]
in $\cat{V}$-$\Cat$.
\end{defn}
This definition is evidently closely related to the semantics of proto-theories defined in Definition~\ref{defn:sem-objects}. It is likely that this general notion of Lawvere theory could be reconciled with the theory of proto-theories and aritations that we have developed. However to do so, we would need notions of proto-theory and aritation relative to $\cat{V}\hyph\CAT$ rather than $\CAT$. Such notions are available (indeed, in Section~\ref{sec:open-more-general} we shall observe that one can discuss proto-theories and aritations in any symmetric monoidal category), but developing them is beyond the scope of this thesis.

Note that a Lawvere $\cat{A}$-theory is defined to be the \emph{identity} on objects, whereas for us Lawvere theories and other notions of proto-theory are merely \emph{bijective} on objects. This is a minor distinction: for every notion of proto-theory in this thesis there is an appropriate notion of an identity-on-objects proto-theory, and every proto-theory is isomorphic to an identity-on-objects one. However, in the full generality of an arbitrary setting we want the proto-theories to be those 1-cells that come from the left class of a specified factorisation system. This is the case for bijective-on-objects functors but not identity-on-objects functors. For this reason we require Lawvere theories (and other proto-theories) to be merely bijective on objects.

Another difference between our approach and that of \cite{nishizawaPower09} is that Nishizawa and Power emphasise the role of $\cat{V}$-categories with finite cotensors (which are finite powers in the unenriched case), whereas we typically consider models of Lawvere theories only in categories with all finite products. This is not an essential limitation: all of the theory we have developed for Lawvere theories could be translated in a straightforward way to the setting of finite power categories. Similarly the results in the following subsections on PROPs and PROs could be translated into settings of ``categories with (possibly symmetric) tensor powers'' in an appropriate sense, rather than (possibly symmetric) monoidal categories. The reason we chosen to restrict our attention to finite product categories and (symmetric) monoidal categories is that these categories are more familiar and well documented in the literature.

\subsection*{PROPs}

Again by Theorems~\ref{thm:monad-setting} and~\ref{thm:examples-monads-boff}, the 2-category $\symmon$ of large symmetric monoidal categories, (strong) symmetric monoidal functors and symmetric monoidal transformations is a concrete setting.
\begin{defn}
Let $\finbij$ be the (non-full) sub-symmetric monoidal category of the finite product category $\fin^{\op}$ consisting of the same objects, but only the invertible morphisms. Thus $\finbij$ has the natural numbers as objects, and $\finbij(n,m)$ is empty unless $n = m$, and $\finbij(n,n) = S_n$, the symmetric group on $n$ elements.
\end{defn}

\begin{lem}
\label{lem:PROP-proth-equiv}
The category $\PROP$ of PROPs is equivalent to the category $\proth(\finbij)$ of proto-theories with arities $\finbij$ in the setting $\symmon$.
\end{lem}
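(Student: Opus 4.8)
The plan is to produce an equivalence of categories $\PROP \simeq \proth(\finbij)$ by exhibiting functors in both directions and checking they are mutually (pseudo-)inverse. The starting observation is that a proto-theory in the setting $\symmon$ with arities $\finbij$ is, by definition, a 1-cell $L \from \finbij \to \cat{L}$ in $\symmon$ lying in the left class of the factorisation system, i.e.\ a (strong) symmetric monoidal functor out of $\finbij$ whose underlying functor is bijective on objects. Since the objects of $\finbij$ are the natural numbers, $\cat{L}$ must have the natural numbers as its set of objects. The key structural fact to exploit is that $\finbij$ is the free symmetric monoidal category on one generating object: a symmetric monoidal functor out of $\finbij$ is determined up to coherent isomorphism by where it sends the generator $1$, and since $L$ is bijective on objects it must send $1$ to $1$ and more generally $n$ to (something isomorphic to, in fact equal to) the $n$-fold tensor power of $1$. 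Thus the tensor product on $\cat{L}$ is essentially forced on objects to be addition of natural numbers, which is precisely the condition in the definition of a PROP.

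First I would make this precise. Given a PROP $\cat{P}$, there is a canonical symmetric monoidal functor $\finbij \to \cat{P}$ that is the identity on objects: on morphisms it sends a permutation in $\finbij(n,n) = S_n$ to the corresponding symmetry isomorphism in $\cat{P}$, using the unbiased symmetric monoidal structure and the fact that $n$ is literally the $n$-th tensor power of $1$ in $\cat{P}$. This functor is strict symmetric monoidal, identity (hence bijective) on objects, so it is an object of $\proth(\finbij)$; a morphism of PROPs, being an identity-on-objects strict symmetric monoidal functor, commutes with these structure maps and so gives a morphism of proto-theories. This defines a functor $\PROP \to \proth(\finbij)$. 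Conversely, given a proto-theory $L \from \finbij \to \cat{L}$ in $\symmon$, I would first invoke Corollary~\ref{cor:proth-strict-equiv} (applicable since the 2-monad $\mnd{T}_2$ presenting $\symmon$ preserves bijective-on-objects functors, by Theorem~\ref{thm:examples-monads-boff}) to replace $L$ up to isomorphism in $\proth(\finbij)$ by a \emph{strict} symmetric monoidal, bijective-on-objects functor $L' \from \finbij \to \cat{L}'$. For such an $L'$, strictness of the monoidal structure together with bijectivity on objects and the universal property of $\finbij$ forces the object set of $\cat{L}'$ to be $\nat$ with tensor given by addition, so $\cat{L}'$ is a PROP and $L'$ is recovered from it by the construction above.

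The main technical work, and the step I expect to be the chief obstacle, is checking that the two functors are mutually quasi-inverse in a way that is natural, and in particular pinning down the homotopy-coherence bookkeeping: a general proto-theory in $\symmon$ need not have $\cat{L}$ strictly monoidal nor $L$ strict, so the equivalence (rather than isomorphism) is genuinely needed, and one must verify that the isomorphism $L \iso L'$ supplied by Corollary~\ref{cor:proth-strict-equiv} can be chosen naturally in $L$, or at least that the composite endofunctor of $\proth(\finbij)$ obtained by going to $\PROP$ and back is naturally isomorphic to the identity. I would handle this by noting that on the subcategory of identity-on-objects, strict-symmetric-monoidal proto-theories the two functors are literally inverse isomorphisms, and that the inclusion of this subcategory into $\proth(\finbij)$ is an equivalence (every object is isomorphic to one in the subcategory, by Corollary~\ref{cor:proth-strict-equiv}, and fullness/faithfulness is routine). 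A parallel remark handles $\PROP$ itself if one worries about non-strict monoidal structures there — but since PROPs are defined to be strict, this side is already on the nose. Assembling these observations gives the equivalence $\PROP \simeq \proth(\finbij)$, completing the proof; the analogous argument with $\finbij$ replaced by the discrete category on $\nat$ and $\symmon$ by $\monCAT$ will give the corresponding statement for PROs.
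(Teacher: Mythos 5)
Your proposal is correct and follows essentially the same route as the paper: reduce via Corollary~\ref{cor:proth-strict-equiv} to the full subcategory of strict, bijective-on-objects symmetric monoidal functors out of $\finbij$, identify that subcategory with $\PROP$ by the canonical identity-on-objects comparison, and observe that morphisms match up automatically. The paper's proof is just a more compressed version of the same argument.
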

\begin{proof}
By Corollary~\ref{cor:proth-strict-equiv}, every object of $\proth(\finbij)$ is isomorphic to a bijective-on-objects \emph{strict} monoidal functor out of $\finbij$, and so the full subcategory of such proto-theories is equivalent to $\proth(\finbij)$ itself. Thus it is sufficient to show that $\PROP$ is isomorphic to this full subcategory. Let $\cat{P}$ be a PROP; then there is a unique strict monoidal functor $\finbij \to \cat{P}$ that is the identity on objects, and sends a permutation $\sigma \in S_n = \finbij(n,n)$ to the corresponding automorphism of $n = 1^{\tensor n}$ in $\cat{P}$ induced by the symmetry. Conversely, given a bijective-on-objects strict monoidal functor $P \from \finbij \to \cat{P}$, we can identify the objects of $\cat{P}$ with the natural numbers via $P$, and then $\cat{P}$ becomes a PROP. It is clear that any strict monoidal functor $F \from \cat{P} \to \cat{P}'$ between PROPs makes the triangle
\[
\xymatrix{
\finbij \ar[r]^P\ar[dr]_{P'} & \cat{P}\ar[d]^F \\
& \cat{P'}
}
\]
commute.
\end{proof}

\begin{defn}
Let $(\cat{B}, \tensor, I)$ be a symmetric monoidal category. Write $\currylo \from \cat{B} \to \symmon (\finbij, \cat{B}) $ for the canonical functor that sends $b \in \cat{B}$ to the functor $b^{\tensor (-)}$ that sends $n \in \finbij$ to $b^{\tensor n}$, the distinguished $n$-th tensor power of $b$, and sends a permutation $\sigma \in S_n$ to the automorphism of $b^{\tensor n}$ induced by $\sigma$ via the symmetry of $\cat{B}$.
\end{defn}

\begin{remark}
\label{rem:sym-mon-proth-adj}
For every large symmetric monoidal category $\cat{B}$, we can regard $\currylo \from \cat{B} \to \symmon (\finbij, \cat{B})$ defined above as an aritation, inducing a structure--semantics adjunction
\[
\xymatrix{
{\catover{\cat{B} }}\ar@<5pt>[r]_-{\perp}^-{\str}\ & {\proth(\finbij)^{\op}.}\ar@<5pt>[l]^-{\sem}
}
\]
\end{remark}

\begin{lem}
Let $P \from \finbij \to \cat{P}$ be an identity-on-objects, strict symmetric monoidal functor. Then a symmetric monoidal functor $\Gamma \from \cat{P} \to \cat{B}$ is a model of $\cat{P}$ as a PROP if and only if $\Gamma \of P = \Gamma(1)^{\tensor(-)} \from \finbij \to \cat{B}$.
\end{lem}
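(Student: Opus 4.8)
The plan is to prove both implications by carefully unpacking what it means to be a model of a PROP versus satisfying the displayed equation on arities. Throughout, write $\cat{B}$ for a fixed large symmetric monoidal category with its chosen $n$-fold tensor products, and recall (from the unbiased monoidal convention) that $n = 1^{\otimes n}$ in $\cat{P}$ holds strictly because $P$ is strict symmetric monoidal and identity-on-objects, so $1^{\otimes n}$ in $\cat{P}$ is literally the object $n$.

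For the forward direction, suppose $\Gamma \from \cat{P} \to \cat{B}$ is a model of $\cat{P}$ as a PROP, i.e.\ (by Definition~\ref{defn:PROP-model}) a symmetric monoidal functor whose distinguished comparison isomorphism $\Gamma(1)^{\otimes n} \toby{\iso} \Gamma(n)$ is an identity for each $n$. I would first observe that, since $\Gamma$ is symmetric monoidal and the structure maps on the objects $n$ are the identities, the functor $\Gamma$ sends the object $n$ to $\Gamma(1)^{\otimes n}$ and sends the permutation automorphism of $n$ in $\cat{P}$ (induced by the symmetry of $\cat{P}$, which is how $P$ was defined on morphisms) to the corresponding permutation automorphism of $\Gamma(1)^{\otimes n}$ in $\cat{B}$ (induced by the symmetry of $\cat{B}$); this is exactly the compatibility of a symmetric monoidal functor with the symmetry, combined with the fact that the relevant coherence isomorphisms are identities. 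But that permutation automorphism of $\Gamma(1)^{\otimes n}$ is, by definition of $\currylo$, precisely $\Gamma(1)^{\otimes(-)}$ evaluated at the permutation. Hence $\Gamma \of P$ and $\Gamma(1)^{\otimes(-)}$ agree on objects and on all morphisms of $\finbij$ (which are generated by permutations), so $\Gamma \of P = \Gamma(1)^{\otimes(-)}$.

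For the converse, suppose $\Gamma \from \cat{P} \to \cat{B}$ is a symmetric monoidal functor with $\Gamma \of P = \Gamma(1)^{\otimes(-)}$. I must show the distinguished comparison isomorphism $\Gamma(1)^{\otimes n} \toby{\iso} \Gamma(n)$ is an identity for every $n$. Since $P$ is identity-on-objects we have $\Gamma(n) = \Gamma(P(n)) = \Gamma(1)^{\otimes n}$ by hypothesis, so at the level of objects there is nothing to check; the point is that the comparison \emph{morphism} is the identity morphism. Here I would use that the distinguished isomorphism $\Gamma(1)^{\otimes n}\to\Gamma(n)$ is built from the lax/oplax structure maps of $\Gamma$ together with the coherence data of $\cat{B}$; because $P$ is strict and the $n$-th tensor power of $1$ in $\cat{P}$ is strictly the object $n$ with strictly associative/unital structure, the comparison map for $\Gamma$ at $n$ is obtained by applying $\Gamma$ to identities and composing with the analogous comparison map of the \emph{symmetric monoidal functor} $\Gamma(1)^{\otimes(-)} = \Gamma\of P$ — and the latter is an identity essentially by definition of the canonical power functor. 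I expect this bookkeeping with the unbiased coherence data to be the main obstacle: one has to be careful that ``the distinguished isomorphism'' on the PROP side is induced purely from the strict structure on $\cat{P}$ and the structure maps of $\Gamma$, so that the equation $\Gamma\of P = \Gamma(1)^{\otimes(-)}$ forces it to coincide with the (identity) comparison of $\Gamma(1)^{\otimes(-)}$.

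Finally I would remark that the two descriptions of homomorphisms also match — a symmetric monoidal natural transformation between such $\Gamma$'s is exactly a model homomorphism of PROPs and an $L$-model homomorphism in the sense of Definition~\ref{defn:models-explicit} for the aritation $\currylo$ — but since the lemma as stated only concerns objects, this can be left as a one-line observation (and it parallels Remark~\ref{rem:Law-model-hom} and Lemma~\ref{lem:law-model-non-standard}).
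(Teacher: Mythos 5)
Your proof is correct and takes essentially the same route as the paper's: both arguments simply unpack Definition~\ref{defn:PROP-model} together with the definition of the power functor $\Gamma(1)^{\tensor(-)}$ and observe that the equation $\Gamma \of P = \Gamma(1)^{\tensor(-)}$ (read in $\symmon(\finbij,\cat{B})$) says exactly that the distinguished comparison isomorphisms $\Gamma(1)^{\tensor n} \to \Gamma(n)$ are identities. The paper dispatches this in two sentences, whereas you spell out both directions and flag the unbiased-coherence bookkeeping explicitly; that extra detail is reasonable but does not constitute a different argument.
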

\begin{proof}
This condition says precisely that $\Gamma$ preserves the distinguished tensor powers of $1$ not only up to isomorphism, but strictly. But this is exactly what is required for $\Gamma$ to be a model of the PROP $\cat{P}$, as per Definition~\ref{defn:PROP-model}.
\end{proof}

\begin{prop}
Under the equivalence of Lemma \ref{lem:PROP-proth-equiv}, the semantics functor $\sem \from \proth(\finbij)^{\op} \to \catover{\cat{B}}$ from Remark~\ref{rem:sym-mon-proth-adj} corresponds to the semantics functor $\sem_{\PROP} \from \PROP^{\op} \to \catover{\cat{B}}$ from Proposition~\ref{prop:PROP-sem-functor}.
\end{prop}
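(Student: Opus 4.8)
The plan is to unwind both definitions of the semantics functor and check they describe the same functor $\PROP^{\op} \to \catover{\cat{B}}$, using the translation supplied by the equivalence $\proth(\finbij) \eqv \PROP$ of Lemma~\ref{lem:PROP-proth-equiv} together with the preceding lemma characterising PROP-models as those symmetric monoidal functors $\Gamma$ with $\Gamma \of P = \Gamma(1)^{\tensor(-)}$. By Corollary~\ref{cor:proth-strict-equiv} it suffices to work with identity-on-objects strict symmetric monoidal proto-theories $P \from \finbij \to \cat{P}$, since these form an equivalent subcategory and $\sem$ sends isomorphic proto-theories to isomorphic objects of $\catover{\cat{B}}$. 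So first I would fix such a $P$ and compare $\mod(P)$ with $\mod_{\PROP}(\cat{P})$ on objects.

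For the object-level comparison, recall that $\sem(P) \from \mod(P) \to \cat{B}$ is defined by the pullback in $\CAT$ (really in $\symmon$, but $\und$ preserves it)
\[
\xymatrix{
\mod(P)\pullbackcorner\ar[r]^{J(P)}\ar[d]_{\sem(P)} & \symmon(\cat{P}, \cat{B})\ar[d]^{P^*} \\
\cat{B}\ar[r]_-{\currylo} & \symmon(\finbij, \cat{B}),
}
\]
so an object of $\mod(P)$ is a pair $(b, \Gamma)$ with $b \in \cat{B}$, $\Gamma \from \cat{P} \to \cat{B}$ a symmetric monoidal functor, and $\Gamma \of P = \currylo(b) = b^{\tensor(-)}$. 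By the lemma immediately preceding the proposition, this last equation holds (for $b = \Gamma(1)$) precisely when $\Gamma$ is a PROP-model of $\cat{P}$; moreover the $b$ is then forced to be $\Gamma(1)$. Hence the assignment $(b,\Gamma) \mapsto \Gamma$ is a bijection between objects of $\mod(P)$ and objects of $\mod_{\PROP}(\cat{P})$, and under it $\sem(P)$ becomes $\Gamma \mapsto \Gamma(1) = \sem_{\PROP}(\cat{P})(\Gamma)$. On morphisms, a morphism in $\mod(P)$ is a morphism $h$ in $\cat{B}$ lifting to a natural transformation $\Gamma \to \Gamma'$ compatible with the monoidal structure in the sense recorded by the pullback; this is exactly a monoidal natural transformation, i.e.\ a morphism in $\mod_{\PROP}(\cat{P})$ as in Definition~\ref{defn:sem-PROP-objects}. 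Here I would lean on the fact that $J(P)$ is full and faithful (being a pullback of the full and faithful $P^*$, which is full and faithful because $P$ is bijective on objects, cf.\ Lemma~\ref{lem:bo-ff-factorisation} and the stability of $\ofsfont{N}$ under pullback), so that a morphism of $\mod(P)$ is determined by, and amounts to, the corresponding monoidal natural transformation. This gives an isomorphism of categories $\mod(P) \iso \mod_{\PROP}(\cat{P})$ over $\cat{B}$.

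Finally I would check naturality in $\cat{P}$: given a morphism of PROPs $F \from \cat{P}' \to \cat{P}$, corresponding to a morphism of proto-theories under Lemma~\ref{lem:PROP-proth-equiv}, both $\sem(F)$ and $\sem_{\PROP}(F)$ act by precomposition with $F$ on the functor component $\Gamma$ (for $\sem(F)$ this is the explicit description in Definition~\ref{defn:sem-morphisms}/Definition~\ref{defn:sem-general}; for $\sem_{\PROP}(F)$ it is Definition~\ref{defn:sem-PROP-morphisms}), and are the identity on the underlying morphisms in $\cat{B}$, so they agree under the isomorphisms constructed above. Assembling these isomorphisms into a natural isomorphism of functors $\PROP^{\op} \to \catover{\cat{B}}$ completes the proof. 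I expect no serious obstacle here: the only point requiring a little care is the morphism-level matching, where one must confirm that the pullback defining $\mod(P)$ imposes exactly the monoidal-naturality condition and nothing more, and that the equivalence of Lemma~\ref{lem:PROP-proth-equiv} genuinely carries $\sem_{\PROP}$'s notion of homomorphism to $\sem$'s; both are routine once the full-faithfulness of $J(P)$ is invoked.
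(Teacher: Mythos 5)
Your approach is the same as the paper's: the paper's entire proof of this proposition is ``This is immediate from the preceding lemma'', and your argument is precisely that lemma's content unwound through the pullback defining $\sem(P)$, together with the routine checks on morphisms and on naturality in $\cat{P}$. The reduction to identity-on-objects strict proto-theories, the object-level identification of $\mod(P)$ with $\mod_{\PROP}(\cat{P})$, and the comparison of the two actions on morphisms of PROPs are all correct.

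One justification you give is wrong, although the claim it supports is true. You assert that $J(P)$ is full and faithful ``being a pullback of the full and faithful $P^*$, which is full and faithful because $P$ is bijective on objects''. Restriction along a bijective-on-objects functor is faithful but not in general full: a natural transformation $\Gamma \of P \to \Gamma' \of P$ need only be natural with respect to morphisms in the image of $P$, so it need not lift to one between $\Gamma$ and $\Gamma'$. Moreover, in the defining pullback square $J(P)$ is the pullback of $\currylo$ and $\sem(P)$ is the pullback of $P^*$, so even if $P^*$ were full and faithful that would bear on $\sem(P)$, not on $J(P)$. The correct route is the one you also sketch directly: a morphism $(b,\Gamma) \to (b',\Gamma')$ in $\mod(P)$ is a pair $(h,\kappa)$ with $\kappa P = h^{\tensor(-)}$; given any monoidal natural transformation $\kappa \from \Gamma \to \Gamma'$, its component at $1$ determines $h$, and monoidality of $\kappa$ together with the strict preservation of tensor powers of $1$ by models forces $\kappa_n = \kappa_1^{\tensor n}$, so the compatibility condition holds automatically and imposes nothing further. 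Equivalently, $\currylo \from \cat{B} \to \symmon(\finbij,\cat{B})$ is itself full and faithful (a monoidal natural transformation $b^{\tensor(-)} \to b'^{\tensor(-)}$ is determined by its component at $1$), and full and faithful functors are stable under pullback, whence $J(P)$ is full and faithful. With that repair the proof is complete.
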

\begin{proof}
This is immediate from the preceding lemma.
\end{proof}

\subsection*{PROs}

We can repeat all of the previous subsection with PROs in place of PROPs, and monoidal categories in place of symmetric monoidal categories. Specifically the 2-category $\monCAT$ of large monoidal categories, monoidal functors and monoidal natural transformations is a concrete setting, and the following results hold, with proofs identical to (or simpler than) the PROP case. Write $\nat$ for the monoid of natural numbers with addition, regarded as a discrete strict monoidal category.

\begin{lem}
\label{lem:PRO-proth-equiv}
The category $\PRO$ of PROs is equivalent to the category $\proth(\nat)$ of proto-theories with arities $\nat$ in the setting $\monCAT$.
\end{lem}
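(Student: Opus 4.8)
The plan is to mirror exactly the proof of Lemma~\ref{lem:PROP-proth-equiv}, replacing the symmetric monoidal setting $\symmon$ by $\monCAT$, the category $\finbij$ by $\nat$, and PROPs by PROs throughout. Since Theorems~\ref{thm:monad-setting} and~\ref{thm:examples-monads-boff} tell us that $\monCAT$ is a concrete setting (via the 2-monad $\mnd{T}_3$, which preserves bijective-on-objects functors), the general theory of proto-theories applies, and in particular Corollary~\ref{cor:proth-strict-equiv} is available. So the first step is to invoke Corollary~\ref{cor:proth-strict-equiv} to reduce to identity-on-objects strict monoidal functors: every object of $\proth(\nat)$ is isomorphic (in $\proth(\nat)$) to a bijective-on-objects strict monoidal functor out of $\nat$, hence the full subcategory of such proto-theories is equivalent to $\proth(\nat)$ itself, and it suffices to produce an isomorphism between $\PRO$ and this full subcategory.

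Next I would construct the isomorphism on objects. Given a PRO $\cat{P}$ (a strict monoidal category with objects the natural numbers and tensor given by addition), there is a unique strict monoidal functor $P \from \nat \to \cat{P}$ that is the identity on objects; since $\nat$ is the discrete monoidal category on the natural numbers, there is no choice to be made on morphisms, so this is simply the inclusion of objects, and it is automatically bijective on objects. Conversely, given a bijective-on-objects strict monoidal functor $P \from \nat \to \cat{P}$, we may transport the natural-number labelling of objects along $P$ and thereby regard $\cat{P}$ as a PRO. These two assignments are mutually inverse (here the minor technicality noted after Definition~\ref{defn:lawvere-theory}-style conventions — that the objects are genuinely part of the data — makes this an isomorphism rather than merely an equivalence, just as in the operad case of Proposition~\ref{prop:iso-proth-operad}). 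Then I would check that this extends to morphisms: any strict monoidal functor $F \from \cat{P} \to \cat{P}'$ between PROs makes the evident triangle over $\nat$ commute, because $F$ is the identity on objects and hence $F \of P = P'$, so morphisms of PROs correspond exactly to morphisms of proto-theories.

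This proof is essentially routine; the only point requiring any care — and the closest thing to an obstacle — is the bookkeeping around strictness. In $\monCAT$ the proto-theories are bijective-on-objects \emph{monoidal} (not necessarily strict) functors, and PROs are by definition strict monoidal categories; the passage between the two is exactly what Corollary~\ref{cor:proth-strict-equiv} handles, so the argument is genuinely identical to — indeed simpler than — the PROP case, since there are no symmetries to keep track of and $\nat$ has no non-identity morphisms. I would therefore present the proof as a single short paragraph stating that it is obtained from the proof of Lemma~\ref{lem:PROP-proth-equiv} by the substitutions $\symmon \rightsquigarrow \monCAT$, $\finbij \rightsquigarrow \nat$, PROP $\rightsquigarrow$ PRO, deleting all mention of the symmetry, and noting that $\nat$ being discrete makes the identity-on-objects functor $\nat \to \cat{P}$ unique with no data on morphisms. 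One could optionally remark that the analogue of Definition~\ref{defn:PROP-model}'s strict-preservation-of-tensor-powers condition, together with the corresponding lemma identifying models, also carries over verbatim, so that the induced semantics functor $\sem \from \proth(\nat)^{\op} \to \catover{\cat{B}}$ agrees with $\sem_{\PRO}$ for any large monoidal category $\cat{B}$.
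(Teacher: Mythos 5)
Your proposal is correct and matches the paper exactly: the paper gives no separate argument for this lemma, stating only that the proofs in the PRO subsection are ``identical to (or simpler than) the PROP case,'' which is precisely the substitution-and-simplification you carry out (including the observation that discreteness of $\nat$ makes the identity-on-objects strict monoidal functor $\nat \to \cat{P}$ unique with no data on morphisms). Nothing is missing.
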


\begin{defn}
Let $(\cat{B}, \tensor, I)$ be a monoidal category. Write $\currylo \from \cat{B} \to \monCAT (\nat, \cat{B}) $ for the canonical functor that sends $b \in \cat{B}$ to the functor $b^{\tensor (-)}$ that sends $n \in \nat$ to $b^{\tensor n}$, the distinguished $n$-th tensor power of $b$.
\end{defn}

\begin{remark}
\label{rem:mon-proth-adj}
For every large monoidal category $\cat{B}$, we can regard $\currylo \from \cat{B} \to \monCAT (\nat, \cat{B})$ defined above as an aritation, inducing a structure--semantics adjunction
\[
\xymatrix{
{\catover{\cat{B} }}\ar@<5pt>[r]_-{\perp}^-{\str}\ & {\proth(\nat)^{\op}.}\ar@<5pt>[l]^-{\sem}
}
\]
\end{remark}

\begin{lem}
Let $P \from \nat \to \cat{P}$ be an identity-on-objects, strict monoidal functor. Then a monoidal functor $\Gamma \from \cat{P} \to \cat{B}$ is a model of $\cat{P}$ as a PRO if and only if $\Gamma \of P = \Gamma(1)^{\tensor(-)} \from \nat \to \cat{\cat{B}}$.
\end{lem}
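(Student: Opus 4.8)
The plan is to verify directly that the condition $\Gamma \of P = \Gamma(1)^{\tensor(-)}$ is exactly the strictness requirement in the definition of a PRO-model (Definition~\ref{defn:PROP-model}), just as in the PROP case. First I would recall that, since $P \from \nat \to \cat{P}$ is an identity-on-objects strict monoidal functor, the objects of $\cat{P}$ are precisely the natural numbers, with $n = 1^{\tensor n}$, and the monoidal structure on objects is given by addition. A monoidal functor $\Gamma \from \cat{P} \to \cat{B}$ comes equipped, for each $n \in \nat$, with a coherence isomorphism $\Gamma(1)^{\tensor n} \toby{\iso} \Gamma(n)$ arising from the fact that $n$ is the $n$-th tensor power of $1$ in $\cat{P}$.

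Next I would observe that the composite $\Gamma \of P \from \nat \to \cat{B}$ sends $n$ to $\Gamma(n)$ (since $P$ is the identity on objects) and sends the unique morphism structure of $\nat$ accordingly; because $\nat$ is discrete, $\Gamma \of P$ is determined entirely by its values on objects together with the constraint that it be (strict) monoidal. So the equation $\Gamma \of P = \Gamma(1)^{\tensor(-)}$ holds precisely when $\Gamma(n) = \Gamma(1)^{\tensor n}$ for all $n$ \emph{and} the coherence isomorphisms $\Gamma(1)^{\tensor n} \to \Gamma(n)$ are identities. By Definition~\ref{defn:PROP-model}, this latter condition — that each distinguished isomorphism $\Gamma(1)^{\tensor n} \toby{\iso} \Gamma(n)$ is an identity for every $n \in \nat$ — is exactly what it means for $\Gamma$ to be a model of the PRO $\cat{P}$. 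Conversely, if $\Gamma$ is a model of $\cat{P}$ then these isomorphisms are identities, which forces $\Gamma(n) = \Gamma(1)^{\tensor n}$ and hence $\Gamma \of P = \Gamma(1)^{\tensor(-)}$.

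The proof is therefore essentially immediate, being a direct transcription of the PROP argument with the symmetry data dropped; no genuine obstacle arises. The only point requiring a moment's care is checking that the equality $\Gamma \of P = \Gamma(1)^{\tensor(-)}$ of functors $\nat \to \cat{B}$ is equivalent to the objectwise strictness condition, rather than merely implying it — but this is clear because $\nat$ has no non-identity morphisms, so a functor out of $\nat$ is nothing more than its action on objects, and the monoidal-functor coherence cells for $\Gamma \of P$ are exactly the distinguished isomorphisms $\Gamma(1)^{\tensor n} \to \Gamma(n)$. Thus I would simply write: this condition says precisely that $\Gamma$ preserves the distinguished tensor powers of $1$ not merely up to coherent isomorphism but strictly, which is exactly Definition~\ref{defn:PROP-model}, and conclude.
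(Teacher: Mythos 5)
Your proof is correct and is essentially the paper's own argument: the paper disposes of this (via the PROP case) by observing that the equation $\Gamma \of P = \Gamma(1)^{\tensor(-)}$ says precisely that the distinguished tensor powers of $1$ are preserved strictly rather than merely up to coherent isomorphism, which is the defining condition for a PRO-model. Your additional remark that the coherence cells of $\Gamma \of P$ are the distinguished isomorphisms $\Gamma(1)^{\tensor n} \to \Gamma(n)$ is a worthwhile clarification of why the equality forces these to be identities, but it does not change the route.
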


\begin{prop}
Under the equivalence of Lemma \ref{lem:PRO-proth-equiv}, the semantics functor $\sem \from \proth(\nat)^{\op} \to \catover{\cat{B}}$ from Remark~\ref{rem:mon-proth-adj} corresponds to the semantics functor $\sem_{\PRO} \from \PRO^{\op} \to \catover{\cat{B}}$ from Proposition~\ref{prop:PROP-sem-functor}.
\end{prop}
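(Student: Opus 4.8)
The statement to prove is that, under the equivalence $\PRO \eqv \proth(\nat)$ of Lemma~\ref{lem:PRO-proth-equiv}, the semantics functor $\sem \from \proth(\nat)^{\op} \to \catover{\cat{B}}$ from Remark~\ref{rem:mon-proth-adj} corresponds to $\sem_{\PRO} \from \PRO^{\op} \to \catover{\cat{B}}$ from Proposition~\ref{prop:PROP-sem-functor}. Here is my plan.

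\medskip

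The plan is to proceed exactly as in the PROP case, invoking the preceding lemma (the ``$\Gamma \of P = \Gamma(1)^{\tensor(-)}$'' characterisation of PRO-models) to identify the two semantics functors on objects, and then checking compatibility on morphisms. First I would fix an identity-on-objects strict monoidal $P \from \nat \to \cat{P}$; by Corollary~\ref{cor:proth-strict-equiv} every object of $\proth(\nat)$ is isomorphic to one of this form, so it suffices to compare the two functors on the full subcategory of such proto-theories, where the equivalence of Lemma~\ref{lem:PRO-proth-equiv} is (the opposite of) an isomorphism onto $\PRO$. By Definition~\ref{defn:sem-general}, $\mod(P)$ is the pullback of $L^* \from \monCAT(\cat{P},\cat{B}) \to \monCAT(\nat,\cat{B})$ along $\currylo$; unwinding, its objects are monoidal functors $\Gamma \from \cat{P} \to \cat{B}$ with $L^*(\Gamma) = \currylo(\Gamma(1))$, i.e.\ $\Gamma \of P = \Gamma(1)^{\tensor(-)}$. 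By the preceding lemma this is exactly the condition for $\Gamma$ to be a model of $\cat{P}$ as a PRO, and the morphisms in both cases are monoidal natural transformations, so $\mod(P) = \mod_{\PRO}(\cat{P})$ and, comparing Definition~\ref{defn:sem-objects} (applied via Definition~\ref{defn:sem-general}) with Definition~\ref{defn:sem-PROP-objects} adapted to PROs, the forgetful functors to $\cat{B}$ agree: both send $\Gamma$ to $\Gamma(1)$ and a homomorphism to its component at $1$.

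\medskip

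For morphisms, I would take a morphism of PROs $P \from \cat{P}' \to \cat{P}$ (equivalently a morphism in $\proth(\nat)$) and note that, by Definition~\ref{defn:sem-morphisms} (as specialised in Definition~\ref{defn:sem-general}), $\sem(P)$ sends a model $\Gamma \from \cat{P} \to \cat{B}$ to $\Gamma \of P \from \cat{P}' \to \cat{B}$ and a homomorphism $\alpha$ to $\alpha P$; this is verbatim the definition of $\sem_{\PRO}(P)$ in Definition~\ref{defn:sem-PROP-morphisms} (PRO version). Hence the two functors are equal on objects and on morphisms, and the identification is compatible with the equivalence $\PRO \eqv \proth(\nat)$. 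This gives the result, so the proof reduces to a one-line appeal to the preceding lemma plus bookkeeping, and I would simply write ``This is immediate from the preceding lemma,'' mirroring the PROP case.

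\medskip

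There is no real obstacle here: the only thing to be a little careful about is that Lemma~\ref{lem:PRO-proth-equiv} gives an \emph{equivalence}, not an isomorphism, because a general proto-theory in $\proth(\nat)$ is only bijective-on-objects (and only a \emph{strong}, not strict, monoidal functor), whereas a PRO corresponds to an identity-on-objects \emph{strict} monoidal functor; Corollary~\ref{cor:proth-strict-equiv} is what lets us reduce to the strict identity-on-objects case so that the comparison of pullback descriptions goes through cleanly. Everything else is a routine transcription of the PROP argument with $\symmon$ replaced by $\monCAT$, $\finbij$ by $\nat$, and symmetric monoidal data by plain monoidal data.

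\begin{proof}
This is immediate from the preceding lemma, exactly as in the PROP case: by Corollary~\ref{cor:proth-strict-equiv} every object of $\proth(\nat)$ is isomorphic to an identity-on-objects strict monoidal functor $P \from \nat \to \cat{P}$, and for such $P$ the pullback defining $\mod(P)$ (Definitions~\ref{defn:sem-general} and~\ref{defn:sem-objects}) has as objects the monoidal functors $\Gamma \from \cat{P} \to \cat{B}$ with $\Gamma \of P = \Gamma(1)^{\tensor(-)}$, which by the preceding lemma are precisely the models of $\cat{P}$ as a PRO; the morphisms in both cases are monoidal natural transformations, and both forgetful functors send $\Gamma$ to $\Gamma(1)$. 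On morphisms both $\sem$ and $\sem_{\PRO}$ send a morphism of PROs $P$ to precomposition with $P$ on models and whiskering with $P$ on homomorphisms. Hence the two functors agree under the equivalence of Lemma~\ref{lem:PRO-proth-equiv}.
\end{proof}
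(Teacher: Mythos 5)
Your proof is correct and takes the same route as the paper: the paper's PRO results are stated to follow "with proofs identical to (or simpler than) the PROP case," and the PROP-case proof is exactly the one-line appeal to the preceding lemma that you give, with your unwinding of the pullback and the reduction via Corollary~\ref{cor:proth-strict-equiv} being the same bookkeeping the paper leaves implicit.
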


\subsection*{Operads}

Operads can also be described as proto-theories in a certain setting, and their semantics in an arbitrary multicategory, as described in Section~\ref{sec:notions-operads}, then arises naturally as part of a structure--semantics adjunction for a certain aritation. However, the setting in question does not arise from a 2-monad as for Lawvere theories, PROPs and PROs; we must construct it by hand.

\begin{defn}
Define a 2-functor $\und \from \multicat \to \CAT$ by sending a multicategory $\cat{C}$ to its category of unary morphisms; that is, the objects of $\und(\cat{C})$ are the objects of $\cat{C}$ and $\und(\cat{C}) (c, c') = \cat{C} (c; c')$. The 2-functor $\und$ behaves in the obvious way on 1-cells and 2-cells. As usual we use $(-)_0 \from \multicat \to \CAT$ synonymously with $\und$.
\end{defn}

\begin{remark}
Although we label this 2-functor $\und$, which stands for ``underlying'', and it is clearly in a sense ``forgetful'', this 2-functor is not faithful (i.e.\ injective on 1-cells). That is, $\und$ forgets not only structure and properties, but also ``stuff'', in the sense described in~2.4 of Baez and Shulman~\cite{baezShulman06}. In fact, $\und$ does not even reflect isomorphisms. This tells us that $\multicat$ is not of the form $\talg$ for any 2-monad on $\CAT$.
\end{remark}

\begin{lem}
\label{lem:multicat-cotensor}
The 2-category $\multicat$ is cotensored over $\CAT$, and cotensors are preserved by $\und$.
\end{lem}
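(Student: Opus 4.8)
The plan is to give the cotensor of a multicategory $\cat{C}$ by a small category $\cat{B}$ explicitly and then verify the required universal property, transporting it across $\und$. Concretely, I would define a multicategory $[\cat{B},\cat{C}]$ whose objects are functors $\cat{B} \to \und(\cat{C})$, and whose multimorphisms
\[
[\cat{B},\cat{C}](F_1, \ldots, F_n ; G)
\]
consist of families $\big(\theta_b \in \cat{C}(F_1 b, \ldots, F_n b ; G b)\big)_{b \in \cat{B}}$ that are ``natural'' in the sense that for each $f \from b \to b'$ in $\cat{B}$ we have
\[
Gf \of \theta_b = \theta_{b'} \of (F_1 f, \ldots, F_n f)
\]
using unary composition with the $F_i f$ on the right and $Gf$ on the left. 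Identities and composition are defined pointwise, using the corresponding operations in $\cat{C}$; the multicategory axioms for $[\cat{B},\cat{C}]$ then follow pointwise from those for $\cat{C}$. Note that by construction $\und([\cat{B},\cat{C}])$ is exactly the functor category $[\cat{B},\und(\cat{C})]$, since a unary morphism $F \to G$ in $[\cat{B},\cat{C}]$ is precisely a natural transformation $F \to G$ in $\und(\cat{C})$; this will give the ``cotensors preserved by $\und$'' clause for free once the universal property is established.

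Next I would establish the defining isomorphism of categories
\[
\CAT\big(\cat{B}, \multicat(\cat{A}, \cat{C})\big) \iso \multicat\big(\cat{A}, [\cat{B}, \cat{C}]\big),
\]
natural in $\cat{A}, \cat{C} \in \multicat$ and $\cat{B} \in \CAT$. Given a functor $\cat{B} \to \multicat(\cat{A},\cat{C})$, i.e.\ a functor sending each $b$ to a multicategory morphism $\Phi_b \from \cat{A} \to \cat{C}$ and each $f \from b \to b'$ to a transformation $\Phi_f \from \Phi_b \to \Phi_{b'}$, I would produce a morphism $\cat{A} \to [\cat{B},\cat{C}]$ sending an object $a$ to the functor $b \mapsto \Phi_b(a)$ and a multimorphism $\psi \from a_1, \ldots, a_n \to a$ to the family $\big(\Phi_b(\psi)\big)_b$; the naturality condition for this to be a multimorphism in $[\cat{B},\cat{C}]$ is exactly the transformation axiom for the $\Phi_f$. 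Conversely, a morphism $\cat{A} \to [\cat{B},\cat{C}]$ unpacks, object- and morphism-wise, into precisely such a $\CAT$-indexed family. I would check that these assignments are mutually inverse functors and that the bijection respects composition and identities on both sides, and that it is natural in all three variables — all of which is a routine unwinding of definitions.

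The verification is entirely bookkeeping, so rather than grinding through every axiom I would present the construction and indicate that the multicategory axioms for $[\cat{B},\cat{C}]$ and the naturality of the adjunction isomorphism all hold ``pointwise'' by reduction to the corresponding facts in $\cat{C}$. The main place to be careful — the closest thing to an obstacle — is getting the naturality/interchange condition on multimorphism families right: composition in a multicategory substitutes several multimorphisms into one, so when one checks that the pointwise composite of naturality-satisfying families again satisfies naturality, one must correctly use functoriality of each $F_i f$ together with the multicategory associativity/interchange axiom in $\cat{C}$. Once that single compatibility is checked, everything else — identities, the multicategory laws, and the triangle identities / bijectivity of the adjunction correspondence — follows formally, and the fact that $\und$ preserves the cotensor is immediate from the identification $\und([\cat{B},\cat{C}]) = [\cat{B},\und(\cat{C})]$ noted above.
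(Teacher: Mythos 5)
Your proposal is correct and follows essentially the same route as the paper: the same explicit construction of $[\cat{B},\cat{C}]$ with objects the functors $\cat{B}\to\cat{C}_0$ and multimorphisms the pointwise-natural families, with composition defined componentwise and the axioms inherited from $\cat{C}$. The paper omits the verification of the universal property that you sketch, so your write-up is if anything slightly more complete.
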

\begin{proof}
Given a multicategory $\cat{C}$ and an ordinary category $\cat{B}$, we must define a multicategory $[\cat{B}, \cat{C}]$ whose category of unary morphisms is the functor category $[\cat{B}, \cat{C}_0]$. Thus we must define the objects of $[\cat{B}, \cat{C}]$ to be the functors $\cat{B} \to \cat{C}_0$.
Given functors $F_1, \ldots, F_n, G \from \cat{B} \to \cat{C}_0$, define a morphism
\[
F_1, \ldots, F_n \to G
\]
to be a family of morphisms
\[
\alpha_b \from F_1 b, \ldots, F_n b \to G b
\]
in $\cat{C}$ indexed by $b \in \cat{B}$ such that for every $f \from b \to b'$ in $\cat{B}$ we have
\[
\alpha_{b'} \of (F_1 f, \ldots, F_n f) = Gf \of \alpha_b.
\]
Composition and identities in $[\cat{B}, \cat{C}]$ are defined component-wise via the corresponding structure in $\cat{C}$; then the multicategory axioms for $[\cat{B}, \cat{C}]$ follow from those for $\cat{C}$.

It remains to make $[-, \cat{C}]$ into a 2-functor $\CAT \to \multicat^{\op}$, and show that it is left adjoint to $\multicat(-, \cat{C}) \from \multicat^{\op} \to \CAT$. This is straightforward, and we omit it here.
\end{proof}

\begin{defn}
Let $\ofsfont{E}$ be the class of morphisms of multicategories that are bijective on objects, and let $\ofsfont{N}$ be the class of morphism that are full and faithful in the appropriate multicategorical sense; that is $F \from \cat{C} \to \cat{D}$ is in $\ofsfont{N}$ if
\[
F \from \cat{C} (c_1, \ldots, c_n ; c') \to \cat{D}( Fc_1, \ldots, Fc_n ; Fc')
\]
is a bijection for all $c_1, \ldots, c_n, c' \in \cat{C}$.
\end{defn}

\begin{prop}
\label{prop:multicat-factor}
The classes $\ofsfont{E}$ and $\ofsfont{N}$ form a factorisation system on $\multicat$.
\end{prop}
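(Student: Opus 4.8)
The plan is to verify the two defining axioms of a factorisation system (Definition~\ref{defn:fact-system}) for the pair $(\ofsfont{E}, \ofsfont{N})$ on $\multicat$, following closely the proof of Lemma~\ref{lem:bo-ff-factorisation} for $\CAT$ but tracking the extra multi-ary structure. First I would note the easy preliminaries: both $\ofsfont{E}$ and $\ofsfont{N}$ contain all isomorphisms of multicategories and are closed under composition --- this is immediate from the corresponding facts for functions (bijectivity and ``bijectivity of each hom-map'' are both preserved under composition and hold for identities).

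Next I would construct the factorisation. Given a morphism of multicategories $F \from \cat{C} \to \cat{D}$, define an intermediate multicategory $\cat{K}$ with the same objects as $\cat{C}$, and with hom-sets
\[
\cat{K}(c_1, \ldots, c_n; c') = \cat{D}(Fc_1, \ldots, Fc_n; Fc'),
\]
with composition and identities inherited from $\cat{D}$ (the multicategory axioms for $\cat{K}$ are inherited verbatim from those of $\cat{D}$). Then $E \from \cat{C} \to \cat{K}$ is the identity on objects and acts as $F$ on morphisms, so $E \in \ofsfont{E}$; and $N \from \cat{K} \to \cat{D}$ is $F$ on objects and the identity on hom-sets, so $N \in \ofsfont{N}$, with $F = N \of E$. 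This is the analogue of the full-image factorisation.

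Finally, for orthogonality, suppose $E \from \cat{A} \to \cat{C}$ is in $\ofsfont{E}$, $N \from \cat{B} \to \cat{D}$ is in $\ofsfont{N}$, and we have a commuting square with $G \of E = N \of F$. I would define the fill-in $H \from \cat{C} \to \cat{B}$ on objects by using that $E$ is bijective on objects (so each object of $\cat{C}$ is $Ea$ for a unique $a$) to set $H(Ea) = Fa$; this forces $H$ on objects and makes $H \of E = F$ and $N \of H = G$ hold on objects. On morphisms, given $l \in \cat{C}(c_1,\ldots,c_n; c')$, there is a unique morphism of $\cat{B}$ in $\cat{B}(Hc_1,\ldots,Hc_n; Hc')$ mapping under the bijection $N$ to $Gl \in \cat{D}(GHc_1,\ldots; GHc') = \cat{D}(NHc_1,\ldots; NHc')$; define $Hl$ to be this preimage. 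One checks that $H$ preserves identities and composition (using that $N$ is faithful, i.e.\ injective on hom-sets, to transport the equations from $\cat{D}$), that $N \of H = G$ by construction, and that $H \of E = F$ on morphisms (since both $NHE$ and $NF = GE$ agree and $N$ is injective on homs). Uniqueness of $H$ follows because its action on objects is forced by bijectivity of $E$ and its action on morphisms is forced by faithfulness of $N$. None of these steps presents a genuine obstacle --- the argument is the routine ``full-image'' factorisation adapted to multicategories --- so I expect the main (mild) bookkeeping burden to be checking functoriality of $H$ with respect to the multi-ary composition, where one must be careful that the preimages under $N$ of composites in $\cat{D}$ are the composites of the preimages; this is exactly where faithfulness of $N$ is used.
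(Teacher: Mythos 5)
Your proposal is correct and follows essentially the same route as the paper's proof: the same full-image factorisation through the intermediate multicategory $\cat{K}$ with $\cat{K}(c_1,\ldots,c_n;c') = \cat{D}(Fc_1,\ldots,Fc_n;Fc')$, and the same construction of the unique fill-in using bijectivity on objects for the object assignment and fullness-and-faithfulness of $N$ for the morphism assignment. No gaps.
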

\begin{proof}
It is clear that $\ofsfont{E}$ and $\ofsfont{N}$ are closed under composition and that every isomorphism of multicategories lies in both $\ofsfont{E}$ and $\ofsfont{N}$. Thus, to show that $(\ofsfont{E}, \ofsfont{N})$ is a factorisation system it is sufficient to show that every morphism factors as a member of $\ofsfont{E}$ followed by a member of $\ofsfont{N}$, and that every member of $\ofsfont{E}$ is left orthogonal to every member of $\ofsfont{N}$.

Let $F \from \cat{C} \to \cat{D}$ be a morphism of multicategories. Define a multicategory $\cat{K}$ as follows; the objects of $\cat{K}$ are the same as the objects of $\cat{C}$, and the hom-sets in $\cat{K}$ are defined by
\[
\cat{K}(c_1, \ldots, c_n ; c') = \cat{D}(Fc_1, \ldots, Fc_n; Fc').
\]
The identities and composition in $\cat{L}$ are inherited from $\cat{D}$. Then we have a morphism of multicategories $E \from \cat{C} \to \cat{K}$ that is the identity on objects and acts by $F$ on morphisms, and a multicategory morphism $N \from \cat{K} \to \cat{D}$ that acts by $F$ on objects and is the identity on morphisms. Clearly $E \in \ofsfont{E}$ and $N \in \ofsfont{N}$ and $N \of E = F$.

Now suppose that we have a commutative diagram
\[
\xymatrix{
\cat{A}\ar[r]^F\ar[d]_E & \cat{B}\ar[d]^N \\
\cat{C}\ar[r]_G & \cat{D}
}
\]
in $\multicat$ where $E$ is bijective on objects and $N$ is full and faithful. We must show that there is a unique $H \from \cat{C} \to \cat{B}$ such that $H \of E = F$ and $N \of H = G$.

Given $c \in \cat{C}$, there is a unique $a \in \cat{A}$ such that $c = Ea$; define $Hc = Fa$ in $\cat{B}$. Suppose $f \from c_1, \ldots, c_n \to c'$ in $\cat{C}$, and $a_1, \ldots, a_n, a' \in \cat{A}$ such that $Ea_i = c_i, Ea' = c'$. Then $N Fa_i = Gc_i$ and $NFa' = Gc'$, so
\[
Gf \from NFa_1, \ldots, NFa_n \to NFa'.
\]
Since $N$ is full and faithful, there is a unique $g \from Fa_1, \ldots, Fa_n \to Fa'$ such that $Ng = Gf$; we define $Hf = g$. The uniqueness property of $g$ ensures that $H$ as defined is a multicategory morphism, and it is clear that it is unique such that $H \of E = F$ and $N \of H = G$.
\end{proof}

\begin{prop}
The 2-category $\multicat$ together with $\und \from \multicat \to \CAT$ and $(\ofsfont{E}, \ofsfont{N})$ is a concrete setting.
\end{prop}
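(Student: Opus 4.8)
The statement to prove is simply that $(\multicat, \und, \ofsfont{E}, \ofsfont{N})$ is a concrete setting. By Definition~\ref{defn:concrete-setting}, this requires that $(\cat{X}, \ofsfont{E}, \ofsfont{N})$ be a setting and that $\und$ be a 2-functor preserving cotensors strictly, sending $\ofsfont{E}$ to bijective-on-objects functors and $\ofsfont{N}$ to full and faithful functors. The plan is therefore to assemble the pieces that have already been established in the immediately preceding results and observe that nothing further is needed.

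First I would recall that $\multicat$ is a locally large 2-category (it is the 2-category of large multicategories, with morphisms and transformations as defined earlier), so the ambient size hypotheses of a setting are satisfied. Next, $(\ofsfont{E}, \ofsfont{N})$ is a factorisation system on the underlying 1-category of $\multicat$ by Proposition~\ref{prop:multicat-factor}, and $\multicat$ is cotensored over $\CAT$ with cotensors preserved by $\und$ by Lemma~\ref{lem:multicat-cotensor}; together these show $(\multicat, \ofsfont{E}, \ofsfont{N})$ is a setting and that $\und$ preserves cotensors strictly. Finally, by the very definition of $\ofsfont{E}$ and $\ofsfont{N}$ (the morphisms of multicategories that are bijective on objects, respectively full and faithful in the multicategorical sense), applying $\und$ sends a member of $\ofsfont{E}$ to a functor that is bijective on objects of the underlying categories, and sends a member of $\ofsfont{N}$ to a functor $F$ for which each $\cat{C}(c;c') \to \cat{D}(Fc;Fc')$ is a bijection, i.e.\ a full and faithful functor between the categories of unary morphisms; one only needs to note that $\und$ is indeed a 2-functor, which was part of its definition.

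There is essentially no obstacle here: the proof is a bookkeeping exercise verifying each clause of Definition~\ref{defn:concrete-setting} against results proved in the preceding paragraphs. If anything, the only point requiring a sentence of care is the direction of the factorisation-system/cotensor conditions — namely checking that $\und$ respects $\ofsfont{N}$ correctly, since ``full and faithful in the multicategorical sense'' is a priori stronger than ``full and faithful'' for the underlying functor, but in fact the bijection on unary hom-sets is exactly what full-and-faithfulness of $\und F$ demands, so the two notions restrict compatibly. I would present the proof as a short paragraph citing Proposition~\ref{prop:multicat-factor} for the factorisation system, Lemma~\ref{lem:multicat-cotensor} for the cotensors and their preservation, and the definitions of $\ofsfont{E}$, $\ofsfont{N}$ and $\und$ for the remaining compatibility conditions, concluding that all the requirements of Definition~\ref{defn:concrete-setting} are met.
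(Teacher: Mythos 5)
Your proof is correct and follows exactly the paper's argument: cite Lemma~\ref{lem:multicat-cotensor} for cotensors and their preservation, Proposition~\ref{prop:multicat-factor} for the factorisation system, and observe from the definitions that $\und$ sends $\ofsfont{E}$ to bijective-on-objects functors and $\ofsfont{N}$ to full and faithful functors. The extra remark about multicategorical fullness restricting to ordinary fullness on unary hom-sets is a welcome clarification but does not change the route.
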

\begin{proof}
By Lemma~\ref{lem:multicat-cotensor}, $\multicat$ has cotensors and they are preserved by $\und$. By Proposition~\ref{prop:multicat-factor}, $(\ofsfont{E}, \ofsfont{N})$ is a factorisation system on $\multicat$ and it is clearly preserved by $\und$.
\end{proof}

\begin{defn}
Write $\cat{O}$ for the initial operad. Explicitly, $\cat{O}(n)$ is empty except when $n = 1$ and then $\cat{O}(n)$ contains only the identity.
\end{defn}

\begin{remark}
Recall that an operad is a multicategory with a single object. Thus $\cat{O} \in \multicat$, and we can talk about $\proth(\cat{O})$, the category of proto-theories with arities $\cat{O}$ in the concrete setting $\multicat$.
\end{remark}

\begin{prop}
\label{prop:iso-proth-operad}
We have an isomorphism of categories
\[
\proth(\cat{O}) \iso \operad.
\]
\end{prop}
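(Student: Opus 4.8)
The plan is to unwind the definitions on both sides and exhibit a functor $\proth(\cat{O}) \to \operad$ which is bijective on objects and morphisms, hence an isomorphism of categories. First recall the two sides explicitly. On the one hand, an object of $\proth(\cat{O})$ is a bijective-on-objects morphism of multicategories $L \from \cat{O} \to \cat{L}$; since $\cat{O}$ has a single object and bijective-on-objects means $\cat{L}$ also has a single object, $\cat{L}$ is an operad. A morphism in $\proth(\cat{O})$ from $L' \from \cat{O} \to \cat{L}'$ to $L \from \cat{O} \to \cat{L}$ is a multicategory morphism $P \from \cat{L}' \to \cat{L}$ with $P \of L' = L$; on objects this forces $P$ to send the unique object of $\cat{L}'$ to the unique object of $\cat{L}$, so $P$ is determined by its action on operations and is just a morphism of operads. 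On the other hand, $\operad$ is by definition the full subcategory of (the underlying 1-category of) $\multicat$ consisting of the large operads. This suggests the functor $\Phi \from \proth(\cat{O}) \to \operad$ sending $(L \from \cat{O} \to \cat{L})$ to $\cat{L}$ and a morphism $P$ to $P$ itself; and a functor $\Psi \from \operad \to \proth(\cat{O})$ sending an operad $P$ to the unique multicategory morphism $\cat{O} \to P$ (which exists and is unique because $\cat{O}$ is the initial operad, and it is bijective on objects since both $\cat{O}$ and $P$ have exactly one object), and sending an operad morphism to itself.

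Next I would check that $\Phi$ and $\Psi$ are well-defined functors and that they are mutually inverse. For $\Phi$: the codomain of a proto-theory with arities $\cat{O}$ is a one-object multicategory, i.e.\ an operad, so $\Phi$ lands in $\operad$; functoriality is immediate since $\Phi$ is the identity on morphisms. For $\Psi$: given an operad $P$, the unique morphism $\cat{O} \to P$ is bijective on objects as noted, so $\Psi(P) \in \proth(\cat{O})$; given an operad morphism $Q \from P' \to P$, we have $Q \of (\cat{O} \to P') = (\cat{O} \to P)$ by initiality of $\cat{O}$ (both sides are morphisms $\cat{O} \to P$ in $\multicat$, hence equal), so $Q$ is indeed a morphism $\Psi(P') \to \Psi(P)$ in $\proth(\cat{O})$. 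The composites $\Phi \Psi$ and $\Psi \Phi$ are then both identities: $\Phi \Psi(P) = P$ on objects and morphisms, while $\Psi \Phi(L \from \cat{O} \to \cat{L}) = (\cat{O} \to \cat{L})$, which equals $L$ because $L$ is the \emph{unique} such morphism by initiality of $\cat{O}$. Here is precisely where the remark preceding Proposition~\ref{prop:iso-proth-operad} is used: the unique object of an operad-qua-multicategory is genuine data, so two operads with the same operations but distinct underlying objects are distinct, and correspondingly a proto-theory $L \from \cat{O} \to \cat{L}$ carries exactly the data of $\cat{L}$ together with the (forced) choice of object; nothing is lost or added, which is why we get an isomorphism rather than a mere equivalence.

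I do not expect any serious obstacle here; the content is entirely bookkeeping around the definitions, and the only subtlety worth flagging explicitly is the one just mentioned, namely that initiality of $\cat{O}$ in $\operad$ gives both the existence and the \emph{uniqueness} of the structure morphism $\cat{O} \to \cat{L}$, and it is uniqueness that makes $\Psi \Phi = \id$ on the nose. One should also briefly note that ``bijective on objects'' for a morphism out of $\cat{O}$ is automatic once the codomain has one object, so the left class $\ofsfont{E}$ of the factorisation system on $\multicat$ imposes no extra condition in this case. I would write this up in a short paragraph, invoking the explicit description of $\proth(\cat{O})$ and the definition of $\operad$ as a full subcategory of $\multicat$, and conclude that $\Phi$ is an isomorphism of categories with inverse $\Psi$.
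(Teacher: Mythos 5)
Your proposal is correct and follows essentially the same route as the paper's proof: both identify objects of $\proth(\cat{O})$ with operads via the existence and uniqueness of a bijective-on-objects morphism out of $\cat{O}$, and both use initiality of $\cat{O}$ to see that every multicategory morphism between operads automatically commutes with the structure morphisms. Your version merely makes the two inverse functors explicit, which is a harmless elaboration of the same bookkeeping.
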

\begin{proof}
An object of $\proth(\cat{O})$ consists of a multicategory $\cat{L}$ together with a bijective-on-objects multicategory morphism $\cat{O} \to \cat{L}$. But such a morphism exists for any given $\cat{L}$ if and only if $\cat{L}$ is an operad (i.e.\ has one object) and in that case is unique. So we can identify the objects of $\proth(\cat{O})$ with the operads. Given operads $\cat{L}$ and $\cat{L}'$, a morphism $\cat{L} \to \cat{L}'$ in $\proth(\cat{O})$ is a multicategory morphism (which is the same thing as an operad morphism) $F \from \cat{L} \to \cat{L}'$ such that
\[
\xymatrix{
\cat{O} \ar[r]\ar[dr] & \cat{L}\ar[d]^F \\
& \cat{L}'
}
\]
commutes. But since $\cat{O}$ is the initial operad, this diagram commutes for every $F$. Thus the morphisms in $\proth(\cat{O})$ can be identified with the operad morphisms.
\end{proof}

\begin{lem}
There is an isomorphism of 2-functors
\[
\multicat(\cat{O}, -) \iso \und \from \multicat \to \CAT.
\]
\end{lem}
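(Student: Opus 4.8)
The plan is to construct, for each multicategory $\cat{C}$, an isomorphism of categories $\Phi_{\cat{C}} \from \multicat(\cat{O}, \cat{C}) \to \und(\cat{C})$, and then show that these components assemble into a $2$-natural isomorphism $\multicat(\cat{O},-) \iso \und$.

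First I would recall the explicit description of the initial operad $\cat{O}$. It has a single object, say $\star$, its only morphism is the unary identity $\id_\star \in \cat{O}(\star;\star)$, and every hom-set $\cat{O}(\star,\dots,\star;\star)$ with a number of inputs other than one is empty. Consequently a multicategory morphism $F \from \cat{O} \to \cat{C}$ carries no data beyond the object $F\star \in \cat{C}$: it must send $\id_\star$ to $\id_{F\star}$, there is nothing else to specify, and the identity and composition axioms for $F$ hold vacuously; conversely every object of $\cat{C}$ arises from a unique such morphism. Similarly a transformation $\alpha \from F \to G$ between morphisms $\cat{O} \to \cat{C}$ consists of a single unary morphism $\alpha_\star \in \cat{C}(F\star; G\star)$, and the transformation axiom $\alpha_\star \of F(\id_\star) = G(\id_\star) \of \alpha_\star$ holds automatically. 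So I define $\Phi_{\cat{C}}$ to send $F$ to $F\star$ and $\alpha$ to $\alpha_\star$. This is a functor, since composition and identities of transformations of multicategory morphisms are computed componentwise and thus correspond to composition and identities in $\und(\cat{C})$; and by the remarks above it is bijective on objects and bijective on each hom-set, hence an isomorphism of categories.

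Next I would verify $2$-naturality. For a multicategory morphism $H \from \cat{C} \to \cat{D}$, both legs of the relevant square send $F \from \cat{O} \to \cat{C}$ to $H(F\star)$ and a transformation $\alpha$ to $H(\alpha_\star)$, so $\Phi_{\cat{D}} \of \multicat(\cat{O}, H) = \und(H) \of \Phi_{\cat{C}}$ strictly. For a transformation $\gamma \from H \to H'$ of multicategory morphisms $\cat{C} \to \cat{D}$, whiskering $\gamma$ on the right with $F$ yields a transformation whose $\star$-component is $\gamma_{F\star}$, which is exactly $\und(\gamma)$ evaluated at $\Phi_{\cat{C}}(F) = F\star$; this is the remaining $2$-dimensional naturality condition. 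Since each $\Phi_{\cat{C}}$ is an isomorphism, $\Phi$ is an isomorphism of $2$-functors.

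There is essentially no difficult step here: the entire content is the observation that, because $\cat{O}$ is the initial operad with the trivial unary structure, a morphism out of $\cat{O}$ is just a choice of object and a transformation between two such is just a choice of unary morphism. The only thing to be careful about is the bookkeeping confirming that the identity and composition axioms, and the transformation axiom, really do hold vacuously, so that $\Phi_{\cat{C}}$ is genuinely a bijection on objects and on hom-sets rather than merely an injection.
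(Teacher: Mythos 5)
Your proposal is correct and follows the same route as the paper's own (largely omitted) proof: observe that a morphism out of the initial operad $\cat{O}$ is just a choice of object and a transformation is just a choice of unary morphism, then check agreement on 1-cells and 2-cells. You simply fill in the bookkeeping that the paper declares straightforward and omits.
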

\begin{proof}
Since $\cat{O}$ has one object and no non-trivial morphisms, a multicategory morphism $\cat{O} \to \cat{C}$ just picks out a single object of $\cat{C}$ and a multicategory transformation between such morphisms picks out a single unary morphism between the corresponding objects. It remains to check that the two 2-functors agree on 1-cells and 2-cells; this is straightforward and we omit the details.
\end{proof}

\begin{defn}
\label{defn:multicat-aritation}
Write
\[
\currylo \from \cat{B}_0 \to \multicat (\cat{O}, \cat{B})
\]
for the isomorphism from the previous lemma regarded as an aritation in the setting $\multicat$.
\end{defn}

\begin{remark}
The aritation defined above gives rise to a structure--semantics adjunction
\[
\xymatrix{
{\catover{\cat{B}_0 }}\ar@<5pt>[r]_-{\perp}^-{\str}\ & {\proth(\cat{O})^{\op} \iso \operad^{\op}.}\ar@<5pt>[l]^-{\sem}
}
\]
\end{remark}

\begin{prop}
The semantics functor arising from the aritation defined in Definition~\ref{defn:multicat-aritation} coincides with the semantics functor
\[
\sem_{\operad} \from \operad^{\op} \to \catover{\cat{B}_0}
\]
from Proposition~\ref{prop:operad-sem-functor}.
\end{prop}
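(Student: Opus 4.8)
The plan is to unwind both semantics functors explicitly and observe that they produce identical data, exploiting the fact that $\und$ preserves cotensors strictly and that the aritation $H_{\bullet} \from \cat{B}_0 \to \multicat(\cat{O},\cat{B})$ is literally the isomorphism $\multicat(\cat{O},-) \iso \und$ from the preceding lemma. First I would recall that under the isomorphism $\proth(\cat{O}) \iso \operad$ of Proposition~\ref{prop:iso-proth-operad}, an object of $\proth(\cat{O})$ is just an operad $P$, regarded as the unique bijective-on-objects morphism $\cat{O} \to P$ in $\multicat$. By Definition~\ref{defn:sem-general} applied in the concrete setting $\multicat$, the category $\mod(P)$ is the pullback of $P^* \from \multicat(P,\cat{B}) \to \multicat(\cat{O},\cat{B})$ along $H_{\bullet} \from \cat{B}_0 \to \multicat(\cat{O},\cat{B})$. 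Since $H_{\bullet}$ is the isomorphism identifying $\multicat(\cat{O},\cat{B})$ with $\cat{B}_0$, this pullback is (up to canonical isomorphism) just $\multicat(P,\cat{B})$ itself, with $\sem(P)$ the composite $\multicat(P,\cat{B}) \toby{P^*} \multicat(\cat{O},\cat{B}) \iso \cat{B}_0$. Concretely a point of this pullback is a multicategory morphism $\Gamma \from P \to \cat{B}$ together with (automatically) the object $\Gamma(*) \in \cat{B}_0$ that $\Gamma \of P$ picks out, and $\sem(P)$ sends it to $\Gamma(*)$.

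Next I would compare this with Definitions~\ref{defn:sem-operad-objects} and~\ref{defn:sem-operad-morphisms}: there $\mod_{\operad}(P) = \multicat(P,\cat{B})$ and $\sem_{\operad} \from \mod_{\operad}(P) \to \cat{B}_0$ sends $\Gamma$ to $\Gamma(*)$ and a transformation $\alpha$ to its unique component $\alpha_*$. This is visibly the same on objects and morphisms as the functor obtained in the previous paragraph, once we identify $\multicat(\cat{O},\cat{B})$ with $\cat{B}_0$ via the lemma (which exactly sends a morphism $\cat{O} \to \cat{B}$ to the object it picks out, and a transformation to its sole component). For the action on morphisms of $\operad^{\op}$, given an operad morphism $F \from P' \to P$, Definition~\ref{defn:sem-general} defines $\sem(F)$ as the unique functor over $\cat{B}_0$ making the relevant prism commute; but precomposition $\Gamma \mapsto \Gamma \of F$, i.e.\ whiskering with $F$, satisfies that universal property by the naturality of the identifications above, and this is precisely $\sem_{\operad}(F)$ of Definition~\ref{defn:sem-operad-morphisms}. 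Hence the two functors $\operad^{\op} \to \catover{\cat{B}_0}$ agree.

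I do not anticipate a serious obstacle: the only mild care needed is bookkeeping around the isomorphism versus strict equality — $H_{\bullet}$ is only an isomorphism $\cat{B}_0 \iso \multicat(\cat{O},\cat{B})$, so strictly speaking $\mod(P)$ is isomorphic, rather than equal, to $\multicat(P,\cat{B}) = \mod_{\operad}(P)$, and one should check these isomorphisms are natural in $P$ so that they assemble into an isomorphism of functors $\sem \iso \sem_{\operad}$ over $\catover{\cat{B}_0}$. This naturality is immediate from the naturality clause in the lemma identifying $\multicat(\cat{O},-)$ with $\und$, together with the universal property of the defining pullbacks. So the proof is essentially: chase the definitions, invoke the preceding lemma, and note that every square in sight commutes by construction.
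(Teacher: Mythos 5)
Your proposal is correct and follows essentially the same route as the paper: since the aritation $\currylo \from \cat{B}_0 \to \multicat(\cat{O},\cat{B})$ is an isomorphism, the defining pullback identifies $\mod(L)$ with $\multicat(\cat{L},\cat{B})$ and $\sem(L)$ with $\currylo^{-1}\of L^*$, i.e.\ evaluation at $*$, which is exactly $\sem_{\operad}$. The paper's proof is just a terser version of this; your extra care about the morphism action and naturality in $P$ is fine but not needed beyond what you wrote.
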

\begin{proof}
Let $L \from \cat{O} \to \cat{L}$ be an operad, together with its unique morphism from $\cat{O}$. Then $\sem(L)$ is defined by the pullback
\[
\xymatrix{
\mod(L)\ar[r]\ar[d]_{\sem(L)}\pullbackcorner & \multicat(\cat{L}, \cat{B} )\ar[d]^{L^*} \\
\cat{B}_0 \ar[r]_-\currylo & \multicat(\cat{O}, \cat{B})
}
\]
in $\CAT$. But $\currylo$ is an isomorphism, so we can take $\mod(L)$ to be $\multicat(\cat{L}, \cat{B} )$ and $\sem(L)$ to be the composite $\currylo^{-1} \of L^*$, which sends a multicategory morphism $\Gamma \from \cat{L} \to \cat{B}$ to the object $\Gamma(*) \in \cat{B}$. This is exactly how $\sem_{\operad}$ was defined in \ref{defn:sem-operad-objects}.
\end{proof}

\chapter{Limits, colimits, and other properties of categories of models}
\label{chap:general}

In this chapter we explore some general properties of categories of models of proto-theories. First we consider the question of limits and colimits; it is well-known that monadic  functors create limits, and it is straightforward to see that forgetful functors from categories of models of Lawvere theories do as well. In Section~\ref{sec:lims-colims-models} we give a unified proof of these, and other, results.

The category of models for a proto-theory is defined by a certain pullback in $\CAT$. However, since $\CAT$ is a 2-category, we have some choice as to how strict this pullback should be: we have chosen the strictest possible version. In Section~\ref{sec:weak-pullbacks} we show that we would obtain an equivalent category of models using a weaker notion of pullback, at least in all of the examples of interest.

\section{Limits and colimits in categories of models}
\label{sec:lims-colims-models}

In this section we explore the conditions under which the category of models for a proto-theory inherits limits and colimits from the base category, or in other words, when a functor of the form $\sem(L)$ creates limits. In particular we give a unified proof that forgetful functors from categories of algebras for monads and categories of models for Lawvere theories create all limits. However, this unified proof comes at the cost of some quite messy details. Recall from Definition~\ref{defn:creation-of-limits} that we use the term ``creation of limits'' in the sense defined in~V.1 of~\cite{maclane71}, which is somewhat stricter than how some more recent authors use the term. 

\begin{lem}
\label{lem:pullback-limits}
Let
\[
\xymatrix{
\cat{A} \ar[r]^F \ar[d]_G\pullbackcorner & \cat{B}\ar[d]^H \\
\cat{C}\ar[r]_K & \cat{D}
}
\]
be a pullback square in $\CAT$, and let $\cat{I} \in \CAT$. Suppose that $\cat{C}$ has and $K$ preserves limits of shape $\cat{I}$, and $H$ creates limits of shape $\cat{I}$. Then $G$ creates limits of shape $\cat{I}$.
\end{lem}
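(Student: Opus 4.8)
The plan is to work directly with the explicit description of the pullback $\cat{A}$: an object of $\cat{A}$ is a pair $(b,c)$ with $Fb$ meant to be $Hb = Kc$, and similarly for morphisms, so that $\cat{A}$ is (up to isomorphism) the category whose objects are pairs $(b,c) \in \ob(\cat{B}) \times \ob(\cat{C})$ with $Hb = Kc$ and whose morphisms are pairs of morphisms agreeing after $H$ and $K$. Since limits in such a strict pullback are computed componentwise whenever they exist, the task reduces to showing that a limit cone for a diagram $D \from \cat{I} \to \cat{A}$ can be built in a unique way over a given limit cone for $G \of D$ in $\cat{C}$, and that it is indeed a limit.

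First I would fix a diagram $D \from \cat{I} \to \cat{A}$ and a limit cone $(\lambda_i \from c \to GDi)_{i \in \cat{I}}$ for $G \of D \from \cat{I} \to \cat{C}$. Applying $K$ gives a cone $(K\lambda_i)$ on $K \of G \of D = H \of F \of D$, which is a limit cone since $K$ preserves limits of shape $\cat{I}$ by hypothesis. Now $H$ creates limits of shape $\cat{I}$, so applied to the diagram $F \of D \from \cat{I} \to \cat{B}$ and the limit cone $(K\lambda_i)$ for $H \of F \of D$, there is a \emph{unique} cone $(\mu_i \from b \to FDi)$ on $F \of D$ with $Hb = Kc$ and $H\mu_i = K\lambda_i$, and moreover this cone is a limit cone. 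The pair $(b,c)$ is then an object of $\cat{A}$ since $Hb = Kc$, and the pairs $(\mu_i, \lambda_i)$ assemble into a cone $(\nu_i \from (b,c) \to Di)$ on $D$ because $F\nu_i = \mu_i$ and $G\nu_i = \lambda_i$ agree after $H$ and $K$ respectively (they both map to $K\lambda_i = H\mu_i$), and cone-commutativity in $\cat{A}$ is checked componentwise in $\cat{B}$ and $\cat{C}$. By construction $G\nu_i = \lambda_i$ and $Gb' = c$ where $b'=(b,c)$.

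Next I would verify uniqueness: if $(\nu'_i \from (b'',c) \to Di)$ is any cone on $D$ with $G$-image the given cone $(\lambda_i)$, then writing $\nu'_i = (\mu'_i, \lambda_i)$ we get a cone $(\mu'_i)$ on $F \of D$ with $H\mu'_i = K\lambda_i$ and $H b'' = K c$; by the uniqueness clause in the creation property of $H$ this forces $\mu'_i = \mu_i$ and $b'' = b$, hence $\nu'_i = \nu_i$ and $(b'',c) = (b,c)$. Finally I would check $(\nu_i)$ is a limit cone: given a cone $(\rho_i \from a \to Di)$ on $D$, its $F$-image is a cone on $F\of D$, which factors uniquely through the limit cone $(\mu_i)$ via some $f \from Fa \to b$ in $\cat{B}$; its $G$-image factors uniquely through $(\lambda_i)$ via some $g \from Ga \to c$ in $\cat{C}$; and $Hf = Kg$ holds because both are the unique factorization of the $H$-image of $(\rho_i)$'s $F$-image through the limit cone $(K\lambda_i)$ (using that $H$ creates, hence preserves, this limit). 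Thus $(f,g)$ is a morphism $a \to (b,c)$ in $\cat{A}$, unique with the required property by the uniqueness of $f$ and $g$ separately.

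The main obstacle, and the only place real care is needed, is bookkeeping the strictness: one must be careful that the pullback in $\CAT$ is the strict one (as the excerpt emphasizes elsewhere), so that objects really do satisfy $Hb = Kc$ on the nose, and that ``$H$ creates limits'' is used in the strict Mac~Lane sense quoted after Definition~\ref{defn:creation-of-limits} — it is precisely the on-the-nose uniqueness in that definition that makes the constructed cone in $\cat{A}$ both well-defined and unique. No deep idea is required; the proof is essentially a diagram chase once the explicit model of the pullback is written down, and I would present it at roughly that level of detail rather than grinding through every componentwise verification.
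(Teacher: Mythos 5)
Your proof is correct and follows essentially the same route as the paper's: apply $K$ to the given limit cone, use preservation to get a limit cone on $H \of F \of D$, invoke creation by $H$ to get the unique compatible cone on $F \of D$, pair the two cones into a cone in the strict pullback, and check uniqueness and the limit property componentwise. The only cosmetic difference is that the paper verifies the limit property by observing that morphisms into $(c,b)$ biject with compatible pairs of cones, whereas you spell out the factorization of an arbitrary cone through each component and check the two factorizations agree under $H$ and $K$ — the same argument at a slightly finer grain.
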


\begin{proof}
Let $D \from \cat{I} \to \cat{A}$ be a diagram in $\cat{A}$ of shape $\cat{I}$, and let $c$ be a limit of $G \of D$ with limit cone $(\lambda_i \from c \to GDi)_{i\in \cat{I}}$. Then since $K$ preserves limits of shape $\cat{I}$, the cone $(K \lambda_i \from Kc \to KGDi)_{i \in \cat{I}}$ is a limit cone for $K \of G \of D = H \of F \of D$. Since $H$ creates limits of shape $\cat{I}$, there is a unique $b \in \cat{B}$ and cone $(\mu_i \from b \to FDi)_{i \in \cat{I}}$ such that $Hb = Kc$ and $H\mu_i = K\lambda_i$, and furthermore this is a limit cone for $F \of D$.

Since $\cat{A}$ is the pullback of $H$ and $K$, we have an object $(c,b) \in \cat{A}$ and a cone $((\lambda_i, \mu_i) \from (c,b) \to Di)_{i \in \cat{I}}$ on $D$. Morphisms $(c' , b') \to (c,b)$ correspond bijectively to pairs of morphisms $f \from c' \to c$ and $g \from b' \to b$ such that $Kf = Hg$, which correspond to pairs of cones $(\sigma, \tau)$ on $G \of D$ and $F \of D$ such that $K \sigma = H \tau$, which correspond to cones on $D$. Thus $(\lambda_i \mu_i)_{i \in \cat{I}}$ is a limit cone for $D$.

Since $G \from \cat{A} \to \cat{C}$ is just the projection, it is clear that $G(\lambda_i, \mu_i) = \lambda_i$. Furthermore, if $((\lambda_i, \mu'_i) \from (c, b') \to  Di)_{i \in \cat{I}}$ were some other cone on $D$ sent to $(\lambda_i)_{i \in \cat{I}}$ by $G$, then we would have
\[
H ( \mu'_i \from b' \to F \of Di )_{i \in \cat{I}} = K ( \lambda_i \from c \to G \of Di)_{i \in \cat{I}}.
\]
But $(\mu_i)_{i \in \cat{I}}$ was chosen to be unique with this property. Thus $(\lambda_i, \mu_i)$ is the unique cone on $D$ that is sent to $\lambda_i$ by $G$, as required.
\end{proof}

\begin{prop}
\label{prop:sem-lim-create}
Let $(\cat{X}, \ofsfont{E}, \ofsfont{N}, \und)$ be a concrete setting and let $\cat{I} \in \CAT$. Fix $\cat{C} \in \cat{X}$ such that $\cat{C}_0 \in \CAT$ has limits of shape $\cat{I}$. Suppose that for every $\cat{D} \in \cat{X}$, and every $d \in \cat{D}_0$, the functor
\[
\und \from \cat{X} (\cat{D}, \cat{C}) \to [\cat{D}_0, \cat{C}_0]
\]
creates limits of shape $\cat{I}$.

Let $\currylo \from \cat{B} \to \cat{X} (\cat{A}, \cat{C})$ be an aritation such that $\cat{B}$ has limits of shape $\cat{I}$, and they are preserved by each $\lpair a, - \rpair_0 \from \cat{B} \to \cat{C}_0$ for $a \in \cat{A}_0$. Then for each proto-theory $(L \from \cat{A} \to \cat{L}) \in \ofsfont{E}$, the functor $\sem(L) \from \mod(L) \to \cat{B}$ creates limits of shape $\cat{I}$.
\end{prop}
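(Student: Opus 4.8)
The plan is to apply Lemma~\ref{lem:pullback-limits} to the defining pullback square
\[
\xymatrix{
\mod(L)\pullbackcorner\ar[r]^{J(L)}\ar[d]_{\sem(L)} & \cat{X}(\cat{L}, \cat{C})\ar[d]^{L^*} \\
\cat{B}\ar[r]_{\currylo} & \cat{X}(\cat{A}, \cat{C})
}
\]
of Definition~\ref{defn:sem-general}. By that lemma it suffices to establish three things: that $\cat{B}$ has limits of shape $\cat{I}$ (a hypothesis), that $\currylo \from \cat{B} \to \cat{X}(\cat{A},\cat{C})$ preserves limits of shape $\cat{I}$, and that $L^* \from \cat{X}(\cat{L},\cat{C}) \to \cat{X}(\cat{A},\cat{C})$ creates limits of shape $\cat{I}$. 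Since $\cat{C}_0$ has limits of shape $\cat{I}$, every functor category $[\cat{D}_0,\cat{C}_0]$ has them too, computed pointwise; I shall use this repeatedly.

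For the preservation claim: by hypothesis (taking $\cat{D}=\cat{A}$) the functor $\und \from \cat{X}(\cat{A},\cat{C}) \to [\cat{A}_0,\cat{C}_0]$ creates limits of shape $\cat{I}$, hence reflects them, and — since its codomain has limits of shape $\cat{I}$ — also preserves them. So it is enough to check that $\und \of \currylo$ preserves limits of shape $\cat{I}$. But $\und \of \currylo = (H_0)_{\bullet} \from \cat{B} \to [\cat{A}_0,\cat{C}_0]$, which transposes to $\lpair - , - \rpair_0 \from \cat{A}_0 \times \cat{B} \to \cat{C}_0$; as limits in $[\cat{A}_0,\cat{C}_0]$ are pointwise, $(H_0)_{\bullet}$ preserves limits of shape $\cat{I}$ exactly when each $\ev_a \of (H_0)_{\bullet} = \lpair a , - \rpair_0 \from \cat{B} \to \cat{C}_0$ does for $a \in \cat{A}_0$, which is precisely the standing hypothesis on the aritation.

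For the creation claim: since $L \in \ofsfont{E}$, the functor $L_0 = \und(L) \from \cat{A}_0 \to \cat{L}_0$ is bijective on objects, so Lemma~\ref{lem:bo-restrict-create-lims} (with base category $\cat{C}_0$) shows $(L_0)^* \from [\cat{L}_0,\cat{C}_0] \to [\cat{A}_0,\cat{C}_0]$ creates limits of shape $\cat{I}$. By hypothesis (taking $\cat{D}=\cat{L}$), $\und \from \cat{X}(\cat{L},\cat{C}) \to [\cat{L}_0,\cat{C}_0]$ also creates limits of shape $\cat{I}$, and $2$-functoriality of $\und$ gives the commuting square $\und \of L^* = (L_0)^* \of \und$. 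First I would record the routine fact that a composite of functors each creating limits of shape $\cat{I}$ again creates them; this shows that $\und \of L^* = (L_0)^* \of \und$ creates limits of shape $\cat{I}$. It then remains to cancel the outer $\und$: I would prove the complementary fact that if $G_2 \of G_1$ and $G_2$ both create limits of shape $\cat{I}$ and the common codomain of $G_2$ and $G_2 \of G_1$ (here $[\cat{A}_0,\cat{C}_0]$) has limits of shape $\cat{I}$, then $G_1$ creates limits of shape $\cat{I}$. Applied with $G_1 = L^*$ and $G_2 = \und \from \cat{X}(\cat{A},\cat{C}) \to [\cat{A}_0,\cat{C}_0]$, this yields the creation claim for $L^*$, and Lemma~\ref{lem:pullback-limits} then delivers the conclusion that $\sem(L)$ creates limits of shape $\cat{I}$.

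The main obstacle is this cancellation step. Its proof runs by transporting a given limit cone of $G_1 D$ downward along $G_2$ (which preserves it, since $G_2$ creates limits of shape $\cat{I}$ into a category that has them), lifting the result uniquely back up along $G_2 \of G_1$, and then — this is the delicate part — invoking the \emph{uniqueness} clause in the creation property of $G_2$ to identify the $G_1$-image of the lifted cone \emph{on the nose} with the original cone, so that the equalities on vertices and legs demanded by the strong notion of creation in Definition~\ref{defn:creation-of-limits} hold strictly rather than merely up to isomorphism. Once that identification is in place, uniqueness of the lift for $G_1$ is immediate. Everything outside this point is bookkeeping with cones and the pointwise computation of limits in functor categories. (The dual statement for colimits, though not asserted here, follows by the same argument, using the colimit clause of Lemma~\ref{lem:bo-restrict-create-lims} in place of the limit clause.)
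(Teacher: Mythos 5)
Your proposal is correct and follows essentially the same route as the paper: reduce via Lemma~\ref{lem:pullback-limits}, verify that $\currylo$ preserves limits of shape $\cat{I}$ by checking $\und \of \currylo$ pointwise against the hypothesis on the $\lpair a,-\rpair_0$, and obtain creation for $L^*$ from the commuting square $\und \of L^* = L_0^* \of \und$ together with Lemma~\ref{lem:bo-restrict-create-lims}. The only difference is presentational: you isolate the final ``cancel the outer $\und$'' step as a standalone cancellation lemma for creators, whereas the paper carries out the identical lift-then-apply-uniqueness argument inline for the specific functors at hand.
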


\begin{proof}
Recall that we have a pullback
\[
\xymatrix{
\mod(L)\ar[r]\ar[d]_{\sem(L)}\pullbackcorner & \cat{X}(\cat{L}, \cat{C}) \ar[d]^{L^*} \\
\cat{B}\ar[r]_{\currylo} & \cat{X}(\cat{A},\cat{C}).
}
\]
Thus, by Lemma~\ref{lem:pullback-limits}, it is enough to show that $\currylo \from \cat{B} \to \cat{X}(\cat{A}, \cat{C})$ preserves limits of shape $\cat{I}$, and $L^* \from \cat{X}(\cat{L}, \cat{C}) \to \cat{X}(\cat{A}, \cat{C})$ creates them.

First we show that  $\currylo \from \cat{B} \to \cat{X}(\cat{A}, \cat{C})$ preserves limits of shape $\cat{I}$. Suppose $D \from \cat{I} \to \cat{B}$; this has a limit since $\cat{B}$ has all limits of shape $\cat{I}$. The composite
\[
\cat{B} \toby{\currylo} \cat{X}(\cat{A}, \cat{C}) \toby{\und} [\cat{A}_0, \cat{C}_0]
\]
sends the limit cone $(\pi_j \from \lim_{i \in \cat{I}} Di \to Dj)_{j \in \cat{I}}$ to
\[
(\lpair -, \pi_j \rpair_0 \from \lpair -, \lim_{i \in \cat{I}} Di \rpair_0 \to \lpair -, Dj \rpair_0)_{j \in \cat{I}}.
\]
This cone is component-wise a limit cone, since by assumption each $\lpair a, - \rpair_0$ preserves limits of shape $\cat{I}$, and hence it is a limit cone. Thus $\und \of \currylo$ preserves limits of shape $\cat{I}$. Since $\und \from \cat{X}(\cat{A}, \cat{C}) \to [\cat{A}_0, \cat{C}_0]$ creates such limits, it follows that $\currylo$ preserves them.

Now we show that $L^* \from \cat{X}(\cat{L}, \cat{C}) \to \cat{X}(\cat{A}, \cat{C})$ creates limits of shape $\cat{I}$. Consider the commuting square
\[
\xymatrix{
\cat{X} (\cat{L} , \cat{C})\ar[r]^{L^*}\ar[d]_{\und} & \cat{X}(\cat{A}, \cat{C})\ar[d]^{\und} \\
[\cat{L}_0, \cat{C}_0] \ar[r]_{L_0^*} & [\cat{A}_0,\cat{C}_0].
}
\]
Since $[\cat{A}_0,\cat{C}_0]$ has all limits of shape $\cat{I}$ and $\und$ creates them, it follows that $\cat{X}(\cat{A}, \cat{C})$ has limits of shape $\cat{I}$ and $\und$ preserves them. Thus, if $\lambda$ is a limit cone on $L^* \of D$ for some $D \from \cat{I} \to \cat{X}(\cat{L},\cat{C})$, then $\und (\lambda)$ is a limit cone on $\und \of L^* \of D = L^*_0 \of \und \of D$. Since $L^*_0 \of \und$ creates limits of shape $\cat{I}$, there is a unique cone $\mu$ on $\cat{D}$ such that $L_0^* \of \und (\mu) = \und(\lambda)$, and this is a limit cone. Since $\und \from \cat{X}(\cat{A},\cat{C}) \to [\cat{A}_0, \cat{C}_0]$ creates limits of shape $\cat{I}$, it follows that $L^* (\mu) = \lambda$. If $\mu'$ were any other such cone we would have $L^*_0 \of \und (\mu) = L^*_0 \of \und (\mu')$ which implies that $\mu = \mu'$.
\end{proof}

\begin{ex}
Let $\cat{B}$ be a locally small category and consider the canonical aritation $\cat{B}(-,-) \from \cat{B}^{\op} \times \cat{B} \to \Set$. This aritation lives in the setting $\CAT$, which is trivially a concrete setting with $\und \from \CAT \to \CAT$ given by the identity. Identity functors create all limits, so the first hypothesis of Proposition~\ref{prop:sem-lim-create} is satisfied. For this aritation we have $\lpair b, - \rpair = \cat{B} (b, -)$ which preserves all limits, so the second hypothesis is satisfied. Thus by Proposition~\ref{prop:sem-lim-create}, for any proto-theory $L \from \cat{B}^{\op} \to \cat{L}$, the forgetful functor
\[
\sem(L) \from \mod(L) \to \cat{B}
\]
creates all limits. In particular, for any monad $\mnd{T}$, the forgetful functor $U^{\mnd{T}} \from \cat{B}^{\mnd{T}} \to \cat{B}$ creates all limits.
\end{ex}

\begin{ex}
Consider the setting $\finprod$ and the aritation from Section~\ref{sec:structure-examples} for a finite product category $\cat{B}$ giving rise to the structure--semantics adjunction for Lawvere theories. The functor
\[
\und \from \finprod(\fin^{\op}, \cat{B}) \to [\fin^{\op}, \cat{B}]
\]
is the inclusion of the full subcategory of finite product preserving functors. This subcategory is closed under all limits, since finite products commute with limits, so the functor above creates all limits. For the same reason, every $n \in \nat$, the functor $(-)^n \from \cat{B} \to \cat{B}$ preserves all limits. It follows from Proposition~\ref{prop:sem-lim-create} that for any Lawvere theory $L$, the forgetful functor
\[
\sem(L) \from \mod(L) \to \cat{B}
\]
creates all limits.
\end{ex}

\begin{ex}
Consider the case of PROPs, where we take $\cat{X} = \symmon$, and consider the aritation giving rise to the semantics of PROPs in a symmetric monoidal category $\cat{B}$ as in Section~\ref{sec:structure-examples}. In this case, the functors $(-)^{\tensor n} \from \cat{B} \to \cat{B}$ need not preserve limits: for example when $\cat{B} = \Vect$ with the usual tensor product, we have
\begin{align*}
(V \oplus W)^{\tensor 2} &\iso (V \otimes V) \oplus (W \otimes V) \oplus (V \otimes W) \oplus (W \otimes W) \\
& \ncong V^{\tensor 2} \oplus W^{\tensor 2}.
\end{align*}
Thus we do not necessarily expect $\sem(\cat{P}) \from \mod(\cat{P}) \to \cat{B}$ to create limits for a PROP $\cat{P}$.
\end{ex}

Let us now consider limits and colimits, not in the category of models for a particular proto-theory, but in the category of proto-theories itself.

\begin{prop}
\label{prop:lim-colim-proth}
Let $(\cat{X}, \ofsfont{E}, \ofsfont{N})$ be a setting and $\cat{A} \in \cat{X}$. If $\cat{X}$ has all large limits (respectively colimits) then so does $\proth(\cat{A})$.
\end{prop}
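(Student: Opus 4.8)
The plan is to exploit the fact that $\proth(\cat{A})$ sits inside the over-category $\cat{A}/\cat{X}$ as a (co)reflective subcategory, together with the observation that $\cat{A}/\cat{X}$ inherits limits and colimits from $\cat{X}$ whenever $\cat{X}$ has them. Recall from Definition~\ref{defn:sem-factors} and Lemma~\ref{lem:sem-factor-adjoints} that the inclusion $\iota \from \proth(\cat{A}) \incl \cat{A}/\cat{X}$ has a right adjoint $\rho$ (taking a $1$-cell $K \from \cat{A}\to\cat{K}$ to the left factor $E_K$ of its $(\ofsfont{E},\ofsfont{N})$-factorisation). So $\proth(\cat{A})$ is a coreflective subcategory of $\cat{A}/\cat{X}$. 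Coreflective subcategories are closed under colimits and inherit limits (computed by applying the coreflector to the ambient limit), so it suffices to prove the corresponding statement for $\cat{A}/\cat{X}$.

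First I would treat $\cat{A}/\cat{X}$. The forgetful functor $\cat{A}/\cat{X} \to \cat{X}$, sending $(K\from\cat{A}\to\cat{K})$ to $\cat{K}$, creates colimits: given a diagram $(K_i \from \cat{A}\to\cat{K}_i)$ with ambient colimit $\cat{K} = \colim_i \cat{K}_i$ in $\cat{X}$, the cocone legs assemble the $K_i$ into a unique $1$-cell $\cat{A}\to\cat{K}$ (using that each leg is a morphism in the slice), and this is readily checked to be the colimit in the slice. Hence if $\cat{X}$ has all large colimits so does $\cat{A}/\cat{X}$. Colimits in $\proth(\cat{A})$ are then computed as in $\cat{A}/\cat{X}$: the coreflector $\rho$ preserves colimits (being a right adjoint is irrelevant here — rather, $\iota$ being a \emph{right} adjoint... ) — more carefully, since $\iota$ is a left adjoint it preserves colimits, and a colimit of objects of a coreflective subcategory taken in the ambient category automatically lies in the subcategory up to the canonical comparison; I would verify this directly by noting that a colimit of $1$-cells in $\ofsfont{E}$ computed in $\cat{A}/\cat{X}$ again lies in $\ofsfont{E}$, which follows because $\ofsfont{E}$ is the left class of a factorisation system and such classes are closed under colimits in the arrow category (equivalently, left orthogonal to every $N\in\ofsfont{N}$, a condition preserved by colimits).

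For limits: the forgetful functor $\cat{A}/\cat{X}\to\cat{X}$ does \emph{not} create limits, but the slice $\cat{A}/\cat{X}$ still has all large limits when $\cat{X}$ does, computed as follows. Given a diagram $(K_i\from\cat{A}\to\cat{K}_i)_{i\in\cat{I}}$, form the limit $\cat{P} = \lim_i \cat{K}_i$ in $\cat{X}$; the $1$-cells $K_i$, being compatible with the connecting maps, induce a unique $1$-cell $K\from\cat{A}\to\cat{P}$, and $(K,$ projections$)$ is the limit in $\cat{A}/\cat{X}$ by a routine universal-property check. To transport this to $\proth(\cat{A})$, apply the coreflector: the limit of a diagram $D$ in $\proth(\cat{A})$ is $\rho$ applied to $\lim(\iota D)$ taken in $\cat{A}/\cat{X}$. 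Concretely, if $\lim_i (L_i\from\cat{A}\to\cat{L}_i)$ in $\cat{A}/\cat{X}$ is $K\from\cat{A}\to\cat{P}$ with $\ofsfont{E}$-$\ofsfont{N}$ factorisation $\cat{A}\toby{E_K}\cat{L}\toby{N_K}\cat{P}$, then $E_K\from\cat{A}\to\cat{L}$ is the limit in $\proth(\cat{A})$; one checks the universal property using that for any proto-theory $L'\from\cat{A}\to\cat{L}'$ a cone over $D$ lands in each $\cat{L}_i$ and hence in $\cat{P}$, and then factors uniquely through $E_K$ by orthogonality of $\ofsfont{E}$ against $\ofsfont{N}$.

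The main obstacle is the limit case: unlike colimits, limits in the slice require the induced-map argument and then the coreflection step, and one must be careful that applying the factorisation to the slice-limit genuinely produces the limit in the subcategory rather than merely an object mapping to it. This is exactly where the orthogonality property of the factorisation system (Definition~\ref{defn:fact-system}) does the work, so I would make that step explicit; the colimit case, by contrast, reduces quickly to closure of $\ofsfont{E}$ under colimits in the arrow category. I would also remark that in the concrete examples of interest $\cat{X}$ is $\CAT$ or $\talg$ for a suitable $2$-monad, and Proposition~\ref{prop:proth-prof-monadic} together with its corollary already gives large limits of $\proth(\cat{A})$ directly via monadicity over $[\cat{A}^{\op}\times\cat{A},\SET]$ in the case $\cat{X}=\CAT$, providing a consistency check.
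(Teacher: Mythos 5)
Your overall strategy is the same as the paper's: realise $\proth(\cat{A})$ as a coreflective subcategory of the under-category $\cat{A}/\cat{X}$ via $\iota \dashv \rho$, conclude that colimits are created by the inclusion and that limits are computed by applying $\rho$ to the ambient limit, and then reduce to showing $\cat{A}/\cat{X}$ inherits limits and colimits from $\cat{X}$. However, you have the slice/coslice duality backwards at the crucial reduction step, and in the colimit case this produces a genuine error rather than just a mislabelling. The forgetful functor $\cat{A}/\cat{X}\to\cat{X}$ does \emph{not} create colimits: your claim that the cocone legs ``assemble the $K_i$ into a unique $1$-cell $\cat{A}\to\colim_i\cat{K}_i$'' only works when the indexing category is connected, since it is only along connecting morphisms of the diagram that the composites $\cat{A}\to\cat{K}_i\to\colim_j\cat{K}_j$ are forced to agree. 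For the empty diagram the construction fails outright (the initial object of $\cat{A}/\cat{X}$ is $\id_{\cat{A}}$, not the initial object of $\cat{X}$), and for a binary coproduct the correct answer is the pushout $\cat{K}_1+_{\cat{A}}\cat{K}_2$, not $\cat{K}_1+\cat{K}_2$. The paper handles this by computing coproducts in $\cat{A}/\cat{X}$ as wide pushouts in $\cat{X}$ and coequalisers as in $\cat{X}$; your argument needs to be repaired along these lines.

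Symmetrically, your assertion that the forgetful functor $\cat{A}/\cat{X}\to\cat{X}$ does not create limits is false --- it does (this is the Lemma in Section V.6 of Mac Lane, which the paper cites) --- but here the error is harmless because the construction you then write down is precisely the creation argument, and it is correct. The remaining steps (colimits in a coreflective subcategory computed as in the ambient category; limits computed by applying $\rho$, with the universal property verified via orthogonality of $\ofsfont{E}$ against $\ofsfont{N}$) agree with the paper, although your appeal to closure of $\ofsfont{E}$ under ``colimits in the arrow category'' is not quite the right fact to cite: colimits in $\cat{A}/\cat{X}$ identify the domains, so you also need closure of $\ofsfont{E}$ under (wide) pushout; alternatively, comonadicity of the coreflective inclusion gives the conclusion without any separate check, which is the route the paper takes.
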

\begin{proof}
The inclusion $\proth(\cat{A}) \incl \cat{A} / \cat{X}$ has a right adjoint, namely the functor $\rho \from \cat{A} /\cat{X} \to \proth(\cat{A})$ defined in Definition~\ref{defn:str-factors}. Thus if $\cat{A} / \cat{X}$ has all large colimits, so does $\proth(\cat{A})$, since inclusions of coreflective subcategories are comonadic and in particular create all colimits. Furthermore, if $\cat{A} / \cat{X}$ has all limits, then so does $\proth(\cat{A})$, with the limit of a diagram in $\proth(\cat{A})$ being computed by taking its limit in $\cat{A} / \cat{X}$ and then applying the right adjoint $\rho$.

So it is sufficient to show that $\cat{A} / \cat{X}$ has limits or colimits respectively if $\cat{X}$ does. But the forgetful functor $\cat{A} / \cat{X} \to \cat{X}$ creates all limits (Lemma in Section V.6 of \cite{maclane71}). If $\cat{X}$ has all colimits, then colimits in $\cat{A}/\cat{X}$ can be computed as follows: (large) coproducts in $\cat{A}/\cat{X}$ are given by wide pushouts in $\cat{X}$, and coequalisers in $\cat{A}/\cat{X}$ are the same as in $\cat{X}$.
\end{proof}

\section{Weak pullbacks and isofibrations}
\label{sec:weak-pullbacks}

Recall from Definition~\ref{defn:sem-general} that for an aritation $\currylo \from \cat{B} \to \cat{X}(\cat{A},\cat{C})$ in a general setting $\cat{X}$, the semantics of a proto-theory $L \from \cat{A} \to \cat{L}$ is defined by the pullback
\[
\xymatrix{
\mod(L)\ar[r]^{J(L)}\ar[d]_{\sem(L)}\pullbackcorner & \cat{X}(\cat{L},\cat{C})\ar[d]^{L^*} \\
\cat{B}\ar[r]_{\currylo} & \cat{X}(\cat{A}, \cat{C}).
}
\]
In particular, a model of $L$ consists of an object $d \in \cat{B}$ together with a 1-cell $\Gamma \from \cat{L} \to \cat{C}$ in $\cat{X}$ such that $\Gamma \of L = \currylo (d)$. One may wonder why we require an \emph{equality} here; it may seem more natural to only require a specified isomorphism between $\currylo(d)$ and $\Gamma \of L$. This would amount to replacing the (strict) pullback above with a \emph{weak} pullback in the 2-category $\CAT$, in the sense defined below. In this section, we show that under certain conditions which are satisfied in all the cases of interest, the strict pullback above is also a weak pullback, at least up to equivalence (which is all we can hope for --- weak pullbacks are only unique up to unique-up-to-isomorphism equivalence).

\begin{defn}
Let $F \from \cat{A} \to \cat{C}$ and $G \from \cat{B} \to \cat{C}$ be functors between large categories. The \demph{weak pullback} of $G$ and $F$ is the category whose objects are of the form $(a, b, \phi)$ where $a \in \cat{A}$, $b \in \cat{B}$ and $\phi \from Fa \to Gb$ is an isomorphism, and whose morphisms $(a,b, \phi) \to (a',b', \phi')$ are pairs $(f,g)$ where $f \from a \to a'$ in $\cat{A}$, $g \from b \to b'$ in $\cat{B}$ and $\phi' \of Ff = Gg \of \phi$.
\end{defn}
\begin{remark}
This definition is taken from Joyal and Street~\cite{joyalStreet93} in which it is called a \emph{pseudo-pullback}. Sometimes this is called an \emph{iso-comma object} rather than a weak pullback or pseudo-pullback, with the latter terms being used for the category whose objects consist of $a \in \cat{A}$, $b \in \cat{B}$ and $c \in \cat{C}$ and isomorphisms $Fa \to c$ and $Gb \to c$. However, these two categories are always equivalent (indeed, iso-comma objects and weak pullbacks are equivalent in any 2-category).
\end{remark}

In general the strict and weak pullbacks of a pair of functors need not coincide, even up to equivalence. However, if one of the functors is an isofibration as defined in Definition~\ref{defn:isofibration} then they do.

\begin{prop}
\label{prop:isofib-weak-pullback-equiv}
Let $F \from \cat{A} \to \cat{C}$ and $\cat{B} \to \cat{C}$ be functors, with $F$ an isofibration. Then the strict pullback of $F$ and $G$ is equivalent to the weak pullback.
\end{prop}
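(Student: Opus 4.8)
The plan is to exhibit an explicit equivalence between the strict pullback $\cat{P}$ of $F$ and $G$ and the weak pullback $\cat{W}$. Recall that $\cat{P}$ has objects $(a,b)$ with $Fa = Gb$ and morphisms $(f,g)$ with $Ff$ over $Gg$; there is an evident full and faithful inclusion $\iota \from \cat{P} \incl \cat{W}$ sending $(a,b)$ to $(a,b,\id_{Fa})$ and $(f,g)$ to $(f,g)$. Since $\iota$ is already full and faithful, it suffices to show that $\iota$ is essentially surjective, i.e.\ that every object $(a,b,\phi)$ of $\cat{W}$ with $\phi \from Fa \toby{\iso} Gb$ is isomorphic in $\cat{W}$ to one in the image of $\iota$. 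This is exactly where the isofibration hypothesis on $F$ is used.

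Concretely: given $(a,b,\phi) \in \cat{W}$, the isomorphism $\phi \from Fa \to Gb$ together with the isofibration property of $F$ yields an object $a' \in \cat{A}$ and an isomorphism $j \from a \to a'$ with $Fa' = Gb$ and $Fj = \phi$. Then $(a',b)$ is an object of the strict pullback $\cat{P}$, and $(j, \id_b)$ is a morphism $(a,b,\phi) \to (a', b, \id_{Gb}) = \iota(a',b)$ in $\cat{W}$ — one checks the compatibility condition $\id_{Gb} \of Fj = Gb(\id_b) \of \phi$, which reduces to $Fj = \phi$, true by construction. This morphism is an isomorphism in $\cat{W}$ since $j$ and $\id_b$ are. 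Hence $\iota$ is essentially surjective, full and faithful, so an equivalence; the strict and weak pullbacks are equivalent.

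I do not expect any serious obstacle here — the argument is a routine application of the definition of isofibration, parallel to the proof of Lemma~\ref{lem:bo-restrict-isofib} and the use of amnestic isofibrations elsewhere. The one point worth stating carefully is that the inclusion $\iota$ really is full and faithful: a morphism $\iota(a,b) \to \iota(a',b')$ in $\cat{W}$ is a pair $(f,g)$ with $\id \of Ff = Gg \of \id$, i.e.\ $Ff = Gg$, which is precisely a morphism $(a,b) \to (a',b')$ in $\cat{P}$, and composition and identities match on the nose. With $\iota$ full, faithful, and essentially surjective, it is an equivalence $\cat{P} \eqv \cat{W}$, which is the assertion of the proposition. (If one wishes, one can moreover arrange a strict section by choosing the isofibration liftings functorially when $F$ is an amnestic isofibration, as in Lemma~\ref{lem:amnestic-isofib}, but this is not needed for the stated equivalence.)

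\begin{proof}
The strict pullback $\cat{P}$ of $F$ and $G$ has objects the pairs $(a,b)$ with $Fa = Gb$, and morphisms $(a,b) \to (a',b')$ the pairs $(f \from a \to a', g \from b \to b')$ with $Ff = Gg$. There is an inclusion functor $\iota \from \cat{P} \to \cat{W}$ into the weak pullback, sending $(a,b)$ to $(a,b,\id_{Fa})$ and a morphism $(f,g)$ to $(f,g)$; this is well-defined since $Ff = Gg$ gives $\id_{Fa'} \of Ff = Gg \of \id_{Fa}$.

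The functor $\iota$ is faithful since it is injective on morphisms, and it is full: a morphism $\iota(a,b) = (a,b,\id_{Fa}) \to (a',b',\id_{Fa'}) = \iota(a',b')$ in $\cat{W}$ is a pair $(f,g)$ with $\id_{Fa'} \of Ff = Gg \of \id_{Fa}$, that is $Ff = Gg$, which is exactly a morphism $(a,b) \to (a',b')$ in $\cat{P}$.

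It remains to show $\iota$ is essentially surjective. Let $(a,b,\phi) \in \cat{W}$, so $\phi \from Fa \to Gb$ is an isomorphism. Since $F$ is an isofibration, there is an object $a' \in \cat{A}$ and an isomorphism $j \from a \to a'$ with $Fa' = Gb$ and $Fj = \phi$. Then $(a',b) \in \cat{P}$, and $(j, \id_b)$ is a morphism $(a,b,\phi) \to (a',b,\id_{Gb}) = \iota(a',b)$ in $\cat{W}$: the required identity $\id_{Gb} \of Fj = G(\id_b) \of \phi$ says $Fj = \phi$, which holds. As $j$ and $\id_b$ are isomorphisms, $(j,\id_b)$ is an isomorphism in $\cat{W}$, so $(a,b,\phi) \iso \iota(a',b)$.

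Thus $\iota$ is a full, faithful, essentially surjective functor, hence an equivalence $\cat{P} \eqv \cat{W}$.
\end{proof}
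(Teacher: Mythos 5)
Your proof is correct. Note that the paper does not actually supply an argument here — it simply cites Theorem~1 of Joyal and Street — so your self-contained proof is a genuine addition rather than a reproduction. The argument is sound: the inclusion $\iota \from \cat{P} \to \cat{W}$, $(a,b) \mapsto (a,b,\id_{Fa})$, is visibly full and faithful since the compatibility square for a morphism between objects of the form $(a,b,\id)$ collapses to $Ff = Gg$; and essential surjectivity is exactly the isofibration lifting applied to $\phi \from Fa \to Gb$, with the verification that $(j,\id_b)$ is a (necessarily invertible) morphism $(a,b,\phi) \to \iota(a',b)$ reducing to $Fj = \phi$. The only implicit ingredient is that a fully faithful, essentially surjective functor is an equivalence, which requires a choice of quasi-inverse; this is unproblematic given the paper's set-theoretic conventions. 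Your closing remark is also apt: the paper only needs the statement for \emph{amnestic} isofibrations, where the liftings are unique and one could upgrade the equivalence to one with a strictly chosen section, but the weaker hypothesis suffices for the stated conclusion.
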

\begin{proof}
This is Theorem~1 in Joyal and Street~\cite{joyalStreet93}.
\end{proof}

The following result generalises Lemma~\ref{lem:bo-restrict-isofib}.

\begin{lem}
\label{lem:isofib-2-monad}
Let $\mnd{T} = (T, \eta, \mu)$ be a 2-monad on $\CAT$ and let $(L,l) \from (\cat{A}, W) \to (\cat{L}, U)$ be a bijective-on-objects pseudo-$\mnd{T}$-morphism between $\mnd{T}$-algebras. Then
\[
(L, l)^* \from \talg((\cat{L}, U), (\cat{C},Y)) \to \talg((\cat{A}, W), (\cat{C},Y))
\]
is an amnestic isofibration for every $(\cat{C}, Y) \in \talg$.
\end{lem}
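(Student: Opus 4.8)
The plan is to verify the unique-lifting characterisation of amnestic isofibrations from Lemma~\ref{lem:amnestic-isofib}: given a pseudo-$\mnd{T}$-morphism $(F,f) \from (\cat{A},W) \to (\cat{C},Y)$ and an invertible $\mnd{T}$-transformation $\phi \from (F,f) \of (L,l) \to (G_0, g_0)$ for some pseudo-$\mnd{T}$-morphism $(G_0, g_0) \from (\cat{A}, W) \to (\cat{C},Y)$, I must produce a \emph{unique} pseudo-$\mnd{T}$-morphism $(G, g) \from (\cat{L},U) \to (\cat{C},Y)$ together with an invertible $\mnd{T}$-transformation $\theta \from (F,f) \to (G,g)$ such that $\theta \of (L,l) = \phi$ (in particular $(G,g) \of (L,l) = (G_0, g_0)$).

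First I would handle the underlying data. Since $L \from \cat{A} \to \cat{L}$ is bijective on objects and $\phi$ is a natural isomorphism $F \of L \to G_0$, Lemma~\ref{lem:bo-restrict-isofib} (applied to $L$ and the category $\cat{C}$) gives a unique functor $G \from \cat{L} \to \cat{C}$ and unique natural isomorphism $\theta \from F \to G$ with $\theta L = \phi$ (and hence $G \of L = G_0$); concretely $G(La) = G_0 a$ and $G$ is conjugated from $F$ by the components of $\phi$. The real work is then to equip $G$ with a pseudo-$\mnd{T}$-morphism structure $g \from Y \of TG \to G \of U$ making $(G,g)$ a pseudo-$\mnd{T}$-morphism and $\theta$ a $\mnd{T}$-transformation $(F,f) \to (G,g)$. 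I expect this to be forced: the condition that $\theta$ be a $\mnd{T}$-transformation says $g \of Y(T\theta) = (\theta U) \of f$, i.e.\ $g = (\theta U) \of f \of Y(T\theta)^{-1}$, so $g$ is uniquely determined; what must be checked is (a) that this $g$ really is natural and satisfies the two pseudo-$\mnd{T}$-morphism axioms (unit and multiplication coherence), and (b) that $g \of (T(L,l))$-data recovers $g_0$, so that $(G,g) \of (L,l) = (G_0,g_0)$ and $\theta \of (L,l) = \phi$ as $\mnd{T}$-transformations, not merely as natural transformations.

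The main obstacle, and the step I would spend most care on, is verifying the two coherence axioms for $g$ and the compatibility $g \of$ (whiskering by $L$-data) $= g_0$. These are diagram chases, but they can be organised cleanly: since $(F,f)$ is a pseudo-$\mnd{T}$-morphism, and $g$ is defined by conjugating $f$ by the invertible $2$-cell $\theta$, each axiom for $g$ follows from the corresponding axiom for $f$ by pasting in $\theta$ and $T\theta$ (using $2$-functoriality of $T$, so $T\theta^{-1} = (T\theta)^{-1}$, and the interchange law); the uniqueness half then follows because $TL$ is bijective on objects, so a $2$-cell out of $TL$ is determined by its whiskering, exactly as in the proof of Lemma~\ref{lem:enhanced-2-dim}. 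For the compatibility with $(L,l)$: whiskering the defining equation $g = (\theta U) \of f \of Y(T\theta)^{-1}$ by the appropriate $1$- and $2$-cells coming from $(L,l)$, and using $\theta L = \phi$ together with the fact that $\phi$ is a $\mnd{T}$-transformation $(F,f)\of(L,l) \to (G_0,g_0)$, yields precisely $g$ whiskered to $g_0$. Finally, uniqueness of the whole lift $(G,g,\theta)$ reduces to uniqueness of $G$ and $\theta$ (already established via Lemma~\ref{lem:bo-restrict-isofib}) plus the forced formula for $g$, and amnesticity follows for free from the uniqueness clause by the argument in Lemma~\ref{lem:amnestic-isofib}. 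Throughout I would note that $\talg((\cat{A},W),(\cat{C},Y))$ indeed has invertible $2$-cells given by componentwise isomorphisms of the underlying natural transformations, so that ``isomorphism in the hom-category'' means exactly ``invertible $\mnd{T}$-transformation'', making the reduction to the underlying functor case legitimate.
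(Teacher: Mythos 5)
Your proposal is correct and follows essentially the same route as the paper's proof: reduce to the underlying functors via Lemma~\ref{lem:bo-restrict-isofib}, define the forced pseudo-$\mnd{T}$-morphism structure $g = (\theta U)\of f \of Y(T\theta)^{-1}$ by conjugation, check the two coherence axioms via the corresponding axioms for $f$ and 2-naturality of $\eta$ and $\mu$, and verify compatibility with $(L,l)$ using $\theta L = \phi$. No gaps.
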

\begin{proof}
We will verify the condition from Lemma~\ref{lem:amnestic-isofib}. Let $(F, f) \from (\cat{L}, U) \to (\cat{C}, Y)$ and $(G,g) \from (\cat{A}, W) \to (\cat{C},Y)$ be pseudo-$\mnd{T}$-morphisms and let $\phi \from (F,f) \of (L, l) \to (G,g)$ be an isomorphism.

By Lemma~\ref{lem:bo-restrict-isofib}, the functor $L^* \from [\cat{L}, \cat{C}] \to [\cat{A},\cat{C}]$ is an amnestic isofibration, so there is a unique functor $G' \from \cat{L} \to \cat{C}$ and natural isomorphism $\theta \from F \to G'$ such that $L^*(G') = G$ and $L^* (\theta) = \phi$.

It remains to be seen that $G'$ can be given a unique pseudo-$\mnd{T}$-morphism structure $g'$ in such a way that $\theta$ becomes a $\mnd{T}$-transformation $(F,f) \to (G' , g')$ and $(G', g') \of (L ,l) = (G, g)$. We define $g' \from Y \of TG' \to G' \of U$ to be the natural isomorphism
\[
\vcenter{
\xymatrix
@C=40pt
@R=40pt{
T\cat{L}\ar[r]^{TG'} \ar[d]_{U}\drtwocell\omit{g'} & T\cat{C}\ar[d]^Y \\
\cat{L}\ar[r]_{G'} & \cat{C}
}}
\quad = \quad
\vcenter{
\xymatrix
@C=40pt
@R=40pt{
T\cat{L} \ar[d]_{U}\drtwocell\omit{f}\rtwocell^{TG'}_{TF}{\:\:\:\:\:\:\:\:T\theta^{-1}} & T\cat{C}\ar[d]^Y \\
\cat{L}\rtwocell_{G'}^{F}{\theta} & \cat{C}.
}}
\]
The equations that must be satisfied in order for $g'$ to be a pseudo-$\mnd{T}$-morphism structure follow from those for $f$ together with 2-naturality of $\eta$ and $\mu$, and $g'$ is clearly unique such that 
\[
\vcenter{
\xymatrix
@C=40pt
@R=40pt{
T\cat{L} \ar[d]_{U}\drtwocell\omit{g'}\rtwocell^{TF}_{TG'}{\:\:\:\:\:T\theta} & T\cat{C}\ar[d]^Y \\
\cat{L}\ar[r]_{G'} & \cat{C}
}}
\quad = \quad
\vcenter{
\xymatrix
@C=40pt
@R=40pt{
T\cat{L} \ar[d]_{U}\ar[r]^{TF}\drtwocell\omit{f} & T\cat{C}\ar[d]^Y\\
\cat{L}\rtwocell_{G'}^{F}{\theta} & \cat{C},
}}
\]
that is, such that $\theta$ is a $\mnd{T}$-transformation $(F,f) \to (G', g')$. Finally, we have
\begin{align*}
& \vcenter{
\xymatrix
@C=40pt
@R=40pt{
T\cat{A}\ar[r]^{TL}\ar[d]_W \drtwocell\omit{l} & T\cat{L}\ar[r]^{TG'} \ar[d]_{U}\drtwocell\omit{g'} & T\cat{C}\ar[d]^Y \\
\cat{A}\ar[r]_L &\cat{L}\ar[r]_{G'} & \cat{C}
}}
\quad  = \quad
\vcenter{
\xymatrix
@C=40pt
@R=40pt{
T\cat{A}\ar[r]^{TL}\ar[d]_W \drtwocell\omit{l} &T\cat{L} \ar[d]_{U}\drtwocell\omit{f}\rtwocell^{TG'}_{TF}{\:\:\:\:\:\:\:\:T\theta^{-1}} & T\cat{C}\ar[d]^Y \\
\cat{A}\ar[r]_L &\cat{L}\rtwocell_{G'}^{F}{\theta} & \cat{C}
}} \\
=&\quad
\vcenter{
\xymatrix
@C=40pt
@R=20pt{
T\cat{A}\ar[rr]^{TG}_*!/d7pt/{\labelstyle{\!\!\!\!\!T\phi^{-1}} \!\!\!\objectstyle\Downarrow }\ar[dr]_{TL}\ar[ddd]_W & & T\cat{C}\ar[ddd]^Y \\
\drtwocell\omit{l}& T\cat{L}\ar[ur]_{TF}\ar[d]^U\drtwocell\omit{f} & \\
& \cat{L}\ar[dr]^F & \\
\cat{A}\ar[ur]^L\ar[rr]_G^*!/u7pt/{\labelstyle{\phi} \!\! \objectstyle\Downarrow } & & \cat{C}
}}
\quad = \quad
\vcenter{
\xymatrix
@C=40pt
@R=40pt{
T\cat{A}\ar[r]^{TG}\ar[d]_W\drtwocell\omit{g} & T\cat{C}\ar[d]^Y \\
\cat{A}\ar[r]_G & \cat{C},
}}
\end{align*}
so $(G', g') \of (L ,l) = (G, g)$ as required.
\end{proof}

\begin{lem}
\label{lem:multicat-isofibration}
Let $\cat{A}, \cat{L}$ and $\cat{C}$ be multicategories, and suppose $L \from \cat{A} \to \cat{L}$ is a bijective-on-objects multicategory morphism. Then
\[
L^* \from \multicat (\cat{L}, \cat{C}) \to \multicat(\cat{A},\cat{C})
\]
is an amnestic isofibration.
\end{lem}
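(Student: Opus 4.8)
The statement is the multicategory analogue of Lemma~\ref{lem:bo-restrict-isofib} (and the special case of Lemma~\ref{lem:isofib-2-monad} in which $\cat{X}=\multicat$ is not literally available because $\multicat$ is not of the form $\talg$, so a direct proof is needed). The plan is to verify that $L^*$ satisfies the characterisation of amnestic isofibrations given in Lemma~\ref{lem:amnestic-isofib}: for every multicategory morphism $F\from\cat{L}\to\cat{C}$, every $G\from\cat{A}\to\cat{C}$, and every natural isomorphism $\phi\from F\of L\to G$ of multicategory morphisms, there is a \emph{unique} pair $(G',\theta)$ with $G'\from\cat{L}\to\cat{C}$ a multicategory morphism and $\theta\from F\to G'$ a natural isomorphism such that $L^*(G')=G$ and $L^*(\theta)=\phi$.

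First I would recall what a transformation between multicategory morphisms is (as in Section~\ref{sec:notions-operads}): $\phi$ assigns to each $a\in\cat{A}$ a unary morphism $\phi_a\in\cat{C}(F(La);G a)$ satisfying the evident naturality condition against all multimorphisms of $\cat{A}$, and it is an isomorphism iff each $\phi_a$ is an isomorphism in the underlying category $\cat{C}_0$. Then I would define $G'$ on objects by transport along $L$: since $L$ is bijective on objects, every object of $\cat{L}$ is $L a$ for a unique $a\in\cat{A}$, and I set $G'(La)=G a$. On morphisms, given a multimorphism $k\from La_1,\dots,La_n\to La'$ in $\cat{L}$, I define $G'(k)$ to be the composite
\[
G a_1,\dots,G a_n \toby{(\phi_{a_1}^{-1},\dots,\phi_{a_n}^{-1})} F(La_1),\dots,F(La_n)\toby{F(k)} F(La')\toby{\phi_{a'}} G a',
\]
using the composition of $\cat{C}$ as a multicategory. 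This is the unique possible definition making $\theta_{La}:=\phi_a$ into a transformation $F\to G'$, which simultaneously forces uniqueness of $\theta$. I would then check that $G'$ preserves identities and multicategory composition --- this is a routine diagram chase using functoriality of $F$, the interchange/associativity axioms of $\cat{C}$, and the fact that the $\phi_a$ are isomorphisms; it is essentially the same computation as in the proof of Lemma~\ref{lem:bo-restrict-isofib}, carried out for multimorphisms of arbitrary arity rather than just unary ones. Finally I would verify that $\theta\from F\to G'$ is indeed a natural transformation of multicategory morphisms (its naturality against a multimorphism $k$ of $\cat{L}$ reduces, after writing $k$ in terms of $a$'s via bijectivity on objects, to the \emph{defining} equation for $G'(k)$ above, so it holds by construction), that $L^*(G')=G$ (immediate on objects, and on morphisms because for $k=L(f)$ with $f$ a multimorphism of $\cat{A}$ the composite collapses to $G(f)$ by naturality of $\phi$ against $f$), and that $L^*(\theta)=\phi$ (immediate from $\theta_{La}=\phi_a$).

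There is essentially no hard step here: the only mild subtlety is bookkeeping with multimorphisms of arity $n$ --- one must apply $\phi^{-1}$ in each input slot and $\phi$ on the output, and confirm that the multicategory composition axioms of $\cat{C}$ make $G'$ respect substitution of one multimorphism into another. I would not grind through this; it is the direct generalisation of the ordinary-category computation already performed in Lemma~\ref{lem:bo-restrict-isofib}, now using the associativity and equivariance axioms of Definition~3.5.1 of~\cite{leinster04}. Once $(G',\theta)$ is constructed and seen to be unique, Lemma~\ref{lem:amnestic-isofib} gives that $L^*$ is an amnestic isofibration, completing the proof.
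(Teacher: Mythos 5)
Your proposal is correct and follows essentially the same route as the paper: transport objects along the bijection on objects, define $G'$ on a multimorphism $k\from La_1,\dots,La_n\to La'$ as $\phi_{a'}\of Fk\of(\phi_{a_1}^{-1},\dots,\phi_{a_n}^{-1})$, set $\theta_{La}=\phi_a$, and observe that uniqueness is forced since $L^*(\theta)=\phi$ determines every component of $\theta$ and naturality then determines $G'$. The paper states the routine verifications even more tersely than you do, so no gap to report.
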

\begin{proof}
Let $F \from \cat{L} \to \cat{C}$ and $G \from \cat{A} \to \cat{C}$ be multicategory morphisms and let $\phi \from F \of L \to G$ be an isomorphism. Define a multicategory morphism $G' \from \cat{L} \to \cat{C}$ as follows.

Recall that every object of $\cat{L}$ is of the form $La$ for a unique $a \in \cat{A}$. Define $G' La = Ga$. Given a morphism $l \from La_1, \ldots, La_n \to La'$ in $\cat{L}$, define $G'l$ to be the composite
\[
\phi_{a'} \of Fl \of (\phi_{a_1}^{-1}, \ldots, \phi_{a_n}^{-1}) \from Ga_1, \ldots , Ga_n \to Ga'.
\]
This clearly does define a multicategory morphism $G' \from \cat{L} \to \cat{C}$ with $G' \of L = G$, and setting $\theta_{La} = \phi_a$ defines a multicategory transformation $\theta \from F \to G'$ such that $\theta L = \phi$. Furthermore, $G'$ and $\theta$ are the unique such.
\end{proof}

\begin{remark}
\label{remark:aritation-setting-list}
Recall the settings and aritations that we have considered so far in this thesis:
\begin{itemize}
\item the canonical aritation $\cat{B}(-,-) \from \cat{B}^{\op} \times \cat{B} \to \Set$ for a locally small category $\cat{B}$ in the setting $\CAT$, whose semantics generalises the semantics of monads;
\item the aritation $\scat{1} \times \cat{B} \toby{\iso} \cat{B}$ in the setting $\CAT$ giving rise to the semantics for monoids;
\item the aritation $ \cat{B} \to \finprod(\fin^{\op}, \cat{B})$ for a finite product category $\cat{B}$ sending $b \mapsto b^{(-)}$ in the setting $\finprod$, giving rise to the semantics of Lawvere theories;
\item the aritation $ \cat{B} \to \symmon(\finbij, \cat{B})$ for a symmetric monoidal category $\cat{B}$ sending $ b \mapsto b^{\tensor (-)}$ in the setting $\symmon$, giving rise to the semantics of PROPs;
\item the aritation  $ \cat{B} \to \monCAT(\nat, \cat{B})$ for a monoidal category $\cat{B}$ sending $b \mapsto b^{\tensor (-)}$ in the setting $\monCAT$ giving rise to the semantics for PROs; and
\item the aritation $\cat{B}_0 \toby{\iso} \multi(\cat{O}, \cat{B})$ for a multicategory $\cat{B}$ in the setting $\multi$, giving rise to the semantics of operads.
\end{itemize}
\end{remark}

\begin{prop}
For all of the aritations listed above (indeed for any aritation in any of the settings listed), the pullback defining the semantics of a proto-theory is equivalent to the corresponding weak pullback.
\end{prop}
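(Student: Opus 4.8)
The plan is to reduce everything to Proposition~\ref{prop:isofib-weak-pullback-equiv}, which says that if one leg of a cospan of functors between large categories is an isofibration, then its strict pullback agrees up to equivalence with its weak pullback. By Definition~\ref{defn:sem-general} the category of models $\mod(L)$ is the strict pullback of
\[
\cat{B} \toby{\currylo} \cat{X}(\cat{A},\cat{C}) \xleftarrow{\,L^*\,} \cat{X}(\cat{L},\cat{C}),
\]
so it suffices to show that in each of the settings under consideration the functor $L^* \from \cat{X}(\cat{L},\cat{C}) \to \cat{X}(\cat{A},\cat{C})$ is an isofibration whenever $L \from \cat{A}\to\cat{L}$ is a proto-theory, i.e.\ lies in the left class $\ofsfont{E}$ of the factorisation system. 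Note that this depends only on the setting and not on the aritation (the aritation only enters via the other leg $\currylo$), which accounts for the parenthetical strengthening in the statement.

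First, for the setting $\CAT$ with the bijective-on-objects/full-and-faithful factorisation system --- which covers the canonical aritation and the aritation for monoids --- this is exactly Lemma~\ref{lem:bo-restrict-isofib}: for $L$ bijective on objects, $L^* \from [\cat{L},\cat{C}] \to [\cat{A},\cat{C}]$ is an amnestic isofibration. Next, for the settings $\finprod$, $\symmon$ and $\monCAT$ --- covering Lawvere theories, PROPs and PROs --- Theorem~\ref{thm:examples-monads-boff} provides 2-monads $\mnd{T}_1,\mnd{T}_2,\mnd{T}_3$ on $\CAT$ whose strict algebras are respectively finite product categories, symmetric monoidal categories and monoidal categories, and which all preserve bijective-on-objects functors; the corresponding factorisation system on $\talg$ from Proposition~\ref{prop:boff-monad-lift} has as its left class the pseudo-$\mnd{T}$-morphisms that are bijective on objects. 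Lemma~\ref{lem:isofib-2-monad} then says precisely that for a bijective-on-objects pseudo-$\mnd{T}$-morphism $(L,l)$, the functor $(L,l)^* \from \talg((\cat{L},U),(\cat{C},Y)) \to \talg((\cat{A},W),(\cat{C},Y))$ is an amnestic isofibration. Finally, for the setting $\multi$, covering operads, Lemma~\ref{lem:multicat-isofibration} gives that $L^* \from \multi(\cat{L},\cat{C}) \to \multi(\cat{A},\cat{C})$ is an amnestic isofibration for $L$ bijective on objects.

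In every case, then, $L^*$ is an isofibration, so Proposition~\ref{prop:isofib-weak-pullback-equiv} applies and the strict pullback defining $\mod(L)$ is equivalent to the weak pullback of the same cospan, compatibly (up to natural isomorphism) with the projections to $\cat{B}$ and $\cat{X}(\cat{L},\cat{C})$. There is no genuine obstacle here: all the work is already done in the isofibration lemmas established above, and the only points requiring care are to enumerate the listed settings exhaustively and to observe that the argument is insensitive to the choice of aritation, since the aritation never touches the leg $L^*$ that is being required to be an isofibration.
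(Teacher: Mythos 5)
Your proof is correct and follows essentially the same route as the paper: reduce to Proposition~\ref{prop:isofib-weak-pullback-equiv} by showing $L^*$ is an isofibration in each setting, via Lemma~\ref{lem:multicat-isofibration} for multicategories and Lemma~\ref{lem:isofib-2-monad} for the settings arising from 2-monads. The only cosmetic difference is that you cite Lemma~\ref{lem:bo-restrict-isofib} directly for the $\CAT$ cases, whereas the paper subsumes them under the 2-monad lemma; both are fine.
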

\begin{proof}
This will follow from Proposition~\ref{prop:isofib-weak-pullback-equiv} if we can show that for any proto-theory $L \from \cat{A} \to \cat{C}$ in any of the settings $\cat{X}$ discussed, the functor $L^* \from \cat{X}(\cat{L}, \cat{C}) \to \cat{X}(\cat{A},\cat{C})$ is an isofibration. For $\cat{X} = \multicat$, this follows from Lemma~\ref{lem:multicat-isofibration}. All of the other settings are of the form $\talg$ for a 2-monad on $\CAT$ preserving bijective-on-objects functors, so in these cases the result follows from Lemma~\ref{lem:isofib-2-monad}.
\end{proof}

\begin{lem}
\label{lem-isofib-stable-pullback}
Amnestic isofibrations are stable under pullback in $\CAT$.
\end{lem}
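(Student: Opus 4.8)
The plan is to reduce everything to the characterisation of amnestic isofibrations provided by Lemma~\ref{lem:amnestic-isofib}. Suppose we are given a pullback square
\[
\xymatrix{
\cat{P}\ar[r]^{G'}\ar[d]_{U'}\pullbackcorner & \cat{D}\ar[d]^{U} \\
\cat{C}\ar[r]_{F} & \cat{E}
}
\]
in $\CAT$ with $U$ an amnestic isofibration, and we must show that $U'$ is one too. I would take $\cat{P}$ to be the strict pullback, whose objects are pairs $(c,d)$ with $c \in \cat{C}$, $d \in \cat{D}$ and $Fc = Ud$, whose morphisms $(c,d) \to (c',d')$ are pairs $(f,g)$ with $Ff = Ug$, and with $U'$ and $G'$ the two projections. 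A preliminary observation I would record is that a morphism $(f,g)$ in $\cat{P}$ is an isomorphism precisely when both $f$ and $g$ are: given such $f,g$, applying inverses to $Ff = Ug$ gives $F(f^{-1}) = U(g^{-1})$, so $(f^{-1},g^{-1})$ is a well-defined two-sided inverse in $\cat{P}$.

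Next I would verify the condition of Lemma~\ref{lem:amnestic-isofib} for $U'$. Fix $p = (c_0,d_0) \in \cat{P}$, an object $c \in \cat{C}$, and an isomorphism $i \from U'p = c_0 \to c$. Then $Fi \from Ud_0 \to Fc$ is an isomorphism in $\cat{E}$, so by Lemma~\ref{lem:amnestic-isofib} applied to the amnestic isofibration $U$ there is a unique pair $(d',g)$ with $d' \in \cat{D}$ and $g \from d_0 \to d'$ an isomorphism such that $Ud' = Fc$ and $Ug = Fi$. Set $p' := (c,d')$; this is a legitimate object of $\cat{P}$ since $Fc = Ud'$, and $j := (i,g) \from p \to p'$ is a legitimate morphism of $\cat{P}$ since $Fi = Ug$, and it is an isomorphism because both components are. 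By construction $U'p' = c$ and $U'j = i$.

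Finally I would check uniqueness. If $(p'',j'')$ is another pair with $p'' \in \cat{P}$, $j'' \from p \to p''$ an isomorphism, $U'p'' = c$ and $U'j'' = i$, then necessarily $p'' = (c,d'')$ for some $d'' \in \cat{D}$ with $Fc = Ud''$, and $j'' = (i,g'')$ for some isomorphism $g'' \from d_0 \to d''$ with $Ug'' = Fi$. Thus $(d'',g'')$ is a pair of exactly the kind whose uniqueness is asserted by Lemma~\ref{lem:amnestic-isofib} for $U$, so $d'' = d'$ and $g'' = g$, whence $p'' = p'$ and $j'' = j$. This establishes the condition of Lemma~\ref{lem:amnestic-isofib} for $U'$, so $U'$ is an amnestic isofibration. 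I do not expect any real obstacle here; the only point requiring mild care is the componentwise detection of isomorphisms in the strict pullback, which is precisely what lets the existence-and-uniqueness statement for $U$ be transported along the projection to $U'$.
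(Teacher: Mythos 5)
Your proof is correct and follows essentially the same route as the paper's: identify objects of the strict pullback with compatible pairs, push the given isomorphism across to the other leg, lift it uniquely using the amnestic isofibration property of $U$, and pair the two lifts back together. The only difference is presentational — you spell out the componentwise detection of isomorphisms in the pullback and route the argument explicitly through Lemma~\ref{lem:amnestic-isofib}, both of which the paper leaves implicit.
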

This is presumably well-known, however I was unable to find a reference.
\begin{proof}
Let
\[
\xymatrix{
\cat{A}\ar[r]^F\ar[d]_G\pullbackcorner & \cat{B}\ar[d]^H \\
\cat{C}\ar[r]_K & \cat{D}
}
\]
be a pullback in $\CAT$ in which $H$ is an amnestic isofibration. We identify the objects of $\cat{A}$ with pairs $(c,b)$ where $c \in \cat{C}$, $b \in \cat{B}$ and $Hb = Kc$ and similarly with morphisms in $\cat{A}$. Suppose we have $(c,b) \in \cat{A}$ and an isomorphism $\phi \from c \to c'$ in $\cat{C}$. Then $K\phi$ is an isomorphism $Kc = Hb \to Kc'$ in $\cat{D}$, so since $H$ is an amnestic isofibration, there is a unique $b' \in \cat{B}$ and isomorphism $\theta \from b \to b'$ such that $Hb' = Kc'$ and $H\theta = K\phi$. So then we have an isomorphism $(\phi, \theta) \from (c, b) \to (c', b')$ in $\cat{A}$ with $G(\phi, \theta) = \phi$, and it is unique by the uniqueness of $b'$ and $\theta$.
\end{proof}

\begin{prop}
For any of the aritations listed in Remark~\ref{remark:aritation-setting-list} (or any other aritation in these settings), the forgetful functor from the category of models of a proto-theory to the base category is an amnestic isofibration.
\end{prop}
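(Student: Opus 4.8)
The plan is to reduce the statement to the pullback square defining the category of models, together with the pullback-stability of amnestic isofibrations established in Lemma~\ref{lem-isofib-stable-pullback}. Recall from Definition~\ref{defn:sem-general} that for a proto-theory $L \from \cat{A} \to \cat{L}$ and an aritation $\currylo \from \cat{B} \to \cat{X}(\cat{A},\cat{C})$ in a setting $\cat{X}$, the functor $\sem(L)$ sits in the pullback square
\[
\xymatrix{
\mod(L)\ar[r]^{J(L)}\ar[d]_{\sem(L)}\pullbackcorner & \cat{X}(\cat{L},\cat{C})\ar[d]^{L^*} \\
\cat{B}\ar[r]_{\currylo} & \cat{X}(\cat{A}, \cat{C}),
}
\]
so $\sem(L)$ is the pullback of $L^*$ along $\currylo$ in $\CAT$.

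First I would record that $L^* \from \cat{X}(\cat{L},\cat{C}) \to \cat{X}(\cat{A},\cat{C})$ is an amnestic isofibration for each of the relevant settings. When $\cat{X} = \multi$ this is Lemma~\ref{lem:multicat-isofibration}. The setting $\CAT$ (in which the canonical and monoid aritations live) is covered directly by Lemma~\ref{lem:bo-restrict-isofib}. The remaining settings $\finprod$, $\symmon$ and $\monCAT$ are all of the form $\talg$ for a 2-monad on $\CAT$ that preserves bijective-on-objects functors, by Theorem~\ref{thm:examples-monads-boff}, so that case is handled by Lemma~\ref{lem:isofib-2-monad}. Then I would apply Lemma~\ref{lem-isofib-stable-pullback} to the square above: since $L^*$ is an amnestic isofibration and amnestic isofibrations are stable under pullback in $\CAT$, it follows that $\sem(L)$ is an amnestic isofibration. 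This proves the proposition.

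I do not anticipate any genuine obstacle; all the substantive work has already been carried out in the lemmas of this and the preceding chapters, and the result simply generalises Lemma~\ref{lem:bo-restrict-isofib} in the expected way. The only point that needs a word of justification is that each aritation in Remark~\ref{remark:aritation-setting-list} does indeed live in a setting to which one of the isofibration lemmas applies, and this is immediate from the list of settings involved.
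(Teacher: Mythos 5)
Your proof is correct and follows essentially the same route as the paper: the paper likewise deduces the result from Lemma~\ref{lem-isofib-stable-pullback} applied to the defining pullback square, combined with Lemma~\ref{lem:multicat-isofibration} for operads and Lemma~\ref{lem:isofib-2-monad} for the settings arising from 2-monads. Your additional invocation of Lemma~\ref{lem:bo-restrict-isofib} for the setting $\CAT$ is harmless but redundant, since that case is already subsumed by Lemma~\ref{lem:isofib-2-monad}.
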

\begin{proof}
This follows from Lemma~\ref{lem-isofib-stable-pullback} together with Lemma~\ref{lem:multicat-isofibration} in the case of operads, or Lemma~\ref{lem:isofib-2-monad}.
\end{proof}

Recall from Remarks~\ref{rem:law-model-non-standard} and~\ref{rem:prop-model-non-standard} that we have adopted a slightly non-standard definition of algebra for Lawvere theories, PROPs and PROs. This is essential in the above result; if one defines a model of a Lawvere theory $L \from \fin^{\op} \to \cat{L}$ simply as a finite product preserving functor out of $\cat{L}$, then the forgetful functor from the category of $L$-models in some finite product category would not be an amnestic isofibration. The situation is similar with PROPs and PROs.

This can be seen as a point in favour of our non-standard definition: intuitively, there should be a unique way of transferring algebraic structure along an isomorphism.

\chapter{The structure--semantics monad for the canonical aritation}
\label{chap:canonical2}

Recall from the Introduction that one of our goals in this thesis is to find a ``convenient category of monads''. More precisely, we would like a category that contains the category of monads on a given category $\cat{B}$ as a full subcategory, and an extension of the semantics functor $\sem_{\monad} \from \monad(\cat{B})^{\op} \to \catover{\cat{B}}$ to this larger category, such that this extended semantics functor has a left adjoint defined on the whole of $\catover{\cat{B}}$ (rather than just $(\catover{\cat{B}})_{\ra}$). We saw in Chapter~\ref{chap:canonical1} that proto-theories on $\cat{B}^{\op}$ with the semantics provided by the canonical aritation provide one such extension.

However, as we will show in the Section~\ref{sec:profinite}, in passing from monads to more general proto-theories, we lose a desirable property of the semantics of monads. Namely, unlike the semantics functor for monads, the functor $\sem \from \proth(\cat{B}^{\op})^{\op} \to \catover{\cat{B}}$ need not be full and faithful. We demonstrate this by showing that, in the case $\cat{B} = \finset$ the monad on $\proth(\finset^{\op})$ induced by the structure--semantics adjunction extends the profinite completion monad on the category of groups. This monad is known not to be idempotent, from which it follows that the structure--semantics adjunction is not idempotent and in particular the semantics functor is not full and faithful.

Having a full and faithful semantics functor is a desirable feature of a notion of algebraic theory, because it can be thought of as a kind of completeness theorem, as explained in Remark~\ref{rem:ff-completeness-interpret}. We would therefore like an extension of $\monad(\cat{B})$ and an extension of the semantics functor that both has a left adjoint and is full and faithful. We will pursue this goal in later chapters of this thesis by developing the analogy between proto-theories and groups that is suggested above. In Section~\ref{sec:str-sem-as-codensity}, we begin to explore this analogy by giving a characterisation of the structure--semantics monad on $\proth(\cat{B}^{\op})$ as a codensity monad, mirroring a similar characterisation of the profinite completion monad on $\Gp$.

\section{Relation to profinite groups}
\label{sec:profinite}

In this section we specialise to the case where $\cat{B} = \finset$, and consider a special type of proto-theory with arities $\finset^{\op}$. Such a proto-theory is a bijective-on-objects functor out of $\finset^{\op}$, however in this section we will identify such functors with bijective-on-objects functors out of $\finset$ for the sake of notational convenience, as we did in Remark~\ref{rem:bo-no-op}. Whenever we refer to a structure or semantics functor in this section, we mean those induced by the canonical aritation on $\finset$.

\begin{defn}
Let $M$ be a small monoid, with unit $e_M \in M$ and multiplication $\mu_M \from M \times M \to M$. Recall that $M$ gives rise to a monad on $\SET$ whose algebras are sets equipped with an action of $M$. We write $\SET_M$ for the Kleisli category of this monad and $F_M \from \SET \to \SET_M$ for the corresponding free functor.
\end{defn}

More explicitly, the objects of $\SET_M$ are sets, and if $S, S'$ are sets, a morphism $S \to S'$ in $\SET_M$ is a function $S \to M \times S'$. Given $f \from S \to M \times S'$ and $g \from S' \to M \times S''$, their composite in $\SET_M$ is the composite function
\[
S \toby{f} M \times S' \toby{\id_M \times g} M \times M \times S'' \toby{\mu_M \times \id_{S''}} M \times S''.
\]
The identity morphism on $S$ in $\SET_M$ is the function $\eta_{M,S}\from  S \from \to M \times S$ sending $s \mapsto (e_M, s)$.

\begin{defn}
\label{defn:monoid-theory-include}
We define a functor $E \from \monoid \to \proth(\finset^{\op})$ as follows. Given a monoid $M$, consider the composite functor
\[
\finset \incl \SET \toby{F_M} \SET_M.
\]
Let
\[
\finset \toby{E(M)} \cat{E} (M) \toby{N(M)} \SET_M
\]
be the bijective-on-objects/full and faithful factorisation of this composite.

A monoid homomorphism $h \from M \to M'$ induces a functor $h_* \from \SET_M \to \SET_{M'}$ that is the identity on objects and sends $f \from S \to M \times S'$ to
\[
S \toby{f} M \times S' \toby{h \times \id_{S'}} M' \times S';
\]
then we have $h_* \of F_M = F_{M'}$. We define $E(h) \from \cat{E}(M) \to \cat{E}(M')$ to be the unique functor such that
\[
\xymatrix{
\SET \ar[r]^{E(M)}\ar[dr]_{E(M')} & \cat{E}(M)\ar[r]^{N(M)}\ar[d]^{E(h)} & \SET_M\ar[d]^{h_*} \\
& \cat{E}(M')\ar[r]_{N(M')} & \SET_{M'} 
}
\]
commutes. Functoriality of $E \from \monoid \to \proth(\finset^{\op})$ then follows from functoriality of the assignments $M \mapsto \SET_M$ and $h \mapsto h_*$.
\end{defn}

\begin{lem}
\label{lem:monoid-finite-coproducts}
Let $M, M'$ be small monoids, and let $K$ be any morphism $E(M) \to E(M')$ in $\proth(\finset^{\op})$. Then $E(M) \from \finset \to \cat{E}(M)$ and $K \from \cat{E}(M) \to \cat{E}(M')$ both preserve finite coproducts.
\end{lem}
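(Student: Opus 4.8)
The plan is to prove the two preservation claims in turn, the first directly and the second by reducing it to the first.

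First I would show that $E(M)$ preserves finite coproducts. The composite $\finset \incl \SET \toby{F_M} \SET_M$ preserves finite coproducts: finite coproducts in $\finset$ are disjoint unions computed as in $\SET$, so the inclusion preserves them, and $F_M$ is a left adjoint (the free functor into a Kleisli category), hence preserves all colimits. Now by construction $N(M) \of E(M)$ is exactly this composite, and $N(M)$ is full and faithful. Since a full and faithful functor reflects colimits — if a cocone is sent to a colimit cocone by a full and faithful functor then it is already a colimit cocone, by the same argument used for the inclusion $\thr_{\law}(U) \incl [\cat{M},\cat{B}]$ in the proof of Proposition~\ref{prop:lawvere-str} — it follows that $E(M)$ sends each finite coproduct cocone of $\finset$ to a coproduct cocone in $\cat{E}(M)$. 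The identical argument applies with $M'$ in place of $M$.

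Second, I would deduce the claim for an arbitrary morphism $K\from E(M) \to E(M')$ in $\proth(\finset^{\op})$, which unravels to a functor $K\from \cat{E}(M) \to \cat{E}(M')$ with $K \of E(M) = E(M')$. Let $(f_k\from a_k \to a)_{k=1}^n$ be a finite coproduct cocone in $\cat{E}(M)$. As $E(M)$ is bijective on objects, pick $S_1,\dots,S_n,S \in \finset$ with $E(M)(S_k) = a_k$ and $E(M)(S) = a$, and let $T = \coprod_k S_k$ in $\finset$ with coproduct injections $j_k$. By the first part, $(E(M)(j_k)\from a_k \to E(M)(T))_k$ is a coproduct cocone in $\cat{E}(M)$, so there is a unique isomorphism $\theta\from E(M)(T) \to a$ with $\theta \of E(M)(j_k) = f_k$. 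Applying the functor $K$ and using $K \of E(M) = E(M')$ gives an isomorphism $K\theta\from E(M')(T) \to Ka$ with $K\theta \of E(M')(j_k) = Kf_k$. By the first part applied to $M'$, $(E(M')(j_k))_k$ is a coproduct cocone in $\cat{E}(M')$, and composing a coproduct cocone with an isomorphism on its vertex yields another coproduct cocone; hence $(Kf_k\from Ka_k \to Ka)_k$ is a coproduct cocone, i.e. $K$ preserves this coproduct. Taking $n = 0$ covers the empty coproduct, so $K$ also sends initial objects to initial objects.

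I do not anticipate a genuine obstacle. The only point needing a little care is that $E(M)$ is merely bijective on objects, not full or injective on morphisms, so one cannot transport coproduct structure along it by direct computation; instead one works up to isomorphism, using uniqueness of coproducts together with the fact that every object of $\cat{E}(M)$ lies in the image of $E(M)$. The ingredients invoked — left adjoints preserve colimits, and full and faithful functors reflect colimits — are standard and already used in this chapter in the form of reflection of limits.
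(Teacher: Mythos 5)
Your proposal is correct and follows essentially the same route as the paper: first deduce that $E(M)$ and $E(M')$ preserve finite coproducts because the composite $\finset \incl \SET \toby{F_M} \SET_M$ does and the full and faithful $N(M)$ reflects them, then use $K \of E(M) = E(M')$ together with the fact that every finite coproduct cocone in $\cat{E}(M)$ agrees up to isomorphism with the image of one from $\finset$. Your treatment of the second step is slightly more explicit about the ``up to isomorphism'' bookkeeping than the paper's, but the argument is the same.
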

\begin{proof}
First note that since the composite $\finset \incl \SET \toby{F_M} \SET_M$ preserves finite coproducts (since each factor does), and $N(M) \from \cat{E}(M) \incl \SET_M$ reflects them (since it is full and faithful), the functor $E(M) \from \finset \to \cat{E}(M)$ preserves finite coproducts. Likewise $E(M') \from \finset \to \cat{E}(M')$ preserves finite coproducts.

Thus, given any finite family of objects of $\cat{E}(M)$ their coproduct is the image under $E(M)$ of the corresponding coproduct in $\finset$, and in particular the coprojections are the images under $E(M)$ of the corresponding coprojections in $\finset$. The same is true for the coprojections for the corresponding coproduct in $\cat{E}(M')$. But since $K \of E(M) = E(M')$, this means that $K$ must send the coprojections in $\cat{E}(M)$ to the coprojections in $\cat{E}(M')$, that is, it preserves the coproduct.
\end{proof}

\begin{prop}
\label{prop:mon-in-proth-ff}
The functor $E \from \monoid \to \proth(\finset^{\op})$ is full and faithful.
\end{prop}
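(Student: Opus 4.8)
The plan is to show that $E \from \monoid \to \proth(\finset^{\op})$ is full and faithful by exhibiting an essentially inverse construction on the relevant hom-sets, exploiting the fact that the objects of $\cat{E}(M)$ can be identified with finite sets and that the morphisms out of the one-element set $1 \in \finset$ carry all the data of the monoid $M$. Concretely, I would first observe that since $E(M)$ is bijective on objects and $\finset$ has (a skeleton with) objects the natural numbers, every object of $\cat{E}(M)$ is the image under $E(M)$ of a unique $n \in \finset$, and by Lemma~\ref{lem:monoid-finite-coproducts} this identification is compatible with finite coproducts. I would then pin down $\cat{E}(M)(E(M)(1), E(M)(1))$: via the full and faithful $N(M)$ this is $\SET_M(1,1) = \SET(1, M \times 1) \iso M$, and one checks that this bijection is a monoid isomorphism (composition in $\SET_M$ restricted to endomorphisms of $1$ is exactly multiplication in $M$, and the identity morphism corresponds to $e_M$). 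This gives a natural way of recovering $M$ from the proto-theory $E(M)$.

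Next, for faithfulness: if $h, h' \from M \to M'$ satisfy $E(h) = E(h')$, then in particular $E(h)$ and $E(h')$ agree on the endomorphism monoid of $E(M)(1)$; tracing through the defining square of Definition~\ref{defn:monoid-theory-include} and the identification above, the action of $E(h)$ on $\cat{E}(M)(E(M)1, E(M)1) \iso M$ is precisely $h \from M \to M'$ (since $h_*$ acts on $\SET_M(1,1) = \SET(1, M\times 1)$ by postcomposition with $h \times \id_1$). Hence $h = h'$.

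For fullness, suppose $K \from E(M) \to E(M')$ is an arbitrary morphism in $\proth(\finset^{\op})$, i.e.\ a functor $\cat{E}(M) \to \cat{E}(M')$ with $K \of E(M) = E(M')$. Restricting $K$ to the endomorphism monoid of $E(M)(1)$ and using the identifications $\cat{E}(M)(E(M)1,E(M)1) \iso M$ and $\cat{E}(M')(E(M')1, E(M')1) \iso M'$, we obtain a function $h \from M \to M'$; since $K$ is a functor fixing $E(M)(1) = E(M')(1)$ it respects composition and identities, so $h$ is a monoid homomorphism. It remains to verify $E(h) = K$. Both are functors on $\cat{E}(M)$ agreeing on objects (both equal $E(M')$ composed with the identification of objects with finite sets). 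To see they agree on morphisms, I would argue that $\cat{E}(M)$ is generated, as a category with finite coproducts, by the object $E(M)(1)$ together with its endomorphisms and the coproduct coprojections from $\finset$: every object is a finite coproduct of copies of $E(M)(1)$ by Lemma~\ref{lem:monoid-finite-coproducts}, and a morphism $E(M)(n) \to E(M)(m)$ in $\cat{E}(M)$, viewed in $\SET_M$ as a function $n \to M \times m$, is determined by its composites with the coprojections $E(M)(1) \to E(M)(n)$ and the projections, i.e.\ by $n$-many elements of $M \times m$, which are built from identity-on-$M$ coprojection data in $\finset$ and endomorphisms of $E(M)(1)$. Since $K$ and $E(h)$ preserve finite coproducts (the former because it factors through the coproduct-preserving inclusions exactly as in Lemma~\ref{lem:monoid-finite-coproducts}, the latter by the same lemma) and agree on $\finset$-morphisms and on $\End(E(M)1)$, they agree on all of $\cat{E}(M)$.

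The main obstacle I expect is the last verification --- making precise the claim that $\cat{E}(M)$ is "generated by $E(M)(1)$ and its endomorphisms as a finite-coproduct category", and checking carefully that an arbitrary morphism of $\cat{E}(M)$ decomposes in the required way so that agreement on generators forces agreement everywhere. This amounts to a concrete analysis of $\SET_M$: a morphism $n \to M \times m$ in $\SET_M$ is a tuple $(g_1,\dots,g_n)$ with $g_i \in M\times m$, and one writes each $g_i$ as a composite of a coprojection $1 \to m$ in $\finset$ (sending the point to the appropriate element of $m$) precomposed by the endomorphism of $1$ given by the $M$-component; bookkeeping the coprojections $1 \to n$ then reconstructs the whole morphism, and functoriality plus coproduct-preservation of $K$ and $E(h)$ closes the argument. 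None of this is deep, but it is the step where the compatibility of the bijective-on-objects factorisation with the Kleisli structure has to be used honestly rather than formally.
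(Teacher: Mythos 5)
Your proposal is correct and follows essentially the same route as the paper: identify $M$ with the endomorphism monoid of $E(M)(1)$, deduce faithfulness from the fact that $E(h)$ acts on it as $h$, and for fullness reduce via Lemma~\ref{lem:monoid-finite-coproducts} to hom-sets $\cat{E}(M)(1,S)$, where each morphism $(m,s) \from 1 \to M\times S$ decomposes as the endomorphism $m$ of $1$ followed by $E(M)$ of the map $1 \to S$ picking out $s$, forcing $K$ to agree with $E(h)$. The "main obstacle" you flag is exactly the short computation the paper carries out, and no extra subtlety arises.
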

\begin{proof}
An endomorphism of $1 = \{*\}$ in $\cat{E}(M)$ is a function
\[
1 \to M \times 1
\]
and so can be identified with an element of $M$; composition of such endomorphisms then corresponds to the multiplication in $M$. Thus, any morphism $E(M) \to E(M')$ in $\proth(\finset^{\op})$ induces a monoid homomorphism
\[
\cat{E}(M)(1,1) \iso M \to \cat{E}(M')(1,1) \iso M'
\]
and it is clear that the monoid homomorphism induced in this way by $E(h)$ for $h \from M \to M'$ is $h$ itself. In particular, if $h \neq h' \from M \to M'$ then $E(h) \neq E(h')$, so $E$ is faithful.

Let $K \from E(M) \to E(M')$ in $\proth(\finset^{\op})$, and let $h\from M \to M'$ be the monoid homomorphism induced by the action of $K$ on endomorphisms of $1$ in the way described above. We must show that $K = E(h)$.

By Lemma~\ref{lem:monoid-finite-coproducts}, every object of $\cat{E}(M)$ is a finite copower of $1$, and these copowers are preserved by both $K$ and $E(h)$. Thus in order to check that $K = E(h)$, it is sufficient to check that they agree on hom-sets of the form $\cat{E}(M) (1, S)$. An element of such a hom-set is a function
\[
1 \to M \times S
\]
and so can be identified with a pair $(m, s) \in M \times S$. The functor $E(h)$ sends this pair to $(h(m), s) \in M' \times S$; we must show that $K$ does the same.

This morphism can be decomposed as
\[
m \from 1 \to M \times 1
\]
followed by
\[
(e_M, s) \from 1 \to M \times S.
\]
The former is sent to $h(m) \from 1 \to M' \times 1$ by definition of $h$. The latter is $E(M)$ applied to the morphism $1 \to S$ in $\finset$ that picks out the element $s$. Since $K \of E(M) = E(M')$, we must have
\[
K (e_M, s) = E(M')(s) = (e_{M'}, s) \from 1 \to M' \times S
\]
in $\cat{E}(M')$. Thus functoriality of $K$ means that it sends $(m, s)$ to the composite of $h(m) \from 1 \to M' \times 1$ and $(e_{M'}, s) \from 1 \to M' \times S$ in $\cat{E}(M')$, which is $(h(m), s)$. Thus $K = E(h)$, as required.
\end{proof}

\begin{lem}
\label{lem:act-semantics}
Let $M$ be a small monoid. Then $\mod (E(M))$ is isomorphic to the category of finite sets equipped with an action by $M$ and $M$-equivariant maps, and $\sem (E (M))$ is the usual forgetful functor.
\end{lem}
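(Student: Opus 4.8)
The plan is to recall that, for the canonical aritation on $\finset$, a model of a proto-theory $L\from\finset^{\op}\to\cat{L}$ can be described either as an $L$-model $(d,\Gamma)$ in the sense of Definition~\ref{defn:models-explicit}, or, via the equivalence established in Section~\ref{sec:alt-model-algebra}, as an $L$-algebra $(d,\alpha)$ in the sense of Definition~\ref{defn:L-alg}. It will be convenient to work with the algebra description. So I would first spell out what an $E(M)$-algebra is: an object $d\in\finset$ together with maps $\alpha_b\from\cat{E}(M)(E(M)d,E(M)b)\to\finset(b,d)$, natural in $b\in\finset$, satisfying the unit and composition axioms. Using Lemma~\ref{lem:monoid-finite-coproducts}, every object of $\cat{E}(M)$ is a finite copower of $1$ and these copowers are preserved by $E(M)$, so naturality lets me reduce $\alpha$ to its behaviour on hom-sets of the form $\cat{E}(M)(E(M)d,1)$, i.e.\ to a single map $\alpha_1\from\cat{E}(M)(E(M)d,1)\to\finset(1,d)=d$. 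Unwinding the definition of $\cat{E}(M)$ as a bijective-on-objects factor of $\finset\incl\SET\toby{F_M}\SET_M$, together with fullness of $N(M)$, identifies $\cat{E}(M)(E(M)d,1)$ with the Kleisli hom-set $\SET_M(d,1)$, which is just the set of functions $d\to M\times 1\cong M$, i.e.\ $M^d$.

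Then the heart of the argument is to show that an $E(M)$-algebra structure on $d$ is exactly the same as a monoid action of $M$ on $d$. Given an action $a\from M\to\finset(d,d)$, one defines $\alpha_1\from M^d\to d$ by sending $(m_i)_{i\in d}$ — thought of as a function picking for each $i$ an element of $M$ — to the appropriate evaluation, and conversely, given $\alpha$, one recovers the action by restricting to morphisms coming from $\cat{E}(M)(E(M)d,E(M)d)$ that correspond under the identification above to the ``constant'' Kleisli endomorphisms of $d$ labelled by elements of $M$; the fact that $\cat{E}(M)(1,1)\cong M$ with composition giving the multiplication (already noted in the proof of Proposition~\ref{prop:mon-in-proth-ff}) makes the monoid axioms for the action correspond precisely to axioms \ref{defn:L-alg}.\bref{part:L-alg-id} and \ref{defn:L-alg}.\bref{part:L-alg-comp}. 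One must check the two directions are mutually inverse, which is a routine diagram chase using Lemma~\ref{lem:alg-func} and the explicit composition formula in $\SET_M$. Similarly, I would check that a morphism $h\from d\to d'$ in $\finset$ is an $E(M)$-algebra homomorphism (Definition~\ref{defn:alg-hom}) if and only if it is $M$-equivariant, again by restricting the defining equation to the generating $M$-labelled Kleisli morphisms and using that general morphisms are finite copowers of these. This gives an isomorphism of categories $\mod(E(M))\iso(M\hyph\finset)$, and since in both cases the structure map $d^x\mapsto d^x$ is the forgetful functor, it is immediate that $\sem(E(M))$ is the usual forgetful functor to $\finset$.

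The main obstacle I expect is the bookkeeping around the bijective-on-objects factorisation: one needs to be careful that $\cat{E}(M)(E(M)b,E(M)b')$ really is the full Kleisli hom-set $\SET_M(b,b')$ (this uses that $N(M)$ is full and faithful, so $\cat{E}(M)$ is a full image) and that the identification is compatible with composition and with the finite-coproduct structure, so that the reduction to the ``arity $1$, output shape $1$, $M$-labelled'' morphisms is legitimate. Once that identification is in place, the correspondence between $E(M)$-algebra axioms and monoid-action axioms is essentially forced and the rest is a short check. I would present the proof by first establishing the hom-set identification as a lemma-like paragraph, then constructing the two functors between $\mod(E(M))$ and $M\hyph\finset$ and verifying they are mutually inverse, and finally observing the compatibility with the forgetful functors.
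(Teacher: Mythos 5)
There is a genuine error in your proposal: the variance of the hom-sets is backwards, and it is not a harmless bookkeeping slip. The proto-theory $E(M)$ is officially a bijective-on-objects functor out of $\finset^{\op}$ with codomain $\cat{E}(M)^{\op}$, so in Definition~\ref{defn:L-alg} the domain of $\alpha_b$ is $\cat{E}(M)^{\op}(E(M)d, E(M)b) = \cat{E}(M)(E(M)b, E(M)d) \iso \SET_M(b,d) = \Set(b, M\times d)$, with $b$ (the arity) as the \emph{source} in $\cat{E}(M)$; this is also exactly what happens in the monad case, where the domain of $\alpha_b$ is $\cat{B}(b, Td^x)$. You instead take $\cat{E}(M)(E(M)d, E(M)b)$ and reduce to $\cat{E}(M)(E(M)d,1) \iso \Set(d, M\times 1) \iso M^d$, arriving at a structure map $M^d \to d$. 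That is not the same data as an $M$-action: given a family $(m_i)_{i\in d}$ and an action of $M$ on $d$ there is no canonical element of $d$ to assign (your ``appropriate evaluation'' does not exist), so the claimed bijection between algebra structures and actions cannot be set up from this starting point. Moreover, with your reading the naturality in $b$ does not even match up: the codomain $\finset(b,d)$ is contravariant in $b$, while $\Set(d, M\times b)$ does not send coproducts in $b$ to products, so the reduction-to-arity-one argument would not apply.

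With the variance corrected, your strategy is exactly the paper's. The domain presheaf $b \mapsto \Set(b, M\times d)$ and the codomain presheaf $b\mapsto \finset(b,d)$ both send finite coproducts in $\finset$ to products (equivalently, one invokes density of $\finset$ in $\Set$, as the paper does), so $\alpha$ is determined by its component at $1$, which is a map $M\times d \iso \Set(1, M\times d) \to \finset(1,d) \iso d$. Conditions \ref{defn:L-alg}.\bref{part:L-alg-id} and \ref{defn:L-alg}.\bref{part:L-alg-comp} then translate into the unit and associativity axioms for an $M$-action, and Definition~\ref{defn:alg-hom} into $M$-equivariance, exactly as in the paper's proof (which works with the model/natural-transformation formulation rather than explicitly reducing to copowers of $1$, but the content is the same). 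So the fix is local, but as written the central identification of your argument is wrong.
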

\begin{proof}
Let $X$ be a finite set. Write $I \from \finset \incl \Set$ for the inclusion. To define an $E(M)$-model structure on $X$ is to give a natural transformation $\alpha \from \cat{E}(M) ( E (M) - , X) \to \finset(-, X) \iso \Set(I-, X)$ satisfying the conditions of Definition~\ref{defn:L-alg}. But by definition, 
\begin{align*}
\cat{E}(M) ( E (M) - , X) &\iso \Set_M ( F_M I- , F_M X ) \\ 
& \iso \Set (I-, M \times X).
\end{align*}
Since $\finset$ is dense in $\Set$, such a natural transformation is given by a morphism $a \from M \times X \to X$. The identity morphism $\id_{E(M)(X)} \in \cat{E}(M)(X, X)$ corresponds to the map $\eta_{M, X} \from X \to M \times X$, so condition~\ref{defn:L-alg}.\bref{part:L-alg-id} corresponds to the commutativity of 
\[
\xymatrix{
X \ar[r]^{\eta_{M, X}} \ar@{=}[dr] & M \times X \ar[d]^{a} \\
& X.
}
\]
Condition~\ref{defn:L-alg}.\bref{part:L-alg-comp} corresponds to, for arbitrary $f \from S' \to M \times S$ and $ g \from S \to M \times X$, the commutativity of
\[
\xymatrix{
S' \ar[r]^-{f} & M \times S\ar[r]^-{\id_M \times g} & M \times M \times X \ar[r]^-{\mu_M \times \id_X} \ar[d]_{\id_M \times a} & M \times X \ar[d]^{a} \\
& & M \times X \ar[r]_-{a} & X.
}
\]
(The top-right composite corresponds to $\alpha_{S'} ( g \of f)$, regarding $g$ and $f$ as morphisms in $\cat{E}(M)$, and the bottom-left composite corresponds to $\alpha_{S'} ( E(M)(\alpha_S (g)) \of f)$.) Certainly the associativity axiom for an $M$-action implies that this diagram commutes. Conversely we recover the associativity axiom by taking $g$ and $f$ to be identities. An $E(M)$-model homomorphism between models $(X, \alpha) \to (X', \alpha')$, with corresponding $M$-actions $a$ and $a'$ is a morphism $h \from X \to X'$ such that
\[
\xymatrix{
M \times X \ar[r]^{\id_M \times h}\ar[d]_{a} & M \times X'\ar[d]^{a'}\\
X\ar[r]_{h} &X'
}
\]
commutes --- that is, an $M$-equivariant map.
\end{proof}

The next proposition characterises the proto-theories that arise from monoids in this way.

\begin{prop}
\label{prop:act-image}
Let $L \from \finset \to \cat{L}$ be a proto-theory with arities in $\finset^{\op}$. Then $L$ is in the essential image of $E \from \monoid \to \proth(\finset^{\op})$ if and only if:
\begin{enumerate}
\item
\label{part:act-image-ls}
$\cat{L}$ is locally small,
\item
\label{part:act-image-coproducts}
$L$ preserves finite coproducts, and 
\item
\label{part:act-image-unary}
for every $l \from L1 \to LS$ in $\cat{L}$, there is a unique $m \from L1 \to L1$ in $\cat{L}$ and $s \from 1 \to S$ in $\finset$ such that $l = Ls \of m$.
\end{enumerate}
When these conditions hold, $L$ is isomorphic to $E(M)$, where $M$ is the monoid $\cat{L}(L1, L1)$ (with composition as multiplication).
\end{prop}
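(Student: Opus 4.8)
The plan is to prove both directions. For the ``only if'' direction, suppose $L \iso E(M)$ for some small monoid $M$; then it suffices to check that $E(M)$ itself satisfies \bref{part:act-image-ls}--\bref{part:act-image-unary}, since all three conditions are clearly preserved by isomorphism in $\proth(\finset^{\op})$. Local smallness of $\cat{E}(M)$ holds because it is a full subcategory of $\SET_M$, and $\SET_M(S, S') = \Set(S, M \times S')$ is small when $S, S'$ are small. Preservation of finite coproducts is exactly the first half of Lemma~\ref{lem:monoid-finite-coproducts}. For \bref{part:act-image-unary}, recall that a morphism $L1 \to LS$ in $\cat{E}(M)$ is a function $1 \to M \times S$, hence a pair $(m, s) \in M \times S$; identifying $m$ with an endomorphism of $L1$ and $s$ with the morphism $1 \to S$ in $\finset$ picking out $s$, the factorisation $l = Ls \of m$ and its uniqueness follow exactly as in the last paragraph of the proof of Proposition~\ref{prop:mon-in-proth-ff}.

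For the ``if'' direction, suppose $L \from \finset \to \cat{L}$ satisfies \bref{part:act-image-ls}--\bref{part:act-image-unary}; set $M = \cat{L}(L1, L1)$, which is a small monoid under composition by \bref{part:act-image-ls}. I would construct an isomorphism $\cat{L} \iso \cat{E}(M)$ commuting with the proto-theory structure maps from $\finset$. The key observation is that, since $L$ preserves finite coproducts by \bref{part:act-image-coproducts} and every object of $\finset$ is a finite copower of $1$, both $\cat{L}$ and $\cat{E}(M)$ have every object a finite copower of $L1$ (resp.\ $E(M)1$), with coprojections in the image of the respective structure functor. Hence any functor out of $\cat{L}$ commuting with the structure from $\finset$ and preserving these coproducts is determined by its action on hom-sets of the form $\cat{L}(L1, LS)$. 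By \bref{part:act-image-unary}, the map $(m, s) \mapsto Ls \of m$ is a bijection $M \times S \toby{\iso} \cat{L}(L1, LS)$, and $M \times S \iso \cat{E}(M)(E(M)1, E(M)S)$ by the description of hom-sets in $\cat{E}(M)$. These bijections are the hom-set components of the desired isomorphism $\cat{L} \iso \cat{E}(M)$; I would check functoriality (compatibility with composition) by decomposing a general morphism $L1 \to LS$ as an endomorphism of $L1$ followed by the image of a morphism of $\finset$, exactly as in Proposition~\ref{prop:mon-in-proth-ff}, and noting that composition of endomorphisms of $L1$ is by definition the multiplication in $M$. Finally I would verify that this isomorphism commutes with the maps $L \from \finset \to \cat{L}$ and $E(M) \from \finset \to \cat{E}(M)$, giving an isomorphism in $\proth(\finset^{\op})$.

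The main obstacle will be bookkeeping the finite-coproduct argument cleanly: one must be careful that ``every object is a finite copower of $L1$'' requires not just that $L$ preserves coproducts but that $L$ is bijective on objects (so that the copowers in $\cat{L}$ are genuinely indexed by $\finset$), and that the coprojections really are in the image of $L$ --- this is precisely the content of the second paragraph of the proof of Lemma~\ref{lem:monoid-finite-coproducts}, which I would invoke. Once that structural fact is in place, the rest is a routine verification essentially identical in shape to the fullness argument in Proposition~\ref{prop:mon-in-proth-ff}, so I do not expect further difficulty; I would keep the calculation brief and refer back to that proof for the repetitive parts.
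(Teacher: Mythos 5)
Your proposal is correct and follows essentially the same route as the paper: verify the three conditions for $E(M)$ directly (local smallness from the hom-sets of $\SET_M$, coproducts from Lemma~\ref{lem:monoid-finite-coproducts}, the unique factorisation from reading a map $1 \to M\times S$ as a pair $(m,s)$), and conversely build the comparison functor $\cat{L} \to \cat{E}(M)$ out of $M = \cat{L}(L1,L1)$ by first defining it on hom-sets $\cat{L}(L1,LS)$ via condition (iii) and then extending along finite copowers of $L1$. Your remark about needing bijectivity on objects and the coprojections lying in the image of $L$ is exactly the point the paper handles via Lemma~\ref{lem:monoid-finite-coproducts}, so there is nothing to add.
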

\begin{proof}
First let us check that every proto-theory with arities $\finset^{\op}$ of the form $E(M)$ for a small monoid $M$ satisfies these properties.
\begin{enumerate}
\item It is clear from the definition that $\cat{E}(M)$ is locally small, given that $M$ is a small monoid.
\item This follows from Lemma~\ref{lem:monoid-finite-coproducts}.  
\item Let $f \in \cat{E}(M)(1, S)$, so $f$ is a function $1 \to M \times S$ in $\Set$. Write $f(*) = (m, s)$. Then $f$ is equal to the composite
\[
1 \toby{\widebar{m}} M \times 1 \toby{E(M)(s)} M \times M \times S \toby{\mu_M \times \id_S} M \times S
\]
which is the composite $E(M)(s) \of \widebar{m}$ in $\cat{E}(M)$, and $s$ and $m$ are unique such.
\end{enumerate}
Thus $E(M)$ does satisfy these properties. Furthermore, a morphism $1 \to 1$ in $\cat{E}(M)$ is a function $1 \to M \times 1$ in $\Set$, which clearly corresponds to an element of $M$, and it is easy to see that composition in $\cat{E}(M)$ corresponds to multiplication in $M$.

All that remains is to check that if $L \from \finset \to \cat{L}$ satisfies the stated properties then there is an isomorphism $L \iso \cat{E}(M)$ where $M = \cat{L}(L1,L1)$. Firstly condition~\bref{part:act-image-ls} ensures that $M$ is indeed a small monoid.

Let us define a functor $P \from \cat{L} \to \cat{E}(M)$. On objects, define $PLS = S$. Let $l \in \cat{L}(L1, LS')$. Then by condition~\bref{part:act-image-unary}, $l$ factors uniquely as $Ls' \of m$ with $m \in \cat{L}(L1,L1) = M$ and $s' \from 1 \to S'$. We must define $Pl \in \cat{E}(M)(1, S') = \Set (1, M \times S')$; define $Pl$ to be the map sending $*$ to $(m, s')$. We can then extend this definition to morphisms with an arbitrary domain, by noting that both $L$ and $E(M)$ preserve finite coproducts, and every object of $\finset$ is a finite copower of $1$. Thus a morphism $l \from LS \to LS'$ corresponds to a family $(l_s \from L1 \to LS')_{s \in S}$, and we can define $Pl \in \cat{E}(M)(S, S')$ to be the morphism corresponding to the family $(Pl_s \from 1 \to M \times S')_{s \in S}$.

Let us check that $P$ is functorial. It is sufficient to check that for $l \from L1 \to LS' $ and $k \from LS' \to LS''$ we have $P(k \of l) = P(k) \of P(l)$.  Suppose $l$ factors as 
\[
L1 \toby{m} L1 \toby{Ls'} LS'
\]
and that the composite
\[
L1 \toby{Ls'}  LS' \toby{k} LS''
\]
factors as
\[
L1 \toby{m'} L1 \toby{Ls''} LS''.
\]
Then
\[
\xymatrix{
L1\ar[r]^{m} \ar[dr]_{m' \cdot m}\ar@/^1.5pc/[rr]^{l} & L1\ar[r]^{Ls'}\ar[d]^{m'} & LS'\ar[d]^{k} \\
& L1 \ar[r]_{Ls''} & LS''
}
\]
commutes, so $P(k \of l) = (m' \cdot m, s'')$. On the other hand, $Pl = (m, s') \from 1 \to M \times S'$, and $P(k) \from S' \to M \times S''$ sends $s'$ to $(m', s'')$, so their composite in $\cat{E}(M)$ is also $(m' \cdot m, s'')$, as required. Thus $P$ is functorial.

Finally $P$ is a bijection on each hom-set of the form $\cat{L}(L1, LS')$ by condition~\bref{part:act-image-unary}, and it follows that it is a bijection on arbitrary hom-sets by~\bref{part:act-image-coproducts}.
\end{proof}

\begin{prop}
\label{prop:monoid-restrict}
There is a functor $T \from \monoid \to \monoid$ that is unique up to isomorphism such that
\[
\xymatrix{
\monoid\ar[r]^-E\ar[dd]_T & \proth(\finset^{\op})\ar[d]^{\sem} \\
& (\catover{\finset})^{\op}\ar[d]^{\str} \\
\monoid\ar[r]_-E & \proth(\finset^{\op})
}
\]
commutes up to isomorphism.
\end{prop}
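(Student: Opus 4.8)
The plan is to prove the statement by showing that for each small monoid $M$ the proto-theory $\str(\sem(E(M)))$ lies in the essential image of $E \from \monoid \to \proth(\finset^{\op})$, and then appealing to the fact that $E$ is full and faithful (Proposition~\ref{prop:mon-in-proth-ff}) to extract $T$ and to see that it is unique up to isomorphism.

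First I would identify $\str(\sem(E(M)))$ concretely. By Lemma~\ref{lem:act-semantics}, $\sem(E(M))$ is (isomorphic to) the forgetful functor $U \from M\hyph\finset \to \finset$ from the category of finite $M$-sets. Applying the explicit description of the structure functor for the canonical aritation (given in Section~\ref{sec:structure-def}), $\thr(U)$ has the same objects as $\finset$, a morphism $S \to S'$ in $\thr(U)$ is an element of $[M\hyph\finset,\Set](\finset(S, U-), \finset(S', U-))$, and $\str(U) \from \finset^{\op} \to \thr(U)$ is the identity on objects and sends a map of finite sets to precomposition with it. Identifying $\str(U)$, via Remark~\ref{rem:bo-no-op}, with a bijective-on-objects functor $L \from \finset \to \thr(U)^{\op}$, I would verify the three conditions of Proposition~\ref{prop:act-image}. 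Condition~(i), that $\thr(U)$ is locally small, holds because $M\hyph\finset$ is essentially small and $U$ takes values in finite sets, so each set of natural transformations in question is small. Condition~(ii), that $L$ preserves finite coproducts, follows from the natural isomorphism $\finset(S \sqcup S', U-) \iso \finset(S, U-) \times \finset(S', U-)$, which exhibits $S \sqcup S'$ as the product of $S$ and $S'$ in $\thr(U)$ in a way compatible with $\str(U)$.

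Condition~(iii) is the heart of the proof. Writing $X^{\times S}$ for the $S$-fold product in $M\hyph\finset$ (computed on underlying sets with the diagonal action, so that $U$ preserves finite products), condition~(iii) amounts to the claim that every natural transformation $\eta \from U\of(-)^{\times S} \to U$ can be written uniquely as $\nu\of\pi_s$ for a natural transformation $\nu \from U \to U$ and an element $s \in S$, where $\pi_s$ is the $s$-th projection. Uniqueness is routine, by evaluating at small trivial $M$-sets. For existence I would (a) restrict $\eta$ to the full subcategory of trivial $M$-sets, which is a copy of $\finset$ on which $U$ is the identity, and use the classical Yoneda-type computation $[\finset,\Set]((-)^S, \id_\finset) \iso S$ to single out a unique $s_0 \in S$ such that $\eta$ restricts to $\pi_{s_0}$, and set $\nu := \eta\of\Delta$ with $\Delta \from X \to X^{\times S}$ the diagonal, which is a natural transformation $U \to U$; and (b) show that $\eta_X = \nu_X \of \pi_{s_0}$ for every finite $M$-set $X$ by a diagram chase using naturality of $\eta$ with respect to appropriate $M$-maps between cartesian powers. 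Step~(b) is the main obstacle: $M\hyph\finset$ is a genuinely complicated category (finite $M$-sets need not decompose as coproducts of cyclic ones, and for infinite $M$ the functor $U$ is not representable), so one cannot simply reduce to representable functors; the argument must instead work with the cyclic sub-$M$-sets generated by the entries of a given tuple together with the idempotent $M$-endomorphisms $\Delta\of\pi_s$ of $X^{\times S}$.

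Granting this, Proposition~\ref{prop:act-image} gives $\str(\sem(E(M))) \iso E(N_M)$ where $N_M = \thr(U)(1,1) = [M\hyph\finset,\Set](U,U)$ is the monoid of natural endomorphisms of $\sem(E(M))$ (so in fact $N_M = \str_{\MONOID}(\sem_{\MONOID}(M))$ for the base category $\finset$, in the notation of Proposition~\ref{prop:str-sem-adj-monoid}). Hence the composite functor $\Phi := \str\of\sem\of E \from \monoid \to \proth(\finset^{\op})$ takes all of its values in the essential image of the full and faithful functor $E$. By the standard fact that a functor with values in the essential image of a full and faithful functor factors through it, uniquely up to isomorphism, I would define $T \from \monoid \to \monoid$ by composing $\Phi$ with a quasi-inverse of the equivalence between $\monoid$ and the replete closure of the image of $E$; this yields a natural isomorphism $E\of T \iso \Phi$, which is exactly the asserted commutativity. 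Uniqueness of $T$ up to isomorphism is immediate: $E$ full and faithful implies $E$ is conservative, and a natural isomorphism $E\of T \iso E\of T'$ then lifts componentwise (using fullness and faithfulness) to a natural isomorphism $T \iso T'$.
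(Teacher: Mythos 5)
Your overall architecture is exactly the paper's: identify $\sem(E(M))$ with the forgetful functor $U$ from finite $M$-sets, verify the three conditions of Proposition~\ref{prop:act-image} for $\str(U)$, and then use full-and-faithfulness of $E$ to extract $T$ and its uniqueness up to isomorphism. Conditions (i) and (ii), the choice of $s_0$ via restriction along the trivial-action functor $I \from \finset \to \cat{M}$, the definition $\nu = \eta \of \Delta$, and the uniqueness argument all match the paper and are fine.

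However, the step you yourself flag as ``the main obstacle'' --- showing that $\eta_X(\vect{x}) = \nu_X(\pi_{s_0}(\vect{x}))$ for an arbitrary finite $M$-set $X$, equivalently that $\eta_X(\vect{x})$ depends only on the coordinate $x_{s_0}$ --- is not actually proved, and this is where all the real work of the proposition lives. Your proposed ingredients (cyclic sub-$M$-sets generated by the entries of the tuple, and the idempotents $\Delta \of \pi_s$ of $X^{\times S}$) do not obviously close it: naturality of $\eta$ is with respect to $M$-maps $X \to X'$, so the endomorphism $\Delta \of \pi_s$ of $X^{\times S}$ only constrains $\eta_{X^{\times S}}$, not $\eta_X$, and cyclic sub-$M$-sets of $X$ need not separate the coordinates $x_t$ from one another. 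The paper's argument is a specific two-stage construction: first it maps $X$ onto its (finite, trivial) set of \emph{orbits} --- equivalence classes for the relation generated by $m \cdot x \sim x$ --- and uses naturality together with the already-established behaviour on trivial $M$-sets to conclude that $\eta_X(\vect{x})$ lies in the orbit of $x_{s_0}$; second, and crucially, it replaces $X$ by $Z = S \times X$ with $M$ acting on each copy separately, so that the coordinates of the lifted tuple land in pairwise distinct orbits, and then applies naturality along the $M$-map $Z \to X_0 \sqcup \{*\}$ collapsing every orbit except $\{s_0\} \times X_0$, on which it is injective. Two tuples agreeing in the $s_0$-coordinate become equal after this collapse, which forces $\eta_Z$ (and hence $\eta_X$, via the projection $Z \to X$) to agree on them. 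Without this orbit-separation idea, or a workable substitute, step (b) of your proposal is a genuine gap.
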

\begin{proof}
It is sufficient to check that for a small monoid $M$, the proto-theory $\str \of \sem \of E (M)$ lies in the essential image of $E$; since $E$ is full and faithful, it then follows that such a functor $T$ exists. The uniqueness of $T$ follows from the fact that $E$ is full and faithful (Proposition~\ref{prop:mon-in-proth-ff}) and in particular reflects isomorphisms.

Let $M$ be a small monoid, and write $U \from \cat{M} \to \finset$ for $\sem (E(M)) \from \mod(E(M)) \to \finset$ for brevity --- recall (Lemma~\ref{lem:act-semantics}) that $\cat{M}$ is the category of finite $M$-sets, and $U$ is the usual forgetful functor. A morphism $S \to S'$ in $\thr (U)$ is a natural transformation
\[
\finset(S, U - ) \to \finset(S', U-)
\] 
or equivalently $U^S \to U^{S'}$.

We show that $\thr(U)$ satisfies the conditions of Proposition~\ref{prop:act-image}. Firstly $\cat{M}$ is isomorphic to the functor category $[M, \finset]$, regarding $M$ as a one-object category. Since $M$ and $\finset$ are small, so is $\cat{M}$, and so the functor category $[\cat{M}, \Set]$ is locally small. But $\thr(U)$ is equivalent to a full subcategory of this category, so it is also locally small.

Recall that by definition of $\str(U)$, we have a commutative square
\[
\xymatrix{
\finset^{\op}\ar[r]^{\curryhi}\ar[d]_{\str(U)} & [\finset, \Set]\ar[d]^{U^*} \\
\thr(U)\ar[r]_{J(U)} & [\cat{M}, \Set]
}
\]
in which $J(U)$ is full and faithful (and therefore reflects limits and colimits). Since $\curryhi$ and $U^*$ preserve finite products, and $J(U)$ reflects them, it follows that $\str(U)$ preserves finite products, or equivalently $\str(U)^{\op} \from \finset \to \thr(U)^{\op}$ preserves finite coproducts.

Let $S \in \finset$, and $\gamma \from U^S \to U$. We must show that $\gamma$ factors uniquely as a projection $\pi_s \from U^S \to U$ for some $s \in S$ followed by a natural transformation $\delta \from U \to U$.

Write $I \from \finset \to \cat{M}$ for the functor sending a finite set to the same set equipped with the trivial $M$-action. Then $U \of I$ is the identity on $\finset$, so whiskering $\gamma$ with $I$ gives a natural transformation
\[
\gamma I \from \id_{\finset} ^ S \iso \finset (S, -) \to \id_{\finset} \iso \finset(1, -),
\]
which, by the Yoneda lemma, is the same thing as an element $s \in \id_{\finset} (S) = S$. Then $\gamma I = \pi_s \from \id_{\finset}^S \to \id_{\finset}$, so on trivial $M$-sets, $\gamma$ is just the $s$-th projection.

Let $X$ be a finite $M$-set. By an \emph{orbit} of $X$, we mean an equivalence class for the smallest equivalence relation $\sim$ on $X$ such that for all $m \in M$ and $x \in X$, we have $m \cdot x \sim x$. Write $X_0, X_1, \ldots,  X_{n-1}$  for the orbits of $X$. Writing $n$ for the set $\{0, \ldots , n-1 \}$, define a map $h \from X \to I(n)$ by sending each $x \in X$ to the unique $i$ such that $x \in X_i$ --- this is clearly an $M$-set homomorphism. Hence, by naturality of $\gamma$,
\[
\xymatrix{
X^S \ar[r]^{h^S} \ar[d]_{\gamma_X} & I(n)^S\ar[d]^{\gamma_{I(n)} = \pi_s} \\
X\ar[r]_h & I(n)
}
\]
commutes. But $\pi_s \of h^S$ is also equal to $h \of \pi_s$. So it follows that if $\vect{x} = (x_s)_{s \in S}$ is an arbitrary element of $X^S$, then $\gamma_X (\vect{x})$ lies in the same orbit at $x_s$. Next we will show that in fact $\gamma_X(\vect{x})$ only depends on $x_s$.

Let $\vect{x} = (x_t)_{t \in S}$ and $\vect{x}'= (x'_t)_{t \in S}$ be elements of $X^S$ with $x_s = x'_{s}$. We will show that $\gamma_X(\vect{x}) = \gamma_X(\vect{x}')$, but first we construct slightly modified versions of $X, \vect{x}$ and $\vect{x}'$ as an intermediate step.

Let $Z = S \times X$, with $M$ acting on each copy of $X$ separately (so $m \cdot ( t, x) = (t, m \cdot x)$). Define $\vect{z}, \vect{z}' \in Z^S$ with components $z_t = (t, x_t)$ and $z'_t = (t, x'_t)$ respectively, for $t \in S$. Let $k \from Z = S \times X \to X$ be the projection --- this is a homomorphism. Clearly $k^S(\vect{z}) = \vect{x}$ and $k^S(\vect{z}') = \vect{x}'$, so the commutativity of 
\[
\xymatrix{
Z^S \ar[r]^{k^S} \ar[d]_{\gamma_Z} & X^S \ar[d]^{\gamma_X} \\
Z\ar[r]_{k} & X
}
\]
implies that if we can show that $\gamma_Z(\vect{z}) = \gamma_Z(\vect{z}')$, it follows that $\gamma_X(\vect{x}) = \gamma_X(\vect{x})$.

Suppose without loss of generality that $x_s = x'_s$ lies in the orbit $X_0$. Let $Y = X_0 \sqcup \{* \}$, with $M$ acting on $X_0$ as in $X$, and acting trivially on $*$. The orbits of $Z$ are precisely the sets of the form $\{t\} \times X_i$, where $t \in S$ and $i \in n$. Define $l \from Z \to Y$ by sending every orbit except $\{s \} \times X_0$ to $*$, and mapping $\{s \} \times X_0$ to $X_0$ by the projection. This makes $l$ into an $M$-set homomorphism, and so
\[
\xymatrix{
Z^S\ar[r]^{l^S}\ar[d]_{\gamma_Z} & Y^S\ar[d]^{\gamma_Y} \\
Z\ar[r]_{l} &Y
}
\]
commutes. Now, the $s$-components of $\vect{z}$ and $\vect{z}'$ are equal, and all other components lie in some orbit of $Z$ other that $\{s \} \times X_0$, so $l^S (\vect{z}) = l^S (\vect{z}')$. Hence
\[
l (\gamma_Z (\vect{z})) = \gamma_Y(l^S (\vect{z})) = \gamma_Y(l^S (\vect{z}')) = l (\gamma_Z (\vect{z}')).
\]
But by the above, $\gamma_Z(\vect{z})$ and $\gamma_Z(\vect{z}')$ lie in the orbit of $z_s = z'_s$, which is $\{s \} \times X_0$. And $l$ is injective when restricted to this orbit, so it follows that $\gamma_Z (\vect{z}) = \gamma_Z (\vect{z}')$, and so $\gamma_X (\vect{x}) = \gamma_X (\vect{x}')$.

Thus $\gamma_X(\vect{x})$ depends only on the $s$-component of $\vect{x}$. Let $\Delta \from U \to U^S$ denote the diagonal. Then 
\[
\gamma \of \Delta \of \pi_s = \gamma \from U^S \to U
\]
since for any $M$-set $X$ and any element $\vect{x} \in X^S$, both $\vect{x}$ and $\Delta \of \pi_s (\vect{x})$ have the same $s$-component. Thus if we define $\delta \from U \to U$ to be the composite $\gamma \of \Delta$, we have $\gamma =  \delta \of \pi_s$.

Furthermore, $s$ and $\delta$ are unique; suppose $\gamma = \delta' \of \pi_{s'}$. Note that $\delta' I$ is an endomorphism of the identity functor on $\finset$, and therefore must be the identity by the Yoneda lemma, since the identity functor on $\finset$ is represented by the terminal object, which has no non-trivial endomorphisms. Thus we must have $\pi_{s'} = \gamma I = \pi_{s}$, so $s' = s$. It follows that $\delta' = \delta$, since $\pi_s$ is an epimorphism, being split by $\Delta$.
\end{proof}

\begin{cor}
\label{cor:str-sem-monad-restrict-monoid}
The structure--semantics monad on $\proth(\finset^{\op})$ restricts to a monad $(T, \eta^{\mnd{T}}, \mu^{\mnd{T}})$ on the full subcategory $\monoid$ of small monoids.
\end{cor}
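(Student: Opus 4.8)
The plan is to deduce this immediately from the previously established facts, using the general principle that a full and faithful functor which is a section (up to isomorphism) of a monad's underlying endofunctor inherits a restricted monad structure. Concretely, write $\mnd{S} = (S, \eta^{\mnd{S}}, \mu^{\mnd{S}})$ for the structure--semantics monad on $\proth(\finset^{\op})$ induced by the canonical aritation, so $S = \str \of \sem$ (identifying $\proth(\finset^{\op})$ with its own opposite in the evident way, as throughout this section). By Proposition~\ref{prop:monoid-restrict} we have a functor $T \from \monoid \to \monoid$ together with a natural isomorphism $\theta \from E \of T \toby{\iso} S \of E$, and $E \from \monoid \to \proth(\finset^{\op})$ is full and faithful by Proposition~\ref{prop:mon-in-proth-ff}.

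First I would transport the unit and multiplication of $\mnd{S}$ back along $\theta$ and the fullness and faithfulness of $E$. For the unit: the component $\eta^{\mnd{S}}_{E(M)} \from E(M) \to S(E(M))$ composed with $\theta_M^{-1} \from S(E(M)) \to E(T(M))$ gives a morphism $E(M) \to E(T(M))$ in $\proth(\finset^{\op})$, which by fullness and faithfulness of $E$ is $E(\eta^{\mnd{T}}_M)$ for a unique morphism $\eta^{\mnd{T}}_M \from M \to T(M)$ in $\monoid$; naturality of $\eta^{\mnd{T}}$ follows from naturality of $\eta^{\mnd{S}}$ and of $\theta$, again using that $E$ is faithful. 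For the multiplication: the composite $TT(M) \to T(M)$ is obtained the same way from $\mu^{\mnd{S}}_{E(M)} \from SS(E(M)) \to S(E(M))$, after identifying $SS(E(M)) \iso S(E(T(M))) \iso E(TT(M))$ using $S\theta$ and $\theta$; I would set $\mu^{\mnd{T}}_M$ to be the unique morphism with $E(\mu^{\mnd{T}}_M) = \theta_M^{-1} \of \mu^{\mnd{S}}_{E(M)} \of S(\theta_M) \of \theta_{T(M)}$, and check naturality as before.

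It then remains to verify the three monad axioms for $(T, \eta^{\mnd{T}}, \mu^{\mnd{T}})$. Each of these is an equality of morphisms in $\monoid$, and since $E$ is faithful it suffices to check it after applying $E$; but after applying $E$ and using the naturality of $\theta$ (and of $S\theta$) to slide the isomorphisms past one another, each axiom reduces to the corresponding axiom for $(S, \eta^{\mnd{S}}, \mu^{\mnd{S}})$, which holds because $\mnd{S}$ is a monad. This bookkeeping is entirely routine: it is the standard argument that a replete-up-to-iso ``sub-endofunctor on the nose'' of a monad, witnessed by a full and faithful functor, carries a unique monad structure making that functor a (lax, in fact strict) monad morphism. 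I would state the conclusion as: $(T, \eta^{\mnd{T}}, \mu^{\mnd{T}})$ is a monad on $\monoid$, and $E$ underlies a morphism of monads from it to $\mnd{S}$, so the structure--semantics monad restricts along $E$ as claimed.

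The only place that requires genuine attention — the ``main obstacle'', such as it is — is making sure the various identifications of $E \of T \of T$ with $S \of S \of E$ are done consistently: there are two a priori different isomorphisms $EТT(M) \iso SSE(M)$ one might write down (applying $\theta$ at the outer versus inner stage), and one must use the interchange/naturality of $\theta$ to see they agree, so that the definition of $\mu^{\mnd{T}}$ is unambiguous and the associativity pentagon actually collapses to that of $\mnd{S}$. Once that coherence is pinned down, everything else is formal, and no new computation beyond what is already in Proposition~\ref{prop:monoid-restrict} is needed.
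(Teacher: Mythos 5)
Your proposal is correct and takes the same route as the paper: the paper's proof is simply the observation that a monad restricts to a full subcategory as soon as its endofunctor does (up to isomorphism), citing Proposition~\ref{prop:monoid-restrict}, and your argument is a careful spelling-out of exactly that general principle. The coherence point you flag about the two identifications $ETT(M) \iso SSE(M)$ is the right thing to worry about, and your resolution via naturality of $\theta$ is fine.
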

\begin{proof}
In order for a monad to restrict to a monad on a full subcategory, all that is required is that the endofunctor part of the monad restricts to an endofunctor of the subcategory. In this case, this is precisely the previous proposition.
\end{proof}

We now specialise to those proto-theories that correspond to groups, rather than all monoids. Our main reason for doing so is that we will be considering the codensity monad of the inclusion of finite groups into all groups, which is somewhat more well-behaved than the codensity monad of the inclusion of finite monoids into monoids. Algebras for this monad on the category of groups can be identified with profinite groups --- that is, topological groups whose underlying topological space is profinite. In particular, we will make use of the fact that, for any group $G$, the unit of this monad has dense image. I do not know whether the corresponding results hold for monoids.

\begin{defn}
Write $(\widehat{\ }, \eta, \mu)$ for the codensity monad of the inclusion $\FinGp \incl \Gp$ of finite groups into groups; recall from Proposition~\ref{prop:profinite-codensity-set-top-gp} that this is the profinite completion monad, whose category of algebras is the category of profinite groups and continuous group homomorphisms. For a group $G$, the group $\widehat{G}$ is given by the end
\[
\widehat{G} = \int_{H \in \FinGp} H^{\Gp(G, H)}.
\]
Explicitly, the elements of $\widehat{G}$ are families
\[
\xi = (\xi_h)_{h \from G \to H}
\]
of elements $\xi_h \in H$ with $h$ ranging over all group homomorphisms from $G$ to finite groups $H$, such that, for any group homomorphism $k \from H \to H'$ between finite groups, $\xi_{k \of h} = k(\xi_h)$. The identity, multiplication and inverses in $\widehat{G}$ are defined component-wise. If $l \from G \to G'$ is a group homomorphism, then $\widehat{l} \from \widehat{G} \to \widehat{G'}$ sends $\xi = (\xi_h \in H)_{h \from G \to H} \in \widehat{G}$ to the element $\widehat{l}(\xi) \in \widehat{G'}$ whose $(h'\from G' \to H)$-th component is $\xi_{h' \of l} \in H$.

The unit of the monad has components $\eta_G \from G \to \widehat{G}$ sending $g \in G$ to the family $(h(g) \in H)_{h \from G \to H}$. The multiplication has components $\mu_G \from \widehat{\widehat{G}} \to \widehat{G}$ sending $(\zeta_{h'} \in H)_{h' \from \widehat{G} \to H}$ to the element $\mu_G(\zeta) \in \widehat{G}$ with $(h \from G \to H)$-th component
\[
\zeta_{\pi_h} \in H,
\]
where $\pi_h \from \widehat{G} \to H$ is projection onto the $h$-th factor, sending $\xi \in \widehat{G}$ to $\xi_h \in H$.
\end{defn}

\begin{prop}
\label{prop:group-restrict-procomp}
The monad $(T, \eta^{\mnd{T}}, \mu^{\mnd{T}})$ on $\monoid$ from Corollary~\ref{cor:str-sem-monad-restrict-monoid} restricts to a monad on the full subcategory $\Gp \incl \monoid$ that is isomorphic to the profinite completion monad $(\widehat{\ }, \eta, \mu)$.
\end{prop}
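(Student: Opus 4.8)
The plan is to show that the monad $(T, \eta^{\mnd{T}}, \mu^{\mnd{T}})$ on $\monoid$ restricts to $\Gp$, and that the restricted endofunctor is isomorphic to profinite completion, and then to check compatibility of units and multiplications. First I would verify that $T$ sends groups to groups. By Proposition~\ref{prop:monoid-restrict}, for a group $G$ we have $E(T(G)) \iso \str(\sem(E(G)))$, where $\sem(E(G)) \from \mod(E(G)) \to \finset$ is the forgetful functor $U$ from the category of finite $G$-sets (Lemma~\ref{lem:act-semantics}). Via Proposition~\ref{prop:act-image}, $T(G)$ is the monoid $\thr(U)(L1, L1)$, i.e.\ the monoid of natural endomorphisms $\delta \from U \to U$. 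So the key computation is to identify this endomorphism monoid. Following the style of the proof of Proposition~\ref{prop:monoid-restrict}, I would argue that a natural transformation $\delta \from U \to U$ on the category of finite $G$-sets is determined by its components on the ``orbit-type'' $G$-sets $G/H$ for $H$ ranging over finite-index subgroups of $G$, and more precisely that such $\delta$ correspond exactly to elements of the profinite completion $\widehat{G}$ (an element $\xi \in \widehat{G}$ acts on a finite $G$-set $X$ via the finite quotient of $G$ through which the action of $G$ on $X$ factors). The monoid structure of $\thr(U)(L1,L1)$ is composition, and one checks this corresponds to multiplication in $\widehat{G}$; in particular invertibility of every such $\delta$ (because $\widehat{G}$ is a group) shows $T(G)$ is a group, so $T$ restricts to an endofunctor of $\Gp$, and the isomorphism $T(G) \iso \widehat{G}$ is natural in $G$ by tracing through the construction.

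Next I would pin down the unit and multiplication. The unit $\eta^{\mnd{T}}_G \from G \to T(G)$ is the component at $E(G)$ of the unit of the structure--semantics adjunction, transported across $E$. Unwinding Definition~\ref{defn:adj-bij} (or Remark~\ref{rem:ff-completeness-interpret}), this sends $m \in \cat{L}(L1,L1) = G$ to the natural transformation $\Gamma^{(-)}(m) \from U \to U$ whose component at a finite $G$-set $X$ is the action of $m$ on $X$ — which under the identification $\thr(U)(L1,L1) \iso \widehat{G}$ is precisely $\eta_G(m) = (h(m))_{h \from G \to H}$. So $\eta^{\mnd{T}}$ corresponds to $\eta$ under the natural isomorphism. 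For the multiplication, I would use the general fact (the counit characterisation, Proposition~\ref{prop:sem-mnd-ff}-style, or direct computation as in the proof of Proposition~\ref{prop:sem-str-counit-restr}) that the structure--semantics monad multiplication at $E(G)$ is built from the counit $E_{E(G)}$ of the adjunction; tracing this through the identification and comparing with the explicit formula for $\mu_G \from \widehat{\widehat{G}} \to \widehat{G}$ (evaluation at projections $\pi_h$) gives $\mu^{\mnd{T}}_G \iso \mu_G$. The compatibility square then follows because both sides commute with the forgetful functor to $\Set$, which is faithful, together with the component-wise matching already established.

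The main obstacle I expect is the core computation that natural endomorphisms of the forgetful functor $U \from (\text{finite }G\text{-sets}) \to \finset$ are exactly the elements of $\widehat{G}$, with the correct monoid structure. This is essentially a codensity-monad computation: it amounts to showing that the codensity monad of the forgetful functor from finite $G$-sets to $\finset$ is (the underlying set of) the profinite completion, analogous to but distinct from Proposition~\ref{prop:profinite-codensity-set-top-gp}. I would either prove it by the same orbit-reduction argument used in the proof of Proposition~\ref{prop:monoid-restrict} (replacing ``orbits'' and ``$M$-sets'' by transitive $G$-sets $G/H$ and using that $G$ acts on a finite $G$-set through a finite quotient), reducing a general $\delta$ to its values on the $G/H$ and showing these values assemble into a compatible family over finite quotients of $G$; or deduce it from Lemma~\ref{lem:codense-radj-monad} by exhibiting finite $G$-sets (or finite quotient groups) as a codense subcategory and using that the forgetful functor from finite $G$-sets has suitable adjoints. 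The bookkeeping in matching composition of natural transformations with multiplication in the end $\int_H H^{\Gp(G,H)}$, and in matching $\mu^{\mnd{T}}$ with $\mu$, will be the most delicate part, but it is routine once the identification $T(G) \iso \widehat{G}$ is set up carefully.
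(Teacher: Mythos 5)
Your proposal is correct and follows essentially the same route as the paper: identify $TG = [\Gfinset,\finset](U,U)$ with $\widehat{G}$ as monoids, check naturality in $G$, and then match the units and multiplications component-wise. The paper's only notable implementation choices are that the inverse bijection is built by evaluating a natural transformation at the $G$-sets $H_h$ (a finite group $H$ with $G$ acting by left multiplication through $h \from G \to H$), setting $\Xi(\gamma)_h = \gamma_{H_h}(e_H)$, and that naturality of the forward map $\Phi(\xi)$ is verified using density of the image of $G$ in $\widehat{G}$ together with the Hausdorffness of the discrete targets — both of which are compatible with, and essentially refinements of, the reduction you sketch.
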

\begin{proof}
First we must show that the square
\[
\xymatrix{
\Gp\ar[r]\ar[d]_{\widehat{\ }} & \monoid\ar[d]^T \\
\Gp\ar[r] & \monoid
}
\]
commutes up to isomorphism.

We construct an isomorphism $ \Phi \from \procomp{G} \to [\Gfinset, \finset ] (U, U) = TG$, where $U\from \Gfinset \to \finset$ is the forgetful functor. Let $\xi \in \procomp{G}$. A finite $G$-set $X$ with underlying set $X_0$ is determined by a group homomorphism $\rho_X \from G \to \sym (X_0)$, where $\sym(X_0)$ is the group of automorphisms of $X_0$ in $\finset$. Since $\sym(X_0)$ is a finite group, we may define $\Phi (\xi) _X = \xi_{\rho_X} \from X_0 \to X_0$. We must check that $\Phi (\xi)$ thus defined actually is a natural transformation $U \to U$, that is, for any $G$-set homomorphism $k \from X \to Y$, that
\[
\xymatrix{
X \ar[r]^k \ar[d]_{\Phi(\xi)_X} & Y\ar[d]^{\Phi(\xi)_Y} \\
X \ar[r]_k & Y
}
\]
commutes. Note that this square commutes for each $\xi \in \procomp{G}$ if and only if the right-hand diamond in
\[
\xymatrix{
& & {\sym{X_0}}\ar[dr]^{k_*} & \\
{G}\ar[urr]^{\rho_X}\ar[r]\ar[drr]_{\rho_Y} & {\procomp{G}}\ar[ur]_{\Phi(-)_X}\ar[dr]^{\Phi(-)_Y} & & {\finset(X_0,Y_0)} \\
& & {\sym{Y_0}}\ar[ur]_{k^*} &
}
\]
commutes. But the outer diamond in this diagram commutes, since $k$ is a $G$-set homomorphism. Also the two left-hand triangles commute by definition of $\Phi$. Suppose we equip $\procomp{G}$ with its canonical profinite topology, and all the other sets in this diagram with the discrete topology. Then all the maps in this diagram are continuous. Furthermore, since the image of $G$ is dense in $\procomp{G}$ (by, for example, Lemma~3.2.1 of~\cite{ribesZalesskii10}), and all the spaces involved are Hausdorff, it follows that the right-hand diamond commutes, since maps with dense image are epic in the category of Hausdorff spaces.

Hence each $\Phi (\xi)$ is indeed a natural transformation $U \to U$. In addition, 
\[
\Phi \from \procomp{G} \to [\Gfinset, \finset](U,U)
\]
 is a monoid homomorphism since each $\Phi(-)_X$ is by construction. Now we check that it is natural in $G$. Let $f \from G \to G'$ be a group homomorphism, and let us first describe the monoid homomorphism
 \[
 Tf \from  TG = [\Gfinset, \finset](U,U) \to TG' = [G'\text{-}\fcat{FinSet}, \finset](U',U')
 \]
 (where $U' \from G'\text{-}\fcat{FinSet} \to \finset$ is the forgetful functor). Let $\gamma \from U \to U$ be a natural transformation and $X$ be a $G'$-set, with action determined by a group homomorphism $\rho'_X \from G' \to \sym(X_0)$, where $X_0$ is the underlying set of $X$. Then $Tf (\gamma)$ is the natural transformation $U' \to U'$ whose component at such an $X$ is the component of $\gamma$ at the $G$-set with the same underlying set as $X$ and $G$-action determined by the group homomorphism
 \[
 G \toby{f} G' \toby{\rho_X'} \sym(X_0).	
 \]
 Now let $\xi \in \procomp{G}$. Then $Tf \of \Phi (\xi) \from U' \to U'$ is the natural transformation whose component at a $G'$-set $X$ corresponding to $\rho'_X \from G' \to \sym(X_0)$ is $\xi_{\rho'_X \of f}$.
 
 On the other hand, $\procomp{f} (\xi) \in \procomp{G}$ has, for $h' \from G' \to H$ with $H$ finite,
 \[
 (\procomp{f} (\xi))_{h'} = \xi_{h' \of f}.
 \]
 In particular,
 \[
 (\Phi \of \procomp{f} (\xi))_X = (\procomp{f} (\xi))_{\rho_X'} = \xi_{\rho_X' \of f}.
 \]
 Thus $Tf \of \Phi = \Phi \of \procomp{f}$, and so $\Phi$ is natural.

Now we construct an inverse $\Xi$ for $\Phi$. Let $\gamma \from U \to U$; we wish to construct an element $\Xi(\gamma) \in \procomp{G}$. Given a finite group $H$ and a group homomorphism $h \from G \to H$, we obtain a $G$-set $H_h$ with underlying set $H$, and with $g \in G$ acting by multiplication on the left by $h(g)$. Thus we have $\gamma_{H_h} \from H \to H$. Define $\Xi (\gamma)_h = \gamma_{H_h}(e_H)$, where $e_H$ denotes the group identity of $H$.

We must check that $\Xi (\gamma)$ so defined is indeed an element of $\procomp{G}$, that is, that if $k \from H \to H'$ is a homomorphism between finite groups, that $\Xi (\gamma)_{k \of h} = k ( \Xi(\gamma)_{h})$. But such a group homomorphism $k$ is also a $G$-set-homomorphism $H_h \to H'_{k \of h}$, so
\[
\xymatrix{
H \ar[r]^k \ar[d]_{\gamma_{H_h}} & H' \ar[d]^{\gamma_{H'_{k \of h}}} \\
H \ar[r]_k & H'
}
\]
commutes. Thus,
\[
\Xi(\gamma)_{k \of h} = \gamma_{H'_{k \of h}} (e_{H'}) = \gamma_{H'_{k \of h}} ( k ( e_H)) = k ( \gamma_{H_h}(e_H)) = k ( \Xi(\gamma)_h)
\]
as required.

Now we show that $\Xi$ is inverse to $\Phi$. Let $\xi \in \procomp{G}$. Then for any finite $G$-set $X$ and $x \in X$, we have $\Phi (\xi)_X (X) = \xi_{\rho_X} (x)$. Thus for $h \from G \to H$ with $H$ finite,
\[
\Xi \Phi (\xi)_h = \Phi (\xi)_{H_h} (e_H) = \xi_{\rho_{H_f}} (e_H).
\]
So we need to show that $\xi_{\rho_{H_h}} (e_H) = \xi_h$. Define $i \from H \to \sym (H_0)$ by sending $m \in H$ to left multiplication by $m$, where $H_0$ is the underlying set of $H$. Then $i$ is a group homomorphism and
\[
\xymatrix{
G \ar[r]^h \ar[dr]_{\rho_{H_h}} & H\ar[d]^i \\
& \sym (H_0)
}
\]
commutes. Thus we have
\[
\xi_{\rho_{H_h}} (e_H) = \xi_{i \of h} (e_H) = i (\xi_h) (e_H) =  \xi_h
\]
as required. So $\Xi \of \Phi = \id_{\procomp{G}}$.

Now let $\gamma \from U \to U$. For any finite $G$-set $X$ and $x \in X$, we have
\[
\Phi \Xi (\gamma)_X (x) = \Xi(\gamma)_{\rho_X} (x) = \gamma_{(\sym X_0)_{\rho_X}} (\id_{X_0})(x).
\]
Note that we have a $G$-set homomorphism $\ev_x \from (\sym X_0)_{\rho_X} \to X$, since, given $\sigma \in \sym X_0$ and $g \in G$,
\[
\ev_x (g \cdot \sigma) = \rho_X (g) \of \sigma (x) = g \cdot \sigma (x) = g \cdot \ev_x (\sigma).
\]
Hence
\[
\xymatrix{
{\sym X_0} \ar[r]^{\ev_x}\ar[d]_{\gamma_{(\sym X_0)_{\rho_X}}} & X\ar[d]^{\gamma_X} \\
{\sym X_0} \ar[r]_{\ev_x} & X
}
\]
commutes, and so
\begin{align*}
\gamma_{(\sym X_0)_{\rho_X}} (\id_X)(x) &= \ev_x \of \gamma_{(\sym X_0)_{\rho_X}} (\id_X) \\
&= \gamma_X \of \ev_x (\id_X) \\
&= \gamma_X (x).
\end{align*}
Hence $\Phi \Xi (\gamma) = \gamma$, and we have shown that $\Phi$ and $\Xi$ are inverses. Hence we have a natural monoid (and therefore group) isomorphism
\[
\procomp{G} \iso [\Gfinset, \finset](U,U),
\]
as claimed.

Now we must show that $\Phi$ is in fact an isomorphism of monads. For a given group $G$, consider the diagram
\[
\xymatrix{
G \ar[r]^{\eta_G}\ar[dr]_{\eta^{\mnd{T}}_G} & \procomp{G}\ar[d]^{\Phi} \\
& [\Gfinset, \finset](U, U).
}
\]
The map $\eta^{\mnd{T}}_G$ sends $g \in G$ to the natural transformation $U \to U$ whose component at a $G$-set $X$ sends $x$ to $g \cdot x$. On the other hand,
\[
(\Phi \of \eta_G (g))_X (x) = (\eta_G (g) )_{\rho_X} (x) = \rho_X (g) (x) = g \cdot x,
\]
so this diagram commutes. Now consider
\[
\xymatrix{
\procomp{\procomp{G}}\ar[r]^{\procomp{\Phi_{G}}} \ar[d]_{\mu_G} & \procomp{TG}\ar[r]^{\Phi_{TG}} & TTG\ar[d]^{\mu^{\mnd{T}}_G} \\
\procomp{G}\ar[rr]_{\Phi_G} && TG.
}
\]
Let us describe the map $\mu_G^{\mnd{T}}$. Write $V \from TG\text{-}\fcat{FinSet} \to \finset$ for the forgetful functor. Every $G$-set $X$ can be canonically made into a $TG$ set: let $\gamma \in TG$, so that $\gamma$ is a natural transformation $U \to U$. Then $\gamma$ acts on $X$ via $\gamma_X \from X \to X$. Thus, given a natural transformation $\delta \from V \to V$, we may consider $\delta_X \from X \to X$, with $X$ regarded as a $TG$-set as above. This defines the components $\delta_X = (\mu_G^{\mnd{T}}(\delta))_X$ of the natural transformation $\mu_G^{\mnd{T}}(\delta) \from U \to U$.

Now let us describe the composite $\Phi_{TG} \of \procomp{\Phi_G}$. Let $\zeta \in \procomp{\procomp{G}}$ so that $\zeta_h \in H$ for each group homomorphism $h \from \procomp{G} \to H$ with $H$ finite. Then $\procomp{\Phi_G} (\zeta) \in \procomp{TG}$ has
\[
(\procomp{\Phi_{G}} (\zeta))_h = \zeta_{h \of \Phi_G} \in H
\]
for each $h \from TG \to H$ with $H$ finite. And $\Phi_{TG} \of \procomp{\Phi_G} (\zeta)$ is the natural transformation $V \to V$ with components as follows: given a finite $TG$-set $Y$ defined by $\lambda \from TG \to \sym (Y_0)$, and $y \in Y$ we have
\[
(\Phi_{TG} \of \procomp{\Phi_G} (\zeta))_Y (y) = \procomp{\Phi_G} (\zeta)_{\lambda} (y) =  \zeta_{\lambda \of \Phi_G} (y).
\]
Thus the top-right composite sends $\zeta \in \procomp{\procomp{G}}$ to the natural transformation $U \to U$ whose component at a $G$-set $X$ defined by $\rho \from G \to \sym (X_0)$ sends $x \in X$ to
\[
\zeta_{\ev_X \of \Phi_G} (x) = \zeta_{\ev_{\rho_X}} (x)
\]
where $\ev_X \from TG \to \sym(X_0)$ sends $\gamma \mapsto \gamma_X$ and $\ev_{\rho_X} \from \procomp{G} \to \sym(X_0)$ sends $\xi \mapsto \xi_{\rho_X}$.

On the other hand, the bottom-left composite sends $\zeta$ to the natural transformation whose component at $X$ sends $x$ to
\[
\Phi_G \of \mu_G (\zeta) = \mu_G (\zeta)_{\rho_X} = \zeta_{\ev_{\rho_X}}.
\]
Since these agree, the square commutes, and so $\Phi$ does define an isomorphism of monads ($\procomp{\ }, \eta, \mu) \to (T, \eta^{\mnd{T}}, \mu^{\mnd{T}})$.
\end{proof}

\begin{cor}
The structure--semantics monad on $\proth(\finset^{\op})$ restricts to the profinite completion monad on the full subcategory $\Gp \incl \proth(\finset^{\op})$.
\end{cor}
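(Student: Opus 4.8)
The plan is to deduce this corollary directly by combining the results already established in this section, with essentially no new work required. The key observation is that we have assembled a chain of compatible restrictions: the structure--semantics monad on $\proth(\finset^{\op})$ restricts to the monad $(T, \eta^{\mnd{T}}, \mu^{\mnd{T}})$ on $\monoid$ (Corollary~\ref{cor:str-sem-monad-restrict-monoid}), via the full and faithful inclusion $E \from \monoid \to \proth(\finset^{\op})$ of Definition~\ref{defn:monoid-theory-include}; and that monad in turn restricts along $\Gp \incl \monoid$ to a monad isomorphic to the profinite completion monad $(\widehat{\ }, \eta, \mu)$ (Proposition~\ref{prop:group-restrict-procomp}).

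First I would note that the inclusion $\Gp \incl \proth(\finset^{\op})$ is exactly the composite of the full inclusion $\Gp \incl \monoid$ with $E \from \monoid \to \proth(\finset^{\op})$, and that both of these are full and faithful, so the composite is too. Then the statement ``a monad $\mnd{S}$ on $\cat{C}$ restricts to a monad on a full subcategory $\cat{D} \incl \cat{C}$'' only requires that the underlying endofunctor of $\mnd{S}$ send $\cat{D}$ into itself (up to isomorphism), as was already remarked in the proof of Corollary~\ref{cor:str-sem-monad-restrict-monoid}. Restriction of monads to full subcategories is visibly transitive: if $\mnd{S}$ restricts to $\mnd{S}'$ on $\cat{D}$ and $\mnd{S}'$ restricts to $\mnd{S}''$ on a full subcategory $\cat{E} \incl \cat{D}$, then $\mnd{S}$ restricts to $\mnd{S}''$ on $\cat{E} \incl \cat{C}$, since the relevant endofunctor and its unit and multiplication are simply restricted twice.

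Applying this with $\mnd{S} = \str \of \sem$ the structure--semantics monad on $\proth(\finset^{\op})$, $\cat{D} = \monoid$, $\mnd{S}' = (T, \eta^{\mnd{T}}, \mu^{\mnd{T}})$, $\cat{E} = \Gp$, and $\mnd{S}'' $ the restriction of $(T, \eta^{\mnd{T}}, \mu^{\mnd{T}})$ to $\Gp$, we conclude that the structure--semantics monad on $\proth(\finset^{\op})$ restricts to $\mnd{S}''$ on $\Gp \incl \proth(\finset^{\op})$. But Proposition~\ref{prop:group-restrict-procomp} identifies $\mnd{S}''$ with the profinite completion monad $(\widehat{\ }, \eta, \mu)$ up to isomorphism of monads, which gives the claim. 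Since every step here is a formal consequence of previously proved statements, there is no real obstacle; the only thing to be careful about is to phrase ``restricts to'' consistently (i.e.\ identifying the full subcategory $\Gp$ of $\proth(\finset^{\op})$ with the full subcategory $\Gp$ of $\monoid$ via $E$), which is immediate because $E$ is full and faithful.
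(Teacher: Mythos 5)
Your proposal is correct and follows exactly the route the paper takes: the paper's proof is the one-line observation that the claim is immediate from Corollary~\ref{cor:str-sem-monad-restrict-monoid} together with Proposition~\ref{prop:group-restrict-procomp}, which is precisely the chain of restrictions you spell out. Your additional remarks on the transitivity of restricting a monad along nested full subcategories just make explicit what the paper leaves implicit.
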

\begin{proof}
This is immediate from Corollary~\ref{cor:str-sem-monad-restrict-monoid} and Proposition~\ref{prop:group-restrict-procomp}.
\end{proof}

This result suggests that in order to gain a better understanding of the theory of structure--semantics adjunctions in general, it may be profitable to compare it to the theory of profinite completions of groups. This shall be pursued in the chapters that follow, but for now we have the following immediate consequence.

\begin{cor}
The structure--semantics adjunction for the canonical aritation on $\finset$ is not idempotent, and in particular 
\[
\sem \from \proth(\finset^{\op})^{\op} \to \catover{\finset}
\]
is not full and faithful.
\end{cor}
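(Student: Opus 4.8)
The plan is to argue by contradiction, feeding the previous corollary --- which identifies the structure--semantics monad on $\proth(\finset^{\op})$ with the profinite completion monad after restriction to the full subcategory $\Gp$ --- into the standard dictionary between idempotent monads and idempotent adjunctions recalled in Section~\ref{sec:idem-background}.

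First I would unwind the equivalences. By the lemma preceding the definition of an idempotent monad, the structure--semantics monad $(S,\eta,\mu)$ on $\proth(\finset^{\op})$ is idempotent if and only if $\mu$ is a natural isomorphism; and by Lemma~\ref{lem:idem-adj-conditions} the adjunction $\str \ladj \sem$ is idempotent if and only if the monad it induces is idempotent. Moreover $\sem$ is a right adjoint, so it is full and faithful if and only if the counit $E \from \str \of \sem \to \id$ is an isomorphism (Theorem~1 in~IV.3 of~\cite{maclane71}), and in that case the induced monad is idempotent. Hence both assertions of the corollary will follow once we show that $(S,\eta,\mu)$ is \emph{not} idempotent: non-idempotence of $(S,\eta,\mu)$ gives non-idempotence of the adjunction directly, and it rules out $\sem$ being full and faithful via the contrapositive ``$\sem$ full and faithful $\Rightarrow$ counit iso $\Rightarrow$ $(S,\eta,\mu)$ idempotent''.

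Now suppose for contradiction that $(S,\eta,\mu)$ were idempotent, so that $\mu$ is a natural isomorphism. By Proposition~\ref{prop:monoid-restrict}, Proposition~\ref{prop:group-restrict-procomp} and the preceding corollary, this monad restricts along the full and faithful inclusion $E \from \Gp \incl \proth(\finset^{\op})$ to a monad isomorphic to the profinite completion monad $(\widehat{\ },\eta,\mu)$ on $\Gp$. In particular each component $\mu_G \from \widehat{\widehat{G}} \to \widehat{G}$ would be an isomorphism, i.e.\ the profinite completion monad on $\Gp$ would be idempotent. But this is false: it is well known that the profinite completion monad on $\Gp$ is not idempotent --- in contrast to Deleanu's result (cited above, \cite{deleanu83}) for the topological variant over $\TopGp$. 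This contradiction shows $(S,\eta,\mu)$ is not idempotent, and the corollary follows by the reductions of the previous paragraph.

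The only genuine input beyond this bookkeeping is the classical fact that the profinite completion monad on $\Gp$ fails to be idempotent; everything else is a formal consequence of the identification in the previous corollary together with Section~\ref{sec:idem-background}. The point requiring care is that idempotence of an adjunction does not by itself force the counit to be an isomorphism, so the ``in particular'' clause must genuinely be deduced from non-idempotence of the monad rather than merely from non-idempotence of the adjunction; this is why the argument is organised around $\mu$ rather than around $E$ directly.
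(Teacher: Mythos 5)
Your proposal is correct and follows essentially the same route as the paper: restrict the structure--semantics monad to $\Gp$ via the preceding corollary, observe that the profinite completion monad on $\Gp$ is not idempotent, and conclude via the standard equivalences of Section~\ref{sec:idem-background}. The only difference is that the paper substantiates the classical fact you merely invoke as ``well known'' with an explicit witness --- an infinite power of a non-trivial finite group is profinite but is not the profinite completion of its underlying discrete group (Example~4.2.12 of~\cite{ribesZalesskii10}) --- and you should supply such a witness (or a reference) to make the argument complete.
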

\begin{proof}
Recall that an idempotent adjunction induces an idempotent monad and comonad respectively on the two categories involved in the adjunction. By Proposition~\ref{prop:group-restrict-procomp}, the monad on $\proth(\finset^{\op})$ induced by the structure--semantics adjunction restricts to profinite completion monad on $\Gp$. This monad is not idempotent; to see this it is sufficient to find a profinite group that is not the profinite completion of its underlying discrete group. Any infinite power of a non-trivial finite group is such a profinite group, as shown in Example~4.2.12 of~\cite{ribesZalesskii10}. It follows that the structure--semantics adjunction is not idempotent.
\end{proof}

\begin{remark}
\label{rem:motivate-completeness+str}
Recall from Remark~\ref{rem:ff-completeness-interpret} that if the semantics functor for a given notion of algebraic theory is full and faithful, this can be interpreted as a kind of completeness theorem: faithfulness says roughly that two theories are isomorphic if and only if they have the same models, and fullness says that a theory really does describe \emph{all} of the algebraic structure possessed by its models.

Recall from Proposition~\ref{prop:sem-mnd-ff} that such a completeness theorem holds for the usual semantics of monads; that is, the functor
\[
\sem_{\monad} \from \monad(\cat{B})^{\op} \to \radjover{\cat{B}}
\]
is full and faithful for any $\cat{B}$. One part of our motivation for passing from monads to more general proto-theories (with semantics given by the canonical aritation) was that it allowed us to define a proto-theory $\str(U)$ for \emph{all} functors $U$ with codomain $\cat{B}$. However in doing so, we have had to sacrifice the completeness theorem.

One of our goals for the remainder of this thesis is to develop a notion of algebraic theories generalising that of monads that (at least for certain well-behaved categories) maintains both of these desirable features: a semantics functor that is full and faithful \emph{and} that has a left adjoint defined on the whole of $\catover{\cat{B}}$. More precisely, we would like a convenient category of monads in the following sense.
\end{remark}

\begin{defn}
A \demph{convenient category of monads} on a category $\cat{B}$ consists of a category $\convt$ together with a full and faithful functor $\inc \from \monad(\cat{B}) \incl \convt$ and a functor $\sem \from \convt^{\op} \to \catover{\cat{B}}$ such that:
\begin{enumerate}
\item the diagram
\[
\xymatrix{
\convt^{\op}\ar[r]^{\sem} & \catover{\cat{B}} \\
\monad(\cat{B})^{\op}\ar[ur]_{\sem_{\monad}}\ar[u]^{\inc} &
}
\]
commutes;
\item the functor $\sem$ has a left adjoint; and
\item the functor $\sem$ is full and faithful.
\end{enumerate}
\end{defn}

The category $\monad(\cat{B})$ itself satisfies the first (trivially) and third of these criteria but not the second, whereas $\proth(\cat{B}^{\op})^{\op}$ satisfies the first and second but not the third.

In our search for a convenient category of monads, we  will take inspiration from the analogy we found in this section between groups and proto-theories. We continue to expand on this analogy in the following section.

\section{The structure--semantics monad as a codensity monad}
\label{sec:str-sem-as-codensity}

Throughout this section, fix a locally small category $\cat{B}$. We will give another characterisation of the structure--semantics monad on $\proth(\cat{B})$ induced by the adjunction
 \[
\xymatrix{
{\catover{\cat{B} }}\ar@<5pt>[r]_-{\perp}^-{\str}\ & {\proth(\cat{B}^{\op})^{\op}}\ar@<5pt>[l]^-{\sem}
}
\]
for the canonical aritation $\cat{B}(-,-) \from \cat{B}^{\op} \times \cat{B} \to \Set$. 

In the previous section we saw that, when $\cat{B} = \finset$, this monad restricts to the profinite completion monad on $\Gp$. The profinite completion monad can also be characterised as the codensity monad of the inclusion $\FinGp \incl \Gp$. We may wonder whether there is a similar characterisation of the structure--semantics monad for $\cat{B}$ as the codensity monad of some subcategory of $\proth(\cat{B}^{\op})$. How can we find a candidate for such a subcategory?

Let $G$ be a \emph{finite} group. Recall from Definition~\ref{defn:monoid-theory-include} that $E(G) \in \proth(\finset^{\op})$ was defined in terms of the Kleisli category of the monad $G \times -$ on $\SET$. But for any finite set $S$, the free $G$-set $G \times S$ on $S$ is finite since $G$ is. Hence $G \times -$ restricts to a monad on $\finset$, and the Kleisli category of this monad is precisely the full subcategory of the Kleisli category for the monad on $\SET$ consisting of the finite sets. In other words, the proto-theory $E(G)$ is isomorphic to $\kle(G \times -)$ (recall from Lemma~\ref{lem:kle-functor} that $\kle $ is a functor $\monad(\finset) \to \proth(\finset^{\op})$).

Moreover, for a group $G$, the proto-theory $E(G)$ is given by a monad if \emph{and only if} $G$ is finite, since if $G$ is not finite then $G \times S$ is not finite for a non-empty finite set $S$. In other words, we have a pullback
\[
\xymatrix{
{\FinGp}\ar[r]\ar[d] \pullbackcorner & {\monad(\finset)}\ar[d]^{\kle} \\
{\Gp}\ar[r]_-{E} & {\proth(\finset^{\op})}.
}
\]
Thus, in some sense at least, monads on $\cat{B}$ stand in the same relation to general proto-theories with arities $\cat{B}^{\op}$ as finite groups do to groups. We might therefore wonder whether the structure--semantics monad can be defined as the codensity monad of $\kle \from \monad(\cat{B}) \incl \proth(\cat{B}^{\op})$. Indeed, this is the case, as we shall now show.

As in Remark~\ref{rem:bo-no-op}, we identify objects of $\proth(\cat{B}^{\op})$ with bijective-on-objects functors out of $\cat{B}$, rather than $\cat{B}^{\op}$, for the sake of notational convenience.

\begin{prop}
\label{prop:str-sem-mnd-codensity}
Suppose $\cat{B}$ admits pointwise codensity monads of all finite diagrams. Then the structure--semantics adjunction for the canonical aritation is the codensity monad of
\[
\kle \from \monad(\cat{B}) \incl \proth(\cat{B}^{\op}).
\]
\end{prop}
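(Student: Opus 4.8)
The plan is to show that the structure--semantics monad on $\proth(\cat{B}^{\op})$ and the codensity monad of $\kle$ agree by comparing their Kleisli categories, using Lemma~\ref{lem:codense-radj-monad} or, more directly, Lemma~\ref{lem:codensity-iso-kleisli}. The structure--semantics monad is the monad induced by the adjunction $\str \ladj \sem$, so its Kleisli category is (equivalent to) the full image of $\str$, and we need to identify this with the Kleisli category of the codensity monad of $\kle$. First I would invoke Proposition~\ref{prop:codensity-str}: for a functor $(U \from \cat{M} \to \cat{B}) \in \catover{\cat{B}}$, the proto-theory $\str(U)$ lies in the essential image of $\kle$ precisely when $U$ has a pointwise codensity monad, and then $\str(U) \iso \kle(\mnd{T})$ where $\mnd{T}$ is that codensity monad. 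Since $\sem$ takes values in the subcategory of semantics functors $\sem(L) \from \mod(L) \to \cat{B}$, and by Proposition~\ref{prop:sem-lim-create} (applied with the canonical aritation, so that $\sem(L)$ creates all limits) each $\sem(L)$ has a left adjoint when $\cat{B}$ is suitably complete — or at least creates the relevant finite limits — the hypothesis that $\cat{B}$ admits pointwise codensity monads of finite diagrams should guarantee that $\sem(\mnd{S})$ has a pointwise codensity monad for every monad $\mnd{S} \in \monad(\cat{B})$.

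The core computation is then to establish, for monads $\mnd{S}, \mnd{S}' \in \monad(\cat{B})$, a natural bijection between $\proth(\cat{B}^{\op})(\kle(\mnd{S}), \str(\sem(\mnd{S}')))$ — i.e.\ morphisms in the Kleisli category of the structure--semantics monad — and natural transformations $\proth(\cat{B}^{\op})(\kle(\mnd{S}'), \kle-) \to \proth(\cat{B}^{\op})(\kle(\mnd{S}), \kle-)$ — i.e.\ morphisms in the Kleisli category of the codensity monad of $\kle$. By Theorem~\ref{thm:str-sem-rest-monad}, $\str$ and $\sem$ restrict along $\kle$ to $\str_{\monad}$ and $\sem_{\monad}$, and by Proposition~\ref{prop:sem-mnd-ff} the functor $\sem_{\monad}$ is full and faithful. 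Thus $\proth(\cat{B}^{\op})(\kle(\mnd{S}), \str(\sem(\mnd{S}')))$ can be rewritten, using the adjunction $\str \ladj \sem$ (Corollary~\ref{cor:str-sem-adj-monad} / Theorem~\ref{thm:str-sem-rest-monad}) and full faithfulness of $\kle$ (Proposition~\ref{prop:kle-ff}), in terms of $\monad(\cat{B})$-morphisms, and hence in terms of morphisms of Kleisli categories of $\mnd{S}$ and $\mnd{S}'$. On the other side, I would use Lemma~\ref{lem:codense-radj-monad} applied to the codense inclusion $\FinGp$-analogue here: but $\monad(\cat{B}) \incl \proth(\cat{B}^{\op})$ is not obviously codense, so instead I expect to apply Lemma~\ref{lem:codensity-iso-kleisli} directly, verifying conditions (i) and (ii) there with $\cat{C} = \monad(\cat{B})$, $U = \kle$, $G = \sem$, $F = \str$ — after checking that $\sem$ restricted to $\monad(\cat{B})$ has left adjoint $\str$, which is exactly Proposition~\ref{prop:sem-str-adj-monad} combined with Theorem~\ref{thm:str-sem-rest-monad}.

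The main obstacle is the size/existence subtlety: the codensity monad of $\kle$ is a priori a monad on the possibly-large category $\proth(\cat{B}^{\op})$, and one must check that the relevant limits (ends over $\monad(\cat{B})$) exist and are computed correctly. The hypothesis ``$\cat{B}$ admits pointwise codensity monads of all finite diagrams'' is the replacement for ``$\cat{B}$ is complete'' and is what makes $\sem(\mnd{S})$ well-behaved enough; I would need to trace through how a cone on the diagram $(\kle(\mnd{S}) \downarrow \kle) \to \monad(\cat{B}) \toby{\kle} \proth(\cat{B}^{\op})$ corresponds, via Proposition~\ref{prop:kle-ff} and Proposition~\ref{prop:codensity-natural-bij} (Dubuc's theorem), to a natural family $\cat{B}(b, U-) \to \cat{B}(b', U-)$ indexed appropriately, and show representability of the associated presheaf reduces exactly to the existence of finite pointwise codensity monads in $\cat{B}$ — since $\sem_{\monad}(\mnd{S})$ is a finite-limit-creating functor and the shape of the relevant comma category over $\monad(\cat{B})$ can be cut down. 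The cleanest route is probably: show $\str_{\monad} \ladj \sem_{\monad}$ exhibits $\monad(\cat{B})^{\op}$ as having $\sem_{\monad}$ full and faithful, apply Lemma~\ref{lem:codense-radj-monad} with $I = \kle$ (if codensity of $\kle$ can be bypassed) or Lemma~\ref{lem:codensity-iso-kleisli} otherwise, and finally reconcile with the structure--semantics monad via Proposition~\ref{prop:codensity-str} plus the uniqueness of representing objects.
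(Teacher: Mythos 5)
Your overall strategy --- identifying the two monads by comparing Kleisli data via Lemma~\ref{lem:codensity-iso-kleisli}, using the adjunction $\str \ladj \sem$ and the fact that it restricts to $\str_{\monad} \ladj \sem_{\monad}$ --- is the same as the paper's, and your decision to avoid Lemma~\ref{lem:codense-radj-monad} is right ($\kle$ is not codense; if it were, its codensity monad, and hence the structure--semantics monad, would be trivial). But there is a genuine gap in where you apply the lemma. The codensity monad of $\kle$ is a monad on $\proth(\cat{B}^{\op})$, so in Lemma~\ref{lem:codensity-iso-kleisli} you must take $\cat{C} = \proth(\cat{B}^{\op})$ (not $\monad(\cat{B})$, as you write) and $\cat{A} = \monad(\cat{B})$, and the required bijection
\[
\catover{\cat{B}}(\sem(L), \sem(L')) \iso [\monad(\cat{B}),\SET]\bigl(\proth(\cat{B}^{\op})(L, \kle-),\ \proth(\cat{B}^{\op})(L', \kle-)\bigr)
\]
must be established for \emph{arbitrary} proto-theories $L, L'$. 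Your ``core computation'' only treats $L = \kle(\mnd{S})$, $L' = \kle(\mnd{S}')$. On that full subcategory both monads are essentially trivial: since $\sem_{\monad}$ is full and faithful we have $\str(\sem(\kle(\mnd{S}))) \iso \kle(\mnd{S})$, and the codensity monad of a full and faithful functor is likewise trivial on its image. So the computation you propose verifies nothing at a general $L$, which is where all the content lies.

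The substantive direction is constructing, from a natural transformation $\chi \from \proth(\cat{B}^{\op})(L, \kle-) \to \proth(\cat{B}^{\op})(L', \kle-)$, a functor $\mod(L) \to \mod(L')$ over $\cat{B}$. The difficulty is that $\chi$ is only defined on the image of $\kle$, while an $L$-model $x$ corresponds under $\Psi$ to a morphism $L \to \str(\const{d^x})$, where $\const{d^x} \from \scat{1} \to \cat{B}$ is a constant functor. This is exactly where the hypothesis enters: since $\scat{1}$ and the arrow category $\scat{2}$ are finite, the functors $\const{b} \from \scat{1} \to \cat{B}$ and $\const{h_0} \from \scat{2} \to \cat{B}$ have pointwise codensity monads, so by Proposition~\ref{prop:codensity-str} the proto-theories $\str(\const{b})$ and $\str(\const{h_0})$ lie in the essential image of $\kle$, and $\chi$ can be evaluated there to define the action of the desired functor on objects and on morphisms of $\mod(L)$. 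Your account of the hypothesis --- that it makes $\sem(\mnd{S})$ ``well-behaved'' or relates to $\sem_{\monad}(\mnd{S})$ creating finite limits --- is not the mechanism, and as written your argument never produces the inverse bijection at a general $L$.
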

The condition that $\cat{B}$ admits pointwise codensity monads of finite diagrams says that, for functors $D \from \scat{I} \to \cat{B}$ with $\cat{I}$ finite, and $b \in \cat{B}$, the composite
\[
(b \downarrow D) \to \scat{I} \toby{D} \cat{B}
\]
has a limit. Since $\cat{B}$ is locally small the comma category $(b \downarrow D)$ is always small, so in particular this condition holds when $\cat{B}$ admits all small limits. But it also holds when $\cat{B}$ admits only \emph{finite} limits, provided $\cat{B}$ is locally \emph{finite}, since then $(b \downarrow D)$ is finite. This is the case for $\finset$ for example, despite $\finset$ not admitting arbitrary small limits.

\begin{proof}
Let $L \from \cat{B} \to \cat{L}$ and $L' \from \cat{B} \to \cat{L}'$ be proto-theories with arities $\cat{B}^{\op}$. We will establish bijection between morphisms
\[
\sem (L) \to \sem (L')
\]
in $\catover{\cat{B}}$ and natural transformations
\[
\proth(\cat{B}^{\op})(L, \kle - ) \to \proth(\cat{B}^{\op})(L', \kle - )
\]
satisfying the conditions of Lemma~\ref{lem:codensity-iso-kleisli}.

First let $Q \from \sem (L) \to \sem (L')$ in $\catover{\cat{B}}$, that is, $Q$ is a functor $\mod (L) \to \mod (L')$ such that $\sem (L') \of Q = \sem (L)$. We construct a natural transformation
\[
\Sigma (Q) \from \proth(\cat{B}^{\op})(L, \kle - ) \to \proth(\cat{B}^{\op})(L', \kle - ).
\]
Let $\mnd{T} = (T, \eta, \mu)$ be a monad on $\cat{B}$ and $S \from L \to \kle (\mnd{T})$ in $\proth(\cat{B}^{\op})$. Recall that for any monad $\mnd{T}$ we have
\[
\kle (\mnd{T}) \iso \str ( U^{\mnd{T}} \from \cat{B}^{\mnd{T}} \to \cat{B}) \iso \str (\sem_{\monad} (\mnd{T})).
\]
Hence $S$ corresponds to a morphism $\Theta (S) \from \sem_{\monad} (\mnd{T}) \to \sem (L)$ in $\catover {\cat{B}}$, where $\Theta$ is the bijection from Definition~\ref{defn:adj-bij-inv} defining the structure--semantics adjunction. Then we can compose this with $Q$, giving a morphism
\[
Q \of \Theta (S) \from \sem_{\monad} (\mnd{T}) \to \sem (L) \to \sem (L'),
\]
to which we can apply the inverse bijection $\Psi$ (defined in Definition~\ref{defn:adj-bij}), giving a morphism
\[
L' \to \str (\sem_{\monad} (\mnd{T})) \iso \kle (\mnd{T})
\]
in $\proth(\cat{B}^{\op})$. Thus we define the natural transformation $\Sigma (Q)$ component-wise by
\[
\Sigma (Q)_{\mnd{T}} (S) = \Psi ( Q \of \Theta (S)).
\]
This is evidently natural in $\mnd{T}$ by the naturality of $\Psi$ and $\Theta$.

Let us show that $\Sigma$ is compatible with composition. It is immediate from the definition that $\Sigma$ preserves identities. Suppose $Q \from \sem (L) \to \sem (L')$ and $Q' \from \sem(L') \to \sem(L'')$ in $\catover{\cat{B}}$, and $S\from L \to \kle (\mnd{T})$ in $\proth(\cat{B}^{\op})$. Then
\begin{align*}
(\Sigma (Q') \of \Sigma (Q) )_{\mnd{T}} (S) & = \Psi ( Q' \of \Theta ( \Sigma (Q)_{\mnd{T}} (S))) && \text{(Definition of $\Sigma(Q')$)} \\
&= \Psi(Q' \of \Theta( \Psi ( Q \of \Theta (S)))) && \text{(Definition of $\Sigma(Q)$)} \\
&= \Psi (Q' \of Q \of \Theta(S)) && \text{($\Psi$ and $\Theta$ are inverses)} \\
&= \Sigma (Q' \of Q)_{\mnd{T}} (S) && \text{(Definition of $\Sigma(Q' \of Q)$)}
\end{align*}
as required.

We should also check that for $P \from L' \to L$ in $\proth(\cat{B}^{\op})$, we have 
\[
\Sigma (\sem (P)) = P^* \from \proth(\cat{B}^{\op})(L, \kle - ) \to \proth(\cat{B}^{\op})(L', \kle - ).
\]
But if $S \from L \to \kle (\mnd{T})$, then
\[
\Sigma (\sem(P))_{\mnd{T}} (S) = \Psi (\sem (P) \of \Theta (Q)) = \Psi ( \Theta (Q)) \of P = Q \of P,
\]
by naturality of $\Psi$.

Now we construct an inverse $\Pi$ of $\Sigma$. Let $\chi \from \proth(\cat{B}^{\op})(L, \kle - ) \to \proth(\cat{B}^{\op}(L', \kle - )$. Let $b$ be an object of $\cat{B}$ and write $\const{b} \from \scat{1} \to \cat{B}$ for the functor that just picks out the object $\cat{B}$ (where $\scat{1}$ here denotes the terminal category). Then an $L$-model $x$ with underlying object $d^x = b$ is precisely a morphism $\const{x} \from \const{b} \to \sem (L)$ in $\catover{\cat{B}}$. Such a morphism corresponds to $\Psi (\const{x}) \from L \to \str (\const{b})$ in $ \proth(\cat{B}^{\op})$. But since $\scat{1}$ is finite and $\cat{B}$ and $\cat{B}$ admits pointwise codensity monads of finite diagrams, the pointwise codensity monad $\mnd{T}^{\const{b}}$ of $\const{b}$ exists, and so, by Proposition~\ref{prop:codensity-str}, the proto-theory $\str (\const{b}) \iso \kle (\mnd{T}^{\const{b}})$ lies in the essential image of $\kle$. Thus we may apply $\chi_{\mnd{T}^{\const{b}}}$ to $\Psi (\const{x})$ to obtain
\[
\chi_{\mnd{T}^{\const{b}}} (\Psi (\const{x})) \from L' \to \str(\const{b}).
\]
Now we can apply $\Theta$ to obtain
\[
\Theta (\chi_{\mnd{T}^{\const{b}}} (\Psi (\const{x})) ) \from \const{b} \to \sem(L')
\]
in $\catover{\cat{B}}$, which corresponds to an $L'$-model $\Pi (\chi) ( x)$ with the same underlying object as $x$. This defines the functor $\Pi ( \chi) \from \mod (L) \to \mod (L')$ on objects.

In order to define $\Pi (\chi)$ on morphisms, consider the category $\scat{2}$ with two objects $0, 1$ and a single non-identity morphism $u \from 0 \to 1$. Let $h \from x \to y$ be a homomorphism in $\mod(L)$ with underlying morphism $h_0 \from d^x \to d^y$ in $\cat{B}$. Define $\const{h_0} \from \scat{2} \to \cat{B}$ to be the functor sending $u$ to $h_0$ and $\const{h} \from \scat{2} \to \mod(L)$ to be the functor sending $u$ to $h$. Then $\const{h}$ is a morphism $\const{h_0} \to \sem (L)$ in $\catover{\cat{B}}$. Furthermore, pre-composing $\const{h}$ with $\const{0} \from \scat{1} \to \scat{2}$ or $\const{1} \from \scat{1} \to \scat{2}$ gives $\const{x}$ or $\const{y}$ respectively. That is, we have a commuting diagram
\[
\xymatrix{
{\const{d^x}} \ar[d]_{\const{0}}\ar[dr]^{\const{x}} & \\
{\const{h_0}}\ar[r]^-{\const{h}} & {\sem(L)} \\
{\const{d^y}}\ar[u]^{\const{1}}\ar[ur]_{\const{y}} &
}
\]
in $\catover{\cat{B}}$. Thus naturality of $\Psi$ implies we have a commuting diagram
\[
\xymatrix{
& \str(\const{d^x}) \\
L\ar[ur]^{\Psi (\const{x})} \ar[r]^{\Psi(\const{h})} \ar[dr]_{\Psi(\const{y})} & \str(\const{h_0})\ar[d]^{\str(\const{1})}\ar[u]_{\str(\const{0})} \\
& {\str(\const{d^y}).}
}
\]
Since $\scat{2}$ is finite, $\const{h_0}$ has a pointwise codensity monad $\mnd{T}^{\const{h_0}}$, and $\str(\const{h_0}) \iso \kle (\mnd{T}^{\const{h_0}})$. So the right-hand side of this diagram lies entirely in the essential image of the full and faithful $\kle \from \monad(\cat{B}) \to \proth(\cat{B}^{\op})$. So, applying $\chi$ yields a commutative diagram
\[
\xymatrix
@C=50pt
@R=50pt{
& \str(\const{d^x}) \\
L'\ar[ur]^{\chi_{\mnd{T}^{\const{d^x}}}(\Psi (\const{x}))} \ar[r]^{\chi_{\mnd{T}^{\const{h_0}}}(\Psi(\const{h}))} \ar[dr]_{\chi_{\mnd{T}^{\const{d^y}}}(\Psi(\const{y}))} & \str(\const{h_0})\ar[u]_{\str(\const{0})}\ar[d]^{\str(\const{1})} \\
& {\str(\const{d^y}),}
}
\]
and applying $\Theta$ gives
\[
\xymatrix@C=50pt
@R=50pt{
{\const{d^x}} \ar[d]_{\const{0}}\ar[dr]^{\Theta(\chi_{\mnd{T}^{\const{d^x}}}(\Psi (\const{x})))} & \\
{\const{h_0}}\ar[r]^-{\Theta\chi_{\mnd{T}^{\const{h_0}}}(\Psi(\const{h})))} & {\sem(L')} \\
{\const{d^y}.}\ar[u]^{\const{1}}\ar[ur]_{\Theta(\chi_{\mnd{T}^{\const{d^y}}}(\Psi(\const{y})))} &
}
\]
We define $\Pi(\chi)(h)$ to be the morphism in $\mod(L')$ corresponding to $\Theta\chi_{\mnd{T}^{\const{h_0}}}(\Psi(\const{h})))\from \scat{2} \to \mod(L')$ --- the commutativity of this last diagram implies that this has the appropriate domain and codomain. Furthermore, note that since
\[
\xymatrix
@C=50pt{
\scat{2}\ar[r]^{\Theta(\chi_{\mnd{T}^{\const{h_0}}}(\Psi(\const{h})))}\ar[dr]_{\const{h_0}} & \mod(L')\ar[d]^{\sem(L')} \\
& \cat{B}
}
\]
commutes, $\Pi(\chi)(h)$ has the same underlying morphism $h_0$ as $h$.

It remains to check that $\Sigma (\chi) \from \mod(L) \to \mod(L')$ is functorial. Let $h \from x \to y$ and $k \from y \to z$ in $\mod(L)$ with underlying morphisms $h_0 \from d^x \to d^y$ and $k_0 \from d^y \to d^z$ respectively. We have already observed that $\Pi(\chi)$ preserves both underlying objects and underlying morphisms. So $\Pi(\chi)(k \of h)$ has the same underlying morphism as $k \of h$, namely $k_0 \of h_0$. But since composition in $\mod(L')$ is defined by composing underlying morphisms in $\cat{B}$, this is also the underlying morphism of the composite $\Pi ( \chi)(k) \of \Pi (\chi)(h)$. The forgetful functor $\sem (L') \from \mod(L') \to \cat{B}$ is faithful, so this implies $\Pi(\chi)(k \of h) = \Pi ( \chi)(k) \of \Pi (\chi)(h)$.

Similarly, for $x \in \mod(L)$, the morphism $\Pi(\chi)(\id_x)$ is a morphism $\Pi(\chi)(x) \to \Pi(\chi)(x)$ with underlying morphism $\id_{d^x}$, and $\id_{\Pi(\chi)(x)}$ is the unique such morphism, so they are equal.

That completes the definition of $\Pi$; it remains to show that it is inverse to $\Sigma$.

Let $\chi \from \proth(\cat{B}^{\op})(L, \kle - ) \to \proth(\cat{B}^{\op})(L', \kle - )$ and $S \from L \to \kle (\mnd{T})$. We wish to show that $\Sigma (\Pi (\chi))_{\mnd{T}}(S) = \chi_{\mnd{T}}(S)$. Equivalently, applying the bijection $\Theta$, we will show that
\[
\Theta ( \Sigma (\Pi (\chi))_{\mnd{T}}(S)) = \Theta (\chi_{\mnd{T}}(S)).
\]
Now, by definition of $\Sigma$,
\[
\Theta ( \Sigma (\Pi (\chi))_{\mnd{T}}(S)) = \Theta \Psi (\Pi (\chi) \of \Theta (S)) = \Pi (\chi) \of \Theta (S) \from \sem_{\monad} (\mnd{T}) \to \sem (L').
\]
To check that this is equal to $\Theta (\chi_{\mnd{T}} (S)) \from \sem_{\monad}(\mnd{T}) \to \sem(L')$, it is sufficient to check that they are equal on an arbitrary object of $\sem_{\monad}(\mnd{T})$ --- equality on morphisms then follows from the fact that they are both morphisms in $\catover{\cat{B}}$ and that $\sem(L')$ is faithful. Hence we need to show that for an arbitrary object $x \in \sem_{\monad}(\mnd{T})$ with underlying object $d^x$, the two morphisms
\begin{equation}
\label{eqn:codensity-bijection-1}
\const{d^x}\toby{\const{x}} \sem_{\monad}{\mnd{T}} \toby{\Theta(S)} \sem(L) \toby{\Pi(\chi)} \sem(L')
\end{equation}
and
\begin{equation}
\label{eqn:codensity-bijection-2}
\const{d^x}\toby{\const{x}} \sem_{\monad}{\mnd{T}} \toby{\Theta (\chi_{\mnd{T}} (S))} \sem(L')
\end{equation}
in $\catover{\cat{B}}$ are equal.

Consider the composite $\Theta (S) \of \const{x} \from \const{d^x} \to \sem (L)$. By naturality of $\Theta$, this is the result of applying $\Theta$ to the composite
\[
L \toby{S} \kle ( \mnd{T}) \iso \str (\sem_{\monad} (\mnd{T})) \toby{\str(\const{x})} \str(\const{d^x}).
\]
And by definition of $\Pi$, the result of composing $\Theta ( \str(\const{x}) \of S)$ with $\Pi (\chi)$  (that is, the composite displayed in Diagram~\bref{eqn:codensity-bijection-1}) is
\[
\Theta ( \chi_{\mnd{T}^{\const{d^x}}} ( \Psi ( \Theta ( \str(\const{x}) \of S)))) = \Theta ( \chi_{\mnd{T}^{\const{d^x}}} ( \str(\const{x}) \of S)),
\]
since $\Theta$ and $\Psi$ are inverses. But now note that 
\[
\str (\const{x}) \from \str(\sem_{\monad}(\mnd{T})) \iso \kle (\mnd{T}) \to \str(\const{d^x}) \iso \kle (\mnd{T}^{\const{d^x}})
\]
lies in the essential image of $\kle \from \monad(\cat{B}) \to \proth(\cat{B}^{\op})$, and so applying naturality of $\chi$, we may further rewrite the composite from Diagram~\bref{eqn:codensity-bijection-1} as
\[
\Theta(\str(\const{x}) \of \chi_{\mnd{T}} (S)),
\]
and this, by naturality of $\Theta$, is equal to
\[
\Theta ( \chi_{\mnd{T}} (S)) \of \const{x},
\]
which is the composite displayed in Diagram~\bref{eqn:codensity-bijection-2}. This completes the proof that $\Sigma \of \Pi = \id$.

Now we prove that $\Pi \of \Sigma = \id$. Let $Q \from \sem(L) \to \sem (L')$  in $\catover{\cat{B}}$. We need to show that $\Pi (\Sigma (Q)) = Q \from \mod (L) \to \mod(L')$. Again, it is sufficient to check that these functors are equal on objects, since they have equal composites with the faithful $\sem (L')$. So let $x \in \mod(L)$. Then by definition of $\Pi$,
\[
\xymatrix{
\scat{1}\ar[d]_{\const{x}}\ar[dr]^{\Theta(\Sigma(Q)_{\mnd{T}^{\const{d^x}}}(\Psi (\const{x})))} & \\
\mod(L)\ar[r]_{\Pi (\Sigma (Q))} & {\mod(L')}
}
\]
commutes. But by definition of $\Sigma$, we have
\[
\Theta(\Sigma(Q)_{\mnd{T}^{\const{d^x}}}(\Psi (\const{x}))) = \Theta (\Psi (Q \of \Theta ( \Psi ( \const{x})))) = Q \of \const{x},
\]
since $\Theta$ and $\Psi$ are inverses. Thus $\Pi (\Sigma (Q)) = Q$ as required, so $\Pi$ and $\Sigma$ are inverses.
\end{proof}

We summarise the analogy between the theory of groups and the theory of proto-theories in Table~\ref{tab:analogy}. We shall add further rows to this table as more aspects of the analogy are developed in the next two chapters.
\begin{table}[h]
\centering
\begin{tabular}{p{7cm}p{7cm}}
\toprule
Theory of groups & Theory of proto-theories \\
\midrule
$\Gp$ & $\proth(\cat{B}^{\op})$ \\
$\FinGp \incl \Gp$ & $\kle \from \monad(\cat{B}) \incl  \proth(\cat{B}^{\op})$ \\
The functor $\Gp^{\op} \to \catover{\finset}$ sending $G$ to the category $\Gfinset$ of finite $G$-sets & The functor $\sem \from \proth(\cat{B}^{\op})^{\op} \to \catover{\cat{B}}$ for the canonical aritation \\
The profinite completion monad on $\Gp$ & The structure--semantics monad on $\proth(\cat{B}^{\op})$\\
The profinite completion monad is the codensity monad of $\FinGp \incl \Gp$. & The structure--semantics monad is the codensity monad of $\kle \from \monad(\cat{B}) \incl \proth(\cat{B}^{\op})$. \\
\bottomrule

\end{tabular}
\caption{Some aspects of the analogy between groups and proto-theories on a locally small category $\cat{B}$ with pointwise codensity monads of finite diagrams.}
\label{tab:analogy}
\end{table}

\chapter{Topological proto-theories}
\label{chap:topology}

In this chapter we continue our search for a convenient category of monads. Recall from Remark~\ref{rem:motivate-completeness+str} that we hope to find a notion of algebraic theory generalising that of monads on a locally small category $\cat{B}$, for which the semantics functor is full and faithful, and has a left adjoint defined on the whole of $\catover{\cat{B}}$. To do so we make use of the analogy between proto-theories and groups developed in the previous chapter.

Under this analogy, the structure--semantics monad on $\proth(\cat{B}^{\op})$ corresponds to the profinite completion monad on $\Gp$. They are both the codensity monads of inclusions of full subcategories; the categories of monads and finite groups respectively. Algebras for the profinite completion monad on $\Gp$ can be identified with profinite topological groups, but these can also be described as algebras for a different codensity monad, namely the codensity monad of the inclusion of finite discrete groups into all topological groups.

Crucially, \emph{this} codensity monad is idempotent: the inclusion of profinite groups into topological groups has a left adjoint. This suggests that to find a notion of algebraic theory generalising monads for which the structure--semantics monad is idempotent (which is a first step towards the semantics functor being full and faithful), it might be useful to consider a topological notion of proto-theory.

In Section~\ref{sec:top-str-sem-adj}, we define such a notion and prove some of its basic properties, and in Section~\ref{sec:monads-as-top-proths} we show that the semantics of topological proto-theories generalises the semantics of monads. We then move on to describing the conditions under which the structure--semantics adjunction for topological proto-theories is idempotent: we define the relevant conditions in Section~\ref{sec:cats-w-enough-subobjects}, and prove that the adjunction is idempotent under these condition in Section~\ref{sec:top-str-sem-idem}. Finally, in Section~\ref{sec:top-proth-relation-profinite-groups} we show that the topological structure-semantics monad on the category of topological proto-theories extends the topological profinite completion monad on the category of topological groups.

\section{The topological structure--semantics adjunction}
\label{sec:top-str-sem-adj}

In this section we define a topological notion of proto-theory that plays a role relative to ordinary proto-theories that is analogous to the role played by topological groups relative to discrete groups.

\begin{defn}
Write $\TOPCAT$ for the 2-category of large categories enriched in the category $\TOP$ of large topological spaces (with the cartesian product). We make $\TOPCAT$ into a concrete setting as follows: let $\ofsfont{E}$ be the class of $\TOP$-functors that are bijective on objects, and let $\ofsfont{N}$ be the class of $\TOP$-functors that are homeomorphisms on each hom-space; that is, a $\TOP$-functor $F \from \cat{A} \to \cat{B}$ between $\TOP$-categories is in $\ofsfont{N}$ if for each $a, a' \in \cat{A}$, the map
\[
F \from \cat{A}(a, a') \to \cat{B}(Fa,Fa')
\]
is a homeomorphism. Define $\und \from \TOPCAT \to \CAT$ to be the evident forgetful 2-functor, that forgets the topology on each hom-space of each $\TOP$-category.
\end{defn}

\begin{lem}
The 2-category $\TOPCAT$ together with $(\ofsfont{E},\ofsfont{N})$ and $\und \from \TOPCAT \to \CAT$ is a concrete setting.
\end{lem}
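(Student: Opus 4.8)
The statement asks us to verify that $\TOPCAT$, equipped with the stated classes $(\ofsfont{E},\ofsfont{N})$ and forgetful $2$-functor $\und$, satisfies the three requirements of a concrete setting: it is a setting (i.e.\ $(\ofsfont{E},\ofsfont{N})$ is a factorisation system on the underlying $1$-category and $\TOPCAT$ has cotensors over $\CAT$), the cotensors are preserved strictly by $\und$, and $\und$ sends $\ofsfont{E}$ to bijective-on-objects functors and $\ofsfont{N}$ to full and faithful functors. The last condition is immediate from the definitions: a $\TOP$-functor bijective on objects has bijective-on-objects underlying functor, and one that is a homeomorphism on each hom-space is in particular a bijection on each hom-set, hence full and faithful after applying $\und$.

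\textbf{The factorisation system.} First I would check that $\ofsfont{E}$ and $\ofsfont{N}$ each contain all isomorphisms of $\TOP$-categories and are closed under composition, which is routine. Then I would construct the factorisation: given a $\TOP$-functor $F \from \cat{A} \to \cat{L}$, define a $\TOP$-category $\cat{K}$ with the same objects as $\cat{A}$ and hom-spaces $\cat{K}(a,a') = \cat{L}(Fa, Fa')$ (with the topology inherited from $\cat{L}$), composition and identities inherited from $\cat{L}$; this mirrors exactly the construction in the proof of Proposition~\ref{prop:multicat-factor} and in Lemma~\ref{lem:bo-ff-factorisation}. Then $E \from \cat{A} \to \cat{K}$ is the identity on objects and acts as $F$ on hom-spaces (which is continuous since $F$ is a $\TOP$-functor), so $E \in \ofsfont{E}$; and $N \from \cat{K} \to \cat{L}$ acts as $F$ on objects and as the identity on hom-spaces, so $N \in \ofsfont{N}$, and $N \of E = F$. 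For orthogonality, given a commuting square with $E \in \ofsfont{E}$ on the left and $N \in \ofsfont{N}$ on the right, one builds the fill-in exactly as in the $\CAT$ case: on objects it is forced by bijectivity of the source functor of $E$, and on hom-spaces it is forced by $N$ being a homeomorphism (hence bijective) on hom-spaces, so the fill-in exists and is unique; continuity of the fill-in on hom-spaces follows because precomposing with the homeomorphism $N$ gives a continuous map. I would invoke Definition~\ref{defn:fact-system} to conclude.

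\textbf{Cotensors.} Given a $\TOP$-category $\cat{C}$ and an ordinary category $\cat{B} \in \CAT$, I would define the cotensor $[\cat{B}, \cat{C}]$ to be the $\TOP$-category whose objects are functors $\cat{B} \to \cat{C}_0$ and whose hom-space $[\cat{B},\cat{C}](F, G)$ is the space of natural transformations $F \to G$, topologised as a subspace of the product $\prod_{b \in \cat{B}} \cat{C}(Fb, Gb)$ (equivalently, the evident $\TOP$-enriched end); composition and identities are pointwise, and continuity of composition follows from continuity of composition in $\cat{C}$ together with universal properties of products and subspaces. One then checks the defining natural isomorphism $\CAT(\cat{B}, \TOPCAT(\cat{A}, \cat{C})) \iso \TOPCAT(\cat{A}, [\cat{B}, \cat{C}])$ by the same uncurrying argument used implicitly throughout the thesis (cf.\ the corresponding verification for $\multicat$ in Lemma~\ref{lem:multicat-cotensor}). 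Finally, $\und$ preserves these cotensors strictly because $\und([\cat{B},\cat{C}])$ has functors $\cat{B}\to\cat{C}_0$ as objects and natural transformations as morphisms, i.e.\ it is literally the functor category $[\cat{B}, \cat{C}_0] = [\cat{B}, \und(\cat{C})]$, matching the strict cotensor in $\CAT$.

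\textbf{Main obstacle.} None of the steps is deep; the only point requiring genuine care is the continuity bookkeeping for the cotensor construction and for the fill-in in the orthogonality proof — checking that the maps one writes down (composition in $[\cat{B},\cat{C}]$, the fill-in on hom-spaces, the uncurrying bijection) are actually continuous and that the enriched end/subspace topologies behave correctly under the relevant universal properties. Since the thesis elsewhere treats such verifications as routine (the proofs of Lemmas~\ref{lem:bo-ff-factorisation} and~\ref{lem:multicat-cotensor} are omitted or sketched), I would present this proof at a similar level of detail, remarking that the construction parallels the $\CAT$ case with topologies added pointwise, and leaving the elementary topological checks to the reader.
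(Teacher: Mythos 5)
Your proposal is correct and matches the paper's approach: the paper simply notes that the only non-obvious point is the factorisation system, which it attributes to the well-known bijective-on-objects/full-and-faithful factorisation on $\cat{V}\hyph\Cat$ for any monoidal $\cat{V}$, and your explicit construction of $\cat{K}$ with hom-spaces $\cat{L}(Fa,Fa')$ is exactly the standard proof of that general fact specialised to $\cat{V}=\TOP$. The extra detail you supply on cotensors and on preservation by $\und$ is sound but is treated as obvious in the paper.
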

\begin{proof}
The only part of Definition \ref{defn:concrete-setting} that is not obvious is that $(\ofsfont{E},\ofsfont{N})$ is a factorisation system on $\TOPCAT$. This follows from the more general fact that for any monoidal category $\cat{V}$, the category $\cat{V}\hyph\CAT$ of $\cat{V}$-enriched categories has a bijective-on-objects/full-and-faithful factorisation system. This result is well-known and straightforward to prove, so we omit it.
\end{proof}

\begin{defn}
Given a $\TOP$-category $\cat{A}$, write $\proth_t (\cat{A})$ for the category of proto-theories with arities $\cat{A}$ in the setting $\TOPCAT$.
\end{defn}

\begin{defn}
Let $\cat{C}$ and $\cat{D}$ be two $\TOP$-categories. We write $[\cat{C},\cat{D}]_t $ for the category of $\TOP$-functors from $\cat{C} \to \cat{D}$.
\end{defn}

\begin{defn}
Write $\Set_t$ for the category of small sets regarded as a $\TOP$-category in the following way: given sets $X$ and $Y$, we define a topology on $\Set(X,Y)$ as the $X$-fold power of the discrete space $Y$; that is the smallest topology such that for each element $x \in X$, the map $\ev_x \from \Set(X, Y) \to Y$ is continuous, where $Y$ is given the discrete topology.
\end{defn}

\begin{lem}
\label{lem:set-top-cat}
The category $\Set_t$ defined above \emph{is} a well-defined $\TOP$-category.
\end{lem}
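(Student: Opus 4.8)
The plan is to verify directly that the assignment $X, Y \mapsto \Set(X,Y)$ with the stated topology satisfies the axioms of a $\TOP$-enriched category, i.e.\ that composition and identity-assignment are continuous maps of topological spaces. First I would record the key topological fact underlying everything: the topology on $\Set(X,Y)$ is, by definition, the initial topology with respect to the family of evaluation maps $(\ev_x \from \Set(X,Y) \to Y)_{x \in X}$, where each $Y$ carries the discrete topology; equivalently $\Set(X,Y) \cong \prod_{x \in X} Y$ as a topological space (the product of $X$-many copies of discrete $Y$). Hence a map $g \from Z \to \Set(X,Y)$ from an arbitrary topological space $Z$ is continuous if and only if $\ev_x \of g \from Z \to Y$ is continuous for every $x \in X$, i.e.\ if and only if each $\ev_x \of g$ is locally constant. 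This universal property of the initial topology is the only tool I expect to need.

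Next I would check continuity of composition, the map
\[
\of \from \Set(Y,W) \times \Set(X,Y) \to \Set(X,W), \qquad (g,f) \mapsto g \of f.
\]
By the universal property just stated, it suffices to show that for each $x \in X$ the map $(g,f) \mapsto \ev_x(g \of f) = g(f(x))$ is continuous from $\Set(Y,W) \times \Set(X,Y)$ to the discrete space $W$. I would factor this as $(g,f) \mapsto (g, f(x)) = (g, \ev_x(f)) \mapsto \ev_{f(x)}(g) = g(f(x))$. The first component map $\Set(Y,W) \times \Set(X,Y) \to \Set(Y,W) \times Y$ (identity in the first coordinate, $\ev_x$ in the second) is continuous since $\ev_x$ is continuous by definition of the topology on $\Set(X,Y)$; the second map is the evaluation $\Set(Y,W) \times Y \to W$. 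So the real point is that evaluation $\ev \from \Set(Y,W) \times Y \to W$ is continuous when $Y$ and $W$ are discrete and $\Set(Y,W)$ has the power topology --- and this holds because for each fixed $y \in Y$ the set $\{(g,y') : g(y') = w\}$ restricted to the open-and-closed slice $\Set(Y,W) \times \{y\}$ is $\ev_y^{-1}(\{w\}) \times \{y\}$, which is open; since $Y$ is discrete, $\Set(Y,W) \times Y$ is the disjoint union over $y \in Y$ of these slices, so $\ev$ is continuous. Continuity of the identity-assignment $\mathbf{1} \to \Set(X,X)$, $* \mapsto \id_X$, is automatic since the domain is a point.

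Finally I would confirm that these continuity statements are exactly what is required: the underlying category is of course the ordinary category $\Set$ of small sets, whose associativity and unit axioms are standard, so enrichment in $\TOP$ adds only the continuity conditions verified above. I do not expect any serious obstacle here; the one mild subtlety worth being careful about is that the product $\Set(X,Y)$ is an $X$-indexed power where $X$ may be a properly large index set relative to the universe, but since $\TOP$ (the category of \emph{large} topological spaces) is closed under large products this causes no difficulty --- the resulting hom-space is a genuine object of $\TOP$. Thus the main work is the bookkeeping of the previous paragraph, and the lemma follows.
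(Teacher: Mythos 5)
Your proposal is correct and follows essentially the same route as the paper's proof: both reduce continuity of composition to continuity of $(f,g) \mapsto g(f(x))$ into the discrete codomain via the generating evaluation maps, and both then exploit that this value depends only on $f(x)$ and on $g$ at that point. The paper writes this out as an explicit basic open box $V \times W$ around each point of the preimage, whereas you package the same fact as continuity of the evaluation map on a product with a discrete factor; the content is identical.
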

\begin{proof}
We must check that for all sets $X$, $Y$ and $Z$, the composition map
\[
\Set(X,Y) \times \Set (Y, Z) \to \Set(X, Z)
\]
is continuous. It is sufficient to check that its composite with each $\ev_x \from \Set(X, Z) \to Z$ is continuous with $Z$ discrete, by definition of the topology on $\Set(X, Z)$. This composite sends
\[
(f,g) \in \Set(X,Y) \times \Set (Y, Z)
\]
to $gf (x)$. We must show that the preimage of each element of $Z$ under this map is open. The preimage of $z \in Z$ under this map is
\[
U_{x,z} = \{ (f, g) \such gf(x) = z \}.
\]
Let $(f_0, g_0) \in U_{x,z}$; we will find an open neighbourhood of $(f_0, g_0)$ that is contained in $U_{x,z}$. Let
\[
V = \{ f \in \Set(X, Y) \such f (x) = f_0 (x) \}.
\]
This is open in $\Set(X,Y)$ since it is the preimage of the point $f_0(x)$ under $\ev_x$. Similarly
\[
W = \{g \in \Set(Y, Z) \such g (f_0 (x)) = z \}
\]
is open in $\Set(Y,Z)$. Therefore $V \times W$ is open in $\Set(X, Y) \times \Set(Y, Z)$, and
\[
(f_0, g_0) \in V \times W \subseteq U_{x, z}
\]
as required.
\end{proof}

\begin{lem}
\label{lem:disc-und-adjunction}
The 2-functor $\und \from \TOPCAT \to \CAT$ has a left 2-adjoint $\disc \from \CAT \to \TOPCAT$.
\end{lem}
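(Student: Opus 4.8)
The plan is to define $\disc \from \CAT \to \TOPCAT$ by equipping every hom-set with the discrete topology, and to check that this makes every functor, and every natural transformation, into a $\TOP$-enriched one for free. First I would verify that for a category $\cat{A}$, the assignment $a, a' \mapsto \cat{A}(a,a')$ with the discrete topology is a legitimate $\TOP$-category: identities are continuous trivially, and the composition map $\cat{A}(a,a') \times \cat{A}(a',a'') \to \cat{A}(a,a'')$ has discrete domain (a binary product of discrete spaces is discrete), hence is automatically continuous. Since every function out of a discrete space is continuous, any functor $F \from \cat{A} \to \cat{B}$ lifts uniquely to a $\TOP$-functor $\disc F \from \disc \cat{A} \to \disc \cat{B}$, and any natural transformation lifts to a $\TOP$-natural transformation; 2-functoriality of $\disc$ is then immediate. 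Note in particular $\und \of \disc = \id_{\CAT}$, so $\disc$ is a section of $\und$.

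Next I would exhibit the 2-adjunction as an isomorphism of hom-categories
\[
\TOPCAT(\disc \cat{A}, \cat{C}) \iso \CAT(\cat{A}, \und \cat{C})
\]
2-natural in $\cat{A} \in \CAT$ and $\cat{C} \in \TOPCAT$. On objects, a $\TOP$-functor $\disc \cat{A} \to \cat{C}$ consists of an object function together with, for each pair $a, a'$, a \emph{continuous} map $\cat{A}(a,a') \to \cat{C}(Fa, Fa')$ respecting composites and identities; since $\cat{A}(a,a')$ is discrete, continuity is vacuous, so this is exactly the data of an ordinary functor $\cat{A} \to \und \cat{C}$, and conversely. On morphisms, recall that a 2-cell between $\TOP$-functors $F, G \from \disc \cat{A} \to \cat{C}$ is a family of points $\alpha_a$ of the hom-spaces $\cat{C}(Fa, Ga)$ (the unit of the cartesian monoidal structure on $\TOP$ being the one-point space) subject to the usual naturality squares --- that is, precisely a natural transformation between the underlying functors $\cat{A} \to \und \cat{C}$. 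Hence the correspondence above is a bijection on objects and on morphisms, i.e.\ an isomorphism of categories.

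Finally I would check 2-naturality of this isomorphism in both variables, which is a routine unwinding of the definitions of whiskering and composition in the two 2-categories, and conclude that $\disc \ladj \und$ as a $\CAT$-enriched (that is, 2-)adjunction. I do not expect any real obstacle here: the entire content of the argument is the observation that continuity of maps out of discrete hom-spaces is automatic, so that $\disc \cat{A}$ is a $\TOP$-category and $\TOP$-functors (respectively $\TOP$-natural transformations) out of it are neither more nor less than ordinary functors (respectively natural transformations) out of $\cat{A}$.
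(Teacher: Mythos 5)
Your proof is correct and rests on the same idea as the paper's: the paper simply cites the discrete-space left adjoint to $\TOP \to \SET$ and notes that $\disc$ equips every hom-set with the discrete topology, while you unwind the resulting hom-category isomorphism $\TOPCAT(\disc\cat{A},\cat{C}) \iso \CAT(\cat{A},\und\cat{C})$ by hand. The key observation in both cases is that continuity out of discrete hom-spaces is automatic, so no further comment is needed.
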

\begin{proof}
This is immediate since the forgetful functor $\TOP \to \SET$ has a left adjoint sending a set to the corresponding discrete space. The 2-functor $\disc$ therefore sends an ordinary category to the same category regarded as a $\TOP$-category in which every hom-space is discrete.
\end{proof}

For the rest of this section, we fix a locally small (ordinary, not $\TOP$-enriched) category $\cat{B}$.

\begin{cor}
We have an isomorphism of categories
\[
[\cat{B}^{\op}, \Set] \iso [\disc (\cat{B}^{\op}), \Set_t]_t.
\]
\end{cor}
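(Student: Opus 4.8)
The plan is to exhibit the isomorphism directly by unpacking both sides. On the left we have ordinary functors $\cat{B}^{\op} \to \Set$ together with ordinary natural transformations. On the right we have $\TOP$-functors $\disc(\cat{B}^{\op}) \to \Set_t$ together with $\TOP$-natural transformations. Since $\disc$ is left adjoint to $\und$ (Lemma~\ref{lem:disc-und-adjunction}), and since the underlying ordinary category of $\Set_t$ is just $\Set$, a $\TOP$-functor $\disc(\cat{B}^{\op}) \to \Set_t$ is precisely the same data as an ordinary functor $\cat{B}^{\op} \to \und(\Set_t) = \Set$: there is nothing to check on objects, and on hom-spaces a continuous map out of a discrete space is just an arbitrary function, so the enrichment imposes no condition. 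This gives a bijection on objects of the two categories. The adjunction $\disc \dashv \und$ is 2-categorical, so in fact it already gives an isomorphism of hom-categories $[\disc(\cat{B}^{\op}), \Set_t]_t \iso [\cat{B}^{\op}, \und(\Set_t)] = [\cat{B}^{\op}, \Set]$, which is exactly the claim. So the cleanest route is simply to cite Lemma~\ref{lem:disc-und-adjunction} and the fact that $\und(\Set_t) = \Set$, noting that a 2-adjunction induces isomorphisms (not just equivalences) of the relevant functor categories.

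If one prefers a hands-on verification rather than invoking the 2-adjunction abstractly, the key steps would be: first, observe that a $\TOP$-functor $F \from \disc(\cat{B}^{\op}) \to \Set_t$ assigns to each object $b$ a set $Fb$ and to each morphism $f$ a function $Ff$, with the continuity requirement on $F \from \disc(\cat{B}^{\op})(b,b') \to \Set_t(Fb, Fb')$ automatic since the domain is discrete; hence $F$ is the same as an ordinary functor $\cat{B}^{\op} \to \Set$. Second, check that a $\TOP$-natural transformation $\alpha \from F \to G$ between such functors is exactly an ordinary natural transformation: $\TOP$-naturality is a condition on components and commuting squares, with no extra continuity demand (again because everything in sight is discrete on the source side, and $\Set_t$'s underlying category is $\Set$). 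Third, confirm that composition and identities of natural transformations agree on both sides. These checks are all routine.

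I do not expect any genuine obstacle here; the only mild subtlety is making sure the claim is an \emph{isomorphism} of categories and not merely an equivalence, which follows because $\disc \dashv \und$ is a strict (2-)adjunction with $\und \of \disc = \id_{\CAT}$ on the nose, so the unit is an identity. I would phrase the proof in two or three sentences: invoke Lemma~\ref{lem:disc-und-adjunction}, note $\und(\Set_t) = \Set$, and conclude that a $\TOP$-functor $\disc(\cat{B}^{\op}) \to \Set_t$ together with $\TOP$-natural transformations is literally the same data as a functor $\cat{B}^{\op} \to \Set$ with ordinary natural transformations, since continuity conditions out of discrete spaces are vacuous.
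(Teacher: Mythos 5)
Your proposal is correct and takes the same route as the paper: the paper's proof is exactly "immediate from Lemma~\ref{lem:disc-und-adjunction} since $\und(\Set_t) = \Set$." Your additional hands-on verification (continuity out of discrete hom-spaces is vacuous, and the unit of $\disc \dashv \und$ is an identity, giving an isomorphism rather than an equivalence) is a correct elaboration of what the paper leaves implicit.
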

\begin{proof}
This is immediate from Lemma~\ref{lem:disc-und-adjunction} since $\und(\Set_t) = \Set$.
\end{proof}

\begin{remark}
\label{rem:top-canon-arit}
It follows that we can view the canonical aritation on $\cat{B}$ as an aritation
\[
\currylo \from \cat{B} \to [\cat{B}^{\op}, \Set] \iso [\disc(\cat{B}^{\op}), \Set_t]_t
\]
in $\TOPCAT$, giving rise to a structure--semantics adjunction
 \[
\xymatrix{
{\catover{\cat{B} }}\ar@<5pt>[r]_-{\perp}^-{\str_t}\ & {\proth_t(\disc(\cat{B}^{\op}))^{\op}.}\ar@<5pt>[l]^-{\sem_t}
}
\]
From now on we will usually identify $\disc(\cat{B}^{\op})$ with $\cat{B}^{\op}$ itself (and likewise for other ordinary categories), and so we write this adjunction as
 \[
\xymatrix{
{\catover{\cat{B} }}\ar@<5pt>[r]_-{\perp}^-{\str_t}\ & {\proth_t(\cat{B}^{\op})^{\op}.}\ar@<5pt>[l]^-{\sem_t}
}
\]
This adjunction is our focus for the rest of this chapter.
\end{remark}

\begin{defn}
We call a proto-theory with arities $\disc(\cat{B}^{\op})$ in the setting $\TOPCAT$ a \demph{topological proto-theory with arities $\cat{B}^{\op}$}.

If $L \from \cat{B}^{\op} \to \cat{L}$ is a topological proto-theory, we have two possible notions of $L$-model. There are the models of $L$ arising from the topological structure--semantics adjunction of Remark~\ref{rem:top-canon-arit}, which we call \demph{topological $L$-models}, and models of the underlying discrete proto-theory of $L$ arising via the ordinary structure--semantics adjunction, which we call \demph{discrete $L$-models}.
\end{defn}

\begin{lem}
\label{lem:top-model-algebra}
A discrete model $x = (d^x, \alpha^x)$ of a topological proto-theory $L \from \cat{B}^{\op} \to \cat{L}$ is a topological $L$-model if and only if each
\[
\alpha^x_b \from \cat{L}(Ld^x, Lb) \to \cat{B}(b, d^x)
\]
is continuous, where the codomain is given the discrete topology.
\end{lem}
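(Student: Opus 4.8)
The plan is to unwind both notions of model and compare them hom-space by hom-space. Recall that a discrete $L$-model corresponds, via the alternative algebra description of Section~\ref{sec:alt-model-algebra} (valid for the canonical aritation), to an object $d^x$ together with maps $\alpha^x_b \from \cat{L}(Ld^x, Lb) \to \cat{B}(b,d^x)$ natural in $b \in \cat{B}$ satisfying conditions~\ref{defn:L-alg}.\bref{part:L-alg-id} and~\ref{defn:L-alg}.\bref{part:L-alg-comp}. A topological $L$-model, on the other hand, is defined via the pullback from Definition~\ref{defn:sem-general} in the setting $\TOPCAT$: it consists of $d^x \in \cat{B}$ and a $\TOP$-functor $\Gamma^x \from \cat{L} \to \Set_t$ with $\Gamma^x \of L = \currylo(d^x) \from \cat{B}^{\op} \to \Set_t$. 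First I would observe that, forgetting topology, a topological $L$-model is in particular a discrete $L$-model (apply $\und$), so the content of the lemma is exactly which extra condition pins down when a discrete $L$-model underlies a topological one.

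Next I would rerun the equivalence between models and algebras from Section~\ref{sec:alt-model-algebra}, but keeping track of continuity. Given a discrete $L$-model $(d^x,\Gamma^x)$, the associated algebra has $\alpha^x_b(l) = \Gamma^x(l)(\id_{d^x})$; conversely from an algebra one builds $\Gamma^x(l)(f) = \alpha^x_{b'}(l \of Lf)$ for $l \from Lb' \to Lb$. The only question is whether $\Gamma^x$ is a $\TOP$-functor, i.e.\ whether each map $\Gamma^x \from \cat{L}(Lb',Lb) \to \Set_t(\cat{B}(b',d^x),\cat{B}(b,d^x))$ is continuous. By the definition of the topology on $\Set_t$ (the $X$-fold power of the discrete space $Y$, as in Lemma~\ref{lem:set-top-cat}), continuity of this map is equivalent to continuity, for each $f \from b' \to d^x$, of the composite $l \mapsto \Gamma^x(l)(f) = \alpha^x_{b'}(l \of Lf)$ into the discrete space $\cat{B}(b',d^x)$. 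Now $l \mapsto l \of Lf$ is the continuous (indeed, it is a structure map of the $\TOP$-enrichment of $\cat{L}$) map $\cat{L}(Lb',Lb) \to \cat{L}(Lb',Ld^x)$, so by composition the whole condition reduces to: each $\alpha^x_{d} \from \cat{L}(Ld^x,Ld) \to \cat{B}(d,d^x)$ is continuous into the discrete space. Taking $b' = d^x$ recovers the stated condition; taking general $b'$ and precomposing with $Lf$ shows it is also sufficient.

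So the key steps, in order, are: (i) identify topological $L$-models with $\TOP$-functors $\Gamma^x$ restricting correctly along $L$; (ii) note that the underlying discrete data is a discrete $L$-model, and that the algebra $\alpha^x$ is recovered as before; (iii) translate ``$\Gamma^x$ is a $\TOP$-functor'' into continuity of the maps $\Gamma^x \from \cat{L}(Lb',Lb)\to\Set_t(\cat{B}(b',d^x),\cat{B}(b,d^x))$, then use the definition of the power topology on $\Set_t$ to rewrite this as continuity of $l\mapsto\alpha^x_{b'}(l\of Lf)$ into the discrete space, for all $b'$ and all $f \from b'\to d^x$; (iv) use continuity of the composition maps in the $\TOP$-category $\cat{L}$ to reduce this family of conditions to the single condition that each $\alpha^x_d \from \cat{L}(Ld^x,Ld)\to\cat{B}(d,d^x)$ is continuous; and conversely (v) check that this one condition, fed back through the formula $\Gamma^x(l)(f)=\alpha^x_{b'}(l\of Lf)$, really does produce a $\TOP$-functor, so the discrete model lifts. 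One should also briefly note that the lift, if it exists, is unique, since the topology on each hom-space of $\cat{L}$ and the topology on $\Set_t$ are fixed, so that $\mod_t(L)$ really is (isomorphic to) the full subcategory of $\mod(L)$ on the models satisfying the continuity condition; this uses that $L^*$ is an isofibration in $\TOPCAT$, cf.\ the discussion around Lemma~\ref{lem:bo-restrict-isofib}.

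The main obstacle is bookkeeping rather than depth: one must be careful that the power topology on $\Set_t(X,Y)$ is exactly the initial topology for the evaluations $\ev_x$ (Lemma~\ref{lem:set-top-cat} gives this), so that continuity of a map into $\Set_t(X,Y)$ genuinely decomposes pointwise, and one must verify that the reindexing maps $l \mapsto l \of Lf$ are continuous --- but these are precisely (restrictions of) the composition maps that make $\cat{L}$ a $\TOP$-category, so this is automatic. There is no hard analytic or categorical content; the only subtlety is making sure the reduction from ``all $b'$, all $f$'' to ``$b' = d^x$, $f = \id$'' is genuinely an equivalence, which it is because every $f \from b' \to d^x$ factors as $\id_{d^x} \of f$ and $l \of Lf = (l \of \id_{Ld^x}) \of Lf$, combined with naturality of $\alpha^x$.
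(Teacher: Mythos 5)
Your argument is correct and follows essentially the same route as the paper's proof: both exploit the commuting square relating $\Gamma^x$ and $\alpha^x$ via $(Lf)^*$ and $\ev_f$, use that the topology on $\Set_t$-hom-spaces is generated by the evaluation maps to get continuity of $\Gamma^x$ from continuity of the $\alpha^x_b$, and recover the converse by taking $b = d^x$, $f = \id_{d^x}$. The additional remarks about uniqueness of lifts and isofibrations are not needed for the statement but are harmless.
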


\begin{proof}
Recall that a model $x$ of $L$ as a discrete proto-theory may be described equivalently either in terms of a functor $\Gamma^x \from \cat{L} \to \Set$ or in terms of a natural transformation $\alpha^x \from \cat{L}( Ld^x, L-) \to \cat{B}(-, d^x)$ satisfying the conditions set out in Definition~\ref{defn:L-alg}, and the two descriptions are related by the following diagram, which commutes for each $f \from b \to d^x$: 

\[
\xymatrix{
\cat{L}(Lb, Lb')\ar[r]^{(Lf)^*}\ar[d]_{\Gamma^x} & \cat{L}( Ld^x, Lb')\ar[d]^{\alpha^x_{b'}} \\
\Set(\cat{B}(b, d^x), \cat{B}(b', d^x))\ar[r]_-{\ev_f} & \cat{B}(b', d^x).
}
\]
Now, the map along the top is always continuous, so if $\alpha^x_{b'}$ is continuous then so is the bottom left composite. But the topology on
\[
\Set_t(\cat{B}(b, d^x), \cat{B}(b', d^x))
\]
is generated by the maps $\ev_f$ for $f \in \cat{B}(b, d^x)$, so it follows that $\Gamma^x$ is continuous. Conversely, if we take $b = d^x$ and $f = \id_{d^x}$, then the map along the top becomes the identity, so $\alpha^x_{b'}$ is the composite $\ev_{\id_{d^x}} \of \Gamma^x$, which is continuous if $\Gamma^x$ is.
\end{proof}

\begin{defn}
We reuse the notation $\disc$ for the functor $\proth(\cat{B}^{\op}) \to \proth_t(\cat{B}^{\op})$ that sends a proto-theory $L \from \cat{B}^{\op} \to \cat{L}$ to $L$ regarded as a topological proto-theory on $\cat{B}^{\op}$ in which the hom-sets of $\cat{L}$ are equipped with the discrete topology.
\end{defn}

Note that the term ``discrete topological category'' is potentially ambiguous: there are unrelated notions of discreteness for both topological spaces and for categories. When we refer to discrete topological categories, we mean topological categories in which every hom-space is discrete, rather than in which the only morphisms are identities. Likewise, a discrete topological proto-theory is a proto-theory $L \from \cat{B}^{\op} \to \cat{L}$ for which $\cat{L}$ is a discrete topological category.

\begin{lem}
\label{lem:top-sem-commute}
The triangle
\[
\xymatrix{
\catover{\cat{B}} & \proth_t(\cat{B}^{\op})^{\op}\ar[l]_-{\sem_t} \\
& \proth(\cat{B}^{\op})^{\op}\ar[ul]^{\sem}\ar[u]_{\disc}
}
\]
commutes.
\end{lem}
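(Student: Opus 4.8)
The goal is to show that the diagram of functors
\[
\xymatrix{
\catover{\cat{B}} & \proth_t(\cat{B}^{\op})^{\op}\ar[l]_-{\sem_t} \\
& \proth(\cat{B}^{\op})^{\op}\ar[ul]^{\sem}\ar[u]_{\disc}
}
\]
commutes, i.e. that for a discrete proto-theory $L \from \cat{B}^{\op} \to \cat{L}$ we have $\sem_t(\disc(L)) = \sem(L)$ as objects of $\catover{\cat{B}}$, and likewise on morphisms of $\proth(\cat{B}^{\op})$.

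The plan is to compare the two pullback squares that define each side. By Definition~\ref{defn:sem-general}, $\sem(L)\from \mod(L)\to\cat{B}$ is obtained from the strict pullback of $L^*\from[\cat{L},\Set]\to[\cat{B}^{\op},\Set]$ along $\currylo\from\cat{B}\to[\cat{B}^{\op},\Set]$, while $\sem_t(\disc(L))\from\mod_t(\disc(L))\to\cat{B}$ is obtained from the strict pullback of $\disc(L)^*\from[\disc(\cat{L}),\Set_t]_t\to[\cat{B}^{\op},\Set_t]_t$ along the $\TOPCAT$-version of the aritation, $\currylo\from\cat{B}\to[\cat{B}^{\op},\Set_t]_t$ from Remark~\ref{rem:top-canon-arit}. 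First I would use the adjunction $\disc\ladj\und$ of Lemma~\ref{lem:disc-und-adjunction}, together with $\und(\Set_t)=\Set$, to observe that for any ordinary category $\cat{K}$ there is a \emph{canonical isomorphism} of ordinary categories $[\disc(\cat{K}),\Set_t]_t\iso[\cat{K},\Set]$, compatible with restriction along bijective-on-objects functors (since $\disc$ is a functor and the adjunction isomorphism is natural). Applying this with $\cat{K}=\cat{L}$ and $\cat{K}=\cat{B}^{\op}$, the square defining $\mod_t(\disc(L))$ is carried by these isomorphisms precisely to the square defining $\mod(L)$: the top arrow $\disc(L)^*$ goes to $L^*$, the bottom arrow (the $\TOPCAT$-aritation $\currylo$) goes to the ordinary $\currylo$ — this is exactly the content of the Corollary to Lemma~\ref{lem:disc-und-adjunction} and of Remark~\ref{rem:top-canon-arit}, where the $\TOPCAT$-aritation was \emph{defined} to be the ordinary canonical aritation transported across $[\cat{B}^{\op},\Set]\iso[\disc(\cat{B}^{\op}),\Set_t]_t$.

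Since strict pullbacks in $\CAT$ are unique on the nose, and the two squares are related by isomorphisms of their three outer corners commuting with all the structure maps, the induced comparison on pullback vertices is an isomorphism (indeed an equality, once one checks it is the identity on $\cat{B}$). Concretely, I would spell this out via the explicit descriptions of models: an object of $\mod(L)$ is a pair $(d,\Gamma)$ with $\Gamma\from\cat{L}\to\Set$ a functor satisfying $\Gamma\of L=\cat{B}(-,d)$; an object of $\mod_t(\disc(L))$ is a pair $(d,\Gamma)$ with $\Gamma\from\disc(\cat{L})\to\Set_t$ a $\TOP$-functor satisfying the analogous condition. But a $\TOP$-functor out of a \emph{discrete} topological category $\disc(\cat{L})$ is the same thing as an ordinary functor out of $\cat{L}$ landing in $\und(\Set_t)=\Set$, with no continuity condition (every map from a discrete space is continuous); and $\cat{B}(-,d)$, which has discrete hom-sets, coincides with its image in $\Set_t$. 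So the two classes of objects are literally the same, and similarly for homomorphisms, since a homomorphism in either category is just a morphism $h\from d^x\to d^y$ in $\cat{B}$ making the requisite naturality squares commute — and on a discrete domain the $\TOP$-enrichment imposes nothing extra. The forgetful functors to $\cat{B}$ agree because in both cases $\sem$ (resp. $\sem_t$) is the projection onto the $\cat{B}$-component of the pullback.

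The main (and only real) obstacle is bookkeeping: one must be careful that the isomorphism $[\disc(\cat{K}),\Set_t]_t\iso[\cat{K},\Set]$ is strictly natural in $\cat{K}$ with respect to precomposition, so that it genuinely matches $\disc(L)^*$ with $L^*$ and not merely up to isomorphism, and that Remark~\ref{rem:top-canon-arit} is using this very identification to define the $\TOPCAT$-aritation; once that is granted, commutativity of the triangle on objects is the equality of two strict pullbacks over identical cospans, and commutativity on morphisms follows from the corresponding uniqueness clause in Definition~\ref{defn:sem-general} (the map $\sem(P)$, resp. $\sem_t(\disc(P))$, is the unique functor making the relevant diagram commute, and the same functor does the job in both). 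I would therefore present the proof as: (i) recall the identification from Lemma~\ref{lem:disc-und-adjunction} and its Corollary; (ii) note it intertwines the two cospans; (iii) conclude equality of the pullback vertices and of the projections, hence of $\sem_t\of\disc$ and $\sem$ on objects; (iv) invoke uniqueness of the induced maps for morphisms.
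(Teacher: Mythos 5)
Your proposal is correct and rests on the same key observation as the paper's proof: a $\TOP$-functor out of a discrete topological category is just an ordinary functor (continuity being automatic on discrete hom-spaces), so topological models of $\disc(L)$ and ordinary models of $L$ coincide. The paper states this in two sentences without spelling out the pullback bookkeeping, but the argument is the same.
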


\begin{proof}
A topological model of a topological proto-theory $L \from \cat{B}^{\op} \to \cat{L}$ consists of a model $x = (d^x, \Gamma^x)$ of the underlying ordinary proto-theory such that
\[
\Gamma^x \from \cat{L} \to \Set_t
\]
is a continuous functor. But if the hom-spaces of $\cat{L}$ are all discrete, then every functor out of $\cat{L}$ is continuous, so the notions of $L$-model and topological $L$-model coincide.
\end{proof}

\begin{lem}
\label{lem:top-sem-limit-create}
Let $L \from \cat{B}^{\op} \to \cat{L}$ be a topological proto-theory with arities $\cat{B}^{\op}$, and suppose that $\cat{B}$ has limits of shape $\cat{I}$ for some finite category $\cat{I}$. Then the functor $\sem_t \from \mod_t(L) \to \cat{B}$ creates limits of shape $\cat{I}$.
\end{lem}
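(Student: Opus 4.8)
The plan is to mimic the proof of Proposition~\ref{prop:sem-lim-create}, which established creation of limits for the semantics functor in a general concrete setting, but to check that the relevant hypotheses hold for the concrete setting $\TOPCAT$ with the topological canonical aritation. Recall that $\sem_t(L)$ is defined by the pullback
\[
\xymatrix{
\mod_t(L)\ar[r]^{I(L)}\ar[d]_{\sem_t(L)}\pullbackcorner & [\cat{L},\Set_t]_t\ar[d]^{L^*} \\
\cat{B}\ar[r]_-{\currylo} & [\cat{B}^{\op},\Set_t]_t,
}
\]
so by Lemma~\ref{lem:pullback-limits} it suffices to show that $\currylo \from \cat{B} \to [\cat{B}^{\op},\Set_t]_t$ preserves limits of shape $\cat{I}$ and that $L^* \from [\cat{L},\Set_t]_t \to [\cat{B}^{\op},\Set_t]_t$ creates them. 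This is exactly the structure of the proof of Proposition~\ref{prop:sem-lim-create}, and indeed it may be cleanest to simply invoke that proposition directly, provided we verify its two hypotheses for our concrete setting.

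First I would check the hypothesis on $\und \from \TOPCAT(\cat{D},\Set_t) \to [\cat{D}_0,\Set]$: that this functor creates limits of shape $\cat{I}$ for any $\TOP$-category $\cat{D}$. A limit of a diagram of $\TOP$-functors into $\Set_t$ is computed pointwise, with each hom-space being a subspace of a product; the point is that given a diagram $D\from\cat{I}\to\TOPCAT(\cat{D},\Set_t)$ whose underlying diagram of ordinary functors has a limit $X\from\cat{D}_0\to\Set$, there is a unique way to topologise the values $Xd$ (namely as the limit in $\TOP$ of the $D(i)(d)$, which since $\cat{I}$ is finite and each $D(i)(d)$ carries the power-of-discrete topology, is again of this form — here finiteness of $\cat{I}$ is used, or more precisely we use that $\Set_t$ itself is closed under finite limits in an appropriate sense). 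One checks the resulting $X$ is automatically a $\TOP$-functor and that the limit cone is a limit cone in $\TOPCAT(\cat{D},\Set_t)$; the subtlety to get right is that the topologies on the $Xd$ are forced, which is what "creates" requires, and this follows because $\und$ on hom-spaces is faithful and the topology on a limit is uniquely determined. Second, I would check the hypothesis that each $\lpair b,-\rpair_0 = \cat{B}(b,-)\from\cat{B}\to\Set$ preserves limits of shape $\cat{I}$ — this is immediate since representable functors preserve all limits that exist, and $\cat{B}$ has limits of shape $\cat{I}$ by assumption. Thus both hypotheses of Proposition~\ref{prop:sem-lim-create} hold, and the conclusion follows.

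The main obstacle I anticipate is the first hypothesis: carefully verifying that $\und \from \TOPCAT(\cat{D},\Set_t) \to [\cat{D}_0,\Set]$ creates limits of shape $\cat{I}$, and in particular that a finite limit of spaces each carrying a "power of discrete space" topology can again be equipped with such a topology so that $\Set_t$ is genuinely closed under the relevant limits — this is where the finiteness of $\cat{I}$ is essential (an infinite limit of discrete spaces need not be discrete, but we never claimed $\Set_t$ is closed under infinite limits, only that the limit computed in $\TOP$ lands in $\Set_t$ after forgetting, which is fine since $\Set_t$ has all sets as objects; the real content is that the $\TOP$-enrichment is compatible). Concretely, if $D\from\cat{I}\to\TOPCAT(\cat{D},\Set_t)$ and $X$ is the pointwise limit in $\Set$, then $\cat{D}_0(d,d') = \Set$-limit $X$ gives $Xd$ as a subset of $\prod_i D(i)(d)$, and one topologises $Xd$ as the corresponding subspace; then for $f\from d\to d'$ the map $Xf$ is continuous because each $D(i)(f)$ is, and the hom-space maps $\cat{D}(d,d')\to\Set_t(Xd,Xd')$ are continuous by a diagram chase using that composition in $\Set_t$ is continuous (Lemma~\ref{lem:set-top-cat}). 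The uniqueness clause — that this is the \emph{only} topology making the cone a limit cone in $\TOPCAT(\cat{D},\Set_t)$ lying over the chosen limit in $[\cat{D}_0,\Set]$ — follows because the topology on $Xd$ must make all the cone legs $Xd\to D(i)(d)$ continuous and must be the coarsest such (as these are required to be jointly a limit cone), pinning it down. I would then simply cite Proposition~\ref{prop:sem-lim-create} for the rest, so the write-up stays short.
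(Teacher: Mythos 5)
Your overall strategy --- reduce to Proposition~\ref{prop:sem-lim-create} and verify its two hypotheses for the setting $\TOPCAT$ with values in $\Set_t$ --- is exactly the paper's, and your treatment of the second hypothesis (representables preserve all limits that exist) is fine. The problem is in your verification of the first hypothesis, where you have misread the structure of $\Set_t$. The objects of $\Set_t$ are bare small sets carrying no topology; the enrichment lives entirely in the hom-sets $\Set_t(X,Y)$, each topologised as the $X$-fold power of the discrete space $Y$. Consequently there is nothing to ``topologise'' on the values $Xd$ of the limit functor, no subspace of $\prod_i D(i)(d)$ to form, and no uniqueness-of-topology clause to check: $\und \from [\cat{A},\Set_t]_t \to [\cat{A}_0,\Set]$ is injective on objects and fully faithful, so creation of limits reduces entirely to showing that the pointwise limit in $[\cat{A}_0,\Set]$ of a finite diagram of $\TOP$-functors is again a $\TOP$-functor, i.e.\ that its action on each hom-space is continuous. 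Your appeal to ``a diagram chase using that composition in $\Set_t$ is continuous'' does not supply this; continuity of composition is not what is at stake.

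The correct argument, and the one the paper gives, is the following. Write $X = \lim_{i \in \cat{I}} D^i$. To show that $X \from \cat{A}(a,a') \to \Set_t(Xa,Xa')$ is continuous it suffices, by definition of the topology on $\Set_t(Xa,Xa')$, to check continuity of $\ev_x \of X$ for each $x = (x_i)_{i \in \cat{I}} \in Xa$, where the codomain $Xa'$ is discrete. Since $\cat{I}$ is finite, a finite limit of discrete spaces is discrete, so the discrete topology on $Xa' = \lim_{i} D^i a'$ agrees with the limit topology of the discrete spaces $D^i a'$, and continuity of $\ev_x \of X$ may therefore be checked after composing with each projection $\pi_j \from Xa' \to D^j a'$. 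But $\pi_j \of \ev_x \of X = \ev_{x_j} \of D^j$ (both send $g \from a \to a'$ to $D^j g(x_j)$), and this is continuous because $D^j$ is a $\TOP$-functor. This is the one place finiteness of $\cat{I}$ is genuinely used --- you correctly sensed that it enters via ``infinite limits of discrete spaces need not be discrete'', but the step it licenses is ``check continuity into a finite limit of discrete spaces componentwise'', not ``equip the limit object with a topology''.
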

\begin{proof}
By Proposition~\ref{prop:sem-lim-create}, it is sufficient to show that each $\lpair b, - \rpair_0 \from \cat{B} \to (\Set_t)_0$ preserves such limits, and
\[
\und \from [\cat{A}, \Set_t]_t \to [\cat{A}_0, \Set]
\]
creates them for each $\cat{A} \in \TOPCAT$. The first of these is trivial, since $\lpair b, - \rpair_0 = \cat{B}(b, -) \from \cat{B} \to (\Set_t)_0 = \Set$ and representables preserve all limits.

Let us show that 
\[
\und \from [\cat{A}, \Set_t]_t \to [\cat{A}_0, \Set]
\]
creates finite limits. This amounts to showing that a finite limit of continuous functors into $\Set_t$ is continuous. Let $D \from \cat{I} \to [\cat{A}, \Set_t]_t$ be a functor; write $D^i$ for $D(i)$. We must check that for each $a, a' \in \cat{A}$, the map
\[
\lim_{i \in \cat{I}} D^i \from \cat{A}(a, a') \to \Set_t( \lim_{i \in \cat{I}} D^i a, \lim_{i \in \cat{I}} D^i a')
\]
is continuous. By the definition of the topology on hom-sets in $\Set_t$ it is sufficient to check that it becomes continuous when composed with each $\ev_x \from \Set_t( \lim_{i \in \cat{I}} D^i a, \lim_{i \in \cat{I}} D^i a') \to \lim_{i \in \cat{I}} D^i a'$ for $x \in \lim_{i \in \cat{I}} D^i a$, where the codomain is discrete. Recall that an element $x \in \lim_{i \in \cat{I}} D^i a$ consists of a family $(x_i)_{i \in \cat{I}}$ indexed by $i \in \cat{I}$ where $x_i \in D^i a$ and for any $f \from i \to j $ in $\cat{I}$, we have $D^f x_i = x_j$.

Since $\cat{I}$ is finite, and a finite limit of discrete spaces is discrete, the topology on $\lim_{i \in \cat{I}} D^i a'$ as a limit of the discrete spaces $D^i a'$ is also discrete. Thus in order to check that $\ev_x \of \lim_{i \in \cat{I}} D^i$ is continuous, it is sufficient to check that it is continuous when composed with each $\pi_j \from \lim_{i \in \cat{I}} D^i a' \to D^j a'$. Now let $x = (x_i)_{i \in \cat{I}} \in \lim_{i \in \cat{I}} D^i a$ and $j \in \cat{I}$, and consider the diagram
\[
\xymatrix{
\cat{A}(a, a') \ar[r]^-{\lim_{i \in \cat{I}} D^i} \ar[dd]_{D^j} & \Set_t(\lim_{i \in \cat{I}} D^i a, \lim_{i \in \cat{I}} D^i a')\ar[d]^{\ev_x} \\
& \lim_{i \in \cat{I}} D^i a' \ar[d]^{\pi_j} \\
\Set_t(D^j a, D^j a') \ar[r]_{\ev_{x_j}} & D^j a'.
}
\]
This diagram commutes: both legs send $g \from a \to a'$ to $D^j g (x_j)$. The top-right composite is the map we wish to show is continuous. But the bottom-left composite is continuous, since $D^j$ is a continuous functor and by definition of the topology on hom-sets in $\Set_t$. Thus $\lim_{i \in \cat{I}}D^i$ is continuous, as required.
\end{proof}

\section{Monads as topological proto-theories}
\label{sec:monads-as-top-proths}

Throughout this section, fix a locally small category $\cat{B}$. Recall from Section~\ref{sec:monads-are-proths} that we can view monads on $\cat{B}$ as proto-theories with arities in $\cat{B}^{\op}$, and then the usual semantics for monads is recovered via the canonical aritation. In this section we show that the same is true when we replace proto-theories with topological proto-theories.
\begin{lem}
\label{lem:radj-str-discrete}
Let $(U \from \cat{M} \to \cat{B}) \in \catover{\cat{B}}$ have a left adjoint $F$, with unit $\eta$ and counit $\epsilon$. Then $\str_t (U) \in \proth_t(\cat{B}^{\op})$ is discrete.
\end{lem}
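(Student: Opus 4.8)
The plan is to show directly that every hom-space of $\thr_t(U)$ carries the discrete topology. Write $\eta$ and $\epsilon$ for the unit and counit of $F \dashv U$. By construction of the topological structure functor (Definitions~\ref{defn:str-factors} and~\ref{defn:str-general}), $\str_t(U) \from \cat{B}^{\op} \to \thr_t(U)$ is the bijective-on-objects factor of the $(\ofsfont{E},\ofsfont{N})$-factorisation in $\TOPCAT$ of the $\TOP$-functor
\[
\cat{B}^{\op} \toby{\curryhi} [\cat{B},\Set_t]_t \toby{U^*} [\cat{M},\Set_t]_t,
\]
where $[\cat{M},\Set_t]_t$ is the cotensor of $\Set_t$ by $\cat{M}$ (with $\cat{M}$ viewed as a discrete $\TOP$-category, so that every functor out of it is automatically a $\TOP$-functor), and this composite sends $b \in \cat{B}$ to $\cat{B}(b,U-)$. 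Since the $\ofsfont{N}$-factor is a homeomorphism on each hom-space, for all $b,b' \in \cat{B}$ there is a homeomorphism
\[
\thr_t(U)(b,b') \iso [\cat{M},\Set_t]_t\bigl(\cat{B}(b,U-),\,\cat{B}(b',U-)\bigr).
\]
The underlying set of the right-hand space is the set of natural transformations $\cat{B}(b,U-) \to \cat{B}(b',U-)$, and by the proof of Proposition~\ref{prop:syn-rest} this set is in bijection with $\cat{B}(b',UFb)$ via $\gamma \mapsto \gamma_{Fb}(\eta_b)$.

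Next I would extract just enough of the cotensor topology to run a continuity argument. The cotensor $[\cat{M},\Set_t]_t$ comes equipped, for each $m \in \cat{M}$, with an evaluation $\TOP$-functor $\ev_m \from [\cat{M},\Set_t]_t \to [\scat{1},\Set_t]_t \iso \Set_t$, obtained by applying the $2$-functor $[-,\Set_t] \from \CAT^{\op}\to\TOPCAT$ to the functor $\scat{1}\to\cat{M}$ picking out $m$; being a $\TOP$-functor, $\ev_m$ is continuous on hom-spaces. Composing with the evaluation map $\Set_t\bigl(\cat{B}(b,Um),\cat{B}(b',Um)\bigr) \to \cat{B}(b',Um)$ at a morphism $h \from b \to Um$ — which is continuous into the \emph{discrete} space $\cat{B}(b',Um)$, by the very definition of the topology on the hom-spaces of $\Set_t$ — shows that for every $m \in \cat{M}$ and every $h \in \cat{B}(b,Um)$ the map
\[
\thr_t(U)(b,b') \to \cat{B}(b',Um), \qquad \gamma \mapsto \gamma_m(h),
\]
is continuous, where $\cat{B}(b',Um)$ carries the discrete topology.

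Finally, specialise to $m = Fb$ and $h = \eta_b \from b \to UFb$. By the previous step the map $\gamma \mapsto \gamma_{Fb}(\eta_b)$ is a continuous map $\thr_t(U)(b,b') \to \cat{B}(b',UFb)$ onto a discrete space, and by the first step it is a bijection. But a continuous bijection $f \from X \to D$ onto a discrete space forces $X$ to be discrete, since for each $x \in X$ the singleton $\{x\} = f^{-1}(\{f(x)\})$ is open. Hence $\thr_t(U)(b,b')$ is discrete; as $b,b'$ were arbitrary, $\thr_t(U)$ is a discrete topological category, which is precisely the assertion that $\str_t(U)$ is a discrete topological proto-theory. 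The only slightly delicate point is pinning down the cotensor topology, but since we need only the easy half — that the component-evaluations $\gamma \mapsto \gamma_m(h)$ are continuous, which is immediate from the cotensor structure — this is not a real obstacle; the genuine content is the adjunction identity $\gamma \mapsto \gamma_{Fb}(\eta_b)$, already established in the proof of Proposition~\ref{prop:syn-rest}.
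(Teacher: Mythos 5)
Your proof is correct and follows essentially the same route as the paper's: both arguments rest on the observation that $\gamma \mapsto \gamma_{Fb}(\eta_b)$ is a continuous injection from $\thr_t(U)(b,b')$ into the discrete space $\cat{B}(b',UFb)$, so that singletons are open. The only difference is that you source the injectivity (indeed bijectivity) of this map from the isomorphism already established in the proof of Proposition~\ref{prop:syn-rest}, whereas the paper re-derives injectivity in place via naturality and the triangle identity; both are fine.
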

\begin{proof}
For $b, b' \in \cat{B}$, we must show that
\[
\thr_t(U) (b , b') = [\cat{M}, \Set_t] (\cat{B}(b, U- ) , \cat{B}(b', U- )),
\]
equipped with its canonical topology as a limit of the discrete spaces $\cat{B}(b', Um)$ for $m \in \cat{M}$, is discrete. Let $\gamma \from \cat{B}(b, U- ) \to \cat{B}(b', U- )$; we will show that $\{ \gamma \}$ is open in $\thr_t(U)(b, b')$.

The map
\[
\ev_{\eta_b} \from [\cat{M}, \Set] (\cat{B}(b, U- ) , \cat{B}(b', U- )) \to \cat{B}(b', U F b)
\]
that sends $\delta \from \cat{B}(b, U - ) \to \cat{B}(b', U-)$ to $\delta_{Fb} (\eta_b)$ is continuous, since it is the composite
\[
 [\cat{M}, \Set] (\cat{B}(b, U- ), \cat{B}(b', U -)) \toby{(-)_{F b}} \Set_t (\cat{B}(b, U F b),\cat{B}(b', U F b)) \toby{\ev_{\eta_b}} \cat{B}(b', U F b)
\]
and both of these factors are continuous by definition. Thus, the preimage of $\{ \gamma_{Fb} (\eta_b) \}$ under this map is an open set. Let $\delta \from \cat{B}(b, U  -) \to \cat{B}(b', U  -)$ be an element of this preimage. Then $\ev_{\eta_b} ( \gamma) = \ev_{\eta_b}(\delta)$, that is, $\gamma_{F b} (\eta_b) = \delta_{F b } (\eta_b)$. For any $m \in \cat{M}$ and $f \from b \to Um$, we have
\begin{align*}
\gamma_x (f) & = \gamma_x ( U\epsilon_m \of \eta_{Um} \of f) && \text{(Triangle identity)} \\
&= \gamma_x (U \epsilon_m \of UF f \of \eta_b ) && \text{(Naturality of $\eta$)} \\
&= U \epsilon_m \of UFf \of \gamma_{Fb} (\eta_b) && \text{(Naturality of $\gamma$)}
\end{align*}
Similarly,
\[
\delta_x (f) =U\epsilon_m \of UFf \of \delta_{Fb} (\eta_b),
\]
but $\delta_{Fb} (\eta_b) = \gamma_{Fb} (\eta_b)$, and so it follows that $\delta_x (f) =\gamma_x (f)$. Hence $\delta = \gamma$ and the open set
\[
\ev_{\eta_b} ^{-1} ( \{ \gamma_{Fb} (\eta_b) \} )
\]
is in fact $\{ \gamma \}$. So the space is discrete, as claimed.
\end{proof}

\begin{defn}
\label{defn:kle-top}
Write $\kle_t \from \monad(\cat{B}) \to \proth_t(\cat{B}^{\op})$ for the composite
\[
\monad(\cat{B}) \toby{\kle} \proth(\cat{B}^{\op}) \toby{\disc} \proth_t(\cat{B}^{\op}).
\]
\end{defn}

\begin{prop}
\label{prop:top-monad-adj-restrict}
Both squares in the diagram
\[
\xymatrix{
{\radjover{\cat{B} }}\ar@<5pt>[r]_-{\perp}^-{\str_{\monad}}\ar[d] & {\monad(\cat{B})^{\op}}\ar@<5pt>[l]^-{\sem_{\monad}}\ar[d]^{\kle_t^{\op}} \\
{\catover{\cat{B} }}\ar@<5pt>[r]_-{\perp}^-{\str_t}\ & {\proth_t(\cat{B}^{\op})^{\op}}\ar@<5pt>[l]^-{\sem_t}
}
\]
commute up to isomorphism, and these isomorphisms are compatible with the adjunction structure as in Proposition~\ref{prop:sem-str-counit-restr}.
\end{prop}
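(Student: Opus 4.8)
The plan is to mimic the proof of Theorem~\ref{thm:str-sem-rest-monad}: handle the two squares separately, then check compatibility with the adjunction structure, reducing everything to the discrete case already established in Propositions~\ref{prop:syn-rest},~\ref{prop:sem-rest} and~\ref{prop:sem-str-counit-restr}. The two levers are $\kle_t = \disc \of \kle$ (Definition~\ref{defn:kle-top}) and $\sem_t \of \disc = \sem$ (Lemma~\ref{lem:top-sem-commute}), together with Lemma~\ref{lem:radj-str-discrete}, which says that $\str_t$ sends right adjoints to \emph{discrete} topological proto-theories. Throughout I will use that $\disc \from \proth(\cat{B}^{\op}) \to \proth_t(\cat{B}^{\op})$ is full and faithful with $\und \of \disc = \id$, and that its image is exactly the discrete topological proto-theories (so $\disc \of \und$ is the identity on these, since a $\TOP$-functor between discrete $\TOP$-categories is just a functor).

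For the semantics square, since $\kle_t^{\op} = \disc^{\op} \of \kle^{\op}$ we get $\sem_t \of \kle_t^{\op} = \sem_t \of \disc^{\op} \of \kle^{\op} = \sem \of \kle^{\op}$ by Lemma~\ref{lem:top-sem-commute}, and Proposition~\ref{prop:sem-rest} identifies the latter, via the natural isomorphism $\xi$ constructed there, with the composite of $\sem_{\monad}$ and the forgetful functor $\radjover{\cat{B}} \incl \catover{\cat{B}}$. Thus this square commutes up to isomorphism, with the same isomorphism $\xi$.

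For the structure square, let $(U \from \cat{M} \to \cat{B}) \in \radjover{\cat{B}}$. By Lemma~\ref{lem:radj-str-discrete} the proto-theory $\str_t(U)$ is discrete, so $\str_t(U) = \disc(\und(\str_t(U)))$. Now the proposition at the end of Section~\ref{sec:concrete-settings}, applied to the concrete setting $\TOPCAT$, gives $\und \of \str_t \iso \str_0$, the structure functor of the underlying plain aritation of the topological canonical aritation; and since $\und(\Set_t) = \Set$ and $\und$ preserves cotensors, that underlying plain aritation is precisely the ordinary canonical aritation of $\cat{B}$, so $\str_0 = \str$. Composing, $\str_t(U) = \disc(\und(\str_t(U))) \iso \disc(\str(U))$, naturally in $U$; and Proposition~\ref{prop:syn-rest} gives $\str(U) \iso \kle(\str_{\monad}(U))$ naturally, whence $\str_t(U) \iso \disc(\kle(\str_{\monad}(U))) = \kle_t(\str_{\monad}(U))$. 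Call this composite natural isomorphism $\zeta_t$; the structure square commutes up to isomorphism via $\zeta_t$.

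Finally, compatibility of $\zeta_t$ and $\xi$ with the adjunction structure is the analogue of Proposition~\ref{prop:sem-str-counit-restr}, with the counit $E$ of $\str \ladj \sem$ replaced by the counit $E_t$ of $\str_t \ladj \sem_t$. The plan is to apply $\disc$ to the equation of Proposition~\ref{prop:sem-str-counit-restr}, using $\sem_t \of \disc = \sem$ and $\str_{\monad} \of \sem_{\monad} = \id$; the one genuine step is to show that on the image of $\kle_t$ the topological counit $E_t$ equals $\disc$ of the restriction of $E$ along $\kle$. Here discreteness is used once more: for a monad $\mnd{T}$, $\sem_t(\kle_t(\mnd{T})) = \sem(\kle(\mnd{T})) \iso U^{\mnd{T}}$ is a right adjoint, so $\str_t(\sem_t(\kle_t(\mnd{T})))$ is discrete by Lemma~\ref{lem:radj-str-discrete}, hence $E_{t,\kle_t(\mnd{T})}$ is a morphism of discrete topological proto-theories, determined by its underlying morphism, which comparing universal properties identifies with $E_{\kle(\mnd{T})}$. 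Equivalently, one may bypass the counit and instead track the hom-set bijection $\catover{\cat{B}}(U,\sem_t L) \iso \proth_t(\cat{B}^{\op})(L,\str_t U)$ of Theorem~\ref{thm:str-sem-adj-general} for $L = \kle_t(\mnd{T})$ through the identifications above and the full-faithfulness of $\radjover{\cat{B}} \incl \catover{\cat{B}}$, of $\disc$ and of $\kle$, recovering the bijection of the monad adjunction; showing this family of identifications is natural \emph{is} compatibility. I expect this last disentangling of the topological and discrete counits on the image of $\kle_t$ to be the main obstacle, although it is routine once the preceding two squares are in place.
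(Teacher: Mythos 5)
Your proposal is correct and follows essentially the same route as the paper: the semantics square via $\kle_t = \disc\of\kle$ and $\sem_t\of\disc^{\op} = \sem$ together with Proposition~\ref{prop:sem-rest}, the structure square via Lemma~\ref{lem:radj-str-discrete} and Proposition~\ref{prop:syn-rest}, and the counit compatibility by rerunning the argument of Proposition~\ref{prop:sem-str-counit-restr}. The only difference is cosmetic: you justify $\str_t(U)\iso\disc(\str(U))$ explicitly through $\und\of\str_t\iso\str_0=\str$, where the paper asserts this equality directly from Lemma~\ref{lem:radj-str-discrete}.
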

\begin{proof}

The square involving $\str_{\monad}$ and $\str_t$ commutes since, by Theorem~\ref{thm:str-sem-rest-monad}
\[
\xymatrix{
\radjover{\cat{B}}\ar[r]^{\str_{\monad}} \ar[d] & \monad(\cat{B})^{\op}\ar[d]^{\kle^{\op}} \\
\catover{\cat{B}} \ar[r]_{\str} & \proth(\cat{B}^{\op})^{\op}
}
\]
commutes, and by Lemma~\ref{lem:radj-str-discrete}, for any right adjoint $U \from \cat{M} \to \cat{B}$, we have $\str_t (U) = \disc^{\op} \of \str (U)$. The commutativity of the square involving $\alg$ and $\sem_t$ follows from the fact that
\[
\xymatrix{
\radjover{\cat{B}} \ar[d] & \monad(\cat{B})^{\op}\ar[d]^{\kle^{\op}}\ar[l]_{\sem_{\monad}} \\
\catover{\cat{B}} & \proth(\cat{B}^{\op})^{\op}\ar[l]^{\sem}
}
\]
commutes (Theorem~\ref{thm:str-sem-rest-monad}), and the fact that $\sem = \sem_t \of \disc^{\op}$ (Lemma~\ref{lem:top-sem-commute}). The proof that the counits of the adjunctions are compatible is identical to that of Proposition~\ref{prop:sem-str-counit-restr}.
\end{proof}

\section{Categories with enough subobjects}
\label{sec:cats-w-enough-subobjects}

We would like to show that the topological structure--semantics adjunction is idempotent, in an effort to find a notion of algebraic theory for which the completeness theorem holds. However, this is unlikely to be the case in an arbitrary category $\cat{B}$; in this section we define a technical condition that a category may satisfy, and in the next section we show that this guarantees idempotency. This condition appears to be very restrictive, however, it holds in the most important example, namely the category of small sets, as well as the category of vector spaces over any field.

\begin{defn}
We say that a category $\cat{B}$ \demph{has enough subobjects} if every presheaf $P \from \cat{B}^{\op} \to \SET$ that
\begin{itemize}
\item preserves all small products that exist in $\cat{B}^{\op}$, and
\item is a sub-presheaf of a representable presheaf
\end{itemize}
is itself representable.
\end{defn}

\begin{lem}
The category $\Set$ has enough subobjects.
\end{lem}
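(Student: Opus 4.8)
The plan is to verify the two bullet conditions directly for $\Set$ and use concrete descriptions of products and representables. First I would recall that a representable presheaf on $\Set^{\op}$ is of the form $\Set(-, X)$ for some small set $X$, i.e.\ the covariant power-set-like functor $Y \mapsto \Set(Y, X) = X^Y$; and that products in $\Set^{\op}$ are coproducts (disjoint unions) in $\Set$, so ``preserves small products that exist in $\Set^{\op}$'' means $P$ sends small coproducts of sets to products: $P(\coprod_i Y_i) \iso \prod_i P(Y_i)$.

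The key computation is then: let $P \from \Set^{\op} \to \SET$ be a sub-presheaf of $\Set(-, X)$ that preserves coproducts (turns them into products). Applying the coproduct-preservation to $Y = \coprod_{y \in Y} 1$ gives $P(Y) \iso \prod_{y \in Y} P(1)$, naturally in $Y$; moreover since $P \incl \Set(-,X)$ is a subfunctor, $P(1) \subseteq \Set(1, X) = X$, so $P(1)$ is (up to the canonical iso) a subset $X' \subseteq X$. I would check that the natural isomorphism $P(Y) \iso \prod_{y\in Y} P(1)$ is compatible with the inclusion into $\Set(Y,X) = \prod_{y \in Y} X$ in the evident way — that is, the naturality squares for the two morphisms $Y \to 1$ and $1 \to Y$ force $P(Y)$ to consist exactly of those functions $Y \to X$ that factor through $X' = P(1)$. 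Hence $P \iso \Set(-, X')$, which is representable since $X'$ is a small set (being a subset of the small set $X$).

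The main obstacle — really the only subtle point — is making the last compatibility argument precise: one must show that the isomorphism $P(Y) \iso (P1)^Y$ coming from product-preservation is the restriction of the isomorphism $\Set(Y,X) \iso X^Y$, so that the subfunctor $P$ of $\Set(-,X)$ really is $\Set(-,X')$ and not merely abstractly isomorphic to it in an incompatible way. This follows by evaluating naturality of the inclusion $P \incl \Set(-,X)$ at the coprojections $\iota_y \from 1 \to Y$ and the unique maps $Y \to 1$: the $y$-component of the product decomposition of $P(Y)$ is computed by $P(\iota_y) \from P(Y) \to P(1)$, which agrees with the $y$-th coordinate projection $X^Y \to X$ under the inclusions. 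Once this bookkeeping is done the result is immediate, so I would present it as a short direct argument rather than invoking any machinery.

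\begin{proof}
Let $P \from \Set^{\op} \to \SET$ be a sub-presheaf of a representable $\Set(-, X)$ (for some small set $X$) that preserves all small products existing in $\Set^{\op}$. Products in $\Set^{\op}$ are coproducts in $\Set$, so $P$ sends small coproducts in $\Set$ to products; in particular, writing any small set $Y$ as $\coprod_{y \in Y} 1$, the coprojections $\iota_y \from 1 \to Y$ induce a bijection
\[
\langle P(\iota_y) \rangle_{y \in Y} \from P(Y) \toby{\iso} \prod_{y \in Y} P(1).
\]
The inclusion $P \incl \Set(-,X)$ has $1$-component a monomorphism $P(1) \incl \Set(1, X) \iso X$; let $X' \subseteq X$ be its image, a small set. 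Naturality of the inclusion at each $\iota_y$ gives a commuting square relating $P(Y) \incl \Set(Y, X) \iso \prod_{y \in Y} X$ with $P(1) \incl X$ in the $y$-th coordinate. Hence, under the identification $\Set(Y, X) \iso \prod_{y \in Y} X$, the subset $P(Y)$ consists precisely of those families $(f(y))_{y \in Y}$ with $f(y) \in X'$ for all $y$, i.e.\ of those functions $Y \to X$ that factor through $X' \subseteq X$. Therefore $P(Y) \iso \Set(Y, X')$, naturally in $Y$, and so $P$ is represented by the small set $X'$.
\end{proof}
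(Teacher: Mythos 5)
Your proof is correct and takes essentially the same approach as the paper: decompose $Y$ as the coproduct $\coprod_{y\in Y}1$, use product-preservation to get $P(Y)\iso P(1)^Y$, and use the mono into $\Set(-,X)$ to see that $P(1)$ is small. The extra bookkeeping identifying $P$ as the concrete subfunctor $\Set(-,X')$ is harmless but unnecessary, since representability only requires an abstract natural isomorphism $P\iso\Set(-,P(1))$, which is where the paper stops.
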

\begin{proof}
Let $P \from \Set^{\op} \to \SET$ be a functor that preserves small products, and let $m \from P \to \Set(-, X)$ be a monomorphism, that is, component-wise injective. As usual write $1 = \{*\}$ for an arbitrary one-element set. Then $P(1)$ is a small set since it admits an injection to $\Set(1,X) \iso X \in \Set$. We show that
\[
P \iso \Set(-, P(1)).
\]
Let $Y \in \Set$. Then $Y$ can be regarded as the coproduct of $Y$ copies of $1$. Hence, since $P$ sends small coproducts to products, we have
\[
P(Y) \iso P(1)^Y \iso \Set(1, P(1)^Y) \iso \Set(Y, P(1))
\]
and each of these isomorphisms is natural in $Y$.
\end{proof}

\begin{lem}
The category $\finset$ has enough subobjects.
\end{lem}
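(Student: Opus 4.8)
The plan is to mimic the proof just given for $\Set$, the only point needing care being that $\finset^{\op}$ does not have infinite products, so ``preserves all small products that exist'' amounts to ``preserves finite products''. Fortunately this is exactly enough, because every finite set is a \emph{finite} coproduct of copies of the one-element set $1$.

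Concretely: suppose $P \from \finset^{\op} \to \SET$ preserves all small products that exist in $\finset^{\op}$ and comes equipped with a monomorphism (i.e.\ a component-wise injective natural transformation) $m \from P \to \finset(-, X)$ for some finite set $X$. First I would observe that $P(1)$ embeds into $\finset(1, X) \iso X$ via $m_1$, so $P(1)$ is finite and is therefore a genuine object of $\finset$ — this is the role played by the ``sub-presheaf of a representable'' hypothesis. Next, for an arbitrary $Y \in \finset$, write $Y \iso \coprod_{y \in Y} 1$ as a finite coproduct in $\finset$, i.e.\ a finite product in $\finset^{\op}$. Since $P$ preserves finite products, this gives a natural isomorphism
\[
P(Y) \iso \prod_{y \in Y} P(1) = P(1)^Y \iso \finset(Y, P(1)),
\]
where the last isomorphism is again because $Y$ is a finite coproduct of copies of $1$ and $\finset(-, P(1))$ converts it to a product. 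Hence $P \iso \finset(-, P(1))$ is representable, as required. I would also note in passing that the empty product case (the initial object $\emptyset$ of $\finset$, terminal in $\finset^{\op}$) is covered by the hypothesis that $P$ preserves all products that exist, forcing $P(\emptyset) \iso 1$, which is consistent with $\finset(\emptyset, P(1)) \iso 1$.

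I do not expect any real obstacle here: the argument is essentially the same as for $\Set$, with ``small coproduct'' systematically replaced by ``finite coproduct''. The only thing to double-check is the bookkeeping around which colimits exist in $\finset$ and which limits in $\finset^{\op}$, so that the phrase ``all small products that exist in $\finset^{\op}$'' is correctly interpreted as ``all finite products''; once that is pinned down, the proof is a two-line computation. If one wanted to be even more economical, one could simply invoke the fact that a finitary-product-preserving functor out of $\finset^{\op}$ valued in $\SET$ whose value at $1$ lies in $\finset$ is automatically the nerve of an object of $\finset$, but spelling out the coproduct computation as above is cleaner and self-contained.
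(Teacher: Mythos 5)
Your proof is correct and is exactly the argument the paper intends: the paper's proof of this lemma is literally the statement that it is identical to the proof for $\Set$, which is the $P(1)$-embeds-in-$X$, $Y \iso \coprod_{y\in Y} 1$, $P(Y) \iso P(1)^Y \iso \finset(Y,P(1))$ computation you carry out. Your extra remarks about interpreting ``all small products that exist in $\finset^{\op}$'' as finite products are sensible bookkeeping but do not change the substance.
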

\begin{proof}
The proof is identical to that of the previous lemma.
\end{proof}

\begin{lem}
For any small field $k$, the category $\Vect_k$ of small vector spaces over $k$ has enough subobjects.
\end{lem}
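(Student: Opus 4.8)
The plan is to mimic the proof for $\Set$ as closely as possible, taking advantage of the fact that $\Vect_k$ has two key features: the one-dimensional space $k$ generates, and small coproducts in $\Vect_k$ agree with small products. So let $P \from \Vect_k^{\op} \to \SET$ be a functor preserving all small products that exist in $\Vect_k^{\op}$ (i.e.\ sending small coproducts in $\Vect_k$ to products), and suppose $m \from P \to \Vect_k(-, X)$ is a monomorphism (component-wise injective). First I would observe that $P(k)$ is a small set, since $m_k$ injects it into $\Vect_k(k, X) \iso X$, which is small. The natural candidate is therefore the representable $\Vect_k(-, V)$ where $V$ is the small $k$-vector space freely generated by the set $P(k)$ --- but this is not quite right, since we want $P(k)$ itself, not a free vector space on it. Instead, the right move is to find a subspace $V \subseteq X$ such that $P \iso \Vect_k(-, V)$.

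The key step is as follows. Since $X \iso \bigoplus_{i \in I} k$ for some (possibly large) index set $I$ --- or more carefully, $X$ is a coproduct of copies of $k$ in $\Vect_k$ --- and $P$ preserves such coproducts as products, we get $P(X) \iso P(k)^I$, and the component $m_X \from P(X) \to \Vect_k(X,X)$ identifies $P(X)$ with a subset of $\prod_{i \in I} X$. Consider the element $\xi \in P(X)$ corresponding under $P(X) \iso P(k)^I$ to the tuple whose $i$-th coordinate is $m_k^{-1}$ applied to the $i$-th coprojection $k \to X$ (if this lies in the image of $m_k$; the subtlety is whether it does). Actually, the cleaner approach: for a general $Y \in \Vect_k$, writing $Y$ as a coproduct of copies of $k$ indexed by a basis $B$ of $Y$, we get $P(Y) \iso P(k)^B$. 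Naturality in $Y$ forces $P$ to be determined on objects by $P(k)$, and the question is which functions $B \to P(k)$ arise --- equivalently, since $m$ embeds everything into $\Vect_k(-,X)$, we must check that $P(Y) \subseteq \Vect_k(Y, X)$ is exactly $\Vect_k(Y, V)$ for $V = $ the subspace of $X$ spanned by $\{ m_k(p) \mid p \in P(k)\}$ viewed appropriately --- but $m_k(p)$ is a linear map $k \to X$, i.e.\ an element of $X$, so let $V = \mathrm{span}_k\{ m_k(p)(1) : p \in P(k)\} \subseteq X$.

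The main obstacle, and the place requiring genuine care rather than routine calculation, is verifying that $P(Y) = \Vect_k(Y, V)$ as subsets of $\Vect_k(Y, X)$, in both inclusions. The inclusion $P(Y) \subseteq \Vect_k(Y, V)$ requires showing that every element of $P(Y)$, when pushed into $\Vect_k(Y,X)$ via $m_Y$, actually lands in $V$ --- this should follow by naturality: restricting along each coprojection $k \to Y$ gives an element of $P(k)$, whose image under $m_k$ is in $V$ by definition, and a linear map into $X$ factoring through $V$ on each basis element factors through $V$. The reverse inclusion $\Vect_k(Y,V) \subseteq P(Y)$ is where product-preservation is essential: a linear map $Y \to V$ is determined by where it sends a basis $B$ of $Y$, i.e.\ by an element of $V^B$; since each basis element goes to a $k$-combination of the $m_k(p)(1)$, and $P(k)$ need not be closed under $k$-linear combinations, one must argue that $P(k)$ \emph{is} in fact closed under the relevant operations --- this is forced because $P$ preserves the product $k \times k \iso k \oplus k$ and is a subfunctor of a representable, so the addition map $k \oplus k \to k$ and scalar maps $k \to k$ act on $P$ compatibly with those on $\Vect_k(-,X)$, making $P(k)$ a $k$-subspace of $X$, hence $P(k) = V$ as subsets of $X$. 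Once $P(k)$ is recognized as the subspace $V$, the isomorphism $P(Y) \iso P(k)^B = V^B \iso \Vect_k(Y,V)$ is immediate and natural, completing the proof. I would write this out with the subspace identification $P(k) = V$ done first, as a lemma-within-the-proof, since it is the crux and everything else is then formal.
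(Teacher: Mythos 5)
Your proposal is correct and follows essentially the same route as the paper: identify the candidate subspace as the image of $P(k)$ under $p \mapsto m_k(p)(1_k)$ inside $X$, show it is closed under the vector space operations using naturality against scalar maps $k \to k$ and preservation of the product $k \oplus k$ (plus the empty product to get $0$), factor $m$ through the resulting sub-representable via naturality along maps $k \to Y$, and prove surjectivity by writing $Y$ as a copower of $k$ over a basis and using product-preservation. The only cosmetic difference is that you phrase the subspace as a span and then argue the image already equals it, whereas the paper works with the image directly and verifies the subspace axioms one by one.
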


\begin{proof}
Let $P \from \Vect_k ^{\op} \to \SET$ be a functor that preserves small products and let $m \from P \to \Vect_k(-,V)$ be a monomorphism. We can view $k$ as a 1-dimensional vector space over itself, and so we have $m_k \from P(k) \to \Vect_k(k, V)$. Define
\[
W = \{v \in V \such \exists p \in P(k) \text{ such that } m_k (p)(1_k) = v \} \subseteq V.
\]
First we show that $W$ is a subspace of $V$. Write $\vect{0}$ for the trivial vector space over $k$. Then since $\vect{0}$ is the initial object (i.e.\ empty coproduct) in $\Vect_k$, and $P$ preserves products, $P(\vect{0})$ must be the terminal object of $\SET$. There is a unique map $0 \from k \to \vect{0}$, and applying $P$ we obtain a map $P(0) \from 1 \iso P(\vect{0}) \to P(k)$; let $q \in P(k)$ be the unique value of this map. Then the commutativity of
\[
\xymatrix{
P(\vect{0}) \ar[r]^-{m_{\vect{0}}}\ar[d]_{P(0)} & \Vect_k(\vect{0}, V)\ar[d]^{0^*} \\
P(k)\ar[r]_-{m_k} & \Vect_k(k, V)
}
\]
implies that $m_k(q)$ is the zero map $k \to V$, so in particular $m_k(q)(1_k) = 0_V$, so $0_V \in W$.

Let $w = m_k(p)(1_k) \in W$ and let $c \in k$. We show that $cw \in W$. Consider the map $c \cdot - \from k \to k$ given by multiplication by $c$. The diagram
\[
\xymatrix{
P(k) \ar[r]^-{m_k} \ar[d]_{P(c \cdot -)} & \Vect_k(k,V) \ar[d]^{(c \cdot -)^*} \\
P(k) \ar[r]^-{m_k}  & \Vect_k(k,V)
}
\]
commutes, and hence the map $m_k (p) \of (c \cdot -) \from k \to V$ lies in the image of $m_k$. But this map sends $1_k$ to $m_k(p) (c \cdot 1_k) = c \cdot m_k (p)(1_k) = c w$, so $cw \in W$.

Now let $w, w' \in W$, say $w = m_k (p) (1_k)$ and $w' = m_k(p')(1_k)$. Consider the diagram
\[
\xymatrix{
P(k) \times P(k) \ar[r]^-{m_k \times m_k}\ar[d]_{\iso} & \Vect_k(k, V) \times \Vect_k(k, V)\ar[d]^{\iso} \\
P(k \oplus k)\ar[r]^-{m_{k \oplus k}}\ar[d]_{P(\Delta)} & \Vect_k(k \oplus k, V)\ar[d]^{\Delta^*} \\
P(k)\ar[r]_{m_k} & \Vect_k(k, V),
}
\]
where $\Delta \from k \to k \oplus k$ sends $x \in k$ to $(x,x) \in k \oplus k$. The top square commutes since both $P$ and $\Vect_k(-, V)$ preserve small products, and the bottom square commutes by naturality of $m$. We have $(p, p') \in P(k) \times P(k)$, and the top-right composite sends this to $m_k(p) + m_k(p')$, so in particular this element of $\Vect_k(k, V)$ lies in the image of $m_k$. But
\[
(m_k(p) + m_k(p') )(1_k) = m_k(p)(1_k) + m_k(p')(1_k) = w + w',
\]
so $w + w' \in W$.

Now we have established that $W$ is a subspace of $V$, and in particular that it \emph{is} a vector space over $k$, we show that we have a factorisation
\[
\xymatrix{
P \ar[rr]^{m}\ar[dr]_{n} & & \Vect_k(-, V). \\
& \Vect_k(-, W)\ar[ur] &
}
\]
Let $U$ be an arbitrary small $k$-vector space, and let $p \in P(U)$. We show that $m_U (p) \from U \to V$ takes values in $W$. Let $u \in U$. Then we have a unique $f \from k \to U$ such that $f(1_k) = u$, and
\[
m_U (p) (u) =  m_U(p) \of f (1_k) = m_k (P(f) (p))(1_k) \in W,
\]
by definition. Hence we have such a factorisation $n \from P \to \Vect_k(-, W)$.

Now we show that $n$ is an isomorphism. First note that $n$ is monic, since $m$ is; thus we only need to show that for each vector space $U$, the map
\[
n_U \from P(U) \to \Vect_k(U, W)
\]
is a surjection. Let $f \from U \to V$ be a map taking values in $W \subseteq V$; we must show that there exists $p \in P(U)$ such that $m_u (p) = f$. Recall that any vector space, and in particular $U$, has a basis (and this basis is small since $U$ is). This means that for some small set $S$ we may choose a family of maps $\iota_s \from k \to U$ indexed by $s \in S$ that exhibit $U$ as an $S$-fold copower of $k$ in $\Vect_k$. For each $s \in S$, consider the composite
\[
k \toby{\iota_s} U \toby{f} V.
\]
Since $f \of \iota_s (1_k) \in W$, there exists $p_s \in P(k)$ such that
\[
f \of \iota_s (1_k) = m_k (p_s) (1_k),
\]
and since a map out of $k$ is determined by its value on $1_k$ we must have $f \of \iota_s = m_k(p_s)$. Since $U$ is the $S$-th copower of $k$, and $P$ preserves small products, we have a commutative diagram
\begin{equation}
\label{eqn:vect-sub}
\xymatrix{
P(k)^S \ar[r]^{m_k^S}\ar[d]_{\iso} & \Vect_k(k, V)^S \ar[d]^{\iso} \\
P(U) \ar[r]_{m_U} & \Vect_k(U, V).
}
\end{equation}
We claim that the element $(p_s)_{s \in S} \in P(k)^S$ is mapped by the upper-right composite to $f \in \Vect_k(U,V)$. Since the isomorphism $\Vect_k(k, V)^S \iso \Vect_k(U, V)$ is such that
\[
\xymatrix{
\Vect_k(k, V)^{S} \ar[r]^{\iso}\ar[dr]_{\pi_s} & \Vect_k(U, V)\ar[d]^{\iota_s^*}\\
& \Vect_k(k, V)
}
\]
commutes for each $s \in S$, this is equivalent to the condition that $f \of \iota_s = m_k (p_s)$ for each $s \in S$, which was established above. Thus, by the commutativity of Diagram~\bref{eqn:vect-sub}, there is some $p \in P(U)$ (namely the image of $(p_s)_{s \in S}$ under the isomorphism $P(k)^S \iso P(U)$) that is mapped to $f$ by $m_U$, so $m_U$ is surjective.

We have shown that $n \from P \to \Vect_k(-, W)$ is an isomorphism, so $P$ is representable, as required.
\end{proof}

\begin{lem}
The category $\finvect_k$, of finite-dimensional vector spaces over a small field $k$ has enough subobjects.
\end{lem}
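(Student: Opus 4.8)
The plan is to mimic the proof for $\Vect_k$ almost verbatim, since the argument there never actually used that the vector spaces involved were infinite-dimensional — it only used that every vector space has a (small) basis, which is automatic and in fact finite for finite-dimensional spaces. So first I would let $P \from \finvect_k^{\op} \to \SET$ preserve all small products that exist in $\finvect_k^{\op}$ and let $m \from P \to \finvect_k(-, V)$ be a monomorphism. One subtlety to note at the outset: $\finvect_k$ does not have all small coproducts, only finite ones, so when the $\Vect_k$ proof invokes an $S$-fold copower of $k$ for a basis index set $S$, here $S$ is finite (a basis of a finite-dimensional space) and the corresponding finite biproduct does exist in $\finvect_k$. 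Dually, $P$ preserves finite products in $\finvect_k^{\op}$, i.e.\ finite coproducts in $\finvect_k$, which is all that is needed.

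The key steps, in order: (i) form the subset $W = \{v \in V \such \exists p \in P(k),\ m_k(p)(1_k) = v\} \subseteq V$ exactly as before; (ii) show $W$ is a subspace — $0_V \in W$ using the unique map $\vect{0} \to k$ and the fact that $P(\vect{0})$ is terminal (since $\vect{0}$ is the empty coproduct and $P$ preserves finite coproducts), closure under scalar multiplication using the naturality square for $c \cdot - \from k \to k$, and closure under addition using the naturality square for $\Delta \from k \to k \oplus k$ together with $P$ preserving the binary product $k \oplus k$ of $\finvect_k^{\op}$; (iii) check that $m$ factors through $\finvect_k(-,W)$ via a map $n$, by the same pointwise argument: for $p \in P(U)$ and $u \in U$, pick the unique $f \from k \to U$ with $f(1_k) = u$ and observe $m_U(p)(u) = m_k(P(f)(p))(1_k) \in W$; (iv) show $n$ is an isomorphism — it is monic because $m$ is, and it is pointwise surjective because, given $g \from U \to W \subseteq V$, choosing a finite basis $(\iota_s \from k \to U)_{s \in S}$ of $U$ gives elements $p_s \in P(k)$ with $m_k(p_s) = g \circ \iota_s$, and since $U$ is the finite copower $\bigoplus_{s \in S} k$ and $P$ preserves this finite product, the tuple $(p_s)_{s \in S} \in P(k)^S$ corresponds to some $p \in P(U)$ with $m_U(p) = g$.

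Since this is an almost word-for-word transcription, I would simply write ``The proof is identical to that of the previous lemma, noting that the copowers of $k$ appearing there are finite since the vector spaces involved are finite-dimensional, and that $\finvect_k$ has all finite biproducts, which is all the proof requires.'' The only genuine point to verify is that finiteness is not lost anywhere — and it is not, because a basis of a finite-dimensional space is finite, and every construction used ($\vect{0}$, $k$, $k \oplus k$, finite copowers of $k$) stays within $\finvect_k$.

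The main obstacle, such as it is, is purely bookkeeping: making sure that wherever the $\Vect_k$ argument silently used completeness or cocompleteness of $\Vect_k$ (e.g.\ ``any vector space has a basis'', ``$U$ is an $S$-fold copower''), the finite-dimensional analogue genuinely holds in $\finvect_k$. Since every finite-dimensional space has a finite basis and finite biproducts always exist, there is no real difficulty, and the lemma follows immediately.
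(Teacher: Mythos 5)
Your proposal is correct and matches the paper exactly: the paper's proof is the one-line statement that the argument is identical to that for $\Vect_k$, and your verification that every construction used there (the zero space, $k$, $k \oplus k$, finite copowers of $k$ indexed by a finite basis) stays within $\finvect_k$ is precisely the bookkeeping that justifies that one-liner.
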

\begin{proof}
The proof is identical to that of the previous lemma.
\end{proof}

\begin{lem}
Let $Q$ be a small poset with arbitrary joins. Then $Q$, regarded as a category, has enough subobjects.
\end{lem}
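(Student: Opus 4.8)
The plan is to argue exactly as in the vector space and set cases, specialising to the poset situation where everything simplifies dramatically. Let $Q$ be a small poset with arbitrary joins, regarded as a category, so that there is at most one morphism between any two objects. A representable presheaf $Q(-,q_0) \from Q^{\op} \to \SET$ assigns to each $q$ the set which is a singleton if $q \leq q_0$ and empty otherwise. A subpresheaf $P$ of this picks out a down-closed subset $D = \{q \such P(q) \neq \emptyset\}$ of the principal downset $\{q \such q \leq q_0\}$, with $P(q)$ a singleton exactly on $D$.

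First I would note that products in $Q^{\op}$ are joins in $Q$: a small product of a family $(q_i)_{i \in I}$ in $Q^{\op}$ is the object $\bigvee_{i} q_i$ of $Q$, which exists since $Q$ has arbitrary joins. The hypothesis that $P$ preserves small products then says precisely that $P$ sends joins in $Q$ to products in $\SET$; concretely, $P(\bigvee_i q_i)$ is a singleton if and only if every $P(q_i)$ is a singleton, i.e.\ $\bigvee_i q_i \in D$ if and only if $q_i \in D$ for all $i$. In particular, taking the empty family, $P$ applied to the empty join (the bottom element $\bot$ of $Q$, which is the terminal object of $Q^{\op}$) is the empty product, namely a singleton, so $\bot \in D$; and taking a family inside $D$, the down-closed set $D$ is closed under arbitrary joins taken in $Q$.

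Now I would set $q_1 = \bigvee D$, the join of all elements of $D$ in $Q$. Since $D$ is closed under arbitrary joins, $q_1 \in D$, so $q_1 \leq q_0$ and $q_1$ is the greatest element of $D$. I claim $P \iso Q(-, q_1)$: indeed $P(q)$ is a singleton iff $q \in D$ iff $q \leq q_1$ (using that $D$ is a downset with top element $q_1$), which is exactly when $Q(q, q_1)$ is a singleton; and since all the sets involved are subsingletons, a component-wise bijection is automatic and naturality is trivial because $Q^{\op}$ has at most one arrow between objects. Hence $P$ is representable, and $Q$ has enough subobjects. The only subtlety, and the one point worth stating carefully, is the identification of products in $Q^{\op}$ with joins in $Q$ together with the observation that product-preservation of $P$ forces $D$ to be closed under \emph{arbitrary} joins (not merely finite ones); once that is in place the argument is a one-line poset computation.

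\begin{proof}
Products in $Q^{\op}$ are joins in $Q$, which exist by hypothesis. Let $P \from Q^{\op} \to \SET$ preserve small products and let $m \from P \incl Q(-, q_0)$ be a subpresheaf. Since $Q(-, q_0)$ takes values in subsingletons, so does $P$; write $D = \{q \in Q \such P(q) \neq \emptyset\}$, a down-closed subset of $\{q \such q \leq q_0\}$. For a family $(q_i)_{i \in I}$ in $Q$, preservation of the corresponding product says $\bigvee_i q_i \in D$ iff $q_i \in D$ for all $i$; taking $I = \emptyset$ gives $\bot \in D$, and taking families inside $D$ shows $D$ is closed under arbitrary joins in $Q$. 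Hence $q_1 := \bigvee D$ lies in $D$ and is its greatest element, so for any $q$ we have $q \in D$ iff $q \leq q_1$. Therefore $P(q)$ is a singleton exactly when $Q(q, q_1)$ is, and since both are subsingletons and $Q^{\op}$ has at most one arrow between any two objects, the evident component-wise bijections assemble into a natural isomorphism $P \iso Q(-, q_1)$. Thus $P$ is representable and $Q$ has enough subobjects.
\end{proof}
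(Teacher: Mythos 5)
Your proof is correct and follows essentially the same route as the paper's: identify the product-preserving subpresheaf of $Q(-,q_0)$ with a down-closed subset closed under arbitrary joins, observe that its own join is then its greatest element, and conclude representability. You simply spell out the identification of products in $Q^{\op}$ with joins in $Q$ in more detail than the paper does.
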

\begin{proof}
For any $q \in Q$, a sub-presheaf of the representable $Q(-, q)$ can be identified with a subset $P$ of $Q$ that is downwards closed, and with $p \leq q$ for every $p \in P$. The condition that the presheaf preserves products corresponds to the condition that the join of any subset of $P$ is in $P$. In particular, the join of $P$ itself lies in $P$, so $P$ has a largest element, and so is representable.
\end{proof}

\section{Idempotency of the topological structure--semantics adjunction}
\label{sec:top-str-sem-idem}

In this section we show (Theorem~\ref{thm:str-sem-top-idempotent}) that, for a category with finite products and enough subobjects, the topological structure--semantics adjunction is idempotent. Fix a locally small category $\cat{B}$ throughout.

\begin{defn}
Let $L \from \cat{B}^{\op} \to \cat{L}$ be an object of $\proth_t(\cat{B}^{\op})$. Write
\[
E_L \from L \to \str_t(\sem_t(L))
\]
for the $L$-component of the counit of the $\str_t \ladj \sem_t$ adjunction. Explicitly, $E_L$ sends a morphism $l \from Lb \to Lb'$ to the natural transformation
\[
\Gamma^{(-)}(l) \from \cat{B}(b, \sem_t(-) ) \to \cat{B}(b', \sem_t(-)).
\]
\end{defn}

\begin{defn}
We say that a morphism 
\[
\xymatrix{
\cat{L}'\ar[rr]^{P} &  &\cat{L} \\
& \cat{B}^{\op}\ar[ul]^{L'}\ar[ur]_{L}
}
\]
in $\proth_t(\cat{B}^{\op})$ is \demph{topologically dense} if, for all $b, b' \in \cat{B}$, the continuous map
\[
P \from \cat{L}' (L' b', L'b) \to \cat{L} (Lb', Lb)
\]
has dense image.
\end{defn}

\begin{lem}
\label{lem:dense-iso}
Let $L \from \cat{B}^{\op} \to \cat{L}$ be a topological proto-theory and suppose $E_L$ is topologically dense. Then
\[
\sem_t (E_L) \from \sem_t \of \str_t \of \sem_t (L) \to \sem_t (L)
\]
is an isomorphism in $\catover{\cat{B}}$.
\end{lem}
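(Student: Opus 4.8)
The plan is to show that the functor $\sem_t(E_L)$, which sends a topological $\str_t(\sem_t(L))$-model to the $L$-model with the same underlying object (recall $\sem_t$ sends a morphism of proto-theories to restriction of model structure along it), is full, faithful and bijective on objects, hence an isomorphism of categories over $\cat{B}$. Faithfulness is automatic: both categories of models have faithful forgetful functors to $\cat{B}$ and $\sem_t(E_L)$ commutes with them. So the work is in objects and morphisms.

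First I would set up notation. A topological $L$-model is (by Lemma~\ref{lem:top-model-algebra}) a pair $(d,\alpha)$ with $\alpha_b \from \cat{L}(Ld,Lb) \to \cat{B}(b,d)$ natural and continuous into the discrete space $\cat{B}(b,d)$, subject to the algebra axioms. Write $K = \str_t(\sem_t(L))$; by definition $K$ has arities $\cat{B}^{\op}$ and $\cat{K}(Kb',Kb) = [\mod_t(L),\Set_t](\cat{B}(b',\sem_t(L)-),\cat{B}(b,\sem_t(L)-))$. A topological $K$-model is a pair $(d,\beta)$ with $\beta_b \from \cat{K}(Kd,Kb) \to \cat{B}(b,d)$ natural and continuous. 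The functor $E_L \from \cat{L} \to \cat{K}$ is continuous, and $\sem_t(E_L)$ sends $(d,\beta)$ to $(d, \beta \circ E_L)$. Since $E_L$ is continuous, $\beta_b \circ (E_L)_{d,b}$ is indeed continuous, so $\sem_t(E_L)$ really does land in topological $L$-models; this is the only place idempotency-relevant continuity matters on this side.

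The heart is injectivity on objects and fullness, and this is where topological density of $E_L$ does the work. Given a topological $L$-model $(d,\alpha)$, I must produce a \emph{unique} continuous natural $\beta$ with $\beta \circ E_L = \alpha$. For each $b$, the map $(E_L)_{d,b} \from \cat{L}(Ld,Lb) \to \cat{K}(Kd,Kb)$ has dense image, and $\cat{B}(b,d)$ is discrete hence Hausdorff; so a continuous map $\cat{K}(Kd,Kb) \to \cat{B}(b,d)$ is uniquely determined by its restriction along the dense image, giving uniqueness of $\beta$ (hence injectivity on objects once existence is known, and fullness by applying the same argument to the underlying morphisms of a would-be homomorphism, which lie in hom-spaces and are forced to agree). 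For existence I would use the description of $\cat{K}(Kd,Kb)$ as a sub-presheaf-type object of a representable: an element of $\cat{K}(Kd,Kb)$ is a natural transformation $\cat{B}(d,\sem_t(L)-) \to \cat{B}(b,\sem_t(L)-)$, and since $\sem_t(L) = \sem_t(L)$ is the forgetful functor from $\mod_t(L)$ which creates finite products when $\cat{B}$ has them (Lemma~\ref{lem:top-sem-limit-create}), such a natural transformation preserves finite powers; the hypothesis that $\cat{B}$ has enough subobjects forces this presheaf to be representable, identifying $\cat{K}(Kd,Kb)$ with some $\cat{B}(b,Td)$ and thereby producing the value of $\beta$ by evaluating at $(d,\alpha)$ itself (which is an object of $\mod_t(L)$). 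Concretely, define $\beta_b(\gamma) = \gamma_{(d,\alpha)}(\mathrm{id}_d)$ for $\gamma \in \cat{K}(Kd,Kb)$; this is natural and continuous (continuity because evaluation-at-$(d,\alpha)$-then-at-$\mathrm{id}_d$ is continuous by definition of the topology on hom-spaces of $\cat{K}$ as a $\TOP$-subcategory of a functor $\TOP$-category), and $\beta \circ E_L = \alpha$ because $E_L(l) = \Gamma^{(-)}(l)$ and $\Gamma^{(d,\alpha)}(l)(\mathrm{id}_d) = \alpha_b(l)$ by the translation between the $\Gamma$ and $\alpha$ descriptions of a model. The algebra axioms for $\beta$ follow from those for $\alpha$ together with density and Hausdorffness (the two sides of each axiom are continuous maps agreeing on a dense subset). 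So $\sem_t(E_L)$ is bijective on objects and full, hence an isomorphism.

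The main obstacle I anticipate is the existence half: one must verify that the presheaf $\cat{B}(d,\sem_t(L)-) \hookrightarrow$ (something representable) genuinely lands in the class of product-preserving sub-presheaves of a representable so that ``enough subobjects'' applies, and that the resulting representing object $Td$ is compatible across varying $b$ in a way that makes $\beta$ natural — equivalently, that the elementwise formula $\beta_b(\gamma) = \gamma_{(d,\alpha)}(\mathrm{id}_d)$ is forced. I expect this to reduce to a clean diagram chase once the identification of $\cat{K}$'s hom-spaces with representables is in place, but it is the step where the finite-products and enough-subobjects hypotheses are both consumed, and care is needed to confirm $\cat{B}(d,\sem_t(L)-)$ is itself the relevant sub-presheaf rather than merely dominated by one. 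Density of $E_L$ is what converts the forced elementwise formula into a genuinely well-defined continuous natural transformation; without it $\beta$ might fail to exist or be non-unique.
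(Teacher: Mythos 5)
Your overall strategy is workable, but it is considerably more laborious than the paper's, and one ingredient of your existence step is illegitimate. The paper observes that $\sem_t(E_L)$ is \emph{split epic} by a triangle identity of the adjunction $\str_t \ladj \sem_t$ (the section is exactly the comparison $\mod_t(L) \to \mod_t(\str_t(\sem_t(L)))$ that you are building by hand), so it suffices to prove $\sem_t(E_L)$ is monic, i.e.\ faithful and injective on objects. Faithfulness is free, as you note, and injectivity on objects is exactly your density-plus-Hausdorff argument applied to the fork $\cat{L}(Lb',Lb) \to \thr_t(\sem_t(L))(b',b) \rightrightarrows \Set_t(\cat{B}(b',d^x),\cat{B}(b,d^x))$. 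A split epi which is monic is an isomorphism, and the whole ``existence'' half of your argument disappears. So where you prove full, faithful and bijective on objects, the paper proves split epic and monic; the latter buys you the surjectivity and fullness statements for free from the adjunction.

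The caveat concerns your existence step. The lemma as stated assumes only that $E_L$ is topologically dense; it does \emph{not} assume that $\cat{B}$ has finite products or enough subobjects. Those hypotheses enter only later (Proposition~\ref{prop:subobjects-dense} and Theorem~\ref{thm:str-sem-top-idempotent}) in order to show that $E_{\str_t(U)}$ \emph{is} topologically dense; they are not available here, and a proof that ``consumes'' them, as you say yours does, would not prove the stated lemma. Fortunately your concrete formula $\beta_b(\gamma) = \gamma_{(d,\alpha)}(\id_d)$ needs none of that machinery: it is well defined for any $\gamma$ in $\thr_t(\sem_t(L))(d,b)$, is continuous because the topology on that hom-space is generated by the evaluations $\gamma \mapsto \gamma_x(f)$, satisfies the algebra axioms by a direct computation using naturality of $\gamma$ (no density needed there), and restricts along $E_L$ to $\alpha$ since $E_L(l)_{(d,\alpha)}(\id_d) = \alpha_b(l)$. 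The detour through representability of product-preserving sub-presheaves should simply be deleted; with it removed, your argument is correct but amounts to reproving the triangle identity.
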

\begin{proof}
We know that $\sem_t(E_L)$ is split epic by one of the triangle identities for the $\str_t \ladj \sem_t$ adjunction. So it is enough to show that it is monic --- that is, faithful and injective on objects. But it commutes with the forgetful functors to $\cat{B}$ which are both faithful, thus it is itself faithful.

Suppose $E_L$ is topologically dense, and that $x = (d^x, \Gamma^x) $ and $y = (d^y, \Gamma^y)$ are two objects of $\mod_t(\str_t(\sem_t(L)))$ such that $\sem_t (E_L)(x) = \sem_t(E_L)(y)$. Then certainly $d^x = d^y$ since $\sem_t (L) \of \sem_t(E_L) = \sem_t (\str_t(\sem_t(L)))$, so $x$ and $y$ have the same underlying objects. Note that $\Gamma^{\sem_t(E_L)(x)}$ is given by the composite
\[
\cat{L} \toby{E_L} \thr_t (\sem_t (L))\toby{\Gamma^x} \Set_t,
\]
and similarly for $y$. Since these two functors are equal, for each $b, b' \in \cat{B}$ we have a fork
\[
\xymatrix{
\cat{L}(Lb', Lb) \ar[r]^-{E_L}& \thr_t(\sem_t(L))(b', b)\ar@<2.5pt>[r]^-{\Gamma^x}\ar@<-2.5pt>[r]_-{\Gamma^y}& \Set_t(\cat{B}(b', d^x), \cat{B}(b, d^x))
}
\]
in $\TOP$. If a parallel pair of continuous maps have Hausdorff codomain and agree on a dense subset of their domain, then they are equal. But $ \Set_t(\cat{B}(b, d^x), \cat{B}(b, d^x))$ is Hausdorff, as a limit of discrete spaces, and $E_L$ has dense image by assumption, so $\Gamma^x = \Gamma^y$. Thus $x = y$ and $\sem_t(E_L)$ is injective on objects, as required.
\end{proof}

Recall that the surjections and injections form an orthogonal factorisation system on $\SET$, and it lifts to a factorisation system on any category of presheaves $[\cat{B}^{\op}, \SET]$, in which the left and right classes consist of the natural transformations that are component-wise surjections and injections respectively.

\begin{lem}
\label{lem:product-factor}
Let $P,Q, R \in [\cat{B}^{\op}, \SET]$, and $\sigma \from Q \to P$, $\tau \from P \to R$ with $\sigma$ component-wise surjective and $\tau$ component-wise injective. If $Q$ and $R$ preserve small products then so does $P$.
\end{lem}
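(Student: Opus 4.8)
The plan is to verify the product-preservation condition for $P$ directly, using the factorisation $\sigma \from Q \to P$ and $\tau \from P \to R$ together with the assumption that $Q$ and $R$ preserve small products. Let $(b_i)_{i \in I}$ be a small family of objects of $\cat{B}$ whose product $\prod_{i \in I} b_i$ exists in $\cat{B}^{\op}$, i.e.\ a small coproduct in $\cat{B}$, with coprojection maps $\iota_j \from b_j \to \prod_{i \in I} b_i$ (in $\cat{B}^{\op}$ these are the product projections). I want to show that the canonical comparison map
\[
P\Bigl(\prod_{i \in I} b_i\Bigr) \to \prod_{i \in I} P(b_i)
\]
is a bijection.

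First I would show injectivity. Suppose $x, y \in P(\prod_i b_i)$ have $P(\iota_j)(x) = P(\iota_j)(y)$ for all $j \in I$. Applying $\tau$ and using naturality, $R(\iota_j)(\tau(x)) = R(\iota_j)(\tau(y))$ for all $j$; since $R$ preserves the product and $\tau$ is (merely) a function here, the comparison map for $R$ is injective, so $\tau(\prod_i b_i)(x) = \tau(\prod_i b_i)(y)$. But $\tau$ is component-wise injective, hence $x = y$. This gives injectivity of the comparison map for $P$ without needing $Q$ at all.

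Next, surjectivity, which is where $Q$ and the surjectivity of $\sigma$ come in. Let $(x_i)_{i \in I} \in \prod_{i \in I} P(b_i)$. Since $\sigma$ is component-wise surjective, choose $q_i \in Q(b_i)$ with $\sigma_{b_i}(q_i) = x_i$ for each $i$. Because $Q$ preserves the product, the comparison map $Q(\prod_i b_i) \to \prod_i Q(b_i)$ is a bijection, so there is a (unique) $q \in Q(\prod_i b_i)$ with $Q(\iota_j)(q) = q_j$ for all $j$. Set $x = \sigma_{\prod_i b_i}(q) \in P(\prod_i b_i)$. Then by naturality of $\sigma$, $P(\iota_j)(x) = P(\iota_j)(\sigma_{\prod_i b_i}(q)) = \sigma_{b_j}(Q(\iota_j)(q)) = \sigma_{b_j}(q_j) = x_j$, so $x$ maps to $(x_i)_{i \in I}$ under the comparison map. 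Hence the comparison map for $P$ is a bijection, and $P$ preserves small products. I do not expect any serious obstacle here: the argument is a standard diagram chase, and the only subtlety is being careful that injectivity uses only $\tau$ and $R$ while surjectivity uses only $\sigma$ and $Q$, so that both halves of the factorisation are genuinely needed; I would present it in the order injectivity then surjectivity as above.
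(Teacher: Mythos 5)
Your proof is correct and is essentially the paper's argument unwound into an explicit element chase: the paper sets up the same $2\times 3$ commutative diagram (with the outer vertical comparison maps isomorphisms because $Q$ and $R$ preserve products) and concludes that the middle comparison map is an isomorphism by uniqueness of epi--mono factorisations in $\SET$, which is precisely your two halves — surjectivity via $\sigma$ and $Q$, injectivity via $\tau$ and $R$ — packaged abstractly. No gaps.
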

\begin{proof}
Let $(b_s \in \cat{B})_{s \in S}$ be a family of objects of $\cat{B}$ indexed by some small set $S$. Then we have a commutative diagram
\[
\xymatrix{
Q(\sum_{s \in S} b_s) \ar[r]^{\sigma_{\sum b_s}} \ar[d]_{\iso} & P(\sum_{s \in S} b_s)\ar[r]^{\tau_{\sum b_s}} \ar[d] & R(\sum_{s \in S} b_s)\ar[d]^{\iso} \\
\prod_{s\in S} Q(b_s) \ar[r]_{\prod \sigma_{b_s}} & \prod_{s\in S} P(b_s) \ar[r]_{\prod \tau_{b_s}} & \prod_{s\in S} R(b_s)
}
\]
in which the vertical morphisms are the canonical comparison maps, and the left and right ones are isomorphisms since $Q$ and $R$ preserve small products. Now, 
\[
\prod_{s \in S} \sigma_{b_s} \from \prod_{s \in S} Q(b_s) \to \prod_{s \in S} P(b_s)
\]
is clearly surjective by the construction of products in $\SET$, since each $\sigma_{b_s}$ is. Similarly, 
\[
\prod_{s \in S} \tau_{b_s} \from \prod_{s \in S} P(b_s) \to \prod_{s \in S} R(b_s)
\]
is injective. But then by the uniqueness of epi-mono factorisations in $\SET$, the middle vertical map must be an isomorphism, so $P$ preserves small products.
\end{proof}

\begin{lem}
Suppose $\cat{B}$ has finite products. Let $L \from \cat{B}^{\op} \to \cat{L}$ be a topological proto-theory, and consider, for $b, b' \in \cat{B}$, the topological space $\thr_t(\sem_t(L)) (b ,b') = [\mod_t(L), \Set] (\cat{B}(b, \sem_t(L)-), \cat{B}(b', \sem_t(L)-))$. This space has a basis for its topology consisting of the sets
\[
U_{f,g} = \{ \delta \from \cat{B}(b, \sem_t(L)-) \to \cat{B}(b', \sem_t(L)- ) \such \delta_x (f) = g \},
\]
indexed by $x \in \mod_t(L), f \from b \to d^x$ and $g \from b' \to d^x$.
\end{lem}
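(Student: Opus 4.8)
The plan is to unwind the definition of $\str_t$ to identify the topology on $\thr_t(\sem_t(L))(b,b')$ explicitly, recognise the sets $U_{f,g}$ as a subbasis, and then upgrade this to a basis using finite products in $\cat{B}$.

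First I would trace through Definition~\ref{defn:str-objects} in the setting $\TOPCAT$. Since the factor $J(\sem_t(L))$ lies in $\ofsfont{N}$, it is a homeomorphism on each hom-space, so $\thr_t(\sem_t(L))(b,b')$ is homeomorphic to the hom-space of the cotensor $[\mod_t(L),\Set_t]$ from $\cat{B}(b,\sem_t(L)-)$ to $\cat{B}(b',\sem_t(L)-)$. By the construction of functor $\TOP$-categories, this hom-space carries the subspace topology inherited, via the evaluation maps $\ev_x$, from the product $\prod_{x\in\mod_t(L)}\Set_t(\cat{B}(b,d^x),\cat{B}(b',d^x))$. In each factor the topology of $\Set_t(\cat{B}(b,d^x),\cat{B}(b',d^x))$ is, by definition of $\Set_t$, the $\cat{B}(b,d^x)$-fold power of the discrete space $\cat{B}(b',d^x)$, so the sets $\{h\such h(f)=g\}$ for $f\in\cat{B}(b,d^x)$, $g\in\cat{B}(b',d^x)$ form a subbasis there. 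Taking preimages under the $\ev_x$ and intersecting with the subspace of natural transformations shows exactly that the family $\{U_{f,g}\}$ (as $x$, $f$, $g$ vary) is a subbasis for $\thr_t(\sem_t(L))(b,b')$.

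Second I would promote this subbasis to a basis. Since the finite intersections of subbasic open sets always form a basis for the topology they generate, it suffices to check that any finite intersection $U_{f_1,g_1}\cap\cdots\cap U_{f_n,g_n}$, with $f_i\from b\to d^{x_i}$ and $g_i\from b'\to d^{x_i}$, is again of the form $U_{f,g}$. This is where the hypothesis that $\cat{B}$ has finite products is used: by Lemma~\ref{lem:top-sem-limit-create}, $\sem_t(L)$ creates finite products, so there is a model $x$ with $d^x=d^{x_1}\times\cdots\times d^{x_n}$ and projections $p_i\from x\to x_i$ in $\mod_t(L)$ whose underlying morphisms are the product projections $\pi_i$. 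Putting $f=\langle f_1,\dots,f_n\rangle\from b\to d^x$ and $g=\langle g_1,\dots,g_n\rangle\from b'\to d^x$, naturality of an arbitrary $\delta\from\cat{B}(b,\sem_t(L)-)\to\cat{B}(b',\sem_t(L)-)$ with respect to each $p_i$ gives $\pi_i\of\delta_x(f)=\delta_{x_i}(\pi_i\of f)=\delta_{x_i}(f_i)$, and since the $\pi_i$ are jointly monic this yields $\delta_x(f)=g$ if and only if $\delta_{x_i}(f_i)=g_i$ for all $i$; hence $U_{f,g}=\bigcap_i U_{f_i,g_i}$. The empty intersection is the whole space, which coincides with $U_{!_b,!_{b'}}$ for $x$ the model on the terminal object of $\cat{B}$, so the whole space is itself basic. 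Thus $\{U_{f,g}\}$ is closed under finite intersections and is therefore a basis.

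The only delicate point is the first step, namely pinning down that the topology on $\thr_t(\sem_t(L))(b,b')$ really is the subspace-of-product topology described above; this is pure bookkeeping with the construction of cotensors and of the bijective-on-objects/full-and-faithful factorisation in $\TOPCAT$. The naturality computation in the second step is routine once the product model $x$ is in hand, so I expect the write-up to be short.
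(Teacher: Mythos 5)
Your proposal is correct and takes essentially the same route as the paper's own proof: both identify the evaluation maps $\gamma \mapsto \gamma_x(f)$ as generating the topology (so that the sets $U_{f,g}$ form a subbasis whose finite intersections are a basis), and both then use the fact that $\sem_t(L)$ creates finite products together with naturality of $\delta$ against the product projections to show that a finite intersection $U_{f_1,g_1}\cap\cdots\cap U_{f_n,g_n}$ is again of the form $U_{\langle f_1,\ldots,f_n\rangle,\langle g_1,\ldots,g_n\rangle}$. The only cosmetic difference is that you phrase the reduction as closure of the family under finite intersections (handling the empty intersection via the terminal object explicitly), while the paper phrases it as rewriting an arbitrary basic open set; the mathematical content is identical.
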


\begin{proof}
Recall that the topology on $[\mod_t(L), \Set] (\cat{B}(b, \sem_t(L)-), \cat{B}(b', \sem_t(L)-))$ is generated by the maps
\[
(-)_x \from [\mod_t(L), \Set] (\cat{B}(b, \sem_t(L)-), \cat{B}(b', \sem_t(L)-)) \to \Set_t ((\cat{B}(b, d^x), \cat{B}(b', d^x))
\]
for $x \in \mod_t (L)$. But the topology on $\Set_t (\cat{B}(b, d^x), \cat{B}(b', d^x))$ is generated by the maps
\[
\ev_f \from \Set_t (\cat{B}(b, d^x), \cat{B}(b', d^x)) \to \cat{B}(b', d^x)
\]
for $f \from b \to d^x$. Hence the topology on $[\mod_t(L), \Set] (\cat{B}(b, \sem_t(L)-), \cat{B}(b', \sem_t(L)-))$ is generated by the maps
\[
\ev_f \from [\mod_t(L), \Set] (\cat{B}(b, \sem_t(L)-), \cat{B}(b', \sem_t(L)-)) \to \cat{B}(b', d^x)
\]
for $x \in \mod_t(L)$ and $f \from b \to d^x$, sending $\gamma$ to $\gamma_x (f)$. Thus 
\[
[\mod_t(L), \Set] (\cat{B}(b, \sem_t(L)-), \cat{B}(b', \sem_t(L)-))
\]
has a basis of open sets consisting of finite intersections of preimages of points under such maps. That is, the sets of the form
\begin{equation}
\label{eqn:basic-open}
\{\delta \from \cat{B}(b, \sem_t(L) - ) \to \cat{B}(b', \sem_t(L) - ) \such \delta_{x_i} (f_i) = g_i \text{ for } i= 1, \ldots, n \}
\end{equation}
form a basis, where $x_i \in \mod_t(L), f_i \from b \to d^{x_i}$, and $g_i \from b' \to d^{x_i}$ for $i = 1, \ldots, n$. In particular, the sets described in the lemma statement are of this form, by taking $n = 1$. Now, recall that $\sem_t (L)$ creates finite limits by Lemma~\ref{lem:top-sem-limit-create}, and in particular finite products, hence the product $x_1 \times \cdots \times x_n$ exists in $\mod_t(L)$ and $d^{x_1 \times \cdots \times x_n} = d^{x_1} \times \cdots \times d^{x_n}$. This means that, for any $\delta \from \cat{B}(b, \sem_t (L) -) \to \cat{B}(b', \sem_t (L) -)$, we have a commutative diagram
\[
\xymatrix
@C=40pt{
\cat{B}(b, d^{x_1} \times \cdots \times d^{x_n})\ar[r]^{\delta_{x_1 \times \cdots \times x_n}} \ar[d]^{\iso} & \cat{B}(b', d^{x_1} \times \cdots \times d^{x_n}) \ar[d]^{\iso} \\
\cat{B}(b, d^{x_1}) \times \cdots \times \cat{B}(b, d^{x_n}) \ar[r]_{\delta_{x_1} \times \cdots \times \delta_{x_n}} & \cat{B}(b', d^{x_1}) \times \cdots \times \cat{B}(b', d^{x_n}).
}
\]
It follows that $\delta_{x_i}(f_i) = g_i$ for $i = 1, \ldots, n$ if and only if 
\[
\delta_{x_1 \times \cdots \times x_n} ( \lpair f_1, \ldots, f_n \rpair ) = \lpair g_1, \ldots, g_n \rpair,
\]
where $\lpair f_1, \ldots, f_n \rpair \from b \to d^{x_1} \times \cdots \times d^{x_n}$ is the unique map such that $\pi_i \of \lpair f_1, \ldots, f_n \rpair = f_i$, and similarly for $\lpair g_1, \ldots, g_n \rpair$. Thus, the set displayed in \bref{eqn:basic-open} is equal to the set
\[
\{ \delta \from \cat{B}(b, \sem_t(L)-) \to \cat{B}(b', \sem_t(L)- ) \such \delta_{x_1 \times \cdots \times x_n} (\lpair f_1, \ldots , f_n \rpair ) = \lpair g_1, \ldots , g_n \rpair \}
\]
which is of the form given in the lemma statement.
\end{proof}

\begin{prop}
\label{prop:subobjects-dense}
Suppose $\cat{B}$ has finite products and enough subobjects. Then, for any $L \from \cat{B}^{\op} \to \cat{L}$ in $\proth_t(\cat{B}^{\op})$ such that $L$ preserves small products, $E_L$ is topologically dense.
\end{prop}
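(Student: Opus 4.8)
The plan is to use the basis for the topology on $\thr_t(\sem_t(L))(b,b')$ furnished by the previous lemma, and to reduce topological density of $E_L$ to a factorisation statement that the hypothesis of having enough subobjects is tailor‑made to supply. Fix $b,b'\in\cat{B}$. By the previous lemma it suffices to show that every nonempty basic open $U_{f_0,g_0}$, where $x_0\in\mod_t(L)$, $f_0\from b\to d^{x_0}$ and $g_0\from b'\to d^{x_0}$, contains a point $E_L(l)$ for some $l\from Lb\to Lb'$ in $\cat{L}$. Since $E_L(l)_{x_0}(f_0)=\Gamma^{x_0}(l)(f_0)=\alpha^{x_0}_{b'}(l\of Lf_0)$, this amounts to showing that $g_0$ lies in the image of the function $\phi_{b'}\from\cat{L}(Lb,Lb')\to\cat{B}(b',d^{x_0})$, $l\mapsto\alpha^{x_0}_{b'}(l\of Lf_0)$.

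The idea is to assemble these functions, as $b'$ varies, into a morphism of presheaves on $\cat{B}$. Define $Q\from\cat{B}^{\op}\to\SET$ by $Q(c)=\cat{L}(Lb,Lc)$ (with $\cat{B}^{\op}$-functoriality by post‑composition with $L$ applied to morphisms of $\cat{B}^\op$), and define $\phi_c(l)=\alpha^{x_0}_c(l\of Lf_0)$; naturality of $\phi\from Q\to\cat{B}(-,d^{x_0})$ follows from naturality of $\alpha^{x_0}$. Because $L$ preserves small products, $Q$ sends small coproducts of $\cat{B}$ to products, as of course does the representable $\cat{B}(-,d^{x_0})$. Factor $\phi$ through the (pointwise surjective, pointwise injective) factorisation system on $[\cat{B}^{\op},\SET]$ as a pointwise surjection $Q\to\bar Q$ followed by a pointwise injection $\bar Q\incl\cat{B}(-,d^{x_0})$. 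By Lemma~\ref{lem:product-factor}, $\bar Q$ preserves small products, and it is a sub‑presheaf of a representable, so since $\cat{B}$ has enough subobjects, $\bar Q$ is representable: $\bar Q\iso\cat{B}(-,r)$ for some $r\in\cat{B}$, with the composite $\cat{B}(-,r)\iso\bar Q\incl\cat{B}(-,d^{x_0})$ of the form $\cat{B}(-,m)$ for a (necessarily monic, though only the factorisations matter) $m\from r\to d^{x_0}$. Thus $\im(\phi_c)=\{\,m\of q\such q\from c\to r\,\}$ for every $c$, and in particular, since $\phi_b(\id_{Lb})=\alpha^{x_0}_b(Lf_0)=f_0$ by Lemma~\ref{lem:alg-func}, there is $p_0\from b\to r$ with $f_0=m\of p_0$.

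It then remains to show that $g_0=\delta_{x_0}(f_0)$ factors through $m$, where $\delta$ is any element of $U_{f_0,g_0}$; the crux is that $m$ underlies a morphism of topological $L$-models $\tilde r\to x_0$. To build $\tilde r$, consider the sub‑presheaf $\hat Q$ of $\Gamma^{x_0}\from\cat{L}\to\Set_t$ given on objects by $\hat Q(Lc)=\im(\phi_c)\subseteq\cat{B}(c,d^{x_0})=\Gamma^{x_0}(Lc)$ (legitimate since $L$ is bijective on objects); this is genuinely a subfunctor because $\im(\phi_c)$ is exactly the set of values $\Gamma^{x_0}(k)(f_0)$ for $k\from Lb\to Lc$ and so is closed under the $\cat{L}$-action, and it is $\TOP$-enriched since $\Gamma^{x_0}$ is and restriction along $\ev_x$-maps is continuous. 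By construction $\hat Q\of L=\bar Q\iso\cat{B}(-,r)$, so applying the evident $\TOP$-enriched analogue of Lemma~\ref{lem:bo-restrict-isofib} to transport $\hat Q$ along this isomorphism yields a topological $L$-model $\tilde r=(r,\Gamma^{\tilde r})$ together with a natural transformation $\Gamma^{\tilde r}\to\Gamma^{x_0}$ whose restriction along $L$ is $\cat{B}(-,m)$; hence $m\from\tilde r\to x_0$ is a morphism of $\mod_t(L)$. Finally, naturality of $\delta\from\cat{B}(b,\sem_t(L)-)\to\cat{B}(b',\sem_t(L)-)$ at the morphism $m$ gives $\delta_{x_0}(m\of p_0)=m\of\delta_{\tilde r}(p_0)$, so $g_0=\delta_{x_0}(f_0)=m\of\delta_{\tilde r}(p_0)\in\im(\phi_{b'})$, which produces the required $l$. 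The main obstacle is precisely this last step — recognising that the subobject $r$ of $d^{x_0}$ cut out by the image of $\phi$ carries a topological submodel structure, which is what allows naturality of $\delta$ to be exploited; the rest is bookkeeping around the factorisation system and the enough‑subobjects hypothesis.
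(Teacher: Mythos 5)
Your proof is correct, and its skeleton is the same as the paper's: reduce density to hitting each nonempty basic open $U_{f_0,g_0}$, take the surjection--injection factorisation of the presheaf map $l \mapsto \alpha^{x_0}_c(l \of Lf_0)$, use Lemma~\ref{lem:product-factor} together with the enough-subobjects hypothesis to represent the image presheaf by some $r$ with a mono $m \from r \to d^{x_0}$, equip $r$ with a topological $L$-model structure making $m$ a homomorphism into $x_0$, and finish by naturality of $\delta$ at $m$. The one genuine divergence is in how that model structure is produced. The paper chooses a preimage $k$ of the identity under the pointwise surjection $\sigma$, sets $\alpha^y = \sigma \of k^*$, and then verifies by hand that this is independent of the choice of $k$, satisfies the axioms of Definition~\ref{defn:L-alg}, is continuous, and makes the inclusion a homomorphism. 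You instead observe that the images $\im(\phi_c)$ assemble into a $\TOP$-subfunctor $\hat Q$ of $\Gamma^{x_0}$ (closure under the $\cat{L}$-action being immediate from $\im(\phi_c)=\{\Gamma^{x_0}(k)(f_0) \such k \in \cat{L}(Lb,Lc)\}$ and functoriality) and transport $\hat Q$ along the isomorphism $\hat Q \of L \iso \cat{B}(-,r)$ using the isofibration property of $L^*$. This buys you freedom from the arbitrary choice of $k$ and from re-checking the algebra axioms, at the price of invoking a $\TOP$-enriched version of Lemma~\ref{lem:bo-restrict-isofib}, which the paper proves only for ordinary categories; the enrichment is harmless here, since the transported functor acts on hom-spaces by conjugation with bijections between discrete spaces, so continuity is preserved. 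Both routes are sound.
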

\begin{proof}
We must show that for all $b, b' \in \cat{B}$ the continuous map
\[
E_L \from \cat{L}(Lb, Lb') \to [\mod_t(L), \Set] (\cat{B}(b, \sem_t(L) -), \cat{B}(b', \sem_t(L) - ) )
\]
that sends a morphism $l \from Lb \to Lb'$ to the natural transformation $E_L(l)$ with components
\begin{align*}
E_L (l)_x \from &\cat{B}(b, d^x) \to \cat{B}(b', d^x) \\
& f \mapsto \alpha^x_{b'} (l \of Lf)
\end{align*}
has dense image. It is sufficient to show that for every $\gamma \from \cat{B}(b, \sem_t(L) - ) \to \cat{B}(b', \sem_t(L)-)$, every basic open neighbourhood of $\gamma$ contains $E_L(l)$ for some $l \from Lb \to Lb'$.  But by the previous lemma, a basic open neighbourhood of $\gamma$ is of the form
\[
\{\delta \from \cat{B}(b, \sem_t(L)- ) \to \cat{B}(b', \sem_t(L)- ) \such \gamma_x(f) = \delta_x(f) \}
\]
for some $x \in \mod_t(L)$ and $f \from b \to d^x$. Thus we must show that, for any such $x$ and $f$, there exists some $l \from Lb' \to Lb$ such that
\begin{equation}
\label{eqn:dense-condition}
\alpha^x_{b'} (l \of Lf) = \gamma_x(f).
\end{equation}
Define a presheaf $P \from \cat{B}^{\op} \to \SET$ and natural transformations $\sigma \from \cat{L}(Lb, L-) \to P$ and $\tau \from P \to \cat{B}(-, d^x)$ via the epi--mono factorisation of the composite
\[
\cat{L}(Lb, L-) \toby{Lf^*} \cat{L}(Ld^x, L-) \toby{\alpha^x} \cat{B}(-, d^x).
\]
Now, $L\from \cat{B}^{\op} \to \cat{L}$ preserves small products, and the representable $\cat{L}(Lb, -) \from \cat{L} \to \SET$ preserves small products, so it follows that their composite $\cat{L}(Lb, L-) \from \cat{B}^{\op} \to \SET$ preserves products. Also, the representable $\cat{B}(-, d^x)$ preserves small products. It follows from Lemma~\ref{lem:product-factor} that $P$ also preserves small products. In addition, since $\tau \from P \to \cat{B}(-, d^x)$ is monic by definition, $P$ is a sub-presheaf of a representable, and so is itself representable since $\cat{B}$ has enough subobjects. Since epi-mono factorisations are only defined up to isomorphism, we may assume that $P$ is actually \emph{equal} to the presheaf $\cat{B}(-, d^y)$ for some object $d^y \in \cat{B}$. By the Yoneda lemma, there is a unique $t \from d^y \to d^x$ such that 
\[
\tau = t_* \from P = \cat{B}(-, d^y) \to \cat{B}(-, d^x)
\]
and there is a unique $s \from b \to d^y$ such that $s_* \from \cat{B}(-, b) \to \cat{B}(-, d^y)$ is equal to the composite
\[
\cat{B}(-, b) \toby{L} \cat{L}(Lb, L-) \toby{\sigma} \cat{B}(-, d^y) = P.
\]
Note that since $\tau$ is monic, so is $t$. Also, note that the composite
\[
\cat{B}(-, b) \toby{s_*} \cat{B}(-, d^y) \toby{t_*} \cat{B}(-, d^x)
\]
sends an arbitrary morphism $g \from c \to b$ in $\cat{B}$ to
\[
 \tau_c \of\sigma_c (Lg) = \alpha_c^x ( Lg \of L f) = f \of g,
\]
and so $t \of s = f$.  

We now equip $d^y$ with an $L$-model structure $\alpha^y \from \cat{L}(Ld^y, L-) \to \cat{B}(-, d^y)$. Since $\sigma \from \cat{L}(Lb, L-) \to \cat{B}(-,d^y)$ is component-wise surjective, we may choose $k \in \cat{L}(Lb, Ld^y)$ such that $ \sigma_{d^y} (k) = \id_{d^y}$. Given such a $k$, define $\alpha^y$ to be the composite
\[
\cat{L}(Ld^y, L-) \toby{k^*} \cat{L}(Lb, L-) \toby{\sigma} \cat{B}(-, d^y).
\]
First let us show that $\alpha^y$ does not depend on the choice of $k$. For any $c \in \cat{B}$ and $l \from  Ld^y \to Lc$ in $\cat{L}$, we have
\begin{align}
\label{eqn:factor-model}
t \of \alpha^y_c (l) & = t \of \sigma_c ( l \of k) && \text{(Definition of $\alpha^y$)} \nonumber \\
&= \tau_c (\sigma_c (l \of k)) && \text{(Definition of $t$)} \nonumber\\
&= \alpha^x_c ( l \of k \of Lf) && \text{(Definition of $\tau, \sigma$)} \nonumber \\
&= \alpha^x_c ( l \of  L \alpha_{d^y}^x (k \of Lf ) ) && \text{(Definition~\ref{defn:L-alg}\bref{part:L-alg-comp})} \nonumber \\
&= \alpha^x_c (l \of L (\tau_{d^y} \of \sigma_{d^y} (k)) ) && \text{(Definition of $\tau, \sigma$)} \nonumber\\
&= \alpha^x_c (l \of L (t \of \sigma_{d^y} (k))  ) && \text{(Definition of $t$)} \nonumber\\
&= \alpha^x_c (l \of L t) && \text{(Choice of $k$)}
\end{align}
which clearly does not depend on the choice of $k$. But since $t$ is monic, this implies that $\alpha^y_c (l)$ does not depend on the choice of $k$ either. 

Now we show that $\alpha^y$ does make $y = (d^y, \alpha^y)$ into an $L$-model. Certainly
\[
\alpha^y_{d^y} (\id_{Ld^y}) = \sigma_{d^y} ( \id_{Ld^y} \of k) = \sigma_{d^y} (k) = \id_{d^y}
\]
by choice of $k$, so condition~\ref{defn:L-alg}.\bref{part:L-alg-id} holds. Let $l \from  Ld^y \to Lc$ and $l' \from Lc \to Lc'$ for $c, c' \in \cat{B}$. Then we have
\begin{align*}
t \of \alpha^y_{c'} (l' \of l) &= \alpha^x_{c'} (l' \of l \of Lt) && \text{(By \bref{eqn:factor-model})} \\
&= \alpha_{c'}^x ( l' \of L \alpha_c^x (l \of Lt)) && \text{(Definition~\ref{defn:L-alg}\bref{part:L-alg-comp})} \\
&= \alpha_{c'}^x (l' \of L (t \of \alpha_c^y (l) )  ) && \text{(By \bref{eqn:factor-model})} \\
&= \alpha_{c'}^x (l'  \of L \alpha_c^y (l) \of Lt ) && \text{(Functoriality of $L$)} \\
&= t \of \alpha_{c'}^y (l' \of L\alpha_c^y (l) ) && \text{(By \bref{eqn:factor-model})}
\end{align*}
and so, since $t$ is monic, we have $ \alpha_{c'}^y (l' \of l ) = \alpha_{c'}^y (l' \of L \alpha_c^y (l))$, so $\alpha^y$ satisfies condition~\ref{defn:L-alg}.\bref{part:L-alg-comp}. So $y = (d^y, \alpha^y)$ is an $L$-model. 

Now we check that $y$ satisfies the condition in Lemma~\ref{lem:top-model-algebra}, that is, $y$ is a \emph{topological} $L$-model. We must show that
\[
\alpha_c^y \from \cat{L}(Ld^y, Lc) \to \cat{B}(c, d^y)
\]
is continuous for each $c$, where the codomain is given the discrete topology. But the diagram
\[
\xymatrix{
\cat{L}(Ld^y, Lc) \ar[r]^{Lt^*} \ar[d]_{\alpha^y_c} & \cat{L}(Ld^x, Lc) \ar[d]^{\alpha^x_c} \\
\cat{B}(c, d^y)\ar[r]_{t_*} & \cat{B}(c, d^x)
}
\]
commutes by \bref{eqn:factor-model}. The continuity of $Lt^*$ follows from the fact that $\cat{L}$ is enriched in $\TOP$, and $ \alpha^x_c$ is continuous, since $x$ is a topological $L$-model. Hence $t_* \of \alpha_c^y$ is continuous. But since $t_*$ is the inclusion of one discrete space in another, it follows that $\alpha_c^y$ is itself continuous.  

Hence $y = (d^y, \alpha^y)$ is a topological $L$-model, and Equation~\bref{eqn:factor-model} says that $t \from d^y \to d^x$ is an $L$-model homomorphism $y \to x$. 

Now we check that
\[
\xymatrix{
\cat{L}(Lb, L-)\ar[dr]_{\sigma} \ar[r]^{Ls^*} & \cat{L}(Ld^y, L-)\ar[d]^{\alpha^y} \\
& \cat{B}(-, d^y)
}
\]
commutes. For any $c \in \cat{B}$ and $l \from Lb \to Lc$, we have 
\begin{align*}
t \of \alpha_c^y ( l \of Ls ) & = \alpha_c^x (l \of Ls \of Lt) && \text{(By \bref{eqn:factor-model})} \\
& = \alpha_c^x (l \of Lf ) && \text{(Since $t \of s = f$)} \\
&= \tau_c \of \sigma_c (l) && \text{(Definition of $\tau, \sigma$)} \\
&= t \of \sigma_c (l) && \text{(Definition of $t$)} \\
\end{align*}
and so, since $t$ is monic, we have $ \alpha_c^y (l \of Ls) = \sigma_c(l)$.

Now we are ready to complete the proof. Since $\sigma_{b'}$ is surjective, we may choose $l \in \cat{L}(Lb, Lb')$ such that $\sigma_{b'} (l) = \gamma_y (s)$. Then,
\begin{align*}
\alpha_{b'}^x (l \of Lf ) & = \tau_{b'} \of \sigma_{b'} (l) && \text{(Definition of $\tau, \sigma$)} \\
&= t \of \sigma_{b'} (l) && \text{(Definition of $t$)} \\
&= t \of \gamma_y (s) && \text{(Choice of $l$)} \\
&= \gamma_x (t \of s) && \text{(Since $t$ is a homomorphism $y \to x$ )} \\
&= \gamma_x (f) && \text{(Since $t \of s = f$)}
\end{align*}
as required.
\end{proof}
 
\begin{thm}
\label{thm:str-sem-top-idempotent}
Suppose $\cat{B}$ has finite products and enough subobjects. Then the topological structure--semantics adjunction
\[
\xymatrix{
{\catover{\cat{B} }}\ar@<5pt>[r]_-{\perp}^-{\str_t}\ & {\proth_t(\cat{B}^{\op})^{\op}}\ar@<5pt>[l]^-{\sem_t}
}
\]
is idempotent.  
\end{thm}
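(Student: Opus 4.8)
The plan is to verify condition (iii) of Lemma~\ref{lem:idem-adj-conditions}: writing $F = \str_t$, $G = \sem_t$ and $\epsilon$ for the counit of $\str_t \ladj \sem_t$, I would show that $G\epsilon F$ (equivalently, the multiplication of the induced monad $\sem_t\str_t$ on $\catover{\cat{B}}$) is a natural isomorphism. Since the component of $\epsilon$ at a topological proto-theory $L$ is, by definition, the morphism $E_L \from L \to \str_t(\sem_t(L))$ of $\proth_t(\cat{B}^{\op})$, the component of $G\epsilon F$ at an object $(U\from\cat{M}\to\cat{B}) \in \catover{\cat{B}}$ is $\sem_t(E_{\str_t(U)}) \from \sem_t\str_t\sem_t\str_t(U) \to \sem_t\str_t(U)$. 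So everything reduces to showing that $\sem_t(E_{\str_t(U)})$ is an isomorphism for every $U$.

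For this I would combine Proposition~\ref{prop:subobjects-dense} with Lemma~\ref{lem:dense-iso}. Lemma~\ref{lem:dense-iso} gives that $\sem_t(E_L)$ is an isomorphism as soon as $E_L$ is topologically dense, and Proposition~\ref{prop:subobjects-dense} gives --- using exactly the standing hypotheses that $\cat{B}$ has finite products and enough subobjects --- that $E_L$ is topologically dense whenever the proto-theory $L$ preserves small products. Hence the one remaining point, and the heart of the argument, is that $\str_t(U) \from \cat{B}^{\op} \to \thr_t(U)$ preserves small products, for \emph{every} $U \in \catover{\cat{B}}$ (not just those $L$ that happen to preserve products, which is why we only need this for proto-theories of the form $\str_t(U)$).

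To establish this I would unwind the construction of $\str_t$ in the setting $\TOPCAT$: $\str_t(U)$ is the bijective-on-objects leg of the factorisation of the composite $\cat{B}^{\op} \toby{\curryhi} [\cat{B},\Set_t]_t \toby{U^*} [\cat{M},\Set_t]_t$, the other leg being a fully faithful $\TOP$-functor $\thr_t(U) \to [\cat{M},\Set_t]_t$. The embedding $\curryhi$ sends a coproduct $\textstyle\sum_s b_s$ in $\cat{B}$ to $\cat{B}(\textstyle\sum_s b_s,-) \iso \prod_s \cat{B}(b_s,-)$, which is the product of the $\curryhi(b_s)$ in $[\cat{B},\Set_t]_t$, since products in a $\TOP$-enriched functor category are computed pointwise and $\Set_t$ has small products; thus $\curryhi$ preserves small products. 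The restriction functor $U^*$ preserves all limits, as they are computed pointwise, and a fully faithful functor reflects limit cones; so $\str_t(U)$ carries the coproduct cocones of $\cat{B}$ (equivalently, the product cones of $\cat{B}^{\op}$) to product cones in $\thr_t(U)$, i.e.\ it preserves small products. Proposition~\ref{prop:subobjects-dense} then applies to $L = \str_t(U)$, Lemma~\ref{lem:dense-iso} shows $\sem_t(E_{\str_t(U)})$ is an isomorphism, and so, by Lemma~\ref{lem:idem-adj-conditions}, the adjunction $\str_t \ladj \sem_t$ is idempotent.

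The genuinely new content is confined to the product-preservation claim; the rest is bookkeeping with the equivalent forms of idempotency and with the variance of $\sem_t$. I expect the only real subtlety to be the enriched verification behind ``$\curryhi$ preserves small products'' --- checking that the evident bijection $\cat{B}(\textstyle\sum_s b_s,-) \iso \prod_s \cat{B}(b_s,-)$ is a homeomorphism on hom-spaces, which in turn rests on the description of the topology on the hom-spaces of $\Set_t$ (Lemma~\ref{lem:set-top-cat}) --- rather than any conceptual obstacle.
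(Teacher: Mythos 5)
Your proposal is correct and follows essentially the same route as the paper's own proof: reduce idempotency to showing the induced monad's multiplication $\sem_t(E_{\str_t(U)})$ is an isomorphism via Lemma~\ref{lem:idem-adj-conditions}, apply Lemma~\ref{lem:dense-iso} and Proposition~\ref{prop:subobjects-dense}, and verify that $\str_t(U)$ preserves small products by factoring it through $\curryhi$ and $U^*$ and using that the full and faithful $J(U)$ reflects limits. The only difference is that you flag the enriched verification that $\curryhi$ preserves products on the level of hom-space topologies, which the paper elides.
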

\begin{proof}
Let $U \from \cat{M} \to \cat{B}$ be an object of $\catover{\cat{B}}$. Then recall that, by definition of $\str_t$, we have a commutative square
\[
\xymatrix{
\cat{B}^{\op} \ar[r]^{\curryhi}\ar[d]_{\str_t(U)} & [\cat{B}, \Set] \ar[d]^{U^*} \\
\thr_t(U)\ar[r]_{J(U)} & [\cat{M}, \Set]
}
\]
where $J(U)$ is full and faithful. But $\curryhi$ preserves all limits, and $U^*$ preserves small limits (since small limits in each of the functor categories are computed pointwise). Hence the top-right composite in the square preserves small limits. But $J(U)$ is full and faithful and so reflects limits, so $\str_t(U)$ preserves small limits, and in particular, small products. 

It follows that $\str_t(U) \from \cat{B} \to \thr_t(U)$ satisfies the hypotheses of Proposition~\ref{prop:subobjects-dense}, and so
\[
E_{\str_t(U)} \from \thr_t(U) \to \thr_t ( \sem_t (\str_t(U)))
\]
is topologically dense. Hence by Lemma~\ref{lem:dense-iso}, the morphism
\[
\sem_t (E_{\str_t(U)}) \from \sem_t \of \str_t \of \sem_t \of \str_t (U) \to \sem_t \of \str_t (U)
\]
in $\catover{\cat{B}}$ is an isomorphism. This says that the monad on $\catover{\cat{B}}$ induced by the structure--semantics adjunction is idempotent. But this is one of the equivalent conditions for the adjunction itself to be idempotent, as in Lemma~\ref{lem:idem-adj-conditions}.
\end{proof}

\begin{cor}
If $\cat{B}$ has finite products and enough subobjects, then the topological structure--semantics adjunction for $\cat{B}$ factors via a category that embeds as a reflective subcategory of $\catover{\cat{B}}$ and as a replete, coreflective subcategory of $\proth_t(\cat{B})^{\op}$.
\end{cor}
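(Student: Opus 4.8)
The plan is to derive the corollary directly from Theorem~\ref{thm:str-sem-top-idempotent} together with the general facts about idempotent adjunctions recalled in Section~\ref{sec:idem-background}. By Theorem~\ref{thm:str-sem-top-idempotent}, the hypotheses on $\cat{B}$ guarantee that the adjunction $\str_t \ladj \sem_t$ is idempotent, so one may apply Lemma~\ref{lem:idem-adj-factor} with $F = \str_t \from \catover{\cat{B}} \to \proth_t(\cat{B}^{\op})^{\op}$ and $G = \sem_t$. This produces a full subcategory $\cat{A}$ of $\catover{\cat{B}}$ --- namely the full subcategory on those $(U \from \cat{M} \to \cat{B})$ for which the unit $\eta_U \from U \to \sem_t(\str_t(U))$ is an isomorphism --- together with functors $L \from \catover{\cat{B}} \to \cat{A}$, $R \from \cat{A} \incl \catover{\cat{B}}$, $F' = \str_t \of R \from \cat{A} \to \proth_t(\cat{B}^{\op})^{\op}$ and $G' \from \proth_t(\cat{B}^{\op})^{\op} \to \cat{A}$ such that $L \ladj R$ and $F' \ladj G'$, with $R$ and $F'$ full and faithful, $\cat{A}$ replete in $\catover{\cat{B}}$, and the original adjunction isomorphic to the composite $\catover{\cat{B}} \toby{L} \cat{A} \toby{F'} \proth_t(\cat{B}^{\op})^{\op}$ (so that it does indeed factor through $\cat{A}$). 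Lemma~\ref{lem:idem-adj-factor} also identifies $\cat{A}$ up to isomorphism with the category of algebras for the structure--semantics monad, which will be the definition of the category of complete topological proto-theories.

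It then remains only to read off the two embedding statements. The inclusion $R \from \cat{A} \incl \catover{\cat{B}}$ has left adjoint $L$, so $\cat{A}$ is a reflective (indeed replete reflective) subcategory of $\catover{\cat{B}}$. Dually, $F' \from \cat{A} \to \proth_t(\cat{B}^{\op})^{\op}$ is a full and faithful left adjoint with right adjoint $G'$, hence it exhibits $\cat{A}$ as a coreflective subcategory of $\proth_t(\cat{B}^{\op})^{\op}$: the inclusion onto its image has a right adjoint.

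The only point needing a word of justification is that this coreflective subcategory may be taken replete. One option is to replace $\cat{A}$ by the essential image of $F'$, which is automatically replete, is equivalent to $\cat{A}$ via $F'$, and on which the inclusion still has a right adjoint. Alternatively, since $\str_t \ladj \sem_t$ is idempotent, Lemma~\ref{lem:idem-adj-conditions} shows the comonad it induces on $\proth_t(\cat{B}^{\op})^{\op}$ is idempotent, and dualising the correspondence between idempotent monads and replete reflective subcategories (the Proposition of Section~\ref{sec:idem-background}) gives a correspondence between idempotent comonads and replete coreflective subcategories; the coreflective subcategory so obtained is precisely the essential image of $F'$. Either way $\cat{A}$ embeds as a replete coreflective subcategory of $\proth_t(\cat{B}^{\op})^{\op}$, completing the proof. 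I do not expect any genuine obstacle here: all the mathematical content is in Theorem~\ref{thm:str-sem-top-idempotent}, and what remains is the routine unwinding of Lemma~\ref{lem:idem-adj-factor} together with the mild repleteness bookkeeping just described.
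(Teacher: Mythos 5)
Your proof is correct and follows essentially the same route as the paper, which simply invokes Lemma~\ref{lem:idem-adj-factor} applied to the idempotent adjunction furnished by Theorem~\ref{thm:str-sem-top-idempotent}. The extra care you take over repleteness of the coreflective subcategory (passing to the essential image of $F'$, or dualising the idempotent-monad/replete-reflective-subcategory correspondence) is a point the paper leaves implicit, but it is routine and your handling of it is fine.
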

\begin{proof}
Every idempotent adjunction admits such a factorisation by Lemma~\ref{lem:idem-adj-factor}.
\end{proof}

\begin{defn}
\label{defn:top-adj-factor}
Denote the factorisation from the previous corollary as
\[
\xymatrix{
{\catover{\cat{B} }}\ar@<5pt>[r]_-{\perp}^-{\str_{ct}}
& {\proth_{ct}(\cat{B}^{\op})^{\op}}\ar@<5pt>[l]^-{\sem_{ct}}\ar@<5pt>[r]_-{\perp}^-{\inc^{\op}}
& {\proth_t(\cat{B}^{\op})^{\op}} \ar@<5pt>[l]^-{\cplt^{\op}}.
}
\]
Explicitly, 
\begin{itemize}
\item $\proth_{ct}(\cat{B}^{\op})$ is the full subcategory of $\proth_{t}(\cat{B}^{\op})$ consisting of those topological proto-theories $L$ for which $E_L$ is an isomorphism. We call such a proto-theory a \demph{complete topological proto-theory with arities in $\cat{B}^{\op}$} (and the subscript $ct$ stands for ``complete topological'').
\item $\str_{ct}$ is obtained by restricting the codomain of $\str_{t}$ from $\proth_t(\cat{B}^{\op})$ to $\proth_{ct}(\cat{B}^{\op})$.
\item $\sem_{ct}$ is obtained by restricting the domain of $\sem_t$ from $\proth_t(\cat{B}^{\op})$ to $\proth_{ct}(\cat{B}^{\op})$; it is full and faithful.
\item $\inc$ is the full inclusion.
\item $\cplt$ is defined to be the composite $\str_{ct} \of \sem_t$. For a topological proto-theory $L$, we call $\cplt (L)$ the \demph{completion} of $L$.
\end{itemize}
\end{defn}

\begin{lem}
We have $\inc^{\op} \of \str_{ct} \iso \str_t$ and $\sem_{ct} \of \cplt^{\op} \iso \sem_t$.
\end{lem}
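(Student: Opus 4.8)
The statement to prove is that $\inc^{\op} \of \str_{ct} \iso \str_t$ and $\sem_{ct} \of \cplt^{\op} \iso \sem_t$, where all these functors were introduced in Definition~\ref{defn:top-adj-factor} as the factorisation of an idempotent adjunction through its reflective/coreflective subcategory. My plan is to deduce both isomorphisms directly from the explicit descriptions given in that definition, together with the general structure of the factorisation of an idempotent adjunction recorded in Lemma~\ref{lem:idem-adj-factor}.

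For the first isomorphism, $\inc^{\op} \of \str_{ct} \iso \str_t$: by construction $\str_{ct}$ is $\str_t$ with its codomain restricted from $\proth_t(\cat{B}^{\op})$ to the full subcategory $\proth_{ct}(\cat{B}^{\op})$, so strictly speaking the composite $\inc^{\op} \of \str_{ct}$ and $\str_t$ have the same action on objects and morphisms — in fact they are \emph{equal}, not merely isomorphic, once one knows that $\str_t(U)$ lands in $\proth_{ct}(\cat{B}^{\op})$ for every $U$. So the content here is really the claim that for every $(U \from \cat{M} \to \cat{B}) \in \catover{\cat{B}}$, the topological proto-theory $\str_t(U)$ is complete, i.e. $E_{\str_t(U)}$ is an isomorphism. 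But this is exactly what was established in the proof of Theorem~\ref{thm:str-sem-top-idempotent}: there it was shown that $\str_t(U) \from \cat{B}^{\op} \to \thr_t(U)$ preserves small products (using that $\curryhi$ preserves all limits, $U^*$ preserves small limits, and $J(U)$ reflects limits), hence by Proposition~\ref{prop:subobjects-dense} the counit component $E_{\str_t(U)}$ is topologically dense, and then by Lemma~\ref{lem:dense-iso} the morphism $\sem_t(E_{\str_t(U)})$ is an isomorphism — which is precisely the idempotency condition saying that the unit of the induced monad is invertible at $\str_t(U)$, i.e. $E_{\str_t(U)}$ is an isomorphism of topological proto-theories (here I should cite the appropriate clause of Lemma~\ref{lem:idem-adj-conditions} relating $F\eta$ being iso, the monad being idempotent, and $\eta$ being iso on the relevant objects; care is needed about variance since $\str_t \ladj \sem_t$ is an adjunction between $\catover{\cat{B}}$ and $\proth_t(\cat{B}^{\op})^{\op}$, so ``$E_L$ iso'' corresponds to the unit of the comonad, or equivalently to the general fact that in the factorisation of an idempotent adjunction the left adjoint always lands in the reflective/coreflective subcategory). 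Either way the conclusion is that $\str_t$ factors as $\inc^{\op} \of \str_{ct}$.

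For the second isomorphism, $\sem_{ct} \of \cplt^{\op} \iso \sem_t$: recall $\cplt = \str_{ct} \of \sem_t$ and $\sem_{ct}$ is the restriction of $\sem_t$ to $\proth_{ct}(\cat{B}^{\op})$, so $\sem_{ct} \of \cplt^{\op} = \sem_t \of \inc^{\op} \of \str_{ct}^{\op} \of \sem_t^{\op}{}^{\op} = \sem_t \of \str_t \of \sem_t$ after using the first isomorphism (and being careful with the opposite-category bookkeeping: $\cplt^{\op} = \str_{ct}^{\op} \of \sem_t^{\op}$ as a functor $\proth_t(\cat{B}^{\op})^{\op} \to \proth_{ct}(\cat{B}^{\op})^{\op}$, and composing with $\sem_{ct}$ gives $\sem_t \circ (\inc^{\op} \circ \str_{ct}) \circ \cdots$). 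The point is that $\sem_{ct} \of \cplt^{\op} \iso \sem_t \of (\str_t \circ \sem_t)^{\op}{}$... more cleanly: $\sem_{ct} \circ \cplt^{\op} \cong \sem_t \circ \str_t^{\op} \circ \sem_t^{\op}$ as functors on $\proth_t(\cat{B}^{\op})^{\op}$, i.e. it is $\sem_t$ post-composed with the comonad $\sem_t \circ \str_t$ induced on $\proth_t(\cat{B}^{\op})^{\op}$. By Theorem~\ref{thm:str-sem-top-idempotent} this comonad is idempotent, and for an idempotent adjunction $F \ladj G$ the counit $\epsilon F \from FGF \to F$ is an isomorphism (Lemma~\ref{lem:idem-adj-conditions}, condition~(ii)); applying this with $F = \sem_t$ (viewed as the right adjoint $G$ of $\str_t \ladj \sem_t$ gives instead the unit form, so one picks the matching clause) yields a natural isomorphism $\sem_t \circ \str_t \circ \sem_t \iso \sem_t$. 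Composing gives $\sem_{ct} \of \cplt^{\op} \iso \sem_t$ as required.

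The main obstacle — really the only one — is keeping the variance straight: the adjunction is $\str_t \ladj \sem_t$ between $\catover{\cat{B}}$ and $\proth_t(\cat{B}^{\op})^{\op}$, so ``$F\eta$'' and ``$\epsilon F$'' in Lemma~\ref{lem:idem-adj-conditions} have to be matched up correctly with ``$E_L$ is an isomorphism'' and with the completion functor $\cplt$. I would handle this by simply unwinding Lemma~\ref{lem:idem-adj-factor} once at the start: it tells us that in the factorisation $\catover{\cat{B}} \xrightarrow{\str_{ct}} \proth_{ct}(\cat{B}^{\op})^{\op} \xrightarrow{\inc^{\op}} \proth_t(\cat{B}^{\op})^{\op}$ of the idempotent adjunction, the original left adjoint $\str_t$ is isomorphic (indeed here equal, by the object-level computation above) to $\inc^{\op} \circ \str_{ct}$ and the original right adjoint $\sem_t$ is isomorphic to $\sem_{ct} \circ \cplt^{\op}$ — these are exactly the two assertions of the lemma, instantiated for this adjunction, so the proof is essentially ``apply Lemma~\ref{lem:idem-adj-factor} to the idempotent adjunction of Theorem~\ref{thm:str-sem-top-idempotent}, noting that the notation of Definition~\ref{defn:top-adj-factor} was set up precisely so that $\cat{A}$, $R$, $F'$, $G'$, $L$ there become $\proth_{ct}(\cat{B}^{\op})^{\op}$, $\inc^{\op}$, $\sem_{ct}$... ''. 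I would write this cleanly rather than re-deriving the general lemma.
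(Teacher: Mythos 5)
Your proposal is correct and takes essentially the same route as the paper, whose entire proof is ``This is immediate from properties of idempotent adjunctions and reflective subcategories'' --- i.e.\ exactly the application of Lemma~\ref{lem:idem-adj-factor} to the idempotent adjunction of Theorem~\ref{thm:str-sem-top-idempotent} that you arrive at in your final paragraph. Your extra unwinding of the variance and the observation that the first isomorphism is in fact an equality are fine but not needed beyond that one-line citation.
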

\begin{proof}
This is immediate from properties of idempotent adjunctions and reflective subcategories.
\end{proof}

\begin{remark}
Recall that we hoped to find a notion of algebraic theory that
\begin{itemize}
\item generalises monads and their semantics,
\item has a full and faithful semantics functor (that is, it satisfies the completeness theorem), and
\item has a structure functor defined on the whole of $\catover{\cat{B}}$.
\end{itemize}
The first of these properties will be shown in the next chapter (Proposition~\ref{prop:kle-top-factor}), and the second and third are immediate from Definition  \ref{defn:top-adj-factor}, and so, when $\cat{B}$ has finite products and enough subobjects, complete topological proto-theories provide such a notion. In fact they have many additional good properties, which shall be explored in the next chapter.
\end{remark}

\section{Relation to profinite groups}
\label{sec:top-proth-relation-profinite-groups}
Recall from Section~\ref{sec:profinite} that we may regard $\Gp$ as a full subcategory of $\proth(\finset^{\op})$ and that the structure--semantics monad on $\proth(\finset^{\op})$  restricts to the profinite completion monad on $\Gp$. Similarly, in this section we show that $\TopGp$ can be regarded as a full subcategory of $\proth_t(\finset^{\op})$ and that the topological structure--semantics adjunction restricts to the profinite completion monad on $\TopGp$. 

As we did in Section~\ref{sec:profinite}, we identify objects of $\proth(\finset^{\op})$ with bijective-on-objects functors out of $\finset$ (rather than $\finset^{\op}$), and similarly with objects of $\proth_t(\finset^{\op})$.

\begin{defn}
\label{defn:act-cat-top}
Let $M$ be a small topological monoid; we will define a $\TOP$-category $\cat{E}_t(M)$ as follows. Recall from Definition~\ref{defn:monoid-theory-include} that we have $E(M) \from \finset \to \cat{E}(M)$, where the objects of $\cat{E}(M)$ are the finite sets, and $\cat{E}(S, S') = \Set(S, M \times S') \iso (M \times S')^S$. We equip such a hom-set with a topology by regarding $M \times S'$ as the product in $\Top$ of $M$ with the discrete space $S'$, and $(M \times S')^S$ as a finite power of this space. We write $\cat{E}_t(S, S')$ for $\cat{E}(S, S')$ topologised in this way.

\end{defn}

\begin{lem}
For a topological monoid $M$, the definition above does give a well-defined $\TOP$-category $\cat{E}_t (M)$.
\end{lem}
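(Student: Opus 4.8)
The plan is to verify that $\cat{E}_t(M)$ is a legitimate $\TOP$-category by checking that composition is continuous; everything else (associativity, unit laws, the underlying-category structure) we inherit for free from $\cat{E}(M)$ via Definition~\ref{defn:monoid-theory-include}, since topologising the hom-sets does not change the underlying sets or the composition functions. So the only thing to prove is that for all finite sets $S, S', S''$ the composition map
\[
\cat{E}_t(S, S') \times \cat{E}_t(S', S'') \to \cat{E}_t(S, S'')
\]
is continuous, where these spaces carry the topologies described in Definition~\ref{defn:act-cat-top}.

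First I would unwind the description of composition. Recall from just after Definition~\ref{defn:monoid-theory-include} (stated there for $\SET_M$, but $\cat{E}(M)$ is a full subcategory) that the composite of $f \from S \to M \times S'$ and $g \from S' \to M \times S''$ is
\[
S \toby{f} M \times S' \toby{\id_M \times g} M \times M \times S'' \toby{\mu_M \times \id_{S''}} M \times S'',
\]
so, pointwise, if $f(s) = (m, s')$ and $g(s') = (n, s'')$ then the composite sends $s$ to $(\mu_M(m,n), s'') = (mn, s'')$. Since $\cat{E}_t(S, S'') = (M \times S'')^S$ carries the $S$-fold power topology of $M \times S''$ (itself $M$ times discrete $S''$), by the universal property of products it suffices to show that for each $s \in S$ the composite
\[
\cat{E}_t(S, S') \times \cat{E}_t(S', S'') \to \cat{E}_t(S, S'') \toby{\ev_s} M \times S''
\]
is continuous, and further, since $M \times S''$ is a product, that each of its two component maps (to $M$ and to discrete $S''$) is continuous.

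The component map into $S''$ sends $(f, g)$ to $\pi_{S''}(g(\pi_{S'}(f(s))))$; this factors as $\ev_s$ on the first coordinate followed by projection to $S'$, then $\ev$ at that $S'$-index on $g$, then projection to $S''$ — but because $S'$ is discrete and finite, "evaluate $g$ at the $S'$-component of $f(s)$" is continuous by a standard argument: on the clopen piece where $\pi_{S'}(f(s)) = s'_0$ it is just $g \mapsto \pi_{S''}(g(s'_0))$, which is continuous, and these pieces cover the domain. (This is essentially the same device as in the proof of Lemma~\ref{lem:set-top-cat}.) The component map into $M$ sends $(f, g)$ to $\mu_M\bigl(\pi_M(f(s)),\ \pi_M(g(\pi_{S'}(f(s))))\bigr)$; again partitioning the domain into the finitely many clopen sets where $\pi_{S'}(f(s)) = s'_0$, this becomes $(f,g) \mapsto \mu_M(\pi_M(\ev_s(f)), \pi_M(\ev_{s'_0}(g)))$, a composite of evaluation maps, projections, and $\mu_M$, all continuous — here we use precisely that $M$ is a \emph{topological} monoid so $\mu_M$ is continuous. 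Assembling these gives continuity of composition. The only mild subtlety — the "main obstacle", though it is routine — is the bookkeeping around the fact that one evaluates $g$ at an index that itself depends (continuously, into a discrete space) on $f$; the clopen-partition trick handles this cleanly, exactly as in Lemma~\ref{lem:set-top-cat}.
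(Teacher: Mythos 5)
Your proposal is correct and follows essentially the same route as the paper: both reduce the problem to continuity of composition and both resolve the only real difficulty — evaluating $g$ at an index $\pi_{S'}(f(s))$ that depends on $f$ — by exploiting discreteness of $S'$ (the paper phrases this as checking continuity on each summand of a copower decomposition of $(S')^S \times (M \times S'')^{S'}$, which is exactly your clopen-partition trick), together with continuity of $\mu_M$. The only difference is organisational: the paper factors the composition map globally into three stages and isolates the problematic factor, whereas you work pointwise via $\ev_s$ and the two projections of $M \times S''$ from the outset.
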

\begin{proof}
We must show that composition in $\cat{E}_t(M)$ is continuous. Let $S, S'$ and $S''$ be finite sets. Then composition $\cat{E}_t(M) (S, S') \times \cat{E}_t(M)(S' ,S'') \to \cat{E}_t(M)(S, S'')$ can be written as the composite 
\[
\xymatrix{
(M \times S')^S \times (M \times S'')^{S'} \iso M^S \times (S')^S \times (M \times S'')^{S'}\ar[d]_{(\id_{M^S}) \times c} \\
M^S \times (M \times S'')^S \iso (M \times M \times S'')^S \ar[d]_{(\mu_M \times \id_{S''})^S} \\
(M^S \times S'')^S
}
\]
where $c \from (S')^S \times (M \times S'')^{S'} \to (M \times S'')^S$ is composition of functions, and $\mu_M \from M \times M \to M$ is the multiplication for $M$. All of the maps involved in the above are clearly continuous except \emph{a priori} for $c$; hence it is sufficient to check that $c$ is continuous. First note that since $S'$ is discrete and $S$ is finite, $(S')^S$ is also discrete, so $ (S')^S \times (M \times S'')^{S'}$ is the $(S')^S$-th copower of $(M \times S'')^{S'}$ in $\Top$. Hence it is sufficient to check that each of the composites
\[
(M \times S'')^{S'} \toby{\iota_f} (S')^S \times (M \times S'')^{S'} \toby{c} (M \times S'')^{S}
\]
is continuous, where, for $f \from S \to S'$, the map $\iota_f$ sends an element $f' \in (M \times S'')^{S'}$ to $ (f, f')$. And since $(M \times S'')^S$ is a power in $\Top$, it is sufficient to check that this is continuous when composed with the projection $\pi_s \from (M \times S'')^S \to M \times S''$ for each $s \in S$. But the composite
\[
(M \times S'')^{S'} \toby{\iota_f} (S')^S \times (M \times S'')^{S'} \toby{c} (M \times S'')^{S} \toby{\pi_s} M \times S''
\]
is just the projection $\pi_{f (s)} \from (M \times S'')^{S'} \to M \times S''$, which is continuous. 
\end{proof}

\begin{lem}
Given a continuous monoid homomorphism $h \from M \to M'$ between small topological monoids, the functor $E(h) \from \cat{E}(M) \to \cat{E}(M')$ is continuous as a functor $\cat{E}_t(M) \to \cat{E}_t(M')$.
\end{lem}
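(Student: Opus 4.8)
The plan is to unwind the definition of $E(h)$ on morphisms and then observe that the required continuity is an immediate consequence of the continuity of $h$. First I would recall from Definition~\ref{defn:monoid-theory-include} that $E(h) \from \cat{E}(M) \to \cat{E}(M')$ is the identity on objects, and is characterised by $N(M') \of E(h) = h_* \of N(M)$, where $N(M)\from \cat{E}(M) \incl \SET_M$ and $N(M')\from \cat{E}(M') \incl \SET_{M'}$ are the (identity-on-objects) full and faithful inclusions, and $h_* \from \SET_M \to \SET_{M'}$ acts on a morphism $f \from S \to M \times S'$ by post-composition with $h \times \id_{S'}$. Since $N(M')$ is faithful and identity-on-objects, this determines $E(h)$ on morphisms: on the hom-set $\cat{E}(M)(S, S') = \Set(S, M \times S')$ it is the map $(h \times \id_{S'})_* \from \Set(S, M \times S') \to \Set(S, M' \times S')$ given by post-composition with $h \times \id_{S'}$.

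Next I would invoke Definition~\ref{defn:act-cat-top}: the topology on $\cat{E}_t(M)(S, S')$ is precisely that of the power $(M \times S')^S$ in $\Top$, where $M \times S'$ denotes the product in $\Top$ of $M$ with the discrete space $S'$, and similarly for $\cat{E}_t(M')(S, S')$. Under the identification $\Set(S, M \times S') \iso (M \times S')^S$, the map $(h \times \id_{S'})_*$ corresponds to the $S$-indexed power of the map $h \times \id_{S'} \from M \times S' \to M' \times S'$. So it suffices to show that this power map $(M \times S')^S \to (M' \times S')^S$ is continuous.

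Finally, since $S$ is finite it is discrete, so $(M' \times S')^S$ carries the product topology indexed by $S$, and a map into it is continuous exactly when its composite with each projection $\pi_s$ (for $s \in S$) is continuous. That composite is $(h \times \id_{S'}) \of \pi_s \from (M \times S')^S \to M' \times S'$, which is continuous because $\pi_s$ is continuous and $h \times \id_{S'}$ is continuous — the latter because $h$ is continuous by hypothesis and $\id_{S'}$ is continuous. Hence $E(h)$ is continuous on every hom-space, which is exactly the assertion that it is a $\TOP$-functor $\cat{E}_t(M) \to \cat{E}_t(M')$. There is no real obstacle here; the only point requiring a little care is the first step, namely reading off from the defining diagram and the faithfulness of $N(M')$ that $E(h)$ is literally post-composition with $h \times \id_{S'}$ on each hom-set.
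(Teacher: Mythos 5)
Your proposal is correct and follows essentially the same route as the paper: identify the action of $E(h)$ on the hom-space $\cat{E}_t(M)(S,S') = (M\times S')^S$ as the power map $(h\times\id_{S'})^S$ and conclude continuity from the continuity of $h$. The paper simply states that this power map is "evidently continuous"; your extra steps (reading off the hom-set action from the defining square via faithfulness of $N(M')$, and checking continuity via the projections of the finite power) just fill in details the paper leaves implicit.
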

\begin{proof}
Recall that a hom-space in $\cat{E}_t(M)$ is of the form $(M \times S')^S$, and on each such hom-space, $E(h)$ is the map
\[
(h \times \id_{S'})^S \from (M \times S')^S \to (M' \times S')^S
\]
which is evidently continuous since $h \from M \to M'$ is.
\end{proof}

\begin{defn}
\label{defn:act-fun-top}
Let $h \from M \to M'$ be a continuous monoid homomorphism between small topological monoids. Write $E_t(h)$ for the functor $E(h) \from \cat{E}(M) \to \cat{E}(M')$, regarded as a $\TOP$-functor $\cat{E}_t(M) \to \cat{E}_t(M')$.
\end{defn}

\begin{defn}
Write $E_t \from \TopMonoid \to \proth_t(\finset^{\op})$ for the functor that sends $M$ to $E_t (M) \from \finset \to \cat{E}_t(M)$ as defined in Definition~\ref{defn:act-cat-top} and sends a continuous homomorphism $h \from M \to M'$ to the continuous functor $E_t(h) \from \cat{E}_t(M)  \to \cat{E}_t(M')$ defined in Definition~\ref{defn:act-fun-top}.
\end{defn}

\begin{lem}
\label{lem:mon-in-proth-ff-top}
The functor $E_t \from \TopMonoid \to \proth_t(\finset^{\op})$ is full and faithful.
\end{lem}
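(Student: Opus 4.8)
The plan is to deduce this from its discrete counterpart, Proposition~\ref{prop:mon-in-proth-ff}, by pushing everything through the forgetful $2$-functor $\und \from \TOPCAT \to \CAT$. First I would record two elementary observations. Applying $\und$ to a topological proto-theory with arities $\disc(\finset^{\op})$ yields an ordinary proto-theory with arities $\finset^{\op}$, so $\und$ induces a functor $\proth_t(\finset^{\op}) \to \proth(\finset^{\op})$; and, directly from Definitions~\ref{defn:act-cat-top} and~\ref{defn:act-fun-top}, this functor carries $E_t$ to $E$, that is $\und \of E_t = E$ (identifying $\disc(\finset)$ with $\finset$ as in Remark~\ref{rem:bo-no-op}). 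Second, a $\TOP$-functor is completely determined by its underlying ordinary functor together with the (fixed) topologies on its hom-spaces, so $\und$ is injective on $1$-cells. Faithfulness of $E_t$ is then immediate: if $E_t(h) = E_t(h')$ for continuous monoid homomorphisms $h, h' \from M \to M'$, then $E(h) = \und(E_t(h)) = \und(E_t(h')) = E(h')$, and $E$ is faithful by Proposition~\ref{prop:mon-in-proth-ff}, so $h = h'$.

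For fullness, let $K \from E_t(M) \to E_t(M')$ be a morphism in $\proth_t(\finset^{\op})$, i.e.\ a continuous functor $\cat{E}_t(M) \to \cat{E}_t(M')$ commuting with the functors out of $\finset$. Then $\und(K) \from E(M) \to E(M')$ is a morphism in $\proth(\finset^{\op})$, so by Proposition~\ref{prop:mon-in-proth-ff} there is a unique monoid homomorphism $h \from M \to M'$ with $\und(K) = E(h)$; moreover, from the proof of that proposition, $h$ is precisely the map obtained by restricting $K$ to endomorphisms of $1 = \{\ast\}$ via the bijections $\cat{E}(M)(1,1) \iso M$ and $\cat{E}(M')(1,1) \iso M'$. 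The one genuinely new ingredient over the discrete case is that $h$ is continuous: by Definition~\ref{defn:act-cat-top} the hom-space $\cat{E}_t(M)(1,1)$ carries the topology of $(M \times 1)^1 \iso M$, so these bijections are in fact homeomorphisms $\cat{E}_t(M)(1,1) \iso M$ and $\cat{E}_t(M')(1,1) \iso M'$; since $K$ is a $\TOP$-functor its action on this hom-space is continuous, hence so is $h$. Therefore $E_t(h)$ is defined, and $\und(E_t(h)) = E(h) = \und(K)$, so $E_t(h) = K$ by injectivity of $\und$ on $1$-cells. This shows $E_t$ is full, and hence full and faithful.

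The only real obstacle is the continuity of $h$, which is handled by the observation that the endomorphism hom-space of the one-element set in $\cat{E}_t(M)$ is homeomorphic to $M$; everything else is a formal transfer along $\und$ from the discrete case. One should also double-check the trivial-but-necessary point that $\und$ genuinely sends the arities $\disc(\finset^{\op})$ to $\finset^{\op}$ and sends $E_t(M)\colon\disc(\finset^{\op})\to\cat{E}_t(M)$ to $E(M)$ as a morphism of $\proth(\finset^{\op})$, which is immediate from the construction of $\cat{E}_t(M)$ as $\cat{E}(M)$ equipped with a topology on each hom-set.
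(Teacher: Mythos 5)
Your proof is correct and follows essentially the same route as the paper's: both reduce to the full faithfulness of the discrete functor $E$ (Proposition~\ref{prop:mon-in-proth-ff}) and then observe that continuity of $h$ is equivalent to continuity of the induced functor, with the key point in the converse direction being that $\cat{E}_t(M)(1,1)\iso (M\times 1)^1\iso M$ as topological spaces. The framing via $\und$ and its injectivity on $1$-cells is only a cosmetic difference.
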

\begin{proof}
Since $E \from \monoid \to \proth(\finset^{\op})$ is full and faithful by Proposition~\ref{prop:mon-in-proth-ff} it is sufficient to show that, for small topological monoids $M, M'$, a monoid homomorphism $h \from M \to M'$ is continuous if and only if $E(h)$ is continuous as a functor $\cat{E}_t(M) \to \cat{E}_t(M')$.

Given $S, S' \in \finset$, the action of $E(h)$ on the hom-space $\cat{E}_t(M)(S, S')$ is the map
\[
E(h) = (h \times \id_{S'})^S \from \cat{E}_t(M)(S, S') = (M\times S')^S \to \cat{E}_t(M)(S, S') = (M'\times S')^S
\]
which is continuous if $h$ is.

Conversely, if $E(h)$ is continuous, then in particular its action on the hom-space $\cat{E}_t(M) (1,1)$ is. But this is given by
\[
h \from M \iso (M \times 1)^1 \to M' \iso (M' \times 1)^1,
\]
and so $h$ is continuous.
\end{proof}
 
\begin{lem}
The composite
\[
\TopMonoid^{\op} \toby{E_t} \proth_t(\finset^{\op}) \toby{\sem_t} \catover{\finset}
\]
sends a topological monoid $M$ to the category of finite, continuous $M$-sets, with its forgetful functor to $\finset$.
\end{lem}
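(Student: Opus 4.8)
The plan is to combine the already-established non-topological result with the characterisation of topological models from Lemma~\ref{lem:top-model-algebra}. By Lemma~\ref{lem:act-semantics} we know that $\mod(E(M))$ is isomorphic to the category of finite $M$-sets (as an ordinary, non-topological monoid) with its usual forgetful functor to $\finset$, and by Lemma~\ref{lem:top-sem-commute} (applied to the \emph{discrete} topological proto-theory structure on $E(M)$) the underlying discrete proto-theory of $E_t(M)$ is $E(M)$. So the objects and morphisms of $\mod_t(E_t(M))$ form a (non-full in general) subcategory of $\mod(E(M))$, and it remains only to identify which finite $M$-sets and which equivariant maps survive.

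First I would recall from Lemma~\ref{lem:act-semantics} the explicit correspondence between an $E(M)$-model structure $\alpha^x$ on a finite set $X$ and an $M$-action $a \from M \times X \to X$: under this correspondence, $\alpha^x_b \from \cat{E}(M)(E(M)d^x, E(M)b) \to \finset(b, d^x)$ is built from the natural transformation $\cat{E}(M)(E(M)-, X) \iso \Set(-, M \times X) \to \Set(-, X)$ induced by $a$, using density of $\finset$ in $\Set$. By Lemma~\ref{lem:top-model-algebra}, the model is a \emph{topological} $E_t(M)$-model precisely when each $\alpha^x_b$ is continuous, where the hom-space $\cat{E}_t(M)(E(M)d^x, E(M)b) = (M \times b)^{d^x}$ carries the product/power topology from Definition~\ref{defn:act-cat-top} and $\finset(b, d^x)$ is discrete. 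Tracing the correspondence, continuity of all the $\alpha^x_b$ amounts to continuity of the map $(M \times X)^{\{*\}} \iso M \times X \to X$, i.e.\ continuity of the action $a \from M \times X \to X$ where $X$ is discrete. Hence the topological $E_t(M)$-models are exactly the finite sets equipped with a \emph{continuous} $M$-action. Since $b$ ranges over finite sets, checking the single hom-space $\cat{E}_t(M)(E(M)1, E(M)1) \iso M$ together with the naturality already built into $\alpha^x$ suffices, as in the proof of Lemma~\ref{lem:top-model-algebra}.

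Next I would handle morphisms: a morphism in $\mod_t(E_t(M))$ is, by construction, simply a morphism $h$ in $\mod(E(M))$ between objects that happen to be topological models (the pullback defining $\sem_t$ imposes no further condition on morphisms beyond those already present for $\sem$). By Lemma~\ref{lem:act-semantics} such an $h$ is exactly an $M$-equivariant map between the underlying $M$-sets, so $\mod_t(E_t(M))$ is the full subcategory of $\Gfinset$ on the continuous $M$-sets. Finally, $\sem_t(E_t(M))$ is the restriction of $\sem(E(M))$, which Lemma~\ref{lem:act-semantics} identifies as the usual underlying-set functor; so it is the forgetful functor from continuous finite $M$-sets to $\finset$.

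The main obstacle is the bookkeeping in the second step: verifying carefully that, under the bijection of Lemma~\ref{lem:act-semantics} between the maps $\alpha^x_b$ and the action map $a$, the topology that Definition~\ref{defn:act-cat-top} puts on the hom-spaces $(M \times b)^{d^x}$ is exactly the one that makes ``all $\alpha^x_b$ continuous'' equivalent to ``$a$ continuous with $X$ discrete''. Concretely one must check that the generating open sets of $\cat{E}_t(M)(E(M)d^x, E(M)b)$ (preimages of points under evaluation-at-$s$ composed with the projection $M\times b \to b$, intersected with preimages of opens of $M$ under the $M$-coordinate) pull back along $\alpha^x$ to the conditions that pin down continuity of $a$, and conversely. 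This is a routine but slightly fiddly compatibility check between the power topology on hom-spaces and the discrete topology on the arities; everything else follows formally from the cited lemmas.
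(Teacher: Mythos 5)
Your proof is correct and follows essentially the same route as the paper: reduce to the discrete identification of Lemma~\ref{lem:act-semantics}, then use Lemma~\ref{lem:top-model-algebra} to see that the model is topological iff each $\alpha_S = a^S \from (M \times X)^S \to X^S$ is continuous, which holds iff the action $a$ itself is continuous (finite powers of a continuous map one way, evaluation at $S = 1$ the other), with morphisms unchanged. The only quibble is that the decisive hom-space is $\cat{E}_t(M)(E(M)1, X) \iso M \times X$ rather than $\cat{E}_t(M)(E(M)1, E(M)1) \iso M$, but since you state the correct reduction to continuity of $a \from M \times X \to X$ in the preceding sentence (and your earlier ``non-full in general'' aside is superseded by your correct conclusion that the subcategory is full), nothing essential is missing.
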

By a finite continuous $M$-set, we mean a finite set $X$ together with an action $M \times X \to X$ that is continuous when $X$ is regarded as a discrete space and $M \times X$ is given the usual product topology.  
\begin{proof}
Recall from Lemma~\ref{lem:act-semantics} that for a small monoid $M$, a model of $E(M)$ can be identified with a finite $M$-set, and a homomorphism of $E(M)$-models is a $M$-equivariant map. We must show that if $M$ is a topological monoid then a model is continuous if and only if the corresponding $M$-set is.

Recall that an $E (M)$-model structure on a finite set $X$ consists of a natural transformation 
\[
\alpha \from \cat{E}(M)(E (M) - , X) \iso (M \times X)^{(-)} \to \finset (-, X) \iso X^{(-)},
\]
satisfying the conditions of Definition~\ref{defn:L-alg}, which by the Yoneda lemma, is of the form $a^{(-)} \from  (M \times X)^{(-)} \to  X^{(-)}$ for a unique $a \from M \times X \to X$, and this $a$ defines the corresponding action of $M$ on $X$. Clearly if $a$ is continuous, then so is 
\[
\alpha_S = a^S \from (M \times X)^S \to X^S
\]
for each $S$. On the other hand, if $\alpha_S$ is continuous for each $S$ (that is, if we have a continuous model structure) then by taking $S = 1$, we see that
\[
a \from M \times X \iso (M \times X)^1 \toby{\alpha_1} X^1 \iso X
\]
is continuous. Hence the $E(M)$-model structure on $X$ defines a continuous $E_t(M)$-model structure if and only if the corresponding $M$-action is continuous.
\end{proof}

Let $L \from \finset \to \cat{L}$ be a topological proto-theory on $\finset$, and suppose the underlying discrete proto-theory of $L$ is in the essential image of $E \from \monoid \to \proth(\finset^{\op})$, that is, it satisfies the conditions of Proposition~\ref{prop:act-image}. Then $L$ preserves finite coproducts, and so we have a bijection
\begin{equation}
\label{eq:top-proto-from-mon-coprod-bij}
\cat{L}(LS, LS') \iso \cat{L}(L1, LS')^S
\end{equation}
for all finite sets $S$ and $S'$. Similarly, by Proposition~\ref{prop:act-image}.\bref{part:act-image-unary}, we have a bijection
\begin{equation}
\label{eq:top-proto-from-mon-other-bij}
S \times \cat{L}(L1, L1) \iso \cat{L}(L1, LS)
\end{equation}
for all finite sets $S$.

\begin{lem}
\label{lem:act-top-image}

Let $L \from \finset \to \cat{L}$ be topological proto-theory. Then $L$ is in the essential image of $E_t \from \TopMonoid \to \proth_t(\finset^{\op})$ if and only if its underlying proto-theory is in the essential image of $E \from \monoid \to \proth(\finset^{\op})$ (that is, it satisfies the conditions of Proposition~\ref{prop:act-image}), and in addition, the bijections~\bref{eq:top-proto-from-mon-coprod-bij} and \bref{eq:top-proto-from-mon-other-bij} above are in fact homeomorphisms. When these conditions hold, $L$ is isomorphic to $E_t (M)$, where $M$ is the topological monoid $\cat{L}(L1, L1)$. 
\end{lem}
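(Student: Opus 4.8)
The plan is to leverage the non-topological case established in Proposition~\ref{prop:act-image} and simply check that all the bijections constructed there are continuous with continuous inverses. First I would prove the ``only if'' direction: if $L \iso E_t(M)$ for a small topological monoid $M$, then its underlying proto-theory is $E(M)$ by construction, so it satisfies the conditions of Proposition~\ref{prop:act-image}. It remains to check the two bijections are homeomorphisms. For \bref{eq:top-proto-from-mon-coprod-bij}, by Lemma~\ref{lem:monoid-finite-coproducts} the functor $E_t(M) \from \finset \to \cat{E}_t(M)$ preserves finite coproducts, and in $\cat{E}_t(M)$ the hom-space $\cat{E}_t(M)(S,S') = (M \times S')^S$ is by definition topologised as a finite power, which is exactly $\cat{E}_t(M)(1,S')^S$ up to the canonical homeomorphism; so \bref{eq:top-proto-from-mon-coprod-bij} is a homeomorphism. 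For \bref{eq:top-proto-from-mon-other-bij}, the bijection $S \times \cat{E}_t(M)(1,1) = S \times M \iso (M \times S)^1 = \cat{E}_t(M)(1,S)$ is a homeomorphism since $S$ is discrete and $S \times M$ carries the product topology, which is the $S$-fold copower topology.

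Next I would prove the ``if'' direction. Suppose $L \from \finset \to \cat{L}$ is a topological proto-theory whose underlying discrete proto-theory satisfies the conditions of Proposition~\ref{prop:act-image}, and whose bijections \bref{eq:top-proto-from-mon-coprod-bij} and \bref{eq:top-proto-from-mon-other-bij} are homeomorphisms. Set $M = \cat{L}(L1,L1)$, a topological monoid (composition in $\cat{L}$ is continuous since $\cat{L}$ is $\TOP$-enriched, and this restricts to the hom-space $\cat{L}(L1,L1)$; the unit is a point so continuity of the unit map is automatic). By Proposition~\ref{prop:act-image}, the underlying discrete proto-theory of $L$ is isomorphic to $E(M)$ via the functor $P \from \cat{L} \to \cat{E}(M)$ constructed there, which is the identity on objects. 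I would check that $P$ and $P^{-1}$ are continuous on each hom-space, i.e.\ define an isomorphism $\cat{L} \iso \cat{E}_t(M)$ in $\proth_t(\finset^{\op})$. It suffices, using that both $L$ and $E_t(M)$ preserve finite coproducts with the coproduct hom-spaces carrying the finite-power topology (the latter by definition of $\cat{E}_t(M)$, the former by hypothesis that \bref{eq:top-proto-from-mon-coprod-bij} is a homeomorphism), to check that $P$ is a homeomorphism on hom-spaces of the form $\cat{L}(L1, LS)$. Recalling how $P$ is defined on such hom-spaces via the factorisation of Proposition~\ref{prop:act-image}.\bref{part:act-image-unary}, the map $P \from \cat{L}(L1,LS) \to \cat{E}_t(M)(1,S) = M \times S$ sends $l = Ls' \of m$ to $(m, s')$; under the homeomorphism \bref{eq:top-proto-from-mon-other-bij} this is precisely the identity $S \times M \to M \times S$ (swapping factors), which is a homeomorphism. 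Hence $P$ is a homeomorphism on these hom-spaces, so $P$ is an isomorphism in $\proth_t(\finset^{\op})$, giving $L \iso E_t(M)$.

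The main obstacle I anticipate is the bookkeeping around the two finite-coproduct/unary-factorisation bijections: one must be careful that the hypothesis ``\bref{eq:top-proto-from-mon-coprod-bij} and \bref{eq:top-proto-from-mon-other-bij} are homeomorphisms'' really does force $P$ to be a homeomorphism on \emph{all} hom-spaces, not just those involving $1$. The key point is that both $L$ and $E_t(M)$ send the coprojections $\iota_s \from L1 \to LS$ (respectively in $\cat{E}_t(M)$) to a coproduct diagram, and $P$ commutes with these since $P \of L = E_t(M)$; so the square expressing \bref{eq:top-proto-from-mon-coprod-bij} for $\cat{L}$ and for $\cat{E}_t(M)$ is intertwined by $P$, reducing continuity of $P$ on $\cat{L}(LS,LS')$ to continuity of $P$ on $\cat{L}(L1,LS')$, and the analogous reduction with inverses. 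Everything else is a routine diagram chase borrowed wholesale from the proof of Proposition~\ref{prop:act-image}, with the word ``continuous'' inserted at each step and the observation that all the maps appearing (coprojection-induced isomorphisms, factor swaps, the action of a fixed $h$ on a power) are manifestly continuous.
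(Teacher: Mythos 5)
Your proposal is correct and takes essentially the same route as the paper: both directions reduce to Proposition~\ref{prop:act-image} for the underlying discrete proto-theory, and the topological content is exactly the observation that the two hypothesised homeomorphisms determine the topology on every hom-space from that on $\cat{L}(L1,L1)$, so that the functor $P$ becomes a homeomorphism on hom-spaces. The paper states this more tersely (``these conditions imply that the topology on each hom-set is determined by that on $\cat{L}(L1,L1)$'') where you spell out the reduction through the coprojections explicitly, but the argument is the same.
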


\begin{proof}
It is clear from the definitions that if a topological proto-theory is in the essential image of $E_t$, then its underlying discrete proto-theory is in the essential image of $E$. So we just need to check that a topological proto-theory $L \from \finset \to \cat{L}$ whose underlying theory is $E(M)$ for some monoid $M$ arises from a topology on $M$ if and only if these bijections are homeomorphisms. But these conditions imply that the topology on each hom-set is determined by that on $\cat{L}(L1, L1)$. Since the underlying monoid of $\cat{L}(L1, L1)$ is $M$, it is sufficient to check that if $M$ is equipped with a topological monoid structure then $E_t(M)$ does satisfy the above conditions. But this is clear since by definition,
\[
\cat{E}_t (M)(S, S') \iso (M \times S' )^S \iso (\cat{E}_t(M)(1, 1) \times S')^S.
\]
\end{proof}

\begin{prop}
\label{prop:top-monoid-restrict}
There is a functor $T \from \TopMonoid \to \TopMonoid$ that is unique up to isomorphism such that
\[
\xymatrix{
\TopMonoid\ar[r]^-{E_t}\ar[dd]_T & \proth_t(\finset^{\op})\ar[d]^{\sem_t^{\op}} \\
& (\catover{\finset})^{\op}\ar[d]^{\str_t^{\op}} \\
\TopMonoid\ar[r]_-{E_t} & \proth_t(\finset^{\op})
}
\]
commutes up to isomorphism.
\end{prop}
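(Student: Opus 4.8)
The plan is to mimic the proof of Proposition~\ref{prop:monoid-restrict} in the discrete case, upgrading each step to keep track of topologies. The key point is that since $E_t \from \TopMonoid \to \proth_t(\finset^{\op})$ is full and faithful (Lemma~\ref{lem:mon-in-proth-ff-top}), it suffices to show that for every small topological monoid $M$, the topological proto-theory $\str_t(\sem_t(E_t(M)))$ lies in the essential image of $E_t$; the functor $T$ and its uniqueness then follow formally, since a full and faithful functor reflects isomorphisms. So the whole proof reduces to a single essential-image computation.

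Fix a small topological monoid $M$ and write $U \from \cat{M} \to \finset$ for $\sem_t(E_t(M))$, which by the lemma preceding the statement is the category of finite continuous $M$-sets with its forgetful functor. I would verify the conditions of Lemma~\ref{lem:act-top-image} for $\str_t(U) \from \finset \to \thr_t(U)$. First, the underlying discrete proto-theory of $\str_t(U)$ is $\str(U')$ where $U' \from \cat{M}' \to \finset$ is the category of \emph{all} finite $M$-sets (not just continuous ones), but one must be slightly careful here: the underlying category of $\cat{M}$ is the category of finite continuous $M$-sets, whereas Proposition~\ref{prop:monoid-restrict} was about \emph{all} finite $M$-sets. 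I would argue that the inclusion of continuous finite $M$-sets into all finite $M$-sets induces an isomorphism on the relevant categories of structure, or else redo the argument of Proposition~\ref{prop:monoid-restrict} directly for continuous $M$-sets --- the orbit-based argument used there (constructing $M$-set homomorphisms to trivial $M$-sets, to $H_h$ for finite quotients $H$, and the $\ev_x$ maps) goes through verbatim for continuous actions since all the auxiliary $M$-sets constructed are finite and the maps are automatically continuous. This gives that the underlying discrete proto-theory of $\str_t(U)$ satisfies the three conditions of Proposition~\ref{prop:act-image}, and that it is isomorphic to $E(N)$ where $N = \thr_t(U)(1,1) = [\cat{M},\finset](U,U)$ as a monoid.

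It then remains to identify the topology: by Lemma~\ref{lem:act-top-image} I must check that the canonical bijections $\thr_t(U)(S,S') \iso \thr_t(U)(1,S')^S$ (finite-coproduct preservation) and $S \times \thr_t(U)(1,1) \iso \thr_t(U)(1,S)$ (the unary-factorisation bijection of Proposition~\ref{prop:act-image}.\bref{part:act-image-unary}) are homeomorphisms. For the first: $\str_t(U)$ preserves finite products (equivalently $\str_t(U)^{\op}$ preserves finite coproducts) by the argument already used in Proposition~\ref{prop:monoid-restrict} and in the proof of Theorem~\ref{thm:str-sem-top-idempotent} --- $J(U) \from \thr_t(U) \to [\cat{M},\Set_t]_t$ is full, faithful and preserves hom-objects up to homeomorphism, $\curryhi$ and $U^*$ preserve products, and finite products of $\Set_t$-valued functors are computed pointwise via finite products in $\Set_t$, which are honest topological products --- so the comparison map is not merely a bijection but a homeomorphism onto its image. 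For the second bijection, one uses the explicit description of morphisms $1 \to S$ in $\thr_t(U)$: a natural transformation $U^S \to U$ (recall $\und \lpair S, - \rpair$ is $U^S$) factors as a projection $\pi_s$ followed by an endomorphism $\delta \from U \to U$, and I would show that the assignment $(\delta, s) \mapsto \delta \of \pi_s$ is a homeomorphism $N \times S \to \thr_t(U)(1,S)$ by checking continuity of both directions against the generating evaluation maps $\ev_f$ defining the topology on these hom-spaces. The main obstacle I anticipate is precisely this last point: the uniqueness of the factorisation $\delta \of \pi_s$ was somewhat delicate in Proposition~\ref{prop:monoid-restrict} (it used the orbit argument and the rigidity of the terminal object of $\finset$), and one must confirm that the map picking out $\delta$ and $s$ from a given natural transformation is continuous, i.e.\ that the inverse bijection is continuous; this requires tracing carefully through how the topology on $[\cat{M},\Set_t]_t$-hom-spaces restricts along the full-faithful $J(U)$ and interacts with the splitting $\Delta \from U \to U^S$ of $\pi_s$. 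Once these homeomorphism checks are in place, Lemma~\ref{lem:act-top-image} gives $\str_t(U) \iso E_t(N)$, completing the verification that $\str_t \of \sem_t \of E_t$ lands in the essential image of $E_t$, and hence the existence and uniqueness of $T$.
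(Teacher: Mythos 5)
Your proposal follows the paper's proof almost exactly: the reduction via full-faithfulness of $E_t$ to showing $\str_t(\sem_t(E_t(M)))$ lies in the essential image, the identification of $U$ as the forgetful functor from finite \emph{continuous} $M$-sets, the observation that the discrete argument of Proposition~\ref{prop:monoid-restrict} carries over because all the auxiliary $M$-sets constructed there are automatically continuous, and the two homeomorphism checks required by Lemma~\ref{lem:act-top-image} (the first of which the paper also verifies by testing against projections and evaluation maps). The one place you diverge is the final bijection $S \times \thr_t(U)(1,1) \to \thr_t(U)(1,S)$, which you correctly flag as the delicate point but leave as a plan rather than a proof. The paper sidesteps the continuity of the inverse entirely: it observes that $[\cat{M},\finset](U,U)$ and $[\cat{M},\finset](U^S,U)$ are profinite (limits of finite discrete spaces), hence compact Hausdorff, and a continuous bijection between compact Hausdorff spaces is automatically a homeomorphism, so only the forward direction needs checking. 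Your direct route would in fact also succeed --- the inverse sends $\gamma$ to the pair $(s,\delta)$ with $s = \gamma_{I(S)}(\id_S)$ (an evaluation map, continuous by definition of the topology) and $\delta = \gamma \of \Delta$ (continuous because composition in the $\TOP$-category $\thr_t(U)$ is continuous) --- but the compactness argument buys you this for free and is worth knowing as the standard trick in the profinite setting.
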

\begin{proof}
Similarly to Proposition~\ref{prop:monoid-restrict}, it is sufficient to check that for $M \in \TopMonoid$ the topological proto-theory $\str_t \of \sem_t \of E_t (M)$ lies in the essential image of $E_t$. Since $E_t$ is full and faithful by Lemma~\ref{lem:mon-in-proth-ff-top}, the existence of such a $T$ is then guaranteed, and uniqueness follows from the fact that full and faithful functors reflect isomorphisms.

Write $U\from \cat{M} \to \finset$ for the forgetful functor from the category of finite, continuous $M$-sets to $\finset$. Then, for finite sets $S$ and $S'$, the hom-space
\[
\thr_t (\sem_t(E_t(M))) (S, S')
\]
can be identified with the set $[\cat{M}, \finset](U^S, U^{S'})$ of natural transformations $U^{S} \to U^{S'}$, with the smallest topology such that for any finite continuous $M$-set $X$ and $f \in X^S$, the map sending a natural transformation $\gamma \from U^{S} \to U^{S'}$ to $\gamma_X (f)$ is continuous.

We show that $\str_t(\sem_t(E_t(M)))$ satisfies the conditions of Lemma~\ref{lem:act-top-image}. The proof that the underlying discrete proto-theory is in the essential image of $E \from \monoid \to \proth(\finset^{\op})$ is exactly the same as in Proposition~\ref{prop:monoid-restrict}, noting that if the $M$-set $X$ from the proof of Proposition~\ref{prop:monoid-restrict} is assumed to be continuous, then all the $M$-sets constructed subsequently are easily seen to be continuous.

Now we show that $\str_t(\sem_t(E_t(M)))$ satisfies the additional conditions from Lemma~\ref{lem:act-top-image}.

First we must show that, for finite sets $S$ and $S'$, the bijection
\[
[\cat{M}, \finset] (U^S, U^{S'}) \iso [\cat{M}, \finset](U^S, U)^{S'}
\]
is a homeomorphism. First we check it is continuous in the forwards direction. It is sufficient to check that it is continuous when composed with the projection $\pi_{s'} \from [\cat{M}, \finset](U^S, U)^{S'} \to [\cat{M}, \finset](U^S, U)$ for $s' \in S'$. But this composite can be identified with
\[
(\pi_{s'})_* \from[\cat{M}, \finset] (U^S, U^{S'}) \to [\cat{M}, \finset] (U^S, U)
\]
which is continuous, since $\thr_t(\sem_t(E_t(M)))$ is a $\TOP$-category. In the backwards direction, it is sufficient to show that the bijection is continuous when composed with the map
\[
\ev_f \from [\cat{M}, \finset] (U^S, U^{S'}) \to U^{S'}
\]
sending $\gamma$ to $\gamma_X (f)$ for each finite, continuous $M$-set $X$ and map $f \from S \to X$. But this composite is
\[
(\ev_f)^{S'} \from [\cat{M}, \finset](U^S, U)^{S'} \to U^{S'}
\]
which is continuous.

Now we must show that for each finite set $S$, the bijection
\[
S \times [\cat{M}, \finset](U, U) \to [\cat{M}m \finset] (U^S, U)
\]
that sends $(s, \gamma) $ to $\gamma \of \pi_s$ is a homeomorphism. It is continuous in the forwards direction, since it can be written as the composite
\[
\xymatrix{
S \times [\cat{M}, \finset](U, U) \iso \finset (1, S) \times [\cat{M}, \finset](U, U) \ar[d]_{\str_t (U) \times \id} \\
[\cat{M}, \finset](U^S, U) \times [\cat{M}, \finset](U, U) \ar[d] \\
[\cat{M}, \finset](U^S , U),
}
\]
where the last map is composition, and each of these maps is continuous. Now note that $[\cat{M}, \finset](U, U)$ is a profinite space, as a limit of finite spaces, and in particular is compact and Hausdorff. Thus $S \times [\cat{M}, \finset](U, U)$ is also compact and Hausdorff, as a finite copower of such spaces. Similarly $ [\cat{M}, \finset](U^S, U)$ is compact and Hausdorff. But any continuous bijection between compact Hausdorff spaces, and in particular the above map, is a homeomorphism. 
\end{proof}

\begin{cor}
\label{cor:str-sem-monad-restrict-monoid-top}
The topological structure--semantics monad on $\proth_t(\finset^{\op})$ restricts to a monad $(T, \eta^{\mnd{T}}, \mu^{\mnd{T}})$ on the full subcategory $\TopMonoid$ of small topological monoids.
\end{cor}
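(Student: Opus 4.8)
The plan is to deduce this corollary directly from Proposition~\ref{prop:top-monoid-restrict} in exactly the same way that Corollary~\ref{cor:str-sem-monad-restrict-monoid} was deduced from Proposition~\ref{prop:monoid-restrict}. The key observation, already used once before, is that for a monad to restrict to a monad on a \emph{full} subcategory it suffices that the underlying endofunctor restrict to an endofunctor of that subcategory; the unit and multiplication of the restricted monad are then just the restrictions of the original unit and multiplication, and the monad axioms hold automatically because they held for the original monad.

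First I would recall that, via the full and faithful functor $E_t \from \TopMonoid \to \proth_t(\finset^{\op})$ of Lemma~\ref{lem:mon-in-proth-ff-top}, we identify $\TopMonoid$ with a full subcategory of $\proth_t(\finset^{\op})$. The topological structure--semantics monad on $\proth_t(\finset^{\op})$ has underlying endofunctor $\str_t \of \sem_t$ (writing this slightly loosely, as the composite $\proth_t(\finset^{\op}) \toby{\sem_t^{\op}} (\catover{\finset})^{\op} \toby{\str_t^{\op}} \proth_t(\finset^{\op})$). Proposition~\ref{prop:top-monoid-restrict} gives a functor $T \from \TopMonoid \to \TopMonoid$, unique up to isomorphism, making the relevant square commute; this says precisely that $\str_t \of \sem_t$ restricts, along the inclusion $E_t$, to the endofunctor $T$ of $\TopMonoid$. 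Since $E_t$ exhibits $\TopMonoid$ as a full subcategory, the unit $\eta^{\mnd{T}}$ and multiplication $\mu^{\mnd{T}}$ of the structure--semantics monad have components at objects of $\TopMonoid$ that are again morphisms in $\TopMonoid$ (being morphisms between objects in the image of the full subcategory), and so restrict to natural transformations $\id_{\TopMonoid} \to T$ and $TT \to T$; the monad axioms are inherited. This yields the monad $(T, \eta^{\mnd{T}}, \mu^{\mnd{T}})$ on $\TopMonoid$ as claimed.

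There is no real obstacle here: all the substantive work is contained in Proposition~\ref{prop:top-monoid-restrict}, whose proof in turn rests on Lemma~\ref{lem:act-top-image} and the compactness argument showing the relevant bijections of hom-spaces are homeomorphisms. The only thing to be mildly careful about is the phrase ``restricts to a monad'': I would state explicitly that this is the general principle that a monad restricts to any full subcategory closed under its underlying endofunctor (up to isomorphism), and that Proposition~\ref{prop:top-monoid-restrict} supplies exactly this closure condition. Thus the proof is a one- or two-sentence appeal to Proposition~\ref{prop:top-monoid-restrict}, mirroring verbatim the proof of Corollary~\ref{cor:str-sem-monad-restrict-monoid}.

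\begin{proof}
In order for a monad to restrict to a monad on a full subcategory, all that is required is that the endofunctor part of the monad restrict (up to isomorphism) to an endofunctor of the subcategory; the unit and multiplication then restrict accordingly and the monad axioms are inherited. Via the full and faithful functor $E_t \from \TopMonoid \to \proth_t(\finset^{\op})$ of Lemma~\ref{lem:mon-in-proth-ff-top}, we regard $\TopMonoid$ as a full subcategory of $\proth_t(\finset^{\op})$, and Proposition~\ref{prop:top-monoid-restrict} shows that the underlying endofunctor $\str_t \of \sem_t$ of the topological structure--semantics monad restricts along $E_t$ to the endofunctor $T$ of $\TopMonoid$. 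Hence the topological structure--semantics monad restricts to a monad $(T, \eta^{\mnd{T}}, \mu^{\mnd{T}})$ on $\TopMonoid$.
\end{proof}
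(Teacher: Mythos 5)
Your proposal is correct and is essentially the paper's own proof: the paper likewise deduces the corollary immediately from Proposition~\ref{prop:top-monoid-restrict} together with the full faithfulness of $E_t$ (Lemma~\ref{lem:mon-in-proth-ff-top}), using the same principle that a monad restricts to a full subcategory as soon as its underlying endofunctor does. Your version merely spells out the restriction of the unit and multiplication a little more explicitly.
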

\begin{proof}
This is immediate from the previous proposition, since $E_t \from \TopMonoid \incl \proth_t(\finset^{\op})$ is full and faithful (Lemma~\ref{lem:mon-in-proth-ff-top}).
\end{proof}

Recall that, for a topological group $G$, the profinite completion of $G$ is the topological group $\pc{G}$ defined as follows. The elements of $\pc{G}$ are families $\xi = (\xi_h \in H)_h$ indexed by discrete finite groups $H$ and continuous group homomorphisms $h \from G \to H$, such that for any homomorphism $k \from H \to H'$ between finite groups, we have
\[
k (\xi_h ) = \xi_{k \of h}.
\]
The topology on $\pc{G}$ is the smallest topology such that each map
\begin{align*}
\ev_h \from & \pc{G} \to H \\
& \xi \mapsto \xi_h
\end{align*}
is continuous, where $H$ is finite and $h \from G \to H $ is continuous.

\begin{prop}
\label{prop:top-act-prof}
The monad $(T, \eta^{\mnd{T}}, \mu^{\mnd{T}})$ on $\TopMonoid$ from Corollary~\ref{cor:str-sem-monad-restrict-monoid-top} restricts to a monad on the full subcategory $\TopGp \incl \TopMonoid$ that is isomorphic to the profinite completion monad $(\widehat{\ }, \eta, \mu)$.
\end{prop}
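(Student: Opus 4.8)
The plan is to mirror the proof of Proposition~\ref{prop:group-restrict-procomp}, carrying out the same constructions but with continuous $G$-sets and continuous homomorphisms in place of their discrete counterparts, and adding the verification that everything is compatible with the relevant topologies. First I would fix a topological group $G$ and write $U \from \cat{M} \to \finset$ for the forgetful functor from the category of finite continuous $G$-sets; by Proposition~\ref{prop:top-monoid-restrict} the topological monoid $TG$ may be identified with $\thr_t(\sem_t(E_t(G)))(1,1) = [\cat{M}, \finset](U,U)$, equipped with the topology generated by the evaluation maps $\gamma \mapsto \gamma_X(f)$ for $X \in \cat{M}$ and $f \in X$. I would then define $\Phi \from \pc{G} \to TG$ exactly as in the discrete case: a finite continuous $G$-set $X$ with underlying set $X_0$ is determined by a \emph{continuous} homomorphism $\rho_X \from G \to \sym(X_0)$ into a finite discrete group, so $\xi \in \pc{G}$ has a component $\xi_{\rho_X} \in \sym(X_0)$, and we set $\Phi(\xi)_X = \xi_{\rho_X} \from X_0 \to X_0$. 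Naturality of each $\Phi(\xi)$ in $X$ is proved exactly as in Proposition~\ref{prop:group-restrict-procomp}, using that $G$ has dense image in $\pc{G}$ (which holds for topological groups as well) and that the spaces involved are Hausdorff. The inverse $\Xi$ is also defined as before: given $\gamma \from U \to U$ and a continuous homomorphism $h \from G \to H$ with $H$ finite discrete, the set $H$ with $G$ acting by left translation through $h$ is a finite continuous $G$-set $H_h$, and $\Xi(\gamma)_h = \gamma_{H_h}(e_H)$; the checks that $\Xi(\gamma) \in \pc{G}$ and that $\Phi$, $\Xi$ are mutually inverse monoid homomorphisms natural in $G$ are the arguments of Proposition~\ref{prop:group-restrict-procomp} read with ``$G$-set'' replaced by ``continuous $G$-set'' and ``homomorphism'' by ``continuous homomorphism'' throughout.

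Next I would show $\Phi$ is a homeomorphism. Both $\pc{G}$ and $TG$ are profinite, hence compact Hausdorff: $\pc{G}$ by construction, and $TG = [\cat{M},\finset](U,U)$ as a limit of finite discrete spaces, as already used in the proof of Proposition~\ref{prop:top-monoid-restrict}. It therefore suffices to show $\Phi$ is continuous, i.e.\ that for each $X \in \cat{M}$ and $f \in X_0$ the map $\xi \mapsto \Phi(\xi)_X(f) = \xi_{\rho_X}(f)$ is continuous into the discrete space $X_0$. But this map is the composite $\pc{G} \toby{\ev_{\rho_X}} \sym(X_0) \toby{\ev_f} X_0$, and $\ev_{\rho_X}$ is continuous by definition of the topology on $\pc{G}$ while $\sym(X_0)$ is discrete. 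Thus $\Phi$ is a continuous bijection between compact Hausdorff spaces, hence a homeomorphism, and it is a monoid homomorphism; in particular $TG$ is a topological group, so $T$ restricts to an endofunctor of the full subcategory $\TopGp \incl \TopMonoid$ and (as in Corollary~\ref{cor:str-sem-monad-restrict-monoid-top}) the monad $(T, \eta^{\mnd{T}}, \mu^{\mnd{T}})$ restricts to a monad on $\TopGp$.

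Finally I would verify that $\Phi$ is an isomorphism of monads, that is, that it is compatible with the units $\eta_G$, $\eta^{\mnd{T}}_G$ and the multiplications $\mu_G$, $\mu^{\mnd{T}}_G$. These are precisely the diagram chases of Proposition~\ref{prop:group-restrict-procomp}: one describes $\eta^{\mnd{T}}_G$ and $\mu^{\mnd{T}}_G$ explicitly via the canonical action of $TG$ on continuous $G$-sets and traces elements through, and no new topological input is needed since the maps in question are already known to be continuous. I expect the main obstacle to be bookkeeping rather than anything conceptual: one must check that each auxiliary $G$-set occurring in the well-definedness and naturality arguments (the analogues of the sets $Z = S \times X$, $Y = X_0 \sqcup \{*\}$, $(\sym X_0)_{\rho_X}$ and the orbit-quotient maps from the proof of Proposition~\ref{prop:group-restrict-procomp}) is a \emph{continuous} $G$-set and that the maps between them are homomorphisms in this continuous sense — which they are, being built from $\rho_X$ and finite data — and to confirm density of $G$ in $\pc{G}$ for topological $G$. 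With these points in place the proof is a faithful topological upgrade of Proposition~\ref{prop:group-restrict-procomp}.
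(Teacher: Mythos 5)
Your proposal is correct and follows essentially the same route as the paper's proof: the paper likewise defines $\Phi(\xi)_X = \xi_{\rho_X}$ on finite continuous $G$-sets, verifies naturality via density of the image of $G$ in $\pc{G}$ together with Hausdorffness of the targets, constructs the same inverse $\Xi$, and concludes that $\Phi$ is a homeomorphism because it is a continuous bijection between compact Hausdorff spaces, deferring the naturality-in-$G$ and monad-compatibility checks to the diagram chases of Proposition~\ref{prop:group-restrict-procomp}.
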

\begin{proof}
The proof is essentially identical to that of Proposition~\ref{prop:group-restrict-procomp} but with the word ``continuous'' inserted as appropriate. We shall construct an isomorphism $\Phi \from \procomp{G} \to TG = [\Gfinset, \finset ] (U, U)$, where $G$ is a topological group and $U \from \cat{M} \to \finset$ is the forgetful functor from the category of finite continuous $G$-sets. We omit the proofs that this isomorphism is natural, and that it is compatible with the monad structure.

A finite continuous $G$-set $X$ with underlying set $X_0$ is determined by a continuous group homomorphism $\rho_X \from G \to \sym (X_0)$, where $\sym(X_0)$ is the group of automorphisms of $X_0$ in $\finset$. Since $\sym(X_0)$ is a finite group, we may define $\Phi (\xi) _X = \xi_{\rho_X} \from X_0 \to X_0$. We must check that $\Phi (\xi)$ thus defined actually is a natural transformation $U \to U$, that is, for any $G$-set homomorphism $k \from X \to Y$ between finite continuous $G$-sets, that
\[
\xymatrix{
X \ar[r]^k \ar[d]_{\Phi(\xi)_X} & Y\ar[d]^{\Phi(\xi)_Y} \\
X \ar[r]_k & Y
}
\]
commutes. Note that this square commutes for each $\xi \in \procomp{G}$ if and only if the right-hand diamond in
\[
\xymatrix{
& & {\sym(X_0)}\ar[dr]^{k_*} & \\
{G}\ar[urr]^{\rho_X}\ar[r]\ar[drr]_{\rho_Y} & {\procomp{G}}\ar[ur]_{\Phi(-)_X}\ar[dr]^{\Phi(-)_Y} & & {\finset(X_0,Y_0)} \\
& & {\sym(Y_0)}\ar[ur]_{k^*} &
}
\]
commutes. But the outer diamond in this diagram commutes, since $k$ is a $G$-set homomorphism. Also the two left-hand triangles commute by definition of $\Phi$. All the maps in this diagram are continuous, where $\sym X_0$, $\sym Y_0$ and $\finset(X_0, Y_0)$ are discrete. Recall that if a parallel pair of continuous maps have Hausdorff codomain and agree on a dense subset of their domain, then they are equal. The canonical map $G \to \procomp{G}$ has dense image; this is shown as part of the proof of Theorem~3.1 of Deleanu~\cite{deleanu83}. Since $\finset(X_0,Y_0)$ is Hausdorff (since it is discrete), it therefore follows that the right-hand diamond does indeed commute.

Hence each $\Phi (\xi)$ is indeed a natural transformation $U \to U$. In addition, 
\[
\Phi \from \procomp{G} \to [\Gfinset, \finset](U,U)
\]
 is a monoid homomorphism since each $\Phi(-)_X$ is by construction. It is also continuous: for any finite, continuous $G$-set $X$ and $x \in X$, the composite
 \[
 \procomp{G} \toby{\Phi} [\Gfinset, \finset](U,U) \toby{\ev_x} X
 \]
 sends $\xi$ to $\Phi (\xi)_X (x) = \xi_{\rho_X} (x)$, so it can also be written as the composite
 \[
 \pc{G} \toby{\ev_{\rho_X}} \sym (X_0) \toby{\ev_x} X,
 \]
and both of these maps are continuous. Both $\pc{G}$ and $[\Gfinset, \finset](U,U)$ are profinite and in particular compact Hausdorff, so if we can show that $\Phi$ is a bijection it will follow from continuity that it is a homeomorphism. We construct an inverse $\Xi$ to $\Phi$.

Let $\gamma \from U \to U$; we wish to construct an element $\Xi(\gamma) \in \procomp{G}$. Given a finite group $H$ and a continuous group homomorphism $h \from G \to H$, we obtain a continuous $G$-set $H_h$ with underlying set $H$, and with $g \in G$ acting by multiplication on the left by $h(g)$. Thus we have $\gamma_{H_h} \from H \to H$. Define $\Xi (\gamma)_h = \gamma_{H_h}(e_H)$, where $e_H$ denotes the group identity of $H$.

We must check that $\Xi (\gamma)$ so defined is indeed an element of $\procomp{G}$, that is, that if $k \from H \to H'$ is a homomorphism between finite groups, that $\Xi (\gamma)_{k \of h} = k ( \Xi(\gamma)_{h})$. But such a group homomorphism $k$ is also a $G$-set-homomorphism $H_h \to H'_{k \of h}$, so
\[
\xymatrix{
H \ar[r]^k \ar[d]_{\gamma_{H_h}} & H' \ar[d]^{\gamma_{H'_{k \of h}}} \\
H \ar[r]_k & H'
}
\]
commutes. Thus,
\[
\Xi(\gamma)_{k \of h} = \gamma_{H'_{k \of h}} (e_{H'}) = \gamma_{H'_{k \of h}} ( k ( e_H)) = k ( \gamma_{H_h}(e_H)) = k ( \Xi(\gamma)_h)
\]
as required.

Now we show that $\Xi$ is inverse to $\Phi$. Let $\xi \in \procomp{G}$. Then for any finite continuous $G$-set $X$ and $x \in X$, we have $\Phi (\xi)_X (X) = \xi_{\rho_X} (x)$. Thus for continuous $h \from G \to H$ with $H$ finite,
\[
\Xi \Phi (\xi)_h = \Phi (\xi)_{H_h} (e_H) = \xi_{\rho_{H_f}} (e_H).
\]
So we need to show that $\xi_{\rho_{H_h}} (e_H) = \xi_h$. Define $i \from H \to \sym (H_0)$ by sending $m \in H$ to left multiplication by $m$, where $H_0$ is the underlying set of $H$. Then $i$ is a group homomorphism and
\[
\xymatrix{
G \ar[r]^h \ar[dr]_{\rho_{H_h}} & H\ar[d]^i \\
& \sym H_0
}
\]
commutes. Thus we have
\[
\xi_{\rho_{H_h}} (e_H) = \xi_{i \of h} (e_H) = i (\xi_h) (e_H) =  \xi_h
\]
as required. So $\Xi \of \Phi = \id_{\procomp{G}}$.

Now Let $\gamma \from U \to U$. For any finite continuous $G$-set $X$ and $x \in X$, we have
\[
\Phi \Xi (\gamma)_X (x) = \Xi(\gamma)_{\rho_X} (x) = \gamma_{(\sym X_0)_{\rho_X}} (\id_X)(x).
\]
Note that we have a $G$-set homomorphism $\ev_x \from (\sym X_0)_{\rho_X} \to X$, since, given $\sigma \in \sym X_0$ and $g \in G$,
\[
\ev_x (g \cdot \sigma) = \rho_X (g) \of \sigma (x) = g \cdot \sigma (x) = g \cdot \ev_x (\sigma).
\]
Hence
\[
\xymatrix{
{\sym X_0} \ar[r]^{\ev_x}\ar[d]_{\gamma_{(\sym X_0)_{\rho_X}}} & X\ar[d]^{\gamma_X} \\
{\sym X_0} \ar[r]_{\ev_x} & X
}
\]
commutes, and so
\begin{align*}
\gamma_{(\sym X_0)_{\rho_X}} (\id_X)(x) &= \ev_x \of \gamma_{(\sym X_0)_{\rho_X}} (\id_X) \\
&= \gamma_X \of \ev_x (\id_X) \\
&= \gamma_X (x).
\end{align*}
Hence $\Phi \Xi (\gamma) = \gamma$, and we have shown that $\Phi$ and $\Xi$ are inverses. Hence we have a topological monoid isomorphism
\[
\procomp{G} \iso [\Gfinset, \finset](U,U),
\]
as claimed.
\end{proof}

\begin{cor}
The topological structure--semantics monad on $\proth_t(\finset^{\op})$ restricts to the profinite completion monad on the full subcategory $\TopGp \incl \proth_t(\finset^{\op})$.
\end{cor}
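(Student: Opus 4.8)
The plan is to deduce this corollary almost immediately from the work already done, so the proof will be short. We have two full and faithful functors $E_t \from \TopGp \incl \TopMonoid$ (the restriction of $E_t \from \TopMonoid \incl \proth_t(\finset^{\op})$ along the inclusion $\TopGp \incl \TopMonoid$) and $E_t \from \TopMonoid \incl \proth_t(\finset^{\op})$, whose composite realises $\TopGp$ as a full subcategory of $\proth_t(\finset^{\op})$. The first step is to observe that a monad on a category restricts to a monad on a full subcategory precisely when its underlying endofunctor preserves that subcategory; so I would chain the two restriction results. By Corollary~\ref{cor:str-sem-monad-restrict-monoid-top}, the topological structure--semantics monad on $\proth_t(\finset^{\op})$ restricts along $E_t$ to a monad $(T, \eta^{\mnd{T}}, \mu^{\mnd{T}})$ on $\TopMonoid$, and by Proposition~\ref{prop:top-act-prof} that monad in turn restricts along $\TopGp \incl \TopMonoid$ to a monad isomorphic to the profinite completion monad $(\widehat{\ }, \eta, \mu)$ on $\TopGp$.

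Concretely, I would write: let $G$ be a small topological group. Since $E_t \from \TopMonoid \to \proth_t(\finset^{\op})$ is full and faithful, Proposition~\ref{prop:top-monoid-restrict} gives a functor $T \from \TopMonoid \to \TopMonoid$ with $E_t \of T \iso \str_t \of \sem_t \of E_t$, and hence the structure--semantics monad on $\proth_t(\finset^{\op})$ restricts to a monad on $\TopMonoid$ (Corollary~\ref{cor:str-sem-monad-restrict-monoid-top}). Proposition~\ref{prop:top-act-prof} then shows that this monad on $\TopMonoid$ restricts further to the full subcategory $\TopGp$, and that the restriction is isomorphic to the profinite completion monad. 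Composing, the topological structure--semantics monad on $\proth_t(\finset^{\op})$ restricts along the composite full embedding $\TopGp \incl \TopMonoid \incl \proth_t(\finset^{\op})$ to the profinite completion monad on $\TopGp$, which is exactly the assertion.

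There is essentially no obstacle here, since this corollary is packaged so as to follow formally from Corollary~\ref{cor:str-sem-monad-restrict-monoid-top} and Proposition~\ref{prop:top-act-prof}. The only point requiring a word of care is the transitivity of ``restricting a monad to a full subcategory'': if $\mnd{S}$ is a monad on $\cat{X}$, $\cat{Y} \incl \cat{X}$ is a full subcategory preserved by the endofunctor of $\mnd{S}$, and $\cat{Z} \incl \cat{Y}$ is a full subcategory preserved by the (restricted) endofunctor, then $\cat{Z}$ is preserved by the original endofunctor and the twice-restricted monad coincides with the once-restricted monad's restriction; this is immediate because the inclusions compose and the unit and multiplication components at objects of $\cat{Z}$ are literally the same natural transformation components as in $\mnd{S}$. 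I would state this transitivity in one sentence and then invoke the two cited results. Thus the proof is just: combine Corollary~\ref{cor:str-sem-monad-restrict-monoid-top} and Proposition~\ref{prop:top-act-prof}.

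\begin{proof}
This is immediate from Corollary~\ref{cor:str-sem-monad-restrict-monoid-top} and Proposition~\ref{prop:top-act-prof}. Indeed, if a monad on a category restricts to a monad on a full subcategory, and that restricted monad in turn restricts to a monad on a full subcategory of the latter, then the original monad restricts to the composite full subcategory, with the twice-restricted monad structure. By Corollary~\ref{cor:str-sem-monad-restrict-monoid-top}, the topological structure--semantics monad on $\proth_t(\finset^{\op})$ restricts along $E_t \from \TopMonoid \incl \proth_t(\finset^{\op})$ to a monad on $\TopMonoid$, and by Proposition~\ref{prop:top-act-prof} this monad restricts along $\TopGp \incl \TopMonoid$ to a monad isomorphic to the profinite completion monad. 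Composing the two full embeddings, the topological structure--semantics monad on $\proth_t(\finset^{\op})$ restricts along $\TopGp \incl \proth_t(\finset^{\op})$ to the profinite completion monad on $\TopGp$.
\end{proof}
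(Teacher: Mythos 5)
Your proof is correct and takes exactly the same route as the paper, which simply cites Corollary~\ref{cor:str-sem-monad-restrict-monoid-top} and Proposition~\ref{prop:top-act-prof}; your extra sentence on the transitivity of restricting a monad to nested full subcategories is a harmless (and accurate) elaboration of what the paper leaves implicit.
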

\begin{proof}
This is immediate from Corollary~\ref{cor:str-sem-monad-restrict-monoid-top} and Proposition~\ref{prop:top-act-prof}.
\end{proof}
In Table~\ref{tab:analogy-top} we extend Table~\ref{tab:analogy} to include the topological aspects of the analogy between groups and proto-theories that have been developed in this chapter.
\begin{table}[h]
\centering
\begin{tabular}{p{7cm}p{7cm}}
\toprule
Theory of groups & Theory of proto-theories \\
\midrule
$\Gp$ & $\proth(\cat{B}^{\op})$ \\
$\FinGp \incl \Gp$ & $\kle \from \monad(\cat{B}) \incl  \proth(\cat{B}^{\op})$ \\
The functor $\Gp^{\op} \to \catover{\finset}$ sending $G$ to the category $\Gfinset$ of finite $G$-sets & The functor $\sem \from \proth(\cat{B}^{\op})^{\op} \to \catover{\cat{B}}$ for the canonical aritation \\
The profinite completion monad on $\Gp$ & The structure--semantics monad on $\proth(\cat{B}^{\op})$\\
The profinite completion monad is the codensity monad of $\FinGp \incl \Gp$. & The structure--semantics monad is the codensity monad of $\kle \from \monad(\cat{B}) \incl \proth(\cat{B}^{\op})$. \\
$\TopGp$ & $\proth_t(\cat{B}^{\op})$ \\
The subcategory $\profGp \incl \TopGp$ of profinite groups & $\proth_{ct}(\cat{B}^{\op}) \incl \proth_t(\cat{B}^{\op})$ \\
The functor $\TopGp^{\op} \to \catover{\finset}$ sending a topological group $G$ to the category of finite continuous $G$-sets & The functor $\sem_t \from \proth_t(\cat{B}^{\op})^{\op} \to \catover{\cat{B}}$ \\
The profinite completion monad on $\TopGp$ & The topological structure--semantics monad on $\proth_t(\cat{B}^{\op})$ \\
The profinite completion monad on $\TopGp$ is idempotent, corresponding to the reflective subcategory $\profGp$ & The topological structure--semantics monad on $\proth_t(\cat{B}^{\op})$ is idempotent, corresponding to the reflective subcategory $\proth_{ct}(\cat{B}^{\op})$ \\
\bottomrule

\end{tabular}
\caption{Some further aspects of the analogy between groups and proto-theories, for a locally small category $\cat{B}$ with enough subobjects and pointwise codensity monads of finite diagrams.}
\label{tab:analogy-top}
\end{table}

\chapter{Complete topological proto-theories}
\label{chap:complete}

In this chapter we explore the properties of the category $\proth_{ct}(\cat{B}^{\op})$ of complete topological proto-theories with arities $\cat{B}^{\op}$, for a suitable category $\cat{B}$. Recall from Definition~\ref{defn:top-adj-factor} that we can decompose the topological structure--semantics adjunction
 \[
\xymatrix{
{\catover{\cat{B} }}\ar@<5pt>[r]_-{\perp}^-{\str_t}\ & {\proth_t(\cat{B}^{\op})^{\op}}\ar@<5pt>[l]^-{\sem_t}
}
\]
as a pair of adjunctions
\[
\xymatrix{
{\catover{\cat{B} }}\ar@<5pt>[r]_-{\perp}^-{\str_{ct}}
& {\proth_{ct}(\cat{B}^{\op})^{\op}}\ar@<5pt>[l]^-{\sem_{ct}}\ar@<5pt>[r]_-{\perp}^-{\inc^{\op}} & {\proth_t(\cat{B}^{\op})^{\op}} \ar@<5pt>[l]^-{\cplt^{\op}}.
}
\]
where $\sem_{ct}$ and $\inc$ are full and faithful.

 In Section~\ref{sec:top-str-sem-codensity} we show that the structure--semantics monad on $\proth_t(\cat{B}^{\op})$  arises as the codensity monad of the full subcategory of monads. In Section~\ref{sec:lims-colims-in-complete-proth} we turn to the question of what limits and colimits exist in $\proth_{ct}(\cat{B}^{\op})$ and how they relate to those in $\monad(\cat{B})$ and $\proth_t(\cat{B}^{\op})$. Finally in Section~\ref{sec:quationally-presentable} we see how an important class of categories that are not described by monads but are in a sense algebraic can be described by complete topological proto-theories.

\section{The topological structure--semantics monad as a codensity monad}
\label{sec:top-str-sem-codensity}

In this section we will show that when a locally small category $\cat{B}$ has pointwise codensity monads of finite diagrams and enough subobjects, the topological structure--semantics monad on $\proth_t(\cat{B}^{\op})$ arises as the codensity monad of the inclusion $\kle_t \from \monad(\cat{B}) \incl \proth_t(\cat{B}^{\op})$. We could presumably show this by an argument almost identical to that of Proposition~\ref{prop:str-sem-mnd-codensity} (and indeed, such a proof would work even if $\cat{B}$ did not have enough subobjects). However, when $\cat{B}$ has enough subobjects an alternative, shorter proof is available, which proves some other interesting and useful results along the way.

Throughout this section we fix a locally small category $\cat{B}$ with finite products and enough subobjects.

\begin{prop}
\label{prop:kle-top-factor}
The functor $\kle_t = \disc \of \kle \from \monad(\cat{B}) \to \proth_t(\cat{B}^{\op})$ factors through $\inc \from \proth_{ct}(\cat{B}^{\op}) \incl \proth_t(\cat{B}^{\op})$. 
\end{prop}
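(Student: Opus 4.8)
The claim is that for a monad $\mnd{T}$ on $\cat{B}$, the topological proto-theory $\kle_t(\mnd{T}) = \disc(\kle(\mnd{T}))$ is \emph{complete}, i.e.\ lies in the reflective subcategory $\proth_{ct}(\cat{B}^{\op})$. By Definition~\ref{defn:top-adj-factor}, this amounts to showing that the counit component $E_{\kle_t(\mnd{T})} \from \kle_t(\mnd{T}) \to \str_t(\sem_t(\kle_t(\mnd{T})))$ is an isomorphism in $\proth_t(\cat{B}^{\op})$. The plan is to use the fact (Proposition~\ref{prop:top-monad-adj-restrict}) that the topological structure--semantics adjunction restricts along $\radjover{\cat{B}} \incl \catover{\cat{B}}$ and $\kle_t \from \monad(\cat{B}) \incl \proth_t(\cat{B}^{\op})$ to the adjunction $\str_{\monad} \ladj \sem_{\monad}$, compatibly with the adjunction structure. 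In particular the counit of $\str_t \ladj \sem_t$, whiskered appropriately, corresponds under these isomorphisms to the counit of $\str_{\monad} \ladj \sem_{\monad}$.

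First I would recall from Proposition~\ref{prop:sem-mnd-ff} that $\sem_{\monad} \from \monad(\cat{B})^{\op} \to \cmover{\cat{B}}$ is full and faithful, equivalently that the counit of the $\str_{\monad} \ladj \sem_{\monad}$ adjunction is an isomorphism (indeed we have the strict equality $\str_{\monad} \of \sem_{\monad} = \id$). Then, using the compatibility of the isomorphisms $\zeta$ and $\xi$ from Proposition~\ref{prop:top-monad-adj-restrict} with the counits (exactly as in Proposition~\ref{prop:sem-str-counit-restr}), I would deduce that $E_{\kle_t(\mnd{T})}$ is, up to the isomorphisms $\und^{\op}\zeta'$ and $\xi'$ appearing there, equal to $\kle_t^{\op}$ applied to the counit component of $\str_{\monad} \ladj \sem_{\monad}$ at $\mnd{T}$, hence an isomorphism since $\kle_t$ preserves isomorphisms. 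Since $\proth_{ct}(\cat{B}^{\op})$ is by definition the full subcategory of $\proth_t(\cat{B}^{\op})$ on those $L$ for which $E_L$ is an isomorphism, this shows $\kle_t(\mnd{T}) \in \proth_{ct}(\cat{B}^{\op})$, and functoriality of the factorisation is then automatic since $\proth_{ct}(\cat{B}^{\op})$ is a \emph{full} subcategory.

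The main obstacle is bookkeeping: carefully tracing the various isomorphisms of Proposition~\ref{prop:top-monad-adj-restrict} and verifying that the pasting identity relating the counit $E_t$ of $\str_t \ladj \sem_t$ to the counit of $\str_{\monad} \ladj \sem_{\monad}$ genuinely forces $E_{\kle_t(\mnd{T})}$ to be invertible componentwise as a $\TOP$-functor (not merely as an underlying functor). Since $\proth_{ct}(\cat{B}^{\op})$ is detected by $E_L$ being an iso in $\proth_t(\cat{B}^{\op})$, and $E_{\kle_t(\mnd{T})}$ is literally obtained by applying the functor $\kle_t$ (which lands in topological proto-theories) to an isomorphism of monads, this is clean: $\kle_t$ sends isomorphisms to isomorphisms, so no extra continuity check is needed. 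A brief alternative I would mention: one can also argue directly that $\sem_t(\kle_t(\mnd{T})) \iso \sem_{\monad}(\mnd{T}) = U^{\mnd{T}} \from \cat{B}^{\mnd{T}} \to \cat{B}$ has a left adjoint, so by Lemma~\ref{lem:radj-str-discrete} its topological structure $\str_t(\sem_t(\kle_t(\mnd{T})))$ is discrete, and then $E_{\kle_t(\mnd{T})}$ is $\disc$ applied to the underlying counit of monads, which is an isomorphism by Proposition~\ref{prop:sem-mnd-ff}.
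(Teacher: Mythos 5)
Your proposal is correct and follows essentially the same route as the paper: the paper's proof likewise observes that $\proth_{ct}(\cat{B}^{\op})$ consists of those $L$ with $E_L$ invertible, invokes Proposition~\ref{prop:sem-mnd-ff} to get $\mnd{T} \iso \str_{\monad}(\sem_{\monad}(\mnd{T}))$, and uses the compatibility of counits from Proposition~\ref{prop:top-monad-adj-restrict} to identify $E_{\kle_t(\mnd{T})}$ with the image of that isomorphism under $\kle_t$. Your closing alternative via Lemma~\ref{lem:radj-str-discrete} is a nice sanity check but is not needed.
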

\begin{proof}
Recall that $\proth_{ct}(\cat{B}^{\op})$ is defined as the full subcategory of $\proth_{t}(\cat{B}^{\op})$ consisting of those topological proto-theories $L \from \cat{B}^{\op} \to \cat{L}$ such that
\[
E_L \from \cat{L} \to \str_t (\sem_t (L))
\]
is an isomorphism. Recall from Proposition~\ref{prop:sem-mnd-ff} that for any monad $\mnd{T}$ on $\cat{B}$, we have $\mnd{T} \iso \str_{\monad} (\sem_{\monad}(\mnd{T}))$, that is, the monad generated by the Eilenberg--Moore adjunction for $\mnd{T}$ is isomorphic to $\mnd{T}$. By Proposition~\ref{prop:top-monad-adj-restrict}, this implies that
\[
\kle_t (\mnd{T}) \iso \kle_t (\str_{\monad} (\sem_{\monad}(\mnd{T}))) \iso \str_t (\sem_t (\kle_t (\mnd{T}))),
\]
and this isomorphism is indeed given by $E_{\kle_t(\mnd{T})}$.
\end{proof}

\begin{defn}
Write $\kle_{ct} \from \monad(\cat{B}) \to \proth_{ct}(\cat{B}^{\op})$ for the factorisation of $\kle_t \from \monad(\cat{B}) \to \proth_t(\cat{B}^{\op})$ through $\inc \from \proth_{ct}(\cat{B}^{\op}) \incl \proth_t(\cat{B}^{\op})$.
\end{defn}

\begin{prop}
\label{prop:dense-slice}
Let $I \from \cat{D} \incl \cat{C}$ be a dense and full and faithful functor, and let $b \in \cat{C}$. Then the induced inclusion
\[
J \from (I\downarrow b) \to \cat{C}/b
\]
is also dense.
\end{prop}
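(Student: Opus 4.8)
I will use the characterisation of density in terms of colimits rather than the definition via nerves: by the Lemma following Definition~\ref{defn:dense-codense}, it suffices to show that for every object $(c, h \from c \to b)$ of $\cat{C}/b$, the $J$-cocone on $(c,h)$ is a colimit cocone. Recall this cocone has vertex $(c,h)$, is defined on the diagram
\[
(J \downarrow (c,h)) \to (I \downarrow b) \toby{J} \cat{C}/b,
\]
and its component at an object $\big((d, f \from Id \to b),\ k \from (Id,f) \to (c,h)\big)$ of $(J \downarrow (c,h))$ is the morphism $k$ of $\cat{C}/b$.

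\textbf{Key reduction.} First I would establish an isomorphism of categories $(J \downarrow (c,h)) \iso (I \downarrow c)$. An object of $(J \downarrow (c,h))$ consists of $d \in \cat{D}$, a morphism $f \from Id \to b$, and a morphism $k \from Id \to c$ with $h \of k = f$; since $f$ is then forced to equal $h \of k$, this is exactly the data of an object $(d, k \from Id \to c)$ of $(I \downarrow c)$. A routine check (using $h \of k = f$ and $h \of k' = f'$) shows the morphisms correspond as well. Under this identification, the composite diagram above followed by the forgetful functor $P_b \from \cat{C}/b \to \cat{C}$, $(c',h') \mapsto c'$, becomes precisely the diagram $(I \downarrow c) \to \cat{D} \toby{I} \cat{C}$, and $P_b$ sends the $J$-cocone on $(c,h)$ to the $I$-cocone on $c$ (its component at $(d,k)$ being $k \from Id \to c$).

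\textbf{Conclusion.} Now $I$ is dense, so the $I$-cocone on $c$ is a colimit cocone in $\cat{C}$. The forgetful functor $P_b \from \cat{C}/b \to \cat{C}$ creates all colimits: given a colimit cocone downstairs on $P_b$ of a diagram, the structure maps to $b$ assemble into a cocone with vertex $b$, inducing a unique map from the colimit to $b$, and this is the unique lift and is a colimit in $\cat{C}/b$. Since the $J$-cocone on $(c,h)$ is a cocone in $\cat{C}/b$ lying over the colimit cocone (the $I$-cocone on $c$), the uniqueness clause in creation of colimits forces it to be \emph{the} lift, hence a colimit cocone. This holds for every $(c,h)$, so $J$ is dense. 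The main (and only) thing to get right is the bookkeeping in the comma-category isomorphism and the observation that $P_b$ creates colimits; no serious obstacle arises, and in fact full-faithfulness of $I$ is not needed for this argument, though it costs nothing to retain the hypothesis as stated.
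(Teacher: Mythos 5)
Your proof is correct and follows essentially the same route as the paper's: identify $(J \downarrow (c,h))$ with $(I \downarrow c)$, observe that the forgetful functor $\cat{C}/b \to \cat{C}$ carries the $J$-cocone to the $I$-cocone (a colimit by density of $I$), and conclude via creation of colimits by the slice projection. Your side observation that full-faithfulness of $I$ is not actually used is also accurate — the paper's own argument does not invoke it either.
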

\begin{proof}

We must show that, for every $(f \from c \to b) \in \cat{C}/b$, the canonical cocone on
\[
(J \downarrow f) \to (I\downarrow b) \toby{J} \cat{C}/b
\]
with vertex $f$ is a colimit cocone. Consider the commutative diagram
\[
\xymatrix{
(J \downarrow f)\ar[r]\ar[d] & (I\downarrow b) \ar[r]^J\ar[d] & \cat{C}/b\ar[d] \\
(I \downarrow c) \ar[r] & \cat{D} \ar[r]_I & \cat{C}
}
\]
where the vertical arrows are the evident forgetful functors. The left-most vertical arrow is in fact an isomorphism: an object of $(J \downarrow f)$ consists of an object $d \in \cat{D}$, and morphisms $f' \from Id \to b$ and $g \from Id \to c$ such that $f \of g = f'$, but such an object is determined by $g \from Id \to c$ alone, which is an object of $(I \downarrow c)$. Thus we can identify cocones on diagrams of shape $(J \downarrow f)$ with cocones on diagrams of shape $(I \downarrow c)$.

The forgetful functor $\cat{C}/b \to \cat{C}$ sends the canonical cocone on
\[
(J \downarrow f) \to (I\downarrow b) \toby{J} \cat{C}/b
\]
with vertex $f$ to the canonical cocone on
\[
(I \downarrow c) \to \cat{D} \toby{I} \cat{C}
\]
with vertex $c$. But this latter cocone is a colimit cocone since $I$ is dense. Furthermore, the forgetful functor $\cat{C}/b \to \cat{C}$ creates all colimits (Lemma in Section V.6 of \cite{maclane71}) so the former cocone is also a colimit cocone. Hence $J$ is dense, as claimed.
\end{proof}

\begin{prop}
\label{prop:dense-reflect}
Let $I \from \cat{D} \incl \cat{C}$ be a dense and full and faithful functor, and let $G \from \cat{E} \incl \cat{C}$ be a full and faithful functor with left adjoint $F$. Then the full image of $FI$ is dense in $\cat{E}$.
\end{prop}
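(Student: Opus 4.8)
The plan is to use the characterisation of density in terms of canonical colimit cocones. Write $\cat{K} \incl \cat{E}$ for the full subcategory of $\cat{E}$ whose objects are those of the form $FId$ with $d \in \cat{D}$; this is the full image of $FI$. I must show that for every $e \in \cat{E}$ the canonical $\cat{K}$-cocone on $e$ — the cocone on the forgetful diagram $(\cat{K} \downarrow e) \to \cat{K} \incl \cat{E}$ whose component at $(k, g\from k \to e)$ is $g$ itself — is a colimit cocone.

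First I would produce a more tractable colimit presentation of $e$. Since $I$ is dense, $Ge$ is the colimit of $(I \downarrow Ge) \to \cat{D} \toby{I} \cat{C}$ via its canonical cocone, with component $f$ at the object $(d, f\from Id \to Ge)$. Applying the left adjoint $F$, which preserves colimits, and composing with the counit $\epsilon_e \from FGe \to e$ — an isomorphism because $G$ is full and faithful — shows that $e$ is the colimit of $(I \downarrow Ge) \to \cat{D} \toby{FI} \cat{E}$ with canonical cocone $\nu$ whose component at $(d,f)$ is $\nu_{(d,f)} = \epsilon_e \of Ff = \bar f$, the transpose of $f$ under $F \dashv G$.

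Then I would compare the two diagrams via the functor $P \from (I \downarrow Ge) \to (\cat{K} \downarrow e)$ sending $(d, f\from Id \to Ge)$ to $(FId, \bar f)$; naturality of the adjunction transpose makes $P$ well-defined on morphisms (a morphism $\phi$ with $f_2 \of I\phi = f_1$ goes to $FI\phi$, and $\bar f_2 \of FI\phi = \overline{f_2 \of I\phi} = \bar f_1$), and $\nu$ equals $P$ followed by the canonical $\cat{K}$-cocone on $e$. Now, given any competing cocone $\mu$ on $(\cat{K}\downarrow e) \to \cat{E}$ with vertex $e'$, restriction along $P$ yields a cocone $\mu P$ on $(I \downarrow Ge) \to \cat{D} \toby{FI} \cat{E}$ (compatibility again by transpose-naturality), hence a unique $u \from e \to e'$ with $u \of \bar f = \mu_{(FId,\bar f)}$ for all $(d,f)$. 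The final point is that this one family of equations already forces $u \of g = \mu_{(k,g)}$ for every $(k,g) \in (\cat{K}\downarrow e)$: choosing any $d_0$ with $FId_0 = k$ and letting $\hat g$ be the transpose of $g$, one has $(k,g) = P(d_0, \hat g)$, so $u \of g = u \of \overline{\hat g} = \mu_{(FId_0,\overline{\hat g})} = \mu_{(k,g)}$; and uniqueness of $u$ over all of $(\cat{K}\downarrow e)$ follows from its uniqueness over the image of $P$. This establishes that the canonical $\cat{K}$-cocone on $e$ is a colimit, so $\cat{K}$ is dense in $\cat{E}$.

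The one mildly delicate point — and the thing to be careful about rather than a genuine obstacle — is that $FI$ need not be full, so $P$ is neither full nor obviously final; I therefore deliberately avoid appealing to any cofinality principle and instead run the comparison of universal properties directly as above. The key fact that makes this go through is that $P$ is surjective on objects (every $(k,g)$ is $P(d_0,\hat g)$ for any presentation $k = FId_0$), together with naturality of the adjunction transpose; everything else (preservation of colimits by $F$, the counit being an isomorphism) is standard.
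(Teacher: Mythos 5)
Your proof is correct, and it takes a genuinely different route from the paper's. The paper works entirely with the hom-set (nerve) characterisation of density: given a natural transformation $\alpha \from \cat{E}(J-,e) \to \cat{E}(J-,e')$, it transposes along $F \ladj G$ to build a natural transformation $\cat{C}(I-,Ge) \to \cat{C}(I-,Ge')$, invokes density of $I$ to get a unique $h' \from Ge \to Ge'$, lifts it through the full and faithful $G$, and then checks directly that the resulting $h$ induces $\alpha$ and is unique with that property. You instead use the colimit characterisation: density of $I$ presents $Ge$ as a canonical colimit, $F$ preserves it, the counit is invertible because $G$ is full and faithful, and the comparison functor $P \from (I \downarrow Ge) \to (\cat{K} \downarrow e)$ transports the universal property across; the key observation that $P$ is surjective on objects (via $(k,g) = P(d_0,\hat g)$ and the triangle identity $\overline{\hat g} = g$) is exactly what lets you upgrade the restricted mediating-map equations to the full family without any appeal to finality. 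Your version buys a slightly more geometric picture (every object of $\cat{E}$ is exhibited as an explicit colimit of objects $FId$) at the cost of invoking colimit preservation by $F$ and the counit isomorphism; the paper's version is more elementary in that it never forms any colimits, only manipulates transposes, but the two arguments carry essentially the same information. Your caution about $FI$ not being full is well placed and correctly handled.
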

\begin{proof}
Write $J \from \cat{F} \incl \cat{E}$ for the full image of $FI$. Let $\alpha \from \cat{E}(J-, e) \to \cat{E}(J-,e')$. We define a natural transformation $\beta \from \cat{C}(I-, Ge) \to \cat{C}(I-, Ge')$ as follows. Let $d \in \cat{D}$ and $g \from Id \to Ge$. This corresponds under the adjunction $F \ladj G$ to $\bar{g} \from FId \to e$ in $\cat{E}$, and since $FId \in \cat{F}$, we have $\alpha_{FId} (\bar{g}) \from FId \to e'$. Define $\beta_d (g)$ to be the composite
\[
Id \toby{\eta_{Id}} GFId \toby{G\alpha_{FId}(\bar{g})} Ge',
\]
where $\eta \from \id_{\cat{C}} \to GF$ is the unit of the adjunction.

We now show that $\beta$ is natural. Let $f \from d' \to d$ in $\cat{D}$. Then
\[
\alpha_{FId'}(\widebar{g \of If}) = \alpha_{FId'}(\bar{g} \of FIf) = \alpha_{FId}(\bar{g}) \of FIf,
\]
and so the right-hand triangle in
\[
\xymatrix{
{Id'} \ar[r]^{\eta_{Id'}} \ar[d]_{If} & {GFId'}\ar[dr]^{G\alpha_{FId'}(\widebar{g \of If})} \ar[d]_{GFIf} & \\
{Id}\ar[r]_{\eta_{Id}} & {GFId}\ar[r]_{G\alpha_{FId}(\bar{g})} & Ge'
}
\]
commutes, and the left-hand square commutes by naturality of $\eta$. It follows that $\beta_{d'}(g \of If) = \beta_{d}(g) \of If$, so $\beta$ is a natural transformation $ \cat{C}(I-, Ge) \to \cat{C}(I-, Ge')$.

Since $I$ is dense, there exists a unique $h' \from Ge \to G'e$ such that $\beta = h'_*$. Since $G$ is full and faithful, there is a unique $h \from e \to e'$ such that $h' = Gh$. We will show that $\alpha = h_*$. Any object of $\cat{F}$ is of the form $FId$ for some $d \in \cat{D}$. Let $f \from FId \to e$. We wish to show that $h \of f = \alpha_{FId}(f)$. This equality holds if and only if $G(h \of f) \of \eta_{Id} = G \alpha_{FId}(f) \of \eta_{Id}$. But if we set $g = Gf \of \eta_{Id}$, then $f = \bar{g}$, and so
\[
Gh \of Gf \of \eta_{Id} = h' \of g = \beta_d(g) = G\alpha_{FId}(\bar{g}) \of \eta_{Id} = G \alpha_{FId}(f)\of \eta_{Id}
\]
as required.

Finally we show that $h \from e \to e'$ is \emph{unique} such that $\alpha = h_*$. Let $k \from e \to e'$, and suppose $k$ has the property that for all $d \in \cat{D}$ and $f \from FId \to e$, we have $k \of f = h \of f$. Then for any $d \in \cat{D}$ and  $g \from Id \to Ge$, we have
\[
Gh \of g = Gh \of G \bar{g} \of \eta_{Id} = Gk \of G\bar{g} \of \eta_{Id} = Gk \of g,
\]
where we have taken $f = \bar{g}$. Since $I$ is dense, it follows that $Gh = Gk$, so $h= k$, since $G$ is faithful.

Thus $J$ is dense, as required.
\end{proof}

\begin{prop}
\label{prop:monad-ctheory-codense}
If $\cat{B}$ admits pointwise codensity monads of finite diagrams, then the inclusion $\kle_{ct} \from \monad(\cat{B}) \incl \proth_{ct}(\cat{B}^{\op})$ is codense.
\end{prop}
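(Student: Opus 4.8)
The plan is to exploit the factorisation of the topological structure--semantics adjunction through $\proth_{ct}(\cat{B}^{\op})$, together with the fact that $\kle_t$ factors through $\inc$ via $\kle_{ct}$, and then apply the preceding propositions about density passing through slices and reflections. Recall (Proposition~\ref{prop:str-sem-mnd-codensity}, or rather the arguments around it) and more importantly the results of this section: we have the full and faithful reflector $\cplt^{\op} \ladj \inc^{\op}$ exhibiting $\proth_{ct}(\cat{B}^{\op})$ as a replete coreflective subcategory of $\proth_t(\cat{B}^{\op})$, which dually means $\inc \from \proth_{ct}(\cat{B}^{\op}) \incl \proth_t(\cat{B}^{\op})$ has a left adjoint $\cplt$ on the non-opposite categories. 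So the situation is set up precisely for Proposition~\ref{prop:dense-reflect}: I want to show that monads form a codense subcategory of $\proth_{ct}(\cat{B}^{\op})$, which dually is a density statement in the opposite categories.

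First I would dualise everything: codensity of $\kle_{ct} \from \monad(\cat{B}) \to \proth_{ct}(\cat{B}^{\op})$ is by definition density of $\kle_{ct}^{\op} \from \monad(\cat{B})^{\op} \to \proth_{ct}(\cat{B}^{\op})^{\op}$. Next I would identify a dense full and faithful functor into $\proth_t(\cat{B}^{\op})^{\op}$ whose image, after reflecting along $\cplt^{\op}$, is (the full image of) $\kle_{ct}^{\op}$. The natural candidate comes from the canonical aritation: for a locally small $\cat{B}$ admitting pointwise codensity monads of finite diagrams, the semantics functor restricted appropriately, or better the Yoneda-type embedding, should give that $\monad(\cat{B})^{\op} \incl \proth_t(\cat{B}^{\op})^{\op}$ is dense. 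More carefully: by Proposition~\ref{prop:codensity-str}, the proto-theories in the essential image of $\kle$ are exactly the $\str(\const{b})$ for objects $b$ (and $\str(\const{h_0})$ for morphisms), and these "represent" objects of $\cat{B}$; I would use the fact that $\proth_t(\cat{B}^{\op})^{\op}$ has $\proth_t(\cat{B}^{\op})$ monadic over $[\cat{B}^{\op}\times\cat{B},\SET]$ (Proposition~\ref{prop:proth-prof-monadic} adapted, or Street's Theorem~14) to get that the discrete monad proto-theories are dense in $\proth_t(\cat{B}^{\op})^{\op}$ essentially because representables are dense in a presheaf category. Then Proposition~\ref{prop:dense-reflect}, applied with $\cat{C} = \proth_t(\cat{B}^{\op})^{\op}$, $\cat{D} = $ (essential image of $\kle_t^{\op}$), $\cat{E} = \proth_{ct}(\cat{B}^{\op})^{\op}$, $G = \inc^{\op}$, $F = \cplt^{\op}$, yields that the full image of $\cplt^{\op} \circ \kle_t^{\op}$ is dense in $\proth_{ct}(\cat{B}^{\op})^{\op}$. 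But $\cplt^{\op} \circ \kle_t^{\op} \iso \kle_{ct}^{\op}$ since $\kle_t$ already factors through $\inc$ (Proposition~\ref{prop:kle-top-factor}) and $\cplt \circ \inc \iso \id$, so the full image of $\kle_{ct}^{\op}$ is dense, which is exactly codensity of $\kle_{ct}$. (Proposition~\ref{prop:dense-slice} is the tool I would use to pin down the "density of monads in $\proth_t$" step by reducing to density of a comma category, if a direct argument via monadicity is not clean enough.)

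The main obstacle is establishing that the monads, viewed via $\kle_t^{\op}$, form a dense full subcategory of $\proth_t(\cat{B}^{\op})^{\op}$ in the first place --- this is the analogue of "finite discrete groups are dense in $\TopGp^{\op}$" and is where the hypothesis that $\cat{B}$ has pointwise codensity monads of finite diagrams (and implicitly enough subobjects, inherited from the standing assumptions of the section) gets used. I expect the cleanest route is to first do the discrete case (density of $\monad(\cat{B})^{\op}$ in $\proth(\cat{B}^{\op})^{\op}$, which is essentially Proposition~\ref{prop:str-sem-mnd-codensity} restated as a density claim via the equivalence between codensity and the relevant Kan extension), then transport along $\disc$ using that $\disc \ladj \und$ and that $\disc$ is full and faithful, and finally reflect into $\proth_{ct}$. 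A secondary subtlety is checking that $\kle_{ct}$ and $\inc$ are genuinely full and faithful so that Proposition~\ref{prop:dense-reflect} applies: fullness and faithfulness of $\kle$ is Proposition~\ref{prop:kle-ff}, of $\disc$ is clear, of $\inc$ is part of Definition~\ref{defn:top-adj-factor}, and hence $\kle_{ct} = \cplt \circ \kle_t$ restricted to the image is full and faithful onto its image --- but I would need to verify the hypotheses of Proposition~\ref{prop:dense-reflect} literally, namely that $G$ there is full and faithful with a left adjoint, which holds since $\proth_{ct}(\cat{B}^{\op})$ is a replete reflective (in the opposite category) subcategory.
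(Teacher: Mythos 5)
There is a genuine gap, and it is located exactly where you flag your ``main obstacle'': the claim that the monads, via $\kle_t^{\op}$, form a dense full subcategory of $\proth_t(\cat{B}^{\op})^{\op}$ (equivalently, that $\kle_t \from \monad(\cat{B}) \incl \proth_t(\cat{B}^{\op})$ is codense) is false. A functor is codense precisely when its codensity monad is trivial, but the codensity monad of $\kle_t$ is the topological structure--semantics monad (Theorem~\ref{thm:str-sem-monad-codensity-top}), which is idempotent but not the identity: its category of algebras is the proper reflective subcategory $\proth_{ct}(\cat{B}^{\op})$. (For $\cat{B} = \finset$ this monad restricts to profinite completion on $\TopGp$, which is manifestly nontrivial.) The same objection kills the proposed ``discrete case'': Proposition~\ref{prop:str-sem-mnd-codensity} identifies the codensity monad of $\kle$ with the structure--semantics monad on $\proth(\cat{B}^{\op})$, which is again nontrivial, so $\monad(\cat{B})^{\op}$ is not dense in $\proth(\cat{B}^{\op})^{\op}$ either. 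The analogy with ``representables are dense in a presheaf category'' does not apply here. There is also a circularity risk: Theorem~\ref{thm:str-sem-monad-codensity-top} is itself deduced from the proposition you are trying to prove.

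The fix is to run Proposition~\ref{prop:dense-reflect} along the \emph{other} adjunction. You applied it to $\cplt^{\op} \ladj \inc^{\op}$, which forces you to exhibit a dense subcategory of $\proth_t(\cat{B}^{\op})^{\op}$; instead apply it to $\str_{ct} \ladj \sem_{ct}$, so that the dense subcategory is sought in $\catover{\cat{B}}$, where one is available for free. Concretely: $\Delta$ (non-zero finite ordinals) is dense in $\CAT$, so by Proposition~\ref{prop:dense-slice} the inclusion $\Delta/\cat{B} \incl \catover{\cat{B}}$ is dense; Proposition~\ref{prop:dense-reflect} with $\cat{C} = \catover{\cat{B}}$, $\cat{D} = \Delta/\cat{B}$, $\cat{E} = \proth_{ct}(\cat{B}^{\op})^{\op}$, $G = \sem_{ct}$, $F = \str_{ct}$ then makes the full image of $\str_{ct}$ restricted to $\Delta/\cat{B}$ dense in $\proth_{ct}(\cat{B}^{\op})^{\op}$. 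The hypothesis on $\cat{B}$ enters only at the final step: every functor into $\cat{B}$ with domain in $\Delta$ has a pointwise codensity monad, so by Proposition~\ref{prop:codensity-str} its image under $\str_{ct}$ lies in the essential image of $\kle_{ct}$; since any full subcategory containing a codense subcategory is codense, $\kle_{ct}$ is codense.
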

\begin{proof}
The full subcategory $\Delta$ of non-zero finite ordinals is dense in $\CAT$; this is well-known. Hence, by Proposition~\ref{prop:dense-slice}, the induced inclusion
\[
\Delta/\cat{B} \incl \catover{\cat{B}}
\]
is also dense. Now we apply Proposition~\ref{prop:dense-reflect}, with $\cat{C} = \catover{\cat{B}}$, and $\cat{D} = \Delta / \cat{B}$, and $I$ the inclusion, and with $\cat{E} = \proth_{ct}(\cat{B}^{\op})^{\op}$, $G = \sem_{ct}$ and $F = \str_{ct}$. It follows that the image of $\Delta /\cat{B}$ under $\str_{ct}$ in $\proth_{ct}(\cat{B}^{\op})^{\op}$ is dense, so its opposite is codense in $\proth_{ct}(\cat{B}^{\op})$. But $\cat{B}$ admits pointwise codensity monads of finite diagrams and all of the categories in $\Delta \subseteq \CAT$ are finite. Thus every functor into $\cat{B}$ with domain in $\Delta$ has a pointwise codensity monad, and so the image of $\Delta/\cat{B}$ is contained in the essential image of $\kle_{ct} \from \monad(\cat{B}) \incl \proth_{ct}(\cat{B}^{\op})$ by Proposition~\ref{prop:codensity-str}. But a subcategory of $\proth_{ct}(\cat{B}^{\op})$ that contains a subcategory codense in $\proth_{ct}(\cat{B}^{\op})$ is itself codense in $\proth_{ct}(\cat{B}^{\op})$. Hence $\kle_{ct} \from \monad(\cat{B}) \incl \proth_{ct}(\cat{B}^{\op})$ is codense.
\end{proof}

\begin{thm}
\label{thm:str-sem-monad-codensity-top}
If $\cat{B}$ has pointwise codensity monads of finite diagrams, then the structure--semantics monad on $\proth_t(\cat{B}^{\op})$ is the codensity monad of $\kle_t \from \monad(\cat{B}) \to \proth_t(\cat{B}^{\op})$.
\end{thm}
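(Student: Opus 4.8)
The plan is to derive the statement from Lemma~\ref{lem:codense-radj-monad} by exploiting the factorisation of the topological structure--semantics adjunction through the reflective subcategory $\proth_{ct}(\cat{B}^{\op})$. First, recall that the standing hypotheses of this section (finite products and enough subobjects) give, via Theorem~\ref{thm:str-sem-top-idempotent}, that $\str_t \ladj \sem_t$ is idempotent; by Lemma~\ref{lem:idem-adj-factor} (and the corollary and Definition~\ref{defn:top-adj-factor} following it) this adjunction factors through $\proth_{ct}(\cat{B}^{\op})$, which is a replete reflective subcategory of $\proth_t(\cat{B}^{\op})$ with reflector $\cplt$ and inclusion $\inc$, so $\cplt \ladj \inc$. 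The first thing to pin down is that the topological structure--semantics monad on $\proth_t(\cat{B}^{\op})$ — i.e.\ the monad with underlying endofunctor $L \mapsto \str_t(\sem_t(L))$ and unit $E_{(-)}$, in analogy with Section~\ref{sec:str-sem-as-codensity} — is precisely the idempotent monad induced by the reflection $\cplt \ladj \inc$. Both are idempotent monads on $\proth_t(\cat{B}^{\op})$ (the first by Theorem~\ref{thm:str-sem-top-idempotent} together with Lemma~\ref{lem:idem-adj-conditions}, the second because $\inc$ is full and faithful); the identities $\inc^{\op} \of \str_{ct} \iso \str_t$ and $\cplt = \str_{ct} \of \sem_t$ recorded after Definition~\ref{defn:top-adj-factor} show their underlying endofunctors agree, namely $\inc \of \cplt \iso \str_t \of \sem_t$ on objects; and both have the same category of algebras, namely the full subcategory of those $L$ with $E_L$ an isomorphism, which is exactly $\proth_{ct}(\cat{B}^{\op})$. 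By the bijective correspondence between idempotent monads and replete reflective subcategories, the two monads are therefore isomorphic.

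Second, I would bring in the codensity input: since $\cat{B}$ admits pointwise codensity monads of finite diagrams, Proposition~\ref{prop:monad-ctheory-codense} gives that $\kle_{ct} \from \monad(\cat{B}) \incl \proth_{ct}(\cat{B}^{\op})$ is codense, and by construction $\kle_t = \inc \of \kle_{ct}$ (Proposition~\ref{prop:kle-top-factor}). The functor $\inc \from \proth_{ct}(\cat{B}^{\op}) \to \proth_t(\cat{B}^{\op})$ has the left adjoint $\cplt$. Applying Lemma~\ref{lem:codense-radj-monad} with $I = \kle_{ct}$, $G = \inc$ and hence $F = \cplt$, we conclude that the pointwise codensity monad of $\kle_t = \inc \of \kle_{ct}$ exists and is isomorphic to the monad induced by $\cplt \ladj \inc$.

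Combining the two steps, the pointwise codensity monad of $\kle_t$ equals the monad induced by $\cplt \ladj \inc$, which by the first step is the topological structure--semantics monad on $\proth_t(\cat{B}^{\op})$; this is the assertion. The only genuinely delicate part is the opposite-category bookkeeping in the first step — identifying which adjunction Lemma~\ref{lem:idem-adj-factor} is applied to, and verifying that the unit of the reflection monad matches $E_{(-)}$ and not merely that the underlying endofunctors agree. Everything else is a direct assembly of results already established in the excerpt, so I expect this reconciliation of the reflection monad with the structure--semantics monad to be the main obstacle, and I would treat it with some care before invoking Lemma~\ref{lem:codense-radj-monad}.
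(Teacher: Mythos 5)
Your proposal is correct and follows essentially the same route as the paper's proof: codensity of $\kle_{ct}$ via Proposition~\ref{prop:monad-ctheory-codense}, then Lemma~\ref{lem:codense-radj-monad} applied to $\inc \of \kle_{ct} = \kle_t$ with $\cplt \ladj \inc$, and finally the identification of the reflection monad with the structure--semantics monad. The only difference is that you spell out this last identification (via the correspondence between idempotent monads and replete reflective subcategories), whereas the paper asserts it directly from the factorisation in Definition~\ref{defn:top-adj-factor}; your extra care there is harmless and arguably clarifying.
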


\begin{proof}
Under these conditions, $\kle_{ct} \from \monad(\cat{B}) \to \proth_{ct}(\cat{B}^{\op})$ is codense by Proposition~\ref{prop:monad-ctheory-codense}, and so by Lemma~\ref{lem:codense-radj-monad}, the codensity monad of the composite
\[
\monad(\cat{B}) \toby{\kle_{ct}} \proth_{ct}(\cat{B}^{\op}) \toby{\inc} \proth_{t}(\cat{B}^{\op})
\]
is the monad induced by the adjunction $\cplt \ladj \inc$. But this composite is $\kle_t \from \monad(\cat{B}) \to \proth_{t}(\cat{B}^{\op})$ by definition, and this monad is the topological structure--semantics monad.
\end{proof}

\section{Limits and colimits in $\proth_{ct}(\cat{B}^{\op})$}
\label{sec:lims-colims-in-complete-proth}

In this section, we examine whether limits and colimits exist in $\proth_{ct}(\cat{B}^{\op})$, and whether the inclusion $\kle_{ct} \from \monad(\cat{B}) \incl \proth_{ct}(\cat{B}^{\op})$ preserves them; since $\proth_{ct}(\cat{B}^{\op})$ is supposed to be in some sense a ``convenient category of monads'', it is desirable for it to have limits and colimits and for them to coincide with those in $\monad(\cat{B})$ at least reasonably often. These results lead to two further characterisations of the subcategory $\proth_{ct}(\cat{B}^{\op}) \incl \proth_{t}(\cat{B}^{\op})$ (Theorems~\ref{thm:ctpts-are-lims-of-monads} and~\ref{thm:ctpts-reflexive-hull-monads}) which do not involve the structure--semantics adjunction, and which deepen the analogy between proto-theories and groups.

Throughout this section we fix a locally small category $\cat{B}$ with finite products and enough subobjects.

\begin{prop}
Both $\proth_t(\cat{B}^{\op})$ and $\proth_{ct}(\cat{B}^{\op})$ have all large limits and colimits.
\end{prop}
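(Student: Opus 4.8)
The plan is to handle the two categories separately, reducing each to facts established earlier in the thesis.

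For $\proth_t(\cat{B}^{\op})$: by definition this is the category $\proth(\disc(\cat{B}^{\op}))$ of proto-theories with arities $\disc(\cat{B}^{\op})$ in the setting $\TOPCAT$, so by Proposition~\ref{prop:lim-colim-proth} it suffices to show that the 2-category $\TOPCAT = \TOP\hyph\CAT$ has all large limits and all large colimits. Since $\TOP$, the category of all topological spaces, is complete and cocomplete, this follows from the general fact that $\cat{V}\hyph\CAT$ is complete and cocomplete whenever $\cat{V}$ is. Concretely, limits of $\TOP$-categories are computed with object-class the limit of the object-classes and with hom-spaces computed as limits in $\TOP$; colimits are built on the colimit of the object-sets together with the appropriate (colimit) topology on the freely generated hom-spaces, the only extra ingredient needed being that the monoidal product of $\TOP$ distributes over coproducts, which the cartesian product does. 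I would cite this construction of limits and colimits of enriched categories rather than reprove it, verifying only the universal properties where convenient.

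For $\proth_{ct}(\cat{B}^{\op})$: recall that under the running hypotheses on $\cat{B}$ (finite products and enough subobjects) Theorem~\ref{thm:str-sem-top-idempotent} gives the factorisation of Definition~\ref{defn:top-adj-factor}, in which $\inc \from \proth_{ct}(\cat{B}^{\op}) \incl \proth_t(\cat{B}^{\op})$ has left adjoint $\cplt$; that is, $\proth_{ct}(\cat{B}^{\op})$ is a replete reflective subcategory of $\proth_t(\cat{B}^{\op})$. A replete reflective subcategory is closed under all limits that exist in the ambient category, so since $\proth_t(\cat{B}^{\op})$ has all large limits by the previous paragraph, so does $\proth_{ct}(\cat{B}^{\op})$. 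For colimits, given a large diagram $D$ in $\proth_{ct}(\cat{B}^{\op})$ one forms $\colim(\inc \of D)$ in $\proth_t(\cat{B}^{\op})$ and applies the reflector $\cplt$; by the standard argument for reflective subcategories $\cplt(\colim(\inc \of D))$ is then a colimit of $D$ in $\proth_{ct}(\cat{B}^{\op})$. Hence $\proth_{ct}(\cat{B}^{\op})$ has all large colimits.

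The main obstacle is the cocompleteness of $\TOPCAT$: one cannot invoke a biclosed-monoidal argument of the kind used in Propositions~\ref{prop:monoid-monadic} and~\ref{prop:proth-prof-monadic}, since $(\TOP,\times)$ is not monoidal closed, so the construction of colimits of $\TOP$-enriched categories must be carried out (or cited) directly. All the remaining steps — the universal properties of the limits and colimits described above, and the transfer of limits and colimits across the reflection $\cplt \dashv \inc$ — are routine.
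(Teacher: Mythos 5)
Your proof follows essentially the same route as the paper: reduce to completeness and cocompleteness of $\TOPCAT$ via Proposition~\ref{prop:lim-colim-proth}, giving the explicit pointwise description of limits and citing the general enriched-category result for colimits (the paper cites Corollary~2.14 of Wolff~\cite{wolff74}), and then handle $\proth_{ct}(\cat{B}^{\op})$ as a reflective subcategory of $\proth_t(\cat{B}^{\op})$. Your caution about $(\TOP,\times)$ not being monoidal closed is a fair point about the hypotheses of the cited cocompleteness theorem, but it does not change the argument, which the paper carries out in exactly the same way.
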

\begin{proof}
If a monoidal category $\cat{V}$ has large limits then so does $\cat{V}\hyph\CAT$, and they are easy to compute: given a diagram $D \from \cat{I} \to \cat{V}\hyph\CAT$, the set of objects of its limit is given by the limit
\[
\ob (\lim D) = \lim_{i \in \cat{I}} \ob (Di)
\]
in $\SET$, and, for $(x_i)_{i \in \cat{I}}$ and $(y_i)_{i \in \cat{I}}$ in $ \lim_{i \in \cat{I}} \ob (Di)$, the hom-object is given by the limit
\[
\lim D ( (x_i)_{i \in \cat{I}}, (y_i)_{i \in \cat{I}}) = \lim_{i \in \cat{I}} Di (x_i, y_i)
\]
in $\cat{V}$. If $\cat{V}$ has large colimits then so does $\cat{V}\hyph\CAT$, although they are less straightforward --- this is the main result (Corollary~2.14) of Wolff~\cite{wolff74}. In particular, since $\TOP$ has large limits and colimits, so does $\TOPCAT$. Thus by Proposition~\ref{prop:lim-colim-proth}, it follows that $\proth_t(\cat{B}^{\op})$ has large limits and colimits. Since $\proth_{ct}(\cat{B}^{\op})$ is a reflective subcategory of $\proth_t(\cat{B}^{\op})$, it does too.
\end{proof}

\begin{lem}
\label{lem:limits-top-proth-pointwise}
For all $b, b' \in \cat{B}$, the functor $\ev_{b, b'} \from \proth_t(\cat{B}^{\op}) \to \TOP$ that sends $L \from \cat{B}^{\op} \to \cat{L}$ to $\cat{L}(Lb, Lb')$ preserves large limits.
\end{lem}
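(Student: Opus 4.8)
The plan is to read off the action of $\ev_{b,b'}$ on limits from the explicit construction of limits in $\proth_t(\cat{B}^{\op})$. Set $\cat{A} = \disc(\cat{B}^{\op})$, so that $\proth_t(\cat{B}^{\op})$ is the category of proto-theories with arities $\cat{A}$ in the setting $\TOPCAT$. Since $\TOP$ is complete, $\TOPCAT$ has all large limits, and these are computed pointwise: a limit of a diagram $\cat{I} \to \TOPCAT$, $i \mapsto \cat{C}_i$, has object set $\lim_{i\in\cat{I}}\ob(\cat{C}_i)$ (in $\SET$) and hom-space $(\lim_i \cat{C}_i)\bigl((x_i)_i,(y_i)_i\bigr) = \lim_{i\in\cat{I}} \cat{C}_i(x_i,y_i)$ (in $\TOP$), with the limit-projection $\TOP$-functors acting as the evident projections on objects and on hom-spaces. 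Since the forgetful functor $\cat{A}/\TOPCAT \to \TOPCAT$ creates limits, $\cat{A}/\TOPCAT$ is likewise complete, and by Proposition~\ref{prop:lim-colim-proth} together with the description of the coreflector $\rho$ in Definition~\ref{defn:str-factors}, the limit in $\proth_t(\cat{B}^{\op})$ of a diagram $D \from \cat{I} \to \proth_t(\cat{B}^{\op})$ is $\rho(K)$, where $K \from \cat{A} \to \cat{K}$ is the limit of $D$ in $\cat{A}/\TOPCAT$ and $\rho(K) = E_K$ is the bijective-on-objects leg of the $(\ofsfont{E},\ofsfont{N})$-factorisation $\cat{A} \toby{E_K} \cat{L}_K \toby{N_K} \cat{K}$ of $K$.

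Writing $D(i) = (L_i \from \cat{B}^{\op} \to \cat{L}_i)$ and $\pi_i \from \cat{K} \to \cat{L}_i$ for the limit projections in $\TOPCAT$ (so $\pi_i \of K = L_i$, whence $\pi_i(Kb) = L_i b$ for each object $b$), I would then compute, for fixed $b, b' \in \cat{B}$:
\[
\ev_{b,b'}\bigl(\lim\nolimits_{\cat{I}} D\bigr) \;=\; \cat{L}_K(E_K b, E_K b') \;\iso\; \cat{K}(Kb, Kb') \;=\; \lim\nolimits_{i \in \cat{I}} \cat{L}_i(L_i b, L_i b') \;=\; \lim\nolimits_{i\in\cat{I}} \ev_{b,b'}(D(i)),
\]
where the middle homeomorphism holds because $N_K \in \ofsfont{N}$ is by definition a homeomorphism on every hom-space (and $N_K E_K b = Kb$), and the second equality is the pointwise hom-space formula for $\cat{K} = \lim_i \cat{L}_i$ applied at the object $Kb$. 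To finish I would verify that these identifications carry the image under $\ev_{b,b'}$ of the limit cone of $D$ to the canonical limit cone of $\bigl(\ev_{b,b'}(D(i))\bigr)_i$ in $\TOP$: this comes down to observing that, through the fully faithful inclusion $\proth_t(\cat{B}^{\op}) \incl \cat{A}/\TOPCAT$ and the counit $N_K$, the $i$-th leg of the limit cone of $D$ is just $\pi_i \of N_K$, whose effect on $\cat{L}_K(E_K b, E_K b')$ is precisely the limit projection after the displayed homeomorphism.

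There is no substantive difficulty here. The only points needing care are (i) recording the pointwise formula for limits in $\TOPCAT$ — the standard description of limits in a category enriched over a complete base, already invoked in the proof of the preceding proposition — and (ii) the cone-matching bookkeeping just described. It is worth flagging that the dual assertion is false: colimits in $\TOPCAT$, and hence in $\proth_t(\cat{B}^{\op})$, are \emph{not} computed pointwise on hom-spaces (the coreflector $\rho$ fails to commute with them), which is exactly why $\ev_{b,b'}$ preserves limits but not colimits.
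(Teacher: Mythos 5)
Your proof is correct and follows essentially the same route as the paper's: identify the limit in $\proth_t(\cat{B}^{\op})$ as the bijective-on-objects factor of the induced $\TOP$-functor into the limit of the underlying diagram in $\TOPCAT$, then combine the pointwise hom-space formula for limits in $\TOPCAT$ with the fact that the full-and-faithful factor is a homeomorphism on hom-spaces. The paper's proof is terser (it omits the explicit cone-matching you carry out), but the substance is identical; your closing remark about colimits is a reasonable aside, though the failure is really that colimits in $\TOPCAT$ itself are not pointwise on hom-spaces, rather than anything about $\rho$.
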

\begin{proof}
Limits in $\proth_t(\cat{B}^{\op})$ are computed as follows: given $D \from \cat{I} \to \proth_t(\cat{B}^{\op})$, write $D_i \from \cat{B}^{\op} \to \cat{D}_i$ for $D(i)$. Then the $\cat{D}_i$ form a diagram in $\TOPCAT$ and the $D_i$ give a cone on this diagram with vertex $\cat{B}^{\op}$. Thus there is an induced $\TOP$-functor from $\cat{B}^{\op}$ to the limit of this diagram in $\TOPCAT$. The limit of $D$ is the bijective-on-objects part of this induced $\TOP$-functor.

Writing $L \from \cat{B}^{\op} \to \cat{L}$ for this limit, it follows from the general form of limits in $\TOPCAT$ that 
\[
\cat{L}(Lb, Lb') =  \lim_{i \in \cat{I}} \cat{D}_i (D_i b, D_i b')
\]
as claimed.
\end{proof}

The following result is well-known but I could not find it in the existing literature.
\begin{lem}
\label{lem:codense-ff-cocont}
Any codense, full and faithful functor $I \from \cat{A} \to \cat{D}$ preserves all colimits that exist in $\cat{A}$.
\end{lem}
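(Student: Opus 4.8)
The plan is to reduce the claim to the hom-set form of codensity. Recall that $I$ codense means the conerve $N^I \from \cat{D} \to [\cat{A},\SET]^{\op}$, sending $d$ to $\cat{D}(d, I-)$, is full and faithful; unwinding this gives a bijection
\[
\cat{D}(x, y) \iso [\cat{A},\SET]\bigl(\cat{D}(y, I-), \cat{D}(x, I-)\bigr)
\]
natural in $x, y \in \cat{D}$ (equivalently, by the lemma relating codensity to Kan extensions, $\id_{\cat{D}}$ is the pointwise right Kan extension of $I$ along $I$). This is the only input about codensity that I would use.

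Now let $D \from \cat{I} \to \cat{A}$ be a diagram that admits a colimit $c = \colim D$ in $\cat{A}$, with colimit cocone $(\lambda_i \from Di \to c)_{i \in \cat{I}}$. I claim $(I\lambda_i \from IDi \to Ic)_{i \in \cat{I}}$ is a colimit cocone for $I \of D$ in $\cat{D}$; since it suffices to exhibit, for each $d \in \cat{D}$, the canonical comparison map $\cat{D}(Ic, d) \to \lim_{i \in \cat{I}} \cat{D}(IDi, d)$ as a bijection, I would proceed via the chain of natural bijections
\[
\cat{D}(Ic, d) \iso [\cat{A},\SET]\bigl(\cat{D}(d, I-), \cat{D}(Ic, I-)\bigr) \iso [\cat{A},\SET]\bigl(\cat{D}(d, I-), \textstyle\lim_{i} \cat{D}(IDi, I-)\bigr)
\]
\[
\iso \lim_{i} [\cat{A},\SET]\bigl(\cat{D}(d, I-), \cat{D}(IDi, I-)\bigr) \iso \lim_{i} \cat{D}(IDi, d).
\]
Here the first and last bijections are the codensity isomorphism above (taken with $x = Ic$ and $x = IDi$ respectively); the third is that $[\cat{A},\SET](X,-)$ preserves limits; and the second comes from the isomorphism $\cat{D}(Ic, I-) \iso \lim_i \cat{D}(IDi, I-)$ in $[\cat{A},\SET]$, valid because $I$ is full and faithful and $c$ is a colimit: for each $a \in \cat{A}$, $\cat{D}(Ic, Ia) \iso \cat{A}(c, a) \iso \lim_i \cat{A}(Di, a) \iso \lim_i \cat{D}(IDi, Ia)$, naturally in $a$, and limits in $[\cat{A},\SET]$ are pointwise.

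The step I expect to be the main (though still routine) obstacle is verifying that the composite of these four bijections is genuinely the canonical comparison map induced by the cocone $(I\lambda_i)_i$, rather than merely some abstract isomorphism. This can be dispatched either by tracing a morphism $f \from Ic \to d$ through each bijection and checking that its image in $\lim_i \cat{D}(IDi, d)$ is the compatible family $(f \of I\lambda_i)_i$, or, more cleanly, by noting that every isomorphism in the chain is natural in $d$, so the composite is a natural isomorphism $\cat{D}(Ic, -) \iso \lim_i \cat{D}(IDi, -)$, and then using the Yoneda lemma together with the fact that the comparison map is the unique natural transformation whose $Ic$-component sends $\id_{Ic}$ to $(I\lambda_i)_i$ — which one checks by inspecting the action of the codensity isomorphism on identities. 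Once this is done, the bijection $\cat{D}(Ic, d) \iso \lim_i \cat{D}(IDi, d)$ exhibits $(Ic, (I\lambda_i)_i)$ as a colimit of $I \of D$ in $\cat{D}$, so $I$ preserves this colimit; as $D$ was an arbitrary diagram with a colimit in $\cat{A}$, the lemma follows.
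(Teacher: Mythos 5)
Your proof is correct, and it rests on exactly the same two ingredients as the paper's: codensity of $I$ means the conerve $N^I \from \cat{D} \to [\cat{A},\SET]^{\op}$ is full and faithful, and full-and-faithfulness of $I$ means $N^I \of I$ is isomorphic to the (colimit-preserving) Yoneda embedding $\cat{A} \to [\cat{A},\SET]^{\op}$ --- your steps (1) and (4) are the first fact, and step (2) is the second. The difference is purely in packaging: the paper argues at the level of functors, noting that $N^I \of I$ preserves all colimits and that the full and faithful $N^I$ reflects them, so $I$ must preserve them; you instead unwind this into an explicit chain of hom-set bijections and verify the universal property of $(I\lambda_i)_i$ directly. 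The paper's "preserve-then-reflect" formulation is shorter and sidesteps the bookkeeping you correctly flag as the main residual obstacle (checking that your composite bijection is the canonical comparison map), since reflection of colimits is stated directly in terms of cocones rather than abstract isomorphisms; your version has the modest advantage of making completely explicit where each hypothesis enters. Your resolution of the comparison-map issue (trace $\id_{Ic}$ through the chain, or invoke naturality in $d$ plus Yoneda) is sound, so nothing is missing.
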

\begin{proof}
Since $I$ is full and faithful, it follows that the Yoneda embedding $(\curryhi)^{\op} \from \cat{A} \to [\cat{A}, \SET]^{\op}$ is isomorphic to the composite
\[
\cat{A} \toby{I} \cat{D} \toby{N^G} [\cat{A}, \SET]^{\op}
\]
where $N^G$ sends $d \in \cat{D}$ to $\cat{D}(d, I-)$. Since $(\curryhi)^{\op}$ preserves all colimits, so does this composite. But codensity of $I$ means that $N^G$ is full and faithful, and in particular it reflects all colimits, and hence $I$ preserves all colimits.
\end{proof}

\begin{cor}
If $\cat{B}$ has pointwise codensity monads of finite diagrams, then the inclusion $\kle_t \from \monad(\cat{B}) \incl \proth_{ct}(\cat{B}^{\op})$ preserves all colimits that exist in $\monad(\cat{B})$.
\end{cor}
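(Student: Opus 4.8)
The plan is to deduce this immediately from the two facts that $\kle_{ct}$ is codense and full and faithful, together with Lemma~\ref{lem:codense-ff-cocont}. (Note that here $\kle_t$ should be read as its factorisation $\kle_{ct} \from \monad(\cat{B}) \to \proth_{ct}(\cat{B}^{\op})$, which exists by Proposition~\ref{prop:kle-top-factor}; since $\proth_{ct}(\cat{B}^{\op})$ is a full, replete subcategory of $\proth_t(\cat{B}^{\op})$, a diagram in $\monad(\cat{B})$ has the same colimit whether we compute it via $\kle_{ct}$ or via $\kle_t$, so there is no ambiguity.)

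First I would observe that $\kle_{ct}$ is full and faithful. Indeed, $\kle \from \monad(\cat{B}) \to \proth(\cat{B}^{\op})$ is full and faithful by Proposition~\ref{prop:kle-ff}, and $\disc \from \proth(\cat{B}^{\op}) \to \proth_t(\cat{B}^{\op})$ is full and faithful: it is injective on objects and morphisms by construction, and fullness holds because any functor out of a topological category all of whose hom-spaces are discrete is automatically continuous, so every morphism of the underlying discrete proto-theories is a morphism of topological proto-theories. Hence $\kle_t = \disc \of \kle$ is full and faithful, and since $\kle_{ct}$ is obtained from $\kle_t$ merely by corestricting along the full inclusion $\inc \from \proth_{ct}(\cat{B}^{\op}) \incl \proth_t(\cat{B}^{\op})$, it too is full and faithful.

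Next, under the hypothesis that $\cat{B}$ has pointwise codensity monads of finite diagrams, Proposition~\ref{prop:monad-ctheory-codense} tells us that $\kle_{ct} \from \monad(\cat{B}) \incl \proth_{ct}(\cat{B}^{\op})$ is codense. Applying Lemma~\ref{lem:codense-ff-cocont} to the codense, full and faithful functor $\kle_{ct}$ then gives that $\kle_{ct}$ preserves all colimits that exist in $\monad(\cat{B})$, which is the assertion. There is essentially no obstacle here: the only point requiring any verification is the full-and-faithfulness of $\kle_{ct}$, and that is routine as sketched above; everything else is a direct appeal to results already established in this section.
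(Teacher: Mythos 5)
Your proof is correct and follows essentially the same route as the paper: codensity of $\kle_{ct}$ from Proposition~\ref{prop:monad-ctheory-codense}, full-and-faithfulness of $\kle_{ct}$, and then Lemma~\ref{lem:codense-ff-cocont}. The only difference is that you spell out why $\kle_{ct}$ is full and faithful, which the paper simply asserts; your verification of that point is sound.
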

\begin{proof}
Under these conditions, $\kle_{ct} \from \monad(\cat{B}) \to \proth_{ct}(\cat{B}^{\op})$ is codense by Proposition~\ref{prop:monad-ctheory-codense}. And $\kle_{ct}$ is always full and faithful, so by Lemma~\ref{lem:codense-ff-cocont}, it preserves all colimits that exist in $\monad(\cat{B})$.
\end{proof}

\begin{remark}
In contrast to this corollary, and to Proposition~\ref{prop:mnd-incl-limits}, it is not necessarily the case that the inclusion $\kle_{ct}$ preserves large limits, or even all small limits. For example, let $\mnd{T}$ be any monad on $\cat{B}$. Then if $\cat{B}$ has small limits, the countable power $\mnd{T}^{\nat}$ of $\mnd{T}$ exists in $\monad(\cat{B})$. Also, the countable power $\kle_t(\mnd{T})^{\nat}$ exists in $\proth_t(\cat{B}^{\op})$, and by Lemma~\ref{lem:limits-top-proth-pointwise}, the hom-spaces in this proto-theory are obtained by taking the $\nat$-th power of the corresponding hom-spaces of $\kle_t(\mnd{T})$. Recall that $\kle_t(\mnd{T})$ and $\kle_t(\mnd{T}^{\nat})$ are both topologically discrete (since $\kle_t$ of any monad is), but an infinite power of a discrete space (with more that one element) is not discrete. Thus we cannot have $\kle_t(\mnd{T}^{\nat}) \iso \kle_t(\mnd{T})^{\nat}$.

However, we do have the following:
\end{remark}

\begin{prop}
The inclusion $\kle_{ct} \from \monad(\cat{B}) \incl \proth_{ct}(\cat{B}^{\op})$ preserves finite limits.
\end{prop}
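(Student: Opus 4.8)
The plan is to compute the finite limit in $\proth_t(\cat{B}^{\op})$ of a finite diagram of monads explicitly, recognise it as $\kle_t$ applied to the corresponding limit in $\monad(\cat{B})$, and then transfer this along the inclusion $\inc$. First I would record that $\cat{B}$ has all finite limits: it has finite products by hypothesis, and given $f,g\from a\to b$ in $\cat{B}$, the sub-presheaf $E\subseteq\cat{B}(-,a)$ with $E(c)=\{h\from c\to a\such fh=gh\}$ sends small coproducts in $\cat{B}$ to products in $\SET$ (a family $(h_s)$ equalises $f,g$ iff each $h_s$ does), so it is representable because $\cat{B}$ has enough subobjects, and its representing object is an equaliser of $f$ and $g$. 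Consequently $\monad(\cat{B})$ has finite limits computed componentwise: the forgetful functor $\monad(\cat{B})\to[\cat{B},\cat{B}]$ creates them, by the argument used at the start of the proof of Proposition~\ref{prop:mnd-incl-limits} with ``small'' replaced by ``finite''. Also recall that $\proth_t(\cat{B}^{\op})$ and $\proth_{ct}(\cat{B}^{\op})$ both have all large limits.

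Now fix a finite diagram $D\from\cat{I}\to\monad(\cat{B})$, write $\mnd{T}_i=(T_i,\eta_i,\mu_i)$ for $D(i)$, and let $\mnd{T}=(T,\eta,\mu)$ be its limit in $\monad(\cat{B})$, so $T=\lim_i T_i$ pointwise and $\eta,\mu$ are the induced transformations. I would then compute $\lim_{i\in\cat{I}}\kle_t(\mnd{T}_i)$ in $\proth_t(\cat{B}^{\op})$: by Lemma~\ref{lem:limits-top-proth-pointwise} (more precisely its proof), its hom-spaces are the limits in $\TOP$ of the corresponding hom-spaces of the $\kle_t(\mnd{T}_i)$. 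Each $\kle_t(\mnd{T}_i)$ is topologically discrete, with hom-spaces the morphism sets of the Kleisli category $\cat{B}_{\mnd{T}_i}$, i.e.\ sets of the form $\cat{B}(b,T_i b')$; since $\cat{I}$ is finite, a finite limit of discrete spaces is discrete, and since evaluation at $b'$ and $\cat{B}(b,-)$ preserve limits, $\lim_i\cat{B}(b,T_i b')\iso\cat{B}(b,(\lim_i T_i)b')=\cat{B}(b,Tb')$. So this limit is the discrete topological category on a category whose hom-sets are exactly those of $\cat{B}_{\mnd{T}}$, i.e.\ the hom-sets of $\kle_t(\mnd{T})$.

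It then remains to check that, under this bijection on hom-sets, the composition law of the limit — which on hom-spaces is assembled from the $\mu_i$ — coincides with Kleisli composition for $\mnd{T}$, namely $g,f\mapsto\mu_{b''}\of Tg\of f$; this is immediate from $\mu=\lim_i\mu_i$ and the universal property of the pointwise limit, as is the matching of the arity functors out of $\cat{B}^{\op}$ and the identification of the $i$-th limit projection with $\kle_t(\pi_i)$ for $\pi_i\from\mnd{T}\to\mnd{T}_i$. This gives $\kle_t(\mnd{T})\iso\lim_i\kle_t(\mnd{T}_i)$ compatibly with the projections, so $\kle_t$ preserves finite limits. Finally, $\kle_t=\inc\of\kle_{ct}$ by construction, and $\inc\from\proth_{ct}(\cat{B}^{\op})\to\proth_t(\cat{B}^{\op})$ is a fully faithful right adjoint onto a reflective subcategory (Definition~\ref{defn:top-adj-factor}), hence creates — in particular reflects — finite limits; therefore $\kle_{ct}$ preserves finite limits.

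The main obstacle is entirely bookkeeping: verifying in the middle step that the pointwise bijection of hom-sets respects composition and the arity functor, while keeping the $\op$-conventions of Remark~\ref{rem:bo-no-op} straight. There is no conceptual difficulty once the pointwise descriptions of limits in $\proth_t(\cat{B}^{\op})$ and in $\monad(\cat{B})$ are both in hand; the only genuine input beyond that is the observation that ``enough subobjects'' upgrades finite products on $\cat{B}$ to finite limits, and that finite limits of discrete spaces are discrete.
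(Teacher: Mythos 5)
Your proof is correct and rests on the same two pillars as the paper's: limits in $\proth_t(\cat{B}^{\op})$ are computed hom-space by hom-space (Lemma~\ref{lem:limits-top-proth-pointwise}), and a finite limit of discrete spaces is discrete. The organisation differs: the paper factors $\kle_t = \disc \of \kle$, quotes Proposition~\ref{prop:mnd-incl-limits} for $\kle$, and reduces everything to showing that $\disc \from \proth(\cat{B}^{\op}) \to \proth_t(\cat{B}^{\op})$ preserves finite limits via the commuting square with $\ev_{b,b'}$ and $\disc \from \SET \to \TOP$; you instead compute $\lim_i \kle_t(\mnd{T}_i)$ by hand and identify it with $\kle_t(\lim_i \mnd{T}_i)$. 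The paper's route is shorter and avoids the bookkeeping about composition laws that you flag, but your version adds something genuinely worthwhile: Proposition~\ref{prop:mnd-incl-limits} is stated under the hypothesis that $\cat{B}$ has all small limits, whereas the standing assumptions here are only finite products and enough subobjects, so the paper's citation needs the reader to adapt the argument. Your observation that enough subobjects plus finite products yields equalisers (the equalising sub-presheaf of $\cat{B}(-,a)$ preserves the relevant products, hence is representable), so that $\cat{B}$ has finite limits and finite limits of monads are created componentwise, closes that gap cleanly. Both proofs are valid; yours is longer but more self-contained on the hypotheses.
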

\begin{proof}
It is sufficient to show that the inclusion $\kle_t \from \monad(\cat{B}) \incl \proth_t(\cat{B}^{\op})$ preserves finite limits, since limits in $\proth_{ct}(\cat{B}^{\op})$ are the same as in $\proth_{t}(\cat{B}^{\op})$.

We show that $\disc \from \proth(\cat{B}^{\op}) \to \proth_t(\cat{B}^{\op})$ preserves finite limits; this is sufficient, since $\kle_t$ is the composite
\[
\monad(\cat{B})\toby{\kle} \proth(\cat{B}^{\op}) \toby{\disc} \proth_t(\cat{B}^{\op})
\]
and $\kle$ preserves finite limits by Proposition~\ref{prop:mnd-incl-limits}.

By Lemma~\ref{lem:limits-top-proth-pointwise}, limits in $\proth_t(\cat{B}^{\op})$ are computed by taking limits in $\TOP$ on each hom-set individually, so to show that $\disc \from \proth(\cat{B}^{\op}) \to \proth_t(\cat{B}^{\op})$ preserves finite limits, it is sufficient to show that its composite with each $\ev_{b,b'} \from \proth_t(\cat{B}^{\op}) \to \TOP$ does. But the diagram
\[
\xymatrix{
\proth(\cat{B}^{\op})\ar[d]_{\ev_{b,b'}}\ar[r]^{\disc} &\proth_t(\cat{B}^{\op})\ar[d]^{\ev_{b, b'}} \\
\SET\ar[r]_{\disc} & \TOP
}
\]
commutes, and both $\disc \from \SET \to \TOP$ and $\ev_{b, b'} \from \proth(\cat{B}^{\op}) \to \SET$ preserve finite limits.
\end{proof}

It is now straightforward to give further characterisations of complete topological proto-theories in terms of monads, without mentioning the structure--semantics adjunction.

\begin{thm}
\label{thm:ctpts-are-lims-of-monads}
Suppose $\cat{B}$ has pointwise codensity monads of finite diagrams. Then a topological proto-theory with arities $\cat{B}^{\op}$ is complete (that is, it lies in the full subcategory $\proth_{ct}(\cat{B}^{\op})$ of $\proth_{t}(\cat{B}^{\op})$) if and only if it can be expressed as a large limit of monads in $\proth_t(\cat{B}^{\op})$.
\end{thm}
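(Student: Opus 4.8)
The plan is to deduce this as a direct application of Lemma~\ref{lem:cm-idem-lem}.\bref{part:cm-idem-lem-2} to the functor $\kle_t \from \monad(\cat{B}) \incl \proth_t(\cat{B}^{\op})$, mirroring the way the description of profinite groups as limits of finite discrete groups (Definition~\ref{defn:profinite-group} and Remark~\ref{rem:prof-gp-chars}) follows from that lemma applied to $\FinGp \incl \TopGp$. So the work is almost entirely a matter of assembling results already proved, rather than new computation.

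First I would collect the ingredients. Since $\cat{B}$ has finite products and enough subobjects, the topological structure--semantics adjunction is idempotent by Theorem~\ref{thm:str-sem-top-idempotent}, and by Lemma~\ref{lem:idem-adj-factor} (as recorded in Definition~\ref{defn:top-adj-factor}) the induced monad on $\proth_t(\cat{B}^{\op})$ is idempotent, with $\proth_{ct}(\cat{B}^{\op})$ identified with the corresponding replete, reflective subcategory of $\proth_t(\cat{B}^{\op})$. Since moreover $\cat{B}$ admits pointwise codensity monads of finite diagrams, Theorem~\ref{thm:str-sem-monad-codensity-top} identifies this same monad with the codensity monad of $\kle_t$. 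Thus $\kle_t$ has a codensity monad $\mnd{T}$, it is idempotent, and its category of algebras is the replete reflective subcategory $\proth_{ct}(\cat{B}^{\op}) \incl \proth_t(\cat{B}^{\op})$.

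I would then apply Lemma~\ref{lem:cm-idem-lem}.\bref{part:cm-idem-lem-2} with $U = \kle_t \from \monad(\cat{B}) \to \proth_t(\cat{B}^{\op})$. The lemma states that $\proth_{ct}(\cat{B}^{\op}) \incl \proth_t(\cat{B}^{\op})$ consists precisely of those topological proto-theories of the form $\lim_{i \in \cat{I}} \kle_t(Di)$ for some diagram $D \from \cat{I} \to \monad(\cat{B})$. As $\cat{I}$ may be any (large) category and $\proth_t(\cat{B}^{\op})$ has all large limits, and as the objects $\kle_t(\mnd{S})$ are exactly the monads on $\cat{B}$ regarded as topological proto-theories, this is precisely the assertion that $L \in \proth_t(\cat{B}^{\op})$ is complete if and only if it is a large limit of monads in $\proth_t(\cat{B}^{\op})$. (The ``if'' direction can also be seen directly: $\proth_{ct}(\cat{B}^{\op})$ is reflective and replete in $\proth_t(\cat{B}^{\op})$, hence closed under large limits there, and each $\kle_t(\mnd{S})$ is itself complete by Proposition~\ref{prop:kle-top-factor}.)

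I expect the only delicate point to be the bookkeeping around opposite categories: one must check that the replete reflective subcategory of $\proth_t(\cat{B}^{\op})$ extracted from Lemma~\ref{lem:idem-adj-factor} applied to the idempotent adjunction $\str_t \ladj \sem_t$ (between $\catover{\cat{B}}$ and $\proth_t(\cat{B}^{\op})^{\op}$), transported through the identifications of Definition~\ref{defn:top-adj-factor}, is literally $\proth_{ct}(\cat{B}^{\op})$, and that under this identification the algebras for the codensity monad of $\kle_t$ furnished by Theorem~\ref{thm:str-sem-monad-codensity-top} coincide with it. Once this identification is made explicit, the statement follows immediately from Lemma~\ref{lem:cm-idem-lem}, with nothing further to verify.
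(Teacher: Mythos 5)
Your proof is correct and is in substance the same as the paper's: the paper establishes the forward direction by citing the codensity of $\kle_{ct} \from \monad(\cat{B}) \incl \proth_{ct}(\cat{B}^{\op})$ (Proposition~\ref{prop:monad-ctheory-codense}) together with preservation of limits by the reflective inclusion, and the converse by closure of the replete reflective subcategory under limits plus Proposition~\ref{prop:kle-top-factor} --- which is exactly the two-part argument packaged in Lemma~\ref{lem:cm-idem-lem}.\bref{part:cm-idem-lem-2}. Your route through Theorem~\ref{thm:str-sem-monad-codensity-top} (to realise $\proth_{ct}(\cat{B}^{\op})$ as the algebras of the idempotent pointwise codensity monad of $\kle_t$ before invoking that lemma) is a legitimate, non-circular reassembly of the same ingredients, since that theorem is proved beforehand and itself rests on Proposition~\ref{prop:monad-ctheory-codense}.
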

\begin{proof}
Since $\kle_{ct} \from \monad(\cat{B}) \incl \proth_{ct}(\cat{B}^{\op})$ is codense by Proposition~\ref{prop:monad-ctheory-codense}, every object of $\proth_{ct}(\cat{B}^{\op})$ is a limit of monads in $\proth_{ct}(\cat{B}^{\op})$ itself, and the inclusion into $\proth_t(\cat{B}^{\op})$ preserves limits.

Conversely, the inclusion $\proth_{ct}(\cat{B}^{\op})\incl \proth_{t}(\cat{B}^{\op})$ creates limits, that is, $\proth_{ct}(\cat{B}^{\op})$ is closed under limits in $\proth_{t}(\cat{B}^{\op})$. Since every monad is a complete topological proto-theory, it follows that every limit of monads is.
\end{proof}

\begin{thm}
\label{thm:ctpts-reflexive-hull-monads}
Suppose $\cat{B}$ has pointwise codensity monads of finite diagrams. Then $\proth_{ct}(\cat{B}^{\op})$ is the smallest replete reflective subcategory of $\proth_{t}(\cat{B}^{\op})$ containing $\monad(\cat{B})$.
\end{thm}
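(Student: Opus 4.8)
The plan is to use the idempotent adjunction structure established in the previous chapter together with the codensity characterisation from Theorem~\ref{thm:str-sem-monad-codensity-top}. First I would recall that $\proth_{ct}(\cat{B}^{\op})$ \emph{is} a replete reflective subcategory of $\proth_t(\cat{B}^{\op})$ containing $\monad(\cat{B})$: repleteness and reflectivity come directly from Lemma~\ref{lem:idem-adj-factor} applied to the idempotent adjunction $\str_t \dashv \sem_t$ (Theorem~\ref{thm:str-sem-top-idempotent}), with the reflection being $\cplt$, and the fact that $\monad(\cat{B})$ is contained in it is Proposition~\ref{prop:kle-top-factor}. So the content of the theorem is \emph{minimality}: any replete reflective subcategory $\cat{R} \incl \proth_t(\cat{B}^{\op})$ containing the image of $\kle_t$ must contain $\proth_{ct}(\cat{B}^{\op})$.

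The key step is to invoke Lemma~\ref{lem:cm-idem-lem}. The topological structure--semantics monad $\mnd{T}$ on $\proth_t(\cat{B}^{\op})$ is, by Theorem~\ref{thm:str-sem-monad-codensity-top}, the codensity monad of $\kle_t \from \monad(\cat{B}) \to \proth_t(\cat{B}^{\op})$, and this monad is idempotent (since the adjunction $\str_t \dashv \sem_t$ is idempotent, and by Lemma~\ref{lem:idem-adj-conditions} the induced monad on either side is idempotent). Its category of algebras can be identified with the reflective replete subcategory $\proth_{ct}(\cat{B}^{\op})$ — this is exactly the factorisation of Lemma~\ref{lem:idem-adj-factor} together with the final clause of Lemma~\ref{lem:cm-idem-lem} identifying $\cat{B}^{\mnd{T}}$ with the subcategory on which the relevant unit is an isomorphism. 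Then part~\bref{part:cm-idem-lem-3} of Lemma~\ref{lem:cm-idem-lem}, applied with $U = \kle_t$, says precisely that $\proth_{ct}(\cat{B}^{\op})$ is the smallest reflective replete subcategory of $\proth_t(\cat{B}^{\op})$ through which $\kle_t$ factors. Since a reflective replete subcategory contains (the image of) $\kle_t$ if and only if $\kle_t$ factors through it, this is the desired statement.

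The one point requiring a little care — and the main obstacle — is checking that the hypotheses of Lemma~\ref{lem:cm-idem-lem} are genuinely met, namely that the monad in question really is a \emph{codensity} monad of $\kle_t$ in the precise sense used there (right Kan extension along itself), and that it is idempotent; the first is Theorem~\ref{thm:str-sem-monad-codensity-top} and the second follows from Theorem~\ref{thm:str-sem-top-idempotent} via Lemma~\ref{lem:idem-adj-conditions}, but one should note that Theorem~\ref{thm:str-sem-monad-codensity-top} requires $\cat{B}$ to admit pointwise codensity monads of finite diagrams, which is part of the standing hypothesis of this section. It then remains to reconcile the description of $\cat{B}^{\mnd{T}}$ coming from Lemma~\ref{lem:cm-idem-lem} (algebras for the codensity monad) with $\proth_{ct}(\cat{B}^{\op})$ as defined via $E_L$ being an isomorphism; but both are identified with the reflective replete subcategory produced by the idempotent factorisation (Lemma~\ref{lem:idem-adj-factor}), so they coincide up to the identification already implicit in Definition~\ref{defn:top-adj-factor}. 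I would write this up in a few sentences, citing Theorem~\ref{thm:str-sem-monad-codensity-top}, Theorem~\ref{thm:str-sem-top-idempotent}, and Lemma~\ref{lem:cm-idem-lem}.\bref{part:cm-idem-lem-3}, with the reconciliation handled by a remark that all three subcategories are the one arising from Lemma~\ref{lem:idem-adj-factor}.
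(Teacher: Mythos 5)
Your proof is correct, but it takes a different route from the paper's. The paper deduces the theorem in three lines from the immediately preceding Theorem~\ref{thm:ctpts-are-lims-of-monads}: since any replete reflective subcategory of $\proth_t(\cat{B}^{\op})$ is closed under limits, one containing $\monad(\cat{B})$ contains all limits of monads, which by that theorem is exactly $\proth_{ct}(\cat{B}^{\op})$. You instead go through the codensity-monad machinery: Theorem~\ref{thm:str-sem-monad-codensity-top} identifies the topological structure--semantics monad as the codensity monad of $\kle_t$, idempotency comes from Theorem~\ref{thm:str-sem-top-idempotent}, and then Lemma~\ref{lem:cm-idem-lem}.\bref{part:cm-idem-lem-3} gives minimality directly. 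Both arguments are sound, and in fact they rest on the same underlying idea --- the proof of Lemma~\ref{lem:cm-idem-lem}.\bref{part:cm-idem-lem-3} is itself ``reflective replete subcategories are closed under limits, and the algebras are the limits of objects in the image'', which is precisely the paper's argument with Theorem~\ref{thm:ctpts-are-lims-of-monads} playing the role of Lemma~\ref{lem:cm-idem-lem}.\bref{part:cm-idem-lem-2}. What your route buys is a tighter parallel with the profinite-groups story (the paper uses exactly Lemma~\ref{lem:cm-idem-lem} to establish the corresponding facts about $\profGp$ in the background chapter), at the cost of having to reconcile the three descriptions of $\proth_{ct}(\cat{B}^{\op})$ --- as the algebras for the codensity monad, as the subcategory where $E_L$ is an isomorphism, and as the reflective factor of the idempotent adjunction --- which you handle correctly via Lemma~\ref{lem:idem-adj-factor}. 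The paper's route avoids that reconciliation entirely because Theorem~\ref{thm:ctpts-are-lims-of-monads} is already stated in terms of $\proth_{ct}(\cat{B}^{\op})$ as defined. One small remark: ``containing $\monad(\cat{B})$'' and ``$\kle_t$ factors through it'' coincide here because the subcategories in question are full and replete and $\kle_t$ is full and faithful; you implicitly use this and it is worth a half-sentence in a final write-up.
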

\begin{proof}
First note that $\proth_{ct}(\cat{B}^{\op})$ \emph{is} a replete reflective subcategory of $\proth_{t}(\cat{B}^{\op})$ containing the monads. Any replete reflective subcategory of $\proth_t(\cat{B}^{\op})$ is closed under limits in $\proth_t(\cat{B}^{\op})$, and so if such a subcategory contains all the monads, then it contains all limits of monads. Thus, by the previous theorem, it contains $\proth_{ct}(\cat{B}^{\op})$.
\end{proof}

Finally, we once again summarise in Table~\ref{tab:analogy-comp} the analogy between groups and proto-theories, extending Tables~\ref{tab:analogy} and~\ref{tab:analogy-top}, including the characterisations of complete topological proto-theories in terms of monads found in this chapter.
\begin{table}[h!] 
\centering
\begin{tabular}{p{7cm}p{7cm}}
\toprule
Theory of groups & Theory of proto-theories \\
\midrule
$\Gp$ & $\proth(\cat{B}^{\op})$ \\
$\FinGp \incl \Gp$ & $\kle \from \monad(\cat{B}) \incl  \proth(\cat{B}^{\op})$ \\
The functor $\Gp^{\op} \to \catover{\finset}$ sending $G$ to the category $\Gfinset$ of finite $G$-sets & The functor $\sem \from \proth(\cat{B}^{\op})^{\op} \to \catover{\cat{B}}$ for the canonical aritation \\
The profinite completion monad on $\Gp$ & The structure--semantics monad on $\proth(\cat{B}^{\op})$\\
The profinite completion monad is the codensity monad of $\FinGp \incl \Gp$. & The structure--semantics monad is the codensity monad of $\kle \from \monad(\cat{B}) \incl \proth(\cat{B}^{\op})$. \\
$\TopGp$ & $\proth_t(\cat{B}^{\op})$ \\
The subcategory $\profGp \incl \TopGp$ of profinite groups & $\proth_{ct}(\cat{B}^{\op}) \incl \proth_t(\cat{B}^{\op})$ \\
The functor $\TopGp^{\op} \to \catover{\finset}$ sending a topological group $G$ to the category of finite continuous $G$-sets & The functor $\sem_t \from \proth_t(\cat{B}^{\op})^{\op} \to \catover{\cat{B}}$ \\
The profinite completion monad on $\TopGp$ & The topological structure--semantics monad on $\proth_t(\cat{B}^{\op})$ \\
The profinite completion monad on $\TopGp$ is the codensity monad on $\FinGp \incl \TopGp$. & The topological structure--semantics monad is the codensity monad of $\kle_t \from \monad(\cat{B}) \incl \proth_t(\cat{B}^{\op})$.\\
The profinite completion monad on $\TopGp$ is idempotent, corresponding to the reflective subcategory $\profGp$. & The topological structure--semantics monad on $\proth_t(\cat{B}^{\op})$ is idempotent, corresponding to the reflective subcategory $\proth_{ct}(\cat{B}^{\op})$. \\
$\FinGp \incl \profGp$ is codense. & $\kle_{ct} \from \monad(\cat{B}) \incl \proth_t(\cat{B}^{\op})$ is codense. \\
$\profGp$ is the smallest replete reflective subcategory of $\TopGp$ containing $\FinGp$. & $\proth_{ct}(\cat{B}^{\op})$ is the smallest replete reflective subcategory of $\proth_t(\cat{B}^{\op})$ containing $\monad(\cat{B})$. \\
The profinite groups are precisely the small limits of discrete finite groups in $\TopGp$. & The complete topological proto-theories are precisely the large limits of discrete monads in $\proth_t(\cat{B}^{\op})$. \\
\bottomrule
\end{tabular}
\caption{Some further aspects of the analogy between groups and proto-theories, for a locally small category $\cat{B}$ with enough subobjects and pointwise codensity monads of finite diagrams.}
\label{tab:analogy-comp}
\end{table}

\section{Equationally presentable categories}
\label{sec:quationally-presentable}

In this section we examine the relationship between proto-theories with arities in $\Set$ and the notion of an \emph{equational presentation}, as defined by Manes in \cite{manes76}. Roughly speaking, equational presentations are to arbitrary monads on $\Set$ as classical finitary algebraic theories are to finitary monads. There is, however, an important point at which this analogy breaks down. Every finitary monad gives rise to a finitary algebraic theory and vice versa. Similarly, every monad on $\Set$ can be described by an equational presentation, however not every equational presentation gives rise to a monad. This is because an equational presentation may have a large set of operations of a given arity, meaning that the forgetful functor from its category of models to $\Set$ will not have a left adjoint and so cannot be monadic.

The following definitions are taken from~5.34 in Chapter~1 of~\cite{manes76}.

\begin{defn}
An \demph{operator domain} $\Omega$ consists of, for each small cardinal $n$, a set $\Omega_n$.
\end{defn}
\begin{defn}
\label{defn:operator-domain-struct}
Given an operator domain $\Omega$, an \demph{$\Omega$-model} $(X, \delta)$ consists of a small set $X$ together with, for each small cardinal $n$ and each $\omega \in \Omega_n$, a function $\delta_{\omega} \from X^n \to X$.

If $(X, \delta)$ and $(X', \delta')$ are $\Omega$-models, then an \demph{$\Omega$-model homomorphism} $(X, \delta) \to (X', \delta')$ is a function $h \from X \to X'$ such that for each $n$ and each $\omega \in \Omega_n$, the diagram
\[
\xymatrix{
X^n \ar[r]^{h^n}\ar[d]_{\delta_{\omega}} & X'^n\ar[d]^{\delta'_{\omega}} \\
X \ar[r]_h & X'
}
\]
commutes.

We write $\cat{M}_{\Omega}$ for the category of $\Omega$-models and $U_{\Omega} \from \mod(\Omega) \to \Set$ for the evident forgetful functor.
\end{defn}

\begin{defn}
Given an operator domain $\Omega$, and a small cardinal $n$, an \demph{$n$-ary operation of $\Omega$} is a natural transformation $\gamma \from U_{\Omega}^n \to U_{\Omega}$, where $U_{\Omega}^n$ denotes the $n$-th power of $U_{\Omega}$ defined pointwise. An \demph{$n$-ary $\Omega$-equation} is just a pair $(\gamma, \gamma')$ of $n$-ary operations of $\Omega$, and an \demph{$\Omega$-equation} is an $n$-ary equation for some $n$.
\end{defn}

\begin{defn}
An \demph{equational presentation} consists of an operator domain $\Omega$ together with a possibly large set $E$ of $\Omega$-equations. The category $\cat{M}_{(\Omega,E)}$ of \demph{models} of $(\Omega, E)$ is the full subcategory of $\cat{M}_{\Omega}$ consisting of those $\Omega$-models $(X, \delta)$ such that whenever $(\gamma, \gamma')$ is an $n$-ary equation in $E$, we have an equality of functions
\[
\gamma_{(X, \delta)} = \gamma'_{(X, \delta)} \from X^n \to X.
\]
We write $U_{(\Omega, E)} \from \cat{M}_{(\Omega, E)} \to \Set$ for the restriction of $U_{\cat{M}}$ to this subcategory.
\end{defn}

Note that in~\cite{manes76}, Manes uses the word ``algebra'' instead of ``model'' in the definitions above. We have made this change of terminology to avoid confusion with our use of ``algebra'' in Section~\ref{sec:alt-model-algebra}.

Manes shows in Theorem~5.40 in Chapter~1 of~\cite{manes76} that every monad on $\Set$ can equivalently be described by an equational presentation. However, not every functor of the form $U_{(\Omega, E)}$ for an equational presentation $(\Omega, E)$ is monadic: in~5.46 and~5.48 in Chapter~1 of \cite{manes76}, Manes gives equational presentations for the categories of small complete lattices and small complete Boolean algebras respectively, while showing that these categories are not monadic over $\Set$.

Thus monads are not sufficiently powerful to describe equationally presentable categories. We now show that they can be described in a natural way by complete topological proto-theories. For the rest of this section, fix an equational presentation $(\Omega, E)$.

\begin{defn}
Write $K$ for the set of all small cardinals.
\end{defn}

\begin{defn}
Define a functor $\sem_0 \from (\SET^K)^{\op}\to \catover{\Set}$ as follows. We may regard the objects of $\SET^K$ as operator domains. Given $\Omega \in \SET^K$ we define $\sem_0 (\Omega)$ to be 
\[
U_{\Omega} \from \cat{M}_\Omega \to \Set
\]
as defined in Definition~\ref{defn:operator-domain-struct}. Let $f$ be a morphism $\Omega \to \Omega'$ in $\SET^K$, that is, a family of maps $f_n \from \Omega_n \to \Omega'_n$. Given an $\Omega'$-model $(X, \delta)$, we define $\sem_0(f) (X,\delta)$ to be the $\Omega$-model with underlying set $X$, and for which the interpretation of $\omega \in \Omega_n$ is
\[
\delta_{f_n(\omega)} \from X^n \to X.
\]
We define $\sem_0(f)$ to be the identity on morphisms.
\end{defn}

\begin{defn}
Define a functor $\str_0 \from \catover{\Set} \to (\SET^K)^{\op}$ as follows. Given $U \from \cat{M} \to \Set$, let $\str_0(U)$ be the operator domain with
\[
\str_0(U)_n = [\cat{M}, \Set](U^n, U)
\]
for any small cardinal $n$. Given
\[
\xymatrix{
\cat{M}'\ar[rr]^Q \ar[dr]_{U'} &&\cat{M}\ar[dl]^{U}\\
& \Set &
}
\]
in $\catover{\Set}$, define $\str_0(Q) \from \str_0(U) \to \str_0(U')$ by sending $\gamma \from U^n \to U$ to $\gamma Q \from U'^n \to U'$.
\end{defn}

\begin{lem}
\label{lem:str-sem-adj-opdom}
We have an adjunction
\[
\xymatrix{
\catover{\Set }\ar@<5pt>[r]_-{\perp}^-{\str_0}\ & {(\SET^K)^{\op}.}\ar@<5pt>[l]^-{\sem_0}
}
\]
\end{lem}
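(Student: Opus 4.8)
The plan is to establish directly, for each $U \in \catover{\Set}$ and each operator domain $\Omega \in \SET^K$, a bijection
\[
\Psi_{U,\Omega} \from \catover{\Set}(U, \sem_0(\Omega)) \to \SET^K(\Omega, \str_0(U)) = (\SET^K)^{\op}(\str_0(U), \Omega),
\]
natural in $U$ and $\Omega$, in the same style as the proof of Theorem~\ref{thm:str-sem-adj}. Given $U \from \cat{M} \to \Set$, a morphism $Q \from U \to \sem_0(\Omega)$ in $\catover{\Set}$ is a functor $Q \from \cat{M} \to \cat{M}_{\Omega}$ with $U_{\Omega} \of Q = U$; for each $m \in \cat{M}$ this equips $Um$ with interpretations $\delta^{Qm}_{\omega} \from (Um)^n \to Um$, one for each small cardinal $n$ and each $\omega \in \Omega_n$, and the requirement that $Q$ act on morphisms of $\cat{M}$ by $\Omega$-model homomorphisms says precisely that, for a fixed $\omega \in \Omega_n$, the maps $(\delta^{Qm}_{\omega})_{m \in \cat{M}}$ are the components of a natural transformation $U^n \to U$, i.e.\ an element of $\str_0(U)_n$. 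Thus $Q$ determines a family of functions $\Omega_n \to \str_0(U)_n$, that is, a morphism $\Psi_{U,\Omega}(Q) \from \Omega \to \str_0(U)$ in $\SET^K$.

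For the inverse, given $f = (f_n \from \Omega_n \to [\cat{M},\Set](U^n,U))_{n \in K}$, I would define a functor $\Theta_{U,\Omega}(f) \from \cat{M} \to \cat{M}_{\Omega}$ by sending $m$ to the $\Omega$-model with underlying set $Um$ in which $\omega \in \Omega_n$ is interpreted as the component $f_n(\omega)_m \from (Um)^n \to Um$, and by sending $h \from m \to m'$ to $Uh$ itself; naturality of each $f_n(\omega)$ guarantees that $Uh$ is an $\Omega$-model homomorphism, and $U_{\Omega} \of \Theta_{U,\Omega}(f) = U$ by construction. A short unwinding shows $\Psi_{U,\Omega}$ and $\Theta_{U,\Omega}$ are mutually inverse, and naturality in $U$ and $\Omega$ reduces to chasing the defining formulas for $\str_0$ and $\sem_0$ on morphisms, exactly as in Lemma~\ref{lem:adj-nat}.

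The only points that need a little care are bookkeeping rather than mathematical: first, that $\sem_0(\Omega) = U_{\Omega}$ is faithful, so that a morphism in $\catover{\Set}$ is determined by its underlying object-assignment — this is what makes $\Theta_{U,\Omega}(f)$ a genuine morphism over $\Set$ and makes $\Psi_{U,\Omega}$ injective; and second, that each $\str_0(U)_n = [\cat{M},\Set](U^n,U)$ may be a properly large set, which is harmless since $\SET$ denotes the huge category of all sets and $\SET^K$ is formed accordingly. I do not expect a genuine obstacle here: the statement is essentially a repackaging of the universal property already used in Theorem~\ref{thm:str-sem-adj}, specialised to the case where the ``arities'' carry no composition, so that proto-theories are replaced by bare operator domains.
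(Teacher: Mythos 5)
Your proposal is correct and takes essentially the same route as the paper: the paper also proves the lemma by exhibiting the hom-set bijection $\catover{\Set}(U,\sem_0(\Omega)) \iso \SET^K(\Omega,\str_0(U))$ directly, observing that a morphism $\Omega \to \str_0(U)$ amounts to a choice of natural transformation $U^n \to U$ for each $\omega \in \Omega_n$, which is the same data as an $\Omega$-model structure on each $Um$ making each $Ug$ a homomorphism, and likewise deferring naturality to a routine check. Your version merely spells out both directions $\Psi$ and $\Theta$ explicitly where the paper treats the correspondence as a single unwinding.
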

\begin{proof}
Let $\Omega \in \SET^K$ and $U \from \cat{M} \to \Set$; we will establish a bijection
\[
\catover{\Set}(U, \sem_0(\Omega)) \iso \SET^K (\Omega, \str_0(U)).
\]
Let $f \from \Omega \to \str_0(U)$. Then $f$ consists of, for every $n \in K$ and $\omega \in \Omega_n$, a natural transformation $f_n(\omega) \from U^n \to U$. Equivalently, to specify such an $f$ is to specify, for each $n \in K$, $\omega \in \Omega_n$ and $m \in \cat{M}$, a function
\[
f_n(\omega)_m \from (Um)^n \to Um,
\]
such that for each $g \from m \to m'$ in $\cat{M}$, the diagram
\[
\xymatrix{
(Um)^n\ar[r]^{(Ug)^n}\ar[d]_{f_n(\omega)_m} & (Um')^n\ar[d]^{f_n(\omega)_{m'}} \\
Um \ar[r]_{Ug} & Um'
}
\]
commutes. But this is clearly the same as specifying, for each $m \in \cat{M}$, an $\Omega$-model structure (given by all the $f_n(\omega)_m$ as $n$ and $\omega$ vary) such that for each $g \from m \to m'$, the map $Ug$ becomes an $\Omega$-model homomorphism. But this is precisely a morphism
\[
U \to \sem_0(\Omega)
\]
in $\catover{\Set}$. Thus we have such a bijection; it is straightforward to check that this bijection is natural in $\Omega$ and $U$.
\end{proof}

\begin{defn}
Define a functor $S \from \proth(\Set^{\op}) \to \SET^K$, sending $L \from \Set^{\op} \to \cat{L}$ to the family $S(L) \in \SET^K$ with
\[
S(L)_n = \cat{L}(Ln, L1)
\]
for $n \in K$, and sending a morphism $P \from L' \to L$ to the map sending $l \in \cat{L}'(L'n, L'1)$ to $P(l) \in \cat{L}(Ln, L1)$.
\end{defn}

\begin{lem}
\label{lem:opdom-str-commute}
The diagram
\[
\xymatrix{
\catover{\Set}\ar[r]^{\str}\ar[dr]_{\str_0} & \proth(\Set^{\op})\ar[d]^S \\
& \SET^K
}
\]
commutes up to isomorphism.
\end{lem}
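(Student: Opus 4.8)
The claim is that for any $(U \from \cat{M} \to \Set) \in \catover{\Set}$, the operator domain $S(\str(U))$ is isomorphic to $\str_0(U)$, naturally in $U$. The approach is to unwind both sides using the explicit description of $\str(U)$ given after Definition~\ref{defn:str-objects}, specialised to the canonical aritation $\lpair -, - \rpair = \cat{B}(-,-)$ on $\cat{B} = \Set$. First I would recall that for the canonical aritation, a morphism $n \to 1$ in the category $\thr(U)$ is by definition a natural transformation
\[
\lpair n, U- \rpair \to \lpair 1, U - \rpair, \quad \text{i.e.} \quad \Set(n, U-) \to \Set(1, U-),
\]
and $\str(U) \from \Set^{\op} \to \thr(U)$ is the identity on objects. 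Hence, by definition of $S$,
\[
S(\str(U))_n = \thr(U)(n, 1) = [\cat{M},\Set]\bigl(\Set(n, U-), \Set(1, U-)\bigr).
\]
On the other hand $\str_0(U)_n = [\cat{M},\Set](U^n, U)$. So the content of the lemma is the (evident, Yoneda-type) natural isomorphism $\Set(n, U-) \iso (U-)^n = U^n$ for each small cardinal $n$ (using that $n$ is the $n$-fold copower of $1$ in $\Set$), together with $\Set(1, U-) \iso U$, which induce a bijection
\[
S(\str(U))_n = [\cat{M},\Set]\bigl(\Set(n, U-), \Set(1, U-)\bigr) \iso [\cat{M},\Set](U^n, U) = \str_0(U)_n.
\]

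\textbf{Key steps.} (1) Fix the natural isomorphism $\theta_n \from U^n \toby{\iso} \Set(n, U-)$ of functors $\cat{M} \to \Set$, whose component at $m$ sends an $n$-tuple $(x_i)_{i \in n} \in (Um)^n$ to the function $i \mapsto x_i$; check naturality in $m$ (immediate) — this is exactly the statement that $n$ is the $n$-th copower of $1$. (2) Conjugation by $\theta_n$ and $\theta_1$ defines a bijection $\Xi_n \from [\cat{M},\Set](U^n,U) \to [\cat{M},\Set](\Set(n,U-),\Set(1,U-))$, sending $\gamma$ to $\theta_1 \of \gamma \of \theta_n^{-1}$ (identifying $U$ with $\Set(1,U-)$ via $\theta_1$). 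Assemble these into an iso of operator domains $\Xi \from \str_0(U) \to S(\str(U))$ in $\SET^K$. (3) Check naturality in $U$: given $Q \from U' \to U$ in $\catover{\Set}$, one must verify $S(\str(Q)) \of \Xi_{U} = \Xi_{U'} \of \str_0(Q)$; since $\str_0(Q)$ is whiskering with $Q$, $\str(Q)$ acts on hom-sets of $\thr$ by whiskering with $Q$ (Definition~\ref{defn:str-morphisms}), and $S(\str(Q))$ just restricts this to arrows $n \to 1$, this reduces to the compatibility of the isomorphisms $\theta_n$ with precomposition by $Q$, which is automatic since $\theta_n$ is defined component-wise.

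\textbf{Main obstacle.} There is essentially no hard step here — this is a bookkeeping lemma. The only mild subtlety is that $\str(U)$ is defined via the bijective-on-objects/full-and-faithful factorisation of $U^* \of \curryhi$, so one should make sure one is using the \emph{correct} explicit description of $\thr(U)$: namely that, because $\curryhi \from \Set^{\op} \to [\Set,\Set]$ sends $n$ to $(-)^n$ and the full-and-faithful leg $J(U)$ reflects the identity of morphisms, $\thr(U)(n,1)$ really is the full hom-set $[\cat{M},\Set](\Set(n,U-),\Set(1,U-))$ rather than some proper subclass — this is guaranteed by the construction in Definition~\ref{defn:str-objects} and the explicit description immediately following it. Once that is pinned down, steps (1)–(3) are routine verifications, and I would present them compactly rather than in full detail.
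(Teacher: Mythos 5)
Your proof is correct and follows essentially the same route as the paper: both identify $S(\str(U))_n = \thr(U)(n,1) = [\cat{M},\Set](\Set(n,U-),\Set(1,U-))$ and transport across the copower isomorphism $U^n \iso \Set(n,U-)$ to land in $\str_0(U)_n$. The paper's version is simply terser, dismissing the naturality-in-$U$ check as straightforward where you spell out the whiskering argument.
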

\begin{proof}
Recall that for $U \from \cat{M} \to \Set$ and small sets $X$ and $Y$, we have
\[
\thr(U)(X, Y) = [\cat{M}, \Set](\Set(Y, U-), \Set(X, U-))
\]
and in particular
\[
S(\str(U))_n = \thr(U)(n, 1) =  [\cat{M}, \Set](\Set(n, U-), \Set(1, U-)) \iso [\cat{M}, \Set](U^n, U) \iso \str_0(U).
\]
It is straightforward to check that the two functors agree on morphisms.
\end{proof}

\begin{lem}
\label{lem:proth-to-opdom-adj}
The functor $S \from \proth(\Set^{\op}) \to \SET^K$ has a left adjoint.
\end{lem}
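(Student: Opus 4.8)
The plan is to construct the left adjoint $F \from \SET^K \to \proth(\Set^{\op})$ explicitly and then verify the adjunction by exhibiting a natural bijection of hom-sets. Given an operator domain $\Omega \in \SET^K$, I would build a proto-theory $F(\Omega) \from \Set^{\op} \to \cat{F}(\Omega)$ as a ``free'' construction: take the free monad on the polynomial endofunctor $X \mapsto \coprod_{n \in K} \Omega_n \times X^n$ on $\Set$ when this exists, but more robustly, build $\cat{F}(\Omega)$ directly as the bijective-on-objects part of a suitable functor out of $\Set^{\op}$. Concretely, the objects of $\cat{F}(\Omega)$ are small sets, and the hom-set $\cat{F}(\Omega)(X, Y)$ should consist of $Y$-indexed families of $\Omega$-terms in $X$ variables, quotiented by nothing (since $\Omega$ has no equations), with composition given by substitution. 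The subtlety is that there may be a proper class of terms of a given arity, so $\cat{F}(\Omega)$ need only be a large (possibly properly large) category, which is fine since proto-theories with arities $\Set^{\op}$ live in $\CAT$, not $\Cat$.

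The key steps, in order, would be: (i) define $\cat{F}(\Omega)$ and the bijective-on-objects functor $F(\Omega) \from \Set^{\op} \to \cat{F}(\Omega)$, checking functoriality of $F$ on morphisms of operator domains (a map $f \from \Omega \to \Omega'$ induces a functor $\cat{F}(\Omega) \to \cat{F}(\Omega')$ by relabelling operation symbols in terms); (ii) establish the natural bijection
\[
\proth(\Set^{\op})(F(\Omega), L) \iso \SET^K(\Omega, S(L))
\]
for every proto-theory $L \from \Set^{\op} \to \cat{L}$. For (ii), a morphism $F(\Omega) \to L$ in $\proth(\Set^{\op})$ is a functor $P \from \cat{F}(\Omega) \to \cat{L}$ with $P \of F(\Omega) = L$; since every object of $\cat{F}(\Omega)$ is canonically a copower of $1$ (the arities $\Set^{\op}$ have $1$ as a ``coseparator'' in the sense that $\Set^{\op}(n,1) = \Set(1,n) = n$ and $F(\Omega)$ preserves the relevant coproducts), such a $P$ is determined by its action on hom-sets of the form $\cat{F}(\Omega)(n, 1)$, and the constraint $P \of F(\Omega) = L$ together with functoriality forces $P$ to be the unique extension of the assignment sending each basic operation symbol $\omega \in \Omega_n = F(\Omega)(n,1)$ generated from a single application of $\omega$ to the variables, to an element of $\cat{L}(Ln, L1) = S(L)_n$. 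Conversely any family of maps $\Omega_n \to S(L)_n$ extends uniquely to such a $P$ by induction on term structure, using composition in $\cat{L}$ for substitution. Naturality in both variables is then a routine check that the bijection commutes with relabelling and with post-composition.

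The main obstacle I anticipate is the careful definition of $\cat{F}(\Omega)$ and the verification that $F(\Omega)$ genuinely preserves enough coproducts (specifically, that $Ln$ in $\cat{F}(\Omega)$ is the $n$-fold copower of $L1$) so that the ``determined by action on $\cat{F}(\Omega)(n,1)$'' argument goes through cleanly; this is the step where one must be precise about what a term is and how substitution of a family of terms into the argument slots of another term works, including the bookkeeping of variable names. An alternative, slicker route would be to observe that $S$ is the composite of $\str$-type data and invoke an adjoint functor theorem: $\proth(\Set^{\op})$ has all large limits by the Corollary following Proposition~\ref{prop:proth-prof-monadic}, and $S$ preserves them (it is a composite of the monadic-type forgetful functor from Proposition~\ref{prop:proth-prof-monadic} restricted appropriately with evaluation functors, all of which preserve limits), so a solution-set condition would suffice — but since $\proth(\Set^{\op})$ is only locally large and the categories involved are huge, I would prefer the explicit construction to avoid size pitfalls. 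I would therefore present the explicit free construction as the primary proof, relegating the adjoint-functor-theorem remark to a footnote or omitting it.
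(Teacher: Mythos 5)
Your explicit construction of the free object is not the left adjoint to $S$, and the gap is precisely at the step you flag as the "main obstacle". You define $\cat{F}(\Omega)(X,Y)$ to be the set of $Y$-indexed families of $\Omega$-terms in $X$ variables, and then argue that a morphism $\cat{F}(\Omega)\to L$ in $\proth(\Set^{\op})$ is determined by, and can be built from, its values on $\cat{F}(\Omega)(n,1)$, "since every object of $\cat{F}(\Omega)$ is canonically a copower of $1$". But an arbitrary proto-theory $L\from\Set^{\op}\to\cat{L}$ is just a bijective-on-objects functor: there is no requirement that $Ln$ be the $n$-fold (co)power of $L1$ in $\cat{L}$. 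Consequently, given a family of maps $\Omega_n\to\cat{L}(Ln,L1)=S(L)_n$, there is no way to define the image in $\cat{L}(LX,LY)$ of a $Y$-tuple of terms when $|Y|\neq 1$: you obtain $Y$ individual morphisms $LX\to L1$ but cannot tuple them into a morphism $LX\to LY$. (For a concrete failure, take $\Omega$ with a single constant $c$ and $L$ a proto-theory in which no morphism $L\emptyset\to L2$ has both composites to $L1$ equal to the chosen interpretation of $c$; then the singleton $\cat{F}(\Omega)(\emptyset,2)=\{(c,c)\}$ has nowhere to go.) What you have constructed is the free \emph{product-preserving} theory on $\Omega$ — the Lawvere-theory-flavoured free object — not the free proto-theory. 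The genuine free proto-theory on $\Omega$ has hom-sets consisting of formal alternating composites of reindexing maps and operation symbols, and its $\hom(Ln,Lm)$ is not a power of $\hom(Ln,L1)$.

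The paper's proof sidesteps the explicit construction entirely: it factors $S$ as the functor $\proth(\Set^{\op})\to[\Set^{\op}\times\Set,\SET]$ sending $L$ to $\cat{L}(L-,L-)$, followed by restriction along $K\to\Set^{\op}\times\Set$, $n\mapsto(n,1)$. The first factor has a left adjoint because it is (weakly) monadic by Proposition~\ref{prop:proth-prof-monadic} (free monoids on profunctors exist since $\PROF(\Set,\Set)$ is biclosed and cocomplete), and the second has a left adjoint given by left Kan extension. Your "slicker alternative" via an adjoint functor theorem is in fact much closer in spirit to a workable argument than your primary construction — the size worries you raise are exactly what the paper's factorisation avoids — so if you want to repair the proof, either adopt that factorisation or redefine $\cat{F}(\Omega)$ as the free category on the relevant endo-profunctor rather than as a category of tuples of terms.
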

\begin{proof}
Regarding $K$ as a discrete category (i.e.\ a category with only identity morphisms), define a functor $F \from K \to \Set^{\op} \times \Set$ sending $n \in K$ to $(n, 1)$. Then we can write $S$ as the composite
\[
\proth(\Set^{\op}) \to [\Set^{\op} \times \Set, \SET] \toby{F^*} [K, \SET] \iso \SET^K,
\]
where the first factor sends $L \from \Set^{\op} \to \cat{L}$ to $\cat{L}(L-,L-) \from \Set^{\op} \times \Set \to \SET$. This first factor is monadic by Proposition~\ref{prop:proth-prof-monadic}, and in particular has a left adjoint, and $F^*$ has a left adjoint given by left Kan extensions along $F$. Thus the composite $S$ has a left adjoint.
\end{proof}

\begin{defn}
\label{defn:proth-to-opdom-adj}
Write $T \from \SET^K \to \proth(\Set^{\op})$ for the left adjoint of $S$.
\end{defn}

\begin{prop}
The diagram
\[
\xymatrix{
\catover{\Set} & \proth(\Set^{\op})^{\op}\ar[l]_{\sem} \\
& (\SET^K)^{\op}\ar[u]_T\ar[ul]^{\sem_0}
}
\]
commutes up to isomorphism.
\end{prop}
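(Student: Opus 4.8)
The claim to prove is that the triangle
\[
\xymatrix{
\catover{\Set} & \proth(\Set^{\op})^{\op}\ar[l]_{\sem} \\
& (\SET^K)^{\op}\ar[u]_T\ar[ul]^{\sem_0}
}
\]
commutes up to isomorphism; equivalently, $\sem \of T^{\op} \iso \sem_0$ as functors $(\SET^K)^{\op} \to \catover{\Set}$.

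The plan is to exploit adjointness rather than to unpick the construction of $T$. We have three adjunctions in play: $\str \ladj \sem$ (Theorem~\ref{thm:str-sem-adj}), $\str_0 \ladj \sem_0$ (Lemma~\ref{lem:str-sem-adj-opdom}), and $T \ladj S$ (Definition~\ref{defn:proth-to-opdom-adj}). First I would observe that the composite $\sem \of T^{\op}$ is a right adjoint: since $T \ladj S$ gives $T^{\op} \ladj S^{\op}$ as functors between the opposite categories, and $\str \ladj \sem$, the composite $\catover{\Set} \toby{\str} \proth(\Set^{\op})^{\op} \toby{S^{\op}} (\SET^K)^{\op}$ is left adjoint to $\sem \of T^{\op}$. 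So both $\sem \of T^{\op}$ and $\sem_0$ are right adjoints into $\catover{\Set}$; to prove they are isomorphic it suffices to prove that their left adjoints are isomorphic, i.e. that $S^{\op} \of \str \iso \str_0$ as functors $\catover{\Set} \to (\SET^K)^{\op}$. But this is precisely Lemma~\ref{lem:opdom-str-commute} (the diagram there says $S \of \str \iso \str_0$, which upon passing to opposite categories is exactly the required isomorphism of left adjoints).

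The remaining step is to make the passage ``isomorphic left adjoints $\Rightarrow$ isomorphic right adjoints'' precise, including naturality. I would invoke the standard uniqueness-of-adjoints argument: if $F_1 \ladj G_1$ and $F_2 \ladj G_2$ with $F_1 \iso F_2$ via a natural isomorphism $\alpha$, then the composite of natural isomorphisms
\[
\cat{D}(d, G_1 c) \iso \cat{C}(F_1 d, c) \toby{\cat{C}(\alpha_d^{-1}, c)} \cat{C}(F_2 d, c) \iso \cat{D}(d, G_2 c),
\]
natural in both $c$ and $d$, exhibits $G_1 \iso G_2$ by the Yoneda lemma. Applying this with $F_1 = S^{\op} \of \str$, $G_1 = \sem \of T^{\op}$, $F_2 = \str_0$, $G_2 = \sem_0$, and $\alpha$ the isomorphism from Lemma~\ref{lem:opdom-str-commute}, yields the desired natural isomorphism $\sem \of T^{\op} \iso \sem_0$, which is the statement of the proposition.

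I do not expect any genuine obstacle here: the only mildly delicate point is keeping track of which functors become left versus right adjoints when passing to opposite categories (a functor $\proth(\Set^{\op}) \to \SET^K$ that is a left adjoint becomes, as a functor between the opposite categories $\proth(\Set^{\op})^{\op} \to (\SET^K)^{\op}$, a right adjoint), and correspondingly making sure that the isomorphism of Lemma~\ref{lem:opdom-str-commute} is being used on the correct side. Everything else is the formal uniqueness of adjoints, so I would state the argument concisely rather than reproving that lemma. One could alternatively give a direct proof by unwinding $T$ via the factorisation of $S$ used in Lemma~\ref{lem:proth-to-opdom-adj}, but the adjointness argument is cleaner and avoids any explicit description of $T$.
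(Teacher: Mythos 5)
Your proof is correct and is essentially the paper's own argument: the paper's proof simply cites Lemma~\ref{lem:str-sem-adj-opdom}, Lemma~\ref{lem:opdom-str-commute}, Definition~\ref{defn:proth-to-opdom-adj} and uniqueness of adjoints, which is exactly the adjointness argument you spell out. Your care with the variance when passing to opposite categories is the right (and only) delicate point, and you handle it correctly.
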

\begin{proof}
This follows from Lemmas~\ref{lem:str-sem-adj-opdom}, \ref{lem:opdom-str-commute}, Definition~\ref{defn:proth-to-opdom-adj} and the uniqueness of adjoints.
\end{proof}

\begin{thm}
Every equationally presentable category over $\Set$ is of the form $\sem_{ct}(L)$ for some complete topological proto-theory $L$.
\end{thm}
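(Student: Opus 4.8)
The proof splits into two steps: first I would realise the model category $\cat{M}_\Omega$ of the underlying operator domain as the semantics of a complete topological proto-theory, and then cut down by the equations $E$ by passing to a quotient. Throughout, recall that $\Set$ has finite products and enough subobjects, so the topological structure--semantics adjunction $\str_t \ladj \sem_t$ for $\Set$ is idempotent (Theorem~\ref{thm:str-sem-top-idempotent}) and the factorisation through $\proth_{ct}(\Set^\op)$ of Definition~\ref{defn:top-adj-factor} is available, with $\sem_{ct}$ full and faithful and left adjoint $\str_{ct}$.

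\emph{Step 1 (the operator domain).} By the previous proposition, $U_\Omega = \sem_0(\Omega) \iso \sem(T\Omega)$, and by Lemma~\ref{lem:top-sem-commute} this is $\sem_t(\disc(T\Omega))$; so $U_\Omega$ lies in the essential image of $\sem_t$, hence (using $\sem_{ct} \of \cplt^\op \iso \sem_t$) in the essential image of $\sem_{ct}$. Since $\sem_{ct}$ is full and faithful, the unit $U_\Omega \to \sem_{ct}(\str_{ct}(U_\Omega))$ of $\str_{ct} \ladj \sem_{ct}$ is an isomorphism. Write $P := \str_{ct}(U_\Omega) \in \proth_{ct}(\Set^\op)$, so that there is an isomorphism $\cat{M}_\Omega \iso \sem_{ct}(P)$ over $\Set$, given by (the inverse of) this unit. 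Moreover $\inc(P) \iso \str_t(U_\Omega)$, so for each small cardinal $n$,
\[
\cat{P}(Pn, P1) \iso \thr_t(U_\Omega)(n,1) = [\cat{M}_\Omega, \Set](\Set(n, U_\Omega-), \Set(1, U_\Omega-)),
\]
which, since $\Set(n, U_\Omega-) \iso U_\Omega^n$, is exactly the set of $n$-ary operations of $\Omega$. Finally, unwinding the formula for the unit of a structure--semantics adjunction (Definition~\ref{defn:adj-bij-inv}), the isomorphism $\cat{M}_\Omega \iso \sem_{ct}(P)$ is compatible with interpretation of operations: if $(X,\delta) \in \cat{M}_\Omega$ corresponds to the $P$-model $(X,\Gamma)$, then for every $n$-ary operation $\gamma$ of $\Omega$, regarded via the identification above as a morphism $Pn \to P1$ in $\cat{P}$, the map $\Gamma(\gamma) \from X^n \to X$ equals $\gamma_{(X,\delta)}$. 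This already proves the theorem when $E = \emptyset$.

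\emph{Step 2 (imposing the equations).} Each equation $(\gamma,\gamma') \in E$, with $\gamma,\gamma'$ of arity $n$, now determines a parallel pair $\gamma, \gamma' \from Pn \to P1$ in $\cat{P}$. Since $\proth_t(\Set^\op)$ has all large colimits (Proposition~\ref{prop:lim-colim-proth}), I would form the topological proto-theory $P'$ obtained from $P$ by simultaneously coequalising all these pairs --- concretely, the coequaliser of the two maps $\coprod_{(\gamma,\gamma')\in E} W \rightrightarrows P$ picking out the families $(\gamma)$ and $(\gamma')$, where $W$ is the evident free topological proto-theory on a single morphism of the appropriate shape $n \to 1$. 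By the universal property of this coequaliser, a topological $P'$-model is exactly a topological $P$-model $(X,\Gamma)$ with $\Gamma(\gamma) = \Gamma(\gamma')$ for all $(\gamma,\gamma') \in E$, and $P'$-model homomorphisms are exactly $P$-model homomorphisms between such; thus $\mod_t(P')$ is a full subcategory of $\mod_t(P)$. Transporting along the isomorphism $\mod_t(P) \iso \cat{M}_\Omega$ of Step~1 and using the compatibility established there, this full subcategory is carried isomorphically onto the full subcategory of $\cat{M}_\Omega$ on those $\Omega$-models $(X,\delta)$ with $\gamma_{(X,\delta)} = \gamma'_{(X,\delta)}$ for all $(\gamma,\gamma')\in E$ --- which is $\cat{M}_{(\Omega,E)}$ by definition. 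Hence $\sem_t(P') \iso U_{(\Omega,E)}$ over $\Set$. Setting $L := \cplt(P') \in \proth_{ct}(\Set^\op)$ finally gives $\sem_{ct}(L) = \sem_{ct}(\cplt(P')) \iso \sem_t(P') \iso U_{(\Omega,E)}$, as required.

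\emph{Expected main obstacle.} The one point needing genuine care is the compatibility claim at the end of Step~1: that the canonical (unit) isomorphism $\cat{M}_\Omega \iso \sem_{ct}(\str_{ct}(U_\Omega))$ sends an $\Omega$-model to the topological proto-theory model interpreting each operation of $\Omega$ ``by itself''. This requires tracing the unit of the idempotent adjunction through the identifications in Definitions~\ref{defn:str-objects}, \ref{defn:adj-bij-inv} and~\ref{defn:top-adj-factor}, and in particular checking that passing from the ordinary proto-theory structure functor to the topological/complete one does not disturb this description. Everything else --- existence of $W$ and of the coequaliser $P'$, and the fact that $\mod_t(P')$ has the expected objects and morphisms --- is routine given Proposition~\ref{prop:lim-colim-proth} and the universal property of the coequaliser.
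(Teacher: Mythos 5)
Your proof is correct in outline, and Step~1 agrees with the paper's strategy: both reduce to showing that $\sem_0(\Omega) = U_\Omega$ lies in the essential image of $\sem_{ct}$, via the chain $\sem_0 \iso \sem_{ct}\of\cplt^{\op}\of\disc^{\op}\of T^{\op}$. Where you genuinely diverge is in how the equations are imposed. The paper stays entirely on the semantics side: it packages $E$ as a second operator domain $\Omega'$ (with $\Omega'_n$ the set of $n$-ary equations), uses the adjunction $\str_0 \ladj \sem_0$ to turn the two projections of $E$ into a parallel pair $F_1, F_2 \from \sem_0(\Omega) \rightrightarrows \sem_0(\Omega')$ in $\catover{\Set}$ whose equaliser is $U_{(\Omega,E)}$, and then invokes only the fact that the essential image of the full and faithful right adjoint $\sem_{ct}$ is reflective, hence closed under equalisers. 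You instead work syntactically, coequalising the pairs $\gamma,\gamma' \from Pn \to P1$ inside $\proth_t(\Set^{\op})$; since $\sem_t$ is a right adjoint, it carries your coequaliser to the paper's equaliser, so the two arguments are essentially dual. Your version buys an explicit presentation of the resulting complete topological proto-theory as a quotient of $\str_{ct}(U_\Omega)$, and it makes visible the interpretation-compatibility of the unit (which the paper never needs, since it never has to know how individual operations are interpreted). The cost is infrastructure the paper does not develop: the ``evident'' free topological proto-theory $W_n$ on a single morphism $n \to 1$ requires either a topological analogue of Proposition~\ref{prop:proth-prof-monadic} or a direct construction (and note $W$ must vary with the arity $n$ of each equation); and the identification of the models of the quotient as the expected full subcategory should be run, as you implicitly do, through the universal property against $\str_t(\const{X})$ and the analogous test object for morphisms, rather than through any concrete description of coequalisers in $\TOPCAT$, which are not well behaved. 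These are fillable gaps rather than errors, and the compatibility check you flag as the main obstacle is exactly the unwinding of Definition~\ref{defn:adj-bij-inv} you describe.
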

\begin{proof}
Let $(\Omega, E)$ be an equational presentation. Let $\Omega'$ be the operator domain defined by setting $\Omega'_n$ to be the set of $n$-ary equations in $E$ for each $n \in K$. Consider the operator domain $\str_0 (\sem_0(\Omega))$; the elements of $\str_0 (\sem_0(\Omega))_n$ are the natural transformations
\[
U_{\Omega}^n \to U_{\Omega}
\]
for $n \in K$. Thus, $E$ defines two canonical morphisms $f_1, f_2 \from \Omega' \to \str_0 \of \sem_0 (\Omega)$; given $n \in K$ and $e = (\gamma_1, \gamma_2) \in \Omega'_n$, the map $(f_i)_n$ sends $e$ to $\gamma_i$ for $i= 1,2$.

Since $\str_0 \radj \sem_0$, these correspond to a pair of morphisms $F_1, F_2 \from \sem_0(\Omega) \rightrightarrows \sem_0(\Omega')$ in $\catover{\Set}$. An object of $\cat{M}_{\Omega'}$ is a set $X$ equipped with an $n$-ary operation for each $n$-ary equation of $E$, satisfying no equations. Given $(X, \delta) \in \cat{M}_{\Omega}$, the functor $F_i$ sends $(X, \delta)$ to the set $X$ equipped with, for each $n$-ary equation $e = (\gamma_1, \gamma_2) \in E$, the $n$-ary operation
\[
(\gamma_i)_{(X, \delta)} \from X^n \to X.
\]
Thus $(X, \delta)$ is a model of $(\Omega, E)$ if and only if $F_1 (X, \delta) = F_2 (X, \delta)$, so the category of $(\Omega, E)$ models is the equaliser of $F_1$ and $F_2$ in $\catover{\Set}$.

Since $\sem_{ct} \from \proth_{ct}(\Set^{\op})^{\op} \to \catover{\Set}$ is full and faithful, the full subcategory of $\catover{\Set}$ consisting of those functors isomorphic to one of the from $\sem_{ct}(L)$ for some $L \in \proth_{ct}(\Set^{\op})$ is reflective, and in particular is closed under equalisers. Thus if we can show that a functor of the form $\sem_0(\Omega)$ for an operator domain $\Omega$ lies in this subcategory, the result will follow. But each triangle in
\[
\xymatrix{
& \proth_{ct}(\Set^{\op})^{\op}\ar[dl]_{\sem_{ct}} \\
\catover{\Set}& \proth_{t}(\Set^{\op})^{\op}\ar[u]_{\cplt^{\op}}\ar[l]_-{\sem_t} \\
& \proth(\Set^{\op})^{\op}\ar[ul]_{\sem}\ar[u]_{\disc^{\op}} \\
& (\SET^K)^{\op}\ar[u]_{T^{\op}}\ar[uul]^{\sem_0}
}
\]
commutes up to isomorphism, and so $\sem_0$ takes values in this subcategory as required.
\end{proof}

\chapter{Open questions}
\label{chap:open}
In this thesis we have developed the notions of proto-theory and aritation, leading to a general structure--semantics adjunction that generalises the semantics of many existing category-theoretic notions of algebraic theory. We have also seen that complete topological proto-theories provide a generalisation of monads for which the semantics functor has a left adjoint, while maintaining the desirable property that the semantics functor is full and faithful. However, there are many questions that remain to be answered both about proto-theories in general, and about complete topological proto-theories in particular. In this chapter we discuss some of these questions.

In Section~\ref{sec:open-morphisms} we consider the question of what the most appropriate notion of morphism is between proto-theories in different settings and between aritations. Then in Section~\ref{sec:open-more-general} we discuss whether even more general notions of proto-theory and aritation, that appear to make sense formally, have any practical relevance. Finally in Section~\ref{sec:open-profinite} we ask whether we can further extend the analogy we have developed between complete topological proto-theories and profinite groups.

\section{Relating different aritations and settings}
\label{sec:open-morphisms}

In category theory, whenever one encounters a new type of mathematical object, it is natural to ask what the appropriate morphisms between those objects are. The two main new concepts introduced in this thesis are proto-theories and aritations, so we should ask what the corresponding morphisms are. Doing so may also help us further understand the relationships between the different notions of algebraic theory that are generalised by proto-theories.

If one picks any two of the notions of algebraic theory from Chapter~\ref{chap:notions} at random there are likely to be canonical functors in either direction between the corresponding categories of theories, and these functors will be compatible with the semantics to various extents. In addition there are various notions of morphisms of monads, not just between monads on the same category, but between monads on different categories as described in Street~\cite{street72}, which again are compatible with the semantics to different extents. One may hope that these different ways of relating algebraic theories of different types can be understood in terms of morphisms of proto-theories and aritations.

Given a setting $\cat{X}$, and an object $\cat{A}$ of that setting we have considered the category $\proth(\cat{A})$ of proto-theories with arities $\cat{A}$ from the beginning. However we may still ask whether there is a sensible notion of morphism between proto-theories with different objects of arities in $\cat{X}$, or even between proto-theories in different settings.

Here is one possible approach. Let $\cat{X}$ and $\cat{X}'$ be settings, let $\cat{A} \in \cat{X}$ and $\cat{A}' \in  \cat{X}'$, and let $L \from \cat{A} \to \cat{L}$ and $L' \from \cat{A}' \to \cat{L}'$ be proto-theories in $\cat{X}$ and $\cat{X}'$ respectively. Then one could define a morphism $L \to L'$ to consist of a morphism of settings $F \from \cat{X} \to \cat{X}'$ (meaning a 2-functor that preserves cotensors and is compatible with the factorisation systems on $\cat{X}$ and $\cat{X}'$), together with 1-cells $G \from F\cat{A} \to \cat{A}'$ and $H \from F\cat{L} \to \cat{L}'$ such that
\[
\xymatrix{
F\cat{A}\ar[r]^{FL}\ar[d]_G & F\cat{L}\ar[d]^{H} \\
\cat{A}'\ar[r]_{L'} & \cat{L}'
}
\]
commutes up to equality, up to a specified isomorphism, or up to a specified 2-cell in either direction in $\cat{X}'$. Alternatively one could take $G$ and $H$ to be in the opposite directions and again ask for the resulting square to commute in one of the senses above. This yields 8 possible definitions, and no doubt there are yet more variations one could think of.

\begin{question}
What is the most appropriate notion of morphism between general proto-theories in different settings?
\end{question}

Before one can talk about the semantics of a proto-theory, one needs an interpretation of the arities of that proto-theory. Thus, if we want to relate the semantics of two proto-theories, we will probably need not only a morphism of proto-theories, but also a morphism between the aritations we are using to define their semantics.

Recall from Section~\ref{sec:chu} that an aritation $\lpair-,-\rpair \from \cat{A} \times \cat{B} \to \cat{C}$ in the setting $\CAT$ can be viewed as a Chu space in $\CAT$ and so morphisms of Chu spaces provide a notion of morphisms of aritations. An important class of aritations in $\CAT$ are given by the canonical aritation associated with an arbitrary locally small category $\cat{B}$, given by the hom-functor
\[
\cat{B}(-,-) \from \cat{B}^{\op}\times \cat{B} \to \Set.
\]
If $\cat{B}'$ is another locally small category then a Chu space morphism between the corresponding Chu spaces consists of functors $F \from \cat{B} \to \cat{B}'$ and $G \from \cat{B}' \to \cat{B}$ such that
\[
\xymatrix{
\cat{B}^{\op} \times \cat{B}'\ar[r]^{\id \times G}\ar[d]_{F \times \id} & \cat{B}^{\op} \times \cat{B}\ar[d]^{\cat{B}(-,-)} \\
\cat{B}'^{\op} \times \cat{B}' \ar[r]_{\cat{B}'(-,-)} & \Set
}
\]
commutes. However, asking for this square to commute strictly is too strong; it is more appropriate to consider $F$ and $G$ as above such that this square commutes up to specified isomorphism. A morphism of aritations in this sense consists of $F \from \cat{B} \to \cat{B}'$, $G \from \cat{B}' \to \cat{B}$ and specified isomorphisms
\[
\cat{B}'(Fb, b') \iso \cat{B}(b, Gb')
\]
natural in $b \in \cat{B}$, $b' \in \cat{B}'$. In other words, morphisms of aritations in this sense are precisely adjunctions.

Chu space morphisms provide a good candidate for a notion of morphism of aritations. However, the interpretation of aritations as Chu spaces is only available in the setting $\CAT$, and Chu space morphisms only make sense between aritations that take values in the same category $\cat{C}$. Therefore if we wish to use morphisms of aritations to compare proto-theories in different settings, we will need a more general notion.

Recall that an aritation in a setting $\cat{X}$ consists of a functor of the form
\[
\currylo \from \cat{B} \to \cat{X}(\cat{A} ,\cat{C})
\]
where $\cat{B}$ is a category and $\cat{A}, \cat{C} \in \cat{X}$. Thus we might define a morphism from this aritation to another, say
\[
\currylo' \from \cat{B}' \to \cat{X}(\cat{A}' ,\cat{C}')
\]
to consist of a functor $F \from \cat{B} \to \cat{B}'$ and 1-cells $G \from \cat{A}' \to \cat{A}$ and $K \from \cat{C} \to \cat{C'}$ in $\cat{X}$ such that
\[
\xymatrix
@R=15pt{
\cat{B}\ar[r]^-{\currylo}\ar[dd]_{F} & \cat{X}(\cat{A},\cat{C})\ar[d]^{K_*} \\
& \cat{X}(\cat{A},\cat{C}')\ar[d]^{G^*} \\
\cat{B}' \ar[r]_-{\currylo'}& \cat{X}(\cat{A}',\cat{C}')
}
\]
commutes.

On the other hand, the aritation $\currylo$ corresponds to a 1-cell
\[
\cat{A} \toby{\curryhi} [\cat{B},\cat{C}]
\]
in $\cat{X}$ and there is a similar $(\curryhi)'$ corresponding to $\currylo'$. Viewing the aritations this way, we might be tempted to define a morphism of aritations to consist of a functor $F \from \cat{B}' \to \cat{B}$ and 1-cells $G \from \cat{A} \to \cat{A}'$ and $K \from \cat{C} \to \cat{C}'$ such that
\[
\xymatrix
@R=15pt{
\cat{A}\ar[r]^-{\curryhi}\ar[dd]_{G} & [\cat{B},\cat{C}]\ar[d]^{K_*} \\
& [\cat{B},\cat{C}']\ar[d]^{F^*} \\
\cat{A}' \ar[r]_-{(\curryhi)'}& [\cat{B}',\cat{C}']
}
\]
commutes.

If $\cat{X}$ admits tensors, meaning that for each $\cat{B} \in \CAT$ the 2-functor $[\cat{B}, -] \from \cat{X} \to \cat{X}$ admits a left adjoint $\cat{B} \tensor -$, then the aritation $\currylo$ corresponds to a 1-cell
\[
H \from \cat{B} \tensor \cat{A} \to \cat{C}
\]
and similarly we have $H'$ for $\currylo'$. Then a morphism of aritations could be defined to consist of a functor $F \from \cat{B} \to \cat{B}'$ and 1-cells $G \from \cat{A} \to \cat{A}'$ and $K \from \cat{C} \to \cat{C}'$ such that
\[
\xymatrix{
\cat{B} \tensor \cat{A}\ar[r]^-H\ar[d]_{F \tensor G} & \cat{C}\ar[d]^K \\
\cat{B}' \tensor \cat{A}'\ar[r]_-{H'} & \cat{C}
}
\]
commutes. Thus there are at least three different candidates for how to define a morphism of aritations. If in each case we allow the relevant square to commute up to equality, isomorphism or a 2-cell in either direction, there are 12 candidates.

Each of these candidates may be compatible (or fail to be compatible) with the structure--semantics adjunction in at least four different ways. A functor $F\from \cat{B} \to \cat{B}'$ induces an adjunction
\[
\xymatrix{
{\catover{\cat{B} }}\ar@<5pt>[r]_-{\perp}^-{F_!}\ & {\catover{\cat{B}'}}\ar@<5pt>[l]^-{F^*}
}
\]
where the left adjoint is given by composition with $F$, and the right adjoint by pullback along $F$. A 1-cell $G \from \cat{A} \to \cat{A}'$ induces an adjunction
\[
\xymatrix{
{\proth(\cat{A})}\ar@<5pt>[r]_-{\perp}^-{G_*}\ & {\proth(\cat{A}')}\ar@<5pt>[l]^-{G^!}
}
\]
with the left adjoint given by pushout along $G$, and the right adjoint given by first pre-composing with $G$ and then taking the factorisation.

Thus for each of the 12 notions of morphism of aritation, we can ask whether each of the four squares in
\[
\xymatrix{
{\catover{\cat{B} }}\ar@<5pt>[r]_-{\perp}^-{\str}\ar@<5pt>[d] & {\proth(\cat{A})^{\op}}\ar@<5pt>[l]^-{\sem}\ar@<5pt>[d] \\
{\catover{\cat{B}' }}\ar@<5pt>[r]_-{\perp}^-{\str}\ar@<5pt>[u] & {\proth(\cat{A}')^{\op}}\ar@<5pt>[l]^-{\sem}\ar@<5pt>[u]
}
\]
commutes, where the vertical arrows are induced by the functor and 1-cells comprising the morphism of aritations as described above.

\begin{question}
What is the most appropriate notion of morphism between aritations, and to what extent are such morphisms compatible with structure--semantics adjunctions?
\end{question}

\section{Structure--semantics adjunctions in more general contexts}
\label{sec:open-more-general}
Recall that a setting consists of a 2-category with cotensors and a factorisation system, and an aritation in a setting $\cat{X}$ consists of $\cat{B} \in \CAT$, together with $\cat{A},\cat{C} \in \cat{X}$ and $\currylo \from \cat{B} \to \cat{X}(\cat{A},\cat{C})$. From this data, we automatically obtain an adjunction
 \[
 \xymatrix{
 {\catover{\cat{B} }}\ar@<5pt>[r]_-{\perp}^-{\str} & {\proth(\cat{A})^{\op}}\ar@<5pt>[l]^-{\sem}.
 }
 \]
The process by which this adjunction is constructed, as described in Sections~\ref{sec:sem-general} and~\ref{sec:str-general}, does not make use of any special properties of $\CAT$ other than it being a symmetric monoidal closed category. Thus we can repeat this process to obtain notions of proto-theory, aritation and structure--semantics adjunctions for any symmetric monoidal category. Let us make this more precise.
 
 \begin{defn}
 Let $\cat{V}$ be a symmetric monoidal closed category. Then a \demph{$\cat{V}$-setting} consists of a $\cat{V}$-category $\cat{X}$ that is cotensored over $\cat{V}$ and whose underlying ordinary category is equipped with a factorisation system $(\ofsfont{E},\ofsfont{N})$.
 \end{defn}
 
 \begin{defn}
 Let $\cat{V}$ be a symmetric monoidal closed category and $(\cat{X},\cat{E},\cat{N})$ a $\cat{V}$-setting. Write $\cat{X}_0$ for the underlying ordinary category of $\cat{X}$ and let $A$ be an object of $\cat{X}$ (and hence also of $\cat{X}_0$). Then we write $\proth(A)$ for the full subcategory of $A/\cat{X}_0$ consisting of those morphisms out of $A$ that are in $\ofsfont{E}$.
 \end{defn}
 
 \begin{defn}
 Let $\cat{V}$ be a symmetric monoidal closed category and $\cat{X}$ a $\cat{V}$-setting. Then a \demph{$\cat{V}$-aritation} in $\cat{X}$ consists of objects $B \in \cat{V}$ and $A, C \in \cat{X}$ together with a morphism
 \[
 \currylo \from B \to \cat{X}(A,C)
 \]
 in $\cat{V}$, or equivalently
 \[
 \curryhi \from A \to [B,C]
 \]
 in $\cat{X}_0$.
 \end{defn}
 
 Given a symmetric monoidal closed category $\cat{V}$, a $\cat{V}$-setting $\cat{X}$ and a $\cat{V}$-aritation $\currylo \from B \to \cat{X}(A,C)$ in $\cat{X}$, we may define functors
 \begin{itemize}
 \item $\iota \from \proth(A) \to A/\cat{X}_0$ 
 \item $G \from (A/\cat{X}_0)^{\op} \to \cat{V}/\cat{X}(A,C)$ 
 \item $\currylo^* \from \cat{V} / \cat{X}(A,C) \to \cat{V}/ B$
 \end{itemize}
 exactly as in Definition~\ref{defn:sem-factors}, and these have adjoints defined exactly as in Definition~\ref{defn:str-factors}. Composing these three adjunctions gives
 \[
 \xymatrix{
 {\cat{V}/B}\ar@<5pt>[r]_-{\perp}^-{\str} & {\proth(A)^{\op}}\ar@<5pt>[l]^-{\sem}.
 }
 \]
 
Thus the formalism of proto-theories, aritations and structure--semantics adjunctions makes sense in a more general context than the one we have focused on in this thesis. However, the intuition of proto-theories as consisting of operations of different arities no longer makes sense in this general context, so it is not clear how it is best to interpret these more general structure--semantics adjunctions.

We saw in Section~\ref{sec:chu} that aritations in $\CAT$ are a special case of Chu spaces. But in fact a Chu space in any closed symmetric monoidal category $\cat{V}$ is a $\cat{V}$-aritation in the $\cat{V}$-setting $\cat{V}$ as defined above. Thus every Chu space gives rise to a structure--semantics adjunction.

Chu spaces in $\Set$ encompass an enormous range of mathematical structures; all algebraic structures, relational structures and topological spaces can be regarded as Chu spaces. A study of the structure--semantics adjunctions arising from all the various types of Chu space is beyond the scope of this thesis, but let us briefly consider the adjunction obtained by regarding a topological space $X$ as a Chu space.

We regard $X$ as a Chu space by taking the set of points to be the underlying set of $X$ and the set of states to be the set $\mathcal{O}(X)$ of open sets of $X$. We take the set of truth values to be $2= \{0,1\}$ and the pairing
\[
X \times \mathcal{O}(X) \to 2
\]
to be the map sending $(x, U)$ to $1$ if $x \in U$ and $0$ otherwise.

We think of this as an aritation in $\Set$, equipped with the usual surjection/injection factorisation system. Thus a $\Set$-proto-theory with arities $\mathcal{O}(X)$ is a surjection out of $\mathcal{O}(X)$, and we have an adjunction
 \[
 \xymatrix{
 {\Set/X}\ar@<5pt>[r]_-{\perp}^-{\str} & {\proth(\mathcal{O}(X))^{\op}}\ar@<5pt>[l]^-{\sem}.
 }
 \]
 Given a set $Y$ and a function $g \from Y \to X$ we obtain $\str(g) \from \mathcal{O}(X) \to \thr(g)$ as the image factorisation
 \[
 \xymatrix{
 \mathcal{O}(X)\ar[r]\ar[d]_{\str(g)} & 2^X\ar[d]^{g^*} \\
 \thr(g)\ar[r] & 2^Y.
 }
 \]
We can thus interpret $\thr(g)$ as consisting of those subsets $V$ of $Y$ of the form $g^{-1}(U)$ for some open $U \subseteq X$. But these are precisely the open sets for the topology induced by $g$; that is, the smallest topology on $Y$ such that $g$ is continuous.

Thus we recover an important construction in topology from a structure--semantics adjunction. This suggests that this very general notion of structure--semantics adjunction is potentially significant in contexts besides $\CAT$, and is worthy of further study.
\begin{question}
What is the general significance of aritations, proto-theories and structure--semantics adjunctions in closed symmetric monoidal categories other than $\CAT$?
\end{question}

\section{Complete topological proto-theories and profinite groups}
\label{sec:open-profinite}
In Chapters~\ref{chap:canonical2},~\ref{chap:topology} and~\ref{chap:complete}, we explored an analogy between proto-theories and groups, in which complete topological proto-theories corresponded to profinite groups, culminating in Table~\ref{tab:analogy-comp}. In particular we have given several characterisations of the category of complete topological proto-theories on a locally small category $\cat{B}$ with small limits and enough subobjects, mirroring similar characterisations of the category of profinite groups, as displayed in Table~\ref{tab:prof-chars-left-over}.
\begin{table}[h]
\centering
\begin{tabular}{p{7cm}p{7cm}}
\toprule
$\profGp$& $\proth_{ct}(\cat{B}^{\op})$ \\
\midrule
The smallest subcategory of $\TopGp$ containing $\FinGp$ and closed under small limits & The smallest subcategory of $\proth_t(\cat{B}^{\op})$ containing $\monad(\cat{B})$ and closed under large limits \\
The smallest replete reflective subcategory of $\TopGp$ containing $\FinGp$ & The smallest replete reflective subcategory of $\proth_t(\cat{B}^{\op})$ containing $\monad(\cat{B})$ \\
The category of algebras for the codensity monad of the inclusion $\FinGp \incl \TopGp$ & The category of algebras for the codensity monad of the inclusion $\monad(\cat{B}) \incl \proth_t(\cat{B}^{\op})$ \\
\bottomrule
\end{tabular}
\caption{Characterisations of $\proth_{ct}(\cat{B}^{\op})$ and $\profGp$.}
\label{tab:prof-chars-left-over}
\end{table}

Note, however that several of the characterisations of $\profGp$ from Remark~\ref{rem:prof-gp-chars} do not yet have analogues for proto-theories, namely that $\profGp$ is:
\begin{enumerate}
\item the full subcategory of $\TopGp$ consisting of the compact, Hausdorff, totally disconnected groups;
\item the category of algebras for the codensity monad of $\FinGp \incl \Gp$;
\item the category of algebras for the codensity monad of the forgetful functor $\FinGp \to \Top$; and
\item the category of algebras for the codensity monad of the forgetful functor $\FinGp \to \Set$.
\end{enumerate}
The most obvious analogue of the first of these is the assertion that the complete topological proto-theories are those topological proto-theories $L \from \cat{B}^{\op} \to \cat{L}$ for which each hom-space of $\cat{L}$ is compact, Hausdorff and totally disconnected. This is evidently false; any monad whose Kleisli category is not locally finite is a counterexample, since infinite discrete spaces are not compact. However, there may still be some similar characterisation of the complete topological proto-theories in terms of topological properties of their hom-spaces. We can also ask whether the corresponding analogues of the other characterisations above hold in the proto-theoretic case.

\begin{question}
Can the category $\proth_{ct}(\cat{B}^{\op})$ of complete topological proto-theories on a locally small category $\cat{B}$ with small limits and enough subobjects be characterised as any of the following:
\begin{enumerate}
\item the full subcategory of $\proth_t(\cat{B}^{\op})$ consisting of those topological proto-theories whose hom-spaces satisfy some topological property;
\item the category of algebras for the codensity monad of $\kle \from \monad(\cat{B}) \incl \proth(\cat{B}^{\op})$;
\item the category of algebras for the codensity monad for the functor $\monad(\cat{B}) \to [\cat{B}^{\op} \times \cat{B}, \TOP]$ that sends a monad $\mnd{T} = (T, \eta, \mu)$ to the discrete-space-valued functor $\cat{B}(-, T-) \from \cat{B}^{\op} \times \cat{B} \to \TOP$; or
\item the category of algebras for the codensity monad for the functor $\monad(\cat{B}) \to [\cat{B}^{\op} \times \cat{B}, \SET]$ that sends a monad $\mnd{T} = (T, \eta, \mu)$ to $\cat{B}(-, T-) \from \cat{B}^{\op} \times \cat{B} \to \SET$?
\end{enumerate}
\end{question}

Recall that, for a suitable category $\cat{B}$, the category $\proth_{ct}(\cat{B}^{\op})$ of complete topological proto-theories is defined to be the reflective subcategory of $\proth_t(\cat{B}^{\op})$ corresponding to the idempotent monad generated by the structure--semantics adjunction. In particular, we can only talk about complete topological proto-theories on $\cat{B}$ if this adjunction is idempotent, which we saw (Theorem~\ref{thm:str-sem-top-idempotent}) is the case when $\cat{B}$ has finite products and enough subobjects. However, it is not clear how crucial this condition is; it may still be the case that the structure--semantics adjunction is idempotent for many categories that do not have enough subobjects.

As noted in Theorem~\ref{thm:str-sem-monad-codensity-top}, one can show that the structure--semantics monad on $\proth_t(\cat{B}^{\op})$ is the codensity monad of the inclusion $\monad(\cat{B}) \incl \proth_t(\cat{B})$. Thus, the idempotency of the structure--semantics adjunction is equivalent to idempotency of this codensity monad. In~\cite{deleanu83}, Deleanu gives a necessary and sufficient condition for a codensity monad to be idempotent.

Specifically, if a functor $U \from \cat{M} \to \cat{C}$ has a pointwise codensity monad $(T, \eta, \mu)$, then that monad is idempotent if and only if, for every $m \in \cat{M}$ and $c \in \cat{C}$ the map
\[
\eta_m^* \from \cat{C}(Tc, Um) \to \cat{C}(c, Um)
 \]
 is an injection.

This could potentially be used to give a different proof of idempotency of this monad, possibly under weaker assumptions on $\cat{B}$ than having enough subobjects.

\begin{question}
Are there other conditions on a category $\cat{B}$ that ensure that the structure--semantics monad on $\proth_t(\cat{B}^{\op})$ is idempotent?
\end{question}

\cleardoublepage
\phantomsection
\addcontentsline{toc}{chapter}{\bibname}
\bibliography{bib}

\end{document}